\newtheorem{theorem}{Theorem}[section]
\newtheorem{lemma}[theorem]{Lemma}
\newtheorem{proposition}[theorem]{Proposition}
\newtheorem{corollary}[theorem]{Corollary}
\theoremstyle{definition}
\newtheorem{definition}[theorem]{Definition}
\newtheorem{example}[theorem]{Example}
\theoremstyle{remark}
\newtheorem{remark}[theorem]{Remark}
\numberwithin{equation}{section}
\newcommand\RedeclareMathOperator{%
  \@ifstar{\def\rmo@s{m}\rmo@redeclare}{\def\rmo@s{o}\rmo@redeclare}%
}
\newcommand\rmo@redeclare[2]{%
  \begingroup \escapechar\m@ne\xdef\@gtempa{{\string#1}}\endgroup
  \expandafter\@ifundefined\@gtempa
     {\@latex@error{\noexpand#1undefined}\@ehc}%
     \relax
  \expandafter\rmo@declmathop\rmo@s{#1}{#2}}
\newcommand\rmo@declmathop[3]{%
  \DeclareRobustCommand{#2}{\qopname\newmcodes@#1{#3}}%
}
\newcommand{\mathsc}[1]{{\normalfont\textsc{#1}}}
\newcommand{\f}{\frac}
\newcommand{\lt}{\left}
\newcommand{\rt}{\right}
\newcommand{\la}{\langle}
\newcommand{\ra}{\rangle}
\newcommand{\eps}{{\varepsilon}}
\DeclareMathOperator*{\E}{{\mathbb{E}}}
\RedeclareMathOperator*{\P}{{\mathbb{P}}}
\DeclareMathOperator*{\Var}{{\mathrm{Var}}}
\newcommand{\ind}[1]{\mathds{1}\lt\{#1\rt\}}
\newcommand{\diff}[1]{{\mathrm{d}#1}}
\newcommand{\rn}[2]{\f{\diff{#1}}{\diff{#2}}}
\newcommand{\cB}{{\mathcal{B}}}
\newcommand{\cC}{{\mathcal{C}}}
\newcommand{\cD}{{\mathcal{D}}}
\newcommand{\cG}{{\mathcal{G}}}
\newcommand{\cL}{{\mathcal{L}}}
\newcommand{\cM}{{\mathcal{M}}}
\newcommand{\cN}{{\mathcal{N}}}
\newcommand{\cS}{{\mathcal{S}}}
\newcommand{\cX}{{\mathcal{X}}}
\newcommand{\bE}{{\mathbb{E}}}
\newcommand{\bN}{{\mathbb{N}}}
\newcommand{\bR}{{\mathbb{R}}}
\newcommand{\bS}{{\mathbb{S}}}
\newcommand{\sE}{{\mathscr{E}}}
\newcommand{\sL}{{\mathscr{L}}}
\newcommand{\diag}{{\mathrm{diag}}}
\newcommand{\etr}{{\mathrm{etr}}}
\newcommand{\HS}{{\mathrm{HS}}}
\newcommand{\norm}[1]{\lt\|#1\rt\|}
\newcommand{\op}{{\mathrm{op}}}
\newcommand{\rank}{{\mathrm{rank}}}
\newcommand{\spec}{{\mathrm{spec}}}
\newcommand{\Tr}{{\mathrm{Tr}}}
\newcommand{\Nl}{{N_{<v}}}
\newcommand{\Nv}{{N(v)}}
\newcommand{\downN}{{N^{\downarrow}}}
\newcommand{\downNv}{{\downN(v)}}
\renewcommand{\deg}{{\mathsf{deg}}}
\newcommand{\ddeg}{{\deg^{\downarrow}}}
\newcommand{\degvl}{{\deg_{<v}}}
\newcommand{\degv}{{\deg(v)}}
\newcommand{\degu}{{\deg(u)}}
\newcommand{\ddegv}{{\ddeg(v)}}
\newcommand{\dvli}{{\degvl(i)}}
\newcommand{\dvlj}{{\degvl(j)}}
\newcommand{\dvlk}{{\degvl(k)}}
\newcommand{\dvll}{{\degvl(\ell)}}
\newcommand{\dvlij}{{\degvl(i,j)}}
\newcommand{\dvlkl}{{\degvl(k,\ell)}}
\newcommand{\Hom}{{\mathrm{Hom}}}
\newcommand{\Num}{{\mathsf{num}}}
\newcommand{\oNum}{{\overrightarrow{\Num}}}
\newcommand{\maxdeg}{{\mathsf{maxdeg}}}
\newcommand{\vst}{{v^*}}
\newcommand{\oK}{{\overrightarrow{K}}}
\newcommand{\oP}{{\overrightarrow{P}}}
\newcommand{\chisq}{{\chi^2}}
\newcommand{\KL}{{\mathsf{KL}}}
\newcommand{\TV}{{\mathsf{TV}}}
\newcommand{\Sop}{{S_{\op}}}
\newcommand{\Sopc}{{S_{\op}^c}}
\newcommand{\Str}{{S_{\Tr}}}
\newcommand{\Sdet}{{S_{\det}}}
\newcommand{\Syv}{{S_{Y,v}}}
\newcommand{\Syvc}{{S_{Y,v}^c}}
\newcommand{\Sze}{{S_{Z,e}}}
\newcommand{\Szec}{{S_{Z,e}^c}}
\newcommand{\Szv}{{S_{Z,v}}}
\newcommand{\Szvc}{{S_{Z,v}^c}}
\newcommand{\Svop}{{S^v_{\op}}}
\newcommand{\Svtr}{{S^v_{\Tr}}}
\newcommand{\Svdet}{{S^v_{\det}}}
\newcommand{\Tdet}{{T_{\det}}}
\newcommand{\Tdetc}{{T_{\det}^c}}
\newcommand{\muvl}{{\mu_{<v}}}
\newcommand{\muv}{{\mu_v}}
\newcommand{\muvls}{{\mu_{<v}^S}}
\newcommand{\muvlsv}{{\mu_{<v}^{S^v}}}
\newcommand{\muvs}{{\mu_v^S}}
\newcommand{\muvsv}{{\mu_v^{S^v}}}
\newcommand{\muvlt}{{\mu_{<v}^T}}
\newcommand{\muvt}{{\mu_v^T}}
\newcommand{\nuv}{{\nu_v}}
\newcommand{\muvltd}{{\mu_{<v}^\Tdet}}
\newcommand{\muvtd}{{\mu_v^\Tdet}}
\newcommand{\Xvl}{{X_{<v}}}
\newcommand{\Xvlt}{{X_{<v}^\top}}
\newcommand{\Xpvl}{{X'_{<v}}}
\newcommand{\Xn}{{X_{\Nv}}}
\newcommand{\Xnt}{{X_{\Nv}^\top}}
\newcommand{\Xbn}{{X_{\downNv}}}
\newcommand{\Xbnt}{{X_{\downNv}^\top}}
\newcommand{\Xpbn}{{X'_{\downNv}}}
\newcommand{\Xpbnt}{{{X'}_{\downNv}^\top}}
\newcommand{\XL}{{X_L}}
\newcommand{\XR}{{X_R}}
\newcommand{\XRt}{{X_R^\top}}
\newcommand{\Xone}[1][]{{X^{(1)}_{#1}}}
\newcommand{\Xonei}{{\Xone[i]}}
\newcommand{\Xonej}{{\Xone[j]}}
\newcommand{\Xonek}{{\Xone[k]}}
\newcommand{\Xonel}{{\Xone[\ell]}}
\newcommand{\XoneR}{{\Xone[R]}}
\newcommand{\Xonevl}{{\Xone[<v]}}
\newcommand{\Xtwo}[1][]{{X^{(2)}_{#1}}}
\newcommand{\Xtwoi}{{\Xtwo[i]}}
\newcommand{\Xtwoj}{{\Xtwo[j]}}
\newcommand{\Xtwok}{{\Xtwo[k]}}
\newcommand{\Xtwol}{{\Xtwo[\ell]}}
\newcommand{\XtwoR}{{\Xtwo[R]}}
\newcommand{\Xtwovl}{{\Xtwo[<v]}}
\newcommand{\Xr}[1][]{{X^{(r)}_{#1}}}
\newcommand{\Xri}{{\Xr[i]}}
\newcommand{\Xrj}{{\Xr[j]}}
\newcommand{\Xrk}{{\Xr[k]}}
\newcommand{\Xrl}{{\Xr[\ell]}}
\newcommand{\Xrvl}{{\Xr[<v]}}
\newcommand{\Xrn}{{\Xr[\Nv]}}
\newcommand{\Xrbn}{{\Xr[\downNv]}}
\newcommand{\XrR}{{\Xr[R]}}
\newcommand{\Uone}[1][]{{U^{(1)}_{#1}}}
\newcommand{\Utwo}[1][]{{U^{(2)}_{#1}}}
\newcommand{\Ur}[1][]{{U^{(r)}_{#1}}}
\newcommand{\Wone}[1][]{{W^{(1)}_{#1}}}
\newcommand{\Wtwo}[1][]{{W^{(2)}_{#1}}}
\newcommand{\Wr}[1][]{{W^{(r)}_{#1}}}
\newcommand{\Mk}{{M_k}}
\newcommand{\Mkt}{{M_k^\top}}
\newcommand{\Mkinv}{{\lt(\Mkt\Mk\rt)^{-1}}}
\newcommand{\Mkproj}{{\Mk \Mkinv \Mkt}}
\newcommand{\MkT}{{\lt(\Mkt\Mk\rt)_T^*}}
\newcommand{\MkprojT}{{\Mk \MkT \Mkt}}
\newcommand{\Ml}{{M_\ell}}
\newcommand{\Mlt}{{M_\ell^\top}}
\newcommand{\Mlinv}{{\lt(\Mlt\Ml\rt)^{-1}}}
\newcommand{\Mlproj}{{\Ml \Mlinv \Mlt}}
\newcommand{\MlT}{{\lt(\Mlt\Ml\rt)_T^*}}
\newcommand{\MlprojT}{{\Ml \MlT \Mlt}}
\newcommand{\In}{{I_{\Nv}}}
\newcommand{\Ibn}{{I_{\downNv}}}
\newcommand{\Deln}{{\Delta_{\Nv}}}
\newcommand{\Delbn}{{\Delta_{\downNv}}}
\newcommand{\Delbnsq}{{\Delta_{\downNv}^2}}
\newcommand{\Delone}[1][]{{\Delta^{(1)}_{#1}}}
\newcommand{\Delonen}{{\Delone[\Nv]}}
\newcommand{\Delonebn}{{\Delone[\downNv]}}
\newcommand{\Deltwo}[1][]{{\Delta^{(2)}_{#1}}}
\newcommand{\Deltwon}{{\Deltwo[\Nv]}}
\newcommand{\Deltwobn}{{\Deltwo[\downNv]}}
\newcommand{\Delr}[1][]{{\Delta^{(r)}_{#1}}}
\newcommand{\Delrn}{{\Delr[\Nv]}}
\newcommand{\Delrbn}{{\Delr[\downNv]}}
\newcommand{\IP}{{\mathrm{IP}}}
\newcommand{\GS}{{\mathsc{gs}}}
\newcommand{\cCgs}[1][]{{\cC^{\mathrm{GS}}_{#1}}}
\newcommand{\cCtckl}{{\cC^{\mathrm{TC}}_{k,\ell}}}
\newcommand{\cNvdips}{{\cN_{v,d}^{\IP}(S)}}
\newcommand{\cNvdipr}{{\cN_{v,d}^{\IP}(\bR^{d\times (v-1)})}}
\newcommand{\cMvd}{{\cM_{v,d}}}
\newcommand{\eZ}[1]{{e_Z\lt(#1\rt)}}
\newcommand{\sigY}[1]{{\sigma^2_Y\lt(#1\rt)}}
\newcommand{\sigZ}[1]{{\sigma^2_Z\lt(#1\rt)}}
\newcommand{\blambda}{{\bar \lambda}}
\newcommand{\Ber}{{\mathrm{Ber}}}
\newcommand{\Ctr}{{C_{\Tr}}}
\newcommand{\GOE}{{\mathrm{GOE}}}
\newcommand{\polylog}{{\mathrm{polylog}}}
\newcommand{\Sym}{{\mathrm{Sym}}}
\newcommand{\RGG}{{\mathsc{rgg}}}
\begin{document}

\title{De Finetti-Style Results for Wishart Matrices: \\ Combinatorial Structure and Phase Transitions}

\author{Matthew Brennan
\and
Guy Bresler\thanks{Massachusetts Institute of Technology. Department of EECS. Email: \texttt{guy@mit.edu}. Supported by MIT-IBM Watson AI Lab and NSF CAREER award CCF-1940205.}
\and
Brice Huang\thanks{Massachusetts Institute of Technology. Department of EECS. Email: \texttt{bmhuang@mit.edu}. Supported by NSF Graduate Research Scholarship 1745302, a Siebel Scholarship, and NSF TRIPODS award 1740751.}}

\date{\today}

\maketitle

\begin{center}
\textit{Dedicated to the memory of Matthew Brennan.}
\end{center}

\begin{abstract}
A recent line of work has studied the relationship between the Wishart matrix $X^\top X$, where $X\in \mathbb{R}^{d\times n}$ has i.i.d. standard Gaussian entries, and the corresponding Gaussian matrix with independent entries above the diagonal. Jiang and Li~\cite{JL15} and Bubeck et al.~\cite{BDER16} showed that these two matrix ensembles converge in total variation whenever $d/n^3\to \infty$, and~\cite{BDER16} showed this to be sharp. 
In this paper we aim to identify the precise threshold for $d$ in terms of $n$ for \emph{subsets} of Wishart matrices to converge in total variation to independent Gaussians.
It turns out that the combinatorial structure of the revealed entries, viewed as the adjacency matrix of a graph $G$, characterizes the distance from fully independent. 
Specifically, we show that the threshold for $d$ depends on the number of various small subgraphs in $G$. 
So, even when the number of revealed entries is fixed, the threshold can vary wildly depending on their configuration. Convergence of masked Wishart to independent Gaussians thus inherently involves an interplay between both probabilistic and combinatorial phenomena.
Our results determine the sharp threshold for a large family of $G$, including Erd\H{o}s-R\'enyi $G\sim \cG(n,p)$ at all values $p\gtrsim n^{-2}\polylog(n)$.
Our proof techniques are both combinatorial and information theoretic, which together allow us to carefully unravel the dependencies in the masked Wishart ensemble. 
\end{abstract}

\tableofcontents
\newpage

\section{Introduction}
\label{sec:intro}

It is a classical fact that the projection to a $k$ dimensional subspace of a random point from the unit sphere in $n$-dimensional Euclidean space has, in the limit $n\to \infty$ with $k$ growing slowly enough, a Gaussian distribution with covariance $I_k/n$ (see \cite{DF87} for the history). Diaconis and Freedman \cite{DF87} proved a finite version of the result, showing a bound on the total variation of the order $k/n$. While one can derive from this a de Finetti style representation theorem, one may also simply interpret the result as showing that dependence in a certain multivariate distribution is diminished under projection, with a precise bound on the distance to a suitable independent distribution. 

For exchangeable distributions, such as the uniform distribution on the Euclidean $n$-sphere, the choice of which $k$ coordinates to retain from the original $n$ is by definition immaterial. This is no longer the case for distributions satisfying only partial exchangeability. Consider for example the centered and normalized Wishart distribution $W(n,d)$ over $n\times n$ matrices, defined as the law of the matrix $W = d^{-1/2} (X^\top X -dI_n)$ where $X$ is a $d\times n$ matrix of i.i.d. standard Gaussian variables. The Wishart distribution satisfies \emph{joint exchangeability}, meaning that the distribution of $\{W_{\sigma(i),\sigma(j)}\}$ is the same as that of $\{W_{i,j}\}$ for all permutations $\sigma:[n]\to [n]$. Notably, different subsets of the same cardinality can possess differing amount of dependence. For example, the entries $(W_{1,2}, W_{3,4}, W_{5,6})$ are jointly independent while $(W_{1,2}, W_{2,3}, W_{1,3})$ are not. In this paper we study the role played by the structure of revealed entries. 

The degrees of freedom parameter $d$ controls the amount of dependence among the entries of the Wishart distribution. It follows from multidimensional CLTs that as $d\to \infty$ and $n$ is constant, the Wishart distribution $W(n,d)$ converges in total variation to the Gaussian Orthogonal Ensemble $\GOE(n)$ distribution which has independent Gaussian entries on and above the diagonal. Jiang and Li \cite{JL15} and Bubeck et al. \cite{BDER16} showed that for jointly varying $n$ and $d$, this convergence happens precisely when $d/n^3 \to \infty$.
We aim to characterize the precise threshold for $d$ in terms of $n$ for \emph{subsets} of Wishart matrices $W(n,d)$ to converge to corresponding subsets of $\GOE(n)$ matrices. 

It turns out that the combinatorial structure of the set of revealed entries, when viewed as the adjacency matrix of a graph $G$, characterizes the distance from fully independent. As we show in this paper, the difference between subsets of the same cardinality can be dramatic. For instance, revealing a random subset of half the entries leaves the phase transition unchanged at $d\asymp n^3$, while revealing the upper right and lower left $n/2$ by $n/2$ submatrices changes the phase transition to the much smaller $d\asymp n^2$.\footnote{The phase transition for this case was independently identified by Bubeck \cite{Bub20}.} The difference between these is that the former subset of entries corresponds to $G$ having on the order of $n^3$ triangles, while the latter subset of entries corresponds to $G$ being bipartite and hence triangle-free, in which case the number of 4-cycles plays a leading role. In general, we will show that the threshold for $d$ depends on the number of various small subgraphs in $G$. We emphasize that the dependence of this threshold on subgraph counts is not a product of our techniques, but an intrinsic property of subsets of the Wishart ensemble. Our results specialize to yield sharp results for Erd\H{o}s-R\'enyi masks $G\sim \cG(n,p)$ at all values $p\gtrsim n^{-2}\polylog(n)$, with different subgraphs being dominant depending on the sparsity. 

A number of recent papers have proved CLTs for random matrices, including for the Wishart ensemble \cite{JL15,BDER16,Jia06,Lev18,Mik20,RR19,Ste19}. These papers and others are discussed in the next subsection on related work.
The present paper continues this theme, but introduces a substantial generalization. From the technical perspective, the projected setting we consider adds nontrivial combinatorial structure to a phenomenon studied so far entirely with analytic techniques. These techniques often make use of properties that are brittle to projection onto an arbitrary subset of entries, such as explicit densities and characterizations of random matrices' eigenvalue distributions. Instead, our techniques are combinatorial and information theoretic; by carefully unraveling the dependencies in the masked Wishart ensemble, we make the phase transition tractable to analyze and obtain quite sharp results. The techniques developed in this paper are also useful in solving an open problem of Eldan and Mikulincer \cite{EM16} on the phase transition associated to detecting the anisotropic random geometric graph. These latter results are presented in a forthcoming paper.

\paragraph{Organization.} 
In the remainder of this section we overview related work and collect some useful notation.
We state our main convergence theorems in Section~\ref{sec:tv-upper-bounds} and show nearly matching converse results demonstrating the tightness of our theorems in Section~\ref{sec:tv-lower-bounds}.
Section~\ref{sec:er-masks} specializes our results to Erd\H{o}s-R\'enyi and bipartite Erd\H{o}s-R\'enyi masks for which, for all such graphs with more than $\polylog(n)$ expected edges, we precisely characterize the asymptotic threshold.
We provide a detailed technical overview in Section~\ref{sec:technical-overview} which describes the main ideas in the proofs of our convergence theorems.
Reading Sections~\ref{sec:intro} through \ref{sec:technical-overview} gives an accurate summary of our work.

Later sections contain proofs, structured as follows.
Sections~\ref{sec:general-ub-main-argument} through \ref{sec:general-ub-refined-higher-order-terms} prove Theorem~\ref{thm:general-ub}, our main convergence theorem.
In Section~\ref{sec:general-ub-main-argument}, we show a weaker variant of this theorem, whose proof contains the key ideas in the proof of Theorem~\ref{thm:general-ub}. 
In Sections~\ref{sec:general-ub-refined-linear-term} and \ref{sec:general-ub-refined-higher-order-terms} we improve the techniques from Section~\ref{sec:general-ub-main-argument} to prove Theorem~\ref{thm:general-ub}.
Section~\ref{sec:bipartite-ub-proof} proves Theorem~\ref{thm:bipartite-ub}, our main convergence theorem for bipartite masks.
Section~\ref{sec:tv-lower-bounds-proofs} proves the converses in Section~\ref{sec:tv-lower-bounds}.

\subsection{Related Work}
As mentioned above, there is a growing literature on CLTs for random matrices.
Chatterjee and Meckes \cite{CS08} proved a general multidimensional CLT via Stein's method.
As noted by Bubeck and Ganguly in \cite{BG16}, \cite[Theorem 7]{CS08} shows that the centered and rescaled Wishart ensemble $W(n,d)$ converges to $\GOE(n)$ in Wasserstein distance if $d/n^6 \to \infty$.
Jiang and Li~\cite{JL15} and Bubeck et al.~\cite{BDER16} showed this convergence holds in total variation if $d/n^3\to \infty$, and~\cite{BDER16} showed a matching converse whereby the total variation tends to $1$ if $d/n^3\to 0$. 
These sharp convergence results used the explicit expressions for Wishart and GOE densities. 
Bubeck and Ganguly~\cite{BG16} generalized the convergence result to the case of matrices $W=d^{-1/2} (X^\top X - \diag(X^\top X))$ with $X$ having i.i.d. entries from some log-concave measure, in which case the lack of an explicit density requires new techniques. Their approach uses tensorization of the Kullback-Leibler divergence, which we also do in this paper. However, as discussed in detail in Section~\ref{subsec:technical-overview-naive-comparison}, this tensorization does not by itself provide the precise control of dependencies needed in our setting with masked entries.
Racz and Richey~\cite{RR19} computed the total variation between $W(n,d)$ and $\GOE(n)$ in the limit $d/n^3\to c\in (0,\infty)$, making use of the fact that both the Wishart and GOE ensembles have explicit densities. Their expression easily evaluates to $0$ or $1$ in the limit $c\to \infty$ or $c\to 0$, respectively.
Chetelat and Wells~\cite{CW19} showed a remarkable countable sequence of phase transitions for the Wishart ensemble. For each $K\in \bN$, they defined an explicit density $f_K$ and showed that if $n^{K+3} / d^{K+1}\to 0$, then the normalized Wishart distribution converges in total variation to $f_K$. Their approach is based on a new variation of the Fourier transform that they introduce, applied to the densities under consideration.
Generalizing in a completely different direction, Mikulincer~\cite{Mik20} and Nourdin and Zheng~\cite{NZ18} showed CLTs for tensor analogues of the Wishart distribution, and for Wishart matrices as above where $X\in \bR^{n\times d}$ has correlated entries.

Understanding the relationship between the Wishart and GOE ensembles at different parameter values is intimately related to the analogous question for high-dimensional random geometric graphs versus Erd\H{o}s-R\'enyi. 
The most commonly studied high-dimensional random geometric graph $\RGG(n,p,d)$ associates to each node $i\in[n]$ a point $X_i$ uniformly sampled on the sphere $\bS^{d-1}$, and includes edge $(i,j)$ if $\langle X_i, X_j\rangle$ exceeds a threshold $t_{p,d}$ which is chosen so that each edge has marginal $\Ber(p)$.
When $p=1/2$ the two random graph models can be instantiated by thresholding the entries of Wishart and GOE matrices. Devroye et al. \cite{DLU11} applied a multivariate CLT to show that each of the $2^{\binom{n}{2}}$ terms in the summation over graphs in the total variation expression tend to zero for $d\gg n^7$, but this then requires $d\gg n^7 2^{\binom{n}{2}}$ for the total variation between the two random graph models to tend to zero. They also showed that the clique number of the random geometric graph is close to that of the Erd\H{o}s-R\'enyi graph whenever $d\gg \log^3 n$.  
Bubeck et al. 
\cite{BDER16} derived the sharp $d\asymp n^3$ threshold for total variation convergence of the two graphs from their result for Wishart versus GOE matrices. They also conjectured that as $p$ decreases, i.e., the graphs become sparser, convergence occurs at smaller values of $d$ than $n^3$. If $p=c/n$, they conjectured that the threshold occurs at $d\asymp \log^3 n$. 
Brennan et al.~\cite{BBN20} proved that indeed the threshold decreases as the graphs become sparser, with $\RGG(n,p,d)$ and $\cG(n,p)$ converging in total variation whenever $d=\tilde \omega(n^3 p, n^{7/2} p^2)$. Their methods combined information inequalities with probabilistic coupling arguments. They also showed sharp results for comparison of random intersection graphs with Erd\H{o}s-R\'enyi, as well as more general convergence results for matrices of intersections between families of random sets and Poisson random matrices. 
Eldan and Mikulincer \cite{EM16} studied the question of convergence of anisotropic random geometric graphs, and obtained lower and upper bounds on when the convergence occurs in terms of the dimension parameter. They left open the question of determining the threshold; as noted above, in a forthcoming paper we use techniques related to those developed in the present paper to determine the threshold precisely. 

Several papers have proved convergence results between the upper left $p\times q$ submatrix of a matrix uniformly sampled from the orthogonal group $O(n)$ and a $p\times q$ matrix of independent Gaussians.
Diaconis et al. showed that convergence occurs if $p=q=o(n^{1/3})$. 
Jiang \cite{Jia06} improved this result to $p=q=o(\sqrt{n})$, and moreover, showed that convergence does not occur if both $p$ and $q$ are $\Omega(\sqrt{n})$.
Jiang and Ma \cite{JM19} and
Stewart \cite{Ste19} generalized this result, showing convergence to Gaussian for any $p,q$ such that $pq=o(n)$. Both \cite{Jia06} and \cite{JM19} also consider distances other than total variation. 
The analogous question of when submatrices of random \textit{unitary} matrices converge to independent Gaussians is also relevant to quantum physics, as discussed in \cite{AA13,AA14,Lev18}.

\subsection{Notation}
For a positive integer $n$, let $[n] = \{1,\dots,n\}$.
Throughout this paper, let $G$ be a simple graph on $[n]$.
Let $E(G)$ denote the edge set of $G$. 
For simplicity of notation, we let $G$ also denote its vertex set.
Let $A_G$ denote the adjacency matrix of $G$.
For $v\in G$, let $N(v)$ denote the set of vertices adjacent to $v$ in $G$, and let $\deg(v) = |N(v)|$ denote the degree of $v$.
Let $G[v]$ denote the induced subgraph of $G$ on $[v]$.
For $i\in G$, let $\Nl(i) = N(i) \cap [v-1]$ denote the set of neighbors of $i$ in $[v-1]$, and let $\degvl(i) = |\Nl(i)|$. 
Let $\downNv = \Nl(v)$ and $\ddegv = |\downNv|$.

Throughout this paper, all quantities other than $n$, except where stated, are functions of $n$.
For example, $G = G_n$ and $d = d_n$ are the graph and degree of freedom parameter associated with $n$.
The asymptotic notation $f(n)\gg g(n)$ means $\lim_{n\to\infty} f(n)/g(n) = \infty$.
Similarly, $f(n) \gtrsim g(n)$ means $\liminf_{n\to\infty} f(n)/g(n) > 0$.
The asymptotic notations $f(n) \ll g(n)$ and $f(n) \lesssim g(n)$ are defined symmetrically.
The notation $f(n) \asymp g(n)$ means $f(n) \gtrsim g(n)$ and $f(n) \lesssim g(n)$.

The total variation distance between two probability measures $\mu$ and $\nu$ on the same space is denoted by $\TV(\mu,\nu) = \frac12\|\mu - \nu\|_1$. 
Similarly, Kullback-Leibler divergence and $\chisq$ divergence are denoted $\KL(\mu \parallel \nu)$ and $\chisq(\mu, \nu)$. These are defined in Section~\ref{sec:technical-overview}.

Throughout this paper, $\cN(0, \Sigma)$ denotes a jointly Gaussian vector with mean $0$ and covariance $\Sigma$.
Moreover, $\cN(0, \Sigma)^{\otimes k}$ denotes a matrix with $k$ i.i.d. columns, which are each a sample from $\cN(0, \Sigma)$.
Let $\chisq(d)$ denote a sample from a $\chisq$ distribution with $d$ degrees of freedom.

For two matrices $A$ and $B$ of the same dimensions, $A\odot B$ denotes the matrix Schur (or Hadamard) product, given by $(A\odot B)_{i,j} = A_{i,j} B_{i,j}$.
For a square matrix $A$, $\spec(A)$ denotes the set of eigenvalues of $A$, including multiplicity.
The Kronecker delta function is denoted by $\delta_{i,j} = \ind{i=j}$.  

\subsection*{Acknowledgements}
We are greatly indebted to Dheeraj Nagaraj for many helpful discussions throughout this work and for ideas that led to the proofs of Lemmas~\ref{lem:general-ub-kl-conditioning} and \ref{lem:dheeraj-method-deg2}. 
BH is also grateful to Mehtaab Sawhney and Dan Mikulincer for helpful conversations over the course of this work.
This work was done in part while the authors were participating in the Probability, Geometry, and Computation in High Dimensions program at the Simons Institute for the Theory of Computing in Fall 2020.

\section{Convergence of Subsets of Wishart and GOE in Total Variation}
\label{sec:tv-upper-bounds}

In this section, we formally introduce the models we study and state our main results.
We begin by defining the Wishart and Gaussian Orthogonal Ensemble (GOE) matrices.

\begin{definition}[Wishart and GOE matrices]
    \label{defn:wishart-goe}
    For positive integers $n,d$, let $W(n,d)$ denote the law of a centered and normalized isotropic Wishart matrix, i.e. the law of $d^{-1/2} (X^\top X - dI_n)$ where $X\in \bR^{d\times n}$ has i.i.d. standard Gaussian entries.
    Let $M(n)$ denote the law of a sample from an $n\times n$ GOE matrix, i.e. the symmetric random matrix with standard Gaussian off-diagonal entries, $\cN(0,2)$-distributed diagonal entries, and mutually independent entries on or above the diagonal.
\end{definition}

Let $G$ be a graph on $[n]$ with adjacency matrix $A_G$.
The objects of study of this paper are the masked Wishart and GOE matrices, which are samples from the Wishart and GOE matrices restricted to the entries $(i,j)$ corresponding to edges $(i,j)\in E(G)$.
Formally, they are defined as follows.

\begin{definition}[Masked Wishart and GOE matrices]
    Let $W(G,d)$ denote the law of $A_G \odot M$ where $M\sim W(|G|,d)$ and $\odot$ denotes the matrix Schur product.
    Similarly, let $M(G)$ denote the law of $A_G\odot M$ where $M\sim M(|G|)$.
\end{definition}

In this definition, the Schur product functions as a mask that preserves the entries $(i,j)$ of the Wishart and GOE matrices where $(i,j)\in E(G)$ and deletes the remaining entries.
Note that the diagonal entries of the Wishart and GOE matrices are necessarily deleted.

The object of this paper is to study asymptotic conditions on $n$, $G$, and $d$ under which we have $\TV(W(G,d), M(G)) \to 0$ or $\TV(W(G,d), M(G)) \to 1$.
In this section, we will focus on asymptotic conditions under which $\TV(W(G,d), M(G)) \to 0$, and we will study conditions under which $\TV(W(G,d), M(G)) \to 1$ in Section~\ref{sec:tv-lower-bounds}.
Throughout this paper, $G$ and $d$ are implicitly functions of $n$.
That is, $G$ implicitly denotes a sequence of graphs $(G_n)_{n\in \bN}$, where $G_n$ is a graph on $[n]$.
Similarly, $d$ implicitly denotes a sequence $(d_n)_{n\in \bN}$.
For simplicity of notation, we will typically denote $G_n$ and $d_n$ by $G$ and $d$.

When $G = K_n$, $W(G,d)$ and $M(G)$ are the ordinary Wishart and GOE matrices $W(n,d)$ and $M(n)$ with diagonal entries removed.
This case is well understood: there are many proofs \cite{BDER16, BG16, JL15, RR19} in the literature that, with or without diagonal entries, $W(n,d)$ and $M(n)$ converge to total variation distance $0$ if $d \gg n^3$ and diverge to total variation distance $1$ if $d \ll n^3$.
Thus, there is a sharp phase transition at $d\asymp n^3$.
We will recover this fact as a special case of our results for arbitrary $G$.
Our results will characterize sharp phase transitions for a large family of graph sequences $G$, which will include, as we will see in Section~\ref{sec:er-masks}, all typical instantiations of Erd\H{o}s-R\'enyi $G$ with more than polylogarithmically many expected edges.

\subsection{Subgraph Counts}

The criteria under which we show $\TV(W(G,d), M(G)) \to 0$ and $\TV(W(G,d), M(G)) \to 1$ in this paper depend on counts of small subgraphs in $G$ .
To state our results formally, we first introduce a notion of subgraph count. 
\begin{definition}
    \label{defn:subgraph-count}
    For a fixed graph $H$, let $\Num_G(H)$ denote the number of subgraphs $G'\subseteq G$ isomorphic to $H$.
    We do not require $G'$ to be an induced subgraph of $G$. 
    Unlike in the notation $\Hom_G(H)$ used in the combinatorics literature, we do require that $G'$ is a non-degenerate copy of $H$, i.e. each vertex of $H$ must correspond to a distinct vertex of $G'$.
    To reduce notational clutter, let $\Num_G(H_1,\ldots,H_k) = \sum_{i=1}^k \Num_G(H_i)$.
\end{definition}
We now define the subgraphs $H$ whose count $\Num_G(H)$ will appear in this paper.
Let $E$ denote the graph consisting of two vertices connected by an edge.
For $r\ge 2$, let $P_r$ denote the path with $r$ edges and $r+1$ vertices.
For $r\ge 3$, let $C_r$ denote the cycle of length $r$.
For $r,s\ge 1$, let $K_{r,s}$ denote the $(r,s)$-complete bipartite graph.
The remaining graphs whose count will appear in this paper are enumerated in Figure~\ref{fig:subgraph-counts}.

\begin{figure}[h!]
    \centering
    \includegraphics{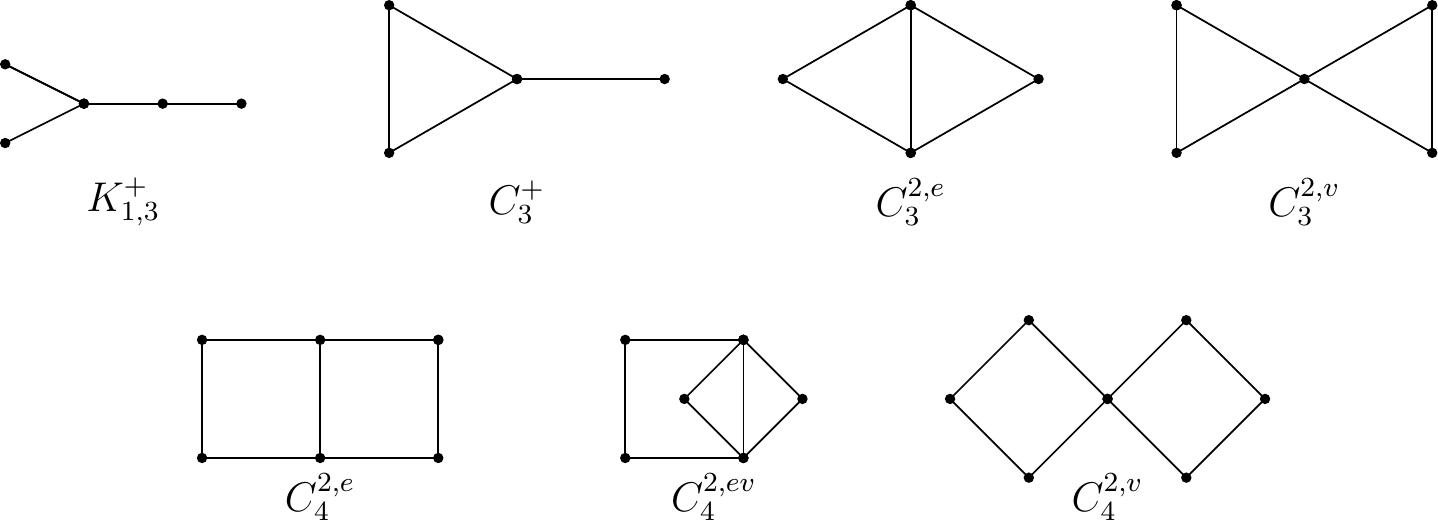}
    \caption{Graphs whose count appears in this paper.}
    \label{fig:subgraph-counts}
\end{figure}

Some of our results study the case when $G$ is bipartite. 
In this setting, the criteria under which we show $\TV(W(G,d), M(G)) \to 0$ or $1$ depend on the above subgraph counts and certain oriented subgraph counts, which we now introduce.
When $G$ is bipartite, we may assign an orientation to $G$: we partition the vertices of $G$ into left-vertices $V_L(G)$ and right-vertices $V_R(G)$ such that $E(G) \subseteq V_L(G) \times V_R(G)$.
When $G$ is clear from context, we refer to these sets as $V_L$ and $V_R$.
We now define a notion of oriented subgraph count with respect to the orientation $(V_L, V_R)$.

\begin{definition}
    For a fixed bipartite graph $H$, also equipped with an orientation $(V_L(H), V_R(H))$, let $\oNum_G(H)$ denote the number of subgraphs $G'\subseteq G$ isomorphic to $H$, such that vertices of $G'$ corresponding to $V_L(H)$ are in $V_L(G)$ and vertices of $G'$ corresponding to $V_R(H)$ are in $V_R(G)$.
    As in Definition~\ref{defn:subgraph-count}, we do not require $G'$ to be an induced subgraph, but do require that $G'$ is not degenerate.
    For notational simplicity, let $\oNum_G(H_1,\ldots,H_k) = \sum_{i=1}^k \oNum_G(H_i)$.
\end{definition}
We emphasize that the oriented subgraph counts $\oNum_G(H)$ are defined only with respect to a fixed orientation $(V_L, V_R)$ of $G$, and that this orientation may not be unique, even up to interchanging $V_L$ and $V_R$.

Let us define the bipartite graphs $H$ whose oriented count $\oNum_G(H)$ will appear in this paper.
For $r,s\ge 1$, let $\oK_{r,s}$ denote the $(r,s)$-complete bipartite graph, whose left-vertices and right-vertices are the sides of the bipartition with $r$ and $s$ vertices, respectively.
The remaining bipartite graph we will need is the oriented 4-path $\oP_4$, depicted in Figure~\ref{fig:oriented-subgraph-counts}.

\begin{figure}[h!]
    \centering
    % \begin{asy}
    %     unitsize(1.0cm);
    %     pair LABEL_SHIFT = (2, 0.5);
    %     pair CENTERING_SHIFT = (2, 1.5);
    %     void dot_shift(pair p, pair dif) {
    %         dot(p + dif + CENTERING_SHIFT);
    %     }
    %     void draw_shift(path p, pair dif) {
    %         draw(shift(dif + CENTERING_SHIFT) * (p));
    %     }
    %     void label_shift(string s, pair dif) {
    %         label(s, dif + LABEL_SHIFT);
    %     }
    %     void dot_path(string s, pair[] p, pair dif) {
    %         int n = p.length;
    %         label_shift(s, dif);
    %         for (int i=0; i < n-1; ++i) {
    %             dot_shift(p[i], dif);
    %             draw_shift(p[i]--p[i+1], dif);
    %         }
    %         dot_shift(p[n-1], dif);
    %     }
    %     // P4
    %     pair P4_SHIFT = (0, 0);
    %     pair[] P4_VERTICES = {(0.75, 0.5), (-0.75, 0.25), (0.75, 0), (-0.75, -0.25), (0.75, -0.5)};
    %     dot_path("$\overrightarrow{P}_4$", P4_VERTICES, P4_SHIFT);
    % \end{asy}
    \includegraphics{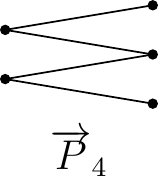}
    \caption{
        The oriented 4-path.
        The left-vertices and right-vertices of this graph are drawn, respectively, on the left and right.
    }
    \label{fig:oriented-subgraph-counts}
\end{figure}

\subsection{Result for General Masks}

Our main result below identifies conditions under which $\TV(W(G,d), M(G)) \to 0$.
\begin{theorem}
    \label{thm:general-ub}
    Suppose the following asymptotic inequalities hold:
    \begin{eqnarray}
        \label{eq:general-ub-hypothesis-triangles}
        d &\gg& \Num_G(C_3), \\
        \label{eq:general-ub-hypothesis-4cycles}
        d^2 &\gg& \Num_G(C_4, P_2, E), \\
        \label{eq:general-ub-hypothesis-k18}
        d^4 &\gg& \Num_G(K_{1,8}) + \log^8 n\, \Num_G(K_{1,4}, E).
    \end{eqnarray}
    Then, $\TV(W(G,d), M(G)) \to 0$ as $n\to \infty$.
\end{theorem}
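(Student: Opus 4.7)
The plan is to bound $\KL(W(G,d)\parallel M(G))$ via the KL chain rule along a vertex ordering, estimate each conditional divergence using a Gaussian-KL Taylor expansion, and organize the resulting error into contributions indexed by the subgraph counts in the hypotheses. Order the vertices as $1,2,\dots,n$; let $W_{<v}$ denote the already revealed masked entries in $G[v-1]$ and let $W_v=(W_{vi})_{i\in\downNv}$ be the vector of new edges revealed at step $v$. Under $M(G)$ the conditional law of $W_v$ given $W_{<v}$ is $\cN(0,I_{\downNv})$, so the KL chain rule gives
\[
\KL(W(G,d)\parallel M(G)) \;=\; \sum_{v=1}^n \bE_{W_{<v}}\KL\bigl(P_v \,\big\|\, \cN(0,I_{\downNv})\bigr),
\]
where $P_v$ is the conditional law of $W_v$ given $W_{<v}$ under $W(G,d)$.

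For each $v$, conditional on $X_{<v}$ the vector $W_v$ is exactly $\cN(0,\Sigma_v)$ with $\Sigma_v=d^{-1}X_{\downNv}^\top X_{\downNv}$. Convexity of KL in its first argument yields
\[
\KL\bigl(P_v \parallel \cN(0,I_{\downNv})\bigr) \;\le\; \bE_{X_{<v}\mid W_{<v}}\KL\bigl(\cN(0,\Sigma_v)\parallel\cN(0,I_{\downNv})\bigr),
\]
and the Gaussian-KL formula, expanded around $I_{\downNv}$ in $\Delta_v:=\Sigma_v-I_{\downNv}$, gives
\[
\KL(\cN(0,\Sigma_v)\parallel\cN(0,I_{\downNv})) \;=\; \tfrac14\|\Delta_v\|_{\mathrm F}^2 \;-\; \tfrac16\Tr(\Delta_v^3) \;+\; \cdots.
\]

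The second-order term (``linear in $\|\Delta_v\|_{\mathrm F}^2$'') drives the main bounds. I would split $\|\Delta_v\|_{\mathrm F}^2$ into contributions from entries already determined by $W_{<v}$ versus those that remain random. Observed off-diagonal entries (pairs $(i,j)\in E(G)$ with $i,j\in\downNv$) are fixed at $W_{ij}/\sqrt d$, and summing over $v$ these index exactly the triangles of $G$; the expected contribution of these entries is of order $\Num_G(C_3)/d$, matching hypothesis \eqref{eq:general-ub-hypothesis-triangles}. The remaining diagonal and non-edge off-diagonal entries are still random with per-entry variance $O(1/d)$, and a direct bound on their total Frobenius contribution gives the naive $O(|E(G)|/d+\Num_G(P_2)/d)$, which would impose the much stronger $d\gg|E(G)|,\Num_G(P_2)$. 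The crucial refinement (Section~\ref{sec:general-ub-refined-linear-term}) is to replace these by $O(|E(G)|/d^2+\Num_G(P_2,C_4)/d^2)$, saving a full factor of $d$. I would implement this via an auxiliary-statistic conditioning trick: rather than averaging over the full law of $X_{<v}\mid W_{<v}$, condition on a carefully chosen statistic that absorbs the first-order fluctuation of the diagonal and non-edge entries, and then apply the data-processing inequality. The four-cycle count $\Num_G(C_4)$ enters as the dominant cross-term in the refined second-moment computation, quantifying the correlations induced between two unobserved entries of $\Delta_v$ via paths present in $W_{<v}$.

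The higher-order terms $\Tr(\Delta_v^3)$ and the $\log\det$ remainder are controlled by the spectral norm $\|\Delta_v\|_{\op}$ paired with $\|\Delta_v\|_{\mathrm F}^2$. Standard Wishart-spectrum concentration gives $\|\Delta_v\|_{\op}\lesssim\sqrt{\ddegv/d}+\ddegv/d$ with high probability, and plugging this in and summing over $v$ (with a $\log n$ union bound to handle rare vertices of atypically large $\ddegv$) yields contributions of order $\Num_G(K_{1,8})/d^4+\log^8 n\cdot\Num_G(K_{1,4},E)/d^4$, matching hypothesis \eqref{eq:general-ub-hypothesis-k18}. This is the content of Section~\ref{sec:general-ub-refined-higher-order-terms}. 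The main obstacle is the refinement of the linear term described above: the naive Frobenius calculation is off by precisely a factor of $d$ on the non-triangle terms, and recovering this factor requires carefully exploiting the conditional structure of the Wishart law on the unobserved entries via the auxiliary conditioning.
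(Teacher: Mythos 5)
Your outline goes wrong at the single most important step. After the KL chain rule (which matches the paper's starting point, (\ref{eq:general-ub-starting-point})), you immediately apply convexity of KL in the first argument to replace the conditional law $P_v$ of $W_v$ given $W_{<v}$ by the average over $X_{<v}\mid W_{<v}$ of $\KL(\cN(0,\Sigma_v)\parallel\cN(0,I_{\downNv}))$. This is exactly the na\"ive Bubeck--Ganguly convexity bound analyzed in Section~\ref{subsec:technical-overview-naive-comparison}: by Lemma~\ref{lem:kl-gaussians} the right-hand side is, in expectation, of order $\ddegv^2/d$ per vertex, so summing over $v$ can never give anything better than $d \gg \Num_G(P_2,E)$ (Theorem~\ref{thm:general-naive-ub}), which is weaker than the claimed $d^2 \gg \Num_G(C_4,P_2,E)$ by a full factor of $d$ on the 2-path and edge terms. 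Moreover Lemma~\ref{lem:general-naive-ub-logdet} is tight up to constants, so once this inequality has been applied the loss is irrecoverable: no subsequent Taylor expansion or splitting of $\|\Delta_v\|_{\mathrm F}^2$ into ``observed'' and ``unobserved'' entries inside that expectation can restore the missing factor. Your proposed fix --- ``condition on a carefully chosen statistic that absorbs the first-order fluctuation and apply data processing'' --- cannot work either, because revealing additional functions of the latent $X_{<v}$ only makes convexity-type upper bounds larger, never smaller (this is precisely the monotonicity expressed by Lemma~\ref{lem:general-ub-stronger-couplings}); the gain the theorem needs comes from the fact that the \emph{mixture} $P_v$ is much closer to Gaussian than a typical mixture component $\cN(0,\Sigma_v)$ is, and that cancellation is destroyed the moment you average the component-wise KLs.

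The paper's proof captures this cancellation by a different mechanism that is entirely absent from your proposal: condition on high-probability latent sets $S^v\in\sigma(X_{<v})$ (with the technically delicate KL-conditioning argument of Lemma~\ref{lem:general-ub-kl-conditioning}, needed because the conditioning happens in the latent space), pass from KL to $\chisq$ and apply the second moment method, which produces the coupled exponentiated overlap $\E\exp(\frac12 Y_v)$ over \emph{two independent replicas} of $X_{<v}$ coupled only through the observed inner products (Definition~\ref{defn:general-ub-ip-coupling}); then Taylor-expand this overlap and bound each low-order term with a \emph{tailored} stronger coupling --- Gram--Schmidt couplings for the easy terms, and the Wishart-inverse moment expansion of Lemma~\ref{lem:general-ub-refined-linear-term} to get the sharp bound $\asymp \degvl(i,j)$ (rather than $\min(\dvli,\dvlj)$) for non-edge pairs, which is where the $\Num_G(C_4)$ in the threshold actually comes from. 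Your heuristic that 4-cycles quantify correlations between unobserved entries via paths in $W_{<v}$ is the right intuition, but without the mixture/second-moment machinery and the term-by-term coupling bounds there is no route from your first inequality to the stated theorem; as written, the argument proves only the weaker Theorem~\ref{thm:general-naive-ub}.
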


Note that when $G=K_n$, this theorem recovers the $d \gg n^3$ threshold at which $W(n,d)$ and $M(n)$ (without diagonal entries) converge in total variation.

Conditions (\ref{eq:general-ub-hypothesis-triangles}) and (\ref{eq:general-ub-hypothesis-4cycles}) are sharp, in the sense that they have matching TV lower bounds.
Theorems~\ref{thm:deg3-lb} and \ref{thm:deg4-lb} below show that in the presence of a mild condition, if $d \ll \Num_G(C_3)$ or $d^2 \ll \Num_G(C_4, P_2, E)$, then $\TV(W(G,d), M(G)) \to 1$.
Moreover, for a large family of $G$ -- including, as we will see in Theorem~\ref{thm:er-mask}, typical samples from all Erd\H{o}s-R\'enyi graphs with more than $\polylog(n)$ expected edges -- (\ref{eq:general-ub-hypothesis-k18}) is implied by one of (\ref{eq:general-ub-hypothesis-triangles}) and (\ref{eq:general-ub-hypothesis-4cycles}).
Thus, for all $G$ in this family, Theorem~\ref{thm:general-ub} identifies the correct asymptotic threshold under which $\TV(W(G,d), M(G)) \to 0$.

Condition (\ref{eq:general-ub-hypothesis-k18}) is a product of our methods, and is in general not sharp.
In graphs where this condition dictates the threshold given by the theorem, Theorem~\ref{thm:general-ub} will be suboptimal.
For example, for $G = K_{n/2, n/2}$, Theorem~\ref{thm:general-ub} gives that $\TV(W(G,d), M(G)) \to 0$ when $d \gg n^{9/4}$.
As we will see below in Theorem~\ref{thm:bipartite-ub}, we in fact have $\TV(W(G,d), M(G)) \to 0$ when $d \gg n^2$, and Theorem~\ref{thm:deg4-lb} below implies that this is the correct threshold.

We will prove Theorem~\ref{thm:general-ub} in Sections~\ref{sec:general-ub-main-argument} through \ref{sec:general-ub-refined-higher-order-terms}.
A detailed outline of this proof will be given in Section~\ref{sec:technical-overview}; we sketch here the main ideas.
The key challenge in upper bounding $\TV(W(G,d), M(G))$ is unraveling the intricate dependencies among the entries of $W(G,d)$, and we will devise information theoretic techniques to do so.

The first idea of our proof is to isolate the information contribution of each vertex of $G$.
We first use Pinsker's Inequality to pass from TV distance to KL divergence, to take advantage of KL divergence's tensorization properties.
We consider an iterative construction of the masked Wishart matrix $W(G,d)$, where we set the i.i.d. latent vectors $X_1,\ldots,X_n \sim \cN(0,I_d)$ one by one; thus, after the $v$th step we observe the upper-left $v\times v$ submatrix of $W(G,d)$.
By KL tensorization, we can write $\KL(W(G,d) \parallel M(G))$ as a sum of $n$ averaged KL divergences, where the $v$th summand is the information contribution of adding vertex $v$. 
We will bound these summands separately. 

The second idea is to recognize each of these summands as a KL divergence between a mixture of Gaussians and a Gaussian.
Indeed, the entries of $W(G,d)$ revealed in the $v$th step are a subset of entries of $d^{-1/2} \Xvl^\top X_v$, where $\Xvl = (X_1,\ldots,X_{v-1})$; this is jointly Gaussian conditioned on $\Xvl$.
The corresponding entries of $M(G)$ are, of course, Gaussian.
Now, $\chisq$ divergence is amenable to mixtures via the second moment method.
So, after truncating on a high probability event $S^v\in \sigma(\Xvl)$ to ensure integrability, we bound each summand by passing to $\chisq$ divergence.

After applying the second moment method, it remains to bound a coupled exponentiated overlap
\begin{equation}
    \label{eq:tv-ubs-exp-overlap}
    \E
    \exp\lt(
        \f{1}{2d^2}
        \sum_{i,j\in \downNv}
        \lt(
            \la \Xonei, \Xonej\ra - d\delta_{i,j}
        \rt)
        \lt(
            \la \Xtwoi, \Xtwoj\ra - d\delta_{i,j}
        \rt)
    \rt).
\end{equation}
Here, the expectation is over $(\Xonevl, \Xtwovl)$ where $\Xonevl$ is sampled from $\cN(0, I_d)^{\otimes v-1}$ conditioned on $\Xonevl\in S$ and $\Xtwovl$ is an independent copy of $\Xonevl$ conditioned further on $\la \Xonei, \Xonej \ra = \la \Xtwoi, \Xtwoj \ra$ for all $(i,j)\in E(G[v-1])$.
Due to the complex dependencies in the coupling of $\Xonevl$ and $\Xtwovl$, this expectation is difficult to evaluate or bound.
Controlling this overlap is our main technical contribution; the bulk of Sections~\ref{sec:general-ub-main-argument} through \ref{sec:general-ub-refined-higher-order-terms} is dedicated to this task.

The third idea in our proof is to expand the exponential in (\ref{eq:tv-ubs-exp-overlap}) into multiple terms and apply convexity in a different way for each term.
We will show by convexity that, for each term in this expansion, taking a stronger coupling over $(\Xonevl, \Xtwovl)$ can only increase that term's expectation.
We will devise a tailored stronger coupling to each term, which allows us to tractably estimate each term while still attaining a reasonably sharp bound. 
Combining these bounds yields Theorem~\ref{thm:general-ub}.
This term-by-term convexity argument is necessary: we will see that a global convexity argument, which upper bounds (\ref{eq:tv-ubs-exp-overlap}) with a single stronger coupling for the entire expression, does not capture the true dependence of $\TV(W(G,d), M(G))$ on $G$. 

We remark that, while our KL tensorization step is reminiscent of the approach of \cite{BG16}, this approach cannot by itself optimally determine the threshold at which $\TV(W(G,d), M(G)) \to 0$, as we will see in Section~\ref{subsec:technical-overview-naive-comparison}.
The approach of \cite{BG16} is equivalent to estimating (\ref{eq:tv-ubs-exp-overlap}) with the global coupling $\Xonevl = \Xtwovl$, which as discussed above is suboptimal.
This underscores the importance of our term-by-term convexity argument to deriving the thresholds in Theorem~\ref{thm:general-ub}.

\subsection{Result for Bipartite Masks}
We also study the case where $G$ is bipartite, because in this case our methods give especially sharp results.
For bipartite $G$, we can forgo the KL tensorization step and pass to $\chisq$ divergence directly, applying the second moment method with all the $X_v$ for $v$ on one side of $G$ as latent randomness.
Like for general $G$, the second moment method leaves the task of bounding the expectation of an exponentiated overlap.
However, in this setting, the two latent random matrices in the exponentiated overlap are fully independent.
Thus we have an expectation over only i.i.d. Gaussians, which allows a sharp analysis.
We derive the following theorem identifying conditions under which $W(G,d)$, and $M(G)$ converge in total variation for bipartite $G$.
We will prove this theorem in Section~\ref{sec:bipartite-ub-proof}.

\begin{theorem}
    \label{thm:bipartite-ub}
    Let $G$ be a bipartite graph with a fixed orientation $(V_L, V_R)$.
    Suppose the following four asymptotic inequalities hold.
    \begin{eqnarray}
        \label{eq:bipartite-ub-hypothesis-4cycles}
        d^2 &\gg& \Num_G(C_4, P_2, E), \\
        \label{eq:bipartite-ub-hypothesis-k14}
        d^3 &\gg& \oNum_G(\oK_{1,4}) + \Num_G(E) \log^3 n, \\
        \label{eq:bipartite-ub-hypothesis-d8}
        d^8 &\gg& \oNum_G(\oK_{1,3})^2\oNum_G(\oK_{2,4}) + 
        \Num_G(E)^2 \oNum_G(\oK_{1,4}, \oK_{2,4}) \log^4 n, \\
        \label{eq:bipartite-ub-hypothesis-d9}
        d^9 &\gg& \Num_G(E)^2 \oNum_G(\oP_4) \log^4 n.
    \end{eqnarray}
    Then, $\TV(W(G,d), M(G)) \to 0$ as $n\to \infty$.
\end{theorem}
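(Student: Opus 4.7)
The plan is to realize the strategy sketched in the paragraph preceding the theorem: bipartiteness will let me take the left-side matrix $\XL = (X_u)_{u\in V_L}$ as the sole latent variable in a single $\chisq$ second-moment computation, avoiding the iterative KL tensorization needed for Theorem~\ref{thm:general-ub}. First I would define a truncation event $S \in \sigma(\XL)$ requiring $\|X_u\|^2 = d(1+o(1))$ for every $u \in V_L$ and $\|\Delta^{(v)}\|_\op \le 1/2$ for every $v \in V_R$, where $\Delta^{(v)} := d^{-1} X_{\Nv}^\top X_{\Nv} - I$; Gaussian/Wishart concentration then gives $\P[S^c] = o(1)$. Since $\TV(W(G,d), M(G)) \le \P[S^c] + \sqrt{\chisq(W(G,d)|_S, M(G))/2}$, it suffices to show the truncated $\chisq$ is $o(1)$.

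Next, I would compute the second-moment overlap. Conditional on $\XL$, for each $v \in V_R$ the observed block $(W(G,d)_{u,v})_{u\in \Nv}$ is $\cN(0, I + \Delta^{(v)})$ while under $M(G)$ it is $\cN(0, I)$. A direct Gaussian computation yields $\int \cN(x;0,\Sigma)\cN(x;0,\Sigma')/\cN(x;0,I)\, dx = \det(I - \Delta\Delta')^{-1/2}$, so the second-moment formula gives
\begin{equation*}
    1 + \chisq(W(G,d)|_S, M(G)) \;=\; \E_{\XL, \XL' \in S,\ \text{indep}} \prod_{v \in V_R} \det\!\lt(I - \Delta^{(v)}(\Delta')^{(v)}\rt)^{-1/2}.
\end{equation*}
The crucial feature of the bipartite setting is that $\XL$ and $\XL'$ are \emph{fully independent} under this expectation: there is no coupling induced by conditioning on revealed entries, as there would be in the general iterative construction.

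Then I would expand via $-\tfrac{1}{2}\log\det(I - A) = \tfrac{1}{2}\sum_{k\ge 1} k^{-1}\Tr(A^k)$, which converges on $S$, and swap the sum $v \in V_R$ inside the trace. Each tuple $(i_1, \dots, i_{2k}) \in V_L^{2k}$ picks up a coefficient equal to the number of right-vertices of $G$ adjacent to all those indices, which is precisely an oriented bipartite subgraph count. The $k=1$ contribution is a quadratic chaos in $\XL, \XL'$ weighted by $\oNum_G(\oK_{1,2})$, whose $L^2$ mass is controlled by $\Num_G(C_4, P_2, E)$, matching~\eqref{eq:bipartite-ub-hypothesis-4cycles}. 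The $k=2$ terms, once expanded, split into contributions governed by $\oNum_G(\oK_{1,4})$, $\oNum_G(\oK_{1,3})^2\oNum_G(\oK_{2,4})$, and $\oNum_G(\oP_4)$, yielding~\eqref{eq:bipartite-ub-hypothesis-k14}--\eqref{eq:bipartite-ub-hypothesis-d9} respectively. Higher-$k$ terms carry extra factors of $\|\Delta^{(v)}\|_\op^{2k-4} \le 4^{2-k}$ on $S$ and are subleading.

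The hard part will be bounding the exponential moment sharply. I would exploit the independence of $\XL$ and $\XL'$ to split via Cauchy-Schwarz/H\"older into $\XL$-only and $\XL'$-only MGFs, then handle the $k=1$ quadratic chaos by Hanson-Wright or a direct Wick expansion, and handle the $k \ge 2$ multilinear chaoses by coarser exponential-moment bounds that leverage the operator-norm truncation. The $\polylog(n)$ factors appearing in~\eqref{eq:bipartite-ub-hypothesis-k14}--\eqref{eq:bipartite-ub-hypothesis-d9} will come from bounding a single $\Delta$-entry by its $L^\infty$ norm on the truncation event while keeping the rest of the chaos in $L^1$, which is what forces the $\Num_G(E)$ factor into those hypotheses. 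The main technical obstacle is organizing this accounting term-by-term so that the $k\ge 2$ contributions remain subleading once the $k=1$ bound from~\eqref{eq:bipartite-ub-hypothesis-4cycles} is saturated, which is why~\eqref{eq:bipartite-ub-hypothesis-d8}--\eqref{eq:bipartite-ub-hypothesis-d9} take their specific product forms rather than simpler single-count conditions.
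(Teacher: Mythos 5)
Your overall strategy matches the paper's proof: truncate on a high-probability event in the latent matrix, pass from TV to $\chi^2$, apply the second moment method with two \emph{fully independent} replicas, evaluate the inner expectation as $\prod_v \det\lt(I - \Delta^{(v)} \Delta'^{(v)}\rt)^{-1/2}$, keep only the low-order part of the expansion, and control the resulting quadratic and quartic overlaps via hypercontractive tail bounds combined with a deterministic operator-norm truncation that renders the degree-4 chaos exponentially integrable; your guess that \eqref{eq:bipartite-ub-hypothesis-d8}--\eqref{eq:bipartite-ub-hypothesis-d9} arise because the tail-integration step needs (truncation level) times (standard deviation) to be small is also the mechanism in the paper.

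There is, however, one concrete error: you condition on the wrong side. The hypotheses are stated for the fixed orientation $(V_L,V_R)$, and $\oNum_G(\oK_{1,4})$, $\oNum_G(\oK_{1,3})$, $\oNum_G(\oK_{2,4})$, $\oNum_G(\oP_4)$ count configurations whose star centers (resp.\ two-vertex side) lie in $V_L$; in particular \eqref{eq:bipartite-ub-hypothesis-k14} controls $\sum_{v\in V_L}\deg(v)^4$. To produce exactly these quantities you must take $\XR$ as the latent randomness, so that the conditional law factors over $v\in V_L$ with covariances $d^{-1}X_{N(v)}^\top X_{N(v)}$, $N(v)\subseteq V_R$: then the third-order spectral error and the deterministic bound on the quartic overlap are governed by left-degrees, matching \eqref{eq:bipartite-ub-hypothesis-k14}, while the chaos means and variances are governed by common right-neighbor counts $\deg(i,j)$, $\deg(i,j,k,\ell)$ with $i,j,k,\ell\in V_R$, matching \eqref{eq:bipartite-ub-hypothesis-4cycles}, \eqref{eq:bipartite-ub-hypothesis-d8}, \eqref{eq:bipartite-ub-hypothesis-d9}. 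Your choice (latent $\XL$, product over $v\in V_R$) yields the mirrored conditions, e.g.\ $d^3\gg \sum_{v\in V_R}\deg(v)^4$, which are not implied by the stated hypotheses and can be far larger: for $G\sim \cG(n,m,p)$ with $m\ll n$ one has $\oNum_G(\oK_{1,4})\asymp nm^4p^4$ while $\sum_{v\in V_R}\deg(v)^4\asymp mn^4p^4$. Since exploiting this asymmetry is precisely why the orientation is fixed (it is what later gives Theorem~\ref{thm:bip-er-mask} for unbalanced bipartitions), your argument as written proves a different statement; the fix is simply to integrate out the other side. Two smaller issues to repair once that is done: truncating at $\norm{\Delta^{(v)}}_{\op}\le \tfrac12$ is too weak --- you need the concentration-scale bound $\norm{\Delta^{(v)}}_{\op}\lesssim \sqrt{(\deg(v)+\log n)/d}$ so that the deterministic bound on the quartic overlap is of order $d^{-2}\sum_{v}\lt(\deg(v)^3+\deg(v)\log^2 n\rt)$, without which the truncation-times-variance condition fails; and \eqref{eq:bipartite-ub-hypothesis-k14} is consumed by the cubic spectral error and by that truncation level rather than by the $k=2$ terms, so the bookkeeping of which hypothesis controls which term must be redone accordingly.
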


In this theorem, the condition (\ref{eq:bipartite-ub-hypothesis-4cycles}) is sharp, and is matched by Theorem~\ref{thm:deg4-lb}.
For a large family of bipartite $G$ -- including, as we will see in Theorem~\ref{thm:bip-er-mask}, typical samples from all bipartite Erd\H{o}s-R\'enyi graphs with more than $\polylog(n)$ expected edges, even when one side of the graph is much larger than the other -- the remaining three conditions are implied by (\ref{eq:bipartite-ub-hypothesis-4cycles}).

Note that there may be many possible choices of the orientation $(V_L, V_R)$ of $G$.
To deduce the conclusion that $\TV(W(G,d), M(G)) \to 0$, we only need the hypotheses of Theorem~\ref{thm:bipartite-ub} to hold for one valid choice of orientation, and we may choose the orientation such that Theorem~\ref{thm:bipartite-ub} is strongest.
We will exploit this fact when we study random bipartite graphs $G$ in Theorem~\ref{thm:bip-er-mask}.

\section{Divergence of Subsets of Wishart and GOE in Total Variation}
\label{sec:tv-lower-bounds}

In this section, we identify asymptotic conditions on $n, G, d$ under which $\TV(W(G,d), M(G)) \to 1$.
These results function as converses to the results in Section~\ref{sec:tv-upper-bounds}.
We will see that for many graphs $G$, the results in this section give thresholds that asymptotically match the thresholds derived in Section~\ref{sec:tv-upper-bounds}.

Define $C_3(G)$, the set of 3-cycles in $G$, by
\[
    C_3(G) =
    \lt\{
    (i,j,k):
    (i,j), (j,k), (k,i) \in E(G),
    i < j < k
    \rt\}.
\]
Define the \textit{degree 3 statistic} $\kappa_3 : \bR^{n\times n} \to \bR$ by
\[
    \kappa_3(M) =
    \sum_{(i,j,k)\in C_3(G)}
    M_{i,j}
    M_{j,k}
    M_{k,i}.
\]
This is the restriction of the 3-cycles statistic $\sum_{1\le i<j<k\le n} M_{i,j} M_{j,k} M_{k,i}$ analyzed in \cite{BDER16}, which separates the ordinary Wishart and GOE matrices $W(n,d)$ and $M(n)$ to total variation $1$ when $d \ll n^3$, to the masked setting.
The degree 3 statistic yields the following criterion for TV divergence.

\begin{theorem}
    \label{thm:deg3-lb}
    Suppose the following two asymptotic inequalities hold:
    \begin{eqnarray}
        \label{eq:deg3-lb-hypothesis-triangles}
        d &\ll& \Num_G(C_3), \\
        \label{eq:deg3-lb-hypothesis-regularity}
        \Num_G(C_3^{2,e}, C_3^{2,v}) &\ll& \Num_G(C_3)^2.
    \end{eqnarray}
    Then, $\TV(\kappa_3(W(G,d)), \kappa_3(M(G))) \to 1$ as $n\to \infty$.
    In particular, $\TV(W(G,d), M(G)) \to 1$.
\end{theorem}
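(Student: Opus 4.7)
The plan is the classic second-moment separation argument: exhibit a real-valued statistic (namely $\kappa_3$) whose mean under $W(G,d)$ and $M(G)$ differ by much more than the standard deviations of $\kappa_3$ under either law, then invoke Chebyshev's inequality to produce threshold events with nearly disjoint probabilities under the two laws. Because $\kappa_3$ is a deterministic function of the matrix, the data-processing inequality $\TV(W(G,d), M(G)) \ge \TV(\kappa_3(W(G,d)), \kappa_3(M(G)))$ transfers the conclusion to the full matrices.

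First I would compute the means. Under $M(G)$ the edge variables are independent mean-zero Gaussians, so each summand vanishes and $\E_{M(G)}[\kappa_3] = 0$. Under $W(G,d)$, expanding $M_{ij} = d^{-1/2}\langle X_i, X_j\rangle$ for the three distinct latent vectors indexing a triangle and applying Wick's theorem, the only surviving Gaussian pairing across the three inner products is the diagonal one, giving $\E[\langle X_i, X_j\rangle\langle X_j, X_k\rangle\langle X_k, X_i\rangle] = d$. Thus $\E_{W(G,d)}[\kappa_3] = d^{-1/2}\Num_G(C_3)$ and the squared mean gap equals $\Num_G(C_3)^2/d$.

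Next I would estimate the variances. Under $M(G)$ distinct triangles share at most one edge variable and the remaining five edge variables are independent, so every cross term is zero and $\Var_{M(G)}[\kappa_3] = \Num_G(C_3)$ (each self-variance is $1$). Under $W(G,d)$ the covariance between two triangles $T_1, T_2$ vanishes when they share no vertex, since then the underlying latent vectors are jointly independent. The remaining two cases are handled by direct Gaussian moment calculations via Wick's theorem: two triangles sharing exactly one edge (a copy of $C_3^{2,e}$) have covariance of order $1/d$, obtained by conditioning on the shared pair of latent vectors to collapse the four remaining inner products to $\langle X_i, X_j\rangle^4$ and then using $\E[\langle X_i, X_j\rangle^4] = 3d(d+2)$; two triangles sharing exactly one vertex (a copy of $C_3^{2,v}$) have covariance of order $1/d^2$, with the degree-four coordinate of the shared vertex producing Wick pairings summing to $d^2 + 2d$ whose leading $d^2$ cancels the product of means. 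Combined with the $O(1)$ per-triangle self-variance, this gives
\[
\Var_{W(G,d)}[\kappa_3] = O\!\lt(\Num_G(C_3) + \Num_G(C_3^{2,e})/d + \Num_G(C_3^{2,v})/d^2\rt).
\]

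To conclude, hypothesis (\ref{eq:deg3-lb-hypothesis-triangles}) gives $\Num_G(C_3) \gg d$, so the per-triangle self-variance term and the GOE variance are both $o(\Num_G(C_3)^2/d)$, while hypothesis (\ref{eq:deg3-lb-hypothesis-regularity}) ensures $\Num_G(C_3^{2,e})/d$ and $\Num_G(C_3^{2,v})/d^2$ are also $o(\Num_G(C_3)^2/d)$. Chebyshev's inequality then confines $\kappa_3(W(G,d))$ and $\kappa_3(M(G))$ to intervals around $d^{-1/2}\Num_G(C_3)$ and $0$ respectively whose overlap probability tends to $0$, yielding $\TV(\kappa_3(W(G,d)), \kappa_3(M(G))) \to 1$ and hence the theorem. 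The main obstacle is the Wishart variance bound: enumerating Wick pairings for the edge-sharing and vertex-sharing configurations is mechanical, but one must carefully verify that the leading Wick contributions cancel against the square of the mean so that only the subleading $1/d$ and $1/d^2$ corrections remain, and the regularity hypothesis (\ref{eq:deg3-lb-hypothesis-regularity}) is calibrated exactly so these survive the comparison to $\Num_G(C_3)^2/d$.
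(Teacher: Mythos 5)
Your proposal is correct and follows essentially the same route as the paper: compute $\E\kappa_3$ and $\Var\kappa_3$ under both ensembles (with the shared-edge and shared-vertex covariance contributions of order $1/d$ and $1/d^2$, exactly as in the paper's Wick-pairing tables), separate via Chebyshev's inequality using the hypotheses (\ref{eq:deg3-lb-hypothesis-triangles}) and (\ref{eq:deg3-lb-hypothesis-regularity}), and transfer to the matrices by data processing. No gaps.
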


We think of (\ref{eq:deg3-lb-hypothesis-triangles}) as the main condition in this theorem.
The condition (\ref{eq:deg3-lb-hypothesis-regularity}) is a mild regularity condition satisfied by all non-pathological graphs.
To see this, note that $\Num_G(C_3^{2,e}, C_3^{2,v})$ counts pairs of 3-cycles in $G$ intersecting in at least one or two vertices, while $\Num_G(C_3)^2$ counts pairs of 3-cycles in $G$ without restriction; for most $G$, most pairs of 3-cycles in $G$ will not intersect.

Recall that $E(G)$ is the set of edges in $G$.
We can analogously define the sets of 4-cycles and 2-paths in $G$ by
\begin{eqnarray*}
    C_4(G)
    &=&
    \lt\{
    (i,j,k,\ell) :
    (i,j), (j,k), (k,\ell), (\ell, i) \in E(G),
    i < \min(j,k,\ell), j<\ell
    \rt\}, \\
    P_2(G)
    &=&
    \lt\{
    (i,j,k) :
    (i,j), (j,k)\in E(G), i<k
    \rt\}.
\end{eqnarray*}
The inequalities among the indices in these conditions ensure that each 4-cycle and 2-path is included exactly once.
Define the \textit{degree 4 statistic} $\kappa_4 : \bR^{n\times n} \to \bR$ by
\[
    \kappa_4(M) =
    \kappa_4^{C_4}(M) +
    \kappa_4^{P_2}(M) +
    \kappa_4^{E}(M),
\]
where the \textit{4-cycles statistic} $\kappa_4^{C_4}$, \textit{2-paths statistic} $\kappa_4^{P_2}$, and \textit{edges statistic} $\kappa_4^{E}$ are defined by
\begin{eqnarray*}
    \kappa_4^{C_4}(M)
    &=&
    \sum_{(i,j,k,\ell)\in C_4(G)}
    M_{i,j}
    M_{j,k}
    M_{k,\ell}
    M_{\ell,i}, \\
    \kappa_4^{P_2}(M)
    &=&
    \sum_{(i,j,k)\in P_2(G)}
    (M_{i,j}^2-1)
    (M_{j,k}^2-1), \\
    \kappa_4^{E}(M)
    &=&
    \sum_{(i,j)\in E(G)}
    (M_{i,j}^4-6M_{i,j}^2+3).
\end{eqnarray*}
Let us motivate this choice of statistic.
If we expand the likelihood ratio $L = \rn{W(G,d)}{M(G)}$ in the orthonormal basis of Hermite polynomials in the entries of $M(G)$, the expansion up to degree $3$ is $L \approx 1 + \f{1}{\sqrt{d}} \kappa_3(M)$.
This explains why $\kappa_3(M)$ is a natural statistic: it is the lowest degree nontrivial term in the expansion of the likelihood ratio with respect to the Hermite basis.
Since, by the Neyman-Pearson lemma, the likelihood ratio test is the most powerful test between two distributions, it is reasonable to expect a low degree proxy for the likelihood ratio to be powerful as well.
Extending this heuristic reasoning, the Fourier expansion of $L$ up to degree $4$ is
\[
    L \approx 1 +
    \f{1}{\sqrt{d}} \kappa_3(M) +
    \f{1}{d} \kappa_4^{C_4}(M) +
    \f{2}{d} \kappa_4^{P_2}(M) +
    \f{6}{d} \kappa_4^{E}(M).
\]
Thus, the degree 4 term of the Fourier expansion is $\kappa_4(M)$, up to constant factors on the three constituent terms of $\kappa_4(M)$.
So, when the statistic $\kappa_3$ is not powerful enough to test between $W(G,d)$ and $M(G)$ -- as is the case for bipartite $G$, where $\kappa_3(M)$ is identically zero, or more generally $G$ with few 3-cycles -- it is reasonable to consider $\kappa_4$ next.
This statistic yields the following criterion for TV divergence.

\begin{theorem}
    \label{thm:deg4-lb}
    Suppose the following two asymptotic inequalities hold.
    \begin{eqnarray}
        \label{eq:deg4-lb-hypothesis-4cycs}
        d^2 &\ll& \Num_G(C_4, P_2, E), \\
        \label{eq:deg4-lb-hypothesis-regularity}
        \Num_G(K_{1,4}, K_{2,4}, C_4^{2,e}, C_4^{2,v}) &\ll& \Num_G(C_4, P_2, E)^2.
    \end{eqnarray}
    Then, $\TV(\kappa_4(W(G,d)), \kappa_4(M(G))) \to 1$ as $n\to \infty$.
    In particular, $\TV(W(G,d), M(G)) \to 1$.
\end{theorem}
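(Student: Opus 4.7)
The plan is the second-moment method applied to the test statistic $\kappa_4$: if its means under $W(G,d)$ and $M(G)$ are separated by much more than its standard deviation under either law, then a threshold test distinguishes the two distributions in total variation, and the data-processing inequality lifts this to the desired bound on $\TV(W(G,d), M(G))$.

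First I compute means. Under $M(G)$, independence of the entries together with the fact that each summand of $\kappa_4$ is a degree-four Hermite polynomial with mean zero give $\E_M[\kappa_4] = 0$. Under $W(G,d)$, direct Gaussian calculations on the latent columns $X_v$ yield
\[
\E_W\!\lt[W_{ij}W_{jk}W_{k\ell}W_{\ell i}\rt] = \f{1}{d}, \qquad
\E_W\!\lt[(W_{ij}^2 - 1)(W_{jk}^2 - 1)\rt] = \f{2}{d}, \qquad
\E_W\!\lt[W_{ij}^4 - 6 W_{ij}^2 + 3\rt] = \f{6}{d},
\]
so $\E_W[\kappa_4] \asymp \Num_G(C_4, P_2, E)/d$; hypothesis (\ref{eq:deg4-lb-hypothesis-4cycs}) then forces the squared mean gap $(\E_W[\kappa_4] - \E_M[\kappa_4])^2 \gg \Num_G(C_4, P_2, E)$.

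The variance under $M(G)$ is immediate: the three parts of $\kappa_4$ lie in orthogonal Hermite sectors, and within each part distinct index tuples have disjoint monomial supports, giving $\Var_M[\kappa_4] \asymp \Num_G(C_4, P_2, E)$. The crux is $\Var_W[\kappa_4]$. I expand it as $\sum \Cov_W(\cdot,\cdot)$ over ordered pairs of contributing subgraphs (cycles, 2-paths, edges) and classify by overlap type, computing each per-pair covariance by conditioning on the latent columns $X_v$ shared between the two tuples and then applying Wick's formula together with chi-squared moments of $\|X_v\|^2$. Diagonal pairs give the familiar $O(\Num_G(C_4, P_2, E))$ contribution. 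The specific coefficients $(1, 2, 6)$ defining $\kappa_4$ are precisely those arising in the degree-four Hermite projection of the Wishart-to-GOE likelihood ratio, and this choice engineers cancellations across cross-covariances of $\kappa_4^{C_4}, \kappa_4^{P_2}, \kappa_4^E$ summands with overlapping index support. After these cancellations each remaining non-trivial overlap class contributes a per-pair covariance of order $1/d^{2}$, and the union graphs of the relevant overlap classes are precisely $K_{1,4}$, $K_{2,4}$, $C_4^{2,e}$, and $C_4^{2,v}$, yielding
\[
\Var_W[\kappa_4] \lesssim \Num_G(C_4, P_2, E) + \f{\Num_G(K_{1,4}, K_{2,4}, C_4^{2,e}, C_4^{2,v})}{d^{2}}.
\]

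Hypothesis (\ref{eq:deg4-lb-hypothesis-regularity}) then makes the off-diagonal term $\ll \Num_G(C_4, P_2, E)^2/d^2 \asymp (\E_W[\kappa_4])^2$, so $\Var_W[\kappa_4] + \Var_M[\kappa_4] \ll (\E_W[\kappa_4] - \E_M[\kappa_4])^2$. Chebyshev's inequality applied at the threshold $t = \tfrac{1}{2}(\E_W[\kappa_4] + \E_M[\kappa_4])$ yields $\P(\kappa_4(W(G,d)) \ge t) \to 1$ and $\P(\kappa_4(M(G)) \ge t) \to 0$, so $\TV(\kappa_4(W(G,d)), \kappa_4(M(G))) \to 1$, and the data-processing inequality upgrades this to $\TV(W(G,d), M(G)) \to 1$. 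The main obstacle I anticipate is verifying the $1/d^2$ claim for each off-diagonal overlap class: one must carefully track the cancellations induced by the $(1,2,6)$ coefficients across the three parts of $\kappa_4$, and exhaustively enumerate the overlap patterns of pairs of 4-cycles, 2-paths, and single edges to confirm that every pattern contributing at leading order is either subsumed by one of the four listed subgraph counts or produces a covariance of strictly lower order in $1/d$.
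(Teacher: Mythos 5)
Your overall skeleton — compute means and variances under both laws, threshold at half the Wishart mean, apply Chebyshev, and finish with data processing — is exactly the paper's route. The mean computations ($1/d$, $2/d$, $6/d$ per summand) and the GOE variance $\asymp \Num_G(C_4,P_2,E)$ are correct and match the paper. The gap is in your Wishart variance bound, which is the crux of the argument, and the mechanism you invoke to get it does not exist. First, $\kappa_4$ as defined in the theorem is $\kappa_4^{C_4}+\kappa_4^{P_2}+\kappa_4^{E}$ with unit coefficients; the $(1,2,6)$ weights appear only in the heuristic Fourier expansion of the likelihood ratio, and the statistic is stated "up to constant factors," so no fine-tuned cancellation across the three parts is available. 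Second, no cancellation is needed or possible for the terms that actually threaten the bound: they are \emph{within-statistic} covariances with positive sign, e.g.\ two 2-paths sharing an edge (union $K_{1,3}$), two 2-paths whose union is a triangle ($C_3$), and two 4-cycles overlapping in a 2-path (union $K_{2,3}$). Each such pair contributes covariance of order $1/d$, not $1/d^2$, so your claimed bound
\[
\Var_W[\kappa_4] \lesssim \Num_G(C_4,P_2,E) + d^{-2}\,\Num_G(K_{1,4},K_{2,4},C_4^{2,e},C_4^{2,v})
\]
does not follow from the enumeration you describe, and the list of "relevant union graphs" is incomplete. (As an aside, the $C_4^{2,v}$ class actually contributes at order $1/d^3$, not $1/d^2$.)

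The paper handles exactly this issue differently: it never computes cross-covariances at all, bounding $\Var(\kappa_4)\le 3(\Var\kappa_4^{C_4}+\Var\kappa_4^{P_2}+\Var\kappa_4^{E})$, enumerates the overlap graphs for each constituent separately, and absorbs the order-$1/d$ classes via subgraph-count inequalities such as $\Num_G(K_{1,3},P_3,C_3^{+})\lesssim \Num_G(P_2)^{3/2}$, $\Num_G(C_3)\lesssim\Num_G(P_2)$, and $\Num_G(K_{2,3})\lesssim\Num_G(C_4)^{3/2}$. This yields an extra term $d^{-1}\Num_G(C_4,P_2)^{3/2}$ in the Wishart variance, which is then shown to be negligible in the Chebyshev step because it produces an error of order $d/\Num_G(C_4,P_2,E)^{1/2}\to 0$ — i.e.\ it is killed by hypothesis (\ref{eq:deg4-lb-hypothesis-4cycs}), not by the regularity hypothesis (\ref{eq:deg4-lb-hypothesis-regularity}). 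To repair your proof you must either add these order-$1/d$ overlap classes to your variance bound and dispose of them as the paper does, or prove (via Cauchy--Schwarz/AM--GM on degree sequences) that $d^{-1}\Num_G(K_{1,3},C_3,K_{2,3})$ is dominated by the two terms you kept; as written, the step asserting that every off-diagonal class is $O(1/d^2)$ "after cancellations" is false.
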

Like above, (\ref{eq:deg4-lb-hypothesis-4cycs}) is this theorem's main condition and (\ref{eq:deg4-lb-hypothesis-regularity}) is a mild regularity condition.

Finally, define the \textit{longest row statistic} $\kappa_r$ as follows.
Recall that for $v\in G$, $N(v)$ and $\degv$ denote the neighborhood and degree of $v$.
Let $\maxdeg(G) = \max_{v\in G} \degv$ be the maximal degree of $G$.
Let $v^*\in G$ be the vertex with maximal degree, breaking ties in an arbitrary but deterministic way (for example, the maximal-degree vertex with smallest label).
Let
\[
    \kappa_r(M)
    =
    \f{1}{\maxdeg(G)}
    \sum_{i\in N(v^*)}
    M_{v^*,i}^2.
\]
The longest row statistic yields the following criterion for TV divergence.
\begin{theorem}
    \label{thm:maxdeg-lb}
    Suppose that $d \ll \maxdeg(G)$.
    Then, $\TV(\kappa_r(W(G,d)), \kappa_r(M(G))) \to 1$ as $n\to \infty$.
    In particular, $\TV(W(G,d), M(G)) \to 1$.
\end{theorem}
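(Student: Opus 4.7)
Let $k = \maxdeg(G)$. The plan is to compute the scale of fluctuations of $\kappa_r$ around its common mean $1$ under both laws and exploit the mismatch: $\kappa_r$ concentrates at scale $1/\sqrt{k}$ under $M(G)$ but has coherent scale-$1/\sqrt{d}$ fluctuations under $W(G,d)$, because the entire row indexed by $\vst$ is driven by the single latent vector $X_{\vst}$. The hypothesis $d \ll k$ makes these two scales separable by a single moderate-deviation event.

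I would first establish the distributional identities. Under $M(G)$, the entries $(M_{\vst, i})_{i \in N(\vst)}$ are $k$ independent standard Gaussians, so $\kappa_r(M(G)) \stackrel{d}{=} \chisq(k)/k$, with mean $1$ and variance $2/k$. Under $W(G,d)$, write $M_{\vst, i} = d^{-1/2}\la X_{\vst}, X_i\ra$ and condition on $X_{\vst}$: since the $X_i$ for $i \in N(\vst)$ are $\cN(0, I_d)$ independent of $X_{\vst}$, the $\la X_{\vst}, X_i\ra$ are conditionally i.i.d. $\cN(0, \|X_{\vst}\|^2)$. Hence
\[
\kappa_r(W(G,d)) \stackrel{d}{=} UV, \qquad U := \chisq(d)/d, \quad V := \chisq(k)/k, \quad U \perp V,
\]
with mean $1$ and variance $\tfrac{2}{d} + \tfrac{2}{k} + \tfrac{4}{dk}$, which is of order $1/d$ when $d \ll k$.

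Next, I would pick $t_n = (dk)^{-1/4}$, so that $t_n\sqrt{d} = (d/k)^{1/4} \to 0$ and $t_n\sqrt{k} = (k/d)^{1/4} \to \infty$, and set $A_n = \{|\kappa_r - 1| > t_n\}$. Chebyshev's inequality immediately gives $\P_{M(G)}(A_n) \le 2/(k t_n^2) = 2\sqrt{d/k} \to 0$. Under $W(G,d)$, decompose $\kappa_r - 1 = V(U-1) + (V-1)$; since $V \to 1$ in probability and $\sqrt{d}(V-1) = O_P(\sqrt{d/k}) \to 0$, Slutsky's theorem shows that $\sqrt{d}(\kappa_r - 1)$ and $\sqrt{d}(U-1)$ share the same distributional limit along every subsequence. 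When $d \to \infty$ the CLT for $\chisq(d)/d$ yields $\sqrt{d/2}(U-1) \Rightarrow \cN(0,1)$; when $d$ stays bounded along a subsequence, $\sqrt{d}(U-1)$ is already a fixed, nondegenerate, continuous random variable. Either way the subsequential limit is continuous with no atom at $0$, so $t_n\sqrt{d} \to 0$ forces $\P_{W(G,d)}(A_n) \to 1$. The bound
\[
\TV(\kappa_r(W(G,d)), \kappa_r(M(G))) \ge \P_{W(G,d)}(A_n) - \P_{M(G)}(A_n) \to 1,
\]
combined with the data processing inequality for the map $M \mapsto \kappa_r(M)$, then yields both statements.

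The only real subtlety is the regime where $d$ does not tend to infinity, in which the CLT does not literally apply; the subsequential Slutsky argument above handles it uniformly using that $\chisq(d)/d$ is already continuous for each fixed $d$, so anti-concentration at any shrinking scale $t_n \to 0$ holds automatically. Everything else is a routine variance computation together with a standard application of Chebyshev's inequality.
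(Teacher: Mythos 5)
Your proof is correct and takes essentially the same route as the paper: the same distributional identities $\kappa_r(M(G)) =_d \chi^2(k)/k$ and $\kappa_r(W(G,d)) =_d (\chi^2(d)/d)\cdot(\chi^2(k)/k)$ with independent factors, the same threshold $(dk)^{-1/4}$, Chebyshev concentration under $M(G)$, and anticoncentration of the $\chi^2(d)/d$ factor under $W(G,d)$ handled by the CLT when $d\to\infty$ and by continuity of the fixed law when $d=O(1)$. The only cosmetic difference is that you merge the two factors via a Slutsky/subsequence argument on $\sqrt{d}(\kappa_r-1)$, whereas the paper directly intersects the concentration interval for $\chi^2(k)/k$ with the anticoncentration interval for $\chi^2(d)/d$; both are valid.
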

This result is usually less powerful than Theorem~\ref{thm:deg4-lb}.
However, it will be useful in the proof of Theorem~\ref{thm:bip-er-mask} below, when the smaller side of the random bipartite graph has $O(1)$ vertices.
For such graphs, the regularity condition (\ref{eq:deg4-lb-hypothesis-regularity}) fails to hold, and we will use Theorem~\ref{thm:maxdeg-lb} to establish the threshold for TV divergence.

We will prove these results in Section~\ref{sec:tv-lower-bounds-proofs}.
The proofs of Theorems~\ref{thm:deg3-lb} and \ref{thm:deg4-lb} echo the proofs of the TV lower bounds in \cite{BDER16,BBN20}: we will compute the mean and variance of the statistics $\kappa_3(M)$ and $\kappa_4(M)$ for $M\sim W(G,d)$ and $M\sim M(G)$ and show these statistics' distributions separate to total variation $1$ by Chebyshev's Inequality.
To prove Theorem~\ref{thm:maxdeg-lb}, we will characterize the distributions of the statistic $\kappa_r(M)$ for $M\sim W(G,d)$ and $M\sim M(G)$.
In both cases the statistic concentrates around $1$, but the fluctuations are larger in the former case.
We will show an anticoncentration result for $\kappa_r(M)$ when $M\sim W(G,d)$ and a concentration result for $\kappa_r(M)$ when $M\sim M(G)$.
This will imply that the statistics' distributions separate to total variation $1$.

\begin{remark}
    We can also consider a masked version of the analogous problem for random geometric and Erd\H{o}s-R\'enyi graphs.
    In this problem, we observe the presence or absence of a subset of edges of $\RGG(n,p,d)$ and $\cG(n,p)$, and we want to identify asymptotic conditions under which samples from these two models converge and diverge in total variation.

    By a data processing argument like that of \cite[Section 5]{BDER16}, the criteria we derive under which $\TV(W(G,d), M(G)) \to 0$ yield analogous criteria for convergence of masked samples of $\cG(n,p)$ and $\RGG(n,p,d)$.
    However, the criteria we derive under which $\TV(W(G,d), M(G)) \to 1$ do not all correspond to analogous criteria for divergence of masked samples of $\cG(n,p)$ and $\RGG(n,p,d)$.
    This contrasts sharply with the non-masked setting, where for constant $p$, $\TV(W(n,d), M(n))\to 1$ and $\TV(\cG(n,p), \RGG(n,p,d)) \to 1$ occur at the same asymptotic threshold and are witnessed by analogous statistics based on 3-cycles.
    
    The TV divergence criteria witnessed by the statistics $\kappa_3$ and $\kappa_4^{C_4}$ do carry over to the random graphs setting: their analogues are the signed 3-cycles statistic
    \[
        \omega_3(M)
        =
        \sum_{(i,j,k)\in C_3(G)}
        (M_{i,j} - p)(M_{j,k} - p)(M_{i,k} - p)
    \]
    and signed 4-cycles statistic
    \[
        \omega_4(M)
        =
        \sum_{(i,j,k,\ell)\in C_4(G)}
        (M_{i,j} - p)(M_{j,k} - p)(M_{k,\ell} - p)(M_{\ell,i} - p).
    \]
    However, the statistics $\kappa_4^{P_2}$, $\kappa_4^{E}$, and $\kappa_r$ do not have analogues.
    These statistics are expressions of degree more than $1$ in the entries of $W(G,d)$ and $M(G)$, which encode high-degree information not present in the binary edge indicators in $\RGG(n,p,d)$ or $\cG(n,p)$.
    If $\Num_G(C_4) \ll \Num_G(P_2, E)$, the power of $\kappa_4$ comes predominantly from $\kappa_4^{P_2}$ and $\kappa_4^{E}$; we believe that in this regime, the threshold in $d$ separating the $\TV(W(G,d), M(G)) \to 0$ and $\TV(W(G,d), M(G)) \to 1$ regimes does not coincide with the analogous threshold for masked $\RGG(n,p,d)$ and $\cG(n,p)$.
\end{remark}

\section{Sharp Phase Transitions for Random Masks}
\label{sec:er-masks}

In this section, we will specialize the results of Sections~\ref{sec:tv-upper-bounds} and \ref{sec:tv-lower-bounds} to the case where $G$ is a sample from an Erd\H{o}s-R\'enyi or bipartite Erd\H{o}s-R\'enyi graph.
In this setting, we will show sharp phase transitions between the $\TV(W(G,d), M(G)) \to 0$ and $\TV(W(G,d), M(G)) \to 1$ regimes for all Erd\H{o}s-R\'enyi and bipartite Erd\H{o}s-R\'enyi $G$ with more than $\polylog(n)$ expected edges.

Let $\cG(n,p)$ denote the Erd\H{o}s-R\'enyi graph with edge probability $p\in [0,1]$, which is implicitly a function of $n$.
The following result identifies the TV convergence and divergence regimes for Erd\H{o}s-R\'enyi $G$.

\begin{theorem}
    \label{thm:er-mask}
    Let $G = G_n \sim \cG(n,p)$.
    Over the randomness of the sample path $G_1, G_2, \ldots$, the following limits occur almost surely.
    \begin{enumerate}[label=(\alph*), ref=\alph*]
        \item \label{itm:er-mask-ub} If
        \begin{equation}
            \label{eq:er-mask-ub-hypothesis}
            d \gg n^3p^3 + n^{3/2} p + np^{1/2} + n^{1/2}p^{1/4}\log^2 n + \log^3 n,
        \end{equation}
        then $\TV(W(G,d), M(G)) \to 0$.
        \item \label{itm:er-mask-lb} Conversely, if $p \gg n^{-2} \log^3 n$ and
        \begin{equation}
            \label{eq:er-mask-lb-hypothesis}
            d \ll n^3p^3 + n^{3/2} p + np^{1/2},
        \end{equation}
        then $\TV(W(G,d), M(G)) \to 1$.
    \end{enumerate}
\end{theorem}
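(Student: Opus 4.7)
}
My approach is to reduce Theorem~\ref{thm:er-mask} to the general-mask results (Theorems~\ref{thm:general-ub}, \ref{thm:deg3-lb}, \ref{thm:deg4-lb}) by computing subgraph counts for $G\sim \cG(n,p)$ and verifying the hypotheses hold almost surely along the sample path $(G_n)_{n\geq 1}$. In both directions the mechanism is the same: replace each $\Num_{G_n}(H)$ appearing in the hypotheses by its typical value $\asymp n^{|V(H)|}p^{|E(H)|}$, apply the corresponding general theorem, and check that the resulting condition on $d$ matches (\ref{eq:er-mask-ub-hypothesis}) or (\ref{eq:er-mask-lb-hypothesis}).

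For part (\ref{itm:er-mask-ub}), I would first establish that for each fixed $H \in \{C_3, C_4, P_2, E, K_{1,4}, K_{1,8}\}$, the bound $\Num_{G_n}(H) \lesssim n^{|V(H)|} p^{|E(H)|} + \polylog n$ holds almost surely, by applying Janson's inequality or Kim--Vu polynomial concentration to get failure probability $n^{-\omega(1)}$ and then invoking Borel--Cantelli. The bound $p \gtrsim n^{-2}\log^3 n$ (implicit through the $\log^3 n$ summand in (\ref{eq:er-mask-ub-hypothesis})) keeps these tail estimates meaningful, and the additive $\polylog n$ absorbs the subconstant-mean regime. Plugging the resulting counts into (\ref{eq:general-ub-hypothesis-triangles})--(\ref{eq:general-ub-hypothesis-k18}) yields $d \gg n^3 p^3$, $d \gg n^2 p^2 + n^{3/2} p + n p^{1/2}$, and $d \gg n^{9/4} p^2 + n^{5/4} p \log^2 n + n^{1/2} p^{1/4} \log^2 n$, plus polylogarithmic lower-order terms. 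A brief case analysis on $p$ then shows each extraneous term is dominated by one already in (\ref{eq:er-mask-ub-hypothesis}): $n^{5/4} p \log^2 n \lesssim n^{3/2} p$ since $\log^2 n \ll n^{1/4}$; $n^2 p^2$ and $n^{9/4} p^2$ are each bounded by $\max(n^3 p^3, n^{3/2} p, np^{1/2})$ --- for instance, $n^{9/4} p^2 \leq n^3 p^3$ when $p \gtrsim n^{-3/4}$ and $n^{9/4} p^2 \leq n^{3/2} p$ otherwise. Hence the combined requirement collapses to (\ref{eq:er-mask-ub-hypothesis}).

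For part (\ref{itm:er-mask-lb}), I would split based on which term in $n^3 p^3 + n^{3/2} p + n p^{1/2}$ dominates $d$ (corresponding, for $p \asymp n^{-\alpha}$, to $\alpha \lesssim 3/4$, $3/4 \lesssim \alpha \lesssim 1$, and $\alpha \gtrsim 1$ respectively). When $n^3 p^3$ dominates, we have $np \gg 1$ and $\Num_{G_n}(C_3) \asymp n^3 p^3$ a.s., so Theorem~\ref{thm:deg3-lb} applies once I verify the regularity (\ref{eq:deg3-lb-hypothesis-regularity}); computing $\Num_G(C_3^{2,e}) \asymp n^4 p^5$ and $\Num_G(C_3^{2,v}) \asymp n^5 p^6$ and comparing to $\Num_G(C_3)^2 \asymp n^6 p^6$, the condition reduces to $n^2 p \gg 1$, which follows from $p \gg n^{-2}\log^3 n$. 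When $n^{3/2}p$ or $np^{1/2}$ dominates, the bound $d^2 \ll \Num_{G_n}(C_4, P_2, E)$ is witnessed by $\Num_{G_n}(P_2) \asymp n^3 p^2$ or $\Num_{G_n}(E) \asymp n^2 p$, so Theorem~\ref{thm:deg4-lb} applies; checking (\ref{eq:deg4-lb-hypothesis-regularity}) amounts to comparing $\Num_G(K_{1,4}) \asymp n^5 p^4$, $\Num_G(K_{2,4}) \asymp n^6 p^8$, $\Num_G(C_4^{2,e}) \asymp n^6 p^7$, and $\Num_G(C_4^{2,v}) \asymp n^7 p^8$ against $\Num_G(C_4, P_2, E)^2$ in each sub-regime of $p$, which I expect to reduce to $p \ll 1$ together with the hypothesis $p \gg n^{-2}\log^3 n$.

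The main obstacle I anticipate is the bookkeeping for the regularity conditions across the several transition scales $p \asymp n^{-1}, n^{-3/4}, n^{-2/3}$, where different subgraph counts dominate and the regularity inequalities must be checked case by case. A secondary issue is promoting the subgraph count estimates from in-probability to almost-sure along the sample path: I would handle this by invoking a concentration inequality with $n^{-\omega(1)}$ failure probability (e.g.\ Kim--Vu applied to the polynomial edge-indicator representation of $\Num_G(H)$) so that Borel--Cantelli yields the a.s.\ statement. Neither obstacle is conceptually deep, and the heart of Theorem~\ref{thm:er-mask} is the matching of the general thresholds in Sections~\ref{sec:tv-upper-bounds}--\ref{sec:tv-lower-bounds} to the $\cG(n,p)$ subgraph-count formulas.
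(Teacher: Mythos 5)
Your plan coincides with the paper's own proof (Appendix~\ref{appsec:er-masks-proofs}): Kim--Vu for upper tails and Janson for lower tails of the relevant subgraph counts with summable failure probabilities, Borel--Cantelli to get almost-sure statements along the sample path, then Theorem~\ref{thm:general-ub} for part~(\ref{itm:er-mask-ub}) and Theorems~\ref{thm:deg3-lb}/\ref{thm:deg4-lb} in the regimes $p\gtrsim n^{-3/4}$ and $n^{-2}\log^3 n \ll p \lesssim n^{-3/4}$ for part~(\ref{itm:er-mask-lb}), with the same term-domination case checks. The only small imprecision is that Janson's inequality controls only the lower tail, so the upper-tail bounds must come from Kim--Vu (as you also note), exactly as in the paper.
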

When $p=1$, we recover the $d\asymp n^3$ phase transition separating the regimes where the ordinary Wishart and GOE matrices (with diagonals removed) converge and diverge in total variation.
Moreover, for all $p \gtrsim n^{-2}\log^8 n$, (\ref{eq:er-mask-ub-hypothesis}) is equivalent to $d \gg n^3p^3 + n^{3/2}p + np^{1/2}$.
Thus, Theorem~\ref{thm:er-mask} implies that for all $p\gtrsim n^{-2}\log^8 n$, the sharp phase transition separating the regimes where $W(G,d)$ and $M(G)$ converge and diverge in total variation is $d \asymp n^3p^3 + n^{3/2} p + np^{1/2}$.

Figure~\ref{fig:er-mask-phase-diagram} summarizes the $(p,d)$ for which Theorem~\ref{thm:er-mask} gives that $W(G,d)$ and $M(G)$ converge and diverge in total variation.
Note the tradeoff between sparsity and combinatorial structure evident in this result.
As the mask $G$ becomes sparser, the combinatorial structure determining the threshold becomes more and more disordered:
3-cycles dominate at $p\gtrsim n^{-3/4}$, followed by 2-paths at $n^{-3/4}\gtrsim p \gtrsim n^{-1}$, followed by edges at $p\lesssim n^{-1}$.
At the phase transitions $p\asymp n^{-3/4}$ and $p\asymp n^{-1}$, one combinatorial structure replaces another as the structure determining the threshold.

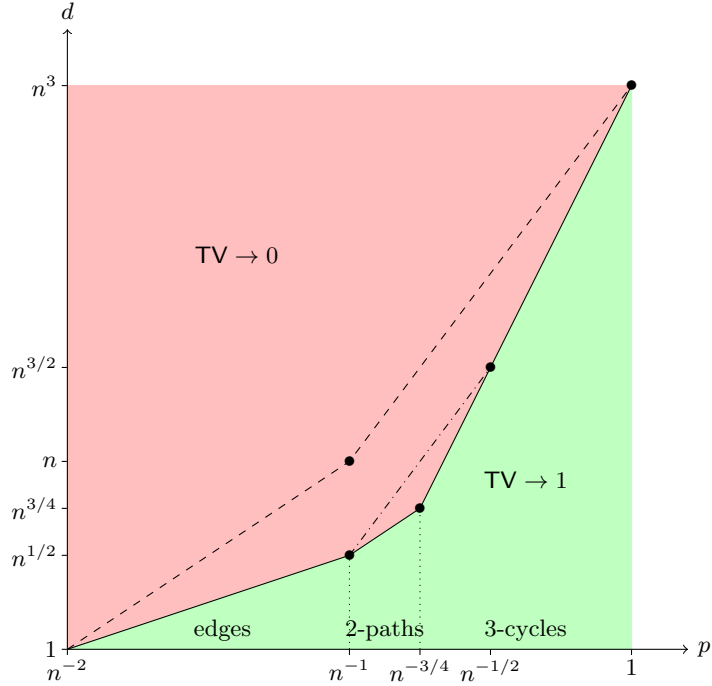
\begin{figure}[h!]
    \centering
    \begin{tikzpicture}[scale=0.75]
    \tikzstyle{every node}=[font=\footnotesize]
    \def\xmin{0}
    \def\xmax{11}
    \def\ymin{0}
    \def\ymax{11}

    \node at (0, 0) [below] {$n^{-2}$};
    \draw (0, -0.1) -- (0, 0);
    \node at (5, 0) [below] {$n^{-1}$};
    \draw (5, -0.1) -- (5, 0);
    \node at (6.25, 0) [below] {$n^{-3/4}$};
    \draw (6.25, -0.1) -- (6.25, 0);
    \node at (7.5, 0) [below] {$n^{-1/2}$};
    \draw (7.5, -0.1) -- (7.5, 0);
    \node at (10, 0) [below] {$1$};
    \draw (10, -0.1) -- (10, 0);
    \node at (0, 0) [left] {$1$};
    \draw (-0.1, 0) -- (0, 0);
    \node at (0, 1.667) [left] {$n^{1/2}$};
    \draw (-0.1, 1.667) -- (0, 1.667);
    \node at (0, 2.5) [left] {$n^{3/4}$};
    \draw (-0.1, 2.5) -- (0, 2.5);
    \node at (0, 3.333) [left] {$n$};
    \draw (-0.1, 3.333) -- (0, 3.333);
    \node at (0, 5) [left] {$n^{3/2}$};
    \draw (-0.1, 5) -- (0, 5);
    \node at (0, 10) [left] {$n^3$};
    \draw (-0.1, 10) -- (0, 10);

    \filldraw[fill=red!25, draw=red!25] (0,0) -- (5, 1.667) -- (6.25, 2.5) -- (10, 10) -- (0,10) -- (0,0);
    \filldraw[fill=green!25, draw=green!25] (0,0) -- (5, 1.667) -- (6.25, 2.5) -- (10,10) -- (10,0) -- (0,0);
    \draw (0,0) -- (5, 1.667) -- (6.25, 2.5) -- (10, 10);
    \draw[dashed] (0,0) -- (5, 3.333) -- (10, 10);
    \draw[dashdotted] (5, 1.667) -- (7.5, 5);

    \node at (3, 7) {$\TV \to 0$};
    \node at (8.125, 3) {$\TV \to 1$};
    \draw[dotted] (5, 0) -- (5, 1.667);
    \draw[dotted] (6.25, 0) -- (6.25, 2.5);
    \node at (8.125, 0.35) {3-cycles};
    \node at (5.625, 0.35) {2-paths};
    \node at (2.75, 0.35) {edges};
    \node at (5, 1.667) {\textbullet};
    \node at (6.25, 2.5) {\textbullet};
    \node at (10, 10) {\textbullet};
    \node at (5, 3.333) {\textbullet};
    \node at (7.5, 5) {\textbullet};

    \draw[->] (\xmin,\ymin) -- (\xmax,\ymin) node[right] {$p$};
    \draw[->] (\xmin,\ymin) -- (\xmin,\ymax) node[above] {$d$};
    \end{tikzpicture}
    \caption{
        Phase diagram of the TV convergence and divergence regimes for $G=\cG(n,p)$ in $(p, d)$ space given by Theorem~\ref{thm:er-mask}.
        We ignore $\polylog(n)$ factors along both axes.
        The red region is the TV convergence regime, and the green region is the TV divergence regime.
        The solid boundary indicates the sharp phase transition between these regimes.
        The dashed and dash-dotted lines in the TV convergence regime indicate, respectively, the thresholds for TV convergence obtained from the weaker TV upper bounds, Theorems~\ref{thm:general-naive-ub} and \ref{thm:general-weaker-ub}, that we will derive below.
        The dash-dotted line, where it is not drawn, coincides with the solid line (up to ignored polylogarithmic factors).
        In the TV divergence regime, the combinatorial structure witnessing the TV lower bound is written above the $p$-axis.
    }
    \label{fig:er-mask-phase-diagram}
\end{figure}

A similar phenomenon occurs for random bipartite masks.
Let $\cG(n,m,p)$ be the random graph on vertices $[n+m] = \{1,\ldots,n+m\}$ where each edge between $\{1,\ldots,n\}$ and $\{n+1,\ldots,n+m\}$ is present independently with probability $p\in [0,1]$.
Without loss of generality, we let $m\le n$.
As above, $m$ and $p$ are implicitly functions of $n$.
The following result identifies the asymptotic conditions under which $\TV(W(G,d), M(G)) \to 0$ or $1$ for $G\sim \cG(n,m,p)$.

\begin{theorem}
    \label{thm:bip-er-mask}
    Let $G = G_n \sim \cG(n,m,p)$.
    Over the randomness of the sample path $G_1, G_2, \ldots$, the following limits occur almost surely.
    \begin{enumerate}[label=(\alph*), ref=\alph*]
        \item \label{itm:bip-er-mask-ub} If
        \begin{equation}
            \label{eq:bip-er-mask-ub-hypothesis}
            d \gg nmp^2 + nm^{1/2}p + (nmp)^{1/2} + (nmp)^{1/3} \log n + (nmp)^{1/4} \log^{5/4} n + \log^{3/2} n,
        \end{equation}
        then $\TV(W(G,d), M(G)) \to 0$.
        \item \label{itm:bip-er-mask-lb} Conversely, if $p \gg (nm)^{-1} \log^3 n$ and
        \begin{equation}
             \label{eq:bip-er-mask-lb-hypothesis}
            d \ll nmp^2 + nm^{1/2} p + (nmp)^{1/2},
        \end{equation}
        then $\TV(W(G,d), M(G)) \to 1$.
    \end{enumerate}
\end{theorem}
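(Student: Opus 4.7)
The plan is to deduce Theorem~\ref{thm:bip-er-mask} from the general TV convergence and divergence results of Sections~\ref{sec:tv-upper-bounds} and \ref{sec:tv-lower-bounds}, specialized to the Erd\H{o}s--R\'enyi bipartite random graph. In both directions the strategy is the same: compute the expected (oriented) subgraph counts for $G \sim \cG(n,m,p)$, use standard subgraph-counting concentration (e.g.\ Kim--Vu polynomial concentration or direct second-moment estimates) together with Borel--Cantelli to upgrade high-probability bounds to almost sure bounds, and then verify the resulting graph-theoretic hypotheses of the appropriate theorem.

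For part (\ref{itm:bip-er-mask-ub}), the plan is to apply Theorem~\ref{thm:bipartite-ub} to $G$ under the orientation that puts the larger side ($|V_L| = n$) on the left. Since $m \le n$, this choice minimizes the oriented star and $\oK_{2,4}$ counts appearing in (\ref{eq:bipartite-ub-hypothesis-k14})--(\ref{eq:bipartite-ub-hypothesis-d8}). Under this orientation the relevant expectations are $\E \Num_G(E) = nmp$, $\E \Num_G(P_2) = \Theta(n^2mp^2)$, $\E \Num_G(C_4) = \Theta(n^2m^2p^4)$, $\E \oNum_G(\oK_{1,s}) = \Theta(nm^sp^s)$, $\E \oNum_G(\oK_{2,4}) = \Theta(n^2m^4p^8)$, and $\E \oNum_G(\oP_4) = \Theta(n^3m^2p^4)$. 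In regimes where these expectations exceed $\polylog n$, concentration makes each count at most a constant times its expectation almost surely; in sparser regimes, direct Poisson-tail bounds control the (integer-valued) counts by $\polylog n$ almost surely. Substituting into (\ref{eq:bipartite-ub-hypothesis-4cycles})--(\ref{eq:bipartite-ub-hypothesis-d9}) reduces each of the four conditions to a polynomial inequality in $n, m, p, d$, which I then verify is implied by (\ref{eq:bip-er-mask-ub-hypothesis}) term-by-term. Roughly, the first three terms $nmp^2,\, nm^{1/2}p,\, (nmp)^{1/2}$ match the square roots of the three summands in (\ref{eq:bipartite-ub-hypothesis-4cycles}); $(nmp)^{1/3}\log n$ is the edge term in (\ref{eq:bipartite-ub-hypothesis-k14}); and $(nmp)^{1/4}\log^{5/4}n$ together with $\log^{3/2}n$ arise from polylogarithmic tail control of $\oNum_G(\oK_{1,4},\oK_{2,4})$ and $\oNum_G(\oP_4)$ in (\ref{eq:bipartite-ub-hypothesis-d8})--(\ref{eq:bipartite-ub-hypothesis-d9}) within the sparse regime. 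The oriented-star, $\oK_{2,4}$, and $\oP_4$ contributions in denser regimes are all dominated by the first three terms of (\ref{eq:bip-er-mask-ub-hypothesis}) via elementary inequalities that exploit $m \le n$.

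For part (\ref{itm:bip-er-mask-lb}), I split into cases depending on the size of the smaller side $m$. When $m \to \infty$, the plan is to apply Theorem~\ref{thm:deg4-lb}. The assumption $p \gg (nm)^{-1}\log^3 n$ forces $\E \Num_G(E) \gg \log^3 n$, so all relevant subgraph counts concentrate and both the main hypothesis (\ref{eq:deg4-lb-hypothesis-4cycs}) and the regularity hypothesis (\ref{eq:deg4-lb-hypothesis-regularity}) hold almost surely via Borel--Cantelli. Specifically, (\ref{eq:deg4-lb-hypothesis-4cycs}) becomes $d^2 \ll n^2m^2p^4 + n^2mp^2 + nmp$, which is exactly the square of (\ref{eq:bip-er-mask-lb-hypothesis}), and (\ref{eq:deg4-lb-hypothesis-regularity}) reduces to inequalities of the form $\E \Num_G(K_{1,4}) \lesssim (\E \Num_G(E))^2/(nm)$, which hold a.s.\ in the given density regime. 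When instead $m = O(1)$, the graph $G$ is essentially a bounded union of stars centered at the right-vertices, the regularity hypothesis (\ref{eq:deg4-lb-hypothesis-regularity}) fails, and I instead invoke Theorem~\ref{thm:maxdeg-lb}: here $\maxdeg(G)$ is dominated by the degree of a most-connected right-vertex, which equals $\Theta(np)$ almost surely, while the right-hand side of (\ref{eq:bip-er-mask-lb-hypothesis}) also collapses to $\Theta(np)$, so the hypothesis $d \ll \maxdeg(G)$ of Theorem~\ref{thm:maxdeg-lb} holds.

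The main obstacle I expect is the bookkeeping in part (\ref{itm:bip-er-mask-ub}): with six terms in (\ref{eq:bip-er-mask-ub-hypothesis}) and four polynomial inequalities in Theorem~\ref{thm:bipartite-ub} involving seven distinct subgraph counts, verifying the implication cleanly requires a careful term-by-term comparison across many sub-regimes of $(m, p)$ (dense versus sparse, $m$ comparable to $n$ versus $m = o(n)$). The unusual exponents in $(nmp)^{1/4}\log^{5/4}n$ and $\log^{3/2}n$ are particularly delicate, as they arise from interactions between the $\Num_G(E)^2$ factors in (\ref{eq:bipartite-ub-hypothesis-d8})--(\ref{eq:bipartite-ub-hypothesis-d9}) and polylogarithmic almost-sure control of the higher-order oriented counts in the very sparse regime; neither exponent is derivable from the first three terms of (\ref{eq:bip-er-mask-ub-hypothesis}) alone. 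Handling the boundary between the ``concentration regime'' and the ``very sparse regime,'' where standard subgraph-counting concentration degrades and one must argue directly that integer-valued subgraph counts are small, is the other main technical hurdle.
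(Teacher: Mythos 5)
Your proposal follows essentially the same route as the paper: Kim--Vu concentration of the relevant (oriented) subgraph counts plus Borel--Cantelli, the orientation with $|V_L|=n$ and $|V_R|=m$ when invoking Theorem~\ref{thm:bipartite-ub} in part (\ref{itm:bip-er-mask-ub}), and in part (\ref{itm:bip-er-mask-lb}) the split between Theorem~\ref{thm:deg4-lb} for $m\gg 1$ and Theorem~\ref{thm:maxdeg-lb} via the degree of a single right-vertex when $m=O(1)$. One small correction: under this orientation the oriented $4$-path has two left-vertices and three right-vertices, so $\E\,\oNum_G(\oP_4)\asymp n^2m^3p^4$ rather than $n^3m^2p^4$; the correct (smaller) count is what lets the verification of (\ref{eq:bipartite-ub-hypothesis-d9}) against the $nm^{1/2}p$, $(nmp)^{1/2}$, and $(nmp)^{1/3}\log n$ terms of (\ref{eq:bip-er-mask-ub-hypothesis}) go through in all sparsity regimes.
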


This result holds for any dependence of $m,p$ on $n$, including when $m$ is much smaller than $n$.
When $p=1$, $G$ is the complete bipartite graph $K_{n,m}$, and we get the following family of sharp phase transitions.
\begin{corollary}
    The total variation between $W(K_{n,m}, d)$ and $M(K_{n,m})$ satisfies
    \[
        \TV(W(K_{n,m}, d), M(K_{n,m})) \to
        \begin{cases}
            0 & d \gg nm, \\
            1 & d \ll nm.
        \end{cases}
    \]
\end{corollary}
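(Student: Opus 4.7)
The plan is to derive this corollary as an immediate specialization of Theorem~\ref{thm:bip-er-mask} to the deterministic case $p=1$, in which $\cG(n,m,1)$ equals the complete bipartite graph $K_{n,m}$ and the ``almost surely'' qualifier is vacuous.

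For the convergence direction $d \gg nm$, I would substitute $p=1$ into hypothesis (\ref{eq:bip-er-mask-ub-hypothesis}), obtaining the requirement
\[
d \gg nm + nm^{1/2} + (nm)^{1/2} + (nm)^{1/3}\log n + (nm)^{1/4}\log^{5/4} n + \log^{3/2} n.
\]
I would then check that the term $nm$ dominates each of the other five. Since $m \geq 1$, the bounds $nm \geq nm^{1/2} \geq (nm)^{1/2}$ are immediate; and since $n\to\infty$, we have $nm \geq n \gg \log^3 n$, from which $(nm)^{2/3} \gg \log n$ and $(nm)^{3/4} \gg \log^{5/4} n$, absorbing the remaining logarithmic terms. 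Thus $d \gg nm$ suffices to invoke part (\ref{itm:bip-er-mask-ub}) of Theorem~\ref{thm:bip-er-mask} and conclude $\TV(W(K_{n,m},d), M(K_{n,m})) \to 0$.

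For the divergence direction $d \ll nm$, I would substitute $p=1$ into hypothesis (\ref{eq:bip-er-mask-lb-hypothesis}) to obtain
\[
d \ll nm + nm^{1/2} + (nm)^{1/2},
\]
which is an immediate consequence of $d \ll nm$ since all three right-hand terms are dominated by $nm$. The auxiliary hypothesis $p \gg (nm)^{-1} \log^3 n$ reduces to $nm \gg \log^3 n$, which again follows from $nm \geq n \to \infty$. Part (\ref{itm:bip-er-mask-lb}) of Theorem~\ref{thm:bip-er-mask} then yields $\TV(W(K_{n,m},d), M(K_{n,m})) \to 1$.

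There is no substantive obstacle here: the only work is the routine bookkeeping that each polylogarithmic correction term appearing in (\ref{eq:bip-er-mask-ub-hypothesis}) is asymptotically swallowed by $nm$ once $p=1$, after which the corollary drops out. Any difficulty has already been absorbed into the proof of Theorem~\ref{thm:bip-er-mask}.
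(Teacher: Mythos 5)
Your proposal is correct and is exactly the paper's route: the corollary is stated as the $p=1$ specialization of Theorem~\ref{thm:bip-er-mask}, where $\cG(n,m,1)=K_{n,m}$ deterministically, and the only work is checking that $nm$ dominates the remaining terms in (\ref{eq:bip-er-mask-ub-hypothesis}) and (\ref{eq:bip-er-mask-lb-hypothesis}) and that $p=1\gg (nm)^{-1}\log^3 n$, all of which you verify correctly.
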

When we further set $m=n$, we have $G=K_{n,n}$.
In this case,
\[
    \TV(W(K_{n,n}, d), M(K_{n,n})) \to
    \begin{cases}
        0 & d \gg n^2, \\
        1 & d \ll n^2.
    \end{cases}
\]
This threshold, for the case $G = K_{n,n}$, was independently identified by Bubeck \cite{Bub20}.

Recall that $W(K_{n,n}, d)$ and $M(K_{n,n})$ are the laws of samples from the ordinary Wishart and GOE matrices $W(2n, d)$ and $M(2n)$, restricted to the top-right and bottom-left $n\times n$ blocks.
We see a remarkable contrast between this $d\asymp n^2$ threshold and the $d\asymp n^3$ threshold of \cite{BDER16} and \cite{JL15}: masking half the entries in the Wishart matrix caused the latent dimension at which the Wishart entries become approximately independent Gaussians to decrease by a polynomial factor, from $d\asymp n^3$ to $d\asymp n^2$.
Of course, this decrease is due to the specific choice of the deleted half of entries.
If we delete a random half of entries by setting $p=\f12$ in Theorem~\ref{thm:general-ub}, the threshold remains at $d\asymp n^3$.
Thus, the onset of approximate independence in the masked Wishart entries depends not only on the sparsity of the mask, but also crucially on the combinatorial structure.

In fact, for all $p\gtrsim (nm)^{-1}\log^6 n$, (\ref{eq:bip-er-mask-ub-hypothesis}) is equivalent to $d \gg nmp^2 + nm^{1/2}p + (nmp)^{1/2}$.
Thus, Theorem~\ref{thm:bip-er-mask} implies that for all $p\gtrsim (nm)^{-1}\log^6 n$, the sharp phase transition separating the regimes where $W(G,d)$ and $M(G)$ converge and diverge in total variation is $d \asymp nmp^2 + nm^{1/2}p + (nmp)^{1/2}$.
In the case $m=n$, this phase transition is at $d \asymp n^2p^2 + n^{3/2}p + np^{1/2}$.
Figure~\ref{fig:bip-er-mask-phase-diagram} summarizes the $(p,d)$ for which Theorem~\ref{thm:er-mask} gives that $W(G,d)$ and $M(G)$ converge and diverge in total variation.
Like above, we see a tradeoff between sparsity and combinatorial structure, and the combinatorial structure determining the threshold becomes progressively disordered as $G$ becomes sparser: 4-cycles dominate at $p\gtrsim n^{-1/2}$, followed by 2-paths at $n^{-1/2}\gtrsim p \gtrsim n^{-1}$, followed by edges at $p\lesssim n^{-1}$.
At the phase transitions $p\asymp n^{-1/2}$ and $p\asymp n^{-1}$, one combinatorial structure replaces another as the structure determining the threshold.

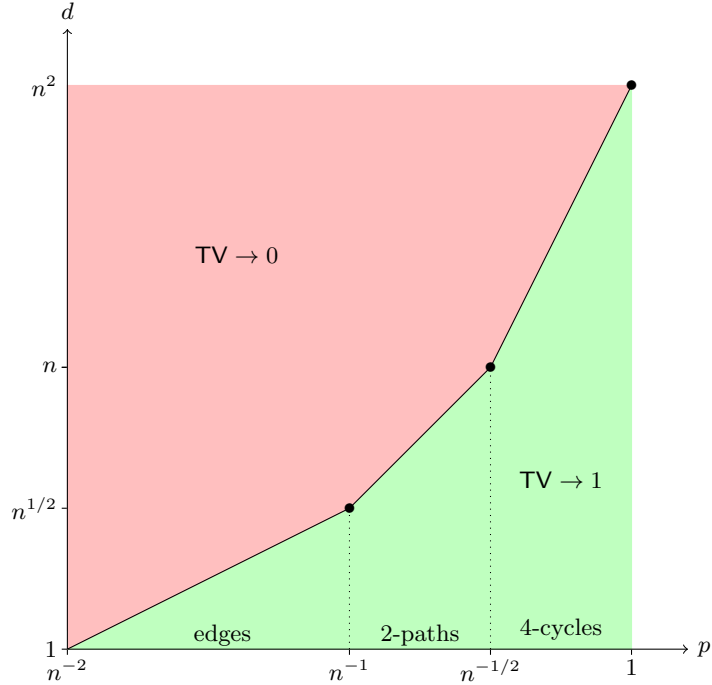
\begin{figure}[h!]
    \centering
    \begin{tikzpicture}[scale=0.75]
    \tikzstyle{every node}=[font=\footnotesize]
    \def\xmin{0}
    \def\xmax{11}
    \def\ymin{0}
    \def\ymax{11}

    \node at (0, 0) [below] {$n^{-2}$};
    \draw (0, -0.1) -- (0, 0);
    \node at (5, 0) [below] {$n^{-1}$};
    \draw (5, -0.1) -- (5, 0);
    \node at (7.5, 0) [below] {$n^{-1/2}$};
    \draw (7.5, -0.1) -- (7.5, 0);
    \node at (10, 0) [below] {$1$};
    \draw (10, -0.1) -- (10, 0);
    \node at (0, 0) [left] {$1$};
    \draw (-0.1, 0) -- (0, 0);
    \node at (0, 2.5) [left] {$n^{1/2}$};
    \draw (-0.1, 2.5) -- (0, 2.5);
    \node at (0, 5) [left] {$n$};
    \draw (-0.1, 5) -- (0, 5);
    \node at (0, 10) [left] {$n^2$};
    \draw (-0.1, 5) -- (0, 5);

    \filldraw[fill=green!25, draw=green!25] (0,0) -- (5, 2.5) -- (7.5, 5) -- (10,10) -- (10,0) -- (0,0);
    \filldraw[fill=red!25, draw=red!25] (0, 0) -- (5, 2.5) -- (7.5, 5) -- (10,10) -- (0,10) -- (0, 0.33);
    \draw (0, 0) -- (5, 2.5) -- (7. 5,5) -- (10, 10);
    % \draw[dashed] (0,0) -- (1, 0.5);

    \node at (3, 7) {$\TV \to 0$};
    \node at (8.75, 3) {$\TV \to 1$};
    \draw[dotted] (5, 0) -- (5, 2.5);
    \draw[dotted] (7.5, 0) -- (7.5, 5);
    \node at (8.75, 0.35) {4-cycles};
    \node at (6.25, 0.25) {2-paths};
    \node at (2.75, 0.25) {edges};
    \node at (5, 2.5) {\textbullet};
    \node at (7.5, 5) {\textbullet};
    \node at (10, 10) {\textbullet};

    \draw[->] (\xmin,\ymin) -- (\xmax,\ymin) node[right] {$p$};
    \draw[->] (\xmin,\ymin) -- (\xmin,\ymax) node[above] {$d$};s
    \end{tikzpicture}
    \caption{
        Phase diagram of the TV convergence and divergence regimes for $G=\cG(n,n,p)$ in $(p, d)$ space given by Theorem~\ref{thm:bip-er-mask}.
        We ignore $\polylog(n)$ factors along both axes.
        The red region is the TV convergence regime, and the green region is the TV divergence regime.
        The solid boundary indicates the sharp phase transition between these regimes.
        In the TV divergence regime, the combinatorial structure witnessing the TV lower bound is written above the $p$-axis.
    }
    \label{fig:bip-er-mask-phase-diagram}
\end{figure}

We will prove Theorems~\ref{thm:er-mask} and \ref{thm:bip-er-mask} in Appendix~\ref{appsec:er-masks-proofs}.
The proof is by a routine, though tedious, application of concentration inequalities.
Each subgraph count appearing in the theorems in Sections~\ref{sec:tv-upper-bounds} and \ref{sec:tv-lower-bounds} is a low-degree polynomial in the edge indicators of $G$, which are i.i.d. Bernoulli variables.
By theorems of Kim-Vu \cite{Vu02} and Janson \cite{JR02}, we will show these counts concentrate within a constant factor of their expectations with high probability.
We can make the failure probabilities summable, so that by the Borel-Cantelli lemma this concentration holds for all sufficiently large $n$ almost surely. 
Then, applying the appropriate theorems in Sections~\ref{sec:tv-upper-bounds} and \ref{sec:tv-lower-bounds} yields the desired results.

\section{Technical Overview}
\label{sec:technical-overview}

The main results of this paper are the TV upper bounds, Theorems~\ref{thm:general-ub} and \ref{thm:bipartite-ub}, and the main conceptual contributions of this paper are in the proofs of these theorems.
In this section, we give an overview of the ideas in these proofs.

\subsection{Information Distances}

We briefly review the properties of the $f$-divergences $\TV(\cdot,\cdot)$, $\KL(\cdot \parallel \cdot)$, and $\chisq(\cdot, \cdot)$ that we will use.
The choice of divergence at various points in our argument will be motivated by these properties. 
Given two measures $\mu, \nu$ on a measurable space $(\cX, \cB)$ where $\mu$ is absolutely continuous with respect to $\nu$, these $f$-divergences are defined by
\begin{align*}
    \TV(\mu,\nu) 
    &= 
    \f12 \E_{\xi \sim \nu} 
    \lt|
        \rn{\mu}{\nu}(\xi) - 1
    \rt|,
    \qquad
    \KL(\mu \parallel \nu) 
    = 
    \E_{\xi \sim \nu} \lt[
        \rn{\mu}{\nu}(\xi)
        \log
        \rn{\mu}{\nu}(\xi)
    \rt], \\
    \chisq(\mu,\nu)
    &= 
    \E_{\xi \sim \nu}
    \lt[
        \lt(\rn{\mu}{\nu}(\xi) - 1\rt)^2
    \rt].
\end{align*}
Here, $\rn{\mu}{\nu}: \cX \to [0, \infty)$ denotes the Radon-Nikodym derivative of $\mu$ with respect to $\nu$. 
These information distances all satisfy data-processing inequalities: if $K$ is a Markov transition from $(\cX, \cB)$ to another measurable space $(\cX', \cB')$, then $\TV(K\mu, K\nu) \le \TV(\mu, \nu)$, and analogously for $\KL$ and $\chisq$.
Moreover, they satisfy the inequalities
\begin{equation}
    \label{eq:technical-overview-information-distance-ineq-chain}
    2 \TV(\mu,\nu)^2 
    \le 
    \KL(\mu \parallel \nu) 
    \le 
    \chisq(\mu, \nu).
\end{equation}
Here, the left inequality is Pinsker's inequality and the right is standard.
By Cauchy-Schwarz, we also have $4\TV(\mu, \nu)^2 \le \chisq(\mu, \nu)$. 
Because our main objective is to upper bound a total variation distance, these inequalities allow us to pass from $\TV$ to $\KL$ and from $\KL$ to $\chisq$.
For more on the relationships between different information distances, see the survey \cite{GS02}.

\paragraph{Convexity of $f$-divergences.}
All $f$-divergences are convex with respect to mixtures.
Formally, let $\rho$ be a distribution on a set $\Phi$.
If $\{P_\phi : \phi \in \Phi\}$ is a collection of measures on $(\cX, \cB)$ indexed by $\phi\in \Phi$, let $\E_{\phi \sim \rho} P_\phi$ denote the measure that samples $\phi\sim \rho$ and then samples from $P_\phi$.
If $\{\mu_\phi: \phi \in \Phi\}$ and $\{\nu_\phi: \phi \in \Phi\}$ are two collections of such measures, and for every $\phi\in \Phi$, $\mu_\phi$ is absolutely continuous with respect to $\nu_\phi$, then for any $f$-divergence $D_f$,
\begin{equation}
    \label{eq:technical-overview-f-divergence-convexity}
    D_f \lt(
        \E_{\phi \sim \rho} \mu_\phi,
        \E_{\phi \sim \rho} \nu_\phi
    \rt)
    \le 
    \E_{\phi \sim \rho}
    D_f(\mu_\phi, \nu_\phi).
\end{equation}
We can interpret the right-hand side of this inequality as the expected $f$-divergence upon revealing the component $\phi$ of the mixture in which the data lies.
So, this inequality states that $f$-divergences only increase upon revealing latent information in a mixture.
This convexity bound will play a central role in bounding $f$-divergences of mixtures, as we will see below.

\paragraph{Mixtures and the Second Moment Method.}
An important application of information inequalities in this work is when $\mu$ is a mixture distribution.
Suppose that $\mu = \E_{\phi \sim \rho} \mu_\phi$ where $\rho$ is a distribution on a set $\Phi$ and $\{\mu_\phi : \phi \in \Phi\}$ is a collection of measures on $(\cX, \cB)$ that are each absolutely continuous with respect to $\nu$.
By (\ref{eq:technical-overview-f-divergence-convexity}), where we take all the $\nu_\phi$ to be $\nu$, we obtain the bound $\TV(\mu,\nu) \le \E_{\phi \sim \rho} \TV(\mu_\phi, \nu)$, and similarly for $\KL$ and $\chisq$. 
We emphasize that such convexity bounds are crude and usually not asymptotically optimal.
In the case of $\chisq$ divergence, we can improve on this bound by exploiting the following property, which makes $\chisq$ divergence particularly amenable to analyzing mixtures.
By Fubini's Theorem, we have that
\begin{align}
    \notag
    1 + \chisq(\mu, \nu)
    &= 
    \E_{\xi \sim \nu}
    \lt[\lt(\rn{\mu}{\nu}(\xi) \rt)^2\rt]
    = 
    \E_{\xi \sim \nu}
    \lt[\lt(
        \E_{\phi \sim \rho }
        \rn{\mu_\phi}{\nu}(\xi)
    \rt)^2\rt] \\
    \label{eq:technical-overview-2mm}
    &= 
    \E_{\phi^{(1)}, \phi^{(2)} \sim \rho \otimes \rho}
    \E_{\xi \sim \nu}
    \rn{\mu_{\phi^{(1)}}}{\nu}(\xi)
    \rn{\mu_{\phi^{(2)}}}{\nu}(\xi).
\end{align}
This expansion is the main idea of the so-called \textit{second moment method}, which we will use throughout our arguments.
When $\nu$ is a simple distribution and each of the $\mu_{\phi}$ is a simple distribution, the inner expectation over $\xi$ can often be evaluated explicitly, leaving an expectation over two independent replicas $\phi^{(1)}, \phi^{(2)}$ of the latent randomness.

Note that the crude convexity bound $\chisq(\mu,\nu) \le \E_{\phi \sim \rho} \chisq(\mu_\phi, \nu)$ yields the final expectation in (\ref{eq:technical-overview-2mm}), except with $\phi^{(1)}$ and  $\phi^{(2)}$ equal instead of independent.
The independence of $\phi^{(1)}$ and $\phi^{(2)}$ in (\ref{eq:technical-overview-2mm}) is crucial for getting the stronger bounds of the second moment method.

\paragraph{Tail events and Conditioning.}
An intuition to keep in mind is that $\TV$ is not sensitive to tail behavior, $\KL$ is slightly sensitive to tail behavior, and $\chisq$ is very sensitive to tail behavior.
Total variation satisfies the triangle inequality and the following conditioning property.
If $S\in \cB$ is an event and $\cL(\mu | S)$ denotes the law of a sample from $\mu$ conditioned to lie in $S$, then $\TV\lt(\mu, \cL(\mu | S)\rt) = \P(S^c)$.
Combined with the triangle inequality, this gives
\begin{equation}
    \label{eq:technical-overview-tv-conditioning}
    \TV(\mu, \nu) \le \P(S^c) + \TV\lt(\cL(\mu | S), \nu\rt).
\end{equation}
By data processing, if $\mu$ is a mixture distribution over latent randomness, this remains true if the conditioning is in the latent space. 
Due to the extra logarithmic factor, a similar inequality does not always hold for KL divergence, though for well behaved $\mu$ and $\nu$, we can often derive such an inequality by ad hoc techniques. 
Due to the square, $\chisq$ divergence behaves poorly when $\mu$ and $\nu$ have mismatched tails, and a similar inequality does not hold for $\chisq$ divergence. 

\paragraph{KL Tensorization.}
Finally, $\KL(\mu \parallel \nu)$ tensorizes when $\nu$ is a product measure.
Suppose $\cX$ is a product set $\cX = \cS_1 \times \cS_2 \times \cdots \times \cS_k$ and $\nu$ is a product measure $\nu = \otimes_{i=1}^k \nu_i$, where for $1\le i\le k$, $\nu_i$ is a measure on $\cS_i$. 
Let $\mu$ be the law of $(\xi_1,\ldots,\xi_k)$, where $\xi_i \in \cS_i$ for $1\le i\le k$. 
Let $\mu_{<i}$ denote the law of $\xi_{<i} = (\xi_1,\ldots,\xi_{i-1}) \in \cS_1 \times \cdots \times \cS_{i-1}$, and let $(\mu_i | \xi_{<i})$ denote the conditional law of $\xi_i$ given $\xi_{<i}$.
Then, we have that
\begin{equation}
    \label{eq:technical-overview-kl-tensorization}
    \KL(\mu \parallel \nu) 
    = 
    \sum_{i=1}^k
    \E_{\xi_{<i} \sim \mu_{<i}}
    \KL\lt((\mu_i | \xi_{<i}) \parallel \nu_i\rt).
\end{equation}

\subsection{Proof Outline of TV Upper Bound for General Masks}
\label{subsec:technical-overview-general-ub-outline}

In this and the next two subsections, we outline the proof of Theorem~\ref{thm:general-ub}. The three steps outlined in this subsection convert the task of bounding total variation to one of bounding a certain coupled exponential overlap. The subsequent two sections describe how we bound this quantity using carefully constructed couplings.  

Let $G$ be a graph on $[n]$, and let $\mu = W(G,d)$ and $\nu = M(G)$. 
We first pass from $\TV$ to $\KL$ and note that $\KL(\mu \parallel \nu)$ admits a natural tensorization of the form just above.
We can consider the sample space of $\mu$ and $\nu$ to be $\cS_1\times \cS_2 \times \cdots \times \cS_n$, where for each $v\in [n]$, $\cS_v$ consists of the entries in the $v$th column and first $v-1$ rows. (By symmetry, this contains the information in the $v$th row and first $v-1$ columns.)

\paragraph{Step 1: KL Tensorization.}
Let $X_1,\ldots,X_n \sim_{\mathrm{i.i.d.}} \cN(0, I_d)$ be the columns of the matrix $X$ generating $W(G,d)$ in Definition~\ref{defn:wishart-goe}.
For each $v\in [n]$, let $\Xvl = (X_1,\ldots,X_{v-1})$ denote the submatrix of $X$ consisting of the first $v-1$ columns.
Let
\begin{equation}
    \label{eq:general-ub-def-wv}
    W_v(\Xvl) = 
    A_{G[v-1]} \odot 
    d^{-1/2} \lt(
        \Xvlt \Xvl - I_{v-1}
    \rt),
\end{equation}
where $A_{G[v-1]}$ is the adjacency matrix of the induced subgraph of $G$ on $[v-1]$. 
Let $\muvl$ denote the measure of $W_v(\Xvl)$. 
Equivalently, $\muvl$ is the marginal measure of the upper left $(v-1)\times (v-1)$ submatrix of $W(G,d)$.
Let $\muv$ and $\nuv$ denote the marginal measures of the first $v-1$ entries of the $v$th columns of $W(G,d)$ and $M(G)$, respectively.
Recall that $\downNv = N(v)\cap [v-1]$ is the set of neighbors of $v$ in $[v-1]$. 
Let $\Xbn$ be the submatrix of $\Xvl$ whose columns are indexed by $\downNv$. 
Note that (ignoring entries deterministically set to $0$) $\muv$ is the measure of $d^{-1/2} \Xbnt X_v$.
By Pinsker's inequality (\ref{eq:technical-overview-information-distance-ineq-chain}) and the KL tensorization (\ref{eq:technical-overview-kl-tensorization}), we have that
\begin{equation}
    \label{eq:general-ub-starting-point}
    2\TV(W(G,d), M(G))^2
    \le
    \KL(\mu \parallel \nu)
    =
    \sum_{v=1}^n
    \E_{W\sim \muvl}
    \KL\lt((\muv | W) \parallel \nuv\rt),
\end{equation}
where $(\muv | W)$ denotes the measure of $d^{-1/2} \Xbnt X_v$, where $X_v\sim \cN(0, I_d)$ and independently $\Xvl \sim \cN(0, I_d)^{\otimes (v-1)}$ is conditioned on $W_v(\Xvl) = W$.
We will separately bound each summand in this sum.
In the following discussion, we outline how to bound the $v$th summand for arbitrary $v\in [n]$.
To reduce notational clutter, we write $W(\Xvl)$ for $W_v(\Xvl)$ when $v$ is clear.

\paragraph{Step 2: Controlling Tails and Second Moment Method.}
We will view $(\muv | W)$ as a mixture distribution, with the goal of applying the second moment method.
Note that $d^{-1/2} \Xbnt X_v$ conditioned on $\Xvl$ is Gaussian with covariance $d^{-1} \Xbnt \Xbn$. 
Thus, $(\muv | W)$ is a mixture of Gaussians parametrized by latent randomness $\Xvl$, where $\Xvl$ is sampled from $\cN(0, I_d)^{\otimes (v-1)}$ conditioned on the event $W(\Xvl) = W$.
This is the correct setup for the second moment method, and we would like to bound the $v$th summand of (\ref{eq:general-ub-starting-point}) by passing from $\KL$ to $\chisq$. 

However, due to a mismatch of tail behaviors between $(\muv | W)$ and $\nuv$, the resulting $\chisq$ divergence is infinite.
So, before we pass to $\chisq$, we condition the $v$th summand in (\ref{eq:general-ub-starting-point}) on a high probability event $S^v \in \sigma(\Xvl)$.
To reduce notational clutter, we will denote this event by $S$ when $v$ is clear.
We perform this conditioning in the latent space, leaving $X_v$ unchanged, so that the resulting distribution of $d^{-1/2} \Xbnt X_v$ is still a mixture of Gaussians parametrized by latent randomness $\Xvl$. 
In the conditioned distribution, $\Xvl \sim \cN(0, I_d)^{\otimes (v-1)}$ is now conditioned on both $\Xvl \in S$ and $W(\Xvl) = W$.
This conditioning changes the $v$th summand of (\ref{eq:general-ub-starting-point}) in two places: both $\muvl$ and $\muv$ become conditioned on $\Xvl \in S$.
We will show that, for $S$ with sufficiently high probability, this conditioning adds only a small error term to (\ref{eq:general-ub-starting-point}).
Because the conditioning happens in the latent space and $(\muv | W)$ does not have an explicit representation, showing this fact requires a technically subtle argument.
Then, we pass from $\KL$ to $\chisq$ and apply the second moment method to the conditioned distribution.

We remark that we must condition after passing from $\TV$ to $\KL$, not before. 
This is because the various events $S^v \in \sigma(\Xvl)$ for $v\in [n]$ overlap, so if we condition before passing to $\KL$, the resulting distributions after KL tensorization become intractable.
Thus we must bound the effect of conditioning on $\KL$, which is significantly more challenging than bounding its effect on $\TV$.

\paragraph{Step 3: Coupled Exponential Overlap.}
After the above steps, the inner expectation of the second moment method (\ref{eq:technical-overview-2mm}) evaluates explicitly.
The remaining expectation can be massaged into the coupled exponentiated overlap $\E \exp (\f12 Y_v)$, where
\begin{equation}
    \label{eq:general-ub-define-yv}
    Y_v 
    =
    \f{1}{d^2} 
    \sum_{i,j\in \downNv} 
    \lt(\la \Xonei, \Xonej \ra - d\delta_{i,j}\rt)
    \lt(\la \Xtwoi, \Xtwoj \ra - d\delta_{i,j}\rt).
\end{equation}
Here, recall that $\delta_{i,j} = \ind{i=j}$ is the Kronecker delta function.
The expectation in $\E \exp (\f12 Y_v)$ is over $W$ and two replicas $\Xonevl, \Xtwovl$ of $\Xvl$, from the following distribution.
Let $\cL\lt(\cN(0, I_d)^{\otimes (v-1)} | S \rt)$ denote the law of a sample from $\Xvl \sim \cN(0, I_d)^{\otimes (v-1)}$ conditioned on $\Xvl \in S$. 
Then, $W$ is the distribution of $W(\Xvl)$, where $\Xvl \sim \cL\lt(\cN(0, I_d)^{\otimes (v-1)} | S \rt)$.
For $r=1,2$, the replicas $\Xrvl$ are independently sampled from $\cL\lt(\cN(0, I_d)^{\otimes (v-1)} | S \rt)$ conditioned on $W(\Xrvl) = W$.
The resulting distribution over $(\Xonevl, \Xtwovl)$ can be defined equivalently as follows. 
\begin{definition}[Inner Product Coupling]
    \label{defn:general-ub-ip-coupling}
    Let $\cNvdips$ denote the distribution over pairs of matrices $(\Xonevl,\Xtwovl) \in \lt(\bR^{d\times (v-1)}\rt)^{2}$ generated as follows.
    \begin{enumerate}[label=(\arabic*), ref=\arabic*]
        \item Sample $\Xonevl$ from $\cL\lt(\cN(0, I_d)^{\otimes (v-1)} | S \rt)$.
        \item Independently sample $\Xtwovl$ from $\cL\lt(\cN(0, I_d)^{\otimes (v-1)} | S \rt)$ conditioned on
        \begin{equation}
            \label{eq:general-ub-ip-coupling-condition}
            \lt\{\text{
                $\la \Xonei, \Xonej \ra = \la \Xtwoi, \Xtwoj \ra$ for all $(i,j) \in E(G[v-1])$
            }\rt\}.
        \end{equation}
    \end{enumerate}
\end{definition}
The condition (\ref{eq:general-ub-ip-coupling-condition}) is equivalent to $W(\Xonevl) = W(\Xtwovl)$.
It is not difficult to see that $\Xonevl$ and $\Xtwovl$ are both marginally distributed as $\cL\lt(\cN(0, I_d)^{\otimes (v-1)} | S\rt)$, and that $(\Xonevl, \Xtwovl)$ and $(\Xtwovl, \Xonevl)$ are equidistributed.
This distribution will figure prominently in our arguments.
The remaining task, which is the core of our technique, is to bound $\E \exp\lt(\f12 Y_v\rt)$ over $(\Xonevl, \Xtwovl) \sim \cNvdips$.

We remark that $\E \exp(\f12 Y_v)$ arises in the approach of Section~\ref{sec:general-ub-main-argument}.
In the refined approach of Section~\ref{sec:general-ub-refined-higher-order-terms}, the coefficient $\f12$ becomes $1$ and we also need to bound a variant of the exponentiated overlap based on 4-cycles, but the technique to bound these expectations is similar.

\subsection{Handling Latent Information by Gram-Schmidt Orthogonalization}
\label{subsec:technical-overview-gs}

The conceptual challenge in bounding $\E \exp\lt(\f12 Y_v\rt)$ over $(\Xonevl, \Xtwovl) \sim \cNvdips$ lies in the following tradeoff between tractability and optimality.
The coupled distribution $\cNvdips$ contains many complex dependencies, which make this expectation intractable to evaluate.
By convexity arguments, replacing $\cNvdips$ with a stronger coupling (defined formally in the next paragraph) yields an upper bound on the original expectation.
By conditioning on more information, the stronger coupling also destroys some of the complexity in $\cNvdips$, yielding a more tractable expectation.
However, if the coupling is too strong, the resulting bound is too weak to produce Theorem~\ref{thm:general-ub}.

We will first see that a global convexity argument fails to navigate this tradeoff, which motivates our fine-grained convexity approach.
We can upper bound the $v$th summand of (\ref{eq:general-ub-starting-point}), after conditioning on $\Xvl \in S$, as follows.
Let $f$ be a function such that $W(\Xvl)$ is $f(\Xvl)$-measurable.
For a measure $\cD$ on $\bR^{d\times (v-1)}$, let $f_{\#} \cD$ denote the pushforward measure of $\cD$ under $f$.
By convexity of $\KL$, we have the upper bound
\begin{align*}
    &\E_{W\sim W_{\#} \cL\lt(\cN(0, I_d)^{\otimes (v-1)} | S \rt)} 
    \KL((\muv | W(\Xvl) = W) \parallel \nuv) \\
    &\qquad \le 
    \E_{a\sim f_{\#} \cL\lt(\cN(0, I_d)^{\otimes (v-1)} | S \rt)}
    \KL((\muv | f(\Xvl) = a) \parallel \nuv).
\end{align*}
We then bound the latter $\KL$ divergence by $\chisq$ and apply the second moment method as before.
We can think of this approach as revealing more information: instead of revealing $W(\Xvl)$, we reveal $f(\Xvl)$.
In the most extreme case, we can pick $f(\Xvl) = \Xvl$, thereby revealing $\Xvl$ instead of $W(\Xvl)$. 
By massaging the resulting expectation, we get a bound of $\E \exp\lt(\f12 Y_v\rt)$, where the expectation is now over a stronger coupling, given by Definition~\ref{defn:general-ub-ip-coupling} with (\ref{eq:general-ub-ip-coupling-condition}) replaced by the stronger condition $f(\Xonevl) = f(\Xtwovl)$.

Unfortunately, this approach is too weak to produce Theorem~\ref{thm:general-ub}. 
The problem is that the expectation $\E \exp\lt(\f12 Y_v\rt)$ over a stronger coupling remains intractable unless the stronger coupling is essentially $\Xonevl = \Xtwovl$.
This choice cannot produce Theorem~\ref{thm:general-ub}, for reasons discussed in Section~\ref{subsec:technical-overview-naive-comparison}.
Thus, a global convexity argument cannot prove the desired result.

Our key conceptual innovation is to use a \textit{term-by-term} convexity argument to navigate this tractability-optimality tradeoff.
We Taylor expand $\exp(\f12 Y_v)$ up to some finite order (1 in the approach of Section~\ref{sec:general-ub-main-argument}, and 3 in the approach of Section~\ref{sec:general-ub-refined-higher-order-terms}) and bound each low order term of this expansion with a tailored stronger coupling that is different for each term.
This technique is made possible by Lemma~\ref{lem:general-ub-stronger-couplings} below, which states that for each term in this expansion, replacing $\cNvdips$ with a stronger coupling can only increase the term's expectation.
The key technical task is to design a stronger coupling for each Taylor term that reveals enough latent information to make the expectation tractable, while maintaining near optimality by keeping hidden the information on which \textit{that specific term} most strongly depends.
Finally, we will control the Taylor error term deterministically by the event $S$.

\begin{lemma}[Upper Bounds through Stronger Couplings]
    \label{lem:general-ub-stronger-couplings}
    Let $\cD$ be a distribution over an arbitrary probability measure $(\cX, \cB)$.
    Given a measurable function $f$ on $\cX$, let $\cD(f)$ denote the coupling of $X,Y\sim \cD$ generated by first sampling $X\sim \cD$ and then independently sampling $Y\sim \cD$ conditioned on the event $\{f(X) = f(Y)\}$.
    Suppose that $f$ and $g$ are measurable functions such that $g(X) = g(Y)$ implies $f(X) = f(Y)$ almost surely.
    Then, for any measurable function $h: \cX \to \bR$, it holds that
    \[
        \E_{X,Y\sim \cD(f)}
        [h(X)h(Y)]
        \le
        \E_{X,Y\sim \cD(g)}
        [h(X)h(Y)].
    \]
\end{lemma}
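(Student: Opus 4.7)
The plan is to rewrite both sides as second moments of conditional expectations, and then deduce the inequality from the tower property and conditional Jensen. First I would verify the disintegration description of the coupling: sampling $X \sim \cD$ and then $Y \sim \cD$ conditioned on $\{f(Y) = f(X)\}$ is equivalent to first drawing $\phi \sim f_\#\cD$ and then drawing $X, Y$ independently from the conditional law $\cD(\cdot \mid f = \phi)$. Writing $H_f(x) := \E_{X \sim \cD}[h(X) \mid f(X) = f(x)]$, this disintegration gives
\[
    \E_{X,Y \sim \cD(f)}[h(X) h(Y)] = \E_{X \sim \cD}\bigl[H_f(X)^2\bigr],
\]
and analogously for $g$ with $H_g$. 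So the claim reduces to showing $\E[H_f^2] \le \E[H_g^2]$.

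Next I would translate the hypothesis ``$g(X) = g(Y)$ implies $f(X) = f(Y)$ almost surely'' into the assertion that $f$ agrees $\cD$-almost surely with a $\sigma(g)$-measurable function, i.e.\ $\sigma(f) \subseteq \sigma(g)$ modulo $\cD$-null sets. On a standard Borel space this follows from a regular conditional distribution of $f$ given $g$: the hypothesis says $f$ is almost surely constant on each $g$-fibre, so it factors through $g$ up to a null set. With this inclusion, the tower property gives $H_f = \E[H_g \mid \sigma(f)]$ a.s., and conditional Jensen for the convex map $t \mapsto t^2$ yields $H_f^2 \le \E[H_g^2 \mid \sigma(f)]$ pointwise a.s. Integrating against $\cD$ removes the inner conditioning and delivers $\E[H_f^2] \le \E[H_g^2]$, which is exactly the desired inequality.

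The only delicate point is the measurability step, since the statement is phrased abstractly with no regularity assumption on $(\cX, \cB)$. I expect this to be the main (and essentially only) obstacle. One can avoid invoking regular conditional distributions by arguing fibrewise: for each value $\phi$ of $f$, the fibre $\{f = \phi\}$ is, modulo null sets, a disjoint union of $g$-fibres $\{g = \psi\}$, and $H_f(\phi)$ is the average of $H_g(\psi)$ over those $\psi$ weighted by the $\cD$-mass of each $g$-fibre. Jensen then applies within each $f$-fibre to give $H_f(\phi)^2 \le \E[H_g^2 \mid f = \phi]$ pointwise, and integrating out $\phi$ yields the same conclusion without any global measurability chase.
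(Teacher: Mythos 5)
Your proof is correct and is essentially the paper's own argument: both reduce each side to $\E[(\E[h\mid f])^2]$ via the mixture/disintegration description of the coupling, use the tower property through $g$ (the paper phrases this as the Markov chain $f(X)\to g(X)\to X$), and conclude by Jensen's inequality. The only difference is cosmetic — the paper writes the tower step as an explicit iterated mixture over $a\sim f_{\#}\cD$ and $b\sim \cL(g\mid f=a)$ rather than via conditional expectations, and it does not dwell on the measurability point you flag.
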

\begin{proof}
    Note that $\cD(f)$ can be expressed as the mixture $\E_{a\sim f_{\#}\cD} \lt[\cL(\cD | f = a)^{\otimes 2}\rt]$.
    Furthermore, the given condition implies that $f(X) \to g(X) \to X$ is a Markov chain.
    Therefore we have that
    \begin{align*}
        \E_{X,Y\sim \cD(f)}
        [h(X)h(Y)]
        &=
        \E_{a\sim f_{\#}\cD}
        \lt[
            \E_{(X,Y) \sim \cL(\cD | f=a)^{\otimes 2}}
            h(X)h(Y)
        \rt] \\
        &=
        \E_{a\sim f_{\#}\cD}
        \lt(
            \E_{X \sim \cL(\cD | f=a)}
            h(X)
        \rt)^2 \\
        &=
        \E_{a\sim f_{\#}\cD}
        \lt(
            \E_{b\sim \cL(g | f=a)}
            \E_{X \sim \cL(\cD | g=b)}
            h(X)
        \rt)^2 \\
        &\le
        \E_{a\sim f_{\#}\cD}
        \E_{b\sim \cL(g | f=a)}
        \lt(
            \E_{X \sim \cL(\cD | g=b)}
            h(X)
        \rt)^2 \\
        &=
        \E_{b\sim g_{\#}\cD}
        \lt(
            \E_{X \sim \cL(\cD | g=b)}
            h(X)
        \rt)^2
        =
        \E_{X,Y\sim \cD(g)}
        [h(X)h(Y)],
    \end{align*}
    where the inequality is by Jensen's inequality.
\end{proof}

We now outline our technique for Taylor expanding $\exp(\f12 Y_v)$ and designing term-by-term stronger couplings.
We will first outline the approach of Section~\ref{sec:general-ub-main-argument}, and then sketch the improvements made in Section~\ref{sec:general-ub-refined-higher-order-terms}.
We begin with the expansion
\begin{align}
    \notag
    &\E \exp\lt(\f12 Y_v\rt) \\
    \label{eq:general-ub-main-exp-overlap-starting-point}
    &\qquad =
    1 +
    \f{1}{2d^2}
    \sum_{i,j\in \downNv}
    \E\lt[
        \lt(\la \Xonei, \Xonej\ra - d\delta_{i,j}\rt)
        \lt(\la \Xtwoi, \Xtwoj\ra - d\delta_{i,j}\rt)
    \rt] 
    + \E h\lt(\f12 Y_v\rt),
\end{align}
where $h(x) = \exp(x) -1 - x$ and the expectations are over $(\Xonevl, \Xtwovl) \sim \cNvdips$.
As discussed above, we will bound the final term deterministically on $S$: we will choose $S$ such that for all $\Xonevl, \Xtwovl \in S$, $Y_v$ is in a neighborhood of $0$.
Because $h(x)\le x^2$ for $x$ in a neighborhood of $0$, this bounds $\E h\lt(\f12 Y_v\rt)$.
The remaining task is to bound the first order terms in (\ref{eq:general-ub-main-exp-overlap-starting-point}).

To handle these terms, we first remove the conditioning on $S$, so now $\Xonevl$ is sampled from $\cN(0, I_d)^{\otimes (v-1)}$ unconditionally and $\Xtwovl$ is sampled from $\cN(0, I_d)^{\otimes (v-1)}$ conditioned on (\ref{eq:general-ub-ip-coupling-condition}).
Keeping with the notation of Definition~\ref{defn:general-ub-ip-coupling}, we denote this distribution $\cNvdipr$.
We show that removing this conditioning adds only a small error to (\ref{eq:general-ub-main-exp-overlap-starting-point}).
We then divide the first order terms of (\ref{eq:general-ub-main-exp-overlap-starting-point}) into three categories: 
\begin{enumerate}[label=(\roman*), ref=\roman*]
    \item \label{itm:technical-overview-main-overlap-term-ijedge} $(i,j)\in E(G[v-1])$; 
    \item \label{itm:technical-overview-main-overlap-term-ijnotedge} $i\neq j$ and $(i,j)\not\in E(G[v-1])$; and
    \item \label{itm:technical-overview-main-overlap-term-ii} $i=j$.
\end{enumerate}
These categories will be handled by Propositions~\ref{prop:general-ub-linear-term-ijedge}, \ref{prop:general-ub-linear-term-ijnotedge}, and \ref{prop:general-ub-linear-term-ii}, respectively.
The terms of (\ref{eq:general-ub-main-exp-overlap-starting-point}) in category (\ref{itm:technical-overview-main-overlap-term-ijedge}) are trivial: for such terms, (\ref{eq:general-ub-ip-coupling-condition}) gives that $\la \Xonei, \Xonej\ra = \la \Xtwoi, \Xtwoj\ra$, and the desired expectation reduces to $\E \la \Xonei, \Xonej\ra^2=d$.

The terms of (\ref{eq:general-ub-main-exp-overlap-starting-point}) in category (\ref{itm:technical-overview-main-overlap-term-ijnotedge}) are bounded by replacing $\cNvdipr$ with a stronger coupling based on a Gram-Schmidt orthogonalization.
The motivation of this coupling is twofold. 
\begin{enumerate}[label=(\arabic*)]
    \item In the Gram-Schmidt orthogonalization $X_i = \sum_{j=1}^i W_{i,j}U_j$ of i.i.d. isotropic Gaussians $X_1,\ldots,X_k \sim \cN(0, I_d)$, the Gram-Schmidt coefficients $W_{i,j}$ have a simple description: $W_{i,i} \sim \sqrt{\chisq(d+1-i)}$ and $W_{i,j} \sim \cN(0,1)$ for all $j<i$, and these coefficients are mutually independent. 
    So, we can control the strength of the coupling between two replicas $\Xone[1],\ldots,\Xonek$ and $\Xtwo[1],\ldots,\Xtwok$ by selecting which $W_{i,j}$ become coupled and which remain free.
    \item The inner product $\la X_i, X_j\ra$ (where $j<i$) is mostly controlled by a single Gram-Schmidt coefficient, namely $W_{i,j}$.
    As long as the two replicas of this coefficient remain independent, the correlations between $\la \Xonei, \Xonej\ra$ and $\la \Xtwoi, \Xtwoj\ra$ will remain small.
\end{enumerate}
We demonstrate this technique on two simple examples.

\begin{example}
    \label{ex:technical-overview-gs-3}
    Suppose $v-1=3$ and $G[v-1]$ is the graph with edges $(1,2)$, $(1,3)$.
    Explicitly, we sample $(\Xone[1], \Xone[2], \Xone[3]) \sim \cN(0, I_d)^{\otimes 3}$ and then sample $(\Xtwo[1], \Xtwo[2], \Xtwo[3]) \sim \cN(0, I_d)^{\otimes 3}$ conditioned on $\la \Xone[1], \Xone[2]\ra = \la \Xtwo[1], \Xtwo[2]\ra$ and $\la \Xone[1], \Xone[3]\ra = \la \Xtwo[1], \Xtwo[3]\ra$.
    We will upper bound $\E \la \Xone[2], \Xone[3]\ra \la \Xtwo[2], \Xtwo[3]\ra$.
    By Gram-Schmidt orthogonalization, we can write
    \begin{align*}
        \Xone[1] &= \Wone[1,1] \Uone[1], \\
        \Xone[2] &= \Wone[2,1] \Uone[1] + \Wone[2,2] \Uone[2], \\
        \Xone[3] &= \Wone[3,1] \Uone[1] + \Wone[3,2] \Uone[2] + \Wone[3,3]\Uone[3],
    \end{align*}
    where $\Uone[1], \Uone[2], \Uone[3]$ are orthogonal unit vectors and the Gram-Schmidt coefficients $\Wone[i,j]$ are mutually independent with the distributions described above.
    A similar expansion holds for $\Xtwo[1], \Xtwo[2], \Xtwo[3]$.
    The above inner product conditions are equivalent to $\Wone[1,1]\Wone[2,1] = \Wtwo[1,1]\Wtwo[2,1]$ and $\Wone[1,1]\Wone[3,1] = \Wtwo[1,1]\Wtwo[3,1]$.
    By Lemma~\ref{lem:general-ub-stronger-couplings}, we can upper bound $\E \la \Xone[2], \Xone[3]\ra \la \Xtwo[2], \Xtwo[3]\ra$ by replacing this coupling with the coupling where $\Wone[i,j]=\Wtwo[i,j]$ for all $(i,j)\neq (3,2)$.
    Note crucially that $\Wone[3,2]$ and $\Wtwo[3,2]$ remain independent.
    Thus,
    \begin{align*}
        &\E_{(\Xonevl, \Xtwovl) \sim \cNvdipr}
        \la \Xone[2], \Xone[3]\ra 
        \la \Xtwo[2], \Xtwo[3]\ra \\
        &\qquad \le
        \E \lt[
            \lt(\Wone[2,1]\Wone[3,1] + \Wone[2,2]\Wone[3,2]\rt)
            \lt(\Wone[2,1]\Wone[3,1] + \Wone[2,2]\Wtwo[3,2]\rt)
        \rt]
        =
        \E \lt[
            \lt(\Wone[2,1]\Wone[3,1]\rt)^2
        \rt]
        = 1,
    \end{align*}
    which is far stronger than the na\"ive bound of $d$ obtained from the coupling $\Xonevl = \Xtwovl$.
    This argument captures the fact that although the inner product coupling induces correlations between $\la \Xone[2], \Xone[3]\ra$ and $\la \Xtwo[2], \Xtwo[3]\ra$, these correlations are confined to the directions of $\Xone[1]$ and $\Xtwo[1]$, and in the vast majority of directions these replicas remain free.
\end{example}

\begin{example}
    \label{ex:technical-overview-gs-m}
    If we replace $3$ with any $m\le d$, Example~\ref{ex:technical-overview-gs-3} generalizes naturally.
    Suppose $v-1=m$ and $G[v-1]$ does not contain $(m-1, m)$.
    By coupling all the Gram-Schmidt coefficients except $W_{m,m-1}$, we get 
    \[
        \E_{(\Xonevl, \Xtwovl) \sim \cNvdipr} 
        \la \Xone[m-1], \Xone[m]\ra 
        \la \Xtwo[m-1],\Xtwo[m]\ra 
        \le m-2.
    \]
\end{example}
Because we can permute the labels $[v-1]$ so that $i$ and $j$ become $m-1$ and $m$, Example~\ref{ex:technical-overview-gs-m} gives a rudimentary bound on the terms of (\ref{eq:general-ub-main-exp-overlap-starting-point}) in category (\ref{itm:technical-overview-main-overlap-term-ijnotedge}).
We can optimize this bound by running Gram-Schmidt on only $X_k$ for $k\in \Nl(i) \cup \{i,j\}$ (or symmetrically, $\Nl(j)\cup \{i,j\}$; recall that $\Nl(i) = N(i) \cap [v-1]$). 
This is carried out in Proposition~\ref{prop:general-ub-linear-term-ijnotedge} and bounds these terms by $\min(\degvl(i), \degvl(j))$.
The terms in category (\ref{itm:technical-overview-main-overlap-term-ii}) can be handled with a similar Gram-Schmidt coupling, where the one coefficient left free is $W_{m,m}$ instead of $W_{m, m-1}$.
This is carried out in Proposition~\ref{prop:general-ub-linear-term-ii} and bounds these terms by $2\degvl(i)$.

In Section~\ref{sec:general-ub-refined-higher-order-terms}, we improve on the error term estimate in (\ref{eq:general-ub-main-exp-overlap-starting-point}) by expanding $\exp(Y_v)$ (as previously mentioned, in the approach of Section~\ref{sec:general-ub-refined-higher-order-terms} the $\f12$ coefficient on $Y_v$ becomes $1$) to Taylor order 3 instead of 1.
This produces order-2 terms of the form 
\begin{equation}
    \label{eq:general-ub-higher-order-overlaps}
    \sum_{i_1,j_1,i_2,j_2\in \downNv}
    \E_{(\Xonevl, \Xtwovl) \sim \cNvdipr} 
    \lt[
        \Delone[i_1,j_1]
        \Deltwo[i_1,j_1]
        \Delone[i_2,j_2]
        \Deltwo[i_2,j_2]
    \rt]
\end{equation}
and analogous order-3 terms, where $\Delr[i,j] = d^{-1} \lt(\la \Xri, \Xrj\ra - d\delta_{i,j}\rt)$.
We will bound such terms in Lemma~\ref{lem:general-ub-higher-order-gs}.
To bound each summand above, we detach $\Delone[i_1,j_1]\Deltwo[i_1,j_1]$ from the product by Cauchy-Schwarz and bound its contribution by a similar Gram-Schmidt technique, while the rest of the product is bounded by a crude application of AM-GM.

\subsection{Beyond Gram-Schmidt}
\label{subsec:technical-overview-beyond-gs}

The bounds on the coupled exponentiated overlap obtained by Gram-Schmidt couplings are nontrivial, but still suboptimal. 
For the terms (\ref{eq:general-ub-main-exp-overlap-starting-point}) in category (\ref{itm:technical-overview-main-overlap-term-ii}), and for the higher order overlaps in Section~\ref{sec:general-ub-refined-higher-order-terms} such as (\ref{eq:general-ub-higher-order-overlaps}), the bounds from Gram-Schmidt are good enough for our purposes.
However, to prove Theorem~\ref{thm:general-ub} in full generality, we will need to improve our upper bound on the terms of (\ref{eq:general-ub-main-exp-overlap-starting-point}) in category (\ref{itm:technical-overview-main-overlap-term-ijnotedge}).
Namely, we seek an improved upper bound on
\begin{equation}
    \label{eq:technical-overview-exp-overlap-linear-term}
    \E_{(\Xonevl, \Xtwovl) \sim \cNvdipr} 
    \la \Xonei, \Xonej\ra 
    \la \Xtwoi, \Xtwoj\ra
\end{equation}
where $i\neq j$ and $(i,j)\not\in E(G[v-1])$. 
The best upper bound the Gram-Schmidt method can attain on (\ref{eq:technical-overview-exp-overlap-linear-term}) is $\min\lt(\degvl(i), \degvl(j)\rt)$; the true order of this expectation is approximately $\degvl(i,j) = |\Nl(i) \cap \Nl(j)|$, which is much smaller for most $G$. 

We first give some geometric intuition for why $\degvl(i,j)$ is the right scale for this expectation.
The correlations between $\la \Xonei, \Xonej\ra$ and $\la \Xtwoi, \Xtwoj\ra$ arise from paths from $i$ to $j$ in $G[v-1]$. 
The strength of the correlations decays rapidly in the length of the path: as we saw in Section~\ref{subsec:technical-overview-gs}, an edge from $i$ to $j$ contributes a correlation of $d$, while a 2-path from $i$ to $j$ contributes a correlation of scale $1$, and the contributions of longer paths are even smaller.
We also expect these correlations to be approximately additive over multiple paths of the same length.
So, when $(i,j)\not\in E(G[v-1])$, the desired expectation should be dominated by the number of 2-paths from $i$ to $j$ in $G[v-1]$, which is $\degvl(i,j)$.

Let us now see why the Gram-Schmidt approach is suboptimal.
By definition of the inner product coupling (\ref{eq:general-ub-ip-coupling-condition}), we can identify conditioning on an additional inner product equality $\la \Xonek, \Xonel\ra = \la \Xtwok, \Xtwol\ra$ with adding an edge $(k,\ell)$ to $G[v-1]$.
By Lemma~\ref{lem:general-ub-stronger-couplings}, adding additional edges can only increase (\ref{eq:technical-overview-exp-overlap-linear-term}).
The basic implementation of the Gram-Schmidt technique in Example~\ref{ex:technical-overview-gs-m} upper bounds (\ref{eq:technical-overview-exp-overlap-linear-term}) by $v-3$.
We can see why this bound is not tight: the conditioning scheme of Example~\ref{ex:technical-overview-gs-m} effectively adds all edges to $G[v-1]$ except $(i,j)$, which creates many new 2-paths from $i$ to $j$.
The optimized implementation of Gram-Schmidt in Proposition~\ref{prop:general-ub-linear-term-ijnotedge}, which runs Gram-Schmidt on only $X_k$ for $k\in \Nl(i) \cup \{i,j\}$ (or $\Nl(j) \cup \{i,j\}$), upper bounds (\ref{eq:technical-overview-exp-overlap-linear-term}) by $\min(\degvl(i), \degvl(j))$.
Still, this conditioning scheme effectively draws edges from $j$ to $\Nl(i) \setminus \Nl(j)$, adding many new 2-paths from $i$ to $j$.
Using methods based on Gram-Schmidt, this suboptimality appears to be unavoidable.

To optimally bound (\ref{eq:technical-overview-exp-overlap-linear-term}), we will use a weaker coupling that adds no new 2-paths from $i$ to $j$.
This is carried out in Lemma~\ref{lem:general-ub-refined-linear-term}.
Our new coupling will condition on $\Xonek = \Xtwok$ for all $k\in [v-1]\setminus \{i,j\}$, but not on any additional information about $X_i$ or $X_j$.
Effectively, this approach draws all edges among $[v-1]\setminus \{i,j\}$ but no additional edges incident to $i$ or $j$, thereby adding many paths from $i$ to $j$ of length $3$ or more but no 2-paths.
With this coupling, we can bound (\ref{eq:technical-overview-exp-overlap-linear-term}) as follows.
Conditioned on $X_k$ for $k\in [v-1]\setminus \{i,j\}$ and the inner products involving $X_i, X_j$ corresponding to edges of $G[v-1]$, $X_i$ and $X_j$ are singular Gaussians whose means can be explicitly computed.
This reduces (\ref{eq:technical-overview-exp-overlap-linear-term}) to an explicit expectation over i.i.d. Gaussians $X_1,\ldots,X_{v-1} \sim \cN(0, I_d)$, albeit a complex one involving Wishart inverses.
To bound this expectation, we will expand these inverses into moments using the power series for $(I+A)^{-1}$ and then carefully control these moments.

\subsection{Comparison with Na\"ive Convexity Bound}
\label{subsec:technical-overview-naive-comparison}

We emphasize that the main challenge of the proof of Theorem~\ref{thm:general-ub} is to handle the latent information in the conditioned distribution $(\muv | W)$.
In the above proof outline, this difficulty translated into the difficulty of handling expectations over the coupled distribution $\cNvdips$. 
To illustrate the importance of tightly handling latent information, we sketch here the proof of a suboptimal upper bound, which is the na\"ive generalization of the convexity argument of Bubeck and Ganguly in \cite{BG16}.

Recall that $\downNv = N(v) \cap [v-1]$ and $X_{\downNv}$ denotes the submatrix of $\Xvl$ with columns indexed by $\downNv$.
Let $I_{\downNv}$ denote the identity matrix with rows and columns indexed by $\downNv$.
Because $W(\Xvl)$ is a $\Xvl$-measurable random variable, by convexity of KL divergence we have 
\begin{align}
    \notag
    &\E_{W\sim \muvl}
    \KL\lt((\muv | W) \parallel \nuv\rt)
    \le 
    \E_{\Xvl \sim \cN(0,I_d)^{\otimes v-1}}
    \KL\lt((\muv | \Xvl) \parallel \nuv\rt) \\
    \label{eq:general-naive-ub-starting-point}
    &\qquad = 
    \E_{\Xvl \sim \cN(0,I_d)^{\otimes v-1}}
    \KL\lt(
        \cN\lt(0, d^{-1}X_{\downNv}^\top X_{\downNv}\rt)
        \parallel
        \cN\lt(0, I_{\downNv}\rt)
    \rt).
\end{align}
This last KL divergence can be evaluated by the following two lemmas.
Lemma~\ref{lem:kl-gaussians} is well known.

\begin{lemma}[KL Divergence between Gaussians] 
    \label{lem:kl-gaussians}
    Let $\Sigma_1, \Sigma_2 \in \mathbb{R}^{k \times k}$ be positive definite matrices. 
    Then,
    \[
        \KL\lt( 
            \cN(0, \Sigma_1) 
            \parallel 
            \cN(0, \Sigma_2) 
        \rt) 
        = \f12 \lt[ 
            \log \f{\det \Sigma_2}{\det \Sigma_1} + 
            \Tr\lt( \Sigma_2^{-1} \Sigma_1 - I_k\rt) 
        \rt].
    \]
\end{lemma}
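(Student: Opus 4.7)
The plan is a direct computation from the definition of KL divergence, using the explicit densities of centered Gaussians. First I would write the density of $\cN(0,\Sigma)$ as
$$p_\Sigma(x) = (2\pi)^{-k/2} (\det \Sigma)^{-1/2} \exp\lt(-\tfrac{1}{2} x^\top \Sigma^{-1} x\rt),$$
so that the log-likelihood ratio becomes
$$\log \f{p_{\Sigma_1}(x)}{p_{\Sigma_2}(x)} = \tfrac{1}{2}\log\f{\det \Sigma_2}{\det \Sigma_1} - \tfrac{1}{2} x^\top \Sigma_1^{-1} x + \tfrac{1}{2} x^\top \Sigma_2^{-1} x.$$
Positive definiteness of both $\Sigma_1, \Sigma_2$ ensures these densities exist and are mutually absolutely continuous, so the KL divergence is simply the expectation of this log-ratio under $x\sim \cN(0,\Sigma_1)$.

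The determinant term is deterministic, and the two quadratic forms are handled by the standard identity $\E[x^\top A x] = \Tr(A \E[xx^\top]) = \Tr(A\Sigma_1)$ (which follows by writing $x^\top A x = \Tr(Axx^\top)$ and pulling the trace outside the expectation by linearity). This gives $\E[x^\top \Sigma_1^{-1} x] = \Tr(\Sigma_1^{-1}\Sigma_1) = k$ and $\E[x^\top \Sigma_2^{-1} x] = \Tr(\Sigma_2^{-1}\Sigma_1)$. Substituting and noting that $k = \Tr(I_k)$ yields the claimed identity. There is no real obstacle here — the only care needed is bookkeeping the factors of $\tfrac12$ and the direction of the log-determinant ratio, which naturally swaps because $\Sigma_1$ appears in the normalizing constant of the numerator density.
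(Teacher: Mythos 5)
Your proof is correct and complete: the paper states Lemma~\ref{lem:kl-gaussians} as well known and supplies no proof, and your direct computation — taking the expectation of the log-density ratio under $\cN(0,\Sigma_1)$ and using $\E\lt[x^\top A x\rt] = \Tr(A\Sigma_1)$ for the two quadratic forms — is exactly the standard argument, with the factors of $\tfrac12$ and the $\det\Sigma_2/\det\Sigma_1$ orientation handled correctly.
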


\begin{lemma}
    \label{lem:general-naive-ub-logdet}
    Let $d\ge Ck^2$ for some universal constant $C$, and let $X\in \bR^{d\times k}$ have i.i.d. Gaussian entries.
    Then, 
    \[
        \E \lt[-\log \det (d^{-1} X^\top X)\rt]
        \le 
        \f{Ck^2}{d}.
    \]
\end{lemma}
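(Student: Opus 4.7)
The plan is to diagonalize the problem via the Bartlett decomposition of the Wishart matrix $X^\top X$. Concretely, applying Gram-Schmidt (or equivalently the LDU factorization of $X^\top X$) yields the well-known distributional identity
\[
    \det(X^\top X) \stackrel{d}{=} \prod_{i=1}^k Y_i,
\]
where $Y_1,\ldots,Y_k$ are mutually independent with $Y_i \sim \chi^2_{d-i+1}$. Taking logarithms and dividing by $d^k$ reduces the problem to estimating $\sum_{i=1}^k \E[\log Y_i]$, which I can evaluate exactly using $\E[\log \chi^2_m] = \psi(m/2) + \log 2$, where $\psi$ denotes the digamma function.

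Next I would invoke two standard quantitative bounds on the digamma function. For every $x > 0$, the integral representation $\psi(x) = \log x - \frac{1}{2x} - 2\int_0^\infty \frac{t}{(t^2+x^2)(e^{2\pi t}-1)}\,dt$ gives both the upper bound $\psi(x) \le \log x - \frac{1}{2x}$ and (by bounding the integrand) the matching lower bound $\psi(x) \ge \log x - \frac{C_0}{x}$ for some universal $C_0$, valid for $x \ge 1$. Applied at $x = (d-i+1)/2$, this gives $\E[\log Y_i] \ge \log(d-i+1) - \frac{2C_0}{d-i+1}$. Substituting back,
\[
    -\E\log\det\bigl(d^{-1}X^\top X\bigr)
    = k\log d - \sum_{i=1}^k \E[\log Y_i]
    \le -\sum_{i=1}^k \log\!\left(1 - \tfrac{i-1}{d}\right) + \sum_{i=1}^k \tfrac{2C_0}{d-i+1}.
\]

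The final step is routine arithmetic under the assumption $d \ge Ck^2$ with $C$ large (in particular $d \ge 2k$). For the first sum, the elementary inequality $-\log(1-x) \le 2x$ on $[0,1/2]$ gives $-\sum_{i=1}^k \log(1-(i-1)/d) \le \frac{k(k-1)}{d} \le \frac{k^2}{d}$. For the second sum, each denominator is at least $d-k+1 \ge d/2$, so the sum is at most $\frac{4C_0 k}{d} \le \frac{4C_0 k^2}{d}$. Combining the two contributions yields a bound of the required form $C'k^2/d$ for a universal $C'$.

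The main (and only) nontrivial input is the digamma lower bound; everything else is bookkeeping. A potential obstacle is a cleaner derivation avoiding the digamma asymptotics: one could alternatively expand $-\log\det(I + E) = -\Tr \log(I+E)$ with $E = d^{-1}(X^\top X - dI_k)$ and control the resulting trace via concentration of $\|E\|_{\op}$, but this route must handle the tail event $\{\|E\|_{\op} \ge 1/2\}$ where the series diverges, which is more awkward than the clean factorization above. Hence I would proceed via Bartlett.
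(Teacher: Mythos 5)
Your proof is correct, but it takes a genuinely different route from the paper. The paper follows \cite[Lemma 2]{BG16}: it adds the mean-zero term $\Tr(d^{-1}X^\top X - I_k)$, splits on the event $\{\lambda_{\min}(d^{-1}X^\top X)\ge \tfrac12\}$, uses the pointwise inequality $-\log x + (x-1)\le 2(x-1)^2$ on the good event to reduce to $\E\|d^{-1}X^\top X - I_k\|_{\mathrm{HS}}^2 \le Ck^2/d$, and disposes of the tail event via the exponential bound cited from \cite{BG16} together with Cauchy--Schwarz and the singular-value tail estimate of \cite[Corollary 5.35]{Ver10} --- i.e.\ exactly the ``alternative'' you flag as awkward at the end, with the awkward tail handling delegated to the citation. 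Your argument instead exploits the Gaussian structure exactly: by the Bartlett decomposition (which the paper itself uses elsewhere, in its Gram--Schmidt facts) $\det(X^\top X)$ is distributed as a product of independent $\chi^2_{d-i+1}$ variables, $\E\log\chi^2_m = \psi(m/2)+\log 2$, and the digamma bound $\psi(x)\ge \log x - C_0/x$ for $x\ge 1$ finishes the computation. Each step checks out (the needed conditions $(i-1)/d\le 1/2$ and $d-k+1\ge d/2$ follow already from $d\ge 2k$, which $d\ge Ck^2$ implies), and your bound $C'k^2/d$ even holds under the weaker hypothesis $d\gtrsim k$ with essentially sharp constants, whereas the paper's route inherits the $d\ge Ck^2$ requirement from the cited tail bound. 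What the paper's approach buys is robustness: it only uses operator-norm and Hilbert--Schmidt control of $d^{-1}X^\top X - I_k$, so it extends to the log-concave setting of \cite{BG16} where no Bartlett factorization is available; your approach is Gaussian-specific, which is all the lemma requires, and is self-contained rather than leaning on an external exponential tail estimate.
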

\begin{proof}
    The proof is essentially the same as \cite[Lemma 2]{BG16}, and we only outline the differences.
    Note that
    \[
        \E \lt[-\log \det (d^{-1} X^\top X)\rt]
        =
        \E \lt[
            -\log \det (d^{-1} X^\top X) + 
            \Tr(d^{-1} X^\top X - I_k) 
        \rt].
    \]
    We will bound the latter expectation.
    Let $\lambda_{\min}$ denote the smallest eigenvalue of $d^{-1} X^\top X$; we decompose this expectation on the events $\{\lambda_{\min}\ge \f12\}$ and its complement.
    Note that $-\log(x) + (x-1) \le 2(x-1)^2$ for $x\ge \f12$. 
    By the proof of \cite[Lemma 2]{BG16}, we have
    \[
        \E \lt[
            \lt(
                -\log \det (d^{-1} X^\top X) + 
                \Tr(d^{-1} X^\top X - I_k) 
            \rt)
            \ind{\lambda_{\min}\ge \f12}
        \rt]
        \le 
        \E \lt[
            \norm{d^{-1} X^\top X - I_k}_{\HS}^2
        \rt]
        \le
        \f{Ck^2}{d}.
    \]
    By the proof of \cite[Lemma 2]{BG16}, we also have, for $d \ge Ck^2$,
    \[
        \E \lt[
            -\log \det (d^{-1} X^\top X) 
            \ind{\lambda_{\min} < \f12}
        \rt]
        \le 
        k\exp(-d^{1/10}).
    \]
    By Cauchy-Schwarz, we have
    \[
        \E \lt[
            \Tr(d^{-1} X^\top X - I_k)
            \ind{\lambda_{\min} < \f12}
        \rt]
        \le 
        \E \lt[
            \Tr(d^{-1} X^\top X - I_k)^2
        \rt]^{1/2}
        \P \lt[
            \lambda_{\min} < \f12
        \rt]^{1/2}.
    \]
    By Lemma~\ref{lem:general-ub-computational}(\ref{itm:general-ub-computational-trsq-centered}), $\E \lt[\Tr(d^{-1} X^\top X - I_k)^2\rt] = \f{2k}{d}$.
    By \cite[Corollary 5.35]{Ver10}, we have $\P \lt[\lambda_{\min} < \f12\rt] \le \exp(-\Omega(d))$.
    Combining these bounds proves the lemma.
\end{proof}

This yields the following crude bound.
Recall that $\ddegv = |\downNv|$ is the number of edges from $v$ to $[v-1]$.
\begin{theorem}
    \label{thm:general-naive-ub}
    There exists a universal constant $C$ such that the following inequality holds. 
    If $d \ge C \max_{v\in G} \ddegv^2$, then
    \[
        \KL\lt( W(G,d) \parallel M(G) \rt) 
        \le 
        C \sum_{v\in G} \f{\ddegv^2}{d}.
    \]
\end{theorem}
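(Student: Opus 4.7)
The plan is to combine the KL tensorization (\ref{eq:general-ub-starting-point}) with the vertex-wise convexity bound (\ref{eq:general-naive-ub-starting-point}), then evaluate the resulting Gaussian KL divergence using the two lemmas just proved. This is essentially a bookkeeping argument; the substance has already been concentrated in Lemma~\ref{lem:general-naive-ub-logdet}.

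First, I would apply (\ref{eq:general-ub-starting-point}) to write
\[
    \KL(W(G,d) \parallel M(G))
    =
    \sum_{v \in G}
    \E_{W\sim \muvl}
    \KL\lt((\muv | W) \parallel \nuv\rt),
\]
and then bound each summand by the convexity inequality (\ref{eq:general-naive-ub-starting-point}), giving the upper bound
\[
    \E_{\Xvl \sim \cN(0,I_d)^{\otimes v-1}}
    \KL\lt(
        \cN\lt(0, d^{-1}\Xbnt \Xbn\rt)
        \parallel
        \cN\lt(0, \Ibn\rt)
    \rt).
\]
Now I would plug into Lemma~\ref{lem:kl-gaussians} with $\Sigma_1 = d^{-1}\Xbnt\Xbn$ and $\Sigma_2 = \Ibn$ to rewrite this inner KL divergence as
\[
    \tfrac12\lt[
        -\log \det\lt(d^{-1}\Xbnt\Xbn\rt)
        + \Tr\lt(d^{-1}\Xbnt\Xbn - \Ibn\rt)
    \rt].
\]

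The trace term has expectation zero, since each diagonal entry of $\Xbnt \Xbn$ is a $\chisq(d)$ random variable with mean $d$, so $\E \Tr(d^{-1}\Xbnt\Xbn) = \ddegv$. The log-determinant term is exactly the quantity estimated in Lemma~\ref{lem:general-naive-ub-logdet}: applying that lemma with $k = \ddegv$ (which is legitimate for every $v$ because the hypothesis $d \ge C\max_{v\in G}\ddegv^2$ enforces the required $d \ge C k^2$ at every vertex) yields the bound $\E\lt[-\log\det(d^{-1}\Xbnt\Xbn)\rt] \le C\ddegv^2/d$.

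Putting these pieces together, each summand of the tensorization is bounded by $C\ddegv^2/(2d)$, and summing over $v\in G$ proves the theorem (after absorbing the factor of $\tfrac12$ into $C$). There is no real obstacle here aside from checking that the hypothesis on $d$ is strong enough to apply Lemma~\ref{lem:general-naive-ub-logdet} uniformly over $v$, which the bound $d \ge C\max_{v\in G}\ddegv^2$ directly guarantees.
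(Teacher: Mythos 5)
Your proposal is correct and follows essentially the same route as the paper: tensorize via (\ref{eq:general-ub-starting-point}), apply the convexity bound (\ref{eq:general-naive-ub-starting-point}), evaluate the Gaussian KL with Lemma~\ref{lem:kl-gaussians} (noting the trace term is centered), and bound the log-determinant term by Lemma~\ref{lem:general-naive-ub-logdet} at each vertex, using $d \ge C\max_{v\in G}\ddegv^2$ to apply it uniformly. No gaps.
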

\begin{proof}
    By (\ref{eq:general-naive-ub-starting-point}) and Lemmas~\ref{lem:kl-gaussians} and \ref{lem:general-naive-ub-logdet}, 
    \[
        \E_{W\sim \muvl}
        \KL\lt((\muv | W) \parallel \nuv\rt)
        \le 
        \E_{\Xvl \sim \cN(0, I_d)^{\otimes (v-1)}}
        \lt[
            -\f12 \log \det \lt(d^{-1}X_{\downNv}^\top X_{\downNv}\rt)
        \rt]
        \le \f{C\ddegv^2}{2d}.
    \]
    Substituting into the tensorization (\ref{eq:general-ub-starting-point}) yields the conclusion.
\end{proof}
By Pinsker's inequality, Theorem~\ref{thm:general-naive-ub} implies that if $d \gg \sum_{v\in G} \ddegv^2$, then we have that $\TV(W(G,d), M(G)) \to 0$.
In terms of graph statistics, this condition gives that if $d \gg \Num_G(P_2, E)$, then $\TV(W(G,d), M(G)) \to 0$.
This result recovers the $d \gg n^3$ threshold of \cite{BG16} for $G = K_n$, but for general $G$ it is considerably weaker than Theorem~\ref{thm:general-ub}.
One way to see this is to derive an analogue of Theorem~\ref{thm:er-mask}(\ref{itm:er-mask-ub}) by applying Theorem~\ref{thm:general-naive-ub} instead of Theorem~\ref{thm:general-ub} to $G\sim \cG(n,p)$.
We can show (analogously to Theorem~\ref{thm:er-mask}(\ref{itm:er-mask-ub})) that if 
\[
    d \gg n^3p^2 + n^2p + \log^2 n,
\]
then $\TV(W(G,d), M(G)) \to 0$ almost surely over the sample path of $G$.
This threshold is plotted in Figure~\ref{fig:er-mask-phase-diagram}, and we can see it is much weaker than the true threshold.

Lemma~\ref{lem:general-naive-ub-logdet} is tight up to constant factors.
So, this is the best asymptotic threshold we can attain with the convexity argument of (\ref{eq:general-naive-ub-starting-point}).
Thus, we cannot throw away the latent information in the distribution $(\muv | W)$; to improve on Theorem~\ref{thm:general-naive-ub}, we must handle this latent information more delicately. 
In the context of the convexity arguments in Section~\ref{subsec:technical-overview-gs}, the estimate in (\ref{eq:general-naive-ub-starting-point}), which reveals $\Xvl$ instead of $W(\Xvl)$, corresponds to estimating $\E_{(\Xonevl, \Xtwovl) \sim \cNvdips} \exp(\f12 Y_v)$ using the strongest coupling $\Xonevl = \Xtwovl$.
Thus, any approach to bounding this quantity that does not use the coupled distribution $\cNvdips$ in a nontrivial way can do at most as well as Theorem~\ref{thm:general-naive-ub}.
So, the conceptual innovations discussed in Sections~\ref{subsec:technical-overview-gs} and \ref{subsec:technical-overview-beyond-gs} are crucial to achieving the full power of Theorem~\ref{thm:general-ub}.

\subsection{Proof Outline of TV Upper Bound for Bipartite Masks}
\label{subsec:technical-overview-bipartite-ub-outline}

The proof of Theorem~\ref{thm:bipartite-ub} is considerably simpler than that of Theorem~\ref{thm:general-ub}, due to the following key observation.
Unlike in the general setting, where we use KL tensorization to break $W(G,d)$ into a sequence of mixture distributions, when $G$ is bipartite $W(G,d)$ is itself a mixture distribution.
This is because all the information in $W(G,d)$ is contained in the appropriate subset of $d^{-1/2}\XR^\top \XL$, where $\XL \in \bR^{d\times |V_L|}$ and $\XR \in \bR^{d\times |V_R|}$ are the submatrices of $X\in \bR^{d\times n}$ consisting of the columns in $V_L$ and $V_R$. 
This is a mixture of jointly Gaussian matrices parametrized by latent randomness~$\XR$.

The bipartite setting affords us two important simplifications over the argument for general masks. 
First, because $W(G,d)$ is a mixture distribution, we do not need the KL tensorization step. 
In fact, we can prove Theorem~\ref{thm:bipartite-ub} without reference to KL divergence at all, by passing directly from $\TV$ to $\chisq$ and applying the second moment method.
We still need to condition on a high probability event $S\in \sigma(\XR)$ before we pass to $\chisq$ divergence; however, not needing KL tensorization means we only need to bound the effect of conditioning (in the latent space) on $\TV$, which is trivial by (\ref{eq:technical-overview-tv-conditioning}) and data processing.
Second, because $V_R$ is an independent set, the two replicas $\XoneR, \XtwoR$ obtained from the second moment method are now fully independent, each distributed as a sample from $\cL\lt(\cN(0, I_d)^{\otimes |V_R|} | S\rt)$, so we no longer need to work with the coupled distribution.
This allows us to get extremely sharp bounds.

After evaluating the inner expectation of the second moment method (\ref{eq:technical-overview-2mm}), we are left with an expectation over this distribution whose integrand can be massaged into the exponential of a polynomial in $\XoneR$ and $\XtwoR$. 
This setup is a batched version of the overlap $Y_v$ defined in (\ref{eq:general-ub-define-yv}). 
For each fixed value of $\XoneR$, we can consider the integrand as the exponential of a polynomial in $\XtwoR$. 
We can integrate this quantity by tails, using Gaussian hypercontractivity to control tail probabilities.
The resulting estimate is an expression in $\XoneR$, which we can make deterministically small conditioned on $\XoneR \in S$.

\section{Main TV Upper Bound Argument for General Masks}
\label{sec:general-ub-main-argument}

In this section, we will develop our main techniques for showing total variation upper bounds for general masks $G$.
These techniques will yield the following intermediate result, which is a weaker variant of Theorem~\ref{thm:general-ub}.
In Sections~\ref{sec:general-ub-refined-linear-term} and \ref{sec:general-ub-refined-higher-order-terms}, we will refine the arguments in this section to arrive at Theorem~\ref{thm:general-ub}.

\begin{theorem}
    \label{thm:general-weaker-ub}
    Suppose the following asymptotic inequalities hold:
    \begin{eqnarray}
        \label{eq:general-weaker-ub-hypothesis-triangles}
        d &\gg& \Num_G(C_3), \\
        \label{eq:general-weaker-ub-hypothesis-4claws}
        d^2 &\gg& \Num_G(K_{1,4}) + \Num_G(P_2, E) \log^4 n.
    \end{eqnarray}
    Then, $\TV(W(G,d), M(G)) \to 0$ as $n\to \infty$.
\end{theorem}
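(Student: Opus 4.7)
The plan follows the three-step program of Section~\ref{subsec:technical-overview-general-ub-outline}, with the coupled exponentiated overlap handled via a first-order Taylor expansion. By Pinsker's inequality and the KL tensorization~(\ref{eq:general-ub-starting-point}), it suffices to show that $\sum_{v=1}^n \E_{W\sim \muvl} \KL((\muv\mid W) \parallel \nuv) \to 0$. For each $v$, one recognizes $(\muv\mid W)$ as a mixture of Gaussians with covariance $d^{-1}\Xbnt\Xbn$ indexed by $\Xvl \sim \cN(0,I_d)^{\otimes(v-1)}$ conditioned on $W(\Xvl)=W$. To gain integrability I would first condition the latent randomness on an event $S^v\in \sigma(\Xvl)$ ensuring, with superpolynomial probability, that $d^{-1}\Xbnt\Xbn \approx \Ibn$ and $|\la X_i,X_j\ra|\lesssim \sqrt{d}\log n$ for all distinct $i,j$ in $\downNv$; this guarantees that the overlap $Y_v$ from~(\ref{eq:general-ub-define-yv}) stays in a fixed neighborhood of $0$ pointwise on $S^v$. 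A dedicated lemma (Lemma~\ref{lem:general-ub-kl-conditioning}) then absorbs the cost of this latent conditioning into a negligible additive error in the KL sum. After conditioning, one passes from $\KL$ to $\chisq$ and applies the second moment method~(\ref{eq:technical-overview-2mm}); the inner Gaussian integral evaluates explicitly, leaving the task of bounding $\E\exp(\tfrac12 Y_v)$ over $(\Xonevl,\Xtwovl)\sim \cNvdips$.

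Next, Taylor expand as in~(\ref{eq:general-ub-main-exp-overlap-starting-point}): $\exp(\tfrac12 Y_v)=1+\tfrac12 Y_v + h(\tfrac12 Y_v)$ with $h(x)=e^x-1-x$. The error term is bounded deterministically on $S^v$ using $|h(\tfrac12 Y_v)|\lesssim Y_v^2$; expanding $Y_v^2$ as a sum over quadruples in $\downNv^4$ and invoking the deterministic tail control from $S^v$ for the degenerate (coincident-index) quadruples gives a total contribution of order $d^{-2}\bigl(\Num_G(K_{1,4})+\log^4 n\cdot \Num_G(P_2,E)\bigr)$, which is $o(1)$ by~(\ref{eq:general-weaker-ub-hypothesis-4claws}). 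The linear term splits into the three categories~(\ref{itm:technical-overview-main-overlap-term-ijedge})--(\ref{itm:technical-overview-main-overlap-term-ii}). In category~(\ref{itm:technical-overview-main-overlap-term-ijedge}), the inner-product coupling forces $\la\Xonei,\Xonej\ra=\la\Xtwoi,\Xtwoj\ra$, so the summand reduces to $\E\la\Xonei,\Xonej\ra^2=d$; summing over $v$ and pairs $(i,j)\in E(G[v-1])$ with $i,j\in\downNv$ produces $O(\Num_G(C_3)/d)$, which is $o(1)$ by~(\ref{eq:general-weaker-ub-hypothesis-triangles}). Categories~(\ref{itm:technical-overview-main-overlap-term-ijnotedge}) and~(\ref{itm:technical-overview-main-overlap-term-ii}) are handled via Lemma~\ref{lem:general-ub-stronger-couplings}: for~(\ref{itm:technical-overview-main-overlap-term-ijnotedge}), I would run Gram-Schmidt on $\{X_k:k\in \Nl(i)\cup\{i,j\}\}$ and couple every coefficient except the off-diagonal one projecting $X_j$ against $X_i$'s span (as in Example~\ref{ex:technical-overview-gs-m}), giving $\E\la\Xonei,\Xonej\ra\la\Xtwoi,\Xtwoj\ra \le \min(\degvl(i),\degvl(j))$; for~(\ref{itm:technical-overview-main-overlap-term-ii}), the analogous coupling with the diagonal Gram-Schmidt coefficient left free yields a bound of $2\degvl(i)$. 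Summed across $v$, both contribute $O(\Num_G(P_2,E)/d^2)$, again controlled by~(\ref{eq:general-weaker-ub-hypothesis-4claws}).

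The main obstacle is the design of the term-by-term Gram-Schmidt couplings for categories~(\ref{itm:technical-overview-main-overlap-term-ijnotedge}) and~(\ref{itm:technical-overview-main-overlap-term-ii}): one must select exactly which Gram-Schmidt coefficients to freeze so that Lemma~\ref{lem:general-ub-stronger-couplings} certifies the replacement as an upper bound while the resulting expectation remains simultaneously tractable and tight to within a constant factor, since the global coupling $\Xonevl=\Xtwovl$ would only reproduce the much weaker Theorem~\ref{thm:general-naive-ub} discussed in Section~\ref{subsec:technical-overview-naive-comparison}. A secondary delicate point is absorbing the latent conditioning on $S^v$ into the KL decomposition without the convenience of the elementary $\TV$ conditioning bound~(\ref{eq:technical-overview-tv-conditioning}); this requires an ad hoc argument exploiting the superpolynomial smallness of $\P((S^v)^c)$ against polynomial moment bounds on the Wishart likelihood ratio. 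Once these components are assembled, summing the linear and quadratic contributions over $v=1,\ldots,n$ yields $\KL(W(G,d)\parallel M(G))=o(1)$, and Pinsker's inequality completes the proof.
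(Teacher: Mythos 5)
Your overall architecture is the paper's: KL tensorization, latent conditioning, second moment method, first-order expansion of $\E\exp(\tfrac12 Y_v)$, and term-by-term Gram--Schmidt couplings via Lemma~\ref{lem:general-ub-stronger-couplings} with exactly the bounds $d$, $\min(\dvli,\dvlj)$, and $2\dvli$ for the three categories. The genuine gap is in your construction of $S^v$ and the error-term bound. An event that only enforces $d^{-1}\Xbnt\Xbn\approx \Ibn$ entrywise, i.e. $|\la X_i,X_j\ra|\lesssim \sqrt{d}\log n$, gives only $|Y_v|\lesssim \ddegv^2\log^2 n/d$, which is \emph{not} $O(1)$ under the hypothesis (\ref{eq:general-weaker-ub-hypothesis-4claws}) (which amounts to $d\gg \degv^2+\degv\log^2 n$; take $\ddegv^2\asymp d/\log n$), so your claim that $Y_v$ stays in a fixed neighborhood of $0$ on $S^v$, and hence the use of $h(x)\le x^2$, fails. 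Moreover, squaring this entrywise bound yields an error contribution of order $d^{-2}\ddegv^4\log^4 n$, i.e. $d^{-2}\Num_G(K_{1,4})\log^4 n$ after summing over $v$, which is strictly stronger than (\ref{eq:general-weaker-ub-hypothesis-4claws}) demands; your stated bound $d^{-2}(\Num_G(K_{1,4})+\Num_G(P_2,E)\log^4 n)$ does not follow from the event you defined. The paper's fix is to put into $S^v$ a concentration event $\Svtr$ for the trace itself, $\Tr(\Delbnsq)\le \tfrac{2\ddegv^2}{d}+C\tfrac{\ddegv\log^2 n}{d}$ (proved by Gaussian hypercontractivity), which both confines $Y_v$ near $0$ and, after squaring, produces exactly $\ddegv^4+\ddegv^2\log^4 n$. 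Note also that this event's failure probability is only polynomially small ($n^{-20}$), so the superpolynomial smallness your conditioning step leans on is neither available for the event you actually need nor necessary.

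A second, related point: for the KL conditioning you cannot simply trade "superpolynomially small $\P((S^v)^c)$" against "polynomial moment bounds on the Wishart likelihood ratio," because the relevant likelihood ratio is that of the mixture $(\muv\mid W)$, which has no explicit density; the needed control is on the fiber average $f_{\det}(\Xvl)=\E_{\Xpvl\sim\gamma(W(\Xvl))}[\det(d^{-1}\Xpbnt\Xpbn)^{-1/2}]$. The paper builds the event $\Svdet=\{f_{\det}(\Xvl)\le e^n\}$ into $S^v$ (controlled only via Markov, with probability $1-e^{-n/2}$), and this is what makes the interpolation $\KL_A\to\KL_E$ in Lemma~\ref{lem:general-ub-kl-conditioning} go through; your event omits any such condition. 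Finally, a smaller technical omission: before applying the Gram--Schmidt couplings you must first remove the conditioning on $S$ from the linear terms (the paper's Proposition~\ref{prop:general-ub-linear-term-unconditioning}), since the clean distributional description of the Gram--Schmidt coefficients holds only for unconditioned Gaussians; this costs only an $O(dn^{-10})$ error but has to be accounted for.
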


\begin{remark}
    By applying Theorem~\ref{thm:general-weaker-ub} to $G \sim \cG(n,p)$, we can show (analogously to Theorem~\ref{thm:er-mask}(\ref{itm:er-mask-ub})) that if
    \[
        d \gg n^3p^3 + n^{5/2}p^2 + n^{3/2}p\log^2 n + np^{1/2} \log^2 n + \log^3 n,
    \]
    then $\TV(W(G,d), M(G)) \to 0$ almost surely over the sample path of $G$.
    This threshold is plotted in Figure~\ref{fig:er-mask-phase-diagram}.
    This threshold matches Theorem~\ref{thm:er-mask}(\ref{itm:er-mask-ub}) for $p\gtrsim n^{-1/2}$ and matches it up to a $\polylog(n)$ factor for $p\lesssim n^{-1}$, but these thresholds do not match for $n^{-1/2} \gtrsim p \gtrsim n^{-1}$.
    So, the full power of Theorem~\ref{thm:general-ub} is necessary to identify the sharp phase transition in Theorem~\ref{thm:er-mask}.
\end{remark}

Let $G$ be a graph on $[n]$, and let $\muvl, \muv, \nuv$ be the appropriate marginal measures of $W(G,d)$ and $M(G)$ defined in Section~\ref{subsec:technical-overview-general-ub-outline}.
The starting point of the proof of Theorem~\ref{thm:general-weaker-ub} is the KL tensorization (\ref{eq:general-ub-starting-point}).
Recall that $G[v-1]$ is the induced subgraph of $G$ on $[v-1]$, and let $X_1,X_2,\ldots, X_n \sim \cN(0, I_d)$ be i.i.d. latent Gaussians.
Throughout this section, we will generate $W\sim \muvl$ as $W_{k,\ell} = d^{-1/2}\la X_k, X_\ell\ra$ for each edge $(k,\ell) \in E(G[v-1])$.
The proof of Theorem~\ref{thm:general-weaker-ub} bounds each of the $n$ averaged KL divergences in (\ref{eq:general-ub-starting-point}) individually.
For each $v\in [n]$, the argument to bound the $v$th summand of (\ref{eq:general-ub-starting-point}) is divided into the following three steps.

\begin{enumerate}[label=(\arabic*), ref=\arabic*]
    \item We first will handle the contributions of tail events to the $v$th summand of (\ref{eq:general-ub-starting-point}). 
    More precisely, we show that it suffices to bound this term with $\muv$ and $\muvl$ replaced by the measures $\muvsv$ and $\muvlsv$, which are $\muv$ and $\muvl$ conditioned on the event $\Xvl \in S^v$ for a high probability set $S^v$ capturing the typical behavior of $\Xvl$.
    As discussed in Section~\ref{subsec:technical-overview-general-ub-outline}, we denote $S^v$ by $S$ when $v$ is clear from context.
    The set $S$ is defined in Section~\ref{subsec:general-ub-def-hp-set} and this step is carried out in Section~\ref{subsec:general-ub-kl-conditioning}.
    In Section~\ref{subsec:general-ub-s-hp-proof}, we prove that $S$ occurs with high probability.
    \item In Section~\ref{subsec:general-ub-main-2mm}, we upper bound this now-conditioned KL divergence by a $\chisq$ divergence and explicitly evaluate the resulting expression with the second moment method. 
    To carry out the second moment method computation, we represent $\cL(\muvs|W)$ as a mixture of the distributions $\cL(\muv | \Xvl)$, where $\Xvl \sim \cL\lt(\cN(0, I_d)^{\otimes (v-1)} | S\rt)$ conditioned on $W_v(\Xvl) = W$.
    We will then simplify the result to obtain an upper bound in terms of the coupled exponentiated overlap $\E \exp(\f12 Y_v)$ over $(\Xonevl, \Xtwovl) \sim \cNvdips$, where $\cNvdips$ is defined in Definition~\ref{defn:general-ub-ip-coupling} and $Y_v$ is defined in (\ref{eq:general-ub-define-yv}).
    \item To bound this overlap, we Taylor expand it into first and higher order terms as in (\ref{eq:general-ub-main-exp-overlap-starting-point}).
    We will condition each first order term on a stronger coupling, which by Lemma~\ref{lem:general-ub-stronger-couplings} yields an upper bound.
    The stronger coupling is different for each term and comes from selectively revealing the entries of a Gram-Schmidt orthogonalization of the latent Gaussians $X_1,\ldots,X_{v-1}$ sketched in Section~\ref{subsec:technical-overview-gs}.
    This coupling makes careful use of the specific entries revealed, and thus the dependencies of the original coupling.
    The error term from the Taylor expansion is handled through deterministic bounds following from the construction of $S$.
    This step is carried out in Section~\ref{subsec:general-ub-main-exp-overlap}, where we also put these steps together to complete the proof of Theorem~\ref{thm:general-weaker-ub}.
\end{enumerate}

\subsection{Identifying the High Probability Latent Sets $S^v$}
\label{subsec:general-ub-def-hp-set}

A key construction in this section will be the high probability sets $S^v \in \sigma(\Xvl)$ over the collection of vectors $\Xvl = (X_1,\ldots,X_{v-1})$, which we now formally introduce.
We will first introduce several quantities necessary to define $S^v$.
Given a subset $V\subseteq [n]$, let $X_V$ denote the $d\times |V|$ matrix with columns $X_i$ for $i\in V$, with rows indexed by $[d]$ and columns indexed by $V$.
Similarly, let $I_V$ denote the $|V|\times |V|$ identity matrix with rows and columns indexed by $V$.
Recall that $\Nl(i) = N(i) \cap [v-1]$ denotes the set of neighbors of $i$ in $[v-1]$ and $\dvli = |\Nl(i)|$.
Recall further that $\downNv = \Nl(v)$ and $\ddegv = \degvl(v)$.
Let $\Delbn$ be the symmetric $\ddegv\times \ddegv$ matrix given by
\[
    \Delbn = d^{-1} \Xbnt \Xbn - \Ibn
\]
for each $v\in [n]$, with rows and columns indexed by $\downNv$.
For $\Xvl\in \bR^{d\times (v-1)}$, define $W_v(\Xvl)$ as in (\ref{eq:general-ub-def-wv}); we write this as $W(\Xvl)$ when $v$ is clear from context.
For $W\in \bR^{(v-1)\times (v-1)}$, let $\gamma(W)$ denote the conditional distribution of $\Xvl \in \bR^{d\times (v-1)}$ with i.i.d. standard Gaussian entries, conditioned on $W(\Xvl) = W$.
For $\Xvl \in \bR^{d\times (v-1)}$, define
\[
    f_{\det}(\Xvl) =
    \E_{\Xpvl \sim \gamma(W(\Xvl))}
    \lt[ \det \lt(
        d^{-1} \Xpbnt \Xpbn
    \rt)^{-1/2} \rt].
\]
We can now define the sets $S^v \in \sigma(\Xvl)$ for each $v\in [n]$ as follows.
Let $\Ctr > 0$ be a sufficiently large constant to be determined later.
Then, define
\begin{equation}
    \label{eq:general-ub-def-sv}
    S^v = \Svop \cap \Svtr \cap \Svdet,
\end{equation}
where the constituent events $\Svop,\Svtr,\Svdet \in \sigma(\Xvl)$ are defined by
\begin{eqnarray}
    \label{eq:general-ub-def-svop}
    \Svop
    &=&
    \lt\{
        \Xvl \in \bR^{d\times (v-1)} :
        \norm{\Delta_{\downNv}}_{\op} \le 100 \sqrt{\f{\ddegv + \log n}{d}},
    \rt\}, \\
    \label{eq:general-ub-def-svtr}
    \Svtr
    &=&
    \lt\{
        \Xvl \in \bR^{d\times (v-1)} :
        \Tr\lt(\Delta_{\downNv}^2\rt)
        \le
        \f{2\ddegv^2}{d} +
        \Ctr \cdot \f{\ddegv \log^2 n}{d}
    \rt\}, \\
    \label{eq:general-ub-def-svdet}
    \Svdet
    &=&
    \lt\{
        \Xvl \in \bR^{d\times (v-1)} :
        f_{\det}(\Xvl) \le e^n
    \rt\}.
\end{eqnarray}
When $v$ is clear from context, we will refer to these sets as $S$, $\Sop$, $\Str$, and $\Sdet$, respectively.

The conditions in $\Sop$ and $\Str$ will be important in bounding $\chisq$ divergence by the coupled exponentiated overlap in Section~\ref{subsec:general-ub-main-2mm} and controlling the coupled exponentiated overlap in Section~\ref{subsec:general-ub-main-exp-overlap}.
The restriction in $\Sdet$ is very mild, as the typical value of $f_{\det}(\Xvl)$ is much smaller than exponential in $n$.
This condition will be crucial in the KL conditioning argument in Section~\ref{subsec:general-ub-kl-conditioning}.
In order to show that conditioning in the latent variables $\Xvl$ does not significantly affect the KL divergence of interest, we will need to use some property of the distributions $\gamma(W)$.
However, these conditional distributions do not have explicit representations, and finding a useful property of them that can be rigorously established is the main difficulty of the KL conditioning step.
A key idea in Section~\ref{subsec:general-ub-kl-conditioning} is to show KL conditioning is possible only given that the distributions $\gamma(W)$ satisfy the bound in $\Sdet$.
Showing $\Sdet$ occurs with high probability is then tractable using Markov's inequality, determinant bounds for Wishart matrices and the fact that the mixture $\E_{W}\gamma(W)$ is a matrix of i.i.d. standard Gaussians.
The next proposition asserts that $S^v$ is a high probability set and will be important throughout our proof of Theorem~\ref{thm:general-weaker-ub}.
The proof of this proposition is deferred to Section~\ref{subsec:general-ub-s-hp-proof}.
\begin{proposition}
    \label{prop:general-ub-s-hp}
    Suppose that $d\gg \max_{v\in [n]} \ddegv + \log n$.
    For $n$ sufficiently large and each $v\in [n]$, it holds that $\P(\Svop) \ge 1-n^{-20}$, $\P(\Svtr) \ge 1-n^{-20}$, and $\P(\Svdet) \ge 1-e^{-n/2}$.
\end{proposition}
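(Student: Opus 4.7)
The plan is to handle the three events $\Svop$, $\Svtr$, and $\Svdet$ separately, since each captures a different aspect of the random matrix $\Xbn$, which is a $d\times \ddegv$ matrix of i.i.d.\ standard Gaussians with $\Delbn = d^{-1}\Xbnt\Xbn - \Ibn$. Under the hypothesis $d \gg \ddegv + \log n$, we are in the regime of standard non-asymptotic Wishart concentration, so each of the first two bounds will be a direct application of a well-known concentration inequality, while $\Svdet$ requires a short additional trick.

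For $\Svop$, I would invoke the non-asymptotic singular-value inequality \cite[Corollary 5.35]{Ver10}, already used elsewhere in the paper: $\sqrt{d} - \sqrt{\ddegv} - t \le \sigma_{\min}(\Xbn) \le \sigma_{\max}(\Xbn) \le \sqrt{d} + \sqrt{\ddegv} + t$ with probability at least $1 - 2e^{-t^2/2}$. Setting $t = \sqrt{40\log n}$ and squaring the extremal singular values of $d^{-1/2}\Xbn$ gives $\norm{\Delbn}_{\op} \le 3(\sqrt{\ddegv/d} + \sqrt{(40\log n)/d})$, which is comfortably under $100\sqrt{(\ddegv + \log n)/d}$, with failure probability at most $n^{-20}$.

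For $\Svtr$, write $\Tr(\Delbnsq) = \norm{\Delbn}_{\HS}^2$, which is a degree-four polynomial in the entries of $\Xbn$. A direct Wick-style computation gives $\E\Tr(\Delbnsq) = (\ddegv^2 + \ddegv)/d \le 2\ddegv^2/d$, since the off-diagonal and diagonal entries of $\Delbn$ have variance $1/d$ and $2/d$ respectively. For the deviation from the mean, I would apply a Hanson-Wright or polynomial-chaos concentration bound to obtain a tail of the form $\P(|\Tr(\Delbnsq) - \E\Tr(\Delbnsq)| > t) \le 2\exp(-c\min(d^2t^2/\ddegv^2,\, dt/\ddegv))$. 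Taking $t = \Ctr \ddegv \log^2 n/d$ for $\Ctr$ sufficiently large makes both exponents exceed $20\log n$, yielding $\P(\Svtr^c) \le n^{-20}$.

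The most delicate of the three is $\Svdet$, and it is here that the argument is genuinely informative. Since $f_{\det}(\Xvl)$ depends on $\Xvl$ only through $W(\Xvl)$, the tower property collapses the inner mixture $\gamma(W)$ back to the unconditional marginal: $\E f_{\det}(\Xvl) = \E_W \E_{\Xpvl \sim \gamma(W)} \det(d^{-1}\Xpbnt\Xpbn)^{-1/2} = \E \det(d^{-1}\Xbnt\Xbn)^{-1/2}$. This is the crucial trick that sidesteps the fact that $\gamma(W)$ has no explicit density. I would then evaluate the resulting Wishart inverse moment using Bartlett's decomposition, which expresses $\det(\Xbnt\Xbn)$ as a product $\prod_{i=0}^{\ddegv-1}\chi^2_{d-i}$ of independent chi-squared factors, giving $\E \det(d^{-1}\Xbnt\Xbn)^{-1/2} = \prod_{i=0}^{\ddegv-1}\sqrt{d/(d-i-1)}\cdot (1+O(1/d))$. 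Taking logs and using $-\log(1-x)\le 2x$ for small $x$, the exponent is $O(\ddegv^2/d)$; the hypothesis $d\gg \ddegv$ forces this to be $o(\ddegv) = o(n)$, so $\E f_{\det}(\Xvl) \le e^{o(n)}$, and Markov's inequality gives $\P(\Svdet^c) \le e^{-n(1 - o(1))} \le e^{-n/2}$ for $n$ large. The main obstacle is precisely the need to extract a quantitative property of the opaque conditional distributions $\gamma(W)$, which becomes tractable only through this combination of the tower property and a classical Wishart determinant identity.
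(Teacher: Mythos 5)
Your overall structure matches the paper's: \cite[Corollary 5.35]{Ver10} for $\Svop$, a mean computation plus concentration of the degree-four Gaussian polynomial $\Tr(\Delbnsq)$ for $\Svtr$, and for $\Svdet$ the tower-property collapse $\E f_{\det}(\Xvl)=\E\det\lt(d^{-1}\Xbnt\Xbn\rt)^{-1/2}$ followed by a Wishart determinant moment and Markov's inequality — this last observation is exactly the paper's key step. Your evaluation of the determinant moment via Bartlett's decomposition is a legitimate alternative to the paper's route (Cauchy--Schwarz together with $\E\det(d^{-1}X^\top X)^{-1}\le e^k$ for $d\ge 2k+2$, from Lemma~\ref{lem:general-ub-computational}); it yields the slightly stronger bound $e^{O(\ddegv^2/d)}$ on the mean, although the cruder $e^{\ddegv/2}$ already suffices against the threshold $e^n$.

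The one step that needs repair is the concentration input for $\Svtr$. Hanson--Wright is a statement about quadratic forms and does not apply to the degree-four polynomial $\Tr(\Delbnsq)$, and the two-regime tail $\exp\lt(-c\min\lt(d^2t^2/\ddegv^2,\, dt/\ddegv\rt)\rt)$ you assert is not an off-the-shelf result for degree-four Gaussian chaos (it is in fact false in the far tail, e.g. already for $\ddegv=1$ and $t\asymp d^2$ the true tail is only $\exp(-cd^2)$, not $\exp(-cd^3)$). The standard and sufficient tool is Gaussian hypercontractivity: with $\sigma^2=\Var\Tr(\Delbnsq)\le 56\ddegv^2/d^2$ (a Wick computation, as in Lemma~\ref{lem:general-ub-computational}), one has $\P(|f|>t\sigma)\le C_4\exp(-c_4 t^{1/2})$ for the centered statistic $f$, and your deviation $\Ctr\ddegv\log^2 n/d$ corresponds to order $\Ctr\log^2 n$ standard deviations, giving failure probability at most $C_4\exp(-c'\sqrt{\Ctr}\log n)\le n^{-20}$ once $\Ctr$ is chosen large — which is precisely why the constant $\Ctr$ is left free in the definition of $\Svtr$. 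A small additional nit: in the $\Svop$ step, $t=\sqrt{40\log n}$ gives failure probability $2n^{-20}$, which just misses the stated $n^{-20}$; take $t$ slightly larger, as the paper does with $t=32\sqrt{\log n}$.
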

We will see in Lemma~\ref{lem:general-weaker-ub-hypothesis-translation} that the condition $d\gg \max_{v\in [n]} \ddegv + \log n$, here and in future lemmas and propositions, is implied by the hypotheses of Theorem~\ref{thm:general-weaker-ub}.

\subsection{KL Conditioning in the Latent Space}
\label{subsec:general-ub-kl-conditioning}

Throughout this section, we will ignore entries deterministically set to zero and consider $\muv$ and $\nuv$ as measures on $\bR^{\downNv}$.
For any event $T\in \sigma(\Xvl)$ with positive probability, we let $\muvlt$ and $\muvt$ be $\muvl$ and $\muv$ conditioned on $\Xvl\in T$, respectively.
Formally, $\muvlt$ is the measure of $W_v(\Xvl)$ (recall the definition of $W_v$ in (\ref{eq:general-ub-def-wv})) where $\Xvl$ is sampled from $\cL \lt(\cN(0, I_d)^{\otimes (v-1)} | T \rt)$, and $\muvt$ is the measure of $d^{-1/2} \Xbnt X_v$ where $\Xvl \sim \cL \lt(\cN(0, I_d)^{\otimes (v-1)} | T \rt)$ and $X_v \sim \cN(0, I_d)$ independently of $\Xvl$.
The purpose of this section is to prove the following lemma, which constitutes our main KL conditioning step.
We remark that the $11n^{-9}$ can be replaced with any $n^{-\Theta(1)}$ term with minor modifications to the argument, assumption on $T$ and choice of $\Svdet$.
\begin{lemma}
    \label{lem:general-ub-kl-conditioning}
    Suppose that $d \gg \max_{v\in [n]} \ddegv + \log n$ and $n$ is sufficiently large.
    Let $v\in [n]$, and let $T\in \sigma(\Xvl)$ be an arbitrary measurable set such that $\P(T)\ge 1-2n^{-20}$.
    If $\Tdet = T \cap \Svdet$, it follows that
    \begin{equation}
        \label{eq:general-ub-kl-conditioning}
        \E_{W\sim \muvl}
        \KL\lt((\muv | W) \parallel \nuv\rt)
        \le
        \E_{W\sim \muvltd}
        \KL\lt((\muvtd | W) \parallel \nuv\rt)
        + 11n^{-9}.
    \end{equation}
\end{lemma}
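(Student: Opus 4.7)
The plan is to use convexity of $\KL$ to decompose the left-hand side of \eqref{eq:general-ub-kl-conditioning} according to whether the latent $\Xvl$ lies in $\Tdet$ or $\Tdetc$, identify the $\Tdet$ contribution with the main right-hand side term, and bound the $\Tdetc$ residual using the tail estimate $\P(\Tdetc)\le 2n^{-20}+e^{-n/2}$ provided by Proposition~\ref{prop:general-ub-s-hp} together with the pointwise density control afforded by $\Svdet$.

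Concretely, for each $W$ in the support of $\muvl$, the distribution $(\muv|W)$ decomposes as the mixture $\P(\Tdet|W)(\muvtd|W)+\P(\Tdetc|W)(\mu_v^{\Tdetc}|W)$. Applying convexity of $\KL$ in its first argument and using the Bayes identity $\muvl(dW)\,\P(\Tdet|W)=\P(\Tdet)\,\muvltd(dW)$ gives
\[
\E_{W\sim\muvl}\KL((\muv|W)\parallel\nuv) \le \E_{W\sim\muvltd}\KL((\muvtd|W)\parallel\nuv) + \mathrm{Err},
\]
where $\mathrm{Err}:=\int\muvl(dW)\,\P(\Tdetc|W)\,\KL((\mu_v^{\Tdetc}|W)\parallel\nuv)$, so it suffices to show that $\mathrm{Err}\le 11n^{-9}$.

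To control $\mathrm{Err}$ I plan to split $\Tdetc=(T^c\cap\Svdet)\cup\Svdet^c$ and treat the two pieces separately. On the first piece, the Gaussian-mixture representation of $\mu_v^{\Tdetc}|W$ yields the pointwise Radon--Nikodym bound $\frac{d(\mu_v^{\Tdetc}|W)}{d\nuv}(\eta)\le \frac{f_{\det}(\Xvl)}{\P(\Tdetc|W)}\exp(\|\eta\|^2/2)$, hence
\[
\KL((\mu_v^{\Tdetc}|W)\parallel\nuv) \le \log f_{\det}(\Xvl) - \log\P(\Tdetc|W) + \tfrac12\E_\eta\|\eta\|^2.
\]
Using $f_{\det}(\Xvl)\le e^n$ on $\Svdet$, multiplying by $\P(\Tdetc|W)$ and integrating against $\muvl$, the three contributions are bounded respectively by $n\,\P(\Tdetc)=O(n^{-19})$, by $O(\sqrt{\P(\Tdetc)}\log(1/\P(\Tdetc)))=O(n^{-10}\log n)$ via a standard dichotomy on the value of $\P(\Tdetc|W)$, and by $O(n^{-9})$ via the disintegration identity $\muvl(dW)\,\P(\Tdetc|W)\,(\gamma(W)|\Tdetc)(d\Xvl)=\mathbf{1}_{\Tdetc}(\Xvl)\,\cN(0,I_d)^{\otimes(v-1)}(d\Xvl)$ combined with Cauchy--Schwarz and the explicit moment $\E_\Xvl[\Tr(d^{-1}\Xbnt\Xbn)^2]=O(n^2)$. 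On the $\Svdet^c$ piece, $\P(\Tdetc|W)=1$, and convexity of $\KL$ over the Gaussian mixture $\gamma(W)$ followed by the same disintegration reduces the contribution to $\E_\Xvl[\mathbf{1}_{\Svdet^c}\KL(\cN(0,d^{-1}\Xbnt\Xbn)\parallel\nuv)]$, which by Cauchy--Schwarz, Proposition~\ref{prop:general-ub-s-hp}, and standard Wishart concentration is $O(e^{-n/4}\polylog(n))$.

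The main obstacle is that $\gamma(W)$ has no explicit representation, so all pointwise estimates on $d(\muv|W)/d\nuv$ must be routed through $f_{\det}(\Xvl)$ and the event $\Svdet$. A direct Cauchy--Schwarz argument using only $T$ would be insufficient because $\P(T^c)$ is merely polynomial in $n$, while the second moment of $\log\det(d^{-1}\Xbnt\Xbn)$ under the unconditional Gaussian measure is not controlled tightly enough to yield the $11n^{-9}$ bound without the additional pointwise structure provided by $\Svdet$.
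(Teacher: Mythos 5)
Your proposal is correct, but it is organized differently from the paper's argument. The paper proves this lemma by introducing five interpolating quantities $\KL_A,\ldots,\KL_E$ and bounding the four successive differences (its Proposition on successive differences): changing the outer measure from $\muvl$ to $\muvltd$, inserting the indicator $\ind{\Xvl\in\Tdet}$ into the mixture weight, replacing the unconditioned mixture inside the logarithm by the conditioned one via $\log(1+x)\le x$, and finally renormalizing. You instead make a single convexity decomposition of $(\muv\mid W)$ as a mixture over $\{\Xvl\in\Tdet\}$ and $\{\Xvl\in\Tdetc\}$, absorb the good piece into the right-hand side through the Bayes identity $\muvl(dW)\,\P(\Tdet\mid W)=\P(\Tdet)\,\muvltd(dW)$ together with nonnegativity of $\KL$, and then bound the bad-event residual directly; this sidesteps the paper's inside-the-logarithm comparisons entirely. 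The residual bound uses the same core estimates as the paper does in its $\Gamma_1$--$\Gamma_4$ computations: the pointwise density bound $\rn{\muv(\Xvl)}{\nuv}(\eta)\le\det(d^{-1}\Xbnt\Xbn)^{-1/2}e^{\|\eta\|_2^2/2}$ combined with $f_{\det}\le e^n$ on $\Svdet$ (note $f_{\det}$ and $\Svdet$ are $W$-measurable, which is what makes your split into $T^c\cap\Svdet$ and $(\Svdet)^c$ legitimate and also fixes the small notational slip of writing $\P(\Tdetc\mid W)$ rather than the per-piece conditional probability in the denominator — on the event $\Svdet$ these coincide $\gamma(W)$-a.s.), the bound $-x\log x\lesssim\sqrt{x}$, the disintegration $\muvl(dW)\gamma(W)(d\Xvl)=\cN(0,I_d)^{\otimes(v-1)}(d\Xvl)$ with Cauchy--Schwarz against the Wishart trace and $\log^2\det$ moments, all of which appear in the paper's Lemma on Wishart computations. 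What your route buys is a shorter and more transparent architecture (one convexity step plus a residual estimate, with the dominant $O(n^{-9})$ term coming from the trace moment, comfortably inside the stated $11n^{-9}$); what the paper's interpolation buys is that it never has to control the full KL divergence of the bad-event component, only differences, which is why its constants track so explicitly. Your closing remark on why conditioning on $T$ alone is insufficient matches the paper's own motivation for introducing $\Svdet$.
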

For a fixed realization $\Xvl \in \bR^{d\times (v-1)}$, let $\muv(\Xvl)$ denote $\muv$ conditioned on the value of $\Xvl$.
Formally, $\muv(\Xvl)$ is the measure of $d^{-1/2}\Xbnt X_v$, where $\Xvl$ (and thus $\Xbn$) is fixed and $X_v \sim \cN(0, I_d)$.
Note that for an event $T\in \sigma(\Xvl)$ with positive probability, $\muvt = \E_{\Xvl \sim \cL} \muv(\Xvl)$, where $\cL = \cL \lt(\cN(0, I_{d})^{\otimes (v-1)} | T\rt)$.
We crucially have the property that $\muv(\Xvl) = \cN\lt(0, d^{-1} \Xbnt \Xbn\rt)$, which will be essential throughout the proof of Theorem~\ref{thm:general-weaker-ub}.

To prove Lemma~\ref{lem:general-ub-kl-conditioning}, we will introduce the following five KL-type quantities interpolating between KL divergences on the the left and right hand sides of (\ref{eq:general-ub-kl-conditioning}).
The proof of Lemma~\ref{lem:general-ub-kl-conditioning} consists of individually bounding the successive differences $\KL_A-\KL_B$, $\KL_B-\KL_C$, $\KL_C-\KL_D$, and $\KL_D-\KL_E$.
\begin{align*}
    \KL_A
    &=
    \E_{W\sim \muvl}
    \KL\lt((\muv | W) \parallel \nuv\rt) \\
    &=
    \E_{W\sim \muvl}
    \E_{\phi \sim \nuv}
    \lt(
        \E_{\Xvl \sim \gamma(W)}
        \rn{\muv(\Xvl)}{\nuv}(\phi)
    \rt)
    \log
    \lt(
        \E_{\Xvl \sim \gamma(W)}
        \rn{\muv(\Xvl)}{\nuv}(\phi)
    \rt), \\
    \KL_B
    &=
    \E_{W\sim \muvltd}
    \E_{\phi \sim \nuv}
    \lt(
        \E_{\Xvl \sim \gamma(W)}
        \rn{\muv(\Xvl)}{\nuv}(\phi)
    \rt)
    \log
    \lt(
        \E_{\Xvl \sim \gamma(W)}
        \rn{\muv(\Xvl)}{\nuv}(\phi)
    \rt), \\
    \KL_C
    &=
    \E_{W\sim \muvltd}
    \E_{\phi \sim \nuv}
    \lt(
        \E_{\Xvl \sim \gamma(W)}
        \ind{\Xvl \in \Tdet}
        \rn{\muv(\Xvl)}{\nuv}(\phi)
    \rt)
    \log
    \lt(
        \E_{\Xvl \sim \gamma(W)}
        \rn{\muv(\Xvl)}{\nuv}(\phi)
    \rt), \\
    \KL_D
    &=
    \E_{W\sim \muvltd}
    \E_{\phi \sim \nuv}
    \lt(
        \E_{\Xvl \sim \gamma(W)}
        \ind{\Xvl \in \Tdet}
        \rn{\muv(\Xvl)}{\nuv}(\phi)
    \rt)
    \log
    \lt(
        \E_{\substack{
            \Xvl \sim \gamma(W) \\
            \Xvl \in \Tdet
        }}
        \rn{\muv(\Xvl)}{\nuv}(\phi)
    \rt), \\
    \KL_E
    &=
    \E_{W\sim \muvltd}
    \KL\lt((\muvtd | W) \parallel \nuv\rt) \\
    &=
    \E_{W\sim \muvltd}
    \E_{\phi \sim \nuv}
    \lt(
        \E_{\substack{
            \Xvl \sim \gamma(W) \\
            \Xvl \in \Tdet
        }}
        \rn{\muv(\Xvl)}{\nuv}(\phi)
    \rt)
    \log
    \lt(
        \E_{\substack{
            \Xvl \sim \gamma(W) \\
            \Xvl \in \Tdet
        }}
        \rn{\muv(\Xvl)}{\nuv}(\phi)
    \rt).
\end{align*}

In the definitions of $\KL_D$ and $\KL_E$, $\Xvl\sim \gamma(W)$ with $\Xvl \in \Tdet$ denotes that $\Xvl$ is a sample from $\gamma(W)$ conditioned on $\Xvl \in \Tdet$.
The next proposition bounds the successive differences as described above.

\begin{proposition}
    \label{prop:general-ub-kl-successive-differences}
    Suppose that $d \gg \max_{v\in [n]} \ddegv + \log n$.
    Then, the following inequalities hold for all sufficiently large $n$. 
    \begin{enumerate}[label=(\alph*), ref=\alph*]
        \item \label{itm:kl-conditioning-kla-klb} $\KL_A \le \KL_B + 2\P(\Tdetc)^{1/2} n$. 
        \item \label{itm:kl-conditioning-klb-klc} $\KL_B \le \KL_C + 3\P(\Tdetc)^{1/2} n$.
        \item \label{itm:kl-conditioning-klc-kld} $\KL_C \le \KL_D + \P(\Tdetc)$. 
        \item \label{itm:kl-conditioning-kld-kle} $\KL_D \le \KL_E$.
    \end{enumerate}
\end{proposition}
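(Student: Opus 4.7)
The plan is to bound the four successive differences one at a time, by separate arguments of increasing difficulty. The interpolating quantities $\KL_B,\KL_C,\KL_D$ are designed to progressively incorporate the conditioning on $\Tdet$: first in the outer expectation over $W$ (passing from $\KL_A$ to $\KL_B$), then inside the leading factor of the logarithm's argument (passing from $\KL_B$ to $\KL_C$), then inside the logarithm itself (passing from $\KL_C$ to $\KL_D$), and finally via pure convexity (passing from $\KL_D$ to $\KL_E$). The four corresponding arguments are of genuinely distinct nature, and step (b), where the bound on $f_{\det}$ supplied by $\Svdet$ must be exploited, is the principal obstacle.

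Steps (c) and (d) are elementary manipulations. For (d), write $B(W,\phi) = \P(\Xvl\in\Tdet\mid W)\,\tilde A(W,\phi)$ with $\tilde A = \rn{(\muvtd\mid W)}{\nuv}$; then $\KL_D = \E_{W\sim\muvltd}\bigl[\P(\Tdet\mid W)\,\E_\phi[\tilde A\log\tilde A]\bigr]$, and since $\E_\phi[\tilde A\log\tilde A] = \KL((\muvtd\mid W)\parallel\nuv)\ge 0$, dropping the factor $\P(\Tdet\mid W)\le 1$ gives $\KL_D\le \KL_E$. For (c), set $R := A - B \ge 0$; then $\log A \le \log B + R/B$ gives $B\log A \le B\log B + R$, and combining with $\log\tilde A = \log B - \log\P(\Tdet\mid W)$ and the sign $\log\P(\Tdet\mid W)\le 0$ yields $B(\log A - \log\tilde A)\le R$. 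Integrating and using the tower property gives $\KL_C - \KL_D \le \E_{W\sim\muvltd}[\P(\Tdetc\mid W)] = \E[\P(\Tdetc\mid W)\P(\Tdet\mid W)]/\P(\Tdet) \le \P(\Tdetc)/\P(\Tdet)$, which is essentially $\P(\Tdetc)$ under the hypothesis $\P(\Tdet)\ge 1 - 2n^{-20}$.

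For step (a), write $f(W) := \KL((\muv\mid W)\parallel\nuv)\ge 0$. A direct splitting along $\{\Xvl\in\Tdet\}$ versus $\{\Xvl\in\Tdetc\}$ shows that $\KL_A - \KL_B \le \E_\Xvl[f(W(\Xvl))\ind{\Xvl\in\Tdetc}]$ (the complementary term has the favorable sign). Cauchy-Schwarz bounds this by $\E[f^2]^{1/2}\P(\Tdetc)^{1/2}$, so it suffices to show $\E[f^2]\le 4n^2$. Applying convexity of KL in its first argument gives $f(W)\le \E_{\Xvl\sim\gamma(W)}g(\Xvl)$ with $g(\Xvl) = \KL(\cN(0,d^{-1}\Xbnt\Xbn)\parallel\nuv)$ given explicitly by Lemma~\ref{lem:kl-gaussians}, and then Jensen gives $\E_{W\sim\muvl}[f^2]\le \E_\Xvl[g(\Xvl)^2]$. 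Splitting $g$ into its $\Tr(\Delta_{\downNv})$ and $-\log\det(I+\Delta_{\downNv})$ parts, the trace part has $L^2$ norm $O(\sqrt{\ddegv/d})$ by Lemma~\ref{lem:general-ub-computational}(\ref{itm:general-ub-computational-trsq-centered}), and the log-determinant part has $L^2$ norm $O(\ddegv^2/d)$ plus an $\exp(-\Omega(d))$-weighted contribution from $\{\lambda_{\min}(d^{-1}\Xbnt\Xbn) < 1/2\}$, exactly as in the proof of Lemma~\ref{lem:general-naive-ub-logdet}. Under $d\gg \ddegv+\log n$ this is comfortably $\ll n^2$.

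Step (b) is the main obstacle. We must bound $\E_{W,\phi}[R\log A]$ by $3\P(\Tdetc)^{1/2}n$; the contribution from $\{\log A\le 0\}$ is non-positive, so the task reduces to estimating $\E[R\log^+\! A]$. The plan is to split based on whether $\log A$ is in a typical range: for $\log A\le n$ use $R\log^+\! A \le nR$ and integrate to get a term of order $n\P(\Tdetc)$, which is sub-leading; for $\log A > n$ apply Cauchy-Schwarz, $\E[R\log^+\! A\,\ind{\log A>n}]\le \E[R^2]^{1/2}\E[(\log A)^2\ind{\log A>n}]^{1/2}$. The $\E[R^2]$ factor is controlled by Cauchy-Schwarz inside the $\Xvl$-integral, $R^2 \le \P(\Tdetc\mid W)\cdot \E_{\Xvl\sim\gamma(W)}[\ind{\Tdetc}L_{\Xvl}(\phi)^2]$ with $L_{\Xvl}(\phi) = \rn{\muv(\Xvl)}{\nuv}(\phi)$, which contributes a factor $O(\P(\Tdetc))$ once integrated. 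The tail factor is where the design of $\Svdet$ becomes essential: the key identity is $A(W(\Xvl),0) = f_{\det}(\Xvl)$, so $\Svdet$'s bound $f_{\det}\le e^n$ controls the mixture density $A$ at the single point $\phi = 0$, and this must be propagated to a $\phi$-averaged tail estimate via the Gaussian mixture form of $A$ together with typical spectral control on $d^{-1}\Xbnt\Xbn$ under $\gamma(W)$. The principal delicacy of the entire proof lies in this propagation step: $\Svdet$ encodes information about $A$ at only one point of the space on which $\nuv$ is integrated, and translating it into the required bound on $\E[(\log A)^2 \ind{\log A > n}]$ is exactly why the latent-space event $\Svdet$ was built into the definition of $S$ in the first place.
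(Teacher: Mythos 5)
Your treatments of parts (a), (c) and (d) are essentially the paper's own: the same split of $\KL_A-\KL_B$ along $\{\Xvl\in\Tdet\}$ followed by convexity of KL, Jensen, and the explicit Gaussian KL formula (the paper invokes Lemma~\ref{lem:general-ub-computational}(\ref{itm:general-ub-computational-trsq-centered},\ref{itm:general-ub-computational-log2det}); note that the route you sketch through Lemma~\ref{lem:general-naive-ub-logdet} only gives \emph{first}-moment control of $\log\det$ on the bad event, so you should cite part (\ref{itm:general-ub-computational-log2det}) for the second moment), the same peeling of $\log \P(\Tdet\mid W)\le 0$ in (c) (your bound $\P(\Tdetc)/\P(\Tdet)$ is marginally weaker than the stated $\P(\Tdetc)$, fixable by noting $\E\lt[\P(\Tdet\mid W)\P(\Tdetc\mid W)\rt]\le \P(\Tdet)\P(\Tdetc)$, and immaterial downstream), and the same ``drop the factor $\P(\Tdet\mid W)\le 1$'' argument in (d).

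Part (b), however, has a genuine gap, and it is the crux of the proposition. First, your control of $\E[R^2]$ fails: Cauchy--Schwarz inside the $\Xvl$-integral leaves you with $\E_{\phi\sim\nuv}\lt[\rn{\muv(\Xvl)}{\nuv}(\phi)^2\rt]=1+\chisq(\muv(\Xvl),\nuv)$ for $\Xvl\in\Tdetc$, and this equals $\det(\Ibn-\Delbn^2)^{-1/2}$ only when $\norm{\Delbn}_{\op}<1$ and is $+\infty$ otherwise; since $\Tdetc$ carries no spectral restriction, this quantity is infinite with positive probability, so the claimed ``factor $O(\P(\Tdetc))$ once integrated'' is unavailable --- reintroducing squared likelihood ratios on the complement event is exactly the $\chisq$ blow-up that the conditioning was designed to avoid. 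Second, the tail factor $\E[(\log A)^2\ind{\log A>n}]$ is precisely the hard estimate, and your proposal leaves the ``propagation from $\phi=0$'' unexecuted. The paper needs no tail argument at all: it writes $\log A(\phi)=\log\lt(\E_{\Xvl\sim\gamma(W)}\rn{\muv(\Xvl)}{\cL}(\phi)\rt)+\f12\norm{\phi}_2^2$, where $\cL$ is Lebesgue measure scaled by $(2\pi)^{-\ddegv/2}$. Since the Gaussian density of $\muv(\Xvl)$ with respect to $\cL$ is bounded by $\det\lt(d^{-1}\Xbnt\Xbn\rt)^{-1/2}$ \emph{uniformly in $\phi$}, the event $\Svdet$ gives $\E_{\Xvl\sim\gamma(W)}\rn{\muv(\Xvl)}{\cL}(\phi)\le f_{\det}\le e^n$ at every $\phi$ (your identity $A(W,0)=f_{\det}$ is the special case $\phi=0$, but the point is that the $\det^{-1/2}$ bound makes it global), so the first piece is at most $n$ deterministically on the support of $\muvltd$ and contributes $O(n\P(\Tdetc))$; the $\f12\norm{\phi}_2^2$ piece is handled by the change of measure $\E_{\phi\sim\nuv}\rn{\muv(\Xvl)}{\nuv}(\phi)\,\f12\norm{\phi}_2^2=\f12\Tr\lt(d^{-1}\Xbnt\Xbn\rt)$ followed by a single Cauchy--Schwarz against $\ind{\Xvl\in\Tdetc}$ using Lemma~\ref{lem:general-ub-computational}(\ref{itm:general-ub-computational-trsq}), giving the $\P(\Tdetc)^{1/2}n$ term. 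Without this uniform-in-$\phi$ observation (or some substitute for it), your step (b) does not go through.
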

We defer the proof of this proposition to Appendix~\ref{appsec:kl-conditioning}.
Lemma~\ref{lem:general-ub-kl-conditioning} readily follows from these bounds.
\begin{proof}[Proof of Lemma~\ref{lem:general-ub-kl-conditioning}]
    Because $d \gg \max_{v\in [n]} \ddegv + \log n$, Propositions~\ref{prop:general-ub-s-hp} and \ref{prop:general-ub-kl-successive-differences} hold.
    By Proposition~\ref{prop:general-ub-s-hp}, $\P(\Tdetc) \le \P(T^c) + \P((\Svdet)^c) \le 2n^{-20} + e^{-n/2} \le 3n^{-20}$ for sufficiently large $n$.
    Summing the bounds in Proposition~\ref{prop:general-ub-kl-successive-differences} gives
    \[
        \KL_A \le \KL_E + 6\P(\Tdetc)^{1/2} n \le \KL_E + 11n^{-9}
    \]
    because $6\sqrt{3} < 11$.
    This completes the proof of the lemma.
\end{proof}

\subsection{Bounding $\chisq$ Divergence with the Second Moment Method}
\label{subsec:general-ub-main-2mm}

In this section, we will evaluate and simplify an upper bound on right-hand side summands in Lemma~\ref{lem:general-ub-kl-conditioning} with $T=\Sop \cap \Str$ (and thus $\Tdet = S$).
We will derive an upper bound in terms of the coupled exponentiated overlap 
\[
    \E_{(\Xonevl, \Xtwovl) \sim \cNvdips} \exp\lt(\f12 Y_v\rt),
\]
where $\cNvdips$ is defined in Definition~\ref{defn:general-ub-ip-coupling} and $Y_v$ is defined in (\ref{eq:general-ub-define-yv}).
The next lemma is the main result of this section.

\begin{lemma}[Bounds from the Second Moment Method]
    \label{lem:general-ub-main-2mm}
    Suppose that $d \gg \max_{v\in [n]} \ddegv + \log n$.
    Let $v\in [n]$ and suppose that $n$ is sufficiently large.
    Define $Y_v$ as in (\ref{eq:general-ub-define-yv}).
    Then, we have that
    \begin{align*}
        &\E_{W\sim \muvls}
        \KL\lt((\muvs | W) \parallel \nuv\rt) \\
        &\qquad \le
        -1 + \exp\lt(
            \f{100^4}{d^2}
            (\ddegv^3 + \ddegv \log^2 n)
        \rt) 
        \E_{(\Xonevl, \Xtwovl) \sim \cNvdips}
        \exp\lt(\f12 Y_v\rt).
    \end{align*}
\end{lemma}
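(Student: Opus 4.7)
The plan is to execute the four-step program sketched in Section~\ref{subsec:technical-overview-general-ub-outline}: downgrade $\KL$ to $\chisq$ pointwise in $W$, apply the second moment method using the mixture structure of $(\muvs\mid W)$, compute the resulting Gaussian integral in closed form, and expand the logarithm into the overlap $Y_v$ plus a deterministic remainder on $S$. First I would invoke the elementary pointwise inequality $\KL(\mu\parallel\nu)\le \chisq(\mu,\nu)$ (which follows from $x\log x\le x^2-x$ for $x\ge 0$) and average over $W\sim \muvls$, producing the ``$-1$'' on the right-hand side of the lemma via $\chisq = (1+\chisq)-1$.

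Next, the second moment method unfolds $1 + \chisq((\muvs\mid W),\nuv)$. Since $(\muvs\mid W)$ is the mixture $\E\,\muv(\Xvl)$ of Gaussians $\muv(\Xvl) = \cN(0, d^{-1}\Xbnt\Xbn)$ over $\Xvl$ drawn from $\gamma(W)$ restricted to $\{\Xvl\in S\}$, Fubini gives
\[
    1 + \chisq((\muvs\mid W),\nuv)
    = \E_{\Xonevl, \Xtwovl} \E_{\phi\sim \nuv}\lt[\rn{\muv(\Xonevl)}{\nuv}(\phi)\rn{\muv(\Xtwovl)}{\nuv}(\phi)\rt],
\]
with $\Xonevl, \Xtwovl$ i.i.d.\ from that conditional law. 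Averaging the outer expectation over $W\sim \muvls$ reproduces precisely the inner-product coupling $\cNvdips$ of Definition~\ref{defn:general-ub-ip-coupling}: drawing $\Xonevl$ from $\cL(\cN(0,I_d)^{\otimes(v-1)}\mid S)$, reading $W = W_v(\Xonevl)$, then drawing $\Xtwovl$ independently from the same law conditioned on $W_v(\Xtwovl)=W$, is exactly this two-stage experiment.

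For the inner Gaussian integral, the standard identity $\E_{\phi\sim\cN(0,I)}\exp(-\tfrac12\phi^\top A\phi) = \det(I+A)^{-1/2}$ yields
\[
    \E_\phi\lt[\rn{\muv(\Xonevl)}{\nuv}\rn{\muv(\Xtwovl)}{\nuv}\rt] = [\det\Sigma_1\det\Sigma_2\det(\Sigma_1^{-1}+\Sigma_2^{-1}-I)]^{-1/2},
\]
where $\Sigma_r = I + \Delrbn$. Using $\Sigma_1^{-1}+\Sigma_2^{-1}-I = \Sigma_1^{-1}(\Sigma_1+\Sigma_2-\Sigma_1\Sigma_2)\Sigma_2^{-1}$ together with the algebraic collapse $\Sigma_1+\Sigma_2-\Sigma_1\Sigma_2 = I - \Delonebn\Deltwobn$, this telescopes to the clean form $\det(I-\Delonebn\Deltwobn)^{-1/2}$.

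Finally I would recognize $\Tr(\Delonebn\Deltwobn) = \sum_{i,j\in \downNv}(\Delonebn)_{i,j}(\Deltwobn)_{i,j} = Y_v$ and Taylor expand
\[
    -\tfrac12\log\det(I - \Delonebn\Deltwobn) = \tfrac12 Y_v + \tfrac12\sum_{k\ge 2}\tfrac1k \Tr((\Delonebn\Deltwobn)^k).
\]
The hard part will be showing that on $S$ the remainder series is deterministically at most $\f{100^4(\ddegv^3+\ddegv\log^2 n)}{d^2}$. The tool is Weyl's singular-value inequality $|\Tr(M^k)|\le \|M\|_{HS}^2 \|M\|_{op}^{k-2}$ for $k\ge 2$ applied to $M = \Delonebn\Deltwobn$: summing the resulting geometric series in $\|M\|_{op}\le 1/2$ (ensured by $\Svop$ together with $d \gg \max_v\ddegv + \log n$) bounds the tail by $\|\Delonebn\Deltwobn\|_{HS}^2$, which via $\|AB\|_{HS}^2\le \|A\|_{op}^2\|B\|_{HS}^2$ becomes $\|\Delonebn\|_{op}^2\|\Deltwobn\|_{HS}^2$. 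Substituting the $\Svop$ bound $\|\Delrbn\|_{op}^2\le 100^2(\ddegv+\log n)/d$ and the $\Svtr$ bound $\|\Delrbn\|_{HS}^2\le 2\ddegv^2/d + \Ctr\ddegv\log^2 n/d$, and using AM-GM to absorb the cross terms ($\ddegv^2\log n$, $\ddegv^2\log^2 n$, $\ddegv\log^3 n$) into constant multiples of $\ddegv^3 + \ddegv\log^2 n$, produces the claimed bound. Exponentiating and pulling the deterministic factor outside the expectation over $\cNvdips$ completes the proof.
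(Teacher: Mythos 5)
Your overall route coincides with the paper's: pass from $\KL$ to $\chisq$, use the mixture structure of $(\muvs\mid W)$ and the second moment method to identify the coupling $\cNvdips$, evaluate the inner Gaussian integral to get $\det\lt(\Ibn - \Delonebn\Deltwobn\rt)^{-1/2}$ (your algebraic collapse $\Sigma_1+\Sigma_2-\Sigma_1\Sigma_2 = \Ibn - \Delonebn\Deltwobn$ is exactly the computation in the paper's Proposition on the inner expectation), and then control the log-determinant remainder deterministically on $S$. Up to that last step, everything is sound, modulo the small omission that one must also verify positive definiteness of $\lt(\Delonebn+\Ibn\rt)^{-1}+\lt(\Deltwobn+\Ibn\rt)^{-1}-\Ibn$ (which follows from the $\Sop$ spectral bound, as the paper checks) so that the Gaussian integral is finite.

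The genuine gap is in your final remainder estimate. You bound the tail of the series by $\norm{\Delonebn\Deltwobn}_{\HS}^2 \le \norm{\Delonebn}_{\op}^2\norm{\Deltwobn}_{\HS}^2$ and plug in the $\Sop$ and $\Str$ bounds, which gives (up to constants depending on $\Ctr$) $d^{-2}(\ddegv+\log n)(\ddegv^2+\ddegv\log^2 n) \asymp d^{-2}\lt(\ddegv^3 + \ddegv^2\log n + \ddegv^2\log^2 n + \ddegv\log^3 n\rt)$. The terms $\ddegv^2\log^2 n$ and $\ddegv\log^3 n$ \emph{cannot} be absorbed into a constant multiple of $\ddegv^3 + \ddegv\log^2 n$, contrary to your AM-GM claim: take $\ddegv \asymp \log n$, where $\ddegv^2\log^2 n \asymp \log^4 n$ while $\ddegv^3+\ddegv\log^2 n \asymp \log^3 n$, or $\ddegv = 1$, where $\ddegv\log^3 n = \log^3 n$ while $\ddegv^3+\ddegv\log^2 n \asymp \log^2 n$. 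So your argument only yields a strictly weaker exponent than the one stated in the lemma (it would suffice for the downstream theorem after minor changes, but it does not prove the stated bound). The fix is to avoid $\Str$ here entirely and use only $\Sop$, as the paper does: bound the remainder by $\sum_{\lambda\in\spec(\Delonebn\Deltwobn)}|\lambda|^2 \le \ddegv\,\norm{\Delonebn\Deltwobn}_{\op}^2 \le \f{100^4}{d^2}\,\ddegv(\ddegv+\log n)^2 \le \f{2\cdot 100^4}{d^2}(\ddegv^3+\ddegv\log^2 n)$, where now the single cross term $2\ddegv^2\log n$ \emph{is} absorbed by AM-GM; equivalently, in your series argument replace $\norm{M}_{\HS}^2\le\norm{\Delonebn}_{\op}^2\norm{\Deltwobn}_{\HS}^2$ by $\norm{M}_{\HS}^2\le \ddegv\norm{M}_{\op}^2$. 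The event $\Str$ is not needed in this lemma; in the paper it is used later, to control the Taylor error $h(\f12 Y_v)$ of the exponentiated overlap.
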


In order to prove this lemma, we will begin with the following standard fact on the expected value of an exponentiated quadratic form of i.i.d. standard Gaussians.

\begin{lemma}
    \label{lem:exp-quadratic-gaussian}
    If $\Sigma \in \bR^{k\times k}$ is symmetric and $I_k + \Sigma$ is positive definite, then
    \[
        \E_{\phi\sim \cN(0, I_k)}
        \exp\lt(-\f12 \phi^\top \Sigma \phi\rt)
        = \det(I_k + \Sigma)^{-1/2}.
    \]
\end{lemma}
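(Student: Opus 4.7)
The plan is to reduce this to the standard Gaussian normalization identity. Writing out the expectation as an integral against the density of $\cN(0,I_k)$, I would combine the two quadratic exponentials into a single one:
\[
    \E_{\phi\sim \cN(0, I_k)}
    \exp\lt(-\f12 \phi^\top \Sigma \phi\rt)
    =
    \f{1}{(2\pi)^{k/2}}
    \int_{\bR^k}
    \exp\lt(-\f12 \phi^\top (I_k + \Sigma) \phi\rt) \diff{\phi}.
\]
Since $I_k + \Sigma$ is positive definite by hypothesis, the integrand is (up to normalization) the density of $\cN(0, (I_k + \Sigma)^{-1})$, so the integral equals $(2\pi)^{k/2} \det(I_k + \Sigma)^{-1/2}$, and the claim follows after cancelling the $(2\pi)^{k/2}$.

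If one prefers to avoid invoking the multivariate Gaussian normalization as a black box, the same result follows from a one-line diagonalization argument. Since $\Sigma$ is symmetric, write $\Sigma = Q^\top D Q$ with $Q$ orthogonal and $D = \diag(\lambda_1, \ldots, \lambda_k)$; positive definiteness of $I_k + \Sigma$ forces $\lambda_i > -1$ for every $i$. The change of variables $\psi = Q\phi$ preserves the law $\cN(0, I_k)$, so the expectation factors as $\prod_{i=1}^k \E_{\psi_i \sim \cN(0,1)} \exp\lt(-\f12 \lambda_i \psi_i^2\rt)$. Each scalar factor evaluates to $(1+\lambda_i)^{-1/2}$ by the standard chi-squared moment generating function (or a direct one-dimensional Gaussian integral), and the product telescopes to $\prod_i (1+\lambda_i)^{-1/2} = \det(I_k + \Sigma)^{-1/2}$.

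There is no real obstacle here: the hypothesis that $I_k + \Sigma$ is positive definite is exactly what is needed to make the Gaussian integral converge (equivalently, to keep each $1 + \lambda_i$ strictly positive so that the square roots are well defined). Both routes give the identity immediately; I would include the one-line diagonalization version, since it makes the role of the positive-definiteness hypothesis transparent and avoids any appeal to a separate lemma.
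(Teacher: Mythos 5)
Your first route is exactly the paper's proof: combine the two quadratic exponentials and recognize the integrand as the unnormalized density of $\cN\lt(0, (I_k+\Sigma)^{-1}\rt)$, with positive definiteness guaranteeing convergence. The diagonalization variant you prefer is an equally valid, slightly more self-contained way to evaluate the same integral, so either version is correct.
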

\begin{proof}
    A direct computation yields that
    \begin{align*}
        \E_{\phi\sim \cN(0, I_k)}
        \exp\lt(-\f12 \phi^\top \Sigma \phi\rt)
        &=
        \int_{\bR^k}
        (2\pi)^{-k/2}
        \exp\lt(-\f12 \phi^\top \Sigma \phi\rt)
        \exp\lt(-\f12 \phi^\top \phi\rt)
        \diff{\phi} \\
        &=
        \det(I_k+\Sigma)^{-1/2}
        \int_{\bR^k}
        (2\pi)^{-k/2}
        \det(I_k+\Sigma)^{1/2}
        \exp\lt(-\f12 \phi^\top (I_k + \Sigma) \phi\rt)
        \diff{\phi} \\
        &=
        \det (I_k + \Sigma)^{-1/2}.
    \end{align*}
    The last equality holds because the last integrand is the probability density of a Gaussian vector with covariance matrix $(I_k + \Sigma)^{-1}$.
\end{proof}

This fact yields the following proposition, which evaluates the $\chisq$-type correlations that arise as the inner expectation of our application of the second moment method.

\begin{proposition}
    \label{prop:general-ub-2mm-inner-expectation}
    Let $\Xonevl, \Xtwovl \in \bR^{d\times (v-1)}$ be two fixed realizations of these random variables.
    For $r=1,2$, let $\Delrbn$ be the $\ddegv \times \ddegv$ real matrix with rows and columns indexed by $\downNv$ given by
    \[
        \Delrbn
        =
        d^{-1}
        \lt(\Xrbn\rt)^\top \Xrbn - \Ibn.
    \]
    If the matrix $\lt(\Delonebn + \Ibn\rt)^{-1} + \lt(\Deltwobn + \Ibn\rt)^{-1} - \Ibn$ is positive definite, then
    \[
        \E_{\phi\sim \nuv}
        \rn{\muv\lt(\Xonevl\rt)}{\nuv}(\phi)
        \rn{\muv\lt(\Xtwovl\rt)}{\nuv}(\phi)
        =
        \det\lt(\Ibn - \Delonebn \Deltwobn\rt)^{-1/2}.
    \]
\end{proposition}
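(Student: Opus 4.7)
The plan is a direct calculation: expand the Radon–Nikodym derivatives, apply Lemma~\ref{lem:exp-quadratic-gaussian}, and simplify the resulting determinants by a matrix identity. Since $\muv(\Xrvl) = \cN\lt(0, d^{-1}(\Xrbn)^\top \Xrbn\rt) = \cN(0, \Ibn + \Delrbn)$ and $\nuv = \cN(0, \Ibn)$, I will abbreviate $\Sigma_r := \Ibn + \Delrbn$ and $\Delta_r := \Delrbn$ throughout. First, I would write down the explicit Radon–Nikodym derivative of a centered Gaussian against the standard one on $\bR^{\downNv}$:
\[
    \rn{\cN(0, \Sigma_r)}{\nuv}(\phi)
    =
    \det(\Sigma_r)^{-1/2}
    \exp\lt(-\tfrac{1}{2} \phi^\top (\Sigma_r^{-1} - \Ibn)\phi\rt).
\]

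Next I would multiply the two derivatives, producing $\det(\Sigma_1 \Sigma_2)^{-1/2} \exp\lt(-\tfrac12 \phi^\top (\Sigma_1^{-1} + \Sigma_2^{-1} - 2\Ibn)\phi\rt)$, and take expectation against $\phi\sim \nuv$. By Lemma~\ref{lem:exp-quadratic-gaussian} applied with the symmetric matrix $\Sigma := \Sigma_1^{-1} + \Sigma_2^{-1} - 2\Ibn$, the inner expectation equals $\det(\Ibn + \Sigma)^{-1/2} = \det(\Sigma_1^{-1} + \Sigma_2^{-1} - \Ibn)^{-1/2}$, and the hypothesis of the proposition is exactly the positive definiteness condition $\Ibn + \Sigma \succ 0$ required by the lemma. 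So at this point I have
\[
    \E_{\phi \sim \nuv}
    \rn{\muv(\Xonevl)}{\nuv}(\phi)
    \rn{\muv(\Xtwovl)}{\nuv}(\phi)
    =
    \det(\Sigma_1)^{-1/2}
    \det(\Sigma_2)^{-1/2}
    \det(\Sigma_1^{-1} + \Sigma_2^{-1} - \Ibn)^{-1/2}.
\]

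The final step is to collapse this product of determinants to $\det(\Ibn - \Delta_1 \Delta_2)^{-1/2}$. The key observation is the matrix identity
\[
    \Sigma_2 \lt(\Sigma_1^{-1} + \Sigma_2^{-1} - \Ibn\rt) \Sigma_1
    =
    \Sigma_1 + \Sigma_2 - \Sigma_2 \Sigma_1,
\]
obtained by distributing. Substituting $\Sigma_r = \Ibn + \Delta_r$ on the right-hand side gives $2\Ibn + \Delta_1 + \Delta_2 - (\Ibn + \Delta_1 + \Delta_2 + \Delta_2 \Delta_1) = \Ibn - \Delta_2 \Delta_1$. Taking determinants of both sides and using multiplicativity of $\det$ yields $\det(\Sigma_1)\det(\Sigma_2)\det(\Sigma_1^{-1} + \Sigma_2^{-1} - \Ibn) = \det(\Ibn - \Delta_2 \Delta_1)$, and finally the Sylvester-type identity $\det(\Ibn - \Delta_2 \Delta_1) = \det(\Ibn - \Delta_1 \Delta_2)$ (since $\Delta_1 \Delta_2$ and $\Delta_2 \Delta_1$ share the same nonzero spectrum) gives the desired form.

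No step here is a serious obstacle; the whole proof is essentially bookkeeping of Gaussian densities, a single invocation of Lemma~\ref{lem:exp-quadratic-gaussian}, and a two-line matrix identity. The only place to be mildly careful is matching the positive definiteness assumption with the hypothesis of Lemma~\ref{lem:exp-quadratic-gaussian} (which ensures the Gaussian integral converges and the determinant on the right is well-defined and positive), and keeping track of which products of $\Delta$'s appear so as to deliver the conclusion in its stated symmetric form $\det(\Ibn - \Delonebn \Deltwobn)^{-1/2}$.
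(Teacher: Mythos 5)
Your proposal is correct and follows essentially the same route as the paper: expand the Radon–Nikodym derivatives, evaluate the Gaussian integral via Lemma~\ref{lem:exp-quadratic-gaussian} (whose positive definiteness hypothesis is exactly the stated condition), and collapse the product of determinants to $\det\lt(\Ibn - \Delonebn\Deltwobn\rt)^{-1/2}$. The only difference is cosmetic: you spell out the matrix identity $\Sigma_2(\Sigma_1^{-1}+\Sigma_2^{-1}-\Ibn)\Sigma_1 = \Ibn - \Delta_2\Delta_1$ (together with $\det(\Ibn-\Delta_2\Delta_1)=\det(\Ibn-\Delta_1\Delta_2)$), which the paper leaves implicit in its final equality.
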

\begin{proof}
    Conditioned on $\Xrvl$, the measure $\muv\lt(\Xrvl \rt)$ is a jointly Gaussian vector with covariance matrix $d^{-1} \lt(\Xrbn\rt)^{\top} \Xrbn$.
    Thus $\muv\lt(\Xrvl\rt)$ is the distribution $\cN\lt(0, \Ibn + \Delrbn\rt)$ for each $r=1,2$.
    Furthermore, $\nuv$ is the distribution $\cN\lt(0, \Ibn\rt)$.
    For positive definite $\Sigma \in \bR^{k\times k}$, let
    \[
        p_\Sigma(\phi)
        =
        (2\pi)^{-k/2}
        (\det \Sigma)^{-1/2}
        \exp\lt(-\f12 \phi^\top \Sigma^{-1} \phi\rt)
    \]
    be the probability density of $\cN(0, \Sigma)$ with respect to the Lebesgue measure on $\bR^k$.
    Directly expanding probability densities and applying Lemma~\ref{lem:exp-quadratic-gaussian} yields that
    \begin{align*}
        \E_{\phi\sim \nuv}
        &=
        \rn{\muv\lt(\Xonevl\rt)}{\nuv}(\phi)
        \rn{\muv\lt(\Xtwovl\rt)}{\nuv}(\phi)
        =
        \E_{\phi\sim \cN(0, \Ibn)}
        \f{
            p_{\Delonebn + \Ibn}(\phi)
            p_{\Deltwobn + \Ibn}(\phi)
        }{
            p_{\Ibn}(\phi)^2
        } \\
        &=
        \det\lt(\Delonebn + \Ibn\rt)^{-1/2}
        \det\lt(\Deltwobn + \Ibn\rt)^{-1/2} \\
        &\qquad \times
        \E_{\phi\sim \cN(0, \Ibn)}
        \exp\lt(
            -\f12 \phi^\top \lt[
                \lt(\Delonebn + \Ibn\rt)^{-1} +
                \lt(\Deltwobn + \Ibn\rt)^{-1} -
                2 \Ibn
            \rt] \phi
        \rt) \\
        &=
        \det\lt(\Delonebn + \Ibn\rt)^{-1/2}
        \det\lt(\Deltwobn + \Ibn\rt)^{-1/2} \\
        &\qquad \times
        \det\lt[
            \lt(\Delonebn + \Ibn\rt)^{-1} +
            \lt(\Deltwobn + \Ibn\rt)^{-1} -
            \Ibn
        \rt]^{-1/2} \\
        &=
        \det\lt(\Ibn - \Delonebn \Deltwobn\rt)^{-1/2}
    \end{align*}
    which completes the proof of the proposition.
\end{proof}

This proposition yields an integrand of the form $\det(I_k - \Pi)^{-1/2}$.
The following simple lemma gives an upper bound on this quantity that will be more convenient to work with in the rest of the proof.

\begin{lemma}
    \label{lem:det-to-exp-to-deg2}
    There exists an absolute constant $\eps > 0$ such that if $\Pi \in \bR^{k\times k}$ (and $\Pi$ is not necessarily symmetric) and $\norm{\Pi}_{\op} \le \eps$, then
    \[
        \det(I_k - \Pi)
        \ge
        \etr(-\Pi)
        \exp\lt(-\sum_{\lambda \in \spec(\Pi)} |\lambda|^2\rt).
    \]
\end{lemma}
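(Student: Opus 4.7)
The plan is to take logarithms of both sides and reduce to a per-eigenvalue inequality via the spectral data of $\Pi$. Specifically, the claim is equivalent to
\[
    \log \det(I_k - \Pi) + \Tr(\Pi) + \sum_{\lambda \in \spec(\Pi)} |\lambda|^2 \ge 0.
\]
Using the Schur triangularization of the (real, but possibly non-symmetric) matrix $\Pi$, both $\det(I_k - \Pi) = \prod_\lambda (1-\lambda)$ and $\Tr(\Pi) = \sum_\lambda \lambda$ factor over the eigenvalues of $\Pi$ counted with multiplicity in $\mathbb{C}$. The operator norm bound $\norm{\Pi}_{\op} \le \eps$ guarantees $|\lambda| \le \eps < 1$ for every eigenvalue, so the principal branch of $\log(1-\lambda)$ is well-defined and $\log \det(I_k - \Pi) = \sum_\lambda \log(1-\lambda)$ (with the real part giving the correct value since conjugate pairs combine).

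It therefore suffices to prove the scalar inequality $\mathrm{Re}\log(1-\lambda) + \mathrm{Re}(\lambda) + |\lambda|^2 \ge 0$ for every $\lambda \in \mathbb{C}$ with $|\lambda| \le \eps$. I would obtain this from the Taylor expansion
\[
    \log(1-\lambda) = -\lambda - \sum_{m \ge 2} \frac{\lambda^m}{m},
\]
which gives
\[
    \bigl|\mathrm{Re}\log(1-\lambda) + \mathrm{Re}(\lambda)\bigr|
    \le \sum_{m \ge 2} \frac{|\lambda|^m}{m}
    \le \frac{|\lambda|^2}{2} \cdot \frac{1}{1-|\lambda|}.
\]
Choosing $\eps$ small enough (e.g.\ $\eps = 1/2$ already works) makes the right-hand side at most $|\lambda|^2$, which yields the desired inequality. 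Summing over $\lambda \in \spec(\Pi)$ completes the proof.

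The argument is essentially a Taylor-series estimate, and no step is genuinely hard. The only minor subtlety worth noting carefully is that $\Pi$ need not be symmetric so eigenvalues may be complex; this is handled by working with the principal branch of the logarithm and observing that conjugate pairs of eigenvalues contribute real quantities to both $\log \det(I_k - \Pi)$ and $\Tr(\Pi)$, so the final inequality (which was stated over $\bR$) is consistent with the per-eigenvalue bound applied in $\mathbb{C}$.
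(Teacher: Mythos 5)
Your proof is correct and follows essentially the same route as the paper's: expand $\log(1-\lambda)$ per eigenvalue, bound the order-two-and-higher terms by $|\lambda|^2$ once $|\lambda|\le\eps$, and sum over $\spec(\Pi)$. The only difference is bookkeeping — you handle complex eigenvalues by taking real parts with an explicit geometric-series tail bound (and an explicit $\eps=1/2$), whereas the paper combines conjugate pairs into a single real term — which is immaterial.
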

\begin{proof}
    For real $\lambda$ in a sufficiently small neighborhood of $0$, we have that
    \[
        \log(1-\lambda)
        =
        - \lambda - \f12 \lambda^2 - O(|\lambda|^3)
        \ge
        -\lambda - |\lambda|^2.
    \]
    For complex conjugates $\lambda, \blambda$ in a neighborhood of $0$,
    \begin{align*}
        \log\lt((1-\lambda)(1-\blambda)\rt)
        &=
        \log \lt(1 - \lambda - \blambda + |\lambda|^2\rt) \\
        &=
        - (\lambda + \blambda - |\lambda|^2)
        - \f12 (\lambda + \blambda - |\lambda|^2)^2
        - O(|\lambda|^3) \\
        &\ge
        - \lt(\lambda + |\lambda|^2\rt)
        - \lt(\blambda + |\blambda|^2\rt).
    \end{align*}
    Set $\eps$ so the above bounds hold when $|\lambda| \le \eps$.
    If $\norm{\Pi}_{\op} \le \eps$, then $|\lambda|\le \eps$ for all $\lambda \in \spec(\Pi)$ and thus
    \[
        \det(I_k - \Pi)
        =
        \prod_{\lambda \in \spec(\Pi)}
        (1-\lambda)
        \ge
        \exp\lt(
            \sum_{\lambda \in \spec(\Pi)}
            (-\lambda - |\lambda|^2)
        \rt)
        =
        \etr(-\Pi)
        \exp\lt(
            -\sum_{\lambda \in \spec(\Pi)}
            |\lambda|^2
        \rt),
    \]
    which completes the proof of the lemma.
\end{proof}

We now combine these propositions and lemmas to prove Lemma~\ref{lem:general-ub-main-2mm}.

\begin{proof}[Proof of Lemma~\ref{lem:general-ub-main-2mm}]
    For now, fix some valid realization of the random matrix $W\in \bR^{(v-1)\times (v-1)}$
    with $W_{i,j} = 0$ for all $i,j\not\in E(G[v-1])$.
    Note that $\muvs$ given $W$ can be written as a mixture of Gaussians, as
    \[
        \cL(\muvs | W)
        =
        \E_{\substack{
            \Xvl \sim \gamma(W) \\
            \Xvl \in S
        }}
        \muv(\Xvl)
        =
        \E_{\substack{
            \Xvl \sim \gamma(W) \\
            \Xvl \in S
        }}
        \cN\lt(0, d^{-1} \Xbnt \Xbn\rt).
    \]
    Here, we recall that $\Xvl \sim \gamma(W)$ with $\Xvl \in S$ denotes that $\Xvl$ is sampled from $\gamma(W)$ conditioned on $\Xvl \in S$.
    Applying the $\chisq$ upper bound on KL divergence and the second moment method, we have that
    \begin{align}
        \notag
        1 + \KL\lt( (\muvs | W) \parallel \nuv\rt)
        &\le
        1 + \chisq\lt((\muvs | W), \nuv\rt) \\
        \notag
        &=
        \E_{\phi\sim \nuv}
        \lt(
            \E_{\substack{
                \Xvl \sim \gamma(W) \\
                \Xvl \in S
            }}
            \rn{\muv(\Xvl)}{\nuv}(\phi)
        \rt)^{2} \\
        \label{eq:general-ub-main-2mm-starting-point}
        &=
        \E_{\substack{
            \Xonevl \sim \gamma(W) \\
            \Xonevl \in S
        }}
        \E_{\substack{
            \Xtwovl \sim \gamma(W) \\
            \Xtwovl \in S
        }}
        \E_{\phi\sim \nuv}
        \rn{\muv\lt(\Xonevl\rt)}{\nuv}(\phi)
        \rn{\muv\lt(\Xtwovl\rt)}{\nuv}(\phi).
    \end{align}
    We now show that the positive definite condition in Proposition~\ref{prop:general-ub-2mm-inner-expectation} holds whenever $\Xonevl, \Xtwovl \in S$ and $n$ is sufficiently large.
    Recall that $S\subseteq \Sop$.
    The definition (\ref{eq:general-ub-def-sv}) of $\Sop$ implies that
    \[
        \norm{\Delrbn}_{\op}
        \le
        100
        \sqrt{\f{\ddegv + \log n}{d}}
    \]
    for each $r=1,2$.
    Since $d \gg \max_{v\in [n]} \ddegv + \log n$, we have that $\norm{\Delrbn}_{\op} \le \f12$ if $n$ is sufficiently large.
    When this holds, we have $\spec \lt(\Delrbn + \Ibn\rt)\le \f32$. Thus, $\spec\lt(\lt(\Delrbn + \Ibn\rt)^{-1}\rt) \ge \f23$ for each $r=1,2$.
    Therefore, we have that
    \[
        \spec\lt(
            \lt(\Delonebn + \Ibn\rt)^{-1} +
            \lt(\Deltwobn + \Ibn\rt)^{-1} -
            \Ibn
        \rt)
        \ge \f13
    \]
    and hence
    $\lt(\Delonebn + \Ibn\rt)^{-1} + \lt(\Deltwobn + \Ibn\rt)^{-1} - \Ibn$ is positive definite, verifying the condition in Proposition~\ref{prop:general-ub-2mm-inner-expectation}.
    Now, applying Proposition~\ref{prop:general-ub-2mm-inner-expectation} to (\ref{eq:general-ub-main-2mm-starting-point}), we have that
    \[
        1 + \KL\lt((\muvs | W) \parallel \nuv\rt)
        \le
        \E_{\substack{
            \Xonevl \sim \gamma(W) \\
            \Xonevl \in S
        }}
        \E_{\substack{
            \Xtwovl \sim \gamma(W) \\
            \Xtwovl \in S
        }}
        \det\lt(\Ibn - \Delonebn \Deltwobn\rt)^{-1/2}.
    \]
    Because $\Xonevl, \Xtwovl \in S\subseteq \Sop$, we have that
    \[
        \norm{\Delonebn \Deltwobn}_{\op}
        \le
        \norm{\Delonebn}_{\op}
        \norm{\Deltwobn}_{\op}
        \le
        \f{100^2}{d}
        \lt(\ddegv + \log n\rt).
    \]
    Since $d \gg \max_{v\in [n]} \ddegv + \log n$, for the $\eps$ in Lemma~\ref{lem:det-to-exp-to-deg2}, we have that $\norm{\Delonebn \Deltwobn}_{\op} \le \eps$ for all sufficiently large $n$.
    When this occurs, Lemma~\ref{lem:det-to-exp-to-deg2} implies that
    \begin{align*}
        &1 + \KL\lt((\muvs | W) \parallel \nuv\rt) \\
        &\qquad \le
        \E_{\substack{
            \Xonevl \sim \gamma(W) \\
            \Xonevl \in S
        }}
        \E_{\substack{
            \Xtwovl \sim \gamma(W) \\
            \Xtwovl \in S
        }}
        \etr\lt(\f12 \Delonebn \Deltwobn\rt)
        \exp\lt(
            \f12
            \sum_{\lambda\in \spec(\Delonebn \Deltwobn)}
            |\lambda|^2
        \rt).
    \end{align*}
    Because $\Xonevl, \Xtwovl \in S\subseteq \Sop$, we have that
    \begin{align*}
        \sum_{\lambda\in \spec(\Delonebn \Deltwobn)}
        |\lambda|^2
        &\le
        \ddegv
        \norm{\Delonebn \Deltwobn}_{\op}
        \le
        \f{100^4}{d^2} \ddegv
        \lt(\ddegv + \log n\rt)^2 \\
        &\le
        \f{2\cdot 100^4}{d^2}
        (\ddegv^3 + \ddegv \log^2 n)
    \end{align*}
    by AM-GM.
    Putting these inequalities together, we now have that
    \begin{align*}
        \E_{W\sim \muvls}
        \KL\lt((\muvs | W) \parallel \nuv\rt)
        &\le
        -1 +
        \exp\lt(
            \f{100^4}{d^2}
            (\ddegv^3 + \ddegv \log^2 n)
        \rt) \\
        &\qquad \times
        \E_{W\sim \muvls}
        \E_{\substack{
            \Xonevl \sim \gamma(W) \\
            \Xonevl \in S
        }}
        \E_{\substack{
            \Xtwovl \sim \gamma(W) \\
            \Xtwovl \in S
        }}
        \etr\lt(\f12 \Delonebn \Deltwobn\rt).
    \end{align*}
    As observed in the discussion before Definition~\ref{defn:general-ub-ip-coupling}, if $W\sim \muvls$ and $\Xrvl$ for $r=1,2$ are i.i.d. samples from $\gamma(W)$ conditioned on $\Xrvl \in S$, then $(\Xonevl, \Xtwovl)$ is distributed as $\cNvdips$.
    Expanding $\Tr\lt(\f12 \Delonebn \Deltwobn\rt)$ using the definition of $\Delrbn$ and recalling the definition (\ref{eq:general-ub-define-yv}) of $Y_v$ yields that
    \[
        \E_{W\sim \muvls}
        \E_{\substack{
            \Xonevl \sim \gamma(W) \\
            \Xonevl \in S
        }}
        \E_{\substack{
            \Xtwovl \sim \gamma(W) \\
            \Xtwovl \in S
        }}
        \etr\lt(\f12 \Delonebn \Deltwobn\rt) 
        =
        \E_{(\Xonevl, \Xtwovl)\sim \cNvdips}
        \exp\lt(\f12 Y_v\rt).
    \]
    This completes the proof of the lemma.
\end{proof}

\subsection{Bounding the Coupled Exponentiated Overlap}
\label{subsec:general-ub-main-exp-overlap}

In this section, we prove the following bound on the coupled exponentiated overlap obtained in the previous section and complete the proof of Theorem~\ref{thm:general-weaker-ub}.
Recall that $\downNv = N(v) \cap [v-1]$, $G[\downNv]$ denotes the induced subgraph of $G$ on $\downNv$, and $E(G[\downNv])$ denotes the edge set of this subgraph.

\begin{lemma}
    \label{lem:general-ub-main-exp-overlap}
    Suppose that $d \gg \max_{v\in [n]} \lt(\ddegv^2 + \ddegv \log^2 n\rt)$.
    Let $v\in [n]$ and suppose that $n$ is sufficiently large.
    Then, we have that
    \begin{align*}
        \E_{(\Xonevl, \Xtwovl) \sim \cNvdips}
        \exp\lt(\f12 Y_v\rt) 
        &\le
        1 +
        \f{|E(G[\downNv])|}{d}
        +
        \f{\ddegv}{d^2} \sum_{i\in \downNv} \dvli \\
        &\qquad +
        \f{(\Ctr + 2)^2}{2d^2}
        \lt(\ddegv^4 + \ddegv^2 \log^4 n\rt)
        +
        n^{-10}.
    \end{align*}
\end{lemma}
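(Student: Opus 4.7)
The plan is to Taylor-expand to first order as in (\ref{eq:general-ub-main-exp-overlap-starting-point}),
\[
    \exp\lt(\f12 Y_v\rt) = 1 + \f12 Y_v + h\lt(\f12 Y_v\rt),
    \qquad h(x) := e^x - 1 - x \ge 0,
\]
and treat the remainder and linear pieces by completely different means: the remainder $\E_{\cNvdips}\lt[h\lt(\f12 Y_v\rt)\rt]$ will be controlled deterministically on $S\times S$, while the linear sum will be split into the three categories from the technical overview and bounded via Propositions~\ref{prop:general-ub-linear-term-ijedge}, \ref{prop:general-ub-linear-term-ijnotedge}, \ref{prop:general-ub-linear-term-ii} after passing from $\cNvdips$ to $\cNvdipr$.

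To dispose of the remainder, I would write $Y_v = \Tr(\Delonebn\Deltwobn)$ and apply Cauchy--Schwarz in the Frobenius inner product: $|Y_v| \le \Tr(\Delonebn^2)^{1/2}\Tr(\Deltwobn^2)^{1/2}$. Under $\cNvdips$ both replicas lie in $S \subseteq \Str$, so by (\ref{eq:general-ub-def-svtr}) each trace is at most $(2\ddegv^2 + \Ctr\ddegv\log^2 n)/d$, giving the pointwise bound $|Y_v| \le (2\ddegv^2 + \Ctr\ddegv\log^2 n)/d$. By the hypothesis $d \gg \ddegv^2 + \ddegv\log^2 n$ this upper bound tends to $0$, so for large $n$ the near-origin estimate $h(x)\le x^2$ applies and gives $h\lt(\f12 Y_v\rt) \le \f14 Y_v^2$ pointwise on $S\times S$. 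Combining with the elementary inequality $(2\ddegv^2 + \Ctr\ddegv\log^2 n)^2 \le (\Ctr+2)^2(\ddegv^4 + \ddegv^2\log^4 n)$ produces the stated remainder contribution $\f{(\Ctr+2)^2}{2d^2}(\ddegv^4 + \ddegv^2\log^4 n)$ with room to spare.

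For the linear piece I would first swap $\cNvdips$ for $\cNvdipr$ at additive cost $O(n^{-10})$: both marginals of $\cNvdips$ are $\cL(\cN(0,I_d)^{\otimes(v-1)} \mid S)$ with $\P(S^c) = O(n^{-20})$ by Proposition~\ref{prop:general-ub-s-hp}, and a Cauchy--Schwarz truncation against polynomial moment bounds on $\la \Xonei,\Xonej\ra$ gives a per-pair swap error of $O(n^{-20}\polylog(n))$, which over the $\ddegv^2 = O(n^2)$ pairs (against the $\f{1}{2d^2}$ prefactor) absorbs into $n^{-10}$. On $\cNvdipr$ I then partition the sum: for $(i,j)\in E(G[\downNv])$, the coupling condition (\ref{eq:general-ub-ip-coupling-condition}) forces $\la\Xonei,\Xonej\ra = \la\Xtwoi,\Xtwoj\ra$, so each such expectation equals $\E\la\Xonei,\Xonej\ra^2 = d$ and the $2|E(G[\downNv])|$ ordered-edge contributions sum to $\f{|E(G[\downNv])|}{d}$; for $i\ne j$ with $(i,j)\notin E(G[\downNv])$, Proposition~\ref{prop:general-ub-linear-term-ijnotedge} bounds each expectation by $\min(\dvli,\dvlj)\le \dvli$, summing to at most $\f{\ddegv}{2d^2}\sum_{i\in\downNv}\dvli$; and for $i=j$, Proposition~\ref{prop:general-ub-linear-term-ii} bounds each expectation by $2\dvli$, contributing at most $\f{1}{d^2}\sum_{i\in\downNv}\dvli$. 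Both of the latter are absorbed into $\f{\ddegv}{d^2}\sum_{i\in\downNv}\dvli$.

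The hard part will be rigorously justifying the swap from $\cNvdips$ to $\cNvdipr$ in the linear sum. Unlike the remainder bound, here I cannot simply invoke $\Xvl \in S$, and the implicit joint law of $\cNvdipr$ (a pair of independent Gaussian matrices conditioned on a system of inner-product equalities) makes off-the-shelf truncation awkward. My plan is to use the identity $\cNvdips = \cL(\cNvdipr \mid \Xonevl,\Xtwovl\in S)$ to write each difference of expectations as a ratio involving $\ind{(\Xonevl,\Xtwovl)\in (S\times S)^c}$, decouple this indicator from the bilinear statistic via Cauchy--Schwarz, and control $\|\la\Xonei,\Xonej\ra\|_4$ under $\cNvdipr$ using the fact that its two marginals are unconditioned Gaussians, which converts the $n^{-20}$ tail from Proposition~\ref{prop:general-ub-s-hp} into a uniform $n^{-10}$ swap error. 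Once that swap is in hand, the remainder bound and the three Gram--Schmidt propositions combine into the stated bound by routine arithmetic.
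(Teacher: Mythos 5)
Your proposal is correct and follows the paper's own proof essentially step for step: the first-order Taylor expansion with the remainder $h(\tfrac12 Y_v)$ bounded deterministically through $\Str$ (the paper uses AM--GM where you use Frobenius Cauchy--Schwarz, an immaterial difference), the swap from $\cNvdips$ to $\cNvdipr$ handled exactly as in Proposition~\ref{prop:general-ub-linear-term-unconditioning} via Lemma~\ref{lem:unconditioning-by-second-moment} (your "hard part" is precisely the identification the paper makes there), and the linear terms dispatched by Propositions~\ref{prop:general-ub-linear-term-ijedge}, \ref{prop:general-ub-linear-term-ijnotedge}, and \ref{prop:general-ub-linear-term-ii} with the same arithmetic. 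The only slip is quantitative: the Cauchy--Schwarz truncation yields a per-pair swap error of order $dn^{-10}$ rather than $n^{-20}\polylog(n)$, but after the $\tfrac{1}{2d^2}$ prefactor and the hypothesis $d\gg\ddegv^2$ the total is still at most $n^{-10}$, exactly as in the paper.
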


As discussed in Section~\ref{subsec:technical-overview-gs}, we will prove this bound by expanding the left-hand side using (\ref{eq:general-ub-main-exp-overlap-starting-point}), reproduced below for clarity.
The below expectations are over $(\Xonevl, \Xtwovl) \sim \cNvdips$.
\begin{align}
    \notag
    &\E \exp\lt(\f12 Y_v\rt) \\
    \tag{\ref{eq:general-ub-main-exp-overlap-starting-point}}
    &\qquad =
    1 +
    \f{1}{2d^2}
    \sum_{i,j\in \downNv}
    \E\lt[
        \lt(\la \Xonei, \Xonej\ra - d\delta_{i,j}\rt)
        \lt(\la \Xtwoi, \Xtwoj\ra - d\delta_{i,j}\rt)
    \rt] 
    + \E h\lt(\f12 Y_v\rt).
\end{align}
To bound the expectations of the linear terms, we will use Lemma~\ref{lem:general-ub-stronger-couplings} to replace $\cNvdips$ with a stronger coupling tailored to each term.
The Taylor error term $\E h(\f12 Y_v)$ can be immediately bounded using the definition (\ref{eq:general-ub-def-svtr}) of $\Str$, as in the next proposition.

\begin{proposition}
    \label{prop:general-ub-bound-hy}
    If $d \gg \max_{v\in [n]} \lt(\ddegv^2 + \ddegv \log^2 n\rt)$, then for sufficiently large $n$ we have
    \[
        \E_{(\Xonevl, \Xtwovl) \sim \cNvdips} 
        h\lt(\f12 Y_v\rt)
        \le
        \f{(\Ctr + 2)^2}{2d^2}
        \lt(\ddegv^4 + \ddegv^2 \log^4 n\rt).
    \]
\end{proposition}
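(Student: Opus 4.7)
The plan is to reduce the bound on $\E h(Y_v/2)$ to a deterministic bound of $|Y_v|$ together with a second-moment bound on $Y_v$, both of which will follow easily from the definitions of $\Svop$ and $\Svtr$. Throughout, I use that under $(\Xonevl, \Xtwovl)\sim \cNvdips$, both replicas lie in $S\subseteq \Svop\cap\Svtr$ almost surely, so the bounds defining these events may be applied pointwise.

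First, I rewrite $Y_v = \Tr(\Delonebn \Deltwobn)$ and observe that by the trace-operator inequality,
\[
    |Y_v|
    \;\le\;
    \ddegv\,\norm{\Delonebn}_{\op}\norm{\Deltwobn}_{\op}
    \;\le\;
    \f{100^2\,\ddegv(\ddegv+\log n)}{d}
\]
using the definition of $\Svop$ on both replicas. Since the hypothesis $d\gg \ddegv^2+\ddegv\log^2 n$ implies that $\ddegv(\ddegv+\log n)/d = o(1)$ uniformly in $v$, we conclude that $|Y_v/2|\le 1$ deterministically for all sufficiently large $n$. Since $h(x)=e^x-1-x$ satisfies the elementary inequality $h(x)\le x^2$ for all $|x|\le 1$ (it suffices to bound the tail of the Taylor series by a geometric series), we obtain $h(Y_v/2)\le Y_v^2/4$ pointwise.

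Next, I bound $Y_v^2$ using $\Svtr$. By Cauchy--Schwarz for the Hilbert--Schmidt inner product on symmetric matrices,
\[
    Y_v^2
    \;=\;
    \Tr(\Delonebn\Deltwobn)^2
    \;\le\;
    \Tr(\Delonebn^2)\,\Tr(\Deltwobn^2)
    \;\le\;
    \lt(\f{2\ddegv^2}{d}+\Ctr\f{\ddegv\log^2 n}{d}\rt)^{\!2}.
\]
Factoring out $(\Ctr+2)/d$ and applying the AM--GM inequality in the form $2\ddegv^3\log^2 n\le \ddegv^4+\ddegv^2\log^4 n$, the squared bracket expands as
\[
    (\ddegv^2+\ddegv\log^2 n)^2
    \;=\;
    \ddegv^4 + 2\ddegv^3\log^2 n + \ddegv^2\log^4 n
    \;\le\;
    2(\ddegv^4+\ddegv^2\log^4 n).
\]
Combining these two steps gives $Y_v^2 \le 2(\Ctr+2)^2(\ddegv^4+\ddegv^2\log^4 n)/d^2$, hence
\[
    \E_{(\Xonevl,\Xtwovl)\sim \cNvdips} h(Y_v/2)
    \;\le\;
    \f{1}{4}\,\E Y_v^2
    \;\le\;
    \f{(\Ctr+2)^2}{2d^2}\bigl(\ddegv^4+\ddegv^2\log^4 n\bigr),
\]
which is the claimed bound.

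The argument is essentially routine because $\Svop$ and $\Svtr$ were defined precisely to enable these two estimates; the only mild subtlety is the simultaneous use of the operator-norm bound (to justify the quadratic Taylor control on $h$) and the Hilbert--Schmidt trace bound (to control the second moment). There is no obstacle beyond verifying that the quantitative smallness provided by $d\gg \ddegv^2+\ddegv\log^2 n$ is enough to put $|Y_v/2|$ in the regime where $h(x)\le x^2$ applies.
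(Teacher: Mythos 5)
Your proof is correct and follows essentially the same route as the paper's: both rest on the almost-sure trace bound from $\Str$, the elementary inequality $h(x)\le x^2$ for $|x|\le 1$, and a Cauchy--Schwarz/AM--GM step, with only a cosmetic reordering (you first secure $|Y_v/2|\le 1$ via the operator-norm bound from $\Sop$ and then square, whereas the paper pushes the AM--GM bound inside $h$ by monotonicity and applies $h(x)\le x^2$ at the end). The Hilbert--Schmidt Cauchy--Schwarz $\Tr(\Delonebn\Deltwobn)^2\le \Tr(\Delonebn^2)\Tr(\Deltwobn^2)$ plays the same role as the paper's entrywise AM--GM, and the constants work out identically.
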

\begin{proof}
    First note that since $h(x) = \sum_{t=2}^\infty \f{1}{t!} x^t$ for all $x\in \bR$, we have that $h(x) \le h(|x|)$ for all $x\in \bR$ and that $h$ is strictly increasing on $[0, \infty)$.
    Combining these properties with the triangle inequality and AM-GM, we have that
    \begin{align*}
        h\lt(\f12 Y_v\rt)
        &\le
        h\lt(\f12 |Y_v|\rt)
        \le
        h\lt(
            \f{1}{2d^2}
            \sum_{i,j\in \downNv}
            \lt|\la \Xonei, \Xonej\ra - d\delta_{i,j}\rt|
            \lt|\la \Xtwoi, \Xtwoj\ra - d\delta_{i,j}\rt|
        \rt)\\
        &\le
        h\lt(
            \f{1}{4d^2}
            \sum_{i,j\in \downNv}
            \lt(\la \Xonei, \Xonej\ra - d\delta_{i,j}\rt)^2
            +
            \f{1}{4d^2}
            \sum_{i,j\in \downNv}
            \lt(\la \Xtwoi, \Xtwoj\ra - d\delta_{i,j}\rt)^2
        \rt).
    \end{align*}
    Now note that if $\Xrvl \in S \subseteq \Str$, then the definition (\ref{eq:general-ub-def-svtr}) of $\Str$ implies that
    \[
        \f{1}{d^2}
        \sum_{i,j\in \downNv}
        \lt(\la \Xri, \Xrj\ra - d\delta_{i,j}\rt)^2
        =
        \Tr\lt(\lt(\Delrbn\rt)^2\rt)
        \le
        \f{2\ddegv^2}{d} +
        \Ctr\cdot \f{\ddegv \log^2 n}{d}
    \]
    almost surely for each of $r=1,2$.
    Since $(\Xonevl, \Xtwovl) \sim \cNvdips$ satisfies that $\Xonevl, \Xtwovl \in S \subseteq \Str$ almost surely, we have that
    \[
        \E h\lt(\f12 Y_v\rt)
        \le
        h\lt(
            \f{\ddegv^2}{d} +
            \Ctr \cdot \f{\ddegv \log^2 n}{2d}
        \rt)
        \le
        h\lt(
            \f{\Ctr+2}{2d}
            \lt(\ddegv^2 + \ddegv \log^2 n\rt)
        \rt).
    \]
    Because $d \gg \max_{v\in [n]} \lt(\ddegv^2 + \ddegv \log^2 n\rt)$, the argument of $h$ in the last bound is in a neighborhood of $0$.
    Because $h(x)\le x^2$ for all $|x|\le 1$, we have
    \[
        \E h\lt(\f12 Y_v\rt)
        \le
        \lt(
            \f{\Ctr+2}{2d}
            \lt(\ddegv^2 + \ddegv \log^2 n\rt)
        \rt)^2
        \le
        \f{(\Ctr+2)^2}{2d^2}
        (\ddegv^4 + \ddegv^2 \log^4 n)
    \]
    by AM-GM.
    This completes the proof of the proposition.
\end{proof}

To control the expectations of the linear terms in (\ref{eq:general-ub-main-exp-overlap-starting-point}), we first show that it suffices to bound their expectations without conditioning on $\Xonevl, \Xtwovl \in S$.
We will need the following general lemma bounding the effect of conditioning by the second moment.

\begin{lemma}
    \label{lem:unconditioning-by-second-moment}
    Let $Z\in \bR$ be a random variable with finite second moment and $A\subseteq \bR$ be a measurable event with respect to $Z$ such that $\P(A)> 0$.
    Then, it follows that
    \[
        |\E[Z] - \E[Z|A]|
        \le
        \f{2\P(A^c)^{1/2}\E[Z^2]^{1/2}}{\P(A)}.
    \]
\end{lemma}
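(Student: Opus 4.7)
The plan is to reduce the problem to two applications of Cauchy--Schwarz by writing the difference $\E[Z] - \E[Z|A]$ as an explicit linear combination of $\E[Z \mathbf{1}_A]$ and $\E[Z \mathbf{1}_{A^c}]$, and then absorbing factors of $\P(A)$ and $\P(A^c)$ using elementary inequalities valid on $[0,1]$.

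First I would expand $\E[Z] = \E[Z \mathbf{1}_A] + \E[Z \mathbf{1}_{A^c}]$ and use $\E[Z|A] = \E[Z \mathbf{1}_A]/\P(A)$ to obtain
\[
\E[Z] - \E[Z|A]
= \E[Z \mathbf{1}_A]\Bigl(1 - \tfrac{1}{\P(A)}\Bigr) + \E[Z \mathbf{1}_{A^c}]
= -\frac{\P(A^c)}{\P(A)} \E[Z \mathbf{1}_A] + \E[Z \mathbf{1}_{A^c}].
\]
The first step is then to bound each of the two pieces by Cauchy--Schwarz: $|\E[Z \mathbf{1}_A]| \le \E[Z^2]^{1/2} \P(A)^{1/2}$ and $|\E[Z \mathbf{1}_{A^c}]| \le \E[Z^2]^{1/2} \P(A^c)^{1/2}$. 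Applying the triangle inequality to the decomposition gives
\[
|\E[Z] - \E[Z|A]| \le \E[Z^2]^{1/2}\left( \frac{\P(A^c)}{\P(A)^{1/2}} + \P(A^c)^{1/2} \right)
= \frac{\E[Z^2]^{1/2} \P(A^c)^{1/2}}{\P(A)^{1/2}} \bigl(\P(A^c)^{1/2} + \P(A)^{1/2}\bigr).
\]

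The final step is a clean-up: use the trivial bound $\P(A^c)^{1/2} + \P(A)^{1/2} \le 2$ (each term is at most $1$), and then use $\P(A)^{-1/2} \le \P(A)^{-1}$, which holds because $\P(A) \le 1$. This yields the claimed inequality $|\E[Z] - \E[Z|A]| \le 2 \P(A^c)^{1/2} \E[Z^2]^{1/2}/\P(A)$.

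There is no real obstacle here: the lemma is a soft bookkeeping estimate whose purpose is to let us pass between conditional and unconditional expectations with only a $\P(A^c)^{1/2}$ penalty, which will later be made negligible by the high-probability control on $S^v$ established in Proposition~\ref{prop:general-ub-s-hp}. One could tighten the constant (a slightly more careful splitting gives the sharper $\sqrt{2}$ in place of $2$), but the stated form is entirely sufficient for the subsequent applications.
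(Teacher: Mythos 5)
Your proof is correct, and it is essentially the same argument as the paper's: decompose the difference into pieces involving $\E[Z\ind{A}]$ and $\E[Z\ind{A^c}]$, apply the triangle inequality and Cauchy--Schwarz to each piece, and absorb the probability factors using trivial bounds such as $\P(A)\le 1$ and $\P(A^c)\le \P(A^c)^{1/2}$. The only difference is cosmetic (the paper routes one term through $|\E[Z]|$ rather than $|\E[Z\ind{A}]|$), and your remark about improving the constant to $\sqrt{2}$ is accurate but, as you note, immaterial for the applications.
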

\begin{proof}
    By the triangle inequality and Cauchy-Schwarz, we have that
    \begin{align*}
        |\E[Z] - \E[Z|A]|
        &\le
        \P(A)^{-1} |\E[Z] - E[Z\ind{A}]| + (\P(A)^{-1}-1) |\E[Z]| \\
        &=
        \P(A)^{-1} |\E[Z\ind{A^c}]| + (\P(A)^{-1}-1) |\E[Z]| \\
        &\le
        \P(A)^{-1} \E[Z^2]^{1/2} \E [\ind{A^c}]^{1/2} + (\P(A)^{-1}-1) \E[Z^2]^{1/2} \\
        &= \lt(\f{\P(A^c) + \P(A^c)^{1/2}}{\P(A)}\rt) \E[Z^2]^{1/2}.
    \end{align*}
    The lemma now follows from the fact that $\P(A^c)\le \P(A^c)^{1/2}$.
\end{proof}
Given this lemma, we can bound the effect of conditioning on $S$ on the linear terms of (\ref{eq:general-ub-main-exp-overlap-starting-point}).
Let $\cNvdipr$ denote the coupling $\cNvdips$ without conditioning on $\Xonevl, \Xtwovl \in S$, i.e. the coupling where $\Xonevl$ is sampled from $\cN(0, I_d)^{\otimes (v-1)}$ unconditionally and $\Xtwovl$ is sampled from $\cN(0, I_d)^{\otimes (v-1)}$ conditioned on (\ref{eq:general-ub-ip-coupling-condition}).

\begin{proposition}
    \label{prop:general-ub-linear-term-unconditioning}
    Suppose that $d \gg \max_{v\in [n]} \ddegv + \log n$ and $n$ is sufficiently large.
    For all $v\in [n]$ and $i,j\in \downNv$, it holds that
    \begin{align*}
        &\Bigg|
            \E_{(\Xonevl, \Xtwovl)\sim \cNvdips}\lt[
                \lt(\la \Xonei, \Xonej \ra - d\delta_{i,j}\rt)
                \lt(\la \Xtwoi, \Xtwoj \ra - d\delta_{i,j}\rt)
            \rt] \\
            &\qquad
            -
            \E_{(\Xonevl, \Xtwovl)\sim \cNvdipr}\lt[
                \lt(\la \Xonei, \Xonej \ra - d\delta_{i,j}\rt)
                \lt(\la \Xtwoi, \Xtwoj \ra - d\delta_{i,j}\rt)
            \rt]
        \Bigg|
        \le 48dn^{-10}.
    \end{align*}
\end{proposition}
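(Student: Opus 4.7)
The plan is to apply Lemma~\ref{lem:unconditioning-by-second-moment} on the probability space of $\cNvdipr$, with $Z = (\la \Xonei, \Xonej \ra - d\delta_{i,j})(\la \Xtwoi, \Xtwoj \ra - d\delta_{i,j})$ and $A = \{\Xonevl \in S^v\} \cap \{\Xtwovl \in S^v\}$. The first key observation is that $\cNvdips$ is exactly $\cNvdipr$ further conditioned on $A$: inspecting Definition~\ref{defn:general-ub-ip-coupling}, both measures enforce the same inner-product matching, and $\cNvdips$ simply adds the restriction $\Xonevl, \Xtwovl \in S^v$. At the level of joint densities on the matching submanifold this is transparent: $\cNvdipr \propto \phi(\Xonevl)\phi(\Xtwovl)$, while $\cNvdips \propto \phi(\Xonevl)\phi(\Xtwovl)\ind{\Xonevl \in S^v}\ind{\Xtwovl \in S^v}$. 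Thus the quantity of interest equals $|\E_{\cNvdipr}[Z] - \E_{\cNvdipr}[Z \mid A]|$, and the lemma reduces matters to bounding $\P(A^c)$ and $\E_{\cNvdipr}[Z^2]$.

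For $\P(A^c)$, the key step is to note that $\cNvdipr$ can equivalently be generated by drawing $W \sim \muvl$ and then sampling $\Xonevl, \Xtwovl \sim \gamma(W)$ independently. This makes the joint law manifestly symmetric in the two replicas, so each $\Xrvl$ has marginal $\cN(0, I_d)^{\otimes (v-1)}$. Proposition~\ref{prop:general-ub-s-hp} then gives $\P(\Xrvl \notin S^v) \le 3 n^{-20}$ for $r=1,2$, and a union bound yields $\P(A^c) \le 6 n^{-20}$.

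For the second moment, Cauchy--Schwarz splits $\E_{\cNvdipr}[Z^2]$ into the product of the fourth moments of $\la \Xri, \Xrj\ra - d \delta_{i,j}$ for $r=1,2$. Marginal Gaussianity reduces these to standard computations: for $i \ne j$, conditioning on $X_i$ gives $\E[\la X_i, X_j\ra^4] = 3(d^2 + 2d)$; for $i = j$, $\|X_i\|^2 - d$ is a centered $\chisq(d)$ with fourth central moment $12d^2 + 48d$. In either case $\E_{\cNvdipr}[Z^2]^{1/2} \le 4d$ once $d$ is large, and plugging into Lemma~\ref{lem:unconditioning-by-second-moment} yields $|\E_{\cNvdipr}[Z] - \E_{\cNvdips}[Z]| \le 2 \sqrt{6n^{-20}} \cdot 4d / (1 - 6n^{-20})$, which is at most $48 d n^{-10}$ for sufficiently large $n$.

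I do not expect any serious obstacle here. The only substantive point to check is the identification $\cNvdips = \cNvdipr | A$, which relies on the specific ``independent samples, then condition on matching'' form of Definition~\ref{defn:general-ub-ip-coupling}; everything else is a routine second-moment calculation.
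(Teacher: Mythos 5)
Your proposal is correct and follows essentially the same route as the paper: the paper's proof is exactly an application of Lemma~\ref{lem:unconditioning-by-second-moment} with $A=\{\Xonevl,\Xtwovl\in S^v\}$, the same fourth-moment bounds ($3d^2+6d$ for $i\neq j$ and $12d^2+48d$ for $i=j$, obtained via AM--GM rather than Cauchy--Schwarz), and the same union bound from Proposition~\ref{prop:general-ub-s-hp}, ending in the identical $48dn^{-10}$ arithmetic. The one step you elaborate beyond the paper, the identification of $\cNvdips$ with $\cNvdipr$ conditioned on $A$, is precisely what the paper's application of the lemma presupposes without comment; be aware, though, that your density heuristic quietly drops the $W$-dependent fiber normalization $\gamma(W(\Xonevl))(S^v)$ arising from the two-step construction in Definition~\ref{defn:general-ub-ip-coupling} (the exact identification holds conditionally on $W$, since the replicas are i.i.d.\ given $W$), so at that point your argument is no more and no less rigorous than the paper's.
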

\begin{proof}
    We aim to apply Lemma~\ref{lem:unconditioning-by-second-moment}.
    Throughout this proof, let $(\Xonevl, \Xtwovl) \sim \cNvdipr$.
    Note that $\Xonevl$ and $\Xtwovl$ are each marginally distributed as $\cN(0, I_d)^{\otimes (v-1)}$.
    When $i\neq j$, AM-GM and standard computations with Gaussian moments yield that
    \begin{align*}
        \E\lt[
            \la \Xonei, \Xonej\ra^2
            \la \Xtwoi, \Xtwoj\ra^2
        \rt]
        &\le
        \f12
        \E\lt[
            \la \Xonei, \Xonej\ra^4
        \rt]
        +
        \f12
        \E\lt[
            \la \Xtwoi, \Xtwoj\ra^4
        \rt] \\
        &=
        \E\lt[
            \la \Xonei, \Xonej\ra^4
        \rt]
        =
        3d^2 + 6d
        \le
        4d^2
    \end{align*}
    for sufficiently large $d$.
    Similarly, if $i=j$, thenca
    \[
        \E\lt[
            \lt(\la \Xonei, \Xonei\ra-1\rt)^2
            \lt(\la \Xtwoi, \Xtwoi\ra-1\rt)^2
        \rt] 
        \le
        \E\lt[
            \lt(\la \Xonei, \Xonei\ra-1\rt)^4
        \rt]
        =
        12d^2 + 48d
        \le
        16d^2
    \]
    for sufficiently large $d$.
    By Proposition~\ref{prop:general-ub-s-hp}, $\P(S^c) \le 2n^{-20} + e^{-n/2} \le 3n^{-20}$ for sufficiently large $n$.
    So, by a union bound,
    \[
        \P\lt[\lt(\Xonevl, \Xtwovl \in S\rt)^c\rt]
        \le
        2\P(S^c)
        \le
        6n^{-20}.
    \]
    Moreover, for sufficiently large $n$, $\P\lt[\Xonevl, \Xtwovl \in S\rt] \ge \f12$.
    Lemma~\ref{lem:unconditioning-by-second-moment} now upper bounds the expectation difference in this proposition by
    \[
        \f{2(6n^{-20})^{1/2}(16d^2)^{1/2}}{1/2}
        \le
        48dn^{-10}.
    \]
\end{proof}

We now handle the expectation of the linear terms in (\ref{eq:general-ub-main-exp-overlap-starting-point}) with respect to the coupling $\cNvdipr$.
As discussed in Section~\ref{subsec:technical-overview-gs}, we categorize these terms into three categories:
\begin{enumerate}[label=(\roman*), ref=\roman*]
    \item \label{itm:general-ub-main-overlap-term-ijedge} $(i,j)\in E(G[v-1])$; 
    \item \label{itm:general-ub-main-overlap-term-ijnotedge} $i\neq j$ and $(i,j)\not\in E(G[v-1])$; and 
    \item \label{itm:general-ub-main-overlap-term-ii} $i=j$.
\end{enumerate}
We begin by bounding the terms in category (\ref{itm:general-ub-main-overlap-term-ijedge}), the simplest case.

\begin{proposition}
    \label{prop:general-ub-linear-term-ijedge}
    If $(i,j)\in E(G[v-1])$, then
    \[
        \E_{(\Xonevl, \Xtwovl)\sim \cNvdipr}
        \lt[
            \la \Xonei, \Xonej\ra
            \la \Xtwoi, \Xtwoj\ra
        \rt]
        = d.
    \]
\end{proposition}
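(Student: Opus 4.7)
The plan is to observe that this case is essentially immediate from the defining condition of the inner product coupling. Recall from Definition~\ref{defn:general-ub-ip-coupling} (specifically the condition (\ref{eq:general-ub-ip-coupling-condition})) that under the coupling $\cNvdipr$, we have $\la \Xonei, \Xonej \ra = \la \Xtwoi, \Xtwoj \ra$ almost surely for every pair $(i,j) \in E(G[v-1])$. Since we are in precisely this case, the product collapses to a single square and the expectation reduces to
\[
\E_{(\Xonevl, \Xtwovl)\sim \cNvdipr}\lt[\la \Xonei, \Xonej\ra \la \Xtwoi, \Xtwoj\ra\rt]
= \E_{(\Xonevl, \Xtwovl)\sim \cNvdipr}\lt[\la \Xonei, \Xonej\ra^2\rt].
\]

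Next, I would use the fact that the marginal distribution of $\Xonevl$ under $\cNvdipr$ is $\cN(0, I_d)^{\otimes (v-1)}$, i.e. the columns $\Xonei$ and $\Xonej$ are independent standard Gaussian vectors in $\bR^d$ (here $i \neq j$ since $(i,j)$ is an edge). Writing $\la \Xonei, \Xonej \ra = \sum_{t=1}^d \Xone[i,t]\Xone[j,t]$ as a sum of $d$ i.i.d. products of independent standard Gaussians, each term has mean $0$ and variance $1$, so $\E\la \Xonei, \Xonej\ra^2 = d$. This yields the claimed equality.

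There is no real obstacle here; this is the trivial case among (\ref{itm:general-ub-main-overlap-term-ijedge})--(\ref{itm:general-ub-main-overlap-term-ii}), and the whole point is that the inner product coupling was designed so that edge-indexed inner products are exactly matched between the two replicas. The nontrivial work for the linear terms is in the other two cases, where correlations between $\la \Xonei, \Xonej\ra$ and $\la \Xtwoi, \Xtwoj\ra$ must be teased out via the Gram--Schmidt stronger-coupling arguments sketched in Section~\ref{subsec:technical-overview-gs}.
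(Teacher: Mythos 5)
Your proof is correct and follows essentially the same route as the paper: use the coupling condition $\la \Xonei, \Xonej\ra = \la \Xtwoi, \Xtwoj\ra$ for $(i,j)\in E(G[v-1])$ to collapse the product to $\E\la \Xonei, \Xonej\ra^2$, then use the Gaussian marginal of $\Xonevl$ to compute that this equals $d$. No gaps.
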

\begin{proof}
    Let $(\Xonevl, \Xtwovl)\sim \cNvdipr$.
    By the definition of $\cNvdipr$, $\la \Xonei, \Xonej\ra = \la \Xtwoi, \Xtwoj\ra$ and $\Xonei, \Xonej$ are i.i.d. samples from $\cN(0, I_d)$.
    Therefore,
    \[
        \E \lt[
            \la \Xonei, \Xonej\ra
            \la \Xtwoi, \Xtwoj\ra
        \rt]
        =
        \E \lt[
            \la \Xonei, \Xonej\ra^2
        \rt]
        = d.
    \]
\end{proof}

We will now handle the linear terms of (\ref{eq:general-ub-main-exp-overlap-starting-point}) in category (\ref{itm:general-ub-main-overlap-term-ijnotedge}).
To bound these terms, we will apply Lemma~\ref{lem:general-ub-stronger-couplings} to obtain an upper bound where $\cNvdipr$ is replaced with a coupling in terms of the Gram-Schmidt decompositions of $\Xonevl$ and $\Xtwovl$.
This coupling will be easier to work with and yield upper bounds that can be evaluated explicitly.
This is one of the key steps in our method and is the content of the next proposition.
Throughout this section and Sections~\ref{sec:general-ub-refined-linear-term} and \ref{sec:general-ub-refined-higher-order-terms}, we will require a number of couplings of a common form.
This general class of couplings is formalized in the following definition for notational convenience.

\begin{definition}[Modified Couplings of $\Xonevl$ and $\Xtwovl$]
    \label{defn:general-ub-modified-couplings}
    Let $\cC(\Xonevl, \Xtwovl)$ be a set of constraints on $\Xtwovl$ given $\Xonevl$ of the form $f(\Xonevl) = f(\Xtwovl)$ for some measurable function $f$.
    Let $\cMvd(\cC)$ denote the coupling of $\Xonevl, \Xtwovl \sim \cN(0, I_d)^{\otimes (v-1)}$ generated as follows.
    \begin{enumerate}[label=(\arabic*), ref=\arabic*]
        \item Sample $\Xonevl$ from $\cN(0, I_d)^{\otimes(v-1)}$.
        \item Sample $\Xtwovl$ from $\cN(0, I_d)^{\otimes(v-1)}$ conditioned on the event that $\cC(\Xonevl, \Xtwovl)$ holds.
    \end{enumerate}
    We will often refer to $\cMvd(\cC)$ as the coupling arising from $\cC$.
\end{definition}

Before proving our key proposition, we first will review the Gram-Schmidt orthogonalization procedure and establish some notation.
Given a sequence of vectors $Z_1,\ldots,Z_k\in \bR^d$ where $k\le d$, let
\[
    U_1 = \f{Z_1}{\norm{Z_1}_2}
    \quad
    \text{and}
    \quad
    W_{1,1} = \norm{Z_1}_2.
\]
For each $2\le i\le k$, recursively define
\begin{align*}
    U_i
    &=
    \f{
        Z_i - \sum_{j=1}^{i-1} W_{i,j}U_j
    }{\norm{
        Z_i - \sum_{j=1}^{i-1} W_{i,j}U_j
    }_2}, \\
    W_{i,j} &= \langle Z_i, U_j\rangle
    \quad
    \text{for all $1\le j < i$ and}
    \quad
    W_{i,i} =
    \norm{
        Z_i - \sum_{j=1}^{i-1} W_{i,j}U_j
    }_2
\end{align*}
and let $W_{i,j}=0$ for all $1\le i<j\le k$.
This yields an orthogonal collection of unit vectors $U_1,\ldots,U_k \in \bR^d$, which we collect as a matrix $U\in \bR^{d\times k}$, and a lower-triangular matrix $W\in \bR^{k\times k}$.
We will denote
\[
    (W,U) = \GS(Z_1,\ldots,Z_k).
\]
Observe that $Z = UW^\top$ where $Z\in \bR^{d\times k}$ is the matrix with columns given by the $Z_i$.
We will heavily rely on a standard fact about Gram-Schmidt orthogonalization applied to independent Gaussian vectors.
If the inputs $Z_1,\ldots,Z_k$ are i.i.d. samples from $\cN(0, I_d)$, then $W$ and $U$ are independent, with the following distributions:
$U$ is distributed according to the Haar measure on the Stiefel manifold of $d\times k$ matrices with columns that are orthogonal unit vectors and $W$ has mutually independent entries distributed as
\[
    W_{i,i} \sim \sqrt{\chisq(d+1-i)}
    \quad
    \text{for all $1\le i\le k$}
    \quad
    \text{and}
    \quad
    W_{i,j} \sim \cN(0,1)
    \quad
    \text{for all $1\le j<i\le k$.}
\]
This fact can be shown through a simple induction on $k$, can be deduced by directly performing a change of measure on the density function of $(Z_1,\ldots,Z_k)$, or can be derived from Bartlett's decomposition of Wishart matrices (see e.g. Theorem 3.2.14 of \cite{Mui09}).

We now will prove our key proposition bounding the terms of (\ref{eq:general-ub-main-exp-overlap-starting-point}) in category (\ref{itm:general-ub-main-overlap-term-ijnotedge}).
We remark that the bound $\min\lt(\dvli, \dvlj\rt)$ in this proposition can be improved to approximately $\dvlij$, the number of common neighbors of $i,j$ in $[v-1]$. 
This improvement will be necessary to refine Theorem~\ref{thm:general-weaker-ub} to Theorem~\ref{thm:general-ub}, and we will carry out this improvement in Lemma~\ref{lem:general-ub-refined-linear-term}. 

\begin{proposition}
    \label{prop:general-ub-linear-term-ijnotedge}
    Suppose that $i\neq j$, $(i,j)\not\in E(G[v-1])$, and $d \ge \min\lt(\dvli, \dvlj\rt) + 2$, where we recall that $\dvli = |\Nl(i)|$ denotes the number of vertices adjacent to $i$ in $[v-1]$.
    Then,
    \[
        \E_{(\Xonevl, \Xtwovl)\sim \cNvdipr}
        \lt[
            \la \Xonei, \Xonej \ra
            \la \Xtwoi, \Xtwoj \ra
        \rt]
        \le
        \min\lt(\dvli, \dvlj\rt).
    \]
\end{proposition}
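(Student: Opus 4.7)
The plan is to apply Lemma~\ref{lem:general-ub-stronger-couplings} with a carefully chosen stronger coupling that forces $\Xonevl$ and $\Xtwovl$ to agree on every column except $X_i$, while still implying the inner product constraints \eqref{eq:general-ub-ip-coupling-condition}. Since the desired expectation is symmetric in $i$ and $j$, I may relabel to assume $\dvli \le \dvlj$ and target the bound $\dvli = \min(\dvli, \dvlj)$. Concretely, I would sample $\Xonevl \sim \cN(0, I_d)^{\otimes(v-1)}$, set $\Xtwo[k] = \Xone[k]$ for every $k \in [v-1] \setminus \{i\}$, and define $\Xtwo[i] = P_{V_i} \Xone[i] + \xi$, where $V_i = \mathrm{span}\{\Xone[k] : k \in \Nl(i)\}$, $P_{V_i}$ denotes orthogonal projection onto $V_i$, and $\xi$ is sampled independently as a standard Gaussian supported on $V_i^\perp$. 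Since $d \ge \dvli + 2$, the generating vectors of $V_i$ are almost surely linearly independent and $\dim V_i = \dvli$; the orthogonal decomposition $\Xtwo[i] = P_{V_i}\Xone[i] + \xi$ makes the marginal check $\Xtwo[i] \sim \cN(0, I_d)$ immediate.

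Next I would verify that this coupling dominates $\cNvdipr$ in the sense required by Lemma~\ref{lem:general-ub-stronger-couplings}. Edge constraints not incident to $i$ are trivial since the relevant columns are shared, the edge $(i,j)$ is excluded by hypothesis, and for each edge $(k,i) \in E(G[v-1])$ with $k \in \Nl(i)$, the inclusion $\Xtwo[k] = \Xone[k] \in V_i$ yields $\la \Xtwo[k], \Xtwo[i]\ra = \la \Xone[k], P_{V_i}\Xone[i]\ra = \la \Xone[k], \Xone[i]\ra$. Hence the expectation under $\cNvdipr$ is bounded above by the corresponding expectation under the new coupling.

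To evaluate the latter, decompose $\Xone[i] = A + B$ with $A = P_{V_i} \Xone[i]$ and $B = P_{V_i^\perp} \Xone[i]$, so that $\Xtwo[i] = A + \xi$, and analogously $\Xone[j] = \tilde A + \tilde B$. Expanding and using $V_i \perp V_i^\perp$ gives
\[
    \la \Xone[i], \Xone[j]\ra \la \Xtwo[i], \Xtwo[j]\ra = \bigl(\la A, \tilde A\ra + \la B, \tilde B\ra\bigr)\bigl(\la A, \tilde A\ra + \la \xi, \tilde B\ra\bigr).
\]
The three cross terms all vanish in expectation: $\xi$ is independent of $\Xonevl$ with mean zero, and conditionally on $V_i$ the pairs $(A,B)$ and $(\tilde A, \tilde B)$ are mutually independent with all factors of mean zero. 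The surviving term is $\E\la A, \tilde A\ra^2$; conditioning on $V_i$, which is independent of $\Xone[i]$ and $\Xone[j]$ because neither $i$ nor $j$ lies in $\Nl(i)$, $A$ and $\tilde A$ are independent samples from $\cN(0, I_{V_i})$, so this expectation equals $\dim V_i = \dvli$. The main conceptual step is the design of the coupling: by freezing all columns except $X_i$ we preserve every edge inner product constraint through the observation $\Xone[k] \in V_i$ for $k \in \Nl(i)$, while avoiding any additional correlation between $\la \Xone[i], \Xone[j]\ra$ and $\la \Xtwo[i], \Xtwo[j]\ra$ beyond the unavoidable $\dvli$-dimensional contribution living in $V_i$. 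The remaining expectation calculation is then routine Gaussian algebra.
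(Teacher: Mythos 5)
Your proof is correct, and it proves the proposition by the same high-level strategy as the paper—invoke Lemma~\ref{lem:general-ub-stronger-couplings} with a tailored stronger coupling that keeps the ``new'' direction of $X_i$ decoupled—but with a genuinely different coupling. The paper runs Gram--Schmidt on $\Nl(i)\cup\{i,j\}$ (with $j,i$ last) and couples \emph{all} Gram--Schmidt data except the single scalar coefficient $W_{m,m-1}$, so the two replicas of $X_i$ differ only in one coordinate; it then relies on the distributional facts about Gram--Schmidt coefficients of i.i.d.\ Gaussians to compute the surviving term $\sum_{a\le m-2}\E[(\Wone[m-1,a]\Wone[m,a])^2]=\dvli$. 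You instead take $g(\Xvl)=\bigl((X_k)_{k\neq i},\,P_{V_i}X_i\bigr)$ with $V_i=\mathrm{span}\{X_k:k\in\Nl(i)\}$, i.e.\ you freeze every column except $X_i$ and only the projection of $X_i$ onto the span of its prior neighbors, leaving the entire $(d-\dvli)$-dimensional orthogonal component free across replicas. This is a strictly weaker conditioning than the paper's, yet (as you verify) it still implies every constraint of (\ref{eq:general-ub-ip-coupling-condition})—the key observation being $\Xone[k]\in V_i$ for $k\in\Nl(i)$ and $(i,j)\notin E(G[v-1])$ so $j\notin\Nl(i)$—and your conditional-Gaussian computation $\E\la P_{V_i}\Xone[i],P_{V_i}\Xone[j]\ra^2=\dvli$ gives exactly the same value (your $\la A,\tilde A\ra$ is literally the paper's $\sum_{a\le m-2}W_{m-1,a}W_{m,a}$, just organized without the Gram--Schmidt parameterization). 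What your route buys is elementarity: no Bartlett-type facts about Gram--Schmidt coefficients are needed, only orthogonal decomposition of a standard Gaussian and independence of $X_i,X_j$ from $V_i$; it is also closer in spirit to the coupling $\cCtckl$ the paper later uses in Section~\ref{sec:general-ub-refined-linear-term}. What the paper's route buys is reusability: the same Gram--Schmidt setup is recycled essentially verbatim for Proposition~\ref{prop:general-ub-linear-term-ii} (the $i=j$ case) and Lemma~\ref{lem:general-ub-higher-order-gs}. Two minor points you implicitly rely on, both unproblematic at the paper's level of rigor: the symmetry in $i,j$ justifying the relabeling (the statistic and $\cNvdipr$ are symmetric), and the fact that conditioning on the measure-zero event $g(\Xtwovl)=g(\Xonevl)$ is interpreted via the conditional law, exactly as the paper already does for $\cNvdips$ itself.
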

\begin{proof}
    Without loss of generality, suppose that $\dvli \le \dvlj$.
    Let $V = \Nl(i) \cup \{i,j\}$, and let $m = |V| = \dvli + 2$.
    Fix some bijection $\pi: [m]\to V$ such that $\pi(m-1) = j$ and $\pi(m)=i$.
    Now consider the coupling $\cMvd(\cCgs[i,j])$ arising from the constraints $\cCgs[i,j]$, defined as follows:
    \begin{align*}
        \Xonek &= \Xtwok
        \quad
        \text{if $k\in [v-1] \setminus V$,} \\
        \Uone &= \Utwo, \\
        \Wone[k,\ell] &= \Wtwo[k,\ell]
        \quad
        \text{for all $1\le \ell \le k \le m$ such that $(k,\ell) \neq (m, m-1)$} \\
        &\text{where $\lt(\Wr, \Ur\rt) = \GS\lt(\Xr[\pi(1)], \ldots, \Xr[\pi(m)]\rt)$ for $r=1,2$.}
    \end{align*}
    The key property that we will exploit with this coupling is that $\Wone[m, m-1]$ and $\Wtwo[m, m-1]$, which are responsible for most of the components of $\Xonei$ and $\Xtwoi$ in the directions of $\Xonej$ and $\Xtwoj$, remain free.
    
    This Gram-Schmidt procedure is well-defined because $d\ge \dvli + 2 = m$.
    We now will verify that these conditions imply that $\la\Xonek, \Xonel\ra = \la\Xtwok, \Xtwol\ra$ for all $(k,\ell)\in E(G[v-1])$.
    First, note that for all $1\le k\le m$ and all $r=1,2$,
    \[
        \Xr[\pi(k)]
        =
        \sum_{a=1}^k
        \Wr[k,a] \Ur[a].
    \]
    Since $\Uone = \Utwo$, the equalities between the entries of $\Wone$ and $\Wtwo$ imply that $\Xonel = \Xtwol$ for all $\ell \in [v-1]\setminus \{i\}$.
    It suffices to verify that $\la \Xonel, \Xonei\ra = \la \Xtwol, \Xtwoi\ra$ for all $\ell \in \Nl(i)$.
    To see this, note that $k=\pi^{-1}(\ell)$ is well-defined and satisfies that $k\le m-2$.
    We have that
    \[
        \la \Xrl, \Xri \ra
        =
        \la \Xr[\pi(k)], \Xr[\pi(m)] \ra
        =
        \lt\la
            \sum_{a=1}^k
            \Wr[k,a] \Ur[a],
            \sum_{a=1}^{m}
            \Wr[m,a] \Ur[a]
        \rt\ra
        =
        \sum_{a=1}^k
        \Wr[k,a] \Wr[m, a],
    \]
    which are equal for $r=1,2$ because $k\le m-2$.
    Lemma~\ref{lem:general-ub-stronger-couplings} now implies that
    \begin{align*}
        &\E_{(\Xonevl, \Xtwovl)\sim \cNvdipr}
        \lt[
            \la \Xonei, \Xonej \ra
            \la \Xtwoi, \Xtwoj \ra
        \rt] \\
        &\qquad \le
        \E_{(\Xonevl, \Xtwovl)\sim \cMvd(\cCgs[i,j])}
        \lt[
            \la \Xonei, \Xonej \ra
            \la \Xtwoi, \Xtwoj \ra
        \rt].
    \end{align*}
    In the rest of this proof, let $\Xonevl, \Xtwovl$ and $\Wone, \Wtwo$ be as in the coupling $\cMvd(\cCgs[i,j])$.
    Prior to conditioning on the constraints $\cCgs[i,j]$, the following random variables are mutually independent for $r=1,2$.
    \begin{itemize}
        \item The vectors $\Xrk$ for $k\in [v-1]\setminus V$;
        \item The matrix $\Ur$; and
        \item The entries of $\Wr$.
    \end{itemize}
    Therefore after conditioning on $\cCgs[i,j]$, the following random variables are mutually independent.
    \begin{itemize}
        \item $\Wone[k,k] \sim \sqrt{\chisq(d+1-k)}$ for all $1\le k\le m$;
        \item $\Wone[k,\ell] \sim \cN(0,1)$ for all $1\le \ell < k\le m$; and
        \item $\Wtwo[m, m-1] \sim \cN(0,1)$.
    \end{itemize}
    From this, we now have that
    \begin{align*}
        &\E\lt[
            \la \Xonei, \Xonej \ra
            \la \Xtwoi, \Xtwoj \ra
        \rt] 
        =
        \E \lt[
            \lt(\sum_{a=1}^{m-1} \Wone[m-1, a] \Wone[m, a]\rt)
            \lt(\sum_{a=1}^{m-1} \Wtwo[m-1, a] \Wtwo[m, a]\rt)
        \rt] \\
        &\qquad =
        \E \lt[
            \lt(
                \Wone[m-1, m-1] \Wone[m, m-1] +
                \sum_{a=1}^{m-2} \Wone[m-1, a] \Wone[m, a]
            \rt)
            \lt(
                \Wone[m-1, m-1] \Wtwo[m, m-1] +
                \sum_{a=1}^{m-2} \Wone[m-1, a] \Wone[m, a]
            \rt)
        \rt] \\
        &\qquad =
        \sum_{a=1}^{m-2}
        \E \lt[
            \lt(\Wone[m-1, a] \Wone[m-2, a]\rt)^2
        \rt]
        = m-2 = \dvli.
    \end{align*}
    The second last equality eliminates all terms in the expansion with expectation zero.
    This completes the proof of the proposition.
\end{proof}
\begin{remark}
    If we had also coupled $\Wone[m, m-1]=\Wtwo[m, m-1]$, we recover the na\"ive coupling $\Xonevl = \Xtwovl$. 
    Then, in the last computation we get an extra summand of $\E \lt[\lt(\Wone[m-1, m-1]\Wtwo[m-2,m-1]\rt)^2\rt] = d-m+2$, which recovers the bound of $d$ from Proposition~\ref{prop:general-ub-linear-term-ijedge}, as expected.
    Thus, the independence of $\Wone[m, m-1]$ and $\Wtwo[m, m-1]$ is crucial to the improved bound of Proposition~\ref{prop:general-ub-linear-term-ijnotedge}.
\end{remark}

The linear terms of (\ref{eq:general-ub-main-exp-overlap-starting-point}) in category (\ref{itm:general-ub-main-overlap-term-ii}) are handled similarly using a variant on the coupling $\cCgs[i,j]$ in the next proposition.
This time, instead of leaving the $(m, m-1)$ entry of the Gram-Schmidt orthogonalization free, we will leave the $(m,m)$ entry free.

\begin{proposition}
    \label{prop:general-ub-linear-term-ii}
    If $i\in [v-1]$ and $d \ge \dvli+1$, then we have that
    \[
        \E_{(\Xonevl, \Xtwovl)\sim \cNvdipr}
        \lt[
            \lt(\la \Xonei, \Xonei \ra - d\rt)
            \lt(\la \Xtwoi, \Xtwoi \ra - d\rt)
        \rt]
        \le
        2\dvli.
    \]
\end{proposition}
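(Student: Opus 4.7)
The plan is to mirror the Gram--Schmidt coupling argument used in Proposition~\ref{prop:general-ub-linear-term-ijnotedge}, but this time leaving the diagonal Gram--Schmidt coefficient for $X_i$ free instead of an off-diagonal one. Set $V = \Nl(i) \cup \{i\}$ and $m = |V| = \dvli + 1$, and fix a bijection $\pi: [m] \to V$ with $\pi(m) = i$. Define a modified coupling $\cMvd(\cC)$ (in the sense of Definition~\ref{defn:general-ub-modified-couplings}) by imposing
\[
    \Xonek = \Xtwok \text{ for } k \in [v-1]\setminus V, \qquad \Uone = \Utwo,
\]
and $\Wone[k,\ell] = \Wtwo[k,\ell]$ for all $1 \le \ell \le k \le m$ \emph{except} $(k,\ell) = (m,m)$, where $(\Wr, \Ur) = \GS(\Xr[\pi(1)],\ldots,\Xr[\pi(m)])$. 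This requires $d \ge m = \dvli + 1$, which is assumed.

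Next I would verify that these constraints imply $\la \Xonek, \Xonel\ra = \la \Xtwok, \Xtwol\ra$ for every $(k,\ell) \in E(G[v-1])$, so that Lemma~\ref{lem:general-ub-stronger-couplings} applies. For edges with both endpoints outside $V$ this is immediate. For an edge $(k,\ell) \in E(G[v-1])$ with $\ell \in \Nl(i) = V \setminus \{i\}$, set $a = \pi^{-1}(\ell) \le m-1$; since $\Wr[a,b]$ for $b \le a \le m-1$ all agree between replicas and $\Uone = \Utwo$, we have $\Xone[\ell] = \Xtwo[\ell]$. Finally, for an edge $(k,i)$ with $k \in \Nl(i)$, let $a = \pi^{-1}(k) \le m-1$; then
\[
    \la \Xrk, \Xri\ra = \sum_{b=1}^{a} \Wr[a,b]\, \Wr[m,b],
\]
and the right-hand side involves only coefficients with $b \le a \le m-1$, all of which are coupled, so the two replicas agree. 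The key point is that $\Wr[m,m]$ never appears in any such edge inner product (it only contributes to $\la \Xri,\Xri\ra$, a diagonal entry absent from $E(G[v-1])$), which is precisely why we can leave it free.

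Applying Lemma~\ref{lem:general-ub-stronger-couplings}, it remains to bound the expectation under $\cMvd(\cC)$. Writing $\la \Xri, \Xri\ra = \sum_{b=1}^m \Wr[m,b]^2$ and letting $A = \sum_{b=1}^{m-1} \Wone[m,b]^2$ (which equals $\sum_{b=1}^{m-1}\Wtwo[m,b]^2$ by coupling) and $B_r = \Wr[m,m]^2$, we have $\la \Xri,\Xri\ra - d = (A - \dvli) + (B_r - (d - \dvli))$. Under the coupling, $A \sim \chisq(\dvli)$ and $B_1, B_2$ are i.i.d.\ $\chisq(d - \dvli)$, with $A, B_1, B_2$ mutually independent. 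Expanding
\[
    \E\lt[(\la \Xonei,\Xonei\ra - d)(\la \Xtwoi,\Xtwoi\ra - d)\rt]
    = \E[(A-\dvli)^2] + \E[(A-\dvli)(B_1 - (d-\dvli))] + \E[(A-\dvli)(B_2-(d-\dvli))] + \E[(B_1-(d-\dvli))(B_2-(d-\dvli))],
\]
the three cross terms vanish by independence and mean-centering, leaving $\Var(A) = 2\dvli$, which gives the desired bound.

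The main subtlety, as in Proposition~\ref{prop:general-ub-linear-term-ijnotedge}, is choosing which single Gram--Schmidt coefficient to leave free: it must carry most of the signal in $\la \Xri,\Xri\ra$ while appearing in no inner product corresponding to an edge of $G[v-1]$. Leaving $\Wr[m,m]$ free satisfies both requirements, and no further work is needed beyond the variance computation.
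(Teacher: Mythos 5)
Your proposal is correct and follows essentially the same route as the paper's proof: the same Gram--Schmidt coupling on $V = \Nl(i)\cup\{i\}$ with only the $(m,m)$ coefficient left free, the same appeal to Lemma~\ref{lem:general-ub-stronger-couplings}, and the same final computation (the paper sums $\E[((\Wone[m,a])^2-1)^2]$ term by term, while you bundle the shared coefficients into a single $\chisq(\dvli)$ variable, which is only a cosmetic difference).
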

\begin{proof}
    Let $V = \Nl(i) \cup \{i\} $ and let $m = |V| = \dvli+1$.
    Fix some bijection $\pi : [m] \to V$ such that $\pi(m)=i$.
    Now consider the coupling $\cMvd(\cCgs[i])$ arising from the collection of constraints $\cCgs[i]$ given as follows:
    \begin{align*}
        \Xonek &= \Xtwok
        \quad
        \text{if $k\in [v-1] \setminus V$,} \\
        \Uone &= \Utwo, \\
        \Wone[k,\ell] &= \Wtwo[k,\ell]
        \quad
        \text{for all $1\le \ell \le k \le m$ such that $(k,\ell) \neq (m, m)$} \\
        &\text{where $\lt(\Wr, \Ur\rt) = \GS\lt(\Xr[\pi(1)], \ldots, \Xr[\pi(m)]\rt)$ for $r=1,2$.}
    \end{align*}
    Note that this Gram-Schmidt procedure is well-defined because $d\ge \dvli + 1 = m$.
    The same argument as in the previous proposition shows that these conditions ensure $\la \Xonek, \Xonel\ra = \la \Xtwok, \Xtwol \ra$ for all $(k,\ell)\in E(G[v-1])$.
    Let $\Xonevl, \Xtwovl$ and $\Wone, \Wtwo$ be as in the coupling $\cMvd(\cCgs[i])$.
    By the same argument as in the previous proposition, the following random variables are mutually independent.
    \begin{itemize}
        \item $\Wone[k,k]\sim \sqrt{\chisq(d+1-k)}$ for all $1\le k \le m$;
        \item $\Wone[k,\ell] \sim \cN(0,1)$ for all $1\le \ell < k \le m$; and
        \item $\Wtwo[m,m] \sim \sqrt{\chisq(d+1-m)}$.
    \end{itemize}
    We now have that
    \begin{align*}
        &\E_{(\Xonevl, \Xtwovl) \sim \cMvd(\cCgs[i])}
        \lt[
            \lt(\la \Xonei, \Xonei \ra - d\rt)
            \lt(\la \Xtwoi, \Xtwoi \ra - d\rt)
        \rt] \\
        &\qquad =
        \E \lt[
            \lt(\sum_{a=1}^{m} \lt(\Wone[m, a]\rt)^2 - d\rt)
            \lt(\sum_{a=1}^{m} \lt(\Wtwo[m, a]\rt)^2 - d\rt)
        \rt] \\
        &\qquad =
        \E \Bigg[
            \lt(
                \lt(\lt(\Wone[m, m]\rt)^2 - (d+1-m)\rt) +
                \sum_{a=1}^{m-1} \lt(\lt(\Wone[m, a]\rt)^2-1\rt)
            \rt) \\
        &\qquad \qquad \times
            \lt(
                \lt(\lt(\Wtwo[m, m]\rt)^2 - (d+1-m)\rt) +
                \sum_{a=1}^{m-1} \lt(\lt(\Wone[m, a]\rt)^2-1\rt)
            \rt)
        \Bigg] \\
        &\qquad= \sum_{a=1}^{m-1}
        \E \lt[
            \lt(\lt(\Wone[m, a]\rt)^2-1\rt)^2
        \rt]
        = 2(m-1)
    \end{align*}
    where the second last equality eliminates all terms in the expansion with expectation zero.
    The last equality follows from the fact that $\E_{\xi \sim \cN(0,1)}[(\xi^2-1)^2] = 2$.
    Applying Lemma~\ref{lem:general-ub-stronger-couplings} completes the proof of this proposition.
\end{proof}

Combining all of these propositions now completes the proof of Lemma~\ref{lem:general-ub-main-exp-overlap}.

\begin{proof}[Proof of Lemma~\ref{lem:general-ub-main-exp-overlap}]
    Because $d \gg \max_{v\in [n]} (\ddegv^2 + \ddegv \log^2 n)$, Propositions~\ref{prop:general-ub-bound-hy}, \ref{prop:general-ub-linear-term-unconditioning}, \ref{prop:general-ub-linear-term-ijedge}, \ref{prop:general-ub-linear-term-ijnotedge}, and \ref{prop:general-ub-linear-term-ii} all hold for sufficiently large $n$.
    Substituting the bounds from these propositions into (\ref{eq:general-ub-main-exp-overlap-starting-point}) yields that
    \begin{align*}
        &\E_{(\Xonevl, \Xtwovl) \sim \cNvdips}
        \exp\lt(
            \f12 Y_v
        \rt) \\
        &\qquad \le
        1 +
        \f{1}{2d^2}
        \lt[
            \sum_{\substack{
                i,j\in \downNv, i\neq j \\
                (i,j)\in E(G[v-1])
            }}
            d
            +
            \sum_{\substack{
                i,j\in \downNv, i\neq j \\
                (i,j)\not\in E(G[v-1])
            }}
            \min\lt(\dvli, \dvlj\rt)
            +
            \sum_{i\in \downNv}
            2 \dvli
        \rt] \\
        &\qquad \qquad +
        \f{(\Ctr + 2)^2}{2d^2}
        \lt(\ddegv^4 + \ddegv^2 \log^4 n\rt)
        +
        24\ddegv^2 d^{-1} n^{-10}\,.
    \end{align*}
    Note that
    \[
        \sum_{\substack{
            i,j\in \downNv, i\neq j \\
            (i,j)\in E(G[v-1])
        }}
        d
        =
        2d \cdot |E(G[\downNv])|\,,
    \]
    and that
    \begin{align*}
        &\sum_{\substack{
            i,j\in \downNv, i\neq j \\
            (i,j)\not\in E(G[v-1])
        }}
        \min\lt(\dvli, \dvlj\rt) +
        \sum_{i\in \downNv}
        2\dvli \\
        &\qquad \le
        \sum_{i,j\in \downNv, i\neq j}
        (\dvli + \dvlj)
        +
        \sum_{i\in \downNv}
        2\dvli \\
        &\qquad =
        \sum_{i,j\in \downNv}
        (\dvli + \dvlj)
        = 2\ddegv \sum_{i\in \downNv} \dvli\,.
    \end{align*}
    Moreover, because $d \gg \max_{v\in [n]} (\ddegv^2 + \ddegv \log^2 n)$, for sufficiently large $n$ we have $d \ge 24\degv^2 \ge 24\ddegv^2$ for all $v\in [n]$.
    When this occurs, we have $24\ddegv^2 d^{-1} n^{-10} \le n^{-10}$.
    Combining these bounds proves the lemma.
\end{proof}

The final ingredient in the proof of Theorem~\ref{thm:general-weaker-ub} is the following lemma, which parses the hypothesis (\ref{eq:general-weaker-ub-hypothesis-4claws}), stated in terms of the subgraph counts of $G$, into one involving the degrees of $G$ that is compatible with the rest of the proof's parts.
Recall that $\degv$ is the degree of a vertex $v$, not restricted to neighbors in $[v-1]$.
\begin{lemma}
    \label{lem:general-weaker-ub-hypothesis-translation}
    Suppose the hypothesis (\ref{eq:general-weaker-ub-hypothesis-4claws}) of Theorem~\ref{thm:general-weaker-ub} holds.
    Then,
    \begin{equation}
        \label{eq:general-weaker-ub-hypothesis-4claws-translated}
        d^2 \gg
        \sum_{v\in G}
        \lt(
            \degv^4 +
            \degv^2 \log^4 n
        \rt).
    \end{equation}
\end{lemma}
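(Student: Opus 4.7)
The plan is to translate both the hypothesis and the conclusion into sums over vertices of elementary functions of $\degv$, and then compare term by term.

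First I would rewrite the three subgraph counts appearing in (\ref{eq:general-weaker-ub-hypothesis-4claws}) as vertex sums:
\[
    \Num_G(E) = \tfrac12 \sum_{v \in G} \degv,
    \qquad
    \Num_G(P_2) = \sum_{v \in G} \binom{\degv}{2},
    \qquad
    \Num_G(K_{1,4}) = \sum_{v \in G} \binom{\degv}{4}.
\]
Each identity follows because every edge, $P_2$, or $K_{1,4}$ has a uniquely defined center (an endpoint, the middle vertex, or the degree-$4$ vertex respectively), so summing over possible centers partitions the count.

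Next I would compare $\degv^4$ and $\degv^2$ to these binomial coefficients. The identity $\degv^2 = \degv + 2\binom{\degv}{2}$ gives $\sum_v \degv^2 \le 2\Num_G(P_2,E)$ immediately. For the fourth power, I would split on whether $\degv \ge 4$ or $\degv \le 3$: on the former range a routine check gives $\binom{\degv}{4} \ge c\,\degv^4$ for a universal constant $c > 0$, while on the latter range $\degv^4 \le 27\,\degv$. Summing over $v$ and using the $\Num_G(E)$ identity yields
\[
    \sum_{v \in G} \degv^4 \le C_1\bigl(\Num_G(K_{1,4}) + \Num_G(E)\bigr)
\]
for an absolute constant $C_1$.

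Combining these two estimates and using that $\log^4 n \ge 1$ for all sufficiently large $n$ to absorb the stray $\Num_G(E)$ term, I obtain
\[
    \sum_{v \in G}\bigl(\degv^4 + \degv^2 \log^4 n\bigr)
    \le C \bigl(\Num_G(K_{1,4}) + \Num_G(P_2, E)\log^4 n\bigr)
\]
for a universal constant $C$. Dividing by $d^2$ and invoking the hypothesis (\ref{eq:general-weaker-ub-hypothesis-4claws}), the right-hand side is $o(1)$, which is exactly (\ref{eq:general-weaker-ub-hypothesis-4claws-translated}). The whole argument is elementary combinatorial bookkeeping; the only subtle point is the small-degree case in bounding $\degv^4$, where $\binom{\degv}{4}$ is zero and one must fall back on the crude inequality $\degv^4 \le 27\,\degv$ so that the slack is absorbed by $\Num_G(E)$.
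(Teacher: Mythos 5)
Your proposal is correct and follows essentially the same route as the paper: the paper simply packages your two-case bound on $\degv^4$ as the single inequality $a^4 \le 4^4\binom{a}{4} + 3^3 a$ (together with $a^2 = 2\binom{a}{2} + a$) and then identifies the vertex sums with $\Num_G(K_{1,4})$, $\Num_G(P_2)$, and $\Num_G(E)$ exactly as you do. The only cosmetic difference is how the stray $\Num_G(E)$ term is absorbed, which does not affect the argument.
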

\begin{proof}
    For all nonnegative integers $a$, we have $a^2 = 2\binom{a}{2} + a$ and $a^4 \le 4^4 \binom{a}{4} + 3^3a$.
    So,
    \begin{align*}
        \sum_{v\in G}
        \lt(
            \degv^4 +
            \degv^2 \log^4 n
        \rt)
        &\le
        \sum_{v\in G} \lt(
            4^4 \binom{\degv}{4} +
            3^3\degv +
            \lt(
                2\binom{\degv}{2} +
                \degv
            \rt)
            \log^4 n
        \rt) \\
        &=
        4^4 \Num_G(K_{1,4}) +
        2 \Num_G(P_2) \log^4 n +
        2 \Num_G(E) (\log^4 n + 3^3).
    \end{align*}
    So, (\ref{eq:general-weaker-ub-hypothesis-4claws}) implies the result.
\end{proof}

We are now ready to prove Theorem~\ref{thm:general-weaker-ub}.
The final outstanding task is to verify that $S^v$ occurs with high probability, as in Proposition~\ref{prop:general-ub-s-hp}.
This is carried out in Section~\ref{subsec:general-ub-s-hp-proof}.

\begin{proof}[Proof of Theorem~\ref{thm:general-weaker-ub}]
    For now, fix a vertex $v\in [n]$.
    By Proposition~\ref{prop:general-ub-s-hp}, $\P(\Svop), \P(\Svtr) \ge 1 - n^{-20}$, so $\P(\Svop \cap \Svtr) \ge 1 - 2n^{-20}$.
    By Lemma~\ref{lem:general-ub-kl-conditioning} with $T = \Svop \cap \Svtr$, we have that
    \[
        \E_{W \sim \muvl}
        \KL\lt((\muv | W) \parallel \nuv\rt)
        \le
        \E_{W \sim \muvlsv}
        \KL\lt((\muv | W) \parallel \nuv\rt)
        + 11n^{-9}
    \]
    for sufficiently large $n$.

    By Lemma~\ref{lem:general-weaker-ub-hypothesis-translation}, (\ref{eq:general-weaker-ub-hypothesis-4claws-translated}) holds.
    Therefore, $d \gg \max_{v\in [n]} \lt(\degv^2 + \degv \log^2 n\rt)$.
    Since $\degv \ge \ddegv$ for all $v$, the hypotheses of Lemmas~\ref{lem:general-ub-main-2mm} and \ref{lem:general-ub-main-exp-overlap} both hold.
    By these lemmas, we have that for sufficiently large $n$,
    \begin{align*}
        \E_{W \sim \muvl}
        \KL\lt((\muv | W) \parallel \nuv\rt)
        &\le
        11n^{-9}-1 + \exp\lt(\f{100^4}{d^2} (\ddegv^3 + \ddegv \log^2 n)\rt) \\
        &\qquad \times
        \Bigg[
            1 + \f{|E(G[\downNv])|}{d} +
            \f{\ddegv}{d^2}
            \sum_{i\in \downNv}
            \dvli \\
            &\qquad \qquad +
            \f{(\Ctr + 2)^2}{2d^2}
            \lt(\ddegv^4 + \ddegv^2 \log^4 n\rt) +
            n^{-10}
        \Bigg].
    \end{align*}
    Because $d \gg \max_{v\in [n]} \lt(\degv^2 + \degv \log^2 n\rt)$, the argument of the exponential in this bound is $o(1)$ and the quantity inside square brackets is $1+o(1)$ (note that $|E(G[\downNv])| \le \ddegv^2$).
    For $x,y \in [0, \f12]$, we have that $\exp(x)(1+y) \le 1+2x+2y$.
    So, for all sufficiently large $n$ we have
    \begin{align*}
        \E_{W \sim \muvl}
        \KL\lt((\muv | W) \parallel \nuv\rt)
        &\le
        \f{2|E(G[\downNv])}{d} +
        \f{2\ddegv}{d^2}
        \sum_{i\in \downNv} \dvli \\
        &\qquad +
        \f{(\Ctr + 2)^2 + 2\cdot 100^4}{d^2}
        \lt(\ddegv^4 + \ddegv^2 \log^4 n\rt) +
        13n^{-9},
    \end{align*}
    where we used that $2n^{-10} \le 2n^{-9}$.
    Substituting this into (\ref{eq:general-ub-starting-point}), we now have that
    \begin{align*}
        2\TV(W(G,d), M(G))^2
        &\le
        \sum_{v=1}^n
        \E_{W\sim \muvl}
        \KL\lt((\muv|W) \parallel \nuv\rt) \\
        &\le
        \f{2}{d}
        \sum_{v=1}^d
        |E(G[\downNv])| +
        \f{2}{d^2}
        \sum_{v=1}^d
        \ddegv
        \sum_{i\in \downNv}
        \dvli \\
        &\qquad +
        \f{(\Ctr+2)^2 + 2\cdot 100^4}{d^2}
        \sum_{v=1}^n
        \lt(\ddegv^4 + \ddegv^2 \log^4 n\rt) +
        13n^{-8}.
    \end{align*}
    Each edge in some $G[\downNv]$ creates a unique 3-cycle with largest vertex $v$, so we have
    \[
        \sum_{v=1}^d |E(G[\downNv])| = \Num_G(C_3).
    \]
    Recall that $N(v)$ and $\degv$ are the neighbor set and degree of $v$, not restricted to $[v-1]$.
    By AM-GM,
    \begin{align*}
        \sum_{v=1}^d
        \ddegv
        \sum_{i\in \downNv}
        \deg(i)
        &\le
        \sum_{v=1}^d
        \sum_{u\in N(v)}
        \degu\degv
        =
        \sum_{(u,v)\in E(G)}
        2\degu\degv \\
        &\le
        \sum_{(u,v)\in E(G)}
        \lt(\degu^2 + \degv^2\rt)
        \le
        2
        \sum_{v\in G}
        \degv^3
        \le
        2
        \sum_{v\in G}
        \degv^4.
    \end{align*}
    Finally, we have $\ddegv \le \degv$ for all $v$.
    So,
    \[
        2\TV(W(G,d), M(G))^2 \le
        \f{2}{d} \Num_G(C_3) +
        \f{(\Ctr + 2)^2 + 2 \cdot 100^4 + 4}{d^2}
        \lt(\degv^4 + \degv^2 \log n\rt)
        + 13n^{-8}.
    \]
    The bounds (\ref{eq:general-weaker-ub-hypothesis-triangles}) and (\ref{eq:general-weaker-ub-hypothesis-4claws-translated}) imply that this upper bound is $o(1)$.
    This completes the proof of the theorem.
\end{proof}

\subsection{High Probability Bounds on $S$ and Determinant Bounds}
\label{subsec:general-ub-s-hp-proof}

In this section, we show that $S^v$, defined in (\ref{eq:general-ub-def-sv}), holds with high probability, proving Proposition~\ref{prop:general-ub-s-hp}.
We first will establish that $\Svop$, defined in (\ref{eq:general-ub-def-svop}), occurs with high probability.
This is implied by the following standard bound on the singular values of a Gaussian matrix.
A proof of this bound can be found in \cite{Ver10} and follows from Gordon's Theorem and Gaussian concentration.

\begin{lemma}{\cite[Corollary 5.35]{Ver10}}
    \label{lem:gaussian-singular-values-concentration}
    Suppose $X\in \bR^{d\times k}$ has i.i.d. standard Gaussian entries.
    Let $s_{\max}(X)$ and $s_{\min}(X)$ be the largest and smallest singular values of $X$.
    For all $t\ge 0$, we have that
    \[
        \sqrt{d} - \sqrt{k} - t
        \le
        s_{\min}(X)
        \le
        s_{\max}(X)
        \le
        \sqrt{d} + \sqrt{k} + t
    \]
    with probability at least $1 - 2\exp(-t^2/2)$.
\end{lemma}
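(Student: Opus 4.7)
The plan is to combine a mean estimate from Gordon's min-max theorem with Gaussian concentration for $1$-Lipschitz functions, which is the standard route to such extreme singular value bounds.

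First, I would note that both $X \mapsto s_{\max}(X)$ and $X \mapsto s_{\min}(X)$ are $1$-Lipschitz as functions of the entries of $X$ under the Frobenius norm. This follows from the variational characterizations $s_{\max}(X) = \sup_{\|u\|_2 = \|v\|_2 = 1} u^\top X v$ and (assuming $d \ge k$, as is the case of interest) $s_{\min}(X) = \inf_{\|v\|_2 = 1} \|Xv\|_2$: writing either as a supremum of linear functionals of $X$ (possibly after a min), the Lipschitz constant is the Frobenius norm of the coefficient matrix $uv^\top$, which equals $1$.

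Next, I would apply Gordon's comparison inequality to the Gaussian process $G_{u,v} = u^\top X v$ indexed by pairs of unit vectors. Comparing with an auxiliary process built from two independent standard Gaussian vectors (of lengths $d$ and $k$) whose extremes have known expectations at most $\sqrt{d}$ and $\sqrt{k}$, one obtains the mean bounds
\begin{equation*}
    \E [s_{\max}(X)] \le \sqrt{d} + \sqrt{k}, \qquad \E [s_{\min}(X)] \ge \sqrt{d} - \sqrt{k}.
\end{equation*}
Finally, I would invoke the Borell--Sudakov--Tsirelson Gaussian concentration inequality: for any $1$-Lipschitz $f: \bR^{dk} \to \bR$ and standard Gaussian $\xi$, $\P(|f(\xi) - \E f(\xi)| \ge t) \le 2 \exp(-t^2/2)$. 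Combining this with the two mean bounds, applied to $s_{\max}$ and $-s_{\min}$ respectively, gives the claimed two-sided estimate with the stated failure probability $2 \exp(-t^2/2)$ after a union bound.

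The result is classical and can be cited directly from Vershynin's notes, so no new ideas are needed; the only subtlety that deserves care is the application of Gordon's theorem in its min-max form (as opposed to Slepian's one-sided comparison) to recover the lower tail on $s_{\min}$. This is the one step I would want to check against a reference rather than reproduce from scratch.
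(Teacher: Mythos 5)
Your proposal matches the route the paper relies on: the paper does not reprove this lemma but cites it as \cite[Corollary 5.35]{Ver10}, whose proof is exactly your argument — Gordon's comparison theorem for the mean bounds $\E s_{\max}(X) \le \sqrt{d}+\sqrt{k}$, $\E s_{\min}(X) \ge \sqrt{d}-\sqrt{k}$, together with Gaussian concentration for the $1$-Lipschitz maps $s_{\max}$ and $s_{\min}$. The only cosmetic point is that to land on the stated failure probability $2\exp(-t^2/2)$ you should apply the one-sided concentration tail $\exp(-t^2/2)$ to each of the two deviations (upper tail of $s_{\max}$, lower tail of $s_{\min}$) before the union bound, rather than the two-sided bound $2\exp(-t^2/2)$, which would give $4\exp(-t^2/2)$.
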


With this bound, we can deduce the following proposition controlling the probability of $\Svop$.

\begin{proposition}
    \label{prop:general-ub-svop-high-prob}
    Suppose that $d \gg \max_{v\in [n]} \ddegv + \log n$.
    For all $v\in [n]$ and all sufficiently large $n$, we have that $\P(\Svop) \ge 1 - n^{-20}$.
\end{proposition}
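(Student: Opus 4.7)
The plan is to relate $\norm{\Delta_{\downNv}}_\op$ directly to the singular values of $\Xbn$ and then invoke Lemma~\ref{lem:gaussian-singular-values-concentration}. Observe that $\Xbn \in \bR^{d \times \ddegv}$ has i.i.d. standard Gaussian entries, and that the eigenvalues of $\Delbn = d^{-1}\Xbnt\Xbn - \Ibn$ are exactly $d^{-1}\sigma_i^2 - 1$ where the $\sigma_i$ are the singular values of $\Xbn$. Hence
\[
    \norm{\Delta_{\downNv}}_\op
    = \max\lt(
        d^{-1} s_{\max}(\Xbn)^2 - 1,\;
        1 - d^{-1} s_{\min}(\Xbn)^2
    \rt).
\]

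First I would apply Lemma~\ref{lem:gaussian-singular-values-concentration} to $\Xbn$ with $t = 10\sqrt{\log n}$, giving that with probability at least $1 - 2\exp(-50\log n) \ge 1 - n^{-20}$ (for $n$ large enough that we may absorb the constant $2$),
\[
    \sqrt{d} - \sqrt{\ddegv} - t \le s_{\min}(\Xbn) \le s_{\max}(\Xbn) \le \sqrt{d} + \sqrt{\ddegv} + t.
\]
On this event, expand $(\sqrt{d} \pm (\sqrt{\ddegv} + t))^2$, divide by $d$, and subtract $1$. Using $(\sqrt{\ddegv} + t)^2 \le 2(\ddegv + t^2) = 2\ddegv + 200\log n \le 200(\ddegv + \log n)$, both expressions in the max are bounded above by
\[
    \f{2(\sqrt{\ddegv} + t)}{\sqrt{d}} + \f{(\sqrt{\ddegv}+t)^2}{d}
    \le \f{2\sqrt{2}\sqrt{\ddegv + t^2}}{\sqrt{d}} + \f{2(\ddegv + t^2)}{d}.
\]
With $t^2 = 100 \log n$, the first term is at most $20\sqrt{2}\sqrt{(\ddegv + \log n)/d}$, while the second is $200(\ddegv + \log n)/d$.

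Finally, I would use the hypothesis $d \gg \max_{v \in [n]} \ddegv + \log n$ to conclude that the second term is $o(1) \cdot \sqrt{(\ddegv + \log n)/d}$, so the total bound is at most $(20\sqrt{2} + o(1))\sqrt{(\ddegv + \log n)/d} \le 100\sqrt{(\ddegv + \log n)/d}$ for all sufficiently large $n$, uniformly in $v$. This is exactly the defining inequality of $\Svop$, and the proposition follows. There is no real obstacle here; the only care required is to choose $t$ large enough ($t = 10\sqrt{\log n}$ suffices) so that the tail probability is $\le n^{-20}$, and to verify that the lower-order term $200(\ddegv + \log n)/d$ gets absorbed into the leading $\sqrt{(\ddegv + \log n)/d}$ thanks to the asymptotic assumption on $d$.
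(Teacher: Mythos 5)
Your proof is correct and follows essentially the same route as the paper: apply Lemma~\ref{lem:gaussian-singular-values-concentration} to $\Xbn$ with $t$ of order $\sqrt{\log n}$, translate the singular-value bounds into a bound on $\norm{\Delbn}_{\op}$, and absorb the quadratic term using $d \gg \max_{v\in[n]} \ddegv + \log n$. The only differences are cosmetic choices of constants (the paper takes $t = 32\sqrt{\log n}$ and packages the algebra via $\eps = 33\sqrt{(\ddegv+\log n)/d}$ with the bound $3\eps$), so the argument is sound as written.
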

\begin{proof}
    Let $n$ be large enough that $d > 100^2 \max_{v\in [n]} \lt(\ddegv + \log n\rt)$.
    Applying Lemma~\ref{lem:gaussian-singular-values-concentration} to $\Xbn$ with $t = 32\sqrt{\log n}$ implies that with probability at least $1 - 2n^{-512} \ge 1 - n^{-20}$,
    \begin{align*}
        \spec\lt(\Xbnt \Xbn\rt)
        &\subseteq
        \lt[
            \lt(\sqrt{d} - \sqrt{\ddegv} - 32\sqrt{\log n}\rt)^2,
            \lt(\sqrt{d} + \sqrt{\ddegv} + 32\sqrt{\log n}\rt)^2
        \rt] \\
        &\subseteq
        \lt[
            \lt(\sqrt{d} - 33 \sqrt{\ddegv + \log n}\rt)^2,
            \lt(\sqrt{d} + 33 \sqrt{\ddegv + \log n}\rt)^2
        \rt] \\
        &=
        [d(1-\eps)^2, d(1+\eps)^2]
    \end{align*}
    for $\eps = 33\sqrt{\f{\ddegv + \log n}{d}} < 1$.
    Suppose this event occurs.
    Since $\Delbn = d^{-1} \Xbnt \Xbn - \Ibn$, we have
    \begin{align*}
        \spec(\Delbn)
        &\subseteq
        \lt[(1-\eps)^2-1, (1+\eps)^2-1\rt]
        \subseteq
        [-3\eps, 3\eps] \\
        &\subseteq
        \lt[
            -100 \sqrt{\f{\ddegv + \log n}{d}},
            100 \sqrt{\f{\ddegv + \log n}{d}}
        \rt].
    \end{align*}
    This implies that $\norm{\Delbn}_{\op} \le 100 \sqrt{\f{\ddegv + \log n}{d}}$.
\end{proof}

We will now show that $\Svtr$, as defined in (\ref{eq:general-ub-def-svtr}), occurs with high probability.
For this, we will need the following standard lemma on Gaussian hypercontractivity.
The lemma follows from, for example, Theorem 9.23 in \cite{Odo14}, after modeling each Gaussian input as a normalized sum of i.i.d. Rademacher variables and applying the Central Limit Theorem.

\begin{lemma}
    \label{lem:gaussian-hypercontractivity-tails}
    Let $f$ be a polynomial of degree at most $k$ in i.i.d. standard Gaussian inputs, and let $\sigma^2 = \E f^2$.
    There exist positive constants $c_k$ and $C_k$, dependent only on $k$, such that
    \[
        \P[|f| > t\sigma]
        \le
        C_k \exp\lt(-c_k t^{2/k}\rt).
    \]
\end{lemma}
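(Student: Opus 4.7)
The plan is to use the $(2,q)$-hypercontractivity of degree-$k$ Gaussian polynomials to control high moments, convert moments to tails via Markov's inequality, and then optimize the choice of $q$ as a function of $t$ to get the stated sub-Gaussian-like tail.

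Concretely, first I would invoke the Rademacher-to-Gaussian reduction suggested in the paper. Using the CLT, any degree-$k$ polynomial $f(g_1,\dots,g_m)$ in standard Gaussians is a distributional limit of the degree-$k$ polynomials $f\lt(\tfrac{1}{\sqrt N}\sum_j \eps_{1,j},\dots,\tfrac{1}{\sqrt N}\sum_j \eps_{m,j}\rt)$ in $mN$ i.i.d.\ Rademachers $\eps_{i,j}$. Applying the Bonami–Beckner inequality (Theorem 9.23 of \cite{Odo14}) to these Rademacher polynomials gives $\|f_N\|_q \le (q-1)^{k/2}\|f_N\|_2$ for every $q\ge 2$, and the $L^q$ and $L^2$ norms pass to the Gaussian limit (the $L^2$ norm is preserved exactly, and the higher moments can be controlled by uniform integrability, which holds because polynomials in bounded variables have polynomially bounded $L^{q+1}$ norms). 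This yields the Gaussian hypercontractive bound $\|f\|_q \le (q-1)^{k/2}\sigma$ for all $q\ge 2$.

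Second, I would apply Markov's inequality to $|f|^q$:
\[
    \P[\,|f|>t\sigma\,]
    \le
    (t\sigma)^{-q}\,\E|f|^q
    \le
    t^{-q}\,(q-1)^{kq/2}.
\]
Taking logarithms, the exponent is $-q\log t + \tfrac{kq}{2}\log(q-1)$. Choosing $q = \tfrac{1}{e}\,t^{2/k}$ (valid once $t$ is large enough that $q\ge 2$) makes $\tfrac{k}{2}\log(q-1) \le \log t - 1$, so the exponent is at most $-\tfrac{kq}{2}\cdot (1-o(1))$, giving
\[
    \P[\,|f|>t\sigma\,]
    \le
    \exp\lt(-c_k\, t^{2/k}\rt)
\]
for a constant $c_k>0$ depending only on $k$, whenever $t$ is above a $k$-dependent threshold. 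For smaller $t$ the bound is trivial by absorbing into the multiplicative constant $C_k$, completing the estimate.

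The only real subtlety is the passage from Rademacher to Gaussian hypercontractivity (one has to justify convergence of $L^q$ norms, not just distributional convergence); this can instead be sidestepped by quoting Nelson's hypercontractivity for the Ornstein–Uhlenbeck semigroup directly, which produces the same $(q-1)^{k/2}$ constant for Gaussian polynomials of degree at most $k$. Everything else is a standard Markov-plus-optimize-$q$ calculation.
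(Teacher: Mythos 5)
Your proposal is correct and follows essentially the same route as the paper, which simply cites \cite[Theorem 9.23]{Odo14} together with the Rademacher-to-Gaussian CLT reduction; your argument just unpacks the standard proof of that cited theorem (hypercontractive moment bound, Markov on $|f|^q$, optimize $q\asymp t^{2/k}$). Your remark that one can instead quote Nelson's Gaussian hypercontractivity directly is a clean way to sidestep the $L^q$-convergence subtlety in the CLT transfer, but it does not change the substance of the argument.
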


With this lemma, we can deduce the following proposition controlling the probability of $\Svtr$.

\begin{proposition}
    \label{prop:general-ub-svtr-high-prob}
    Suppose that $d \gg \max_{v\in [n]} \ddegv$ and $\Ctr > 0$ is a sufficiently large constant.
    For all $v\in [n]$ and sufficiently large $n$, we have that $\P(\Svtr) \ge 1 - n^{-20}$.
\end{proposition}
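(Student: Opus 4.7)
The plan is to apply Gaussian hypercontractivity (Lemma~\ref{lem:gaussian-hypercontractivity-tails}) to the mean-zero random variable $f = \Tr(\Delbnsq) - \E[\Tr(\Delbnsq)]$, which is a polynomial of degree $4$ in the $d\ddegv$ i.i.d.\ standard Gaussian entries of $\Xbn$. Expanding
\[
    \Tr(\Delbnsq)
    =
    \sum_{i\in \downNv}
    \lt(d^{-1}\|X_i\|^2 - 1\rt)^2
    +
    \sum_{\substack{i,j\in \downNv \\ i\neq j}}
    \lt(d^{-1}\la X_i, X_j\ra\rt)^2
\]
and using the moments of $\chisq(d)$ and of $\la X_i, X_j\ra$ for $i \neq j$, a short computation yields $\E[\Tr(\Delbnsq)] = \ddegv(\ddegv+1)/d$, which is already at most $2\ddegv^2/d$ when $\ddegv \ge 1$.

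The crux of the argument is to show that $\sigma^2 := \Var(\Tr(\Delbnsq)) \le C\ddegv^2/d^2$ for some absolute constant $C$, so that $\sigma = O(\ddegv/d)$. I would split $\Tr(\Delbnsq) = A + B$ into the diagonal sum $A = \sum_{i\in\downNv} (d^{-1}\|X_i\|^2-1)^2$ and off-diagonal sum $B = \sum_{i\neq j}(d^{-1}\la X_i,X_j\ra)^2$. Independence of the $X_i$'s gives $\Var(A) = \ddegv \cdot \Var\lt((d^{-1}\|X_i\|^2-1)^2\rt) = O(\ddegv/d^2)$ via explicit $\chisq(d)$ moments. For $\Var(B)$, I would expand as a sum of covariances of pairs $\lt(d^{-1}\la X_i, X_j\ra\rt)^2, \lt(d^{-1}\la X_k, X_\ell\ra\rt)^2$ and case-split on whether $\{i,j\}$ and $\{k,\ell\}$ share zero, one, or two indices. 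Disjoint pairs are independent; pairs sharing one index, say $j=\ell$, have common conditional expectation $\|X_j\|^2/d^2$ given $X_j$ and so contribute covariance $O(1/d^3)$ each; identical pairs contribute $O(1/d^2)$ each. Combined with the combinatorial counts ($O(\ddegv^2)$ identical pairs and $O(\ddegv^3)$ one-index-shared pairs), this yields $\Var(B) = O(\ddegv^2/d^2)$ for $\ddegv \ll d$. A parallel conditional-expectation argument bounds the cross term by $|\mathrm{Cov}(A,B)| = O(\ddegv^2/d^3)$, so altogether $\sigma = O(\ddegv/d)$.

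Given this variance bound, Lemma~\ref{lem:gaussian-hypercontractivity-tails} applied with $k=4$ yields $\P(|f| > t\sigma) \le C_4 \exp(-c_4 \sqrt{t})$, and choosing $t = \Theta(\log^2 n)$ makes this probability at most $n^{-20}$. On the complementary event, $\Tr(\Delbnsq) \le \ddegv(\ddegv+1)/d + C'\ddegv \log^2 n/d$ for an absolute constant $C'$; combining with $\ddegv(\ddegv+1) \le 2\ddegv^2$ (the case $\ddegv = 0$ being trivial, since then $\Tr(\Delbnsq) = 0$) and choosing $\Ctr \ge C'$ gives $\Tr(\Delbnsq) \le 2\ddegv^2/d + \Ctr \ddegv \log^2 n/d$ as required. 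The main obstacle is the variance computation: a loose bound like $\sigma = O(\sqrt{\ddegv/d})$, which one can get from $\Var(f) \le \E f^2$ combined with the $\Svop$ bound, leads after hypercontractivity only to a deviation of order $\log^2 n \sqrt{\ddegv/d}$, insufficient when $\ddegv$ is much smaller than $d^{1/3}$. Obtaining the sharp $O(\ddegv/d)$ scaling of $\sigma$ requires the Isserlis-type fourth-moment case analysis above.
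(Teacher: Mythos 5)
Your proposal is correct and follows essentially the same route as the paper: the paper computes $\E \Tr(\Delbnsq) = (\ddegv^2+\ddegv)/d$ and $\Var \Tr(\Delbnsq) \le 56\ddegv^2/d^2$ (for $d\ge \ddegv$) in Lemma~\ref{lem:general-ub-computational}, via exactly the kind of shared-index case analysis of Gaussian fourth moments you describe, and then applies Lemma~\ref{lem:gaussian-hypercontractivity-tails} with $k=4$ to the centered trace, taking $\Ctr$ large so the deviation $\Ctr\ddegv\log^2 n/d$ corresponds to $t \asymp \Ctr\log^2 n$ and tail probability $\exp(-\Omega(\sqrt{\Ctr}\log n)) \le n^{-20}$. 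Your variance bookkeeping (diagonal/off-diagonal split, the $O(1/d^3)$ covariance for one shared index with $O(\ddegv^3)$ such pairs, and the cross term) matches the paper's computation, so there is no gap.
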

\begin{proof}
    Since $d \gg \max_{v\in [n]} \ddegv$, for sufficiently large $n$ we have $d \ge \max_{v\in [n]} \ddegv$.
    When this occurs, Lemma~\ref{lem:general-ub-computational}(\ref{itm:general-ub-computational-vartrdelsq}) applies.
    By Lemma~\ref{lem:general-ub-computational}(\ref{itm:general-ub-computational-trdelsq},\ref{itm:general-ub-computational-vartrdelsq}), we have
    \[
        \E \Tr\lt(\Delbnsq\rt)
        = 
        \f{\ddegv^2 + \ddegv}{d}
        \le 
        \f{2\ddegv^2}{d},
        \qquad
        \Var \Tr\lt(\Delbnsq\rt)
        \le 
        \f{56\ddegv^2}{d^2}.
    \]
    Consider applying Lemma~\ref{lem:gaussian-hypercontractivity-tails} to $f = \Tr\lt(\Delbnsq\rt) - \E \Tr\lt(\Delbnsq\rt)$, which is a degree 4 polynomial in the entries of the matrix $\Xbn$.
    Let $C_4, c_4$ be the constants in Lemma~\ref{lem:gaussian-hypercontractivity-tails}.
    It follows that
    \begin{align*}
        \P\lt[
            \Tr\lt(\Delbnsq\rt) >
            \f{2\ddegv^2}{d} +
            \Ctr \cdot \f{\ddegv \log^2 n}{d}
        \rt]
        &\le
        \P \lt[
            |f| > \Ctr \cdot \f{\ddegv \log^2 n}{d}
        \rt] \\
        &\le C_4 \exp\lt(-\f{c_4 \Ctr}{\sqrt{56}} (\log^2 n)^{1/2}\rt)
        \le n^{-20}
    \end{align*}
    for a sufficiently large choice of the constant $\Ctr > 0$.
    This completes the proof of the proposition.
\end{proof}

Finally, we show that $\Svdet$ occurs with high probability.
Recall that $\Svdet$ is defined in (\ref{eq:general-ub-def-svdet}).
\begin{proposition}
    \label{prop:general-ub-svdet-high-prob}
    If $d \gg \max_{v\in [n]} \ddegv$, then for all  sufficiently large $n$ and all $v\in [n]$ we have that $\P(\Svdet) \ge 1 - e^{-n/2}$.
\end{proposition}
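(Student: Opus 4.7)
The plan is to apply Markov's inequality to $f_{\det}(\Xvl)$: since
\[
    \P(\Svdet^c) = \P\lt[f_{\det}(\Xvl) > e^n\rt] \le e^{-n}\, \E_{\Xvl \sim \cN(0, I_d)^{\otimes(v-1)}} f_{\det}(\Xvl),
\]
it suffices to show that $\E f_{\det}(\Xvl) \le e^{n/2}$. The first key step is to collapse the double expectation defining $f_{\det}$ using the tower property. By construction, $\gamma(W)$ is the conditional law of $\Xpvl \sim \cN(0, I_d)^{\otimes(v-1)}$ given $W(\Xpvl) = W$, and the pushforward of $\cN(0, I_d)^{\otimes(v-1)}$ under $W(\cdot)$ is $\muvl$. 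Consequently,
\begin{align*}
    \E_{\Xvl} f_{\det}(\Xvl)
    &= \E_{\Xvl}\E_{\Xpvl \sim \gamma(W(\Xvl))}\det\lt(d^{-1}\Xpbnt \Xpbn\rt)^{-1/2} \\
    &= \E_{W\sim \muvl}\E_{\Xpvl \sim \gamma(W)}\det\lt(d^{-1}\Xpbnt \Xpbn\rt)^{-1/2}
    = \E_{X \sim \cN(0, I_d)^{\otimes(v-1)}}\det\lt(d^{-1}X_{\downNv}^{\top} X_{\downNv}\rt)^{-1/2}.
\end{align*}

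The second step is to evaluate this unconditional expectation via Bartlett's decomposition. Since $X_{\downNv}\in \bR^{d\times \ddegv}$ has i.i.d. standard Gaussian entries and $d \gg \ddegv$, the classical fact (cf.\ \cite[Thm.\ 3.2.14]{Mui09}) gives
\[
    \det\lt(X_{\downNv}^{\top} X_{\downNv}\rt) \stackrel{d}{=} \prod_{i=1}^{\ddegv} Y_i,\qquad Y_i \sim \chi^2(d-i+1)\ \text{independent},
\]
so that $\E \det(d^{-1}X_{\downNv}^{\top} X_{\downNv})^{-1/2} = d^{\ddegv/2}\prod_{i=1}^{\ddegv}\E\, Y_i^{-1/2}$. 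A direct integration against the chi-squared density yields $\E\,[\chi^2(k)]^{-1/2} = \Gamma((k-1)/2)/(\sqrt{2}\,\Gamma(k/2))$, which is bounded above by $(k-2)^{-1/2}$ for $k \ge 3$ by a standard Gautschi-type inequality for gamma-function ratios. Therefore
\[
    \E f_{\det}(\Xvl) \le d^{\ddegv/2}\prod_{i=1}^{\ddegv}(d-i-1)^{-1/2} \le \lt(\f{d}{d-\ddegv-1}\rt)^{\ddegv/2}.
\]

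The third step is a routine bound on this quantity using the hypothesis $d \gg \max_{v\in[n]}\ddegv$. For $n$ large enough we have $d \ge 4(\ddegv+1)$, in which case $\log(d/(d-\ddegv-1)) \le 2(\ddegv+1)/d$, so
\[
    \log \E f_{\det}(\Xvl) \le \f{\ddegv(\ddegv+1)}{d} \le \f{2\ddegv^2}{d}.
\]
Since $\ddegv \le n-1$ and the hypothesis forces $d \ge 4\ddegv$ for large $n$, we have $2\ddegv^2/d \le \ddegv/2 \le n/2$, so $\E f_{\det}(\Xvl) \le e^{n/2}$ and Markov yields $\P(\Svdet^c) \le e^{-n/2}$, as required.

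The only genuinely nontrivial point is the tower-property collapse, which is what makes the seemingly opaque quantity $f_{\det}$ tractable: integrating $f_{\det}(\Xvl)$ against the unconditional law of $\Xvl$ is the same as first resampling to get an unconditional Gaussian and then computing a standard Wishart determinant moment. After that reduction, everything is a short calculation with chi-squared variables, so I do not anticipate any real obstacle.
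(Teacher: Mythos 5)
Your proposal is correct and follows essentially the same route as the paper: Markov's inequality applied to $f_{\det}$, followed by the tower-property collapse of the double expectation to reduce to the unconditional Wishart determinant moment $\E \det(d^{-1}\Xbnt\Xbn)^{-1/2}$. The only difference is in how that moment is bounded — you compute the negative half-moment directly via Bartlett's decomposition and chi-squared/Gamma-ratio estimates, whereas the paper applies Cauchy–Schwarz and its Lemma~\ref{lem:general-ub-computational}(\ref{itm:general-ub-computational-det}) bound $\E\det(d^{-1}M)^{-1}\le e^k$ — and both yield a bound of at most $e^{\ddegv/2}\le e^{n/2}$, so the arguments are interchangeable.
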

\begin{proof}
    Take $n$ large enough that $d \ge 2 \max_{v\in [n]} \ddegv + 2$.
    By Lemma~\ref{lem:general-ub-computational}(\ref{itm:general-ub-computational-det}) and Cauchy-Schwarz, we have that
    \[
        \E_{\Xvl}
        \det\lt(d^{-1} \Xbnt \Xbn\rt)^{-1/2}
        \le
        \E_{\Xvl}
        \lt[\det\lt(d^{-1} \Xbnt \Xbn\rt)^{-1}\rt]^{1/2}
        \le
        e^{\ddegv/2}.
    \]
    So, by the definition of $\gamma(W)$,
    \begin{align*}
        \E_{\Xvl}
        f_{\det}(\Xvl)
        &=
        \E_{\Xvl}
        \E_{\Xpvl \sim \gamma(W(\Xvl))}
        \lt[\det\lt(d^{-1} \Xpbnt \Xpbn\rt)^{-1/2}\rt] \\
        &=
        \E_{\Xvl}
        \lt[\det\lt(d^{-1} \Xbnt \Xbn\rt)^{-1/2}\rt]
        \le
        e^{\ddegv/2}
        \le
        e^{n/2}.
    \end{align*}
    By Markov's inequality, $f_{\det}(\Xvl) > e^n$ occurs with probability at most $e^{-n/2}$.
\end{proof}

Proposition~\ref{prop:general-ub-s-hp} follows readily from these propositions.

\begin{proof}[Proof of Proposition~\ref{prop:general-ub-s-hp}]
    The desired bounds follow from Propositions~\ref{prop:general-ub-svop-high-prob}, \ref{prop:general-ub-svtr-high-prob}, and \ref{prop:general-ub-svdet-high-prob}.
\end{proof}

\section{Sharp Bounds on the Linear Terms of the Coupled Exponential Overlap}
\label{sec:general-ub-refined-linear-term}

In this and the next section, we strengthen the argument used to prove Theorem~\ref{thm:general-weaker-ub} in order to show Theorem~\ref{thm:general-ub}.
There are three parts of the proof of Theorem~\ref{thm:general-weaker-ub} that we will need to improve.
\begin{enumerate}[label=(\arabic*), ref=\arabic*]
    \item The upper bound on the first order terms of (\ref{eq:general-ub-main-exp-overlap-starting-point}) in category (\ref{itm:general-ub-main-overlap-term-ijnotedge}) shown in Proposition~\ref{prop:general-ub-linear-term-ijnotedge} currently depends on the size of the smaller of the neighborhoods of $i$ and $j$.
    This upper bound can be strengthened to depend only on the size of the intersection of these neighborhoods, which naturally yields the tradeoff between 3-cycles and 4-cycles in Theorem~\ref{thm:general-ub}.
    \item The multiplicative factor of $\exp\lt(O\lt(d^{-2} (\ddegv^3 + \ddegv \log^2 n)\rt)\rt)$ in Lemma~\ref{lem:general-ub-main-2mm} is too large to yield Theorem~\ref{thm:general-ub}.
    This can be tightened by using a stronger variant of Lemma~\ref{lem:det-to-exp-to-deg2} replacing the first order approximation of the determinant with a second order approximation.
    With this improvement, the third order and higher terms of the determinant are bounded deterministically by the set $\Sop$.
    We no longer bound the second order terms deterministically, and they instead give rise to a 4-cycles variant of the coupled exponentiated overlap that will need to be bounded.
    \item The bound on the higher order terms of the coupled exponentiated overlap in Proposition~\ref{prop:general-ub-bound-hy} is too crude to yield Theorem~\ref{thm:general-ub}.
    We will need to handle the second and third order terms in $h(Y_v)$ with more care to obtain tight enough bounds to prove this theorem.
\end{enumerate}

Among these three improvements, the first is the most challenging and will require new ideas.
The second and third improvements follow from optimizations of the ideas in Lemma~\ref{lem:general-ub-main-2mm}, Proposition~\ref{prop:general-ub-bound-hy} and Proposition~\ref{prop:general-ub-linear-term-ijnotedge}.
These improvements and the proof of Theorem~\ref{thm:general-ub} will be carried out in Section~\ref{sec:general-ub-refined-higher-order-terms}.
The goal of this section will be to prove the following lemma, which carries out the first improvement and strengthens the estimate from Proposition~\ref{prop:general-ub-linear-term-ijnotedge}.

\begin{lemma}[Sharp First Order Term Bounds]
    \label{lem:general-ub-refined-linear-term}
    Let $v\in [n]$ and let $k,\ell \in [v-1]$ such that $k\neq \ell$ and $(k,\ell)\not\in E(G[v-1])$.
    Let $\dvlkl$ denote the number of common neighbors of $k$ and $\ell$ in $G[v-1]$.
    Fix a constant $\eps \in (0,1)$ and suppose that $d^{1-\eps} \ge \max\lt(\dvlk, \dvll\rt)$.
    Then,
    \[
        \E_{(\Xonevl, \Xtwovl)\sim \cNvdipr}
        \lt[
            \la \Xonek, \Xonel \ra
            \la \Xtwok, \Xtwol \ra
        \rt]
        \le
        C_\eps\lt[
            \dvlkl + \f{\dvlk\dvll}{d}
        \rt]
        + d^{-10}
    \]
    for sufficiently large $d$, where $C_\eps > 0$ is a constant depending only on $\eps$.
\end{lemma}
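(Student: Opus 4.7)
The plan is to apply Lemma~\ref{lem:general-ub-stronger-couplings} with a coupling tailored to the fact that $(k,\ell) \notin E(G[v-1])$, then reduce the expectation to a single Gaussian second moment, and bound the result via a Neumann expansion.

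First I would introduce the coupling $\cMvd(\cCtckl)$ arising from the constraints $\Xonej = \Xtwoj$ for every $j \in [v-1] \setminus \{k,\ell\}$. Each of these is a function-equality constraint, and since $(k,\ell) \notin E(G[v-1])$ they imply every inner-product equality defining $\cNvdipr$. Hence Lemma~\ref{lem:general-ub-stronger-couplings} gives the upper bound
\[
\E_{(\Xonevl, \Xtwovl)\sim \cNvdipr} \la \Xonek, \Xonel \ra \la \Xtwok, \Xtwol \ra \le \E_{(\Xonevl, \Xtwovl)\sim \cMvd(\cCtckl)} \la \Xonek, \Xonel \ra \la \Xtwok, \Xtwol \ra.
\]
The key virtue of $\cCtckl$ over a Gram--Schmidt coupling is that it introduces no new length-two paths between $k$ and $\ell$, so the combinatorial scale $\dvlkl$ is preserved.

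Under $\cMvd(\cCtckl)$ the only remaining constraints on the $k$, $\ell$ columns are $\Mkt \Xonek = \Mkt \Xtwok$ and $\Mlt \Xonel = \Mlt \Xtwol$, where $\Mk, \Ml$ are the matrices with columns indexed by $\Nl(k), \Nl(\ell)$. Writing $P_k := \Mkproj$ and $P_\ell := \Mlproj$ for the associated orthogonal projectors, I decompose $\Xonek = A_k + B_1$ and $\Xtwok = A_k + B_2$ with $A_k := P_k\Xonek = P_k\Xtwok$, $B_r := (I-P_k)\Xr[k]$, and analogously $\Xonel = A_\ell + D_1$, $\Xtwol = A_\ell + D_2$. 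Conditional on $(\Mk, \Ml)$, the six summands $A_k, B_1, B_2, A_\ell, D_1, D_2$ are mutually independent centered Gaussians with covariances $P_k, I-P_k, I-P_k, P_\ell, I-P_\ell, I-P_\ell$. Expanding the product $\la \Xonek, \Xonel\ra \la \Xtwok, \Xtwol\ra$ into sixteen terms, every term other than $\la A_k, A_\ell\ra^2$ contains an independent linear factor in one of $B_1, B_2, D_1, D_2$ and vanishes in expectation. So the target reduces to
\[
\E \la A_k, A_\ell\ra^2 = \E\,\Tr(P_k P_\ell),
\]
with the outer expectation over $\{X_j : j \in \Nl(k)\cup \Nl(\ell)\}$.

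The final step is to estimate $\E\,\Tr(P_k P_\ell)$ via Neumann expansion. Let $\Delta_k := d^{-1}\Mkt\Mk - I$ and $\Delta_\ell := d^{-1}\Mlt\Ml - I$. On the high-probability event that $\|\Delta_k\|_{\op}, \|\Delta_\ell\|_{\op} \le Cd^{-\varepsilon/2}$, available via Lemma~\ref{lem:gaussian-singular-values-concentration} with failure probability at most $e^{-cd^{\varepsilon}}$, I substitute $(I+\Delta_k)^{-1} = \sum_{s\ge 0}(-\Delta_k)^s$ and the analogue for $\ell$, truncating at an order $K = K(\varepsilon)$. The zeroth-order term is $d^{-2}\|\Mkt \Ml\|_{\HS}^2$; splitting the columns of $\Mkt \Ml$ by whether both indices lie in $J := \Nl(k) \cap \Nl(\ell)$, the diagonal entries inside $J$ give $\dvlkl$ (up to lower-order corrections from $\chisq(d)$ concentration on squared norms), while all other entries are independent Gaussians of variance $d$ and contribute $\dvlk\dvll/d$ in expectation. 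Each higher-order Taylor term is a trace of a longer product of $\Mk, \Ml, \Delta_k, \Delta_\ell$; each additional $\Delta_k$ or $\Delta_\ell$ factor can be pulled out deterministically at cost $d^{-\varepsilon/2}$ on the good event, so summing yields a geometric series with ratio $d^{-\varepsilon/2}$ that is absorbed into the constant $C_\varepsilon$. The low-probability complement is handled by the crude deterministic bound $\Tr(P_k P_\ell) \le \min(\dvlk, \dvll) \le d^{1-\varepsilon}$ together with the exponentially small failure probability, which is absorbed into the $d^{-10}$ additive error.

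The main obstacle is the bookkeeping of the Neumann expansion: I must show that each higher-order trace, after taking expectation over the Gaussian columns, is genuinely $o(\dvlkl + \dvlk\dvll/d)$ rather than of the same order, and that the separation into the $J$-block and the complementary blocks of $\Mkt\Ml$ plays well with the Taylor corrections from $\Delta_k, \Delta_\ell$. Keeping the $\varepsilon$-dependent constant $C_\varepsilon$ controlled while carrying enough Taylor order to beat the required $d^{-10}$ error will be the most delicate calculation.
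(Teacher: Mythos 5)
Your first step, as written, has a genuine error. The constraint set you state for $\cCtckl$ — only $\Xonej = \Xtwoj$ for $j\in[v-1]\setminus\{k,\ell\}$ — does \emph{not} imply the inner-product equalities defining $\cNvdipr$ that are incident to $k$ or $\ell$: for $j\in\Nl(k)$ the equality $\la \Xonek, \Xonej\ra = \la \Xtwok, \Xtwoj\ra$ is a nontrivial constraint on the $k$ columns, and under your stated coupling $\Xonek,\Xtwok$ are completely unconstrained (the fact that $(k,\ell)\notin E(G[v-1])$ only removes the single constraint between $k$ and $\ell$, not those between $k$ and $\Nl(k)$). So the hypothesis of Lemma~\ref{lem:general-ub-stronger-couplings} (that the new conditioning refines the old) fails and the comparison inequality is unjustified; indeed under your literal coupling the target expectation is $0$, which cannot serve as an upper bound via that lemma. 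The fix is exactly the paper's $\cCtckl$: keep the column equalities off $\{k,\ell\}$ \emph{and} the constraints $\la \Xonei, \Xonej\ra = \la \Xtwoi, \Xtwoj\ra$ for $i\in\{k,\ell\}$, $j\in\Nl(i)$, which (given the shared columns) is precisely $\Mkt\Xonek = \Mkt\Xtwok$ and $\Mlt\Xonel = \Mlt\Xtwol$ — the constraints your projection decomposition silently assumes. With that correction your second step is sound and coincides with the paper's reduction to $\E\Tr\lt(\Mkproj\Mlproj\rt)$ (the paper computes the conditional mean of the second replica; your sixteen-term decomposition with $P_k$, $I-P_k$ is an equivalent route). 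You should also dispose of the degenerate case $\dvlk=0$ or $\dvll=0$ separately, where the expectation is $0$ and the projectors are undefined.

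Your third step departs from the paper, and the departure can work but only if the higher-order terms are bounded \emph{relative to the zeroth-order quantity}, which your write-up leaves ambiguous. The paper never conditions inside the expansion: it bounds $\E\Tr\lt(\MkprojT\MlprojT\rt)$ by an unconditional combinatorial moment count, and controls the truncation error on and off the spectral event separately, taking $T=\lceil 23/\eps\rceil$. In your good-event version, the correct deterministic step is, with $A=\Mlt\Mk$, $\lt|\Tr\lt(A^\top\Delta_\ell^{s_2}A\Delta_k^{s_1}\rt)\rt|\le \norm{\Delta_k}_{\op}^{s_1}\norm{\Delta_\ell}_{\op}^{s_2}\norm{A}_{\HS}^2$, so that on the event $\Tr\lt(\Mkproj\Mlproj\rt)\le (1-\norm{\Delta_k}_{\op})^{-1}(1-\norm{\Delta_\ell}_{\op})^{-1}d^{-2}\norm{A}_{\HS}^2$, and then you take expectations of the nonnegative right-hand side (no finite truncation is needed, and $\E\, d^{-2}\norm{A}_{\HS}^2\lesssim \dvlkl+\dvlk\dvll/d$ as you compute). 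A cruder ``pull out the $\Delta$'s'' bound that leaves $\norm{\Mk}_{\op}\norm{\Mk}_{\HS}\norm{\Ml}_{\op}\norm{\Ml}_{\HS}$ already gives at first order roughly $\sqrt{\dvlk\dvll}\,d^{-\eps/2}$, which can be far larger than $\dvlkl+\dvlk\dvll/d$ when $\dvlkl$ is small, so the geometric series must multiply the leading term, not a worst-case bound. Two minor points: the off-diagonal entries of $\Mkt\Ml$ are not independent Gaussians (they are correlated inner products), but only $\E\la X_i,X_j\ra^2=d$ is needed for the zeroth-order expectation; and off the event the deterministic bound $\Tr\lt(\Mkproj\Mlproj\rt)\le\min\lt(\dvlk,\dvll\rt)\le d$ times the exponentially small failure probability is indeed absorbed into $d^{-10}$, which is if anything simpler than the paper's Cauchy--Schwarz treatment of its truncation error.
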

\begin{remark}
    Recall the discussion from Section~\ref{subsec:technical-overview-beyond-gs}: the coupling used to prove Lemma~\ref{lem:general-ub-refined-linear-term} can be understood as drawing all edges among $[v-1]\setminus \{k,\ell\}$.
    After drawing these edges, the numbers of 2-paths and 3-paths from $k$ to $\ell$ are, respectively, $\dvlkl$ and $\dvlk\dvll$.
    So, this lemma can be understood to mean that each 2-path contributes an expectation of order $1$, each 3-path contributes an expectation of order $1/d$, and longer paths are dominated by 3-paths.
\end{remark}

\subsection{Reducing to an Expectation over Independent Gaussians}

Throughout this section, fix $k,\ell \in [v-1]$ such that $k\neq \ell$ and $(k,\ell)\not\in E(G[v-1])$.
Like in the proof of Proposition~\ref{prop:general-ub-linear-term-ijnotedge}, our first step is to apply Lemma~\ref{lem:general-ub-stronger-couplings} and replace $\cNvdipr$ with a coupling that is easier to work with.
While the Gram-Schmidt coupling $\cMvd(\cCgs[k,\ell])$ led to an upper bound with minimal computation, it is too strong of a coupling to capture the actual dependence of the desired upper bound on $\dvlkl$ instead of $\min\lt(\dvlk, \dvll\rt)$.

In this section, we will instead work with a weaker coupling that leads to a tighter but more computationally involved upper bound.
This coupling of $\Xonevl$ and $\Xtwovl$ will be denoted by $\cMvd(\cCtckl)$.
It similarly is generated as in Definition~\ref{defn:general-ub-modified-couplings} with the collection of constraints $\cCtckl$ given as follows:
\begin{align*}
    &\Xonei = \Xtwoi
    \quad
    \text{for all $i\in [v-1]\setminus \{k,\ell\}$,} \\
    &\la \Xonei, \Xonej \ra
    =
    \la \Xtwoi, \Xtwoj \ra
    \quad
    \text{for $i\in \{k,\ell\}$ and $j\in \Nl(i)$.}
\end{align*}
Since these conditions imply that $\la \Xonei, \Xonej \ra = \la \Xtwoi, \Xtwoj\ra$ for all $i,j\in E(G[v-1])$, Lemma~\ref{lem:general-ub-stronger-couplings} yields that
\begin{align*}
    &\E_{(\Xonevl, \Xtwovl) \sim \cNvdipr}
    \lt[
        \la \Xonek, \Xonel \ra
        \la \Xtwok, \Xtwol \ra
    \rt] \\
    &\qquad \le
    \E_{(\Xonevl, \Xtwovl) \sim \cMvd(\cCtckl)}
    \lt[
        \la \Xonek, \Xonel \ra
        \la \Xtwok, \Xtwol \ra
    \rt].
\end{align*}
To reduce notation, we will write this upper bound as $\E \lt[\la X_k, X_\ell \ra \la W_k, W_\ell \ra\rt]$ where $X_1,\ldots,X_{v-1}$ are i.i.d. samples from $\cN(0, I_d)$ and $W_k, W_\ell$ are i.i.d. samples from $\cN(0, I_d)$ conditioned on
\[
    \la W_i, X_j \ra = \la X_i, X_j\ra
    \quad
    \text{for all $i\in \{k,\ell\}$ and $j\in \Nl(i)$.}
\]
Note that if $\dvlk=0$, then $W_k$ is independent of $X_1,\ldots,X_{v-1}$ and $W_\ell$. Then, we have $\E \lt[\la X_k, X_\ell \ra \la W_k, W_\ell \ra\rt] = 0$.
The same is true if $\dvll = 0$.
So, we assume throughout the rest of this section that $\dvlk, \dvll > 0$.
Now consider the conditional distribution of $(W_k,W_\ell)$ given $X_1,\ldots,X_{v-1}$.
Define the matrices $\Mk, \Ml$ as follows.
\begin{itemize}
    \item $\Mk \in \bR^{d\times \dvlk}$ has columns $X_i$ for each $i\in \Nl(k)$, ordered and indexed by the vertex $i$.
    \item $\Ml \in \bR^{d\times \dvll}$ is defined analogously for the vertex $\ell$.
\end{itemize}
Note that $\rank(\Mk) = \dvlk$ and $\rank(\Ml) = \dvll$ almost surely.
Conditioned on the vectors $X_1,\ldots,X_{v-1}$, the two random vectors $W_k,W_\ell$ are distributed as independent samples from $\cN(0, I_d)$ conditioned on the events $\Mkt W_k = \Mkt X_k$ and $\Mlt W_\ell = \Mlt X_\ell$, respectively.
Standard conditioning properties of Gaussians imply that conditioned on $X_1,\ldots,X_{v-1}$, $W_k$ and $W_\ell$ are independent and are distributed as singular Gaussians given by
\[
    W_k \sim \cN\lt(\Mkproj X_k, R_k R_k^\top\rt)
    \quad
    \text{and}
    \quad
    W_\ell \sim \cN\lt(\Mlproj X_l, R_\ell R_\ell^\top\rt).
\]
Here, the columns of $R_k \in \bR^{d\times (d - \dvlk)}$ and $R_\ell \in \bR^{d\times (d - \dvll)}$ are chosen to form orthogonal bases of the nullspaces of $\Mk$ and $\Ml$, respectively.
The matrix inverses $\Mkinv$ and $\Mlinv$ are well-defined almost surely because $\Mk$ and $\Ml$ are full rank almost surely.
Therefore,
\[
    \E \lt[
        \la W_k, W_\ell\ra
        \Big|
        X_1,\ldots,X_{v-1}
    \rt]
    =
    X_k^{\top} \Mkproj \Mlproj X_\ell.
\]
Note that, because $(k,\ell)\not\in E(G[v-1])$, $\Mk$ and $\Ml$ are each independent of both $X_k$ and $X_\ell$, which are i.i.d. samples from $\cN(0, I_d)$.
So,
\begin{align*}
    \E \lt[
        \la X_k,X_\ell\ra
        \la W_k,W_\ell\ra
    \rt]
    &=
    \E \lt[
        X_\ell^\top X_k \cdot
        X_k^{\top} \Mkproj \Mlproj X_\ell
    \rt] \\
    &=
    \E \Tr \lt(
            X_k X_k^{\top} \Mkproj \Mlproj X_\ell X_\ell^\top
    \rt) \\
    &= \E \Tr \lt(\Mkproj \Mlproj\rt)
\end{align*}
by taking expectations over $X_k$ and $X_\ell$.
Summarizing the results in this section, we now have that
\begin{align}
    \notag
    &\E_{(\Xonevl,\Xtwovl)\sim \cNvdipr}
    \lt[
        \la \Xonek, \Xonel \ra
        \la \Xtwok, \Xtwol \ra
    \rt] \\
    \label{eq:general-ub-refined-linear-term-starting-point}
    &\qquad \le
    \E \Tr \lt(\Mkproj \Mlproj\rt).
\end{align}
Crucially, the last expectation is only over a collection of i.i.d. Gaussians.
The next two sections are devoted to estimating this last expectation.

\subsection{Moment Approximations to Wishart Inverses}

The key idea in this and the next section will be to use approximations of the Wishart inverses $\Mkinv$ and $\Mlinv$ amenable to approximating the desired expectation (\ref{eq:general-ub-refined-linear-term-starting-point}).
Our argument will be motivated by the following simple fact.
Suppose $A$ is a real symmetric matrix that is a contraction in the sense that all of its eigenvalues are in $(-1, 1)$.
Then,
\[
    (I+A)^{-1} = \sum_{t=0}^\infty (-1)^t A^t,
\]
and furthermore this series converges absolutely in spectral norm.
Observe that $\Mkt\Mk$ is distributed according to the law of an $m\times m$ Wishart matrix with $d$ degrees of freedom, where $m = \dvlk$.
Standard results on the spectra of Wishart matrices imply that the spectral norm of $d^{-1} \Mkt \Mk - I_{m}$ is approximately $\sqrt{m/d}$ and therefore that this matrix is a contraction with overwhelming probability.
This motivates us to define the finite series
\[
    \MkT =
    d^{-1}
    \sum_{t=0}^T
    (-1)^t
    \lt(d^{-1} \Mkt\Mk - I_{\Nl(k)}\rt)^t
\]
for each positive integer $T$, which approximates $\Mkinv$.
Define $\MlT$ analogously.
In this section, we will estimate (\ref{eq:general-ub-refined-linear-term-starting-point}) with $\Mkinv$ and $\Mlinv$ replaced by $\MkT$ and $\MlT$, respectively.
In the next section, we will make the above heuristic argument formal, showing that both the event that $d^{-1} \Mkt\Mk - I_{\Nl(k)}$ is not a contraction and the error term from a $T$th order approximation have a negligible effect on this estimate.
The main result of this subsection is the following lemma.

\begin{lemma}
    \label{lem:general-ub-refined-linear-term-orderT}
    Suppose that $d\ge \max\lt(\dvlk, \dvll\rt)$.
    Then, for all positive integers $T$, there is a constant $C_T > 0$, dependent only on $T$, such that
    \[
        \E \Tr \lt(\MkprojT \MlprojT\rt)
        \le
        C_T \lt[
            \dvlkl + \f{\dvlk\dvll}{d}
        \rt].
    \]
\end{lemma}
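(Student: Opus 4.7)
I substitute the finite Taylor definitions of $\MkT$ and $\MlT$ into $\Tr(\MkprojT \MlprojT)$, then reduce by cyclicity to a double sum. Setting $Z_k = d^{-1}\Mkt\Mk$, $Z_\ell = d^{-1}\Mlt\Ml$, and $\tilde Y = d^{-1}\Mkt\Ml$, this gives
\[
    \Tr(\MkprojT \MlprojT)
    =
    \sum_{s,t=0}^{T} (-1)^{s+t}
    \Tr\lt((Z_k - I)^s \tilde Y (Z_\ell - I)^t \tilde Y^\top\rt).
\]
By the triangle inequality, it suffices to bound the expected absolute value of each summand by a constant (depending on $s$, $t$, $T$) times $\dvlkl + \dvlk\dvll/d$ and sum. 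The case $\dvlk = 0$ or $\dvll = 0$ makes $\MkprojT$ or $\MlprojT$ vanish trivially, so I may assume $\dvlk, \dvll \ge 1$.

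For each $(s,t)$, combining $|\Tr(AB^\top)| \le \norm{A}_{\HS}\norm{B}_{\HS}$ with $\norm{CD}_{\HS} \le \norm{C}_{\op}\norm{D}_{\HS}$ yields the deterministic bound
\[
    \lt|\Tr\lt((Z_k - I)^s \tilde Y (Z_\ell - I)^t \tilde Y^\top\rt)\rt|
    \le
    \norm{Z_k - I}_{\op}^s \norm{Z_\ell - I}_{\op}^t \norm{\tilde Y}_{\HS}^2.
\]
I will split the expectation over the spectral regularity event $E = \{\norm{Z_k - I}_{\op} \le 10\} \cap \{\norm{Z_\ell - I}_{\op} \le 10\}$. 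Since $\Mk$ and $\Ml$ are each marginally $d \times r$ Gaussian matrices with $r \le d$, Lemma~\ref{lem:gaussian-singular-values-concentration} applied marginally with $t = \sqrt{d}$ and a union bound give $\P(E^c) \le e^{-\Omega(d)}$ under the assumption $d \ge \max(\dvlk, \dvll)$.

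On $E$, summing the deterministic bound over $(s,t) \in \{0, \ldots, T\}^2$ yields $\ind{E}|\Tr(\MkprojT \MlprojT)| \le \lt(\sum_{s=0}^T 10^s\rt)^2 \norm{\tilde Y}_{\HS}^2 =: C_T' \norm{\tilde Y}_{\HS}^2$. A direct Wick computation for $\E\norm{\tilde Y}_{\HS}^2 = d^{-2}\sum_{i\in\Nl(k), j\in\Nl(\ell)} \E \la X_i, X_j\ra^2$ splits into the diagonal case $i = j$ (which requires $i\in \Nl(k)\cap \Nl(\ell)$, contributing $d^2 + 2d$ per term) and the off-diagonal case (contributing $d$ per term), yielding $\E\norm{\tilde Y}_{\HS}^2 \le 3[\dvlkl + \dvlk\dvll/d]$. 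On $E^c$, I apply Cauchy-Schwarz: since $\norm{Z_k - I}_{\op}$, $\norm{Z_\ell - I}_{\op}$, and $\norm{\tilde Y}_{\HS}$ are each bounded by polynomials of fixed degree in i.i.d. standard Gaussian entries of $\Mk$ and $\Ml$, Lemma~\ref{lem:gaussian-hypercontractivity-tails} shows that all of their moments are polynomially bounded in $d$. Multiplying by $\sqrt{\P(E^c)} \le e^{-\Omega(d)}$ gives an $e^{-\Omega(d)}$ contribution. Because $\dvlkl + \dvlk\dvll/d \ge 1/d \gg e^{-\Omega(d)}$ for $d$ sufficiently large, this is absorbed into the final constant $C_T$; the finitely many small-$d$ cases are handled by inflating $C_T$.

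The main obstacle I anticipate is avoiding the tempting but lossy direct bound $\E|\Tr(\cdot)| \le (\E\norm{Z_k - I}_{\op}^{4s})^{1/4}(\E\norm{Z_\ell - I}_{\op}^{4t})^{1/4}(\E\norm{\tilde Y}_{\HS}^4)^{1/2}$. In the sparse regime $\dvlkl = 0$ and $\dvlk\dvll \ll d$, a direct computation shows $\E\norm{\tilde Y}_{\HS}^4$ contains a variance-type term of order $\dvlk\dvll/d$ that dominates $(\E\norm{\tilde Y}_{\HS}^2)^2 = (\dvlk\dvll/d)^2$, so $\sqrt{\E\norm{\tilde Y}_{\HS}^4} \asymp \sqrt{\dvlk\dvll/d}$, which overshoots the target $\dvlk\dvll/d$. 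The event-splitting argument sidesteps this by controlling the operator-norm factors deterministically on the high-probability event $E$, leaving $\norm{\tilde Y}_{\HS}^2$ to be estimated only in $L^1$.
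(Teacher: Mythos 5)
Your proof is correct, and it takes a genuinely different route from the paper's. The paper re-expands $\MkT$ in \emph{uncentered} powers of $\Mkt\Mk$ via a binomial identity, writes $\E\Tr\lt(\MkprojT\MlprojT\rt)$ as a sum of cyclic products of inner products of the latent Gaussians, and proves a combinatorial Wick-type estimate (each cyclic product of length $r$ with $|I|$ distinct indices has expectation at most $C_r d^{r+1-|I|}$), followed by a count of index tuples according to how they meet $\Nl(k)\cap\Nl(\ell)$, $\Nl(k)\setminus\Nl(\ell)$, $\Nl(\ell)\setminus\Nl(k)$; together with $d\ge\max(\dvlk,\dvll)$ this yields $\dvlkl+\dvlk\dvll/d$ unconditionally, with no truncation. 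You instead keep the centered powers, bound each $(s,t)$ term deterministically by $\norm{d^{-1}\Mkt\Mk-I}_{\op}^s\norm{d^{-1}\Mlt\Ml-I}_{\op}^t\norm{d^{-1}\Mkt\Ml}_{\HS}^2$, truncate on a spectral event of probability $1-e^{-\Omega(d)}$ on which the operator-norm factors are $O(1)$, and then only need the two-line computation $\E\norm{d^{-1}\Mkt\Ml}_{\HS}^2\le 3[\dvlkl+\dvlk\dvll/d]$ (which, pleasingly, is exactly the $s=t=0$ term), disposing of the bad event by Cauchy--Schwarz, crude polynomially bounded moments, and the reduction to $\dvlk,\dvll\ge1$ so that the target is at least $1/d$; all constants depend only on $T$, as required. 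Your route is more elementary, avoiding the paper's combinatorial moment estimate entirely, at the price of a probabilistic truncation and of inflating $C_T$ over finitely many small $d$; what the paper's argument buys is an unconditional moment bound with a term-by-term nonnegativity structure that it reuses in its crude second-moment control of the approximation error. One minor quibble with your motivating aside: when $\dvlkl=0$, the variance of $\norm{d^{-1}\Mkt\Ml}_{\HS}^2$ is of order $\dvlk\dvll/d^2+\dvlk\dvll(\dvlk+\dvll)/d^3$, not $\dvlk\dvll/d$, so $\lt(\E\norm{d^{-1}\Mkt\Ml}_{\HS}^4\rt)^{1/2}$ does not in fact overshoot $\dvlk\dvll/d$ once $\dvlk\dvll\ge1$; this is immaterial, since your actual argument never uses a fourth moment on the good event.
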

Before proving this lemma, we will first need a computational proposition bounding the expected values of the terms that arise from expanding this trace.
For a sequence $(i_1,\ldots,i_r)$ of elements of $[v-1]$, possibly with repeated elements, define the cyclic product
\begin{equation}
    \label{eq:general-ub-refined-linear-term-cyclic-product}
    \Pi\lt(\Xvl, (i_1,\ldots,i_r)\rt)
    =
    \prod_{t=1}^r
    \la X_{i_t}, X_{i_{t+1}} \ra,
\end{equation}
where we cyclicly define $i_{r+1} = i_1$.

\begin{proposition}
    \label{prop:general-ub-refined-linear-term-computational}
    Let $(i_1,\ldots,i_r)$ be a sequence of elements of $[v-1]$.
    There is a constant $C^{(1)}_r > 0$, dependent only on $r$, such that
    \[
        0 \le
        \E \Pi\lt(\Xvl, (i_1,\ldots,i_r)\rt)
        \le
        C^{(1)}_r d^{r+1 - |I|},
    \]
    where $I = \{i_1,\ldots,i_r\}$ is the set of distinct elements in $(i_1,\ldots,i_r)$.
\end{proposition}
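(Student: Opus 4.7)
The plan is to expand the cyclic product into coordinates and apply Isserlis' theorem. Writing the $t$-th inner product as $\la X_{i_t}, X_{i_{t+1}}\ra = \sum_{a_t=1}^d X_{i_t}^{(a_t)} X_{i_{t+1}}^{(a_t)}$, we obtain
\[
    \E \Pi\lt(\Xvl, (i_1,\ldots,i_r)\rt)
    = \sum_{a\in [d]^r} \E \prod_{t=1}^r X_{i_t}^{(a_t)} X_{i_{t+1}}^{(a_t)}.
\]
By Isserlis' theorem, the inner expectation equals the number of perfect matchings $M$ on the $2r$ scalar factors such that every pair in $M$ consists of two copies of the same scalar variable (equal vertex label and equal coordinate). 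Each such matching contributes $1$, so the inner expectation is a nonnegative integer, and summing over $a$ yields the nonnegativity $\E \Pi \ge 0$.

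For the upper bound, I would swap the order of summation and group by $M$. Each matching induces a partition $\pi_M$ of $[r]$: declare walk-edges $s$ and $t$ equivalent whenever $M$ pairs a factor from edge $s$ with a factor from edge $t$, and take the transitive closure. The pair constraints then force $a_s = a_t$ precisely within each block of $\pi_M$, with no distinctness constraint imposed between different blocks. Hence if $M$ is also consistent with the vertex labels, the number of valid $a \in [d]^r$ is exactly $d^{|\pi_M|}$. Bounding the total number of matchings by $(2r-1)!!$ gives
\[
    \E \Pi \le (2r-1)!! \cdot d^{\max_M |\pi_M|},
\]
where the maximum is over matchings admitting some vertex-valid assignment.

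The core combinatorial claim is that $|\pi_M| \le r - |I| + 1$ for every such matching. The key observation is that within a block $B$, the $2|B|$ factors associated to edges in $B$ pair up under $M$ with matching vertex labels, so the multiset $\{i_t, i_{t+1} : t \in B\}$ has even multiplicity at every vertex of $I$. Equivalently, the sub-multigraph with edge set $\{(i_t, i_{t+1}) : t \in B\}$ is Eulerian. Thus $\pi_M$ yields an edge-partition of the closed-walk multigraph $H$ on vertex set $I$ into Eulerian sub-multigraphs. Since $H$ is connected (it is traced by a single closed walk) and has $r$ edges and $|I|$ vertices, its cycle space $Z_1(H, \mathbb{F}_2)$ has dimension $r - |I| + 1$. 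The characteristic vectors of the blocks are nonzero elements of $Z_1(H, \mathbb{F}_2)$ with pairwise disjoint supports and therefore linearly independent, forcing $|\pi_M| \le r - |I| + 1$. Taking $C_r^{(1)} = (2r-1)!!$ will complete the proof.

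The main obstacle is setting up the correspondence between blocks of $\pi_M$ and Eulerian sub-multigraphs of $H$ carefully; once this bijection is in place, the bound reduces to a one-line dimension count in the cycle space.
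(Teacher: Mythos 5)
Your proposal is correct, and it takes a genuinely different route from the paper. The paper's proof also expands $\E\,\Pi$ over coordinate assignments $L:[r]\to[d]$, but it then bounds every nonzero term crudely by a constant (a product of Gaussian moments of order at most $2r$) and spends its effort counting how many assignments $L$ can yield a nonzero term, via an explicit encoding scheme (an initial value $L_1$, a set $A_u$ of $m_u-1$ extra labels per vertex $u$, and a bounded tuple $B_u$), which deliberately overcounts and produces the $d^{r+1-|I|}$ factor. You instead keep the exact Wick/Isserlis expansion, interchange the sums, and obtain the exact identity $\E\,\Pi = \sum_{M} d^{|\pi_M|}$ over vertex-consistent matchings, reducing the whole estimate to the structural claim $|\pi_M|\le r-|I|+1$, which you prove by observing that $M$ restricts to a perfect matching on the slots of each block, so each block's edge set has all even degrees in the closed-walk multigraph $H$ and hence is a nonzero vector of the cycle space $Z_1(H,\mathbb{F}_2)$; disjoint supports give linear independence and the bound $\dim Z_1 = r-|I|+1$ (which correctly accounts for loops and parallel edges, since $H$ is connected). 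Your argument buys an exact combinatorial identity, the explicit constant $(2r-1)!!$, and a conceptual explanation of where the exponent $r+1-|I|$ comes from, at the cost of invoking Isserlis and the cycle-space dimension count; the paper's argument is more elementary and self-contained but both its constant and its counting are lossier. One cosmetic remark: the even-degree sub-multigraph attached to a block need not be connected, so calling it Eulerian is slightly loose, but the statement you actually use (membership in $Z_1(H,\mathbb{F}_2)$) is exactly right, so nothing needs to change in the argument.
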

\begin{proof}
    Let $\sL$ be the set of all functions $L: [r] \to [d]$.
    The desired expectation can be expanded as
    \[
        \E \Pi\lt(\Xvl, (i_1,\ldots,i_r)\rt)
        =
        \sum_{L\in \sL}
        \E \lt[\prod_{t=1}^r X_{i_t, L(t)} X_{i_{t+1}, L(t)}\rt],
    \]
    where for each $t\in [r]$, $L(t)$ encodes the selection of the term $X_{i_t, L(t)} X_{i_{t+1}, L(t)}$ from the inner product $\la X_{i_t}, X_{i_{t+1}}\ra$.
    The random variables $X_{i,j}$ are i.i.d. samples from $\cN(0,1)$.
    So, an expectation in this sum is nonzero if and only if it contains each $X_{i,j}$ an even number of times.
    Note that each nonzero term is the product of moments of i.i.d. standard Gaussians, where the order of each moment is at most $2r$ and the total number of moments in the product is at most $r$.
    Therefore there is a constant $C^{(2)}_r > 0$ such that each nonzero term is bounded above by $C^{(2)}_r$.
    Since each term is the product of moments of independent standard Gaussians, each term in the sum is nonnegative.

    It suffices to bound the number of $L\in \sL$ corresponding to nonzero terms.
    For each vertex $u\in I$, let $m_u$ be the number of times $u$ appears in $(i_1,\ldots,i_r)$.
    Consider one $u\in I$, and let those times be $i_t$ for $t \in \{t_1, \ldots, t_{m_u}\}$, where $t_1 < \cdots < t_{m_u}$.
    In each term $\prod_{t=1}^r X_{i_t, L(t)}X_{i_{t+1}, L(t)}$, there are exactly $2m_u$ factors containing $X_{u,s}$ for some $s$: $m_u$ each of the forms $X_{i_t, L(t)}$ and $X_{i_t, L(t-1)}$ for $t \in \{t_1, \ldots, t_{m_u}\}$.
    Since each $X_{u,s}$ that appears must appear at least twice for the expectation to be nonzero, the set
    \begin{equation}
        \label{eq:general-ub-refined-linear-term-computational-labels-of-u}
        \{L(t_j) : 1\le j\le m_u\}
        \cup
        \{L(t_j-1) : 1\le j\le m_u\}
    \end{equation}
    has at most $m_u$ distinct values.
    In particular, if $i_t$ appears only once in $(i_1,\ldots,i_r)$, then $L(t-1) = L(t)$.
    Now consider the following procedure for generating a function $L$.
    \begin{enumerate}[label=(\arabic*)]
        \item Choose an initial value $L_1\in [d]$.
        \item For each $u\in I$, do the following.
        \begin{enumerate}[label=(\alph*)]
            \item Choose a subset $A_u\subseteq [d]$ of size $m_u-1$.
            \item Choose an $m_u$-tuple $B_u = \lt(b^{(1)}_u,\ldots,b^{(m_u)}_u\rt)$ of values from $A_u\cup \{*\}$ where $*$ is a special element.
        \end{enumerate}
        \item Set $L(1) = L_1$.
        \item For $t=2,3,\ldots,r$ in that order, do the following.
        \begin{enumerate}[label=(\alph*)]
            \item Let $u = i_t$, and let $i_t$ be the $j$th occurrence of $u$ in $(i_1,\ldots,i_r)$.
            \item If $b^{(j)}_u \neq *$, set $L(t) = b^{(j)}_u$.
            If $b^{(j)}_u = *$, set $L(t) = L(t^*(u)-1)$ where $t^*(u)$ is the smallest $t^* \ge 2$ (possibly equal to $t$) such that $u = i_{t^*}$.
        \end{enumerate}
    \end{enumerate}
    Note that the choices of $L_1$, $A_u$ and $B_u$, for $u\in I$, encode any $L\in \cL$ corresponding to a nonzero term.
    This is because, in addition to the value $L(t^*(u)-1)$, the set (\ref{eq:general-ub-refined-linear-term-computational-labels-of-u}) can contain at most $m_u-1$ additional values.

    Therefore, the number of $L$ corresponding to nonzero terms is upper bounded by the number of valid choices for $L_1$, $A_u$ and $B_u$, for $u\in I$.
    There are $d$ options for $L_1$.
    The total number of choices for all of the subsets $A_u$ is at most $d^a$ where $a = \sum_{u\in I} (m_u-1) = r - |I|$.
    Given the subset $A_u$, the number of ways to choose $B_u$ is bounded by $m_u^{m_u}\le r^r$.
    Therefore given all of the $A_u$, the number of ways to choose all of the $B_u$ is at most $C^{(3)}_r = r^{r^2}$.
    Thus the number of nonzero terms is bounded above by $C^{(3)}_r d^{r + 1 - |I|}$, which completes the proof of the lemma on setting $C^{(1)}_r = C^{(2)}_r C^{(3)}_r$.
\end{proof}

We remark that the procedure in the proof above can generate $L$ that correspond to terms with expectation zero and generate the same $L$ multiple times, and hence overcounts the number of nonzero terms.
With this lemma, we now can complete the proof of Lemma~\ref{lem:general-ub-refined-linear-term-orderT}.
\begin{proof}[Proof of Lemma~\ref{lem:general-ub-refined-linear-term-orderT}]
    First note that
    \begin{align}
        \notag
        \MkT
        &=
        d^{-1} \sum_{t=0}^T (-1)^t \lt(d^{-1}\Mkt \Mk - I_{\Nl(k)}\rt)^t \\
        \notag
        &=
        d^{-1} \sum_{t=0}^T \sum_{r=0}^t (-1)^r d^{-r} \binom{t}{r} \lt(\Mkt\Mk\rt)^r \\
        \notag
        &=
        d^{-1} \sum_{r=0}^T (-1)^r \binom{T+1}{r+1} \cdot d^{-r} \lt(\Mkt\Mk\rt)^r \\
        \label{eq:general-ub-refined-linear-term-mkt-expansion}
        &=
        \sum_{r=1}^{T+1} (-1)^{r-1} \binom{T+1}{r} \cdot d^{-r} \lt(\Mkt\Mk\rt)^{r-1}
    \end{align}
    since $\sum_{t=r}^T \binom{t}{r} = \binom{T+1}{r+1}$.
    The quantity $\MlT$ admits a similar expansion.
    Therefore,
    \begin{align*}
        &\E \Tr \lt(\MkprojT \MlprojT\rt) \\
        &\qquad =
        \sum_{r=1}^{T+1} \sum_{s=1}^{T+1}
        (-1)^{r+s} d^{-r-s}
        \binom{T+1}{r} \binom{T+1}{s}
        \E \Tr\lt(
            \lt(\Mk\Mkt\rt)^r
            \lt(\Ml\Mlt\rt)^s
        \rt) \\
        &\qquad \le
        2^{2T+2}
        \sum_{r=1}^{T+1} \sum_{s=1}^{T+1}
        d^{-r-s}
        \lt|
            \E \Tr\lt(
                \lt(\Mk\Mkt\rt)^r
                \lt(\Ml\Mlt\rt)^s
            \rt)
        \rt|
    \end{align*}
    by the triangle inequality, since $\binom{T+1}{r} \le 2^{T+1}$ for all $r\le T+1$.
    Directly expanding the trace yields that
    \begin{align*}
        \E \Tr\lt(
            \lt(\Mk\Mkt\rt)^r
            \lt(\Ml\Mlt\rt)^s
        \rt)
        &=
        \sum_{\substack{
            (i_1,\ldots,i_r) \in \Nl(k)^r \\
            (j_1,\ldots,j_s) \in \Nl(\ell)^s
        }}
        \E \Pi\lt(\Xvl, (i_1,\ldots,i_r, j_1,\ldots,j_s) \rt) \\
        &\le
        C^{(4)}_T
        \sum_{\substack{
            (i_1,\ldots,i_r) \in \Nl(k)^r \\
            (j_1,\ldots,j_s) \in \Nl(\ell)^s
        }}
        d^{r+s+1 - |\{i_1,\ldots,i_r,j_1,\ldots,j_s\}|}
    \end{align*}
    by Proposition~\ref{prop:general-ub-refined-linear-term-computational}, where $C^{(4)}_T = \max_{1\le r\le 2T+2} C^{(1)}_r$.
    Furthermore, it follows that this expectation is nonnegative for each pair $r,s$.
    Combining these inequalities yields that
    \begin{align*}
        &\E \Tr \lt(\MkprojT \MlprojT\rt) \\
        &\qquad \le
        2^{2T+2} C^{(4)}_T d
        \sum_{r=1}^{T+1} \sum_{s=1}^{T+1}
        \sum_{\substack{
            (i_1,\ldots,i_r) \in \Nl(k)^r \\
            (j_1,\ldots,j_s) \in \Nl(\ell)^s
        }}
        d^{-|\{i_1,\ldots,i_r,j_1,\ldots,j_s\}|}.
    \end{align*}
    Now, define the sets
    \[
        V_1 = \Nl(k) \cap \Nl(\ell),
        \quad
        V_2 = \Nl(k) \setminus V_1,
        \quad
        \text{and}
        \quad
        V_3 = \Nl(\ell) \setminus V_1,
    \]
    and set $\Lambda = \{i_1,\ldots,i_r,j_1,\ldots,j_s\}$.
    For each triple $(a,b,c)$ of integers with $0\le a,b,c\le T+1$, we will upper bound the number of terms in the above sum with $a = |V_1 \cap \Lambda|$, $b = |V_2 \cap \Lambda|$, and $c = |V_3 \cap \Lambda|$.
    First note that given the intersections $V_1 \cap \Lambda$, $V_2\cap \Lambda$, and $V_3 \cap \Lambda$, the number of ways to form the sequences $(i_1,\ldots,i_r)$ and $(j_1,\ldots,j_s)$ for some pairs of lengths $r,s\le T+1$ is upper bounded by a constant $C^{(5)}_T > 0$.
    Furthermore, the number of choices for these three intersections is at most $|V_1|^a |V_2|^b |V_3|^c$.
    Thus the number of terms corresponding to the triple $(a,b,c)$ is at most $C^{(5)}_T |V_1|^a |V_2|^b |V_3|^c \le C^{(5)}_T\dvlkl^a \dvlk^b \dvll^c$.

    Since $V_1$, $V_2$, and $V_3$ partition $\Nl(k) \cup \Nl(\ell)$, which contains $\Lambda$, it follows that any term corresponding to the triple $(a,b,c)$ satisfies that $|\Lambda| = a+b+c$.
    Since $r,s \ge 1$, the set $\Lambda$ must have nonempty intersection with both $\Nl(k)$ and $\Nl(\ell)$.
    Therefore, either $a\ge 1$ or $b,c\ge 1$.
    Combining all of these observations yields that
    \begin{align*}
        & \E \Tr \lt(\MkprojT \MlprojT\rt) \\
        &\qquad \le
        2^{2T+2} C^{(4)}_T C^{(5)}_T
        d
        \sum_{\substack{
            0\le a,b,c\le T+1 \\
            \text{$a\ge 1$ or $b,c\ge 1$}
        }}
        \lt(\f{\dvlkl}{d}\rt)^a
        \lt(\f{\dvlk}{d}\rt)^b
        \lt(\f{\dvll}{d}\rt)^c \\
        &\qquad \le
        2^{2T+2} (T+1)^3 C^{(4)}_T C^{(5)}_T
        d
        \lt[
            \lt(\f{\dvlkl}{d}\rt) +
            \lt(\f{\dvlk}{d}\rt)
            \lt(\f{\dvll}{d}\rt)
        \rt] \\
        &\qquad =
        C_T \lt[
            \dvlkl +
            \f{\dvlk\dvll}{d}
        \rt],
    \end{align*}
    where $C_T = 2^{2T+2} (T+1)^3 C^{(4)}_T C^{(5)}_T$.
    Here, the final inequality follows from the condition $d\ge \max\lt(\dvlk, \dvll\rt)$ and the fact that either $a\ge 1$ or $b,c\ge 1$ for each term in the sum.
\end{proof}

\subsection{Bounding the Error Term in the Moment Approximation}

In this section, we will bound $\E Z$, where $Z$ is the error term in the moment approximation given in the previous section.
More precisely, let
\begin{align*}
    Z
    &=
    \Tr\lt(\Mkproj \Mlproj\rt)
    - \Tr\lt(\MkprojT \MlprojT\rt) \\
    &=
    \Tr\lt(\Mk \lt[\Mkinv - \MkT\rt] \Mkt \Mlproj \rt) \\
    &\qquad +
    \Tr\lt(\MkprojT \Ml \lt[\Mlinv - \MlT\rt] \Mlt \rt)
\end{align*}
As in the statement of Lemma~\ref{lem:general-ub-refined-linear-term}, let $\eps \in (0,1)$ be such that $d^{1-\eps} \ge \max\lt(\dvlk, \dvll\rt)$.
Let $E_Z$ denote the event that
\[
    \sqrt{d} - 2d^{(1-\eps)/2} \le s_{\min}(\Mk) \le s_{\max}(\Mk) \le \sqrt{d} + 2d^{(1-\eps)/2}
\]
and
\[
    \sqrt{d} - 2d^{(1-\eps)/2} \le s_{\min}(\Ml) \le s_{\max}(\Ml) \le \sqrt{d} + 2d^{(1-\eps)/2}.
\]
Here, $s_{\max}(M) = \norm{M}_{\op}$ and $s_{\min}(M)$ denote the maximum and minimum singular values of a matrix $M$.
We now will prove three simple propositions that will be used to complete the proof of Lemma~\ref{lem:general-ub-refined-linear-term}.
The first bounds $Z$ on the event $E_Z$.

\begin{proposition}
    \label{prop:general-ub-refined-linear-term-z-clamp-bound}
    If $d$ is at least a sufficiently large constant $C^{(6)}_{\eps} > 0$ dependent only on $\eps$, then on the event $E_Z$, it holds that $|Z|\le 2^{3T+10} d^{1 - \f{(T+1)\eps}{2}}$ almost surely.
\end{proposition}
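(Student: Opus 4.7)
\textbf{Proof plan for Proposition~\ref{prop:general-ub-refined-linear-term-z-clamp-bound}.} The plan is to exploit the fact that $\Mkinv - \MkT$ is the tail of a Neumann series and decays geometrically on $E_Z$. Set $A_k = d^{-1}\Mkt\Mk - I_{\Nl(k)}$ so that $\Mkinv = d^{-1}(I_{\Nl(k)}+A_k)^{-1}$, and write $\MkT = d^{-1}\sum_{t=0}^T (-1)^t A_k^t$. On $E_Z$ the squared singular values of $\Mk$ lie in $[d(1-2d^{-\eps/2})^2, d(1+2d^{-\eps/2})^2]$, so the eigenvalues of $A_k$ are contained in $[-4d^{-\eps/2}+4d^{-\eps},\,4d^{-\eps/2}+4d^{-\eps}]$. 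Thus $\|A_k\|_{\op}\le 8d^{-\eps/2}$, and taking $d\ge C^{(6)}_\eps$ large enough (depending only on $\eps$) ensures $\|A_k\|_{\op}\le 1/2$. This gives convergence of the Neumann series and the tail estimate
\[
    \|\Mkinv - \MkT\|_{\op}
    = d^{-1}\Bigl\|\sum_{t=T+1}^\infty (-1)^t A_k^t\Bigr\|_{\op}
    \le \frac{d^{-1}\|A_k\|_{\op}^{T+1}}{1-\|A_k\|_{\op}}
    \le 2\cdot 8^{T+1}\,d^{-1-(T+1)\eps/2}.
\]
An identical bound holds for $\|\Mlinv-\MlT\|_{\op}$.

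Next, I would control the ``almost projection'' $\MkprojT$ via the decomposition $\MkprojT = \Mkproj + \Mk(\MkT - \Mkinv)\Mkt$. Since $\|\Mkproj\|_{\op}=1$ and $\|\Mk\|_{\op}^2 \le d(1+2d^{-\eps/2})^2\le 4d$ on $E_Z$, the second summand has operator norm at most $4d\cdot 2\cdot 8^{T+1} d^{-1-(T+1)\eps/2} = 8^{T+2}\,d^{-(T+1)\eps/2}$, which is $\le 1$ for $d\ge C^{(6)}_\eps$. Hence $\|\MkprojT\|_{\op}\le 2$.

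To bound $|Z|$, I would apply the standard trace inequality $|\Tr(CD)|\le \rank(C)\,\|C\|_{\op}\|D\|_{\op}$ to each summand of $Z$. For the first summand, $C = \Mk[\Mkinv-\MkT]\Mkt$ has rank at most $\dvlk\le d^{1-\eps}$ and operator norm at most $\|\Mk\|_{\op}^2\|\Mkinv-\MkT\|_{\op}\le 4d\cdot 2\cdot 8^{T+1}d^{-1-(T+1)\eps/2} = 8^{T+2}d^{-(T+1)\eps/2}$, while $\|\Mlproj\|_{\op}=1$; this gives a contribution bounded by $8^{T+2}\,d^{\,1-\eps-(T+1)\eps/2}$. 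For the second summand, $C = \Ml[\Mlinv-\MlT]\Mlt$ is bounded analogously, and the extra factor $\|\MkprojT\|_{\op}\le 2$ gives a contribution bounded by $2\cdot 8^{T+2}\,d^{\,1-\eps-(T+1)\eps/2}$. Summing yields $|Z|\le 3\cdot 8^{T+2}\,d^{\,1-\eps-(T+1)\eps/2}\le 2^{3T+10}\,d^{\,1-(T+1)\eps/2}$, since $d^{-\eps}\le 1$ and $3\cdot 8^{T+2} = 3\cdot 2^{3T+6} \le 2^{3T+10}$.

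The main ``obstacle'' is purely bookkeeping: keeping the explicit constants tight enough to fit inside $2^{3T+10}$ and ensuring the constant $C^{(6)}_\eps$ absorbs all the thresholds needed for $\|A_k\|_{\op}\le 1/2$, $\|\Mk\|_{\op}^2\le 4d$, and the bound $\|\MkprojT\|_{\op}\le 2$. No single step is conceptually difficult---everything reduces to the geometric decay of the Neumann series on $E_Z$ coupled with elementary operator-norm and trace inequalities.
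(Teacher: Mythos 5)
Your proof is correct and follows essentially the same route as the paper: on $E_Z$ you bound the spectrum of $d^{-1}\Mkt\Mk - I_{\Nl(k)}$ by $8d^{-\eps/2}$, deduce $\norm{\Mkinv-\MkT}_{\op}\le 2\cdot 8^{T+1}d^{-1-(T+1)\eps/2}$ (via the Neumann tail, which for symmetric matrices is the paper's exact eigenvalue computation), and then control the two traces by operator-norm products. Your bookkeeping is slightly different — you use $|\Tr(M)|\le \rank(M)\norm{M}_{\op}$ with $\rank\le d^{1-\eps}$ and the bounds $\norm{\Mlproj}_{\op}=1$, $\norm{\MkprojT}_{\op}\le 2$, where the paper uses $|\Tr(M)|\le d\norm{M}_{\op}$ together with $\norm{\Mlinv}_{\op},\norm{\MkT}_{\op}\le 2/d$ — but both yield the stated constant (yours even gains an unneeded factor $d^{-\eps}$), and your threshold $8^{T+2}d^{-(T+1)\eps/2}\le 1$ is indeed uniform in $T$, so $C^{(6)}_\eps$ depends only on $\eps$.
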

\begin{proof}
    First note that the largest and smallest eigenvalues of $\Mkt\Mk$ are by definition $s_{\max}(\Mk)^2$ and $s_{\min}(\Mk)^2$, respectively.
    On the event $E_Z$, the eigenvalues of $d^{-1} \Mkt\Mk - I_{\Nl(k)}$ all lie in the interval
    \begin{align*}
        \lt[d^{-1}s_{\min}(\Mk)^2-1, d^{-1}s_{\max}(\Mk)^2-1\rt]
        &\subseteq
        \lt[(1 - 2d^{-\eps/2})^2-1, (1 + 2d^{-\eps/2})^2-1\rt] \\
        &\subseteq
        \lt[-8d^{-\eps/2}, 8d^{-\eps/2}\rt].
    \end{align*}
    Since $\Mkt\Mk$, $\Mkinv$, and all powers of $d^{-1}\Mkt\Mk - I_{\Nl(k)}$ have a common eigenbasis, the eigenvalues of $d^{-1}\Mkt\Mk - I_{\Nl(k)}$ and $\Mkinv-\MkT$ are in one-to-one correspondence, with each eigenvalue $\lambda$ of $d^{-1}\Mkt\Mk - I_{\Nl(k)}$ corresponding to an eigenvalue
    \[
        \f{1}{d} \lt(\f{1}{1 + \lambda} - \sum_{t=0}^T (-1)^t \lambda^t\rt) = \f{(-\lambda)^{T+1}}{d(1+\lambda)}
    \]
    of $\Mkinv-\MkT$.
    If $d$ is larger than a constant depending only on $\eps$, then $|\lambda| \le 8d^{-\eps/2}$ implies that
    \[
        \lt|\f{(-\lambda)^{T+1}}{d(1+\lambda)}\rt|
        \le
        \f{2}{d} |\lambda|^{T+1}
        \le
        2\cdot 8^{T+1} d^{-1 - \f{(T+1)\eps}{2}},
    \]
    and therefore
    \[
        \norm{\Mkinv-\MkT}_{\op} \le 2\cdot 8^{T+1} d^{-1 - \f{(T+1)\eps}{2}}.
    \]
    Similarly, the eigenvalues of $\MkT$ are given by $\f{1 - (-\lambda)^{T+1}}{d(1+\lambda)}$ and it follows that $\norm{\MkT}_{\op} \le \f{2}{d}$ for sufficiently large $d$.
    Moreover, we have $\norm{\Mk}_{\op} = s_{\max}(\Mk) \le 2\sqrt{d}$ and $\norm{\Mkinv}_{\op} = s_{\min}(\Mk)^{-2} \le \f{2}{d}$ for sufficiently large $d$.
    All of these inequalities hold symmetrically for $\Ml$.
    On $E_Z$, we now have that
    \begin{align*}
        |Z|
        &\le
        d
        \norm{\Mk \lt[\Mkinv - \MkT\rt] \Mkt \Mlproj}_{\op} \\
        &\qquad +
        d
        \norm{\MkprojT \Ml \lt[\Mlinv - \MlT\rt] \Mlt}_{\op} \\
        &\le
        d
        \norm{\Mk}_{\op} \norm{\Mkinv - \MkT}_{\op} \norm{\Mkt}_{\op} \norm{\Ml}_{\op} \norm{\Mlinv}_{\op} \norm{\Mlt}_{\op} \\
        &\qquad +
        d
        \norm{\Mk}_{\op} \norm{\MkT}_{\op} \norm{\Mkt}_{\op} \norm{\Ml}_{\op} \norm{\Mlinv - \MlT}_{\op} \norm{\Mlt}_{\op} \\
        &\le 2^{3T+10} d^{1 - \f{(T+1)\eps}{2}},
    \end{align*}
    which proves the proposition.
\end{proof}

The next proposition bounds the probability of the complement of $E_Z$ and follows from standard concentration bounds on the singular values of a Gaussian matrix.

\begin{proposition}
    \label{prop:general-ub-refined-linear-term-z-error-prob}
    If $d^{1-\eps} \ge \max\lt(\dvlk, \dvll\rt)$, it holds that $\P(E_Z^c) \le 4\exp\lt(-\f12 d^{1-\eps}\rt)$.
\end{proposition}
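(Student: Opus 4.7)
The plan is to apply the Gaussian singular value concentration bound (Lemma~\ref{lem:gaussian-singular-values-concentration}) separately to each of $\Mk$ and $\Ml$, and then union bound. First, I would observe that since $(k,\ell) \notin E(G[v-1])$, we have $\ell \notin \Nl(k)$ and $k \notin \Nl(\ell)$; thus the columns of $\Mk$ (respectively $\Ml$) are drawn from $\{X_i : i \in \Nl(k)\}$ (respectively $\Nl(\ell)$), which are i.i.d. standard Gaussian vectors in $\bR^d$. Therefore $\Mk \in \bR^{d \times \dvlk}$ and $\Ml \in \bR^{d \times \dvll}$ have i.i.d. standard Gaussian entries, and Lemma~\ref{lem:gaussian-singular-values-concentration} applies directly to each.

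Next, I would choose the deviation parameter $t = d^{(1-\eps)/2}$ in Lemma~\ref{lem:gaussian-singular-values-concentration}. The hypothesis $d^{1-\eps} \ge \max(\dvlk, \dvll)$ gives $\sqrt{\dvlk} \le d^{(1-\eps)/2}$ and $\sqrt{\dvll} \le d^{(1-\eps)/2}$, so that $\sqrt{\dvlk} + t \le 2d^{(1-\eps)/2}$ and similarly for $\dvll$. Applying Lemma~\ref{lem:gaussian-singular-values-concentration} to $\Mk$ with this choice of $t$ yields
\[
\sqrt{d} - 2d^{(1-\eps)/2} \le s_{\min}(\Mk) \le s_{\max}(\Mk) \le \sqrt{d} + 2d^{(1-\eps)/2}
\]
with probability at least $1 - 2\exp(-d^{1-\eps}/2)$, and the analogous inequality for $\Ml$ holds with the same probability.

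Finally, I would take a union bound over these two events, obtaining $\P(E_Z^c) \le 4\exp(-d^{1-\eps}/2)$. There is no real obstacle here: the statement is essentially an immediate corollary of Lemma~\ref{lem:gaussian-singular-values-concentration} once the observations that $\Mk$ and $\Ml$ are i.i.d. Gaussian matrices and that the hypothesis on $d^{1-\eps}$ absorbs the $\sqrt{\dvlk}, \sqrt{\dvll}$ terms in the bound are in place.
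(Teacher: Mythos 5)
Your proof is correct and follows essentially the same argument as the paper: apply Lemma~\ref{lem:gaussian-singular-values-concentration} with $t = d^{(1-\eps)/2}$ to each of $\Mk$ and $\Ml$, absorb $\sqrt{\dvlk}, \sqrt{\dvll}$ using the hypothesis $d^{1-\eps} \ge \max\lt(\dvlk, \dvll\rt)$, and union bound over the two events.
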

\begin{proof}
    Since $d^{1-\eps} \ge \dvlk$, we have $d^{(1-\eps)/2} + \sqrt{\dvlk} \le 2d^{(1-\eps)/2}$.
    So, Lemma~\ref{lem:gaussian-singular-values-concentration} with $t = d^{(1-\eps)/2}$ yields that
    \[
        \sqrt{d} - 2d^{(1-\eps)/2} \le s_{\min}(\Mk) \le s_{\max}(\Mk) \le \sqrt{d} + 2d^{(1-\eps)/2}
    \]
    holds with probability at least $1 - 2\exp\lt(-\f12 d^{1-\eps}\rt)$.
    The same bound holds for $\Ml$.
    A union bound over these two events completes the proof of the lemma.
\end{proof}

The last proposition establishes a crude upper bound on the second moment of $Z$.

\begin{proposition}
    \label{prop:general-ub-refined-linear-term-z-second-moment}
    If $d \ge \max\lt(\dvlk, \dvll\rt)$, then there is a constant $C^{(7)}_T >0$, dependent only on $T$, such that $\E[Z^2] \le C^{(7)}_T d^{8T+8}$.
\end{proposition}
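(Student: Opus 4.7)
The plan is to split $Z = Z_1 - Z_2$ where $Z_1 = \Tr(\Mkproj\Mlproj)$ and $Z_2 = \Tr(\MkprojT\MlprojT)$, bound $\E[Z_1^2]$ deterministically, and control $\E[Z_2^2]$ via the explicit polynomial expansion of $\MkprojT$ together with operator norm moment bounds. Since the asserted bound $d^{8T+8}$ is extremely loose, any polynomial bound in $d$ will in fact suffice.

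First I would observe that $\Mkproj$ and $\Mlproj$ are the orthogonal projections onto the column spaces of $\Mk$ and $\Ml$ (well-defined almost surely under the hypothesis $d \ge \max(\dvlk,\dvll)$, since $\Mk$ and $\Ml$ are then almost surely full column rank). Because $\Mlproj \preceq I_d$, it follows that
\[
    0 \le \Tr(\Mkproj\Mlproj) = \Tr(\Mkproj\Mlproj\Mkproj) \le \Tr(\Mkproj) = \dvlk \le d
\]
almost surely, and hence $\E[Z_1^2] \le d^2$ deterministically.

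Next I would expand $\MkprojT$ via (\ref{eq:general-ub-refined-linear-term-mkt-expansion}) as
\[
    \MkprojT = \sum_{r=1}^{T+1} (-1)^{r-1}\binom{T+1}{r}d^{-r}(\Mk\Mkt)^r,
\]
and analogously for $\MlprojT$. Combining the trace bound $|\Tr(AB)| \le \norm{A}_{\HS}\norm{B}_{\HS} \le d\norm{A}_{\op}\norm{B}_{\op}$ with $\norm{(\Mk\Mkt)^r}_{\op} \le \norm{\Mk}_{\op}^{2r}$ and the triangle inequality yields
\[
    |Z_2| \le C_T\, d \cdot \Bigl(\sum_{r=1}^{T+1} d^{-r}\norm{\Mk}_{\op}^{2r}\Bigr)\Bigl(\sum_{s=1}^{T+1} d^{-s}\norm{\Ml}_{\op}^{2s}\Bigr)
\]
for a constant $C_T$ depending only on $T$. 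To control $\E[Z_2^2]$, I would then use Lemma~\ref{lem:gaussian-singular-values-concentration}: since $\dvlk,\dvll \le d$, it gives $\P(\norm{\Mk}_{\op} > 2\sqrt d + t) \le 2e^{-t^2/2}$, and routine tail integration yields $\E[\norm{\Mk}_{\op}^{2p}] \le C_p d^p$ for every $p \ge 1$, and similarly for $\Ml$. Consequently, $\E[d^{-2r}\norm{\Mk}_{\op}^{4r}] = O_T(1)$ uniformly in $r \le T+1$.

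The only (minor) obstacle is that $\Mk$ and $\Ml$ need not be independent, since $\Nl(k)$ and $\Nl(\ell)$ may overlap; this would be handled by a single Cauchy-Schwarz step separating the $\Mk$- and $\Ml$-dependent factors, at the cost of doubling the exponents in the moment bounds, which remain polynomial in $d$. Combining the two estimates then yields $\E[Z^2] \le 2\E[Z_1^2] + 2\E[Z_2^2] = O_T(d^2)$, which is comfortably within the $C_T^{(7)} d^{8T+8}$ budget asserted in the proposition.
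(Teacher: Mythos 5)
Your argument is correct, and for the main term it takes a genuinely different route from the paper. The treatment of $Z_1=\Tr\lt(\Mkproj\Mlproj\rt)$ is essentially the paper's (both exploit that the two factors are orthogonal projections, giving a deterministic bound of order $d$; your sandwich $0\le \Tr(\Mkproj\Mlproj\Mkproj)\le \Tr(\Mkproj)=\dvlk$ is a slightly sharper version of the paper's $|\Tr| \le d\,\norm{\cdot}_{\op}$ bound). For $Z_2=\Tr\lt(\MkprojT\MlprojT\rt)$, however, the paper expands the trace into cyclic products $\Pi\lt(\Xvl,(i_1,\ldots,i_r,j_1,\ldots,j_s)\rt)$, counts the at most $(T+1)^2 d^{2T+2}$ summands, and bounds each second moment by the combinatorial estimate of Proposition~\ref{prop:general-ub-refined-linear-term-computational}, which is what produces the (lossy) exponent $d^{8T+8}$. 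You instead bound $|Z_2|$ pathwise by $C_T\, d\,\bigl(\sum_r d^{-r}\norm{\Mk}_{\op}^{2r}\bigr)\bigl(\sum_s d^{-s}\norm{\Ml}_{\op}^{2s}\bigr)$, decouple $\Mk$ and $\Ml$ by Cauchy--Schwarz (correctly, since they may share columns), and control the resulting operator-norm moments $\E\norm{\Mk}_{\op}^{2p}\lesssim_p d^p$ by tail integration of Lemma~\ref{lem:gaussian-singular-values-concentration}; this yields $\E[Z^2]=O_T(d^2)$, far stronger than the stated $C^{(7)}_T d^{8T+8}$, which of course suffices. The trade-off: the paper's route needs no new probabilistic input beyond the cyclic-product moment bound it has already proved for Lemma~\ref{lem:general-ub-refined-linear-term-orderT}, while yours requires the (routine but unstated) tail-integration step for operator-norm moments and in exchange gives a much tighter and arguably more transparent estimate. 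One small bookkeeping point to make explicit when writing it up: after the Cauchy--Schwarz decoupling you need fourth moments of the sums, i.e.\ $\E\bigl[d^{-4r}\norm{\Mk}_{\op}^{8r}\bigr]=O_T(1)$, which your moment bound covers but which you only gesture at ("doubling the exponents").
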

\begin{proof}
    By AM-GM,
    \begin{align*}
        Z^2 
        &\le
        2 \Tr\lt(\Mkproj \Mlproj\rt)^2 \\
        &\qquad + 2 \Tr\lt(\MkprojT \MlprojT\rt)^2.
    \end{align*}
    Since $\lt(\Mkproj\rt)^2 = \Mkproj$, it follows that $\Mkproj$ is a projection matrix, and thus $\norm{\Mkproj}_{\op}\le 1$.
    Similarly, we have $\norm{\Mlproj}_{\op}\le 1$.
    Therefore,
    \begin{align*}
        \lt|\Tr \lt(\Mkproj \Mlproj\rt)\rt|
        &\le
        d \norm{\Mkproj \Mlproj}_{\op} \\
        &\le d
    \end{align*}
    almost surely.
    It follows that
    \[
        \E 2 \Tr\lt(\Mkproj \Mlproj\rt)^2 \le 2d^2.
    \]
    To bound the second term in the upper bound for $Z^2$, we will again use the expansion (\ref{eq:general-ub-refined-linear-term-mkt-expansion}).
    This yields
    \begin{align*}
        &\Tr\lt(\MkprojT \MlprojT\rt) \\
        &\qquad =
        \sum_{r=1}^{T+1}
        \sum_{s=1}^{T+1}
        (-1)^{r+s}
        \binom{T+1}{r} \binom{T+1}{s}
        \Tr\lt(\lt(\Mk\Mkt\rt)^r \lt(\Ml\Mlt\rt)^s\rt) \\
        &\qquad =
        \sum_{r=1}^{T+1}
        \sum_{s=1}^{T+1}
        \sum_{\substack{
            (i_1,\ldots,i_r) \in \Nl(k)^r \\
            (j_1,\ldots,j_s) \in \Nl(\ell)^s
        }}
        (-1)^{r+s}
        \binom{T+1}{r} \binom{T+1}{s}
        \Pi\lt(\Xvl, (i_1,\ldots,i_r,j_1,\ldots,j_s)\rt),
    \end{align*}
    where we recall the definition of the cyclic product $\Pi$ in (\ref{eq:general-ub-refined-linear-term-cyclic-product}).
    There are at most
    \[
        \lt(\sum_{r=1}^{T+1} \dvlk^r\rt)
        \lt(\sum_{s=1}^{T+1} \dvll^s\rt)
        \le
        (T+1)^2 \dvlk^{T+1} \dvll^{T+1}
        \le
        (T+1)^2 d^{2T+2}
    \]
    summands in this sum, where we use that $d \ge \max\lt(\dvlk, \dvll\rt)$.
    By the inequality
    \[
        (x_1+\cdots+x_N)^2 \le N(x_1^2+\cdots+x_N^2)
    \]
    for all $x_1,\ldots,x_N\in \bR$, which follows from Cauchy-Schwarz, we have
    \begin{align*}
        &\E \Tr\lt(\MkprojT \MlprojT\rt)^2 \\
        &\qquad \le
        (T+1)^2 d^{2T+2}
        \sum_{r=1}^{T+1}
        \sum_{s=1}^{T+1}
        \sum_{\substack{
            (i_1,\ldots,i_r) \in \Nl(k)^r \\
            (j_1,\ldots,j_s) \in \Nl(\ell)^s
        }}
        \binom{T+1}{r}^2 \binom{T+1}{s}^2
        \E \Pi\lt(\Xvl, (i_1,\ldots,i_r,j_1,\ldots,j_s)\rt)^2 \\
        &\qquad \le
        2^{4T+4} (T+1)^2 d^{2T+2}
        \sum_{r=1}^{T+1}
        \sum_{s=1}^{T+1}
        \sum_{\substack{
            (i_1,\ldots,i_r) \in \Nl(k)^r \\
            (j_1,\ldots,j_s) \in \Nl(\ell)^s
        }}
        \E \Pi\lt(\Xvl, (i_1,\ldots,i_r,j_1,\ldots,j_s)\rt)^2
    \end{align*}
    Finally, note that
    \[
        \Pi\lt(\Xvl, (i_1,\ldots,i_r,j_1,\ldots,j_s)\rt)^2
        =
        \Pi\lt(\Xvl, (i_1,\ldots,i_r,j_1,\ldots,j_s,i_1,\ldots,i_r,j_1,\ldots,j_s)\rt),
    \]
    so Proposition~\ref{prop:general-ub-refined-linear-term-computational} implies that
    \begin{align*}
        \E \Pi\lt(\Xvl, (i_1,\ldots,i_r,j_1,\ldots,j_s)\rt)^2
        &\le
        C^{(1)}_{2r+2s} d^{2r+2s+1 - |\{i_1,\ldots,i_r,j_1,\ldots,j_s\}|} \\
        &\le
        C^{(1)}_{2r+2s} d^{2r+2s}
        \le
        C^{(8)}_{T} d^{4T+4},
    \end{align*}
    where $C^{(8)}_T = \max_{1\le r\le 4T+4} C^{(1)}_r$.
    Putting this all together, we have
    \begin{align*}
        &\E 2 \Tr\lt(\MkprojT \MlprojT\rt)^2 \\
        &\qquad \le
        2 \cdot  2^{4T+4} (T+1)^2 d^{2T+2}
        \cdot
        (T+1)^2 d^{2T+2} \cdot C^{(8)}_{T} d^{4T+4} 
        =
        2^{4T+5} (T+1)^4 C^{(8)}_{T} d^{8T+8}.
    \end{align*}
    So,
    \[
        \E [Z^2]
        \le 
        2^{4T+5} (T+1)^4 C^{(8)}_{T} d^{8T+8} + 2d^2
        \le 
        C^{(7)}_T
        d^{8T+8}
    \]
    for $C^{(7)}_T = 2^{4T+5} (T+1)^4 C^{(8)}_{T} + 2$.
\end{proof}

With these propositions, we can now complete the proof of Lemma~\ref{lem:general-ub-refined-linear-term}.
\begin{proof}[Proof of Lemma~\ref{lem:general-ub-refined-linear-term}]
    Let $T$ be a positive integer to be determined later.
    By Propositions~\ref{prop:general-ub-refined-linear-term-z-clamp-bound}, \ref{prop:general-ub-refined-linear-term-z-error-prob}, and \ref{prop:general-ub-refined-linear-term-z-second-moment}, the triangle inequality and Cauchy-Schwarz, we have that for all $d$ larger than a constant dependent only on $\eps$,
    \begin{align*}
        \E Z
        &\le
        \E[|Z| \ind{E_Z}] +
        \E[|Z| \ind{E_Z^c}] 
        \le
        \E[|Z| \ind{E_Z}] +
        \E[\ind{E_Z^c}]^{1/2} \E[Z^2]^{1/2} \\
        &\le
        2^{3T+10} d^{1 - \f{(T+1)\eps}{2}} +
        2\sqrt{C^{(7)}_T} \exp\lt(-\f14 d^{1-\eps}\rt)d^{4T+4}.
    \end{align*}
    By (\ref{eq:general-ub-refined-linear-term-starting-point}) and Lemma~\ref{lem:general-ub-refined-linear-term-orderT}, we have that
    \begin{align*}
        &\E_{(\Xonevl, \Xtwovl) \sim \cNvdipr}
        \lt[
            \la \Xonek, \Xonel \ra
            \la \Xtwok, \Xtwol \ra
        \rt] \\
        &\qquad \le
        \E \Tr \lt(\Mkproj \Mlproj\rt) \\
        &\qquad =
        \E \Tr \lt(\MkprojT \MlprojT\rt) + \E Z \\
        &\qquad \le
        C_T \lt[
            \dvlkl + \f{\dvlk\dvll}{d}
        \rt] +
        2^{3T+10} d^{1 - \f{(T+1)\eps}{2}} +
        2\sqrt{C^{(7)}_T} \exp\lt(-\f14 d^{1-\eps}\rt)d^{4T+4}.
    \end{align*}
    Now, take $T = \lceil 23/\eps\rceil$, which is constant because $\eps$ is a fixed constant.
    For sufficiently large $d$, the last two terms sum to at most $d^{-10}$.
    This completes the proof of the lemma.
\end{proof}

\section{Refined TV Upper Bound Argument and Bounds on Higher Order Terms}
\label{sec:general-ub-refined-higher-order-terms}

In this section, we strengthen the argument used to prove Theorem~\ref{thm:general-weaker-ub} and complete the proof of Theorem~\ref{thm:general-ub}.
Recall that for $r=1,2$, we defined
\[
    \Delrbn = d^{-1} \lt(\Xrbn\rt)^\top \Xrbn - \Ibn.
\]
In the first part of this section, we tighten the analysis in Lemma~\ref{lem:general-ub-main-2mm} by expanding the determinant $\det\lt(\Ibn - \Delonebn \Deltwobn\rt)^{-1/2}$ to Taylor order two instead of one.
The second order term of this determinant, which we no longer bound deterministically with the event $\Sop$, gives rise to a 4-cycles variant of the coupled exponentiated overlap.
The remaining task is to bound two coupled exponentiated overlaps: the original overlap $\E \exp(Y_v)$ (now without the $\f12$) and the new 4-cycles variant, both over $(\Xonevl, \Xtwovl) \sim \cNvdips$.
In the second part of this section, we show, like in Proposition~\ref{prop:general-ub-bound-hy}, that it suffices to consider low order terms of these two exponentials.
To obtain the sharper bounds of Theorem~\ref{thm:general-ub}, we will now need tailored arguments for terms of the coupled exponentiated overlap up to third order, and the 4-cycles coupled exponentiated overlap up to first order.
In the third part of this section, we give Gram-Schmidt couplings similar to those in Proposition~\ref{prop:general-ub-linear-term-ijnotedge} to bound these low order terms.
In the final part, we combine these results with Lemma~\ref{lem:general-ub-refined-linear-term}, which controls the first order terms, to complete the proof of Theorem~\ref{thm:general-ub}.

\subsection{A Tighter Determinant Expansion}

The main result of this section is a sharper variant of Lemma~\ref{lem:general-ub-main-2mm} that bounds the KL divergence of interest in terms of the coupled exponentiated overlap and a 4-cycles variant of it.
Throughout this section, define $\Delr[i,j] = d^{-1}\la \Xri, \Xrj \ra - \delta_{i,j}$ for all $i,j\in [v-1]$ and $r=1,2$.
Recall the coupled overlap $Y_v$ defined in (\ref{eq:general-ub-define-yv}); we can equivalently write
\begin{equation}
    \label{eq:general-ub-redefine-yv}
    Y_v =
    \Tr\lt(\Delonebn \Deltwobn\rt)
    =
    \sum_{i,j\in \downNv}
    \Delone[i,j]
    \Deltwo[i,j].
\end{equation}
We further define the 4-cycles variant of the coupled overlap $Z_v$ by
\begin{equation}
    \label{eq:general-ub-define-zv}
    Z_v =
    \Tr\lt(\lt(\Delonebn \Deltwobn\rt)^2\rt)
    =
    \sum_{i,j,k,\ell \in \downNv}
    \Delone[i,j]
    \Deltwo[j,k]
    \Delone[k,\ell]
    \Deltwo[\ell,i].
\end{equation}
The following lemma is our refinement of Lemma~\ref{lem:general-ub-main-2mm}.
\begin{lemma}[Refined Second Moment Method]
    \label{lem:general-ub-refined-2mm}
    Suppose that $d \gg \max_{v\in [n]} \ddegv + \log n$.
    Fix some $v\in [n]$ and suppose that $n$ is sufficiently large.
    Then, we have that
    \begin{align*}
        \E_{W\sim \muvls}
        \KL\lt((\muvs | W) \parallel \nuv\rt)
        &\le
        -1 + \exp\lt(
            \f{2 \cdot 100^6}{d^2}
            (\ddegv^4 + \ddegv \log^3 n)
        \rt) \\
        & \quad \times
        \lt[
            \E_{(\Xonevl, \Xtwovl) \sim \cNvdips}
            \exp\lt(Y_v\rt)
        \rt]^{1/2}
        \lt[
            \E_{(\Xonevl, \Xtwovl) \sim \cNvdips}
            \exp\lt(\f12 Z_v\rt)
        \rt]^{1/2}.
    \end{align*}
\end{lemma}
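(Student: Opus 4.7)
The plan is to refine the proof of Lemma~\ref{lem:general-ub-main-2mm} by keeping the quadratic term of the expansion of $-\log\det(I-A)$ in the integrand rather than bounding it deterministically, leaving only the cubic-and-higher error term to be controlled on $\Sop$. I begin as in the proof of Lemma~\ref{lem:general-ub-main-2mm}: use $\KL\le \chisq$, apply the second moment method, and invoke Proposition~\ref{prop:general-ub-2mm-inner-expectation} (whose hypotheses hold on $\Sop$ exactly as before when $d\gg \max_v \ddegv + \log n$). This reduces the task to controlling
\[
    \E_{(\Xonevl,\Xtwovl)\sim \cNvdips}
    \det\bigl(\Ibn - \Delonebn\Deltwobn\bigr)^{-1/2}.
\]

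The key new ingredient is a second-order analogue of Lemma~\ref{lem:det-to-exp-to-deg2}: I will prove that there is an absolute $\eps_0>0$ such that if $\Pi\in\bR^{k\times k}$ satisfies $\norm{\Pi}_{\op}\le \eps_0$, then
\[
    \det(I_k - \Pi)
    \ge
    \etr(-\Pi)\,
    \exp\lt(-\tfrac12\Tr(\Pi^2)\rt)\,
    \exp\lt(-\tfrac23\sum_{\lambda\in\spec(\Pi)}|\lambda|^3\rt).
\]
The derivation mirrors that of Lemma~\ref{lem:det-to-exp-to-deg2}: for each real eigenvalue I Taylor expand $\log(1-\lambda)=-\lambda-\lambda^2/2+R(\lambda)$ with the Lagrange remainder bound $|R(\lambda)|\le |\lambda|^3/3\cdot(1-|\lambda|)^{-1}\le \tfrac23|\lambda|^3$, and for a conjugate pair $\lambda,\bar\lambda$ I apply the same complex Taylor remainder to $\log(1-\lambda)+\log(1-\bar\lambda)$, which is real and equals $-(\lambda+\bar\lambda)-\tfrac12(\lambda^2+\bar\lambda^2)+2\Re R(\lambda)$. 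Summing over the spectrum recovers $-\Tr(\Pi)-\tfrac12\Tr(\Pi^2)$ as the leading terms and bounds the remainder by $\tfrac23\sum|\lambda|^3$.

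Applying this to $\Pi=\Delonebn\Deltwobn$ and using that on $\Sop$ one has $\norm{\Delonebn\Deltwobn}_{\op}\le 100^2(\ddegv+\log n)/d$, the cubic remainder is bounded deterministically by
\[
    \tfrac23\sum_{\lambda}|\lambda|^3
    \le
    \tfrac23\,\ddegv\,\norm{\Delonebn\Deltwobn}_{\op}^3
    \le
    \tfrac23\cdot 100^6\cdot \f{\ddegv(\ddegv+\log n)^3}{d^3}
    \le
    \f{2\cdot 100^6}{d^2}(\ddegv^4+\ddegv\log^3 n),
\]
using $(a+b)^3\le 4(a^3+b^3)$ and $d\ge 1$. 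Recognizing $\Tr(\Delonebn\Deltwobn)=Y_v$ and $\Tr((\Delonebn\Deltwobn)^2)=Z_v$, we obtain pointwise on $\cNvdips$ the bound
\[
    \det(\Ibn-\Delonebn\Deltwobn)^{-1/2}
    \le
    \exp\lt(\f{2\cdot 100^6}{d^2}(\ddegv^4+\ddegv\log^3 n)\rt)
    \exp\lt(\tfrac12 Y_v + \tfrac14 Z_v\rt).
\]

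Finally, Cauchy--Schwarz gives $\E\exp(\tfrac12 Y_v+\tfrac14 Z_v)\le [\E\exp(Y_v)]^{1/2}[\E\exp(\tfrac12 Z_v)]^{1/2}$, yielding the claimed inequality after subtracting $1$. The main obstacle is the precise bookkeeping of the cubic remainder: the matrix $\Delonebn\Deltwobn$ need not be symmetric, so its spectrum may contain complex eigenvalues and the Taylor argument from Lemma~\ref{lem:det-to-exp-to-deg2} must be upgraded accordingly. Handling this rigorously through the complex Taylor remainder and the conjugate-pair cancellation described above is what makes the improved prefactor $2\cdot 100^6(\ddegv^4+\ddegv\log^3 n)/d^2$ possible without losing the $Z_v$-factor to a deterministic bound (which would have been too lossy for Theorem~\ref{thm:general-ub}).
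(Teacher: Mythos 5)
Your proposal is correct and follows essentially the same route as the paper: after the $\KL\le\chisq$ and second-moment-method reduction via Proposition~\ref{prop:general-ub-2mm-inner-expectation}, the paper likewise uses a second-order determinant expansion (its Lemma~\ref{lem:det-to-exp-to-deg3}, which is exactly your inequality up to the constant on the cubic term), bounds the cubic remainder deterministically on $\Sop$ using $\norm{\Delonebn\Deltwobn}_{\op}\le 100^2(\ddegv+\log n)/d$, identifies $\Tr(\Delonebn\Deltwobn)=Y_v$ and $\Tr((\Delonebn\Deltwobn)^2)=Z_v$, and finishes with the same Cauchy--Schwarz split. The only difference is that you re-derive the determinant lemma with an explicit Lagrange-remainder/conjugate-pair argument rather than citing it, which changes nothing of substance.
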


To prove this, we will need the following variant of Lemma~\ref{lem:det-to-exp-to-deg2}, where we expand the determinant to Taylor order two.

\begin{lemma}
    \label{lem:det-to-exp-to-deg3}
    There exists an absolute constant $\eps > 0$ such that if $\Pi \in \bR^{k\times k}$ (and $\Pi$ is not necessarily symmetric) and $\norm{\Pi}_{\op} \le \eps$, then
    \[
        \det(I_k - \Pi)
        \ge
        \etr(-\Pi)
        \etr\lt(-\f12 \Pi^2\rt)
        \exp\lt(-\sum_{\lambda \in \spec(\Pi)} |\lambda|^3\rt).
    \]
\end{lemma}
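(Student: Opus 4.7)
The plan is to mirror the proof of Lemma~\ref{lem:det-to-exp-to-deg2} but carry the scalar Taylor expansion of $\log(1-z)$ one extra order further. Writing $\det(I_k - \Pi) = \prod_{\lambda \in \spec(\Pi)}(1-\lambda)$ and using that $\Pi$ is real (so its non-real eigenvalues occur in conjugate pairs), the product is a positive real number once $\norm{\Pi}_{\op}$ is small enough that every $|\lambda| < 1$. Taking the principal branch of $\log$, complex-conjugate pairs contribute $\log\lt((1-\lambda)(1-\blambda)\rt) = 2\Re \log(1-\lambda)$ while real eigenvalues contribute $\log(1-\lambda)$ directly, so $\log\det(I_k - \Pi) = \sum_{\lambda \in \spec(\Pi)} \Re\log(1-\lambda)$.

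The key scalar inequality I would establish is that for all complex $\lambda$ with $|\lambda| \le \eps$ sufficiently small,
\[
    \Re\log(1-\lambda) \ge -\Re\lambda - \f12 \Re(\lambda^2) - |\lambda|^3.
\]
This follows from the power series $\log(1-\lambda) = -\lambda - \f12\lambda^2 - \sum_{k\ge 3} \lambda^k/k$, since the absolute value of the degree-$\ge 3$ tail is bounded by $\sum_{k\ge 3} |\lambda|^k/k \le \f{|\lambda|^3}{3(1-|\lambda|)}$, which is at most $|\lambda|^3$ as soon as $|\lambda| \le 2/3$, say. Summing this inequality over all eigenvalues (the pair $\{\lambda,\blambda\}$ is handled by applying the bound to $\lambda$ and doubling, which contributes $-2\Re\lambda - \Re(\lambda^2) - 2|\lambda|^3$; this is consistent because the pair contributes $\lambda + \blambda$, $\lambda^2 + \blambda^2$, and $|\lambda|^3 + |\blambda|^3$ to the three sums on the right) yields
\[
    \log \det(I_k - \Pi) \ge -\sum_{\lambda \in \spec(\Pi)} \lambda - \f12 \sum_{\lambda \in \spec(\Pi)} \lambda^2 - \sum_{\lambda \in \spec(\Pi)} |\lambda|^3.
\]

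Finally I would use $\sum_\lambda \lambda = \Tr(\Pi)$ and $\sum_\lambda \lambda^2 = \Tr(\Pi^2)$, both of which are real since $\Pi$ is real, to identify the first two right-hand sums with $-\Tr(\Pi)$ and $-\f12\Tr(\Pi^2)$, and exponentiate. Fixing $\eps$ small enough that $\norm{\Pi}_{\op} \le \eps$ forces $|\lambda| \le \eps \le 2/3$ for all eigenvalues (so that both the series convergence and the tail bound above apply) closes the argument. I anticipate no substantive obstacle: the only subtlety is the bookkeeping for non-real eigenvalues of the possibly asymmetric $\Pi$, which is handled cleanly by conjugate-pair grouping combined with the reality of $\Tr(\Pi)$ and $\Tr(\Pi^2)$.
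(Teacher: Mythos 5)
Your proof is correct and takes essentially the same route as the paper's: factor $\det(I_k-\Pi)$ over the eigenvalues, apply a third-order Taylor bound on $\log(1-\lambda)$ to each eigenvalue with $|\lambda|\le\eps$, sum, and exponentiate, identifying the first two sums with $\Tr(\Pi)$ and $\Tr(\Pi^2)$. Your uniform treatment of complex eigenvalues via $\Re\log(1-\lambda)$ with an explicit tail estimate is only a minor streamlining of the paper's separate real/conjugate-pair expansions.
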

\begin{proof}
    This lemma follows from the same argument as in Lemma~\ref{lem:det-to-exp-to-deg2}, with the inequalities
    \begin{align*}
        \log(1-\lambda)
        &=
        -\lambda - \f12 \lambda^2 - \f13 \lambda^3 - O(|\lambda|^4)
        \ge
        -\lambda - \f12 \lambda^2 - |\lambda|^3, \\
        \log\lt((1-\lambda)(1-\blambda)\rt)
        &=
        \log \lt(1 - \lambda - \blambda + |\lambda|^2\rt) \\
        &=
        - (\lambda + \blambda - |\lambda|^2)
        - \f12 (\lambda + \blambda - |\lambda|^2)^2
        - \f13 (\lambda + \blambda - |\lambda|^2)^3
        - O(|\lambda|^4) \\
        &\ge
        - \lt(\lambda + \f12 \lambda^2 + |\lambda|^3\rt)
        - \lt(\blambda + \f12 \blambda^2 + |\blambda|^3\rt),
    \end{align*}
    which hold for all $|\lambda|\le \eps$ for a sufficiently small choice of $\eps > 0$.
\end{proof}

We now prove Lemma~\ref{lem:general-ub-refined-2mm}.

\begin{proof}[Proof of Lemma~\ref{lem:general-ub-refined-2mm}]
    By the same argument as in Lemma~\ref{lem:general-ub-main-2mm}, we have that
    \begin{align*}
        \E_{W\sim \muvls}
        \KL\lt(\lt(\muvs|W\rt) \parallel \nuv\rt)
        &\le
        -1 +
        \E_{W\sim \muvls}
        \E_{\substack{
            \Xonevl \sim \gamma(W) \\
            \Xonevl \in S
        }}
        \E_{\substack{
            \Xtwovl \sim \gamma(W) \\
            \Xtwovl \in S
        }}
        \det\lt(I_{\Nl} - \Delonebn \Deltwobn\rt)^{-1/2} \\
        &=
        -1 +
        \E_{(\Xonevl, \Xtwovl) \sim \cNvdips}
        \det\lt(I_{\Nl} - \Delonebn \Deltwobn\rt)^{-1/2}.
    \end{align*}
    As in Lemma~\ref{lem:general-ub-main-2mm}, if $\Xonevl, \Xtwovl \in S \subseteq \Sop$ it holds that $\norm{\Delonebn \Deltwobn}_{\op} \le \f{100^2}{d} \lt(\ddegv + \log n\rt)$, where $\Delrbn = d^{-1} \lt(\Xrbn\rt)^\top \Xrbn - \Ibn$ for $r=1,2$.
    So, when $\Xonevl, \Xtwovl \in S$ we have that
    \begin{align*}
        \sum_{\lambda \in \spec\lt(\Delonebn \Deltwobn\rt)} |\lambda|^3
        &\le
        \ddegv \norm{\Delonebn \Deltwobn}_{\op}^3
        \le
        \f{100^6}{d^3} \ddegv \lt(\ddegv + \log n\rt)^3 \\
        &\le
        \f{4\cdot 100^6}{d^3} \lt(\ddegv^4 + \ddegv \log^3 n\rt).
    \end{align*}
    The last estimate uses Jensen's inequality, in the form $\lt(\f{x+y}{2}\rt)^3 \le \f{x^3 + y^3}{2}$ for $x,y \ge 0$.
    Since $d \gg \max_{v\in [n]} \ddegv + \log n$, for the $\eps$ in Lemma~\ref{lem:det-to-exp-to-deg3} we have $\norm{\Delonebn \Deltwobn}_{\op} \le \eps$ for all sufficiently large $n$.
    When this occurs, by Lemma~\ref{lem:det-to-exp-to-deg3} we have that
    \begin{align*}
        &
        \E_{(\Xonevl, \Xtwovl) \sim \cNvdips}
        \det\lt(I_{\Nl} - \Delonebn \Deltwobn\rt)^{-1/2} \\
        &\qquad \le
        \exp\lt(
            \f{2\cdot 100^6}{d^3}
            \lt(\ddegv^4 + \ddegv \log^3 n\rt)
        \rt) \\
        &\qquad \qquad \times
        \E_{(\Xonevl, \Xtwovl) \sim \cNvdips}
        \etr\lt(\f12 \Delonebn \Deltwobn\rt)
        \etr\lt(\f14 \lt(\Delonebn \Deltwobn\rt)^2 \rt) \\
        &\qquad \le
        \exp\lt(
            \f{2\cdot 100^6}{d^3}
            \lt(\ddegv^4 + \ddegv \log^3 n\rt)
        \rt)
        \lt[
            \E_{(\Xonevl, \Xtwovl) \sim \cNvdips}
            \etr\lt(\Delonebn \Deltwobn\rt)
        \rt]^{1/2} \\
        &\qquad \qquad \times
        \lt[
            \E_{(\Xonevl, \Xtwovl) \sim \cNvdips}
            \etr\lt(\f12 \lt(\Delonebn \Deltwobn\rt)^2\rt)
        \rt]^{1/2},
    \end{align*}
    where the last inequality is by Cauchy-Schwarz.
    The lemma now follows from the definitions (\ref{eq:general-ub-redefine-yv}) and (\ref{eq:general-ub-define-zv}) of $Y_v$ and $Z_v$.
\end{proof}

\subsection{Refined Bounds on the Coupled Exponentiated Overlaps}

The main result in this and the next section is the following lemma, which is a sharper analogue of Lemma~\ref{lem:general-ub-main-exp-overlap} that bounds the expectations $\E \exp(Y_v)$ and $\E \exp(\f12 Z_v)$ over $(\Xonevl, \Xtwovl) \sim \cNvdips$.

\begin{lemma}
    \label{lem:general-ub-refined-coupled-exp-overlap-ubs}
    Suppose that $d \gg \max_{v\in [n]} \lt(\ddegv^2 + \ddegv \log^2 n\rt)$.
    There exists a constant $C>0$ such that for all sufficiently large $n$,
    \begin{align*}
        &\max\lt(
            \E_{(\Xonevl, \Xtwovl) \sim \cNvdips} \exp(Y_v),
            \E_{(\Xonevl, \Xtwovl) \sim \cNvdips} \exp\lt(\f12 Z_v\rt)
        \rt) \\
        &\qquad \le
        1 +
        C \Bigg[
            d^{-1}|E(G[\downNv])| +
            d^{-2} \sum_{i,j\in \downNv, i\neq j} \dvlij +
            d^{-2} \sum_{i\in \downNv} \dvli +
            d^{-2} \ddegv^2 
            \\
            &\qquad \qquad +
            d^{-3} \ddegv^4 +
            d^{-4} \ddegv^8 +
            d^{-4} \ddegv^4 \log^8 n +
            d^{-3} \lt(\sum_{i\in \downNv} \dvli\rt)^2 \\
            &\qquad \qquad +
            d^{-3} \ddegv^3 \sum_{i\in \downNv} \dvli +
            d^{-4} \ddegv^5 \sum_{i\in \downNv} \dvli +
            n^{-10}
        \Bigg].
    \end{align*}
    for all $v\in [n]$.
\end{lemma}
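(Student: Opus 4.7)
The plan is to Taylor-expand each exponential to an order matching the degree of its exponent in the $\Delta^{(r)}_{i,j}$ variables, then bound each resulting term separately. Since $Y_v$ is a degree-$2$ polynomial in the $\Delta^{(r)}$ and $Z_v$ is degree $4$, I will write
\[
    \exp(Y_v) = 1 + Y_v + \f12 Y_v^2 + \f16 Y_v^3 + h_3(Y_v),
    \qquad
    \exp\lt(\f12 Z_v\rt) = 1 + \f12 Z_v + h_1\lt(\f12 Z_v\rt),
\]
where $h_k(x) = \sum_{t \ge k+1} x^t/t!$. This refines the order-$1$ expansion of Lemma~\ref{lem:general-ub-main-exp-overlap}, which, as discussed in Section~\ref{subsec:technical-overview-gs}, is too lossy to yield Theorem~\ref{thm:general-ub}.

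The Taylor remainder terms are controlled deterministically on $S \subseteq \Str$, following the blueprint of Proposition~\ref{prop:general-ub-bound-hy}. Using $h_k(x) \le x^{k+1}$ for $|x|$ small together with the $\Str$-induced bound $\Tr((\Delrbn)^2) = O(d^{-1}\ddegv^2 + d^{-1}\ddegv\log^2 n)$ for both replicas and Cauchy-Schwarz, I obtain almost-sure estimates $|Y_v|^4 \lesssim d^{-4}(\ddegv^8 + \ddegv^4 \log^8 n)$ and $|Z_v|^2 \lesssim d^{-4}(\ddegv^8 + \ddegv^4 \log^8 n)$, contributing the corresponding terms in the stated bound. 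The hypothesis $d \gg \ddegv^2 + \ddegv\log^2 n$ ensures these remainders are indeed $o(1)$ so that the inequality $h_k(x) \le x^{k+1}$ applies.

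For the first-order contribution $\E Y_v$, I split the sum as in Section~\ref{subsec:general-ub-main-exp-overlap} into categories (i) $(i,j) \in E(G[v-1])$, (ii) $i \neq j$, $(i,j) \notin E(G[v-1])$, and (iii) $i = j$. Categories (i) and (iii) are handled by Propositions~\ref{prop:general-ub-linear-term-ijedge} and \ref{prop:general-ub-linear-term-ii}, contributing $d^{-1}|E(G[\downNv])|$ and $d^{-2}\sum_{i}\dvli$. For category (ii), instead of the crude Gram-Schmidt bound of Proposition~\ref{prop:general-ub-linear-term-ijnotedge}, I apply the sharp estimate of Lemma~\ref{lem:general-ub-refined-linear-term}, which yields $\dvlij + \dvli \dvlj / d + d^{-10}$ per pair and sums to $d^{-2}\sum_{i \neq j}\dvlij + d^{-3}\lt(\sum_i \dvli\rt)^2 + n^{-10}$. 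The first-order term $\f12\E Z_v$ is a sum over $4$-tuples $(i,j,k,\ell)$ with two $\Delta^{(1)}$ and two $\Delta^{(2)}$ factors; grouping by the pattern of repetitions among the indices and applying the Gram-Schmidt couplings of the three propositions above produces the $d^{-2}\ddegv^2$ contribution and is absorbed into the other terms. As in Proposition~\ref{prop:general-ub-linear-term-unconditioning}, replacing $\cNvdips$ by $\cNvdipr$ before applying these bounds incurs only $n^{-10}$ error, which is compatible with the stated bound.

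The main obstacle is the higher-order terms $\E Y_v^2$ and $\E Y_v^3$, which expand into sums of products $\prod_{s=1}^t \Delone[i_s,j_s]\Deltwo[i_s,j_s]$ of the form (\ref{eq:general-ub-higher-order-overlaps}) over tuples in $\downNv^{2t}$ for $t = 2, 3$. The strategy, which is precisely the content of Lemma~\ref{lem:general-ub-higher-order-gs}, is to detach one factor $\Delone[i_s,j_s]\Deltwo[i_s,j_s]$ via Cauchy-Schwarz, bound it using a tailored Gram-Schmidt coupling that leaves exactly one relevant coordinate free (as in Example~\ref{ex:technical-overview-gs-3}), and dominate the remaining factors crudely by AM-GM against $\Tr((\Delrbn)^2)$-type quantities controlled on $\Str$. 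The combinatorial bookkeeping for the index-coincidence patterns is the delicate part but yields contributions of the shapes $d^{-3}\ddegv^4$, $d^{-3}\ddegv^3\sum_i \dvli$, and $d^{-4}\ddegv^5\sum_i \dvli$ appearing in the lemma. Collecting all contributions and taking the maximum of the resulting estimates for $\E\exp(Y_v)$ and $\E\exp(\f12 Z_v)$ yields the claimed bound.
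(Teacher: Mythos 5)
Your overall skeleton matches the paper's proof: Taylor expansion of $\exp(Y_v)$ to order three and of $\exp(\f12 Z_v)$ to order one, deterministic control of the remainders through $\Str$, first-order $Y_v$ terms via Propositions~\ref{prop:general-ub-linear-term-ijedge}, \ref{prop:general-ub-linear-term-ii} and Lemma~\ref{lem:general-ub-refined-linear-term}, and a Cauchy-Schwarz-plus-Gram-Schmidt treatment of the second and third order $Y_v$ terms as in Lemma~\ref{lem:general-ub-higher-order-gs}. However, your treatment of the first-order term of $\exp(\f12 Z_v)$ has a genuine gap. That term is a sum of cyclic quartics $\E\lt[\Delone[i,j]\Deltwo[j,k]\Delone[k,\ell]\Deltwo[\ell,i]\rt]$ in which the two replicas carry \emph{different} index pairs. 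None of the first-order propositions you invoke applies to such terms, and, more fundamentally, the stronger-coupling device (Lemma~\ref{lem:general-ub-stronger-couplings}) only yields upper bounds for expectations of the symmetric form $\E[h(X)h(Y)]$; each cyclic quartic is of the form $\E\lt[h_1(\Xonevl)h_2(\Xtwovl)\rt]$ with $h_1\neq h_2$, so no Gram-Schmidt coupling bound can be invoked directly, and the sign of the term is not controlled a priori. The paper closes this with a separate symmetrization step (Proposition~\ref{prop:general-ub-4cyc-exp-overlap-first-order-ub}): conditionally on $W$ the two replicas in $\cNvdips$ are i.i.d., so the expectation factors as a product of two conditional expectations, and AM-GM converts each cyclic term into an average of two \emph{matched} quartics $\E\lt[\Delone[i,j]\Deltwo[i,j]\Delone[k,\ell]\Deltwo[k,\ell]\rt]$, i.e.\ exactly the second-order $Y_v$ terms, which the higher-order machinery then handles. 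Without this (or an equivalent) reduction, your claim that the $Z_v$ first-order term ``produces the $d^{-2}\ddegv^2$ contribution and is absorbed into the other terms'' is unsupported.

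A secondary, repairable issue: for the second and third order $Y_v$ terms you propose to bound the residual factors by AM-GM against $\Tr\lt(\lt(\Delrbn\rt)^2\rt)$-type quantities ``controlled on $\Str$.'' But once you replace the coupling by a stronger Gram-Schmidt coupling via Lemma~\ref{lem:general-ub-stronger-couplings}, the replicas are no longer supported on $S$, so the deterministic $\Str$ bound is unavailable inside that expectation. The paper instead removes the $S$-conditioning first (Proposition~\ref{prop:general-ub-higher-term-unconditioning}, at cost $O(d^{-T}n^{-10})$) and bounds the residual factors by unconditional Gaussian moment and hypercontractivity estimates (Lemma~\ref{lem:general-ub-delr-large-moments}); your argument should do the same.
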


Analogously to the expansion (\ref{eq:general-ub-main-exp-overlap-starting-point}) in the proof of Lemma~\ref{lem:general-ub-main-exp-overlap}, we will prove Lemma~\ref{lem:general-ub-refined-coupled-exp-overlap-ubs} by Taylor expanding the two exponentials and bounding terms of different orders individually.
Let the below expectations be over $(\Xonevl, \Xtwovl) \sim \cNvdips$.
We have that
\begin{align}
    \notag
    \E \exp(Y_v)
    &=
    1 +
    \sum_{i,j \in \downNv}
    \E\lt[\Delone[i,j] \Deltwo[i,j]\rt] +
    \f12
    \sum_{i_1,j_1,i_2,j_2 \in \downNv}
    \E \lt[
        \Delone[i_1,j_1] \Deltwo[i_1,j_1]
        \Delone[i_2,j_2] \Deltwo[i_2,j_2]
    \rt] \\
    \label{eq:general-ub-refined-exp-yv-expansion}
    &\qquad +
    \f{1}{6}
    \sum_{i_1,j_1,i_2,j_2,i_3,j_3 \in \downNv}
    \E \lt[
        \Delone[i_1,j_1] \Deltwo[i_1,j_1]
        \Delone[i_2,j_2] \Deltwo[i_2,j_2]
        \Delone[i_3,j_3] \Deltwo[i_3,j_3]
    \rt] +
    \E g(Y_v), \\
    \label{eq:general-ub-refined-exp-zv-expansion}
    \E \exp\lt(\f12 Z_v\rt)
    &=
    1 +
    \f12
    \sum_{i,j,k,\ell \in \downNv}
    \E \lt[
        \Delone[i,j]
        \Deltwo[j,k]
        \Delone[k,\ell]
        \Deltwo[\ell,i]
    \rt]
    +
    \E h\lt(\f12 Z_v\rt),
\end{align}
where $h(x) = \exp(x)-1-x$ as in Section~\ref{subsec:general-ub-main-exp-overlap} and $g(x) = \exp(x) -1 - x - \f12 x^2 - \f16 x^3$.
Note that each summand in the first order term of (\ref{eq:general-ub-refined-exp-yv-expansion}) has the form
\[
    \E
    \lt[
        \Delone[i,j] \Deltwo[i,j]
    \rt] 
    =
    d^{-2}
    \E
    \lt[
        \lt(\la \Xonei, \Xonej \ra - d\delta_{i,j}\rt)
        \lt(\la \Xtwoi, \Xtwoj \ra - d\delta_{i,j}\rt)
    \rt],
\]
which has already been bounded by combining Propositions~\ref{prop:general-ub-linear-term-unconditioning}, \ref{prop:general-ub-linear-term-ijedge}, and \ref{prop:general-ub-linear-term-ii} and Lemma~\ref{lem:general-ub-refined-linear-term}.
In the next two propositions, we will deterministically bound the contributions of the high degree terms $g(Y_v)$ and $h(\f12 Z_v)$ of (\ref{eq:general-ub-refined-exp-yv-expansion}) and (\ref{eq:general-ub-refined-exp-zv-expansion}) using the event $\Str$.
In the next section, we will complete the proof of Lemma~\ref{lem:general-ub-refined-coupled-exp-overlap-ubs} by using Gram-Schmidt couplings to bound the second and third order terms of (\ref{eq:general-ub-refined-exp-yv-expansion}) and the first order terms of (\ref{eq:general-ub-refined-exp-zv-expansion}).

\begin{proposition}
    \label{prop:general-ub-bound-gy}
    If $d \gg \max_{v\in [n]}\lt(\ddegv^2 + \ddegv \log^2 n\rt)$, then for sufficiently large $n$ it follows that
    \[
        \E_{(\Xonevl, \Xtwovl) \sim \cNvdips} g(Y_v)
        \le
        \f{8(\Ctr + 2)^4}{d^4}
        \lt(\ddegv^8 + \ddegv^4 \log^8 n\rt).
    \]
\end{proposition}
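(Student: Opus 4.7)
The plan is to follow the structure of Proposition~\ref{prop:general-ub-bound-hy} essentially verbatim, replacing the second-order Taylor residual $h(x) = e^x - 1 - x$ with the fourth-order residual $g(x) = e^x - 1 - x - \f{x^2}{2} - \f{x^3}{6} = \sum_{t\ge 4} \f{x^t}{t!}$. The key elementary properties of $g$ I will use are: it has nonnegative Taylor coefficients, so $g(x) \le g(|x|)$ for all $x\in \bR$ and $g$ is nondecreasing on $[0,\infty)$; and $g(x) \le x^4$ whenever $|x|\le 1$, since $g(x) \le \f{x^4}{4!} \sum_{t\ge 0} \f{1}{t!} = \f{e}{24} x^4$.

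Starting from $g(Y_v) \le g(|Y_v|)$ and the representation (\ref{eq:general-ub-redefine-yv}), the triangle inequality together with $2|ab| \le a^2 + b^2$ yields
\[
    |Y_v|
    \le
    \sum_{i,j\in \downNv} \lt|\Delone[i,j]\rt| \lt|\Deltwo[i,j]\rt|
    \le
    \f12 \Tr\lt(\lt(\Delonebn\rt)^2\rt) + \f12 \Tr\lt(\lt(\Deltwobn\rt)^2\rt).
\]
Since $(\Xonevl, \Xtwovl) \sim \cNvdips$ forces both replicas into $S \subseteq \Str$ almost surely, the defining bound (\ref{eq:general-ub-def-svtr}) of $\Str$ controls $\Tr\lt(\lt(\Delrbn\rt)^2\rt) \le \f{2\ddegv^2}{d} + \Ctr \f{\ddegv \log^2 n}{d}$ for each $r=1,2$. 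Absorbing constants exactly as in Proposition~\ref{prop:general-ub-bound-hy} then gives the deterministic bound $|Y_v| \le \f{\Ctr+2}{d}(\ddegv^2 + \ddegv \log^2 n)$.

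Under the hypothesis $d \gg \max_{v\in [n]}(\ddegv^2 + \ddegv \log^2 n)$, this bound is $o(1)$, hence at most $1$ for all sufficiently large $n$, so the estimate $g(|Y_v|) \le |Y_v|^4$ applies. Combining with the standard inequality $(a+b)^4 \le 8(a^4+b^4)$ then yields
\[
    g(Y_v)
    \le
    \lt(\f{\Ctr+2}{d}\rt)^4 \lt(\ddegv^2 + \ddegv \log^2 n\rt)^4
    \le
    \f{8(\Ctr+2)^4}{d^4} \lt(\ddegv^8 + \ddegv^4 \log^8 n\rt)
\]
as a pointwise bound under $\cNvdips$, and hence the same bound in expectation. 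I do not anticipate any real obstacle here: the argument is a direct adaptation of Proposition~\ref{prop:general-ub-bound-hy}, with the only substantive change being the upgrade of the tail estimate from $h(x) \le x^2$ to $g(x) \le x^4$, which precisely doubles each exponent appearing in the earlier bound and matches the right-hand side in the statement.
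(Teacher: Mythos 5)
Your proposal is correct and follows essentially the same route as the paper's proof: bound $g(Y_v)\le g(|Y_v|)$, use AM-GM and the deterministic trace control from $\Str$ to get $|Y_v|\le \f{\Ctr+2}{d}\lt(\ddegv^2+\ddegv\log^2 n\rt)$, invoke $g(x)\le x^4$ for $|x|\le 1$ under the dimension hypothesis, and finish with $(a+b)^4\le 8(a^4+b^4)$ (the paper phrases this last step as Jensen's inequality, but it is the same bound).
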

\begin{proof}
    Similarly to Proposition~\ref{prop:general-ub-bound-hy}, since $g(x) = \sum_{t=4}^\infty \f{1}{t!} x^t$ for all $x\in \bR$, it holds that $g(x) \le g(|x|)$ and $g$ is increasing on $[0, \infty)$.
    Let the below expectations be over $(\Xonevl, \Xtwovl) \sim \cNvdips$.
    The same applications of the triangle inequality, AM-GM and the conditioning of $\Str$ as in Proposition~\ref{prop:general-ub-bound-hy} imply that
    \begin{align*}
        \E g(Y_v)
        &\le
        \E g(|Y_v|)
        \le
        \E g\lt(
            \sum_{i,j\in \downNv}
            \lt|\Delone[i,j]\rt|
            \lt|\Deltwo[i,j]\rt|
        \rt) \\
        &\le
        \E g\lt(
            \f12 \sum_{i,j\in \downNv} \lt(\Delone[i,j]\rt)^2 +
            \f12 \sum_{i,j\in \downNv} \lt(\Deltwo[i,j]\rt)^2
        \rt) \\
        &=
        \E g\lt(
            \f12 \Tr\lt(\lt(\Delone[\downNv]\rt)^2\rt) +
            \f12 \Tr\lt(\lt(\Deltwo[\downNv]\rt)^2\rt)
        \rt) \\
        &\le
        g\lt(
            \f{(\Ctr + 2)}{d}
            \lt(\ddegv^2 + \ddegv \log^2 n\rt)
        \rt) \\
        &\le
        \lt(
            \f{(\Ctr + 2)}{d}
            \lt(\ddegv^2 + \ddegv \log^2 n\rt)
        \rt)^4 \\
        &\le
        \f{8(\Ctr + 2)^4}{d^4}
        \lt(\ddegv^8 + \ddegv^4 \log^8 n\rt).
    \end{align*}
    The second last inequality follows from the condition $d \gg \max_{v\in [n]}\lt(\ddegv^2 + \ddegv \log^2 n\rt)$ and the fact that $g(x) \le x^4$ for all $|x|\le 1$.
    The last inequality follows from Jensen's inequality, in the form $\lt(\f{x+y}{2}\rt)^4 \le \f{x^4+y^4}{2}$.
    This proves the proposition.
\end{proof}

\begin{proposition}
    \label{prop:general-ub-bound-hz}
    If $d \gg \max_{v\in [n]}\lt(\ddegv^2 + \ddegv \log^2 n\rt)$, then for sufficiently large $n$ it follows that
    \[
        \E_{(\Xonevl, \Xtwovl) \sim \cNvdips} h\lt(\f12 Z_v\rt)
        \le
        \f{2(\Ctr + 2)^4}{d^4}
        \lt(\ddegv^8 + \ddegv^4 \log^8 n\rt).
    \]
\end{proposition}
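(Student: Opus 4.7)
The plan is to mirror Proposition~\ref{prop:general-ub-bound-hy}, reducing to a deterministic upper bound on $|Z_v|$ that holds whenever $\Xonevl, \Xtwovl \in S \subseteq \Str$. Since $h(x) = \sum_{t\ge 2} x^t/t!$ satisfies $h(x) \le h(|x|)$ and is increasing on $[0,\infty)$, we have $h(\tfrac12 Z_v) \le h(\tfrac12 |Z_v|)$ pointwise, so it suffices to control $|Z_v|$ on $\Str$.

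The key difference from the $Y_v$ case is that, while $Y_v$ could be bounded above by the triangle inequality and AM-GM applied term by term, the quantity $Z_v = \Tr\lt((\Delonebn \Deltwobn)^2\rt)$ is a trace of a product of four symmetric matrices and need not be nonnegative. The cleanest route is to first establish $|\Tr(ABAB)| \le \Tr(A^2 B^2)$ for symmetric $A,B$ via Cauchy-Schwarz in the Hilbert-Schmidt inner product: by symmetry and cyclicity, $\Tr(ABAB) = \la BA, AB\ra_\HS$, so
\[
    |\Tr(ABAB)|
    \le \norm{BA}_\HS \norm{AB}_\HS
    = \Tr(A^2 B^2),
\]
where $\norm{AB}_\HS^2 = \Tr((AB)^\top AB) = \Tr(A^2 B^2)$ by cyclicity. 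A second application of Cauchy-Schwarz gives $\Tr(A^2 B^2) \le \sqrt{\Tr(A^4)\Tr(B^4)}$, and since the eigenvalues $\lambda_i$ of a symmetric matrix $A$ are real, $\Tr(A^4) = \sum_i \lambda_i^4 \le \lt(\sum_i \lambda_i^2\rt)^2 = \Tr(A^2)^2$. Chaining these yields the deterministic bound $|Z_v| \le \Tr\lt((\Delonebn)^2\rt) \cdot \Tr\lt((\Deltwobn)^2\rt)$.

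On the event $\Xonevl, \Xtwovl \in S \subseteq \Str$, the definition~(\ref{eq:general-ub-def-svtr}) of $\Str$ bounds each trace factor by $(2\ddegv^2 + \Ctr \ddegv \log^2 n)/d \le \tfrac{\Ctr+2}{d}\lt(\ddegv^2 + \ddegv \log^2 n\rt)$. Thus
\[
    \tfrac12 |Z_v|
    \le \f{(\Ctr+2)^2}{2d^2} \lt(\ddegv^2 + \ddegv \log^2 n\rt)^2
    \le \f{(\Ctr+2)^2}{d^2} \lt(\ddegv^4 + \ddegv^2 \log^4 n\rt),
\]
where the last step uses $(x+y)^2 \le 2(x^2 + y^2)$. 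The hypothesis $d \gg \max_{v\in[n]}\lt(\ddegv^2 + \ddegv \log^2 n\rt)$ makes this quantity at most $1$ for $n$ large, so the elementary estimate $h(t) \le t^2$ for $|t|\le 1$ gives $h(\tfrac12 Z_v) \le (\tfrac12 |Z_v|)^2$. Squaring the previous display and invoking $(x+y)^2 \le 2(x^2+y^2)$ once more yields the advertised bound $\f{2(\Ctr+2)^4}{d^4}\lt(\ddegv^8 + \ddegv^4 \log^8 n\rt)$.

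The main subtlety in navigating the matrix inequalities is to avoid passing through $\norm{\Delrbn}_\op$ (available from $\Sop$): the alternative bound $\Tr(A^4) \le \norm{A}_\op^2 \Tr(A^2)$ would contribute a spurious factor of order $\ddegv + \log n$ and produce $\ddegv^3$-type terms that exceed the target budget. Using only the $\Str$-bound together with $\Tr(A^4) \le \Tr(A^2)^2$ keeps $|Z_v|$ precisely at the scale $(\ddegv^2 + \ddegv \log^2 n)^2/d^2$, which squares to the desired scale.
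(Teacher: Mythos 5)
Your proof is correct and follows essentially the same approach as the paper: bound $\tfrac12|Z_v|$ deterministically on the event $\Xonevl,\Xtwovl\in S\subseteq \Str$ by $\f{(\Ctr+2)^2}{2d^2}\lt(\ddegv^2+\ddegv\log^2 n\rt)^2$, then apply $h(t)\le t^2$ for $|t|\le 1$ using the hypothesis on $d$. The only difference is how you reach the intermediate bound $|Z_v|\le \Tr\lt(\lt(\Delonebn\rt)^2\rt)\Tr\lt(\lt(\Deltwobn\rt)^2\rt)$ --- you use Hilbert-Schmidt Cauchy-Schwarz together with $\Tr(A^4)\le \Tr(A^2)^2$, whereas the paper expands $Z_v$ entrywise and uses the triangle inequality and AM-GM to get $\tfrac14\Tr\lt(\lt(\Delonebn\rt)^2\rt)^2+\tfrac14\Tr\lt(\lt(\Deltwobn\rt)^2\rt)^2$; both give the stated constant.
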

\begin{proof}
    Recall from the proof of Proposition~\ref{prop:general-ub-bound-hy} that $h(x) \le h(|x|)$ and $h$ is increasing on $[0,\infty)$.
    Let the below expectations be over $(\Xonevl, \Xtwovl) \sim \cNvdips$.
    The same applications of the triangle inequality, AM-GM and conditioning on $\Str$ imply that
    \begin{align*}
        \E h\lt(\f12 Z_v\rt)
        &\le
        \E h\lt(\f12 |Z_v|\rt)
        \le
        \E h\lt(
            \f12
            \sum_{i,j,k,\ell \in \downNv}
            \lt|\Delone[i,j]\rt|
            \lt|\Deltwo[j,k]\rt|
            \lt|\Delone[k,\ell]\rt|
            \lt|\Deltwo[\ell,i]\rt|
        \rt) \\
        &\le
        \E h\lt(
            \f14
            \sum_{i,j,k,\ell \in \downNv}
            \lt(\Delone[i,j]\rt)^2
            \lt(\Delone[k,\ell]\rt)^2
            +
            \f14
            \sum_{i,j,k,\ell \in \downNv}
            \lt(\Deltwo[j,k]\rt)^2
            \lt(\Deltwo[\ell,i]\rt)^2
        \rt) \\
        &=
        \E h\lt(
            \f14 \Tr \lt(\lt(\Delonebn\rt)^2 \rt)^2 +
            \f14 \Tr \lt(\lt(\Deltwobn\rt)^2 \rt)^2
        \rt) \\
        &\le
        h \lt(
            \f{(\Ctr + 2)^2}{2d^2}
            \lt(\ddegv^2 + \ddegv \log^2 n\rt)^2
        \rt) \\
        &\le
        \f{(\Ctr + 2)^4}{4d^4}
        \lt(\ddegv^2 + \ddegv \log^2 n\rt)^4 \\
        &\le
        \f{2(\Ctr + 2)^4}{d^4}
        \lt(\ddegv^8 + \ddegv^4 \log^8 n\rt).
    \end{align*}
    The second last inequality follows from the condition $d \gg \max_{v\in [n]}\lt(\ddegv^2 + \ddegv \log^2 n\rt)$ and the fact that $h(x) \le x^2$ for all $|x|\le 1$.
    The last inequality follows from Jensen's inequality, as in Proposition~\ref{prop:general-ub-bound-gy}.
    This proves the proposition.
\end{proof}

\subsection{Gram-Schmidt Coupling Bounds for Higher Order Terms}

It remains to bound the second and third order terms of (\ref{eq:general-ub-refined-exp-yv-expansion}) and the first order terms of (\ref{eq:general-ub-refined-exp-zv-expansion}).
We will bound the first order terms of (\ref{eq:general-ub-refined-exp-zv-expansion}) in terms of the second order terms of (\ref{eq:general-ub-refined-exp-yv-expansion}).
Then, we will bound he second and third order terms of (\ref{eq:general-ub-refined-exp-yv-expansion}) by a Gram-Schmidt argument, using couplings similar to $\cMvd(\cCgs[i,j])$ and $\cMvd(\cCgs[i])$ from Section~\ref{subsec:general-ub-main-exp-overlap}.

We begin with the following proposition, which controls the first order terms of (\ref{eq:general-ub-refined-exp-zv-expansion}) by the second order terms of (\ref{eq:general-ub-refined-exp-yv-expansion}).

\begin{proposition}
    \label{prop:general-ub-4cyc-exp-overlap-first-order-ub}
    The following inequality holds, over $(\Xonevl, \Xtwovl) \sim \cNvdips$.
    \[
        \sum_{i,j,k,\ell \in \downNv}
        \E
        \lt[
            \Delone[i,j]
            \Deltwo[j,k]
            \Delone[k,\ell]
            \Deltwo[\ell,i]
        \rt]
        \le
        \sum_{i,j,k,\ell \in \downNv}
        \E
        \lt[
            \Delone[i,j]
            \Deltwo[i,j]
            \Delone[k,\ell]
            \Deltwo[k,\ell]
        \rt].
    \]
\end{proposition}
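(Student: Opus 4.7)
The plan is to exploit the mixture structure of $\cNvdips$: conditional on the shared statistic $W = W_v(\Xonevl) = W_v(\Xtwovl)$, the two replicas are conditionally i.i.d. This reduces the inequality to a one-line Cauchy--Schwarz argument applied to a conditional covariance tensor, pointwise in $W$.

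First, I would rewrite $\cNvdips$ as a mixture. Sampling $(\Xonevl,\Xtwovl)\sim\cNvdips$ is equivalent to first sampling $W$ from its induced marginal and then sampling $\Xonevl$ and $\Xtwovl$ independently from $\cL(\gamma(W)\mid S)$, because the constraint $\la\Xonei,\Xonej\ra=\la\Xtwoi,\Xtwoj\ra$ for all $(i,j)\in E(G[v-1])$ is precisely the event $W_v(\Xonevl)=W_v(\Xtwovl)$. Hence, given $W$, the symmetric matrices $\Delonebn$ and $\Deltwobn$ are independent and identically distributed. Define the conditional covariance tensor
\[
K_{(i,j),(k,\ell)}(W) \;:=\; \E\lt[\Delone[i,j]\,\Delone[k,\ell]\mid W\rt] \;=\; \E\lt[\Deltwo[i,j]\,\Deltwo[k,\ell]\mid W\rt],
\]
where the second equality uses conditional identical distribution of the two replicas.

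Second, using conditional independence of $\Delonebn$ and $\Deltwobn$ given $W$, the two summands factor as
\begin{align*}
\E\lt[\Delone[i,j]\Deltwo[j,k]\Delone[k,\ell]\Deltwo[\ell,i]\mid W\rt] &= K_{(i,j),(k,\ell)}(W)\cdot K_{(j,k),(\ell,i)}(W), \\
\E\lt[\Delone[i,j]\Deltwo[i,j]\Delone[k,\ell]\Deltwo[k,\ell]\mid W\rt] &= K_{(i,j),(k,\ell)}(W)^{2}.
\end{align*}
Viewing $K=K(W)$ as a real matrix with rows and columns indexed by pairs in $\downNv\times\downNv$, let $\widetilde K_{(i,j),(k,\ell)} := K_{(j,k),(\ell,i)}$ denote the tensor obtained by cyclically permuting the indices $(i,j,k,\ell)\mapsto(j,k,\ell,i)$. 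Summing the conditional LHS over $(i,j,k,\ell)\in\downNv^{4}$ yields $\la K,\widetilde K\ra_F$, and since the cyclic permutation is a bijection of $\downNv^{4}$ we have $\|\widetilde K\|_F=\|K\|_F$. Cauchy--Schwarz for the Frobenius inner product then gives
\[
\la K,\widetilde K\ra_F \;\le\; \|K\|_F\,\|\widetilde K\|_F \;=\; \|K\|_F^{2} \;=\; \sum_{i,j,k,\ell}K_{(i,j),(k,\ell)}^{2},
\]
which is exactly the conditional RHS. Taking expectation over $W$ completes the proof.

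The only conceptual step is recognizing the conditional-i.i.d. structure of $\cNvdips$ given $W$; after that, the inequality reduces to one application of Cauchy--Schwarz together with the index relabeling $(i,j,k,\ell)\mapsto(j,k,\ell,i)$. I do not expect any substantive obstacle, and in particular no tail truncation or stronger-coupling machinery is needed for this proposition.
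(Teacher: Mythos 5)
Your proof is correct and follows essentially the same route as the paper: both exploit that, conditionally on $W$, the two replicas in $\cNvdips$ are i.i.d., so the conditional expectations factor, and both then combine the resulting products via the cyclic index relabeling $(i,j,k,\ell)\mapsto(j,k,\ell,i)$. The only cosmetic difference is that the paper applies AM-GM termwise and sums (using that the relabeled sum equals the original), whereas you package the same step as Cauchy--Schwarz for the Frobenius inner product of the conditional covariance tensor.
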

\begin{proof}
    Recall from the discussion before Definition~\ref{defn:general-ub-ip-coupling} that if $(\Xonevl, \Xtwovl) \sim \cNvdips$, then we can sample $(\Xonevl, \Xtwovl)$ by sampling $W\sim \muvls$ and then sampling $\Xrvl$, for $r=1,2$, i.i.d. from $\gamma(W)$ conditioned on $\Xrvl \in S$.
    By AM-GM, each term in this sum can be bounded by
    \begin{align*}
        &\E_{(\Xonevl, \Xtwovl) \sim \cNvdips}
        \lt[
            \Delone[i,j]
            \Deltwo[j,k]
            \Delone[k,\ell]
            \Deltwo[\ell,i]
        \rt] 
        =
        \E_{W\sim \muvls}
        \lt(
            \E_{\substack{
                \Xonevl \sim \gamma(W) \\
                \Xonevl \in S
            }}
            \Delone[i,j]
            \Delone[k,\ell]
        \rt)
        \lt(
            \E_{\substack{
                \Xtwovl \sim \gamma(W) \\
                \Xtwovl \in S
            }}
            \Deltwo[j,k]
            \Deltwo[\ell,i]
        \rt) \\
        &\qquad \le
        \f12
        \E_{W\sim \muvls}
        \lt(
            \E_{\substack{
                \Xonevl \sim \gamma(W) \\
                \Xonevl \in S
            }}
            \Delone[i,j]
            \Delone[k,\ell]
        \rt)^2 +
        \f12
        \E_{W\sim \muvls}
        \lt(
            \E_{\substack{
                \Xtwovl \sim \gamma(W) \\
                \Xtwovl \in S
            }}
            \Deltwo[j,k]
            \Deltwo[\ell,i]
        \rt)^2 \\
        &\qquad =
        \f12
        \E_{(\Xonevl, \Xtwovl) \sim \cNvdips}
        \lt[
            \Delone[i,j]
            \Deltwo[i,j]
            \Delone[k,\ell]
            \Deltwo[k,\ell]
        \rt] +
        \f12
        \E_{(\Xonevl, \Xtwovl) \sim \cNvdips}
        \lt[
            \Delone[j,k]
            \Deltwo[j,k]
            \Delone[\ell,i]
            \Deltwo[\ell,i]
        \rt].
    \end{align*}
    Summing over $i,j,k,\ell \in \downNv$ proves the proposition.
\end{proof}

It now suffices to bound the second and third order terms of (\ref{eq:general-ub-refined-exp-yv-expansion}).
Recall that $(\Xonevl, \Xtwovl) \sim \cNvdipr$ is sampled by sampling $\Xonevl \sim \cN(0, I_d)^{\otimes (v-1)}$ and independently sampling $\Xtwovl \sim \cN(0, I_d)^{\otimes (v-1)}$ conditioned on (\ref{eq:general-ub-ip-coupling-condition}).
We begin with a technical lemma controlling constant moments of the the $\Delr[i,j]$.

\begin{lemma}
    \label{lem:general-ub-delr-large-moments}
    Let $T\ge 1$ be a positive integer.
    Let $\cM$ be any coupling of $(\Xonevl, \Xtwovl)$ wherein $\Xonevl, \Xtwovl$ are each marginally distribued as $\cN(0, I_d)^{\otimes (v-1)}$.
    For any $r=1,2$ and $i,j\in [v-1]$, we have that
    \[
        \E_{(\Xonevl, \Xtwovl) \sim \cM} \lt[\lt(\Delr[i,j]\rt)^{2T}\rt]
        \le 
        2^T (2T)^{2T} d^{-T}.
    \]
\end{lemma}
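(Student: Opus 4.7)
The plan is to observe that the coupling $\cM$ is irrelevant: since $(\Delr[i,j])^{2T}$ is a function of $\Xrvl$ alone, and the marginal of $\Xrvl$ under $\cM$ is $\cN(0,I_d)^{\otimes(v-1)}$, we may assume throughout that $\Xrvl$ has i.i.d.\ standard Gaussian entries. In this situation $\Delr[i,j]$ is a mean-zero polynomial of degree exactly two in i.i.d.\ standard Gaussians, so the bound will follow from Gaussian hypercontractivity.

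I would split into two cases. If $i \ne j$, then $\Delr[i,j] = d^{-1}\la \Xri, \Xrj\ra$, a degree-$2$ polynomial in $2d$ i.i.d.\ standard Gaussians, with variance $\E[(\Delr[i,j])^2] = d^{-2}\E[\la \Xri,\Xrj\ra^2] = 1/d$. If $i = j$, then $\Delr[i,i] = d^{-1}(\norm{\Xri}_2^2 - d) = d^{-1}(\chisq(d) - d)$, a degree-$2$ polynomial in $d$ i.i.d.\ standard Gaussians, with variance $\E[(\Delr[i,i])^2] = d^{-2}\cdot 2d = 2/d$.

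In both cases I would apply the Gaussian hypercontractivity inequality, which says that for any mean-zero polynomial $f$ of degree at most $k$ in i.i.d.\ standard Gaussians and any $q \ge 2$,
\[
    \E[f^q]^{1/q} \le (q-1)^{k/2}\, \E[f^2]^{1/2}.
\]
Taking $q = 2T$ and $k = 2$ gives $\E[f^{2T}] \le (2T-1)^{2T}(\E f^2)^T$. Plugging in the variance computations yields $\E[(\Delr[i,j])^{2T}] \le (2T-1)^{2T} d^{-T}$ when $i\ne j$ and $\E[(\Delr[i,i])^{2T}] \le 2^T(2T-1)^{2T} d^{-T}$ when $i = j$. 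Bounding $(2T-1)^{2T}\le (2T)^{2T}$ and absorbing the factor $2^T$ in both cases yields the stated bound $2^T(2T)^{2T} d^{-T}$.

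There is no real obstacle here; the only modeling choice is whether to cite the moment form of hypercontractivity directly or to derive it from the tail bound of Lemma~\ref{lem:gaussian-hypercontractivity-tails} via $\E[f^{2T}] = \int_0^\infty 2T t^{2T-1}\P[|f|>t]\,dt$, which yields the same scaling $\E[f^{2T}] \lesssim_T (\E f^2)^T$ up to a $T$-dependent constant that is easily seen to be absorbed into the $(2T)^{2T}$ factor. Either route gives a short, purely computational proof.
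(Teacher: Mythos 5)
Your proposal is correct and matches the paper's proof: the paper likewise notes that only the marginal of one replica matters, computes $\E[(\Delr[i,j])^2] = d^{-1}$ for $i\neq j$ and $2d^{-1}$ for $i=j$, and applies the moment form of Gaussian hypercontractivity to get $\E[(\Delr[i,j])^{2T}] \le (2T-1)^{2T}(\E[(\Delr[i,j])^2])^T \le 2^T(2T)^{2T}d^{-T}$. The only cosmetic difference is that you spell out the two cases separately and mention the alternative derivation via the tail bound, neither of which changes the argument.
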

\begin{proof}
    Throughout this proof, let $(\Xonevl, \Xtwovl) \sim \cM$, so each of $\Xonevl, \Xtwovl$ is marginally distributed as $\cN(0, I_d)^{\otimes (v-1)}$.
    So, $\Delr[i,j]$ is a degree 2 polynomnial of i.i.d. standard Gaussians.
    By standard Gaussian moment computations, we have that
    \[
        \E \lt[\lt(\Delr[i,j]\rt)^{2}\rt]
        = 
        \begin{cases}
            d^{-1} & i\neq j, \\
            2d^{-1} & i=j.
        \end{cases}
    \]
    By Gaussian hypercontractivity,
    \[
        \E \lt[\lt(\Delr[i,j]\rt)^{2T}\rt]
        \le 
        (2T-1)^{2T}
        \lt(\E \lt[\lt(\Delr[i,j]\rt)^{2}\rt]\rt)^T
        \le 
        2^T (2T)^{2T} d^{-T}.
    \]
    
\end{proof}

Next we show, analogously to Proposition~\ref{prop:general-ub-linear-term-unconditioning}, that it suffices to bound the second and third order terms of (\ref{eq:general-ub-refined-exp-yv-expansion}) without conditioning on $\Xonevl, \Xtwovl \in S$.

\begin{proposition}
    \label{prop:general-ub-higher-term-unconditioning}
    Let $T \ge 2$ be a positive integer.
    Suppose that $d \gg \max_{v\in [n]} \ddegv + \log n$ and $n$ is sufficiently large.
    For all $v\in [n]$ and $i_1,j_1,i_2,j_2,\ldots, i_T,j_T\in \downNv$, we have that
    \begin{align*}
        &\Bigg|
            \E_{(\Xonevl, \Xtwovl) \sim \cNvdips}
            \lt[
                \prod_{t=1}^T
                \Delone[i_t,j_t]
                \Deltwo[i_t,j_t]
            \rt] \\
            &\qquad
            -
            \E_{(\Xonevl, \Xtwovl) \sim \cNvdipr}
            \lt[
                \prod_{t=1}^T
                \Delone[i_t,j_t]
                \Deltwo[i_t,j_t]
            \rt]
        \Bigg|
        \le
        10\cdot 2^T (4T)^{2T} d^{-T} n^{-10}.
    \end{align*}
\end{proposition}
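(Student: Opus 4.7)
The plan is to mirror the argument of Proposition~\ref{prop:general-ub-linear-term-unconditioning}: we apply Lemma~\ref{lem:unconditioning-by-second-moment} to the random variable $Z = \prod_{t=1}^T \Delone[i_t,j_t]\Deltwo[i_t,j_t]$, viewed as a function of $(\Xonevl, \Xtwovl) \sim \cNvdipr$, with conditioning event $A = \{\Xonevl, \Xtwovl \in S\}$. As in the earlier proposition, conditioning $\cNvdipr$ on $A$ recovers $\cNvdips$, so the output of the lemma is precisely $|\E_{\cNvdipr}[Z] - \E_{\cNvdips}[Z]|$. Two ingredients remain to be estimated: the probability $\P(A^c)$ and the second moment $\E_{\cNvdipr}[Z^2]$.

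The probability bound is immediate. Under $\cNvdipr$, each of $\Xonevl$ and $\Xtwovl$ is marginally distributed as $\cN(0, I_d)^{\otimes (v-1)}$, so by Proposition~\ref{prop:general-ub-s-hp} and the assumption $d \gg \max_{v\in[n]} \ddegv + \log n$, for $n$ sufficiently large we have $\P(S^c) \le 3n^{-20}$ marginally, and a union bound gives $\P(A^c) \le 2\P(S^c) \le 6n^{-20}$ and $\P(A) \ge 1/2$.

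The main computation is the second moment. Write $Z^2 = \prod_{t=1}^T (\Delone[i_t,j_t])^2 (\Deltwo[i_t,j_t])^2$ as a product of $T$ nonnegative factors and apply the generalized H\"older inequality with each exponent equal to $T$:
$$\E_{\cNvdipr}[Z^2] \le \prod_{t=1}^T \E_{\cNvdipr}\lt[(\Delone[i_t,j_t])^{2T} (\Deltwo[i_t,j_t])^{2T}\rt]^{1/T}.$$
Apply Cauchy-Schwarz inside each factor to separate the two replicas, then invoke Lemma~\ref{lem:general-ub-delr-large-moments} with $T$ replaced by $2T$; the hypothesis is satisfied since each replica is marginally standard Gaussian under $\cNvdipr$. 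This yields $\E_{\cNvdipr}\lt[(\Delr[i_t,j_t])^{4T}\rt] \le 2^{2T}(4T)^{4T}d^{-2T}$ for $r=1,2$, so $\E_{\cNvdipr}[Z^2] \le 2^{2T}(4T)^{4T}d^{-2T}$ and hence $\E_{\cNvdipr}[Z^2]^{1/2} \le 2^T (4T)^{2T} d^{-T}$.

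Substituting into Lemma~\ref{lem:unconditioning-by-second-moment} gives
$$|\E_{\cNvdipr}[Z] - \E_{\cNvdips}[Z]| \le \frac{2\sqrt{6n^{-20}} \cdot 2^T(4T)^{2T}d^{-T}}{1/2} = 4\sqrt{6} \cdot 2^T(4T)^{2T}d^{-T}n^{-10},$$
which is at most $10 \cdot 2^T(4T)^{2T}d^{-T}n^{-10}$ since $4\sqrt{6} < 10$. There is no serious obstacle; the argument is a direct $T$-fold generalization of Proposition~\ref{prop:general-ub-linear-term-unconditioning}, and the only mildly delicate point is choosing the H\"older exponent equal to $T$ so that the $d^{-T}$ scaling in the hypercontractivity bound of Lemma~\ref{lem:general-ub-delr-large-moments} survives after taking the square root required by Lemma~\ref{lem:unconditioning-by-second-moment}.
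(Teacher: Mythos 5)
Your proposal is correct and takes essentially the same route as the paper: it conditions via Lemma~\ref{lem:unconditioning-by-second-moment} with $A=\{\Xonevl,\Xtwovl\in S\}$, bounds $\P(A^c)\le 6n^{-20}$ using Proposition~\ref{prop:general-ub-s-hp} and a union bound, and bounds the second moment by $2^{2T}(4T)^{4T}d^{-2T}$ through Lemma~\ref{lem:general-ub-delr-large-moments} with $2T$ in place of $T$, which plugs into the lemma to give $4\sqrt{6}\cdot 2^T(4T)^{2T}d^{-T}n^{-10}\le 10\cdot 2^T(4T)^{2T}d^{-T}n^{-10}$. The only cosmetic difference is the intermediate step for the second moment: the paper applies AM-GM directly to the $2T$ nonnegative factors $\lt(\Delr[i_t,j_t]\rt)^2$, i.e. $\prod_k a_k \le \f{1}{2T}\sum_k a_k^{2T}$, whereas you use generalized H\"older with exponent $T$ followed by Cauchy--Schwarz, and both yield the identical bound.
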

\begin{proof}
    Throughout this proof, let $(\Xonevl, \Xtwovl) \sim \cNvdipr$.
    By AM-GM and Lemma~\ref{lem:general-ub-delr-large-moments},
    \[
        \E \lt[\lt(
            \prod_{t=1}^T
            \Delone[i_t,j_t]
            \Deltwo[i_t,j_t]
        \rt)^2\rt]
        \le
        \f{1}{2T}
        \sum_{r=1}^r
        \sum_{t=1}^T
        \E \lt[\lt(\Delr[i_t,j_t]\rt)^{4T}\rt]
        \le
        2^{2T} (4T)^{4T} d^{-2T}.
    \]
    By Proposition~\ref{prop:general-ub-s-hp}, $\P\lt((S^v)^c\rt) \le 2n^{-20} + e^{-n/2} \le 3n^{-20}$ for all sufficiently large $n$.
    When this holds, by a union bound
    \[
        \P \lt[
            \lt(\Xonevl, \Xtwovl \in S^v\rt)^c
        \rt]
        \le 6n^{-20}.
    \]
    Moreover, for sufficiently large $n$, $\P\lt[\Xonevl, \Xtwovl \in S\rt] \ge \f12$.
    By Lemma~\ref{lem:unconditioning-by-second-moment}, the expectation difference in this proposition is bounded by
    \[
        \f{2(6n^{-20})^{1/2}\lt(2^{2T} (4T)^{4T} d^{-2T}\rt)^{1/2}}{1/2}
        \le
        10 \cdot 2^T (4T)^{2T} d^{-T} n^{-10}.
    \]
\end{proof}

We now will use Gram-Schmidt couplings to bound the expectations of the second and third order terms of (\ref{eq:general-ub-refined-exp-yv-expansion}) over $(\Xonevl, \Xtwovl) \sim \cNvdipr$.
All couplings in this section will be of the form $\cMvd(\cC)$ for some collection of constraints $\cC$ as in Definition~\ref{defn:general-ub-modified-couplings}.
The core of our argument is the following lemma.
In the proof of Lemma~\ref{lem:general-ub-refined-coupled-exp-overlap-ubs}, we will apply this lemma with $T=2,3$.

\begin{lemma}[Bounds on Higher Order Terms]
    \label{lem:general-ub-higher-order-gs}
    Let $T\ge 2$ and $i_1,j_1,i_2,j_2,\ldots,i_T,j_T \in \downNv$ be such that $d \ge \degvl(i_1) + 2T$ (and $i_1,j_1,i_2,j_2,\ldots,i_T,j_T$ are not necessarily distinct).
    Then, it holds that
    \begin{align*}
        &\E_{(\Xonevl, \Xtwovl) \sim \cNvdipr}
        \lt[
            \prod_{t=1}^T
            \Delone[i_t,j_t]\Deltwo[i_t,j_t]
        \rt] \\
        &\qquad \le
        2^T (4T)^{2T}
        \cdot
        \begin{cases}
            d^{-T-1} (\degvl(i_1) + 2T) & \text{if $i_1\not\in \{i_2,j_2,\ldots,i_T,j_T\}$ and $(i_1,j_1)\not\in E(G[v-1])$}, \\
            d^{-T} & \text{otherwise}.
        \end{cases}
    \end{align*}
\end{lemma}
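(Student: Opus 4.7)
The plan is to split into two cases based on the hypotheses.

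In the ``otherwise'' case, where $i_1 \in \{i_2, j_2, \ldots, i_T, j_T\}$ or $(i_1, j_1) \in E(G[v-1])$, I would not invoke the coupling structure at all. Starting from the pointwise identity $\bigl|\prod_t \Delone[i_t,j_t]\Deltwo[i_t,j_t]\bigr| = \prod_t |\Delone[i_t,j_t]| \cdot |\Deltwo[i_t,j_t]|$, I apply Cauchy--Schwarz to separate the superscript-$(1)$ and superscript-$(2)$ factors, and then within each $\sqrt{\E[\prod_t (\Delr[i_t, j_t])^2]}$ the AM--GM inequality $\prod_t x_t \le \f{1}{T}\sum_t x_t^T$ reduces matters to Lemma~\ref{lem:general-ub-delr-large-moments}. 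This produces the bound $2^T (2T)^{2T} d^{-T}$, which fits inside $2^T(4T)^{2T} d^{-T}$.

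For the sharp case, my plan is to first apply Lemma~\ref{lem:general-ub-stronger-couplings} to replace $\cNvdipr$ with the Gram--Schmidt coupling $\cMvd(\cCgs[i_1, j_1])$ of Proposition~\ref{prop:general-ub-linear-term-ijnotedge} (when $i_1 \ne j_1$, with $V = \Nl(i_1) \cup \{i_1, j_1\}$) or $\cMvd(\cCgs[i_1])$ of Proposition~\ref{prop:general-ub-linear-term-ii} (when $i_1 = j_1$, with $V = \Nl(i_1) \cup \{i_1\}$). A verification analogous to the one in Proposition~\ref{prop:general-ub-linear-term-ijnotedge} shows this is a valid stronger coupling regardless of whether $\degvl(i_1) \le \degvl(j_1)$. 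The key structural property is that $\Xonek = \Xtwok$ for every $k \ne i_1$; combined with the hypothesis $i_1 \notin \{i_2, j_2, \ldots, i_T, j_T\}$, this forces $\Delone[i_t, j_t] = \Deltwo[i_t, j_t]$ for all $t \ge 2$, so the product collapses to $\Delone[i_1,j_1]\Deltwo[i_1,j_1] \cdot \prod_{t=2}^T (\Delone[i_t, j_t])^2$.

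Next I would condition on the $\sigma$-algebra $\cF$ generated by all the Gram--Schmidt data except the free pair of coefficients ($\Wone[m,m-1], \Wtwo[m,m-1]$ when $i_1 \ne j_1$, or $\Wone[m,m], \Wtwo[m,m]$ when $i_1 = j_1$, with $m = |V|$). The factor $\prod_{t=2}^T (\Delone[i_t,j_t])^2$ is $\cF$-measurable, while $\Delone[i_1,j_1]$ and $\Deltwo[i_1,j_1]$ are conditionally independent given $\cF$ with a common $\cF$-measurable mean $\alpha$, so the expectation of interest equals $\E[\alpha^2 \prod_{t=2}^T (\Delone[i_t, j_t])^2]$. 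Applying Cauchy--Schwarz, together with a direct fourth-moment computation giving $\E[\alpha^4] = O(\degvl(i_1)^2 d^{-4})$ (via a bilinear Gaussian form when $i_1 \ne j_1$, or a centered chi-squared when $i_1 = j_1$) and AM--GM plus Lemma~\ref{lem:general-ub-delr-large-moments} at parameter $2(T-1)$ giving $\E[\prod_{t=2}^T (\Delone[i_t, j_t])^4] \le 2^{2(T-1)}(4(T-1))^{4(T-1)} d^{-2(T-1)}$, multiplies to an $O(\degvl(i_1) d^{-T-1})$ bound that comfortably fits inside $2^T(4T)^{2T}(\degvl(i_1) + 2T) d^{-T-1}$. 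The additive $2T$ in the stated bound is slack absorbing the universal constants and covering the degenerate $\degvl(i_1) = 0$ case automatically.

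The main obstacle is the conditional-mean step: verifying that $\Delone[i_1,j_1]$ and $\Deltwo[i_1,j_1]$ share the same $\cF$-measurable conditional mean $\alpha$ requires carefully tracking which Gram--Schmidt entries are pinned together under $\cCgs[i_1,j_1]$ (or $\cCgs[i_1]$) and which genuinely remain independent across the two replicas. Everything else in the sharp case is routine moment arithmetic once this structural fact is in hand.
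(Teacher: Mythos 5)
Your proposal is correct and follows essentially the same route as the paper: replace $\cNvdipr$ by the Gram--Schmidt coupling that leaves only the $(m,m-1)$ (resp.\ $(m,m)$) coefficient free in each replica, detach the $(i_1,j_1)$ factor via Cauchy--Schwarz, bound its contribution by the fourth moment of the partial sum of pinned coefficients (of order $m^2d^{-4}$ with $m \lesssim \degvl(i_1)+2T$), and control the remaining factors by AM--GM together with Lemma~\ref{lem:general-ub-delr-large-moments}; the handling of the ``otherwise'' case is likewise the paper's argument up to an extra Cauchy--Schwarz. The only differences are cosmetic -- you run Gram--Schmidt on the smaller set $\Nl(i_1)\cup\{i_1,j_1\}$ rather than also adjoining $i_2,j_2,\ldots,i_T,j_T$ as the paper does, and you phrase the detachment through conditional independence given $\cF$ together with the collapse $\Delone[i_t,j_t]=\Deltwo[i_t,j_t]$ for $t\ge 2$, neither of which changes the quantitative content or the validity of the coupling.
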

\begin{proof}
    This proof argues over many couplings of $(\Xonevl, \Xtwovl)$ where $\Xonevl, \Xtwovl$ are each marginally distributed as $\cN(0, I_d)^{\otimes (v-1)}$.
    To keep track of the couplings, we adopt the following notational convention when we write expectations.
    Every expectation over a coupling $\cM$ of $(\Xonevl, \Xtwovl)$ will be written explicitly as $\E_{(\Xonevl, \Xtwovl) \sim \cM}$, with the coupling $\cM$ indicated.
    We will write $\E$ without subscript only when the argument of the expectation is a function of only one of $\Xonevl$ and $\Xtwovl$.

    By AM-GM and Lemma~\ref{lem:general-ub-delr-large-moments}, for all sequences $i$ and $j$ we have
    \[
        \E_{(\Xonevl, \Xtwovl) \sim \cNvdipr}
        \lt[
            \prod_{t=1}^T
            \Delone[i_t,j_t]
            \Deltwo[i_t,j_t]
        \rt]
        \le
        \f{1}{2T}
        \sum_{r=1}^2
        \sum_{t=1}^T
        \E \lt[\lt(
            \Delr[i_t,j_t]
        \rt)^{2T}\rt]
        \le 
        2^T (4T)^{2T} d^{-T}.
    \]

    It remains to show the first bound in the lemma.
    Suppose that $i_1 \not \in \{i_2,j_2,\ldots,i_T,j_T\}$ and $(i_1,j_1)\not \in E(G[v-1])$.
    We now divide into two main cases.

    \noindent {\em Case 1. $i_1 \neq j_1$.}

    Let $V = \Nl(i_1) \cup \{i_1,j_1,i_2,j_2,\ldots,i_T,j_T\}$ and $m=|V|$.
    Note that $m \le \degvl(i_1) + 2T$.
    Fix a bijection $\pi : [m] \to V$ such that $\pi(m-1) = j_1$ and $\pi(m) = i_1$.
    Now consider the coupling $\cMvd(\cC)$ arising from the collection of constraints $\cC$ given as follows:
    \begin{align*}
        \Xonek &= \Xtwok
        \quad
        \text{if $k\in [v-1] \setminus V$,} \\
        \Uone &= \Utwo, \\
        \Wone[k,\ell] &= \Wtwo[k,\ell]
        \quad
        \text{for all $1\le \ell\le k\le m$ such that $(k,\ell)\neq (m, m-1)$} \\
        &\text{where $\lt(\Wr, \Ur\rt) = \GS\lt(\Xr[\pi(1)], \ldots, \Xr[\pi(m)]\rt)$ for $r=1,2$.}
    \end{align*}
    The condition $d \ge \degvl(i_1) + 2T$ ensures that the Gram-Schmidt procedure above is well-defined.
    The same argument as in Proposition~\ref{prop:general-ub-linear-term-ijnotedge} shows that $\cC$ implies that $\la \Xonek, \Xonel \ra = \la \Xtwok, \Xtwol \ra$ for all $k,\ell \in E(G[v-1])$.
    We have by Lemma~\ref{lem:general-ub-stronger-couplings} that
    \begin{equation}
        \label{eq:general-ub-higher-order-coupling-ineq}
        \E_{(\Xonevl, \Xtwovl) \sim \cNvdipr}
        \lt[
            \prod_{t=1}^T
            \Delone[i_t,j_t]
            \Deltwo[i_t,j_t]
        \rt]
        \le
        \E_{(\Xonevl, \Xtwovl) \sim \cMvd(\cC)}
        \lt[
            \prod_{t=1}^T
            \Delone[i_t,j_t]
            \Deltwo[i_t,j_t]
        \rt].
    \end{equation}
    Throughout the rest of this case, let $\Wone, \Wtwo$ be as in the coupling $\cMvd(\cC)$.
    As in Proposition~\ref{prop:general-ub-linear-term-ijnotedge}, we have that the following random variables are independent.
    \begin{itemize}
        \item $\Wone[k,k] \sim \sqrt{\chisq(d+1-k)}$ for $1\le k\le m$;
        \item $\Wone[k,\ell] \sim \cN(0,1)$ for $1\le \ell < k\le m$; and
        \item $\Wtwo[m,m-1] \sim \cN(0,1)$.
    \end{itemize}
    Note that $\prod_{t=2}^T \Delone[i_t,j_t] \Deltwo[i_t,j_t]$ is independent of $\Wone[m,m-1]$ and $\Wtwo[m,m-1]$.
    By Cauchy-Schwarz and AM-GM,
    \begin{align}
        \notag
        &\E_{(\Xonevl, \Xtwovl) \sim \cMvd(\cC)}
        \lt[
            \prod_{t=1}^T
            \Delone[i_t,j_t]
            \Deltwo[i_t,j_t]
        \rt] \\
        \notag
        &\qquad =
        \E_{(\Xonevl, \Xtwovl) \sim \cMvd(\cC)}
        \lt[
            \lt(
                \E_{\Wone[m,m-1], \Wtwo[m, m-1]}
                \Delone[i_1,j_1]
                \Deltwo[i_1,j_1]
            \rt)
            \prod_{t=2}^T
            \Delone[i_t,j_t]
            \Deltwo[i_t,j_t]
        \rt] \\
        \notag
        &\qquad \le
        \lt[
            \E_{(\Xonevl, \Xtwovl) \sim \cMvd(\cC)}
            \lt(
                \E_{\Wone[m,m-1], \Wtwo[m, m-1]}
                \Delone[i_1,j_1]
                \Deltwo[i_1,j_1]
            \rt)^2
        \rt]^{1/2} \\
        \label{eq:general-ub-higher-order-isolate-nonedge-by-cs}
        &\qquad \qquad \times
        \lt[
            \E_{(\Xonevl, \Xtwovl) \sim \cMvd(\cC)}
            \lt(
                \prod_{t=2}^T
                \Delone[i_t,j_t]
                \Deltwo[i_t,j_t]
            \rt)^2
        \rt]^{1/2}.
    \end{align}
    We will bound these last two expectations.
    By AM-GM and Lemma~\ref{lem:general-ub-delr-large-moments},
    \begin{align}
        \notag
        \E_{(\Xonevl, \Xtwovl) \sim \cMvd(\cC)}
        \lt(
            \prod_{t=2}^T
            \Delone[i_t,j_t]
            \Deltwo[i_t,j_t]
        \rt)^2
        &\le 
        \f{1}{2T-2}
        \sum_{r=1}^2
        \sum_{t=2}^{T}
        \E \lt[\lt(\Delr[i_t,j_t]\rt)^{4T-4}\rt] \\
        &\le 
        \label{eq:general-ub-higher-order-gaussian-hypercontractivity}
        2^{2T-2}
        (4T)^{4T-4}
        d^{-(2T-2)}.
    \end{align}
    By expanding $\Xr[i_1]$ and $\Xr[j_1]$, for $r=1,2$ in terms of the columns of $\Ur$ and entries of $\Wr$, we get
    \begin{align*}
        \Delone[i_1,j_1]
        \Deltwo[i_1,j_1]
        &=
        d^{-2}
        \la \Xone[i_1], \Xone[j_1] \ra
        \la \Xtwo[i_1], \Xtwo[j_1] \ra \\
        &=
        d^{-2}
        \lt\la
            \sum_{a=1}^{m}   \Wone[m,a]  \Uone[a],
            \sum_{a=1}^{m-1} \Wone[m-1,a]\Uone[a]
        \rt\ra
        \lt\la
            \sum_{a=1}^{m}   \Wtwo[m,a]  \Utwo[a],
            \sum_{a=1}^{m-1} \Wtwo[m-1,a]\Utwo[a]
        \rt\ra \\
        &=
        d^{-2}
        \lt(\sum_{a=1}^{m-1} \Wone[m-1,a] \Wone[m,a] \rt)
        \lt(\sum_{a=1}^{m-1} \Wtwo[m-1,a] \Wtwo[m,a] \rt) \\
        &=
        d^{-2}
        \lt(
            \Wone[m-1, m-1] \Wone[m, m-1] +
            \sum_{a=1}^{m-2} \Wone[m-1,a] \Wone[m,a]
        \rt) \\
        &\qquad
        \times \lt(
            \Wone[m-1, m-1] \Wtwo[m, m-1] +
            \sum_{a=1}^{m-2} \Wone[m-1,a] \Wone[m,a]
        \rt),
    \end{align*}
    using the constraints in $\cC$.
    Therefore,
    \[
        \E_{\Wone[m,m-1], \Wtwo[m, m-1]}
        \Delone[i_1,j_1]
        \Deltwo[i_1,j_1]
        =
        d^{-2}
        \lt(
            \sum_{a=1}^{m-2} \Wone[m-1,a] \Wone[m,a]
        \rt)^2.
    \]
    By standard computations with Gaussian moments,
    \begin{align*}
        \E_{(\Xonevl, \Xtwovl) \sim \cMvd(\cC)} \lt[\lt(
            \E_{\Wone[m,m-1], \Wtwo[m, m-1]}
            \Delone[i_1,j_1]
            \Deltwo[i_1,j_1]
        \rt)^2\rt]
        &=
        d^{-4}
        \E \lt[\lt(
            \sum_{a=1}^{m-2} \Wone[m-1,a] \Wone[m,a]
        \rt)^4\rt] \\
        &=
        3m(m-2)d^{-4}
        \le
        4m^2 d^{-4}.
    \end{align*}
    Therefore,
    \begin{align*}
        \E_{(\Xonevl, \Xtwovl) \sim \cMvd(\cC)}
        \lt[
            \prod_{t=1}^T
            \Delone[i_t,j_t]
            \Deltwo[i_t,j_t]
        \rt]
        &\le
        \lt(4m^2 d^{-4}\rt)^{1/2}
        \lt(2^{2T-2} (4T)^{4T-4} d^{-(2T-2)}\rt)^{1/2} \\
        &=
        2^T (4T)^{2T-2} md^{-T-1}
        \le
        2^T (4T)^{2T} d^{-T-1} (\degvl(i_1) + 2T).
    \end{align*}
    In the final inequality, we use the bound $m \le \degvl(i_1) + 2T$.

    \noindent {\em Case 2. $i_1 = j_1$.}

    This case will be handled through a similar argument as that used in Case 1, the main difference being that we will leave the $(m,m)$ Gram-Schmidt entry free, instead of the $(m,m-1)$ entry.
    In other words, this case is to the previous case as Proposition~\ref{prop:general-ub-linear-term-ii} is to Proposition~\ref{prop:general-ub-linear-term-ijnotedge}.
    
    Let $V = \Nl(i_1) \cup \{i_1,j_1,i_2,j_2, \ldots, i_T, j_T\}$ let $m=|V|$.
    Note that $i_1=j_1$ implies $m \le \degvl(i_1) + 2T-1$.
    Fix a bijection $\pi : [m] \to V$ such that $\pi(m) = i_1$.
    Now consider the coupling $\cMvd(\cC)$ arising from the collection of constraints $\cC$ given as follows:
    \begin{align*}
        \Xonek &= \Xtwok
        \quad
        \text{if $k\in [v-1] \setminus V$,} \\
        \Uone &= \Utwo, \\
        \Wone[k,\ell] &= \Wtwo[k,\ell]
        \quad
        \text{for all $1\le \ell\le k\le m$ such that $(k,\ell)\neq (m, m)$} \\
        &\text{where $\lt(\Wr, \Ur\rt) = \GS\lt(\Xr[\pi(1)], \ldots, \Xr[\pi(m)]\rt)$ for $r=1,2$.}
    \end{align*}
    Similarly to the previous case, the Gram-Schmidt procedure is well-defined, and $\cC$ implies that $\la \Xonek, \Xonel \ra = \la \Xtwok, \Xtwol \ra$ for all $k,\ell \in E(G[v-1])$.
    By Lemma~\ref{lem:general-ub-stronger-couplings}, (\ref{eq:general-ub-higher-order-coupling-ineq}) also holds with this new $\cC$.
    Throughout the rest of this case, let $\Wone, \Wtwo$ be as in the coupling $\cMvd(\cC)$.
    We have that the following random variables are independent.
    \begin{itemize}
        \item $\Wone[k,k] \sim \sqrt{\chisq(d+1-k)}$ for $1\le k\le m$;
        \item $\Wone[k,\ell] \sim \cN(0,1)$ for $1\le \ell < k\le m$; and
        \item $\Wtwo[m,m] \sim \sqrt{\chisq(d+1-m)}$.
    \end{itemize}
    Note that $\prod_{t=2}^T \Delone[i_t,j_t] \Deltwo[i_t,j_t]$ is independent of $\Wone[m,m]$ and $\Wtwo[m,m]$.
    Analogously to the inequality chain (\ref{eq:general-ub-higher-order-isolate-nonedge-by-cs}), we can show that
    \begin{align*}
        \E_{(\Xonevl, \Xtwovl) \sim \cMvd(\cC)}
        \lt[
            \prod_{t=1}^T
            \Delone[i_t,j_t]
            \Deltwo[i_t,j_t]
        \rt] 
        &\le
        \lt[
            \E_{(\Xonevl, \Xtwovl) \sim \cMvd(\cC)}
            \lt(
                \E_{\Wone[m,m], \Wtwo[m,m]}
                \Delone[i_1,i_1]
                \Deltwo[i_1,i_1]
            \rt)^2
        \rt]^{1/2} \\
        &\qquad \times \lt[
            \E_{(\Xonevl, \Xtwovl) \sim \cMvd(\cC)}
            \lt(
                \prod_{t=2}^T
                \Delone[i_t,j_t]
                \Deltwo[i_t,j_t]
            \rt)^2
        \rt]^{1/2},
    \end{align*}
    and (\ref{eq:general-ub-higher-order-gaussian-hypercontractivity}) bounds the last expectation.
    Expanding $\Xr[i_1]$ for $r=1,2$ in terms of the columns of $\Ur$ and entries of $\Wr$, we get
    \begin{align*}
        \Delone[i_1,i_1]
        \Deltwo[i_1,i_1]
        &=
        d^{-2}
        \lt(\norm {\Xone[i_1]}_2^2 - d\rt)
        \lt(\norm {\Xtwo[i_1]}_2^2 - d\rt) \\
        &=
        d^{-2}
        \lt(
            \norm{\sum_{a=1}^m \Wone[m,a] \Uone[a]}_2^2 - d
        \rt)
        \lt(
            \norm{\sum_{a=1}^m \Wtwo[m,a] \Utwo[a]}_2^2 - d
        \rt) \\
        &=
        d^{-2}
        \lt(
            \sum_{a=1}^m \lt(\Wone[m,a]\rt)^2 - d
        \rt)
        \lt(
            \sum_{a=1}^m \lt(\Wtwo[m,a]\rt)^2 - d
        \rt) \\
        &=
        d^{-2}
        \lt(
            \lt(\lt(\Wone[m,m]\rt)^2 - (d + 1 - m)\rt) +
            \sum_{a=1}^{m-1} \lt(\lt(\Wone[m,a]\rt)^2-1\rt)
        \rt) \\
        &\qquad \times
        \lt(
            \lt(\lt(\Wtwo[m,m]\rt)^2 - (d + 1 - m)\rt) +
            \sum_{a=1}^{m-1} \lt(\lt(\Wone[m,a]\rt)^2-1\rt)
        \rt).
    \end{align*}
    Therefore,
    \[
        \E_{\Wone[m,m], \Wtwo[m,m]}
        \Delone[i_1,i_1]
        \Deltwo[i_1,i_1]
        =
        d^{-2}
        \lt(
            \sum_{a=1}^{m-1} \lt(\lt(\Wone[m,a]\rt)^2-1\rt)
        \rt)^2.
    \]
    By standard computations with Gaussian moments,
    \begin{align*}
        \E_{(\Xonevl, \Xtwovl) \sim \cMvd(\cC)}
        \lt[\lt(
            \E_{\Wone[m,m], \Wtwo[m,m]}
            \Delone[i_1,i_1]
            \Deltwo[i_1,i_1]
        \rt)^2\rt] 
        &=
        d^{-4} \E \lt[\lt(
            \sum_{a=1}^{m-1} \lt(\lt(\Wone[m,a]\rt)^2-1\rt)
        \rt)^4\rt] \\
        &= 12(m-1)(m+3) d^{-4}
        \le
        16(m+1)^2 d^{-4}.
    \end{align*}
    Therefore,
    \begin{align*}
        \E_{(\Xonevl, \Xtwovl) \sim \cMvd(\cC)}
        \lt[
            \prod_{t=1}^T
            \Delone[i_t,j_t]
            \Deltwo[i_t,j_t]
        \rt]
        &\le
        \lt(16(m+1)^2 d^{-4}\rt)^{1/2}
        \lt(2^{2T-2} (4T)^{4T-4} d^{-(2T-2)}\rt)^{1/2} \\
        &=
        2^{T+1} (4T)^{2T-2} (m+1)d^{-T-1} \\
        &\le
        2^T (4T)^{2T} d^{-T-1} (\degvl(i_1) + 2T).
    \end{align*}
    In the last inequality, we use the bound $m \le \degvl(i_1) + 2T-1$.
\end{proof}

In the next proposition, we apply Proposition~\ref{prop:general-ub-higher-term-unconditioning} and Lemma~\ref{lem:general-ub-higher-order-gs} to bound the second and third order terms of (\ref{eq:general-ub-refined-exp-yv-expansion}).

\begin{proposition}
    \label{prop:general-ub-refined-deg2-deg3-term}
    Let $T\ge 2$ be a constant.
    Suppose that $d \gg \max_{v\in [n]} \ddegv + \log n$ and $n$ is sufficiently large.
    For all $v\in [n]$, it holds that
    \begin{align*}
        &\sum_{i_1,j_1,\ldots,i_T,j_T\in \downNv}
        \E_{(\Xonevl, \Xtwovl) \sim \cNvdips}
        \lt[
            \prod_{t=1}^T \Delone[i_t,j_t] \Deltwo[i_t,j_t]
        \rt] \\
        &\qquad  \le
        (4T)^{4T}
        \Bigg[
            d^{-T} \lt(\ddegv^{2T-2} + |E(G[\downNv])|^T\rt) \\
            &\qquad \qquad +
            d^{-T-1} \lt(\ddegv^{2T} + \ddegv^{2T-1} \sum_{i\in \downNv} \dvli\rt) +
            d^{-T} n^{-10} \ddegv^{2T}
        \Bigg].
    \end{align*}
\end{proposition}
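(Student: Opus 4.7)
The plan is to sum the per-tuple bounds provided by Lemma~\ref{lem:general-ub-higher-order-gs}, after first using Proposition~\ref{prop:general-ub-higher-term-unconditioning} to replace the coupling $\cNvdips$ with $\cNvdipr$ at the cost of an additive per-tuple error of $10\cdot 2^T(4T)^{2T}d^{-T}n^{-10}$. Summed over the $\ddegv^{2T}$ tuples, this error produces the $d^{-T}n^{-10}\ddegv^{2T}$ term in the target. The remaining task is to bound the sum over $\cNvdipr$. A key observation is that the integrand $\prod_t \Delone[i_t,j_t]\Deltwo[i_t,j_t]$ is invariant under permutations of the pair index $t$ and under the swap $i_t \leftrightarrow j_t$ within each pair, so Lemma~\ref{lem:general-ub-higher-order-gs} may be applied with any pair (and either orientation) playing the role of $(i_1,j_1)$.

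Call a tuple \emph{good} if, after some such relabeling, the chosen pair satisfies Case~A of the lemma, i.e.\ it is not in $E(G[v-1])$ and at least one of its endpoints does not appear among the other $2T-2$ positions; otherwise call it \emph{bad}. For each good tuple I would apply the stronger bound $2^T(4T)^{2T}d^{-T-1}(\degvl(i^*)+2T)$ with $i^*$ the chosen distinguished index. Summing these bounds over good tuples and crudely extending to all of $\downNv^{2T}$, while overcounting by the $2T$ possible choices of distinguished position, yields
\[
2T\cdot 2^T(4T)^{2T}d^{-T-1}\lt(\ddegv^{2T-1}\sum_{i\in\downNv}\dvli + 2T\ddegv^{2T}\rt),
\]
which matches the $d^{-T-1}$ terms in the target after absorbing constants into $(4T)^{4T}$.

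The main technical step is bounding the bad-tuple contribution, for which only the weaker bound $2^T(4T)^{2T}d^{-T}$ is available. A tuple is bad exactly when every pair $(i_t,j_t)$ is either an edge of $G[v-1]$ or has both endpoints repeating among the remaining $2T-2$ positions. I would split bad tuples by whether every pair is an edge: in that case the count is at most $(2|E(G[\downNv])|)^T$. Otherwise, some non-edge pair $t^*$ must have both of its endpoints determined by collisions with other positions; specifying $t^*$ together with the two collision targets costs a combinatorial factor of $T(2T-2)^2$ while leaving only $2T-2$ free positions, giving a count of at most $4T^3\ddegv^{2T-2}$. Multiplying by the per-tuple bound yields the $d^{-T}(|E(G[\downNv])|^T + \ddegv^{2T-2})$ contribution, and collecting the three pieces produces the claim (with the condition $d\ge\degvl(i^*)+2T$ of Lemma~\ref{lem:general-ub-higher-order-gs} verified from the hypothesis for the tuples to which Case~A is applied, the rest being folded into the bad-tuple count). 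The main obstacle is precisely this combinatorial bookkeeping: the symmetric exploitation of pair permutations and intra-pair swaps is what forces bad tuples to carry at least two collision constraints beyond any edge constraints, and is what cuts the naive $\ddegv^{2T}$ count down to $\ddegv^{2T-2}$.
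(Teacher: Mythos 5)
Your argument is correct and matches the paper's proof in all essentials: uncondition via Proposition~\ref{prop:general-ub-higher-term-unconditioning}, isolate an exceptional set of tuples (your ``bad'' set; the paper's set $R$ of tuples with at most $2T-2$ distinct vertices or with every pair an edge) handled by the crude $d^{-T}$ bound of Lemma~\ref{lem:general-ub-higher-order-gs}, and apply the refined $d^{-T-1}(\degvl(i^*)+2T)$ bound to all remaining tuples after exploiting the permutation and swap symmetry, summing the degree term over the full range of tuples. Your collision-encoding count of the bad set and the factor-$2T$ overcount of distinguished positions on the good side differ from the paper's bookkeeping only in constants that are absorbed into $(4T)^{4T}$, so no further changes are needed.
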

\begin{proof}
    We first bound this sum where the expectation is over $\cNvdipr$ instead of $\cNvdips$.
    Let $R\subseteq \downNv^{2T}$ be the set of all vertex tuples $(i_1,j_1,\ldots,i_T,j_T)$ such that either $|\{i_1,j_1,\ldots,i_T,j_T\}| \le 2T-2$ or $(i_t,j_t) \in E(G[v-1])$ for all $1\le t\le T$.
    The number of tuples in $R$ can be crudely upper bounded by
    \[
        |R| \le
        (2T-2)^{2T} \ddegv^{2T-2} +
        2^T |E(G[\downNv])|^T.
    \]
    The first term bounds the tuples with $|\{i_1,j_1,\ldots,i_T,j_T\}| \le 2T-2$: there are at most $\ddegv^{2T-2}$ ways to pick the set of distinct elements among $i_1,j_1,\ldots,i_t,j_t$, and given this set, at most $(2T-2)^{2T}$ ways to pick $(i_1,j_1,\ldots,i_T,j_T)$.
    The second term bounds the tuples with $(i_t,j_t) \in E(G[v-1])$ for all $1\le t\le T$ because there are $2|E(G[\downNv])|$ possible choices for each $(i_t,j_t)$.
    
    We bound the sub-sum corresponding to tuples $(i_1,j_1,\ldots,i_T,j_T) \in R$ by the second bound in Lemma~\ref{lem:general-ub-higher-order-gs}.
    This yields that
    \begin{align*}
        &\sum_{(i_1,j_1,\ldots,i_T,j_T)\in R}
        \E_{(\Xonevl, \Xtwovl) \sim \cNvdipr}
        \lt[
            \prod_{t=1}^T
            \Delone[i_t,j_t]
            \Deltwo[i_t,j_t]
        \rt] \\
        &\qquad
        \le
        2^T (4T)^{2T} d^{-T}
        \lt(
            (2T-2)^{2T} \ddegv^{2T-2} +
            2^T |E(G[\downNv])|^T.
        \rt) \\
        &\qquad \le
        (4T)^{4T}
        d^{-T} \lt(\ddegv^{2T-2} + |E(G[\downNv])|^T\rt).
    \end{align*}
    For each tuple $(i_1,j_1,\ldots,i_T,j_T) \in \downNv^{2T} \setminus R$, we have that $|\{i_1,j_1,\ldots,i_T,j_T\}| \ge 2T-1$, and that there exists $t$ such that $(i_t,j_t)\not\in E(G[v-1])$.
    Because $|\{i_1,j_1,\ldots,i_T,j_T\}| \ge 2T-1$, at least one of $i_t, j_t$ is not in
    \[
        A_t =
        \{i_{t'}: 1\le t'\le T, t'\neq t\}
        \cup
        \{j_{t'}: 1\le t'\le T, t'\neq t\}.
    \]
    Suppose $i_t \not \in A_t$.
    Then, by reordering the sequences $i$ and $j$, we may apply the first bound in Lemma~\ref{lem:general-ub-higher-order-gs} to deduce
    \begin{align*}
        \E_{(\Xonevl, \Xtwovl) \sim \cNvdipr}
        \lt[
            \prod_{t=1}^T
            \Delone[i_t,j_t]
            \Deltwo[i_t,j_t]
        \rt]
        &\le
        2^T (4T)^{2T} d^{-T-1} \lt(\degvl(i_t) + 2T\rt) \\
        &\le
        2^T (4T)^{2T} d^{-T-1}
        \sum_{t=1}^T
        \lt(
            \degvl(i_t) +
            \degvl(j_t) +
            2
        \rt).
    \end{align*}
    If $j_t\not\in A_t$, we get the same upper bound by swapping the roles of $i$ and $j$.
    So,
    \begin{align*}
        &\sum_{(i_1,j_1,\ldots,i_T,j_T)\in \downNv^{2T} \setminus R}
        \E_{(\Xonevl, \Xtwovl) \sim \cNvdipr}
        \lt[
            \prod_{t=1}^T
            \Delone[i_t,j_t]
            \Deltwo[i_t,j_t]
        \rt] \\
        &\qquad \le
        2^T (4T)^{2T} d^{-T-1}
        \sum_{(i_1,j_1,\ldots,i_T,j_T)\in \downNv^{2T}}
        \sum_{t=1}^T
        \lt(
            \degvl(i_t) +
            \degvl(j_t) +
            2
        \rt) \\
        &\qquad =
        2T \cdot 2^T (4T)^{2T} d^{-T-1}
        \lt(
            \ddegv^{2T} + \ddegv^{2T-1} \sum_{i\in \downNv} \dvli
        \rt) \\
        &\qquad \le
        (4T)^{4T} d^{-T-1}
        \lt(
            \ddegv^{2T} + \ddegv^{2T-1} \sum_{i\in \downNv} \dvli
        \rt).
    \end{align*}
    Combining these bounds with the bound in Proposition~\ref{prop:general-ub-higher-term-unconditioning} (note that $10\cdot 2^T (4T)^{2T} \le (4T)^{4T}$ for all $T\ge 2$) completes the proof of the proposition.
\end{proof}

We now have the tools to prove Lemma~\ref{lem:general-ub-refined-coupled-exp-overlap-ubs}.

\begin{proof}[Proof of Lemma~\ref{lem:general-ub-refined-coupled-exp-overlap-ubs}]
    The condition $d \gg \max_{v\in [n]} \lt(\ddegv^2 + \ddegv \log^2 n\rt)$ implies that for all sufficiently large $n$, $d \ge \max_{v\in [n]} \ddegv^2$.
    So, for all $k,\ell \in [v-1]$ with $(k,\ell) \not\in E(G[v-1])$, Lemma~\ref{lem:general-ub-refined-linear-term} holds with $\eps = \f12$.
    Combined with Propositions~\ref{prop:general-ub-linear-term-unconditioning}, \ref{prop:general-ub-linear-term-ijedge}, and \ref{prop:general-ub-linear-term-ii}, this implies that
    \begin{align*}
        &\sum_{i,j\in \downNv}
        \E_{(\Xonevl, \Xtwovl) \sim \cNvdips}
        \lt[
            \Delone[i,j]
            \Deltwo[i,j]
        \rt]
        \le
        2d^{-1}|E(G[\downNv])| +
        C_{1/2} d^{-2} \sum_{i,j\in \downNv, i\neq j} \dvlij \\
        &\qquad +
        2 d^{-2} \sum_{i\in \downNv} \dvli +
        C_{1/2} d^{-3} \lt(\sum_{i\in \downNv} \dvli\rt)^2 +
        48 d^{-1} n^{-10} \ddegv^2 + d^{-10} \ddegv^2,
    \end{align*}
    where $C_{1/2}$ is defined in Lemma~\ref{lem:general-ub-refined-linear-term}.
    Combined with (\ref{eq:general-ub-refined-exp-yv-expansion}), Proposition~\ref{prop:general-ub-bound-gy} and Proposition~\ref{prop:general-ub-refined-deg2-deg3-term} with $L=2,3$, this implies that for some constant $C' > 0$, we have that
    \begin{align}
        \notag
        &\E_{(\Xonevl, \Xtwovl) \sim \cNvdips}
        \exp(Y_v) \\
        \notag
        &\qquad  \le
        1 + C' \Bigg[
            \bigg(
            d^{-1} |E(G[\downNv])| +
            d^{-2} \sum_{i,j\in \downNv, i\neq j} \dvlij +
            d^{-2} \sum_{i\in \downNv} \dvli \\
            \notag
            &\qquad \qquad \qquad +
            d^{-3} \lt(\sum_{i\in \downNv} \dvli\rt)^2 +
            d^{-1} n^{-10} \ddegv^2 +
            d^{-10} \ddegv^2
            \bigg) \\
            \notag
            &\qquad \qquad +
            \bigg(
                d^{-2} \ddegv^2 +
                d^{-2} |E(G[\downNv])|^2 +
                d^{-3} \ddegv^4 +
                d^{-3} \ddegv^3\sum_{i\in \downNv} \dvli \\
            \notag
            &\qquad \qquad \qquad +
                d^{-2} n^{-10} \ddegv^4
            \bigg) \\
            \notag
            &\qquad \qquad +
            \bigg(
                d^{-3} \ddegv^4 +
                d^{-3} |E(G[\downNv])|^3 +
                d^{-4} \ddegv^6 +
                d^{-4} \ddegv^5\sum_{i\in \downNv} \dvli \\
            \notag
            &\qquad \qquad \qquad +
                d^{-3} n^{-10} \ddegv^6
            \bigg) \\
            \label{eq:general-ub-exp-yv-zv-unsimplified-ub}
            &\qquad \qquad +
            \bigg(
                d^{-4} \ddegv^8 +
                d^{-4} \ddegv^4 \log^8 n
            \bigg)
        \Bigg].
    \end{align}
    The inequality $d \ge \max_{v\in [n]} \ddegv^2$ implies that for sufficiently large $n$,
    \[
        d^{-1} n^{-10} \ddegv^2,
        d^{-2} n^{-10} \ddegv^4,
        d^{-3} n^{-10} \ddegv^6
        \le n^{-10}.
    \]
    Because $|E(G[\downNv])| \le \ddegv^2$, the inequality $d \ge \max_{v\in [n]} \ddegv^2$ also implies that $d \ge |E(G[\downNv])|$.
    So, for sufficiently large $n$,
    \[
        d^{-2} |E(G[\downNv])|^2,
        d^{-3} |E(G[\downNv])|^3
        \le
        d^{-1}|E(G[\downNv])|.
    \]
    Moreover, we have $d^{-4} \ddegv^6 \le d^{-4} \ddegv^8$ and $d^{-10} \ddegv^2 \le d^{-2} \ddegv^2$.
    Combining these bounds proves the desired bound on $\E \exp(Y_v)$.

    By (\ref{eq:general-ub-refined-exp-zv-expansion}) and Propositions~\ref{prop:general-ub-bound-hz}, \ref{prop:general-ub-4cyc-exp-overlap-first-order-ub}, and \ref{prop:general-ub-refined-deg2-deg3-term} with $T=2$, we can show (\ref{eq:general-ub-exp-yv-zv-unsimplified-ub}) is also an upper bound for $\E \exp(\f12 Z_v)$.
    The desired inequality for $\E \exp(\f12 Z_v)$ follows similarly.
\end{proof}

\subsection{Proof of Theorem~\ref{thm:general-ub}}

Similarly to Lemma~\ref{lem:general-weaker-ub-hypothesis-translation}, the following lemma parses the hypotheses of Theorem~\ref{thm:general-ub} into a form compatible with the above results.

\begin{lemma}
    \label{lem:general-ub-hypothesis-translation}
    Suppose the hypotheses (\ref{eq:general-ub-hypothesis-4cycles}) and (\ref{eq:general-ub-hypothesis-k18}) of Theorem~\ref{thm:general-ub} hold.
    Then,
    \begin{eqnarray}
        \label{eq:general-ub-p2-translated}
        d^2 &\gg& \sum_{v\in G} \degv^2, \\
        \label{eq:general-ub-k18-translated}
        d^4 &\gg& \sum_{v\in G} \lt(\degv^8 + \degv^4 \log^8 n\rt).
    \end{eqnarray}
\end{lemma}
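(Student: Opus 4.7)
The plan is to reduce both statements to the elementary fact that $a^k$ is comparable to $\binom{a}{k}$ for large $a$ and to $a$ for small $a$, then translate sums over vertices into the subgraph counts appearing in the hypotheses. Specifically, for any nonnegative integer $a$ and positive integer $k$, I would show $a^k \le 2^k k!\binom{a}{k} + (2k)^k a$: if $a\ge 2k$ then every factor in $\binom{a}{k} = a(a-1)\cdots(a-k+1)/k!$ is at least $a/2$, so $\binom{a}{k}\ge a^k/(2^k k!)$; if $1\le a < 2k$ then $a^k \le (2k)^k \le (2k)^k a$; and the $a=0$ case is trivial.

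Summing this over $v\in G$ with $a = \degv$ and using that a copy of $K_{1,k}$ is specified by a hub $v$ together with a $k$-subset of its neighbors, so $\Num_G(K_{1,k}) = \sum_v \binom{\degv}{k}$, together with $\sum_v \degv = 2\Num_G(E)$, yields the key bound
\[
    \sum_{v\in G} \degv^k \le 2^k k!\,\Num_G(K_{1,k}) + 2(2k)^k \Num_G(E).
\]

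For (\ref{eq:general-ub-p2-translated}), I apply this with $k=2$, noting $K_{1,2} = P_2$, to get $\sum_v \degv^2 \lesssim \Num_G(P_2) + \Num_G(E) \le \Num_G(C_4, P_2, E)$, so (\ref{eq:general-ub-hypothesis-4cycles}) gives $d^2 \gg \sum_v \degv^2$. For (\ref{eq:general-ub-k18-translated}), I apply the bound with $k=8$ and $k=4$ separately, obtaining
\[
    \sum_{v\in G}\lt(\degv^8 + \degv^4 \log^8 n\rt)
    \lesssim
    \Num_G(K_{1,8}) + \Num_G(E) + \log^8 n \lt(\Num_G(K_{1,4}) + \Num_G(E)\rt),
\]
which is $\ll d^4$ by (\ref{eq:general-ub-hypothesis-k18}) (after absorbing the stray $\Num_G(E)$ term into $\log^8 n\,\Num_G(E)$).

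This is a purely bookkeeping lemma, so there is no real obstacle; the only thing to be careful about is retaining the $\Num_G(E)$ term to absorb the contribution of low-degree vertices, and the hypotheses are stated precisely so that this term is available.
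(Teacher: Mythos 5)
Your proposal is correct and follows essentially the same route as the paper: both bound $\degv^k$ by a constant multiple of $\binom{\degv}{k}$ plus a constant multiple of $\degv$, sum over vertices to convert to $\Num_G(K_{1,k})$ and $\Num_G(E)$ (with $K_{1,2}=P_2$), and then invoke hypotheses (\ref{eq:general-ub-hypothesis-4cycles}) and (\ref{eq:general-ub-hypothesis-k18}), absorbing the bare $\Num_G(E)$ term into $\log^8 n\,\Num_G(E)$. The only difference is cosmetic — you prove a uniform-in-$k$ inequality $a^k \le 2^k k!\binom{a}{k} + (2k)^k a$, whereas the paper just states the needed instances for $k=2,4,8$ with explicit constants.
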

\begin{proof}
    For all nonnegative integers $a$, we have $a^2 = 2\binom{a}{2} + a$.
    So,
    \[
        \sum_{v\in G} \degv^2
        =
        \sum_{v\in G}
        \lt[
            2\binom{\degv}{2} +
            \degv
        \rt]
        =
        2\Num_G(P_2, E).
    \]
    Similarly, for all nonnegative $a$, we have $a^4 \le 4^4\binom{a}{4} + 3^3a$ and $a^8 \le 8^8 \binom{a}{8} + 7^7 a$.
    So,
    \begin{align*}
        &\sum_{v\in G}
        \lt(
            \degv^8 +
            \degv^4 \log^8 n
        \rt) \\
        &\qquad \le
        \sum_{v\in G} \lt(
            8^8 \binom{\degv}{8} +
            7^7 \degv +
            \lt(
                4^4 \binom{\degv}{4} +
                3^3 \degv
            \rt)
            \log^8 n
        \rt) \\
        &\qquad =
        8^8 \Num_G(K_{1,8}) +
        4^4 \Num_G(K_{1,4}) \log^8 n +
        2 \Num_G(E) (3^3 \log^8 n + 7^7).
    \end{align*}
    The hypotheses (\ref{eq:general-ub-hypothesis-4cycles}) and (\ref{eq:general-ub-hypothesis-k18}) now imply the result.
\end{proof}

We will now complete the proof of our main result for general masks.

\begin{proof}[Proof of Theorem~\ref{thm:general-ub}]
    For now, fix a vertex $v\in [n]$.
    Like in the proof of Theorem~\ref{thm:general-weaker-ub}, we have
    \[
        \E_{W\sim \muvl}
        \KL\lt((\muv | W) \parallel \nuv\rt)
        \le
        \E_{W\sim \muvlsv}
        \KL\lt((\muv | W) \parallel \nuv\rt)
        + 11n^{-9}
    \]
    for sufficiently large $n$.
    By Lemma~\ref{lem:general-ub-hypothesis-translation}, both (\ref{eq:general-ub-p2-translated}) and (\ref{eq:general-ub-k18-translated}) hold.
    By (\ref{eq:general-ub-k18-translated}), we have that $d \gg \max_{v\in [n]} \lt(\degv^2 + \degv \log^2 n\rt)$.
    So, the hypotheses of Lemmas~\ref{lem:general-ub-refined-2mm} and \ref{lem:general-ub-refined-coupled-exp-overlap-ubs} both hold.
    By these lemmas, there exists a constant $C$ such that
    \begin{align*}
        &\E_{W\sim \muvl}
        \KL\lt((\muv | W) \parallel \nuv\rt)
        \le
        11n^{-9}
        -1 +
        \exp\lt(Cd^{-3} (\ddegv^4 + \ddegv \log^3 n)\rt) \\
        &\qquad \times
        \Bigg[
            1 + C \bigg(
                d^{-1}|E(G[\downNv])| +
                d^{-2} \sum_{i\in \downNv} \dvli +
                d^{-2} \sum_{i,j\in \downNv, i\neq j} \dvlij \\
                &\qquad \qquad +
                d^{-2} \ddegv^2 +
                d^{-3} \ddegv^4 +
                d^{-4} \ddegv^8 +
                d^{-4} \ddegv^4 \log^8 n \\
                &\qquad \qquad +
                d^{-3} \lt(\sum_{i\in \downNv} \dvli\rt)^2 +
                d^{-3} \ddegv^3 \sum_{i\in \downNv} \dvli +
                d^{-4} \ddegv^5 \sum_{i\in \downNv} \dvli \\
                &\qquad \qquad +
                n^{-10}
            \bigg)
        \Bigg].
    \end{align*}
    The condition $d \gg \max_{v\in [n]} \lt(\degv^2 + \degv \log^2 n\rt)$ implies that the argument of the exponential is $o(1)$ and the quantity within square brackets is $1 + o(1)$ (recall again that $|E(G[\downNv])| \le \ddegv^2$).
    We can simplify this bound by noting that $\exp(x)(1+y) \le 1 + 2x + 2y$ for all $x,y\in [0,\f12]$.
    So, for sufficiently large $n$, there exists another constant $C^{(1)}$ such that
    \begin{align*}
        &\E_{W\sim \muvl}
        \KL\lt((\muv | W) \parallel \nuv\rt)
        \le
        C^{(1)}
        \Bigg[
            d^{-1}|E(G[\downNv])| +
            d^{-2} \sum_{i,j\in \downNv, i\neq j} \dvlij \\
            & \qquad +
            d^{-2} \sum_{i\in \downNv} \dvli +
            d^{-2} \ddegv^2 +
            d^{-3} \ddegv^4 +
            d^{-4} \ddegv^8 +
            d^{-3} \ddegv \log^3 n \\
            & \qquad +
            d^{-4} \ddegv^4 \log^8 n +
            d^{-3} \lt(\sum_{i\in \downNv} \dvli\rt)^2 +
            d^{-3} \ddegv^3 \sum_{i\in \downNv} \dvli \\
            &\qquad +
            d^{-4} \ddegv^5 \sum_{i\in \downNv} \dvli +
            n^{-9}
        \Bigg].
    \end{align*}
    Substituting this into (\ref{eq:general-ub-starting-point}) yields the upper bound
    \begin{align*}
        &2 \TV\lt(W(G,d), M(G)\rt)^2 
        \le
        C^{(1)}
        \Bigg[
            d^{-1} \sum_{v=1}^n |E(G[\downNv])| +
            d^{-2} \sum_{v=1}^n \sum_{i,j\in \downNv, i\neq j} \dvlij \\
            &\qquad +
            d^{-2} \sum_{v=1}^n \sum_{i\in \downNv} \dvli +
            d^{-2} \sum_{v=1}^n \ddegv^2 +
            d^{-3} \sum_{v=1}^n \ddegv^4 +
            d^{-4} \sum_{v=1}^n \ddegv^8 \\
            &\qquad +
            d^{-3} \sum_{v=1}^n \ddegv \log^3 n +
            d^{-4} \sum_{v=1}^n \ddegv^4 \log^8 n +
            d^{-3} \sum_{v=1}^n \lt(\sum_{i\in \downNv} \dvli\rt)^2 + \\
            &\qquad +
            d^{-3} \sum_{v=1}^n \ddegv^3 \sum_{i\in \downNv} \dvli +
            d^{-4} \sum_{v=1}^n \ddegv^5 \sum_{i\in \downNv} \dvli +
            n^{-8}
        \Bigg].
    \end{align*}
    As $|E(G[\downNv])|$ counts the number of 3-cycles in $G$ with largest vertex $v$, we have that
    \[
        \sum_{v=1}^n |E(G[\downNv])| = \Num_G(C_3).
    \]
    Similarly, because $\sum_{i,j\in \downNv, i\neq j} \dvlij$ counts twice the number of 4-cycles in $G$ with largest vertex $v$, we have that
    \[
        \sum_{v=1}^n \sum_{i,j\in \downNv, i\neq j} \dvlij = 2\Num_G(C_4).
    \]
    The remaining quantities in this bound can be estimated as follows.
    Recall that $N(v)$ denotes the set of neighbors of $v$ (not restricted to $[v-1]$), and $\degv = |N(v)|$ is the degree of $v$.
    We have that
    \[
        \sum_{v=1}^n \sum_{i\in \downNv} \dvli
        \le
        \sum_{v=1}^n \sum_{i\in N(v)} \deg(i)
        =
        \sum_{i=1}^n \deg(i)^2.
    \]
    By AM-GM,
    \[
        d^{-3}
        \sum_{v=1}^n
        \ddegv \log^3 n
        \le
        d^{-2}
        \sum_{v=1}^n
        \ddegv^2 +
        d^{-4}
        \sum_{v=1}^n
        \ddegv^4 \log^8 n.
    \]
    By Cauchy-Schwarz and AM-GM,
    \begin{align*}
        \sum_{v=1}^n
        \lt(\sum_{i\in \downNv} \dvli\rt)^2
        &\le
        \sum_{v=1}^n
        \lt(\sum_{u\in N(v)} \degu\rt)^2
        \le
        \sum_{v=1}^n \degv
        \sum_{u\in N(v)} \degu^2 \\
        &=
        \sum_{(u,v)\in E(G)} \lt(
            \degu^2 \degv +
            \degu\degv^2
        \rt) \\
        &\le
        \sum_{(u,v)\in E(G)}
        \lt(\degu^3 + \degv^3\rt)
        =
        2\sum_{v\in G} \degv^4.
    \end{align*}
    By AM-GM,
    \begin{align*}
        \sum_{v=1}^n
        \ddegv^3
        \sum_{i\in \downNv}
        \dvli
        &\le
        \sum_{v=1}^n
        \sum_{u\in N(v)}
        \degv^3 \degu \\
        &=
        \sum_{(u,v)\in E(G)}
        \lt(
            \degu^3 \degv +
            \degu \degv^3
        \rt)\\
        & \le
        \sum_{(u,v)\in E(G)}
        \lt(\degu^4 + \degv^4\rt)
        \le
        2\sum_{v\in G} \degv^5.
    \end{align*}
    Similarly
    \begin{align*}
        \sum_{v=1}^n
        \ddegv^5
        \sum_{i\in \downNv}
        \dvli
        &\le
        \sum_{v=1}^n
        \sum_{u\in N(v)}
        \degv^5 \degu \\
        &=
        \sum_{(u,v)\in E(G)}
        \lt(
            \degu^5 \degv +
            \degu \degv^5
        \rt)\\
        & \le
        \sum_{(u,v)\in E(G)}
        \lt(\degu^6 + \degv^6\rt)
        \le
        2\sum_{v\in G} \degv^7.
    \end{align*}
    Combining these bounds, we get that for another constant $C^{(2)}$, it holds that
    \begin{align*}
        2 \TV\lt(W(G,d), M(G)\rt)^2
        &\le
        C^{(2)}
        \Bigg[
            d^{-1} \Num_G(C_3) +
            d^{-2} \Num_G(C_4) +
            d^{-2} \sum_{v=1}^n \degv^2 \\
        &\qquad +
            d^{-3} \sum_{v=1}^n \degv^5 +
            d^{-4} \sum_{v=1}^n \lt(\degv^8 + \degv^4 \log^8 n\rt) +
            n^{-8}
        \Bigg].
    \end{align*}
    Finally, by AM-GM,
    \[
        d^{-3} \sum_{v=1}^n \degv^5
        \le
        d^{-2} \sum_{v=1}^n \degv^2 +
        d^{-4} \sum_{v=1}^n \degv^8.
    \]
    The result now follows from the bounds (\ref{eq:general-ub-hypothesis-triangles}), (\ref{eq:general-ub-hypothesis-4cycles}), (\ref{eq:general-ub-p2-translated}), and (\ref{eq:general-ub-k18-translated}).
\end{proof}

\section{Proof of TV Upper Bound for Bipartite Masks}
\label{sec:bipartite-ub-proof}

In this section, we will prove Theorem~\ref{thm:bipartite-ub}, which gives conditions for bipartite masks $G$ under which $\TV \lt(W(G,d), M(G)\rt) \to 0$. As discussed in Section~\ref{subsec:technical-overview-bipartite-ub-outline} the proof follows the same overall outline as the argument for general masks, but with several simplifications that end up resulting in sharper bounds.
Throughout this section, let $\XL \in \bR^{d\times |V_L|}$ and $\XR \in \bR^{d\times |V_R|}$ be the submatrices of $X\in \bR^{d\times n}$ consisting of the columns in $V_L$ and $V_R$.
We keep the indexing of $X$, so the columns of $\XL$ and $\XR$ are indexed by $V_L$ and $V_R$, respectively.
Let $A'_G \in \bR^{|V_R| \times |V_L|}$ denote the submatrix of the adjacency matrix $A_G$ indexed by $V_R\times V_L$.
Let 
\[
    \mu = \cL \lt(A'_G \odot d^{-1/2} \XRt \XL\rt),
\]
where $X\sim \cN(0, I_d)^{\otimes n}$.
Similarly, let 
\[
    \nu = \cL \lt(A'_G \odot M\rt),
\]
where $M\in \bR^{|V_R| \times |V_L|}$ has i.i.d. standard Gaussian entries whose coordinates are indexed by $V_R \times V_L$.
Note that, by symmetry, all information in the random matrices $W(G,d)$ and $M(G)$ is contained in the entries indexed by $V_R \times V_L$.
So, $\TV(W(G,d), M(G)) = \TV(\mu, \nu)$.
In the rest of this section, we will show $\TV(\mu, \nu) \to 0$ under the hypotheses of Theorem~\ref{thm:bipartite-ub}.

For an event $S\in \sigma(\XR)$ with positive probability, let $\mu^S$ denote $\mu$ conditioned on $\XR \in S$.
Formally, $\mu^S$ is the law of $A'_G \odot d^{-1/2} \XRt \XL$, where $\XL \sim \cN(0, I_d)^{\otimes |V_L|}$ and independently $\XR \sim \cN(0, I_d)^{\otimes |V_R|}$ conditioned on $\XR \in S$.
Let $\cL \lt(\cN(0, I_d)^{\otimes |V_R|} | S\rt)$ denote the law of a sample from $\cN(0, I_d)^{\otimes |V_R|}$ conditioned on being in the set $S$.
Note that $\XR \mapsto A'_G \odot d^{-1/2} \XRt \XL$ is a Markov transition.
By (\ref{eq:technical-overview-tv-conditioning}), data processing, and Cauchy-Schwarz, we derive the following inequality, which is the starting point of the proof of Theorem~\ref{thm:bipartite-ub}.
\begin{equation}
    \label{eq:bipartite-ub-starting-point}
    \TV(\mu, \nu)
    \le
    \TV\lt(\mu, \mu^S\rt) +
    \TV\lt(\mu^S, \nu\rt)
    \le
    \P(S^c) + \f12 \sqrt{\chisq\lt(\mu^S, \nu\rt)}.
\end{equation}
The main task of the proof of Theorem~\ref{thm:bipartite-ub} is to show this $\chisq$ divergence is $o(1)$ for an appropriate high probability event $S$.
We will carry this out in the following three steps.
\begin{enumerate}[label=(\arabic*)]
    \item We will choose a high probability event $S \in \sigma(\XR)$, on which $\chisq\lt(\mu^S, \nu\rt)$ can be bounded.
    This set $S$ is defined in Section~\ref{subsec:bipartite-ub-def-hp-set}.
    In Section~\ref{subsec:bipartite-ub-s-hp-proof} we will prove that $S$ occurs with $1-o(1)$ probability.
    \item In Section~\ref{subsec:bipartite-ub-2mm}, we simplify the quantity $\chisq\lt(\mu^S, \nu\rt)$ using a second moment method computation, in a manner analogous to Lemmas~\ref{lem:general-ub-main-2mm} and \ref{lem:general-ub-refined-2mm}.
    We will upper bound this $\chisq$ divergence in terms of two exponentiated overlaps.
    These are analogous to the coupled exponentiated overlap and 4-cycles variant coupled exponentiated overlap of Section~\ref{sec:general-ub-refined-higher-order-terms}, with two differences.
    First, as mentioned in Section~\ref{subsec:technical-overview-bipartite-ub-outline}, the coupling is now trivial, and the two replicas $\XoneR, \XtwoR$ defining the overlap are independent and each distributed according to $\cL \lt(\cN(0, I_d)^{\otimes |V_R|} | S\rt)$.
    Second, this second moment method calculation can be considered a batched version of the second moment method calculation for general $G$, in which we process all of $v\in V_L$ separately instead of one vertex at a time.
    Consequently, the overlaps in this setting are \textit{sums} of terms of the form $Y_v$ and $Z_v$ from Section~\ref{sec:general-ub-refined-higher-order-terms}, instead of single terms $Y_v$ and $Z_v$.
    \item In Section~\ref{subsec:bipartite-ub-exp-overlap}, we will bound the exponentiated overlaps with integration by tails, as described in Section~\ref{subsec:technical-overview-bipartite-ub-outline}.
    Finally, we will combine these results to prove Theorem~\ref{thm:bipartite-ub}.
\end{enumerate}

\subsection{Identifying the High Probability Latent Set $S$}
\label{subsec:bipartite-ub-def-hp-set}

In this section, we will define the high probability set $S \in \sigma(\XR)$ that we will use in our proof.
We will first introduce the quantities necessary to define $S$.
Recall that $N(v)$ denotes the set of neighbors of vertex $v$, and $\degv = |N(v)|$ denotes the degree of $v$.
We introduce a notion of shared degrees: for vertices $v_1,\ldots,v_k\in [n]$, let $\deg(v_1,\ldots,v_k)$ be the number of vertices adjacent to all of $v_1,\ldots,v_k$.
This definition holds even when some of $v_1,\ldots,v_k$ are the same, so for example $\deg(v_1,v_1,v_2) = \deg(v_1,v_2)$.
Recall that for a subset $V\subseteq [n]$, $X_V$ denotes the $d\times |V|$ matrix with columns $\{X_i : i\in V\}$, with rows indexed by $[d]$ and columns indexed by $V$, and that $I_V$ denotes the $|V| \times |V|$ identity matrix with rows and columns indexed by $V$.
For each $v\in V_L$, let $\Deln$ be the $\degv \times \degv$ matrix given by
\[
    \Deln = d^{-1} \Xnt \Xn - \In.
\]
So, each $\Deln$ is a function of $\XR$.
For two matrices $\XoneR, \XtwoR \in \bR^{d\times |V_R|}$ with the same indexing as $\XR$, we similarly define
\[
    \Delrn = d^{-1} \lt(\Xrn\rt)^\top \Xrn - \In
\]
for $v\in V_L$ and $r=1,2$.
We further define the overlaps
\begin{eqnarray*}
    Y \lt(\XoneR, \XtwoR\rt)
    &=&
    \sum_{v\in V_L}
    \Tr\lt(\Delonen \Deltwon\rt), \\
    Z \lt(\XoneR, \XtwoR\rt)
    &=&
    \sum_{v\in V_L}
    \Tr\lt(\lt(\Delonen \Deltwon\rt)^2\rt).
\end{eqnarray*}
These are analogous to the coupled overlaps $Y_v, Z_v$ defined in (\ref{eq:general-ub-redefine-yv}) and (\ref{eq:general-ub-define-zv}).
Each summand in $Y$ and $Z$ is a term of the form $Y_v$ and $Z_v$, respectively, with $\downNv$ replaced by $\Nv$; because we are bounding the information contribution of multiple vertices at once, $Y$ and $Z$ consist of multiple such terms.
Next, we define the following quantities, which are moments of $Y \lt(\XoneR, \XtwoR\rt)$ and $Z \lt(\XoneR, \XtwoR\rt)$ with respect to $\XtwoR$ with i.i.d. standard Gaussian entries, conditioned on $\XoneR$.
\begin{eqnarray*}
    \sigY{\XoneR}
    &=&
    \Var_{\XtwoR \sim \cN(0, I_d)^{\otimes |V_R|}}
    Y \lt(\XoneR, \XtwoR\rt) \\
    \eZ{\XoneR}
    &=&
    \E_{\XtwoR \sim \cN(0, I_d)^{\otimes |V_R|}}
    Z \lt(\XoneR, \XtwoR\rt) \\
    \sigZ{\XoneR}
    &=&
    \Var_{\XtwoR \sim \cN(0, I_d)^{\otimes |V_R|}}
    Z \lt(\XoneR, \XtwoR\rt) \\
\end{eqnarray*}
The final ingredient needed to define $S$ is the following lemma, which is the analogue of Lemmas~\ref{lem:general-weaker-ub-hypothesis-translation} and Lemmas~\ref{lem:general-ub-hypothesis-translation} for the bipartite setting.
\begin{lemma}
    \label{lem:bipartite-ub-hypothesis-translation}
    Suppose the hypotheses (\ref{eq:bipartite-ub-hypothesis-4cycles}), (\ref{eq:bipartite-ub-hypothesis-k14}), (\ref{eq:bipartite-ub-hypothesis-d8}), and (\ref{eq:bipartite-ub-hypothesis-d9}) of Theorem~\ref{thm:bipartite-ub} hold.
    Then, we have that
    \begin{align}
        \label{eq:bipartite-ub-hypothesis-k14-translated}
        1
        &\gg
        d^{-3} \lt[
            \sum_{v\in V_L} \lt(
                \deg(v)^4 + \deg(v) \log^3 n
            \rt)
        \rt], \\
        \notag
        1
        &\gg
        d^{-2}
        \sum_{i,j \in V_R} \deg(i,j)^2, \\
        \notag
        1
        &\gg
        d^{-4} \sum_{i,j,k,\ell \in V_R} \deg(i,j,k,\ell)^2 +
        d^{-5} \sum_{i,j,k\in V_R} \deg(i,j) \deg(i,k) +
        d^{-6} \sum_{i,j,k,\ell \in V_R} \deg(i,j,k) \deg(i,j,\ell), \\
        \notag
        1
        &\gg
        d^{-4}
        \lt[
            \sum_{v\in V_L} \lt(
                \deg(v)^3 + \deg(v) \log^2 n
            \rt)
        \rt]^2
        \Bigg[
            d^{-4} \sum_{i,j,k,\ell \in V_R} \deg(i,j,k,\ell)^2 \\
            \notag
            &\qquad +
            d^{-5} \sum_{i,j,k\in V_R} \deg(i,j) \deg(i,k) +
            d^{-6} \sum_{i,j,k,\ell \in V_R} \deg(i,j,k) \deg(i,j,\ell)
        \Bigg].
    \end{align}
\end{lemma}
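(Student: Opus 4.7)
The strategy is identical to that of Lemmas~\ref{lem:general-weaker-ub-hypothesis-translation} and~\ref{lem:general-ub-hypothesis-translation}: I will write each sum on the right-hand side of a conclusion as a nonnegative combination of falling factorial sums $\sum\binom{\deg(v_1,\ldots,v_k)}{j}$ by polynomial identities such as $a^2 = 2\binom{a}{2} + \binom{a}{1}$, $a^4 \le 4^4\binom{a}{4} + 3^3\binom{a}{1}$, and products thereof; then identify each such falling factorial sum with a bipartite subgraph count, using the combinatorial fact that $\binom{\deg(v_1,\ldots,v_k)}{j}$ is the number of ways to choose $j$ distinct common neighbors of $v_1,\ldots,v_k$, so that summing over distinct index tuples yields a scalar multiple of $\oNum_G(\oK_{k,j})$; and finally apply the matching hypothesis.

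For the first conclusion, I would use $\sum_{v\in V_L}\deg(v)^4 \lesssim \oNum_G(\oK_{1,4}) + \Num_G(E)$ and $\sum_{v\in V_L}\deg(v)=\Num_G(E)$, so the bound follows from (\ref{eq:bipartite-ub-hypothesis-k14}). For the second, expanding $\deg(i,j)^2 = 2\binom{\deg(i,j)}{2} + \deg(i,j)$ and summing over (not necessarily distinct) $i,j\in V_R$ dominates the result by a nonnegative combination of $\Num_G(C_4)$ (the non-degenerate part, where $i\neq j$ and the two common neighbors are distinct), $\Num_G(P_2)$ (when either $i=j$ or the two common neighbors coincide), and $\Num_G(E)$, and then invoke hypothesis (\ref{eq:bipartite-ub-hypothesis-4cycles}). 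For the third conclusion I treat each of the three sums in $N$ separately: each is a sum of products of shared degrees, and carefully enumerating coincidence patterns among the right-vertex indices and the left-vertices witnessing the shared degrees dominates each sum by $\oNum_G(\oK_{2,4}) + \oNum_G(\oK_{1,4})$ plus lower-order bipartite subgraph counts, which are controlled by the combination of (\ref{eq:bipartite-ub-hypothesis-d8}) (applied in the form $d^8 \gg \Num_G(E)^2 \oNum_G(\oK_{1,4}, \oK_{2,4})\log^4 n$ and then combined with $\Num_G(E) \lesssim d^2$ from (\ref{eq:bipartite-ub-hypothesis-4cycles}) to extract $d^4 \gg \oNum_G(\oK_{2,4})$) with (\ref{eq:bipartite-ub-hypothesis-4cycles}) and (\ref{eq:bipartite-ub-hypothesis-k14}). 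For the fourth conclusion, I would use $\lt[\sum_v(\deg(v)^3 + \deg(v)\log^2 n)\rt]^2 \lesssim \oNum_G(\oK_{1,3})^2 + \Num_G(E)^2\log^4 n$, distribute over the three pieces of $N$, and match the resulting products of subgraph counts against the hypotheses: (\ref{eq:bipartite-ub-hypothesis-d8}) directly handles all terms involving $\oNum_G(\oK_{2,4})$, while (\ref{eq:bipartite-ub-hypothesis-d9}) handles the terms involving the sum $\sum_{i,j,k\in V_R}\deg(i,j)\deg(i,k)$ once it is identified with $\oNum_G(\oP_4)$ plus lower-order terms.

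The main obstacle is the combinatorial bookkeeping for the third and fourth conclusions, specifically the identification of the mixed-degree products with subgraph counts. The key calculation is that by switching the order of summation, $\sum_{i,j,k\in V_R}\deg(i,j)\deg(i,k) = \sum_{v_1,v_2\in V_L}\deg(v_1)\deg(v_2)\deg(v_1,v_2)$, and the non-degenerate portion (with $i,j,k$ distinct in $V_R$ and $v_1,v_2$ distinct in $V_L$) corresponds to the 4-path $j-v_1-i-v_2-k$, an instance of $\oP_4$ with three right-vertices and two left-vertices; this is exactly the count appearing in hypothesis (\ref{eq:bipartite-ub-hypothesis-d9}). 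A similar enumeration is needed for $\sum_{i,j,k,\ell\in V_R}\deg(i,j,k)\deg(i,j,\ell)$, whose non-degenerate part counts configurations with two left-vertices each adjacent to the shared pair $i,j$ and to an additional distinct right-vertex, which embeds into $\oK_{2,4}$. Once these identifications are in place and each degenerate pattern is separately checked to be dominated by a subgraph count covered by an earlier hypothesis, the remaining polynomial arithmetic is mechanical.
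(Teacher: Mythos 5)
Your first two conclusions and the identification $\sum_{i,j,k\in V_R}\deg(i,j)\deg(i,k)=\sum_{v_1,v_2\in V_L}\deg(v_1)\deg(v_2)\deg(v_1,v_2)$, whose nondegenerate part is $\oNum_G(\oP_4)$, all match the paper. But your treatment of the third conclusion has two concrete flaws. First, the extraction of $d^4\gg\oNum_G(\oK_{2,4})$ from (\ref{eq:bipartite-ub-hypothesis-d8}) ``combined with $\Num_G(E)\lesssim d^2$'' is logically backwards: an \emph{upper} bound on $\Num_G(E)$ weakens the hypothesis $d^8\gg\Num_G(E)^2\,\oNum_G(\oK_{1,4},\oK_{2,4})\log^4 n$ rather than letting you divide it out; you would need a lower bound $\Num_G(E)^2\log^4 n\gtrsim d^4$, which is not available. (The fact itself is true, but via $\oNum_G(\oK_{2,4})\le\Num_G(C_4)^2\ll d^4$ from (\ref{eq:bipartite-ub-hypothesis-4cycles}), not your route.) Second, bounding the nondegenerate part of $\sum_{i,j,k,\ell}\deg(i,j,k)\deg(i,j,\ell)$ by $\oNum_G(\oK_{2,4})$ because the witnessing configuration ``embeds into $\oK_{2,4}$'' is invalid: that configuration is a proper six-edge subgraph of $K_{2,4}$, and counts of a proper subgraph are not dominated by counts of the supergraph. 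The paper avoids all of this for the third conclusion by the elementary dominations $\deg(i,j,k,\ell)^2\le\deg(i,j)\deg(k,\ell)$ and $\deg(i,j,k)\deg(i,j,\ell)\le\deg(i,k)\deg(j,\ell)$, which bound all three sums by $\big(\sum_{i,j\in V_R}\deg(i,j)\big)^2\ll d^4$ using only (\ref{eq:bipartite-ub-hypothesis-4cycles}); no subgraph identification is needed there.

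For the fourth conclusion, ``distribute and match against the hypotheses'' leaves an uncovered cross term: the $\oNum_G(\oK_{1,3})^2$-type part of the squared prefactor multiplied by the $\oNum_G(\oP_4)$ part of $\sum_{i,j,k}\deg(i,j)\deg(i,k)$, i.e.\ a contribution of order $d^{-9}\oNum_G(\oK_{1,3})^2\,\oNum_G(\oP_4)$, appears in neither (\ref{eq:bipartite-ub-hypothesis-d8}) nor (\ref{eq:bipartite-ub-hypothesis-d9}) (the latter pairs $\oP_4$ only with $\Num_G(E)^2\log^4 n$). The paper handles this with an additional Cauchy--Schwarz step, $\big(\sum_{v\in V_L}\deg(v)^3\big)^2\le\big(\sum_v\deg(v)^2\big)\big(\sum_v\deg(v)^4\big)\lesssim\Num_G(P_2,E)\big(\oNum_G(\oK_{1,4})+\Num_G(E)\big)\ll d^5$, combined with the crude bound $d^{-5}\sum_{i,j,k}\deg(i,j)\deg(i,k)\ll d^{-1}$; only the terms carrying the $\Num_G(E)^2\log^4 n$ prefactor are sent to (\ref{eq:bipartite-ub-hypothesis-d8}) and (\ref{eq:bipartite-ub-hypothesis-d9}), and for those the refined bound $\sum_{i,j,k}\deg(i,j)\deg(i,k)\lesssim d^{5/2}+\Num_G(P_2)^{3/2}+\oNum_G(\oP_4)$ is also needed. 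Your sketch needs these (or equivalent) estimates to close; as written, the matching of hypotheses to terms does not cover all the products that arise.
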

We defer the proof of this lemma to Appendix~\ref{appsec:subgraph-statistics}.
By Lemma~\ref{lem:bipartite-ub-hypothesis-translation}, we can choose functions $\alpha(n), \beta(n), \gamma(n) \gg 1$ such that
\begin{align}
    \label{eq:bipartite-ub-alpha-ub}
    1
    &\gg
    \alpha(n) d^{-2}
    \sum_{i,j \in V_R} \deg(i,j)^2, \\
    \label{eq:bipartite-ub-beta-ub}
    1
    &\gg
    \beta(n) d^{-2}
    \sum_{i,j \in V_R} \deg(i,j), \\
    \notag
    1
    &\gg
    \gamma(n) \Bigg[
        d^{-4} \sum_{i,j,k,\ell \in V_R} \deg(i,j,k,\ell)^2 +
        d^{-5} \sum_{i,j,k\in V_R} \deg(i,j) \deg(i,k) \\
        \label{eq:bipartite-ub-gamma-ub1}
        &\qquad +
        d^{-6} \sum_{i,j,k,\ell \in V_R} \deg(i,j,k) \deg(i,j,\ell)
    \Bigg], \\
    \notag
    1
    &\gg
    \gamma(n) d^{-4}
    \lt[
        \sum_{v\in V_L}
        \lt(
            \deg(v)^3 + \deg(v) \log^2 n
        \rt)
    \rt]^2 
    \Bigg[
        d^{-4} \sum_{i,j,k,\ell \in V_R} \deg(i,j,k,\ell)^2 \\
        \label{eq:bipartite-ub-gamma-ub2}
        &\qquad +
        d^{-5} \sum_{i,j,k\in V_R} \deg(i,j) \deg(i,k) +
        d^{-6} \sum_{i,j,k,\ell \in V_R} \deg(i,j,k) \deg(i,j,\ell)
    \Bigg].
\end{align}
We can now define the set $S\in \sigma(\XR)$ by
\begin{equation}
    \label{eq:bipartite-ub-def-s}
    S = \Sop \cap \Syv \cap \Sze \cap \Szv,
\end{equation}
where the constituent events $\Sop, \Syv, \Sze, \Szv \in \sigma(\XR)$ are defined by
\begin{eqnarray}
    \label{eq:bipartite-ub-def-sop}
    \Sop
    &=&
    \lt\{
        \XR \in \bR^{d\times |V_R|}:
        \norm{\Delta_{N(v)}}_{\op}
        \le
        100 \sqrt{\f{\deg(v) + \log n}{d}}
        ~\text{for all}~
        v\in V_L
    \rt\}, \\
    \label{eq:bipartite-ub-def-syv}
    \Syv
    &=&
    \lt\{
        \XR \in \bR^{d\times |V_R|}:
        \sigY{\XR}
        \le
        \alpha(n) d^{-2}
        \sum_{i,j \in V_R} \deg(i,j)^2
    \rt\}, \\
    \label{eq:bipartite-ub-def-sze}
    \Sze
    &=&
    \lt\{
        \XR \in \bR^{d\times |V_R|}:
        \eZ{\XR}
        \le
        \beta(n) d^{-2}
        \sum_{i,j \in V_R} \deg(i,j)
    \rt\}, \\
    \label{eq:bipartite-ub-def-szv}
    \Szv
    &=&
    \lt\{
        \begin{array}{l}
            \XR \in \bR^{d\times |V_R|}:
            \sigZ{\XR}
            \le
            \displaystyle
            \gamma(n) \Bigg[
                d^{-4} \sum_{i,j,k,\ell \in V_R} \deg(i,j,k,\ell)^2 \\
                \displaystyle
                \qquad
                + d^{-5} \sum_{i,j,k\in V_R} \deg(i,j) \deg(i,k) 
                d^{-6} \sum_{i,j,k,\ell \in V_R} \deg(i,j,k) \deg(i,j,\ell)
            \Bigg]
        \end{array}
    \rt\}.
\end{eqnarray}
The event $\Sop$ provides a spectral bound on the $\Deln$, which will be important for bounding $\chisq$ divergence by the exponentiated overlap in Section~\ref{subsec:bipartite-ub-2mm}.
The events $\Syv$, $\Sze$, $\Szv$ control the first and second moments of the overlaps $Y$ and $Z$ with respect to the second input, when the first input is held fixed.
(Note that for any fixed $\XoneR$, $\E Y\lt(\XoneR, \XtwoR\rt) = 0$ over $\XtwoR \sim \cN(0, I_d)^{\otimes |V_R|}$, so the first moment of $Y$ does not need to be explicitly controlled.)
By Gaussian hypercontractivity, this controls the tails of the overlaps $Y$ and $Z$ with respect to $\XtwoR$ for fixed $\XoneR$. 
This, in turn, controls their exponential integrals by integration by tails.

The next proposition states that $S$ is a high probability event over $\XR \sim \cN(0, I_d)^{\otimes n}$.
The proof of this proposition is deferred to Section~\ref{subsec:bipartite-ub-s-hp-proof}.

\begin{proposition}
    \label{prop:bipartite-ub-s-hp}
    Suppose that $d \gg \max_{v\in V_L} \degv + \log n$.
    Then, there exists a constant $C>0$ such that for all sufficiently large $n$, $\P(\Sopc) \le Cn^{-1}$, $\P(\Syvc) \le C\alpha(n)^{-1}$, $\P(\Szec) \le C\beta(n)^{-1}$, and $\P(\Szvc) \le C\gamma(n)^{-1}$.
\end{proposition}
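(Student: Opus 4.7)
The plan is to treat $\Sop$ by a standard singular value concentration argument with a union bound over $v \in V_L$, and to treat each of $\Syv$, $\Sze$, $\Szv$ by Markov's inequality applied to the first moments (in $\XR$) of the random variables $\sigY{\XR}$, $\eZ{\XR}$, and $\sigZ{\XR}$ respectively. The thresholds in the definitions (\ref{eq:bipartite-ub-def-syv})--(\ref{eq:bipartite-ub-def-szv}) have been precisely designed so that each such first moment is bounded, up to a universal constant, by the target quantity divided by $\alpha(n)$, $\beta(n)$, or $\gamma(n)$; once the moment bounds are established, Markov delivers the proposition.

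For $\Sop$, I will mimic the proof of Proposition~\ref{prop:general-ub-svop-high-prob}: apply Lemma~\ref{lem:gaussian-singular-values-concentration} to each $\Xn \in \bR^{d \times \degv}$ with $t = C\sqrt{\log n}$ for a sufficiently large constant $C$. When $d \gg \degv + \log n$, this yields $\norm{\Deln}_{\op} \le 100 \sqrt{(\degv + \log n)/d}$ with probability at least $1 - 2\exp(-C^2 (\log n)/2) \le n^{-3}$ per vertex. A union bound over the at most $n$ vertices $v \in V_L$ produces $\P(\Sopc) \le Cn^{-1}$.

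For $\Syv$ and $\Sze$, the key algebraic step is to swap the order of summation and use the Gaussian second moment identity $\E_{\XtwoR}[\Deltwo[i,j]\Deltwo[k,\ell]] = d^{-1}(\delta_{i,k}\delta_{j,\ell} + \delta_{i,\ell}\delta_{j,k})$. Rewriting
\[
Y(\XoneR, \XtwoR) = \sum_{i,j\in V_R} \deg(i,j)\, \Delone[i,j]\, \Deltwo[i,j],
\]
a direct covariance computation in $\XtwoR$ collapses to $\sigY{\XoneR} = 2d^{-1} \sum_{i,j\in V_R} \deg(i,j)^2 (\Delone[i,j])^2$; taking $\E_{\XoneR}$ and applying the same moment identity gives $\E_{\XR}\sigY{\XR} \le 4 d^{-2}\sum_{i,j\in V_R}\deg(i,j)^2$, so Markov yields $\P(\Syvc) \le 4\alpha(n)^{-1}$. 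For $\Sze$, I expand
$\eZ{\XoneR} = \sum_v \sum_{i,j,k,\ell \in \Nv} \Delone[i,j]\Delone[k,\ell] \E_{\XtwoR}[\Deltwo[j,k]\Deltwo[\ell,i]]$
using the same identity, producing the two contraction patterns $(j=\ell, k=i)$ and $(i=j, k=\ell)$ that assemble into $\sum_{i,j\in V_R} \deg(i,j)(\Delone[i,j])^2$ and $\sum_{i,k\in V_R} \deg(i,k)\Delone[i,i]\Delone[k,k]$. Taking $\E_{\XoneR}$ a second time gives $\E_{\XR}\eZ{\XR} \lesssim d^{-2} \sum_{i,j\in V_R} \deg(i,j)$, and Markov finishes.

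The main obstacle is $\Szv$, which requires computing the variance of the degree-four (in $\XtwoR$) polynomial $Z(\XoneR, \XtwoR)$, with coefficients themselves degree four in $\XoneR$. Writing $Z = \sum_{a,b,c,d\in V_R} C_{a,b,c,d}(\XoneR)\, \Deltwo[a,b]\Deltwo[c,d]$ with coefficients $C_{a,b,c,d}$ supported on quadruples lying in some common neighborhood $\Nv$, the variance decomposes via Wick contractions into a sum over two disjoint blocks of pairings among the eight $\Xtwo$ factors; after discarding the mean, each surviving contraction pattern leaves a combinatorial weight that depends on how many of the indices $a,b,c,d,a',b',c',d'$ are forced to coincide. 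I will enumerate the contraction patterns and check that each surviving pattern collapses the $v_1, v_2$ sum in $V_L$ into one of the three shared-degree sums $\sum \deg(i,j,k,\ell)^2$, $\sum \deg(i,j)\deg(i,k)$, or $\sum \deg(i,j,k)\deg(i,j,\ell)$ appearing in (\ref{eq:bipartite-ub-def-szv}), after a final round of Gaussian moments in $\XoneR$. Summing the $O(1)$ resulting patterns gives $\E_{\XR}\sigZ{\XR}$ bounded, up to an absolute constant, by the bracketed expression in the definition of $\Szv$ divided by $\gamma(n)$, and Markov yields $\P(\Szvc) \le C\gamma(n)^{-1}$. The constant $C$ in the proposition is then the maximum of the four constants obtained above.
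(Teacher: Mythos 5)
Your proposal is correct and follows essentially the same route as the paper's own proof: singular-value concentration (Lemma~\ref{lem:gaussian-singular-values-concentration}) with a union bound over $v\in V_L$ for $\Sop$, and Markov's inequality applied to $\E_{\XR}\sigY{\XR}$, $\E_{\XR}\eZ{\XR}$, and $\E_{\XR}\sigZ{\XR}$, each computed via the same Gaussian second-moment/Wick expansions, for the remaining three events. The only point worth making explicit is that Markov's inequality for $\Sze$ requires $\eZ{\XR}\ge 0$, which follows at once from your own expansion after resumming the cross term as $d^{-1}\sum_{v\in V_L}\bigl(\sum_{i\in N(v)}\Delone[i,i]\bigr)^2$.
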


For technical reasons, the nonnegativity of $\eZ{\XR}$ will be useful in the proof of Proposition~\ref{prop:bipartite-ub-s-hp} and elsewhere in our argument.
We record this property in the following proposition, whose proof is also deferred to Section~\ref{subsec:bipartite-ub-s-hp-proof}.
\begin{proposition}
    \label{prop:bipartite-ub-ez-nonnegative}
    For all $\XR\in \bR^{d\times |V_R|}$, we have that $\eZ{\XR} \ge 0$.
\end{proposition}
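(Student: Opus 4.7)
The plan is to reduce the proposition to a summand-by-summand Gaussian moment computation. Since
\[
\eZ{\XR} = \sum_{v \in V_L} \E_{\XtwoR} \Tr\bigl((\Delonen \Deltwon)^2\bigr),
\]
and the outer sum is over a fixed set, it suffices to show that for each $v \in V_L$ and each fixed symmetric matrix $A \in \bR^{\degv \times \degv}$ (playing the role of $\Delonen$, which depends only on $\XoneR = \XR$), we have
\[
\E_{Y} \Tr\bigl((AB)^2\bigr) \ge 0,
\qquad
\text{where } B = d^{-1} Y^\top Y - \In
\]
and $Y \in \bR^{d \times \degv}$ has i.i.d.\ standard Gaussian entries (corresponding to the $\Nv$-indexed columns of $\XtwoR$). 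Note that $A$ need not be positive semidefinite, so the usual $\|A^{1/2} B A^{1/2}\|_F^2$ trick does not apply; the nonnegativity must come from averaging over $B$.

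The key ingredient is the Gaussian second-moment identity
\[
\E B_{ab} B_{cd} = d^{-1}\bigl(\delta_{ac}\delta_{bd} + \delta_{ad}\delta_{bc}\bigr),
\]
which I would obtain by a one-line application of Isserlis' theorem to the four Gaussian factors in $\E Y_{\alpha a} Y_{\alpha b} Y_{\beta c} Y_{\beta d}$, summing over $\alpha, \beta \in [d]$, and then cancelling the $\delta_{ab}\delta_{cd}$ terms against the $-\In$ contributions in the definition of $B$. Expanding the trace as $\Tr((AB)^2) = \sum_{i,j,k,\ell} A_{ij} A_{k\ell} B_{jk} B_{\ell i}$, substituting the identity with $(a,b,c,d) = (j,k,\ell,i)$, and collapsing the Kronecker deltas yields
\[
\E \Tr\bigl((AB)^2\bigr) = d^{-1}\Bigl[\|A\|_F^2 + (\Tr A)^2\Bigr] \ge 0,
\]
which gives the claim after summing over $v$.

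The argument is entirely routine; there is no real obstacle, only the mild bookkeeping in verifying the fourth-moment identity for $B$. I would present it as a short lemma (the Gaussian moment identity) followed by a two-line algebraic manipulation of the trace.
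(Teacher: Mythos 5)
Your proposal is correct and follows essentially the same route as the paper: both expand the trace into coordinates, apply the Gaussian second-moment identity $\E\lt[\Delta^{(2)}_{a,b}\Delta^{(2)}_{c,d}\rt] = d^{-1}(\delta_{a,c}\delta_{b,d}+\delta_{a,d}\delta_{b,c})$, and arrive at the same manifestly nonnegative expression $d^{-1}\sum_{v\in V_L}\lt[\norm{\Delta^{(1)}_{\Nv}}_{\HS}^2 + \Tr\lt(\Delta^{(1)}_{\Nv}\rt)^2\rt]$. The only difference is cosmetic (your per-vertex matrix formulation versus the paper's direct coordinate bookkeeping).
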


\subsection{Bounding $\chisq$ Divergence with the Second Moment Method}
\label{subsec:bipartite-ub-2mm}

In this section, we will upper bound the quantity $\chisq(\mu^S, \nu)$ for the $S$ defined in (\ref{eq:bipartite-ub-def-s}).
Throughout this section, we adopt the notational convention that $\E_{\XR}$ denotes expectation with respect to $\XR \sim \cN(0, I_d)^{\otimes |V_R|}$, and $\E_{\XR \in S}$ denotes expectation with respect to $\XR \sim \cL \lt(\cN(0, I_d)^{\otimes |V_R|} | S\rt)$.
Similarly, let $\E_{\XoneR, \XtwoR}$ and $\E_{\XoneR, \XtwoR \in S}$ denote expectation with respect to $\XoneR, \XtwoR$ sampled i.i.d. from $\cN(0, I_d)^{\otimes |V_R|}$ and $\cL \lt(\cN(0, I_d)^{\otimes |V_R|} | S\rt)$, respectively.
The main result of this section is the following lemma.

\begin{lemma}[Bounds from the Second Moment Method, Bipartite Setting]
    \label{lem:bipartite-ub-2mm}
    Suppose that $d \gg \max_{v\in V_L} \deg(v) + \log n$.
    For all sufficiently large $n$, we have that
    \begin{align*}
        \label{lem:bipartite-ub-2mm}
        &\chisq\lt(\mu^S, \nu\rt)
        \le
        -1 + \exp\lt(
            4\cdot 10^{12}d^{-3}
            \sum_{v\in V_L} \lt(
                \deg(v)^4 + \deg(v) \log^3 n
            \rt)
        \rt) \\
        &\qquad \times
        \lt[
            \E_{\XoneR, \XtwoR \in S}
            \exp\lt(Y\lt(\XoneR, \XtwoR\rt)\rt)
        \rt]^{1/2}
        \lt[
            \E_{\XoneR, \XtwoR \in S}
            \exp\lt(\f12 Z\lt(\XoneR, \XtwoR\rt)\rt)
        \rt]^{1/2}.
    \end{align*}
\end{lemma}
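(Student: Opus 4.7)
The plan is to mirror the argument of Lemma~\ref{lem:general-ub-refined-2mm}, exploiting the fact that in the bipartite setting $\mu^S$ is already a mixture distribution parametrized by $\XR$, with the key simplification that the two replicas $\XoneR,\XtwoR$ are sampled independently from $\cL(\cN(0,I_d)^{\otimes|V_R|}\mid S)$ rather than from a nontrivial coupling. The starting point is the second moment method identity
\[
    1+\chisq(\mu^S,\nu)
    =
    \E_{\phi\sim\nu}\lt(\rn{\mu^S}{\nu}(\phi)\rt)^{2}
    =
    \E_{\XoneR,\XtwoR\in S}\;
    \E_{\phi\sim\nu}\;
    \rn{\mu(\XoneR)}{\nu}(\phi)\,\rn{\mu(\XtwoR)}{\nu}(\phi),
\]
where $\mu(\XR)$ denotes $\mu$ with $\XR$ fixed. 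The crucial observation is that conditional on $\XR$, the columns of $A'_G\odot d^{-1/2}\XRt\XL$ are independent across $v\in V_L$, with column $v$ distributed as $\cN(0,d^{-1}\Xnt\Xn)$. Thus $\mu(\XR)=\bigotimes_{v\in V_L}\cN(0,\,\Delta_{N(v)}+I_{N(v)})$ and $\nu=\bigotimes_{v\in V_L}\cN(0,I_{N(v)})$, so the inner expectation factors.

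I would apply Proposition~\ref{prop:general-ub-2mm-inner-expectation} once per $v\in V_L$, using the definition of $\Sop$ (which holds since $\XoneR,\XtwoR\in S\subseteq\Sop$) combined with $d\gg\max_{v}\deg(v)+\log n$ to verify its positive-definiteness hypothesis. This yields
\[
    \E_{\phi\sim\nu}\rn{\mu(\XoneR)}{\nu}(\phi)\rn{\mu(\XtwoR)}{\nu}(\phi)
    =
    \prod_{v\in V_L}\det\lt(I_{N(v)}-\Delone[N(v)]\Deltwo[N(v)]\rt)^{-1/2}.
\]
Next I would apply Lemma~\ref{lem:det-to-exp-to-deg3} to each factor, once again verifying $\|\Delone[N(v)]\Deltwo[N(v)]\|_{\op}\le\eps$ on $\Sop$ for large $n$. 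This gives
\[
    \det^{-1/2}\;\le\;\etr\lt(\tfrac12\Delone[N(v)]\Deltwo[N(v)]\rt)\,\etr\lt(\tfrac14(\Delone[N(v)]\Deltwo[N(v)])^{2}\rt)\,\exp\lt(\tfrac12\!\!\sum_{\lambda\in\spec(\Delone[N(v)]\Deltwo[N(v)])}\!\!|\lambda|^{3}\rt).
\]

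The remaining step is bookkeeping. The cubic tail is controlled deterministically on $\Sop$: bounding $\sum_{\lambda}|\lambda|^3\le \deg(v)\cdot(10^4)^3(\deg(v)+\log n)^3/d^3$ and applying Jensen in the form $(a+b)^3\le 4(a^3+b^3)$ gives a per-vertex deterministic bound of $O(d^{-3}(\deg(v)^4+\deg(v)\log^3 n))$, which sums over $v\in V_L$ to produce exactly the exponential prefactor $\exp(4\cdot 10^{12}d^{-3}\sum_v(\deg(v)^4+\deg(v)\log^3 n))$ once the absolute constants are chased (I would choose the constant generously to absorb the $10^4$'s, the factor of $4$ from Jensen, and the factor of $\tfrac12$ from the exponent). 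The product of the two $\etr$-factors becomes $\exp(\tfrac12 Y(\XoneR,\XtwoR)+\tfrac14 Z(\XoneR,\XtwoR))$ upon recognizing $\sum_v\Tr(\Delone[N(v)]\Deltwo[N(v)])=Y$ and $\sum_v\Tr((\Delone[N(v)]\Deltwo[N(v)])^2)=Z$. Finally, Cauchy–Schwarz in the form $\E[\exp(\tfrac12 Y)\exp(\tfrac14 Z)]\le(\E\exp(Y))^{1/2}(\E\exp(\tfrac12 Z))^{1/2}$ produces the two claimed exponentiated-overlap factors, and subtracting $1$ from both sides yields the stated bound.

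The argument is essentially a streamlined version of the general-mask proof, so there is no genuinely new technical obstacle; the only care required is ensuring that the per-vertex application of Lemma~\ref{lem:det-to-exp-to-deg3} is legitimate (i.e.\ the operator-norm hypothesis holds uniformly in $v\in V_L$, which is precisely what $\Sop$ guarantees) and that the constants in the cubic-error bound do combine to give the clean $4\cdot 10^{12}$. The independence of $\XoneR$ and $\XtwoR$ in the bipartite setting is what makes the whole calculation go through cleanly without any of the coupled-distribution machinery of Sections~\ref{sec:general-ub-main-argument}--\ref{sec:general-ub-refined-higher-order-terms}.
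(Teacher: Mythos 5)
Your proposal is correct and follows essentially the same route as the paper: second moment method, factorization of the inner expectation over $v\in V_L$ (the paper packages your per-vertex application of Proposition~\ref{prop:general-ub-2mm-inner-expectation} as Proposition~\ref{prop:bipartite-ub-2mm-inner-expectation}), Lemma~\ref{lem:det-to-exp-to-deg3} on each factor with the cubic spectral tail bounded deterministically on $\Sop$ (the paper's Proposition~\ref{prop:bipartite-ub-spectral-error-term} with $T=3$), and a final Cauchy--Schwarz. Your constant chase for $4\cdot 10^{12}$ also goes through.
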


We will prove this result by techniques analogous to the proof of Lemma~\ref{lem:general-ub-refined-2mm}, using Lemma~\ref{lem:exp-quadratic-gaussian} to bound the inner expectation arising from the second moment method and Lemma~\ref{lem:det-to-exp-to-deg3} to bound the resulting determinant by an exponentiated trace.

For a fixed realization $\XR\in \bR^{d\times |V_R|}$, let $\mu(\XR)$ denote the measure $\mu$ conditioned on $\XR$.
Formally, $\mu(\XR)$ is the law of $A'_G \odot d^{-1/2} \XRt \XL$, where $\XL \sim \cN(0, I_d)^{\otimes |V_L|}$ and $\XR$ is fixed.
Note that $\mu^S = \E_{\XR \in S} \mu(\XR)$.
Crucially, the measure $\mu\lt(\XR\rt)$ is a jointly Gaussian matrix with independent columns indexed by $V_L$, where the nonzero entries of the column corresponding to $v\in V_L$ form a jointly Gaussian vector with covariance matrix $d^{-1} \Xnt \Xn = \Deln + \In$.
Therefore,
\[
     \mu(\XR)
     =
     \bigotimes_{v\in V_L}
     \cN\lt(0, \Deln + \In\rt).
\]
Since $\nu$ is a jointly Gaussian matrix with the same nonzero entries as $\mu\lt(\XR\rt)$, where every column has identity covariance, we similarly have
\[
     \nu
     =
     \bigotimes_{v\in V_L}
     \cN\lt(0, \In\rt).
\]
The next proposition bounds the inner expectation in our application of the second moment method, analogously to Proposition~\ref{prop:general-ub-2mm-inner-expectation}.
\begin{proposition}
    \label{prop:bipartite-ub-2mm-inner-expectation}
    Let $\XoneR, \XtwoR \in \bR^{d\times |V_R|}$ be two fixed realizations of these random variables such that for all $v\in V_L$, the matrix $\lt(\Delonen + \In\rt)^{-1} + \lt(\Deltwon + \In\rt)^{-1} - \In$ is positive definite.
    Then, it follows that
    \[
        \E_{M\sim \nu}
        \rn{\mu\lt(\XoneR\rt)}{\nu}(M)
        \rn{\mu\lt(\XtwoR\rt)}{\nu}(M)
        =
        \prod_{v\in V_L}
        \det\lt(\In - \Delonen \Deltwon\rt)^{-1/2}.
    \]
\end{proposition}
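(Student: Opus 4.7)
The plan is to leverage the product structure of $\mu(\XR)$ and $\nu$ over $v \in V_L$ to reduce the claimed identity to a product of instances of Proposition~\ref{prop:general-ub-2mm-inner-expectation}. Since both measures factor as independent Gaussians indexed by $v \in V_L$, the Radon--Nikodym derivative $\rn{\mu(\XR)}{\nu}$ itself factors as a product over $v \in V_L$, and the integral against $\nu$ (which also has independent components) will factor accordingly.

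First, I would write down explicitly that for any $\XR \in \bR^{d \times |V_R|}$,
\[
    \rn{\mu(\XR)}{\nu}(M)
    = \prod_{v \in V_L}
    \rn{\cN(0, \Deln + \In)}{\cN(0, \In)}(M_v),
\]
where $M_v$ denotes the subvector of the $v$th column of $M$ indexed by $\Nv$. This follows from the explicit product decompositions
\[
    \mu(\XR) = \bigotimes_{v \in V_L} \cN(0, \Deln + \In),
    \qquad
    \nu = \bigotimes_{v \in V_L} \cN(0, \In),
\]
noted just before the proposition. Consequently, the product $\rn{\mu(\XoneR)}{\nu}(M)\rn{\mu(\XtwoR)}{\nu}(M)$ factors as a product over $v \in V_L$ of terms depending only on $M_v$.

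Next, since $\nu$ has independent components across $v \in V_L$, the expectation $\E_{M \sim \nu}$ splits as a product over $v \in V_L$ of expectations over $M_v \sim \cN(0, \In)$. This reduces the left-hand side to
\[
    \prod_{v \in V_L}
    \E_{M_v \sim \cN(0, \In)}
    \rn{\cN(0, \Delonen + \In)}{\cN(0, \In)}(M_v)
    \rn{\cN(0, \Deltwon + \In)}{\cN(0, \In)}(M_v).
\]
The positive definiteness hypothesis ensures that Proposition~\ref{prop:general-ub-2mm-inner-expectation} applies to each factor (the proof of that proposition only uses the analogous positive definite condition and the explicit Gaussian densities, and does not require the Gaussians to be realized as conditional distributions of $\muv$). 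Applying Proposition~\ref{prop:general-ub-2mm-inner-expectation} to each $v \in V_L$ then yields $\det(\In - \Delonen \Deltwon)^{-1/2}$ per factor, and taking the product gives the claim.

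There is no substantive obstacle; the only care needed is bookkeeping. Specifically, I must verify that the indexing of $M$ by $V_R \times V_L$ correctly induces independent column marginals supported on $\Nv$ under $\nu$, and that the factorization of the Radon--Nikodym derivative holds because both $\mu(\XR)$ and $\nu$ have the same product structure with matched supports. Once these are confirmed, the result is a direct reduction to the already proven Proposition~\ref{prop:general-ub-2mm-inner-expectation}.
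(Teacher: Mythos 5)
Your proposal is correct and follows essentially the same route as the paper: both use the product decompositions of $\mu(\XR)$ and $\nu$ over $v \in V_L$ to factor the expectation, and then evaluate each factor exactly as in Proposition~\ref{prop:general-ub-2mm-inner-expectation} (the paper phrases this as "a computation identical to" that proposition rather than citing it directly, but the content is the same).
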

\begin{proof}
    Because $\mu(\XR)$ and $\nu$ are both product distributions over $v\in V_L$, we have that
    \begin{align*}
        &\E_{M\sim \nu}
        \rn{\mu\lt(\XoneR\rt)}{\nu}(M)
        \rn{\mu\lt(\XtwoR\rt)}{\nu}(M) \\
        &\qquad =
        \prod_{v\in V_L}
        \lt(
            \E_{\phi \sim \cN(0, \In)}
            \rn{\cN\lt(0, \Delonen + \In\rt)}{\cN\lt(0, \In\rt)}(\phi)
            \rn{\cN\lt(0, \Deltwon + \In\rt)}{\cN\lt(0, \In\rt)}(\phi)
        \rt).
    \end{align*}
    By a computation identical to that of Proposition~\ref{prop:general-ub-2mm-inner-expectation}, where we use Lemma~\ref{lem:exp-quadratic-gaussian} to evaluate the exponential integral of a Gaussian quadratic form, the inner expectation evaluates as
    \[
        \E_{\phi \sim \cN(0, \In)}
        \rn{\cN\lt(0, \Delonen + \In\rt)}{\cN\lt(0, \In\rt)}(\phi)
        \rn{\cN\lt(0, \Deltwon + \In\rt)}{\cN\lt(0, \In\rt)}(\phi)
        \le
        \det\lt(\In - \Delonen \Deltwon\rt)^{-1/2}.
    \]
\end{proof}

The following bound on the spectral error term arising from Lemma~\ref{lem:det-to-exp-to-deg3} will be useful in the proof of Lemma~\ref{lem:bipartite-ub-2mm} and in the analysis to upper bound the exponentiated overlap $\E \exp(\f12 Z)$.
We will use this proposition with $T=3$ in the proof of Lemma~\ref{lem:bipartite-ub-2mm} and with $T=2$ in the argument to control $\E \exp(\f12 Z)$.

\begin{proposition}
    \label{prop:bipartite-ub-spectral-error-term}
    If $\XoneR, \XtwoR \in S$, then for all $v\in V_L$,
    \[
        \norm{\Delonen \Deltwon}_{\op}
        \le
        \f{10000}{d} \lt(\degv + \log n\rt).
    \]
    Furthermore, if $T$ is a positive integer, then
    \[
        \sum_{v\in V_L}
        \sum_{\lambda \in \spec\lt(\Delonen \Deltwon\rt)}
        |\lambda|^T
        \le
        20000^Td^{-T}
        \sum_{v\in V_L}
        \lt(\degv^{T+1} + \degv \log^T n \rt).
    \]
\end{proposition}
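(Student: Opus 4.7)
The plan is to derive both statements directly from the definition of $\Sop$, since $S \subseteq \Sop$ and the proposition only uses information about $\XoneR, \XtwoR$ through their membership in $S$. First I would observe that for each $v \in V_L$ and each $r=1,2$, the definition \eqref{eq:bipartite-ub-def-sop} of $\Sop$ gives $\norm{\Delrn}_{\op} \le 100 \sqrt{(\degv + \log n)/d}$. By submultiplicativity of the operator norm,
\[
    \norm{\Delonen \Deltwon}_{\op}
    \le
    \norm{\Delonen}_{\op} \norm{\Deltwon}_{\op}
    \le
    \f{10000}{d}(\degv + \log n),
\]
which establishes the first claim.

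For the second claim, I would use the first claim together with the observation that $\Delonen \Deltwon$ is a $\degv \times \degv$ matrix, so it has exactly $\degv$ eigenvalues (counted with multiplicity), each bounded in absolute value by the operator norm. Therefore
\[
    \sum_{\lambda \in \spec(\Delonen \Deltwon)} |\lambda|^T
    \le
    \degv \cdot \norm{\Delonen \Deltwon}_{\op}^T
    \le
    \f{10000^T}{d^T} \degv (\degv + \log n)^T.
\]
Then I would apply the convexity estimate $(a+b)^T \le 2^T(a^T + b^T)$ (valid for $a,b \ge 0$) to split the $(\degv + \log n)^T$ factor, yielding
\[
    \sum_{\lambda \in \spec(\Delonen \Deltwon)} |\lambda|^T
    \le
    \f{20000^T}{d^T} \lt(\degv^{T+1} + \degv \log^T n\rt).
\]
Summing this inequality over $v \in V_L$ gives precisely the desired bound. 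There is no real obstacle here: both parts are short consequences of the spectral bound built into $\Sop$, together with the elementary inequalities $\norm{AB}_{\op} \le \norm{A}_{\op}\norm{B}_{\op}$, $\sum_\lambda |\lambda|^T \le (\dim) \cdot \norm{\cdot}_{\op}^T$, and the convexity inequality for $(a+b)^T$.
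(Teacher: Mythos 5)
Your proposal is correct and follows essentially the same route as the paper: both claims are deduced from the $\Sop$ bound in (\ref{eq:bipartite-ub-def-sop}) via submultiplicativity of the operator norm and the bound $\sum_{\lambda}|\lambda|^T \le \degv \norm{\Delonen\Deltwon}_{\op}^T$, followed by splitting $(\degv+\log n)^T$ (the paper uses Jensen's inequality in the form $\lt(\f{x+y}{2}\rt)^T \le \f{x^T+y^T}{2}$, which is just a marginally sharper version of your $2^T$ splitting and yields the same $20000^T$ constant).
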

\begin{proof}
    Recall that $S\subseteq \Sop$.
    The definition (\ref{eq:bipartite-ub-def-sop}) of $\Sop$ implies that
    \[
        \norm{\Delrn}_{\op} \le
        100 \sqrt{\f{\degv + \log n}{d}}
    \]
    for all $v\in V_L$ and each $r=1,2$.
    Therefore, we have that
    \[
        \norm{\Delonen \Deltwon}_{\op}
        \le
        \norm{\Delonen}_{\op}
        \norm{\Deltwon}_{\op}
        \le
        \f{10000}{d} \lt(\degv + \log n\rt),
    \]
    proving the first part of the proposition.
    Moreover, for each $v\in V_L$ we have that
    \begin{align*}
        \sum_{\lambda \in \spec\lt(\Delonen \Deltwon\rt)}
        |\lambda|^T 
        &\le 
        \degv \lt(\f{10000}{d} \lt(\degv + \log n\rt)\rt)^T \\
        &\le
        20000^Td^{-T}
        \lt(\degv^{T+1} + \degv \log^T n\rt),
    \end{align*}
    where the last inequality follows from Jensen's inequality in the form $\lt(\f{x+y}{2}\rt)^T \le \f{x^T + y^T}{2}$ for $x,y > 0$.
    Summing this inequality over $v\in V_L$ completes the proof of the proposition.
\end{proof}

We now have the tools to prove Lemma~\ref{lem:bipartite-ub-2mm}.

\begin{proof}[Proof of Lemma~\ref{lem:bipartite-ub-2mm}]
    By the second moment method, we have that
    \[
        1 + \chisq\lt(\mu^S, \nu\rt)
        =
        \E_{M \sim \nu}
        \lt(
            \E_{\XR \in S}
            \rn{\mu(\XR)}{\nu}
        \rt)
        =
        \E_{\XoneR, \XtwoR \in S}
        \E_{M \sim \nu}
        \rn{\mu\lt(\XoneR\rt)}{\nu}(M)
        \rn{\mu\lt(\XtwoR\rt)}{\nu}(M).
    \]
    By the same argument as in Lemma~\ref{lem:general-ub-main-2mm}, the definition (\ref{eq:bipartite-ub-def-sop}) of $\Sop$ implies that the positive definite condition in Proposition~\ref{prop:bipartite-ub-2mm-inner-expectation} holds when $\XoneR, \XtwoR\in S$ and $n$ is sufficiently large.
    By Proposition~\ref{prop:bipartite-ub-2mm-inner-expectation}, we have that
    \[
        1 + \chisq\lt(\mu^S, \nu\rt)
        =
        \E_{\XoneR, \XtwoR \in S}
        \prod_{v\in V_L}
        \det\lt(\In - \Delonen \Deltwon\rt)^{-1/2}.
    \]
    By Proposition~\ref{prop:bipartite-ub-spectral-error-term}, we have that $\norm{\Delonen \Deltwon}_{\op} \le \f{10000}{d} \lt(\degv + \log n\rt)$ for all $v\in V_L$.
    Since $d \gg \max_{v\in V_L} \degv + \log n$, for the $\eps$ in Lemma~\ref{lem:det-to-exp-to-deg3} we have $\norm{\Delonen \Deltwon}_{\op} \le \eps$ for all $v\in V_L$ for all sufficiently large $n$.
    When this occurs, Lemma~\ref{lem:det-to-exp-to-deg3} implies that
    \begin{align*}
        1 + \chisq\lt(\mu^S, \nu\rt)
        &\le
        \E_{\XoneR, \XtwoR \in S}
        \prod_{v\in V_L}
        \lt[
            \begin{array}{l}
            \displaystyle \etr\lt(\f12 \Delonen \Deltwon\rt)
            \etr\lt(\f14 \lt(\Delonen \Deltwon\rt)^2\rt)
            \\
            \displaystyle
            \qquad \times
            \exp\lt(
                \f12
                \sum_{\lambda \in \spec\lt(\Delonen \Deltwon\rt)}
                |\lambda|^3
            \rt)
            \end{array}
        \rt] \\
        &\le
        \exp\lt(
            4\cdot 10^{12}d^{-3}
            \sum_{v\in V_L} \lt(
                \deg(v)^4 + \deg(v) \log^3 n
            \rt)
        \rt)
        \\
        &\qquad
        \times
        \E_{\XoneR, \XtwoR \in S}
        \exp\lt(
            \f12 Y\lt(\XoneR, \XtwoR\rt) +
            \f14 Z\lt(\XoneR, \XtwoR\rt)
        \rt).
    \end{align*}
    where the last inequality follows from Proposition~\ref{prop:bipartite-ub-spectral-error-term} with $T=3$ and the definitions of $Y$ and $Z$.
    The lemma now follows from Cauchy-Schwarz.
\end{proof}

\subsection{Bounding the Exponentiated Overlap of Two Independent Replicas}
\label{subsec:bipartite-ub-exp-overlap}

Lemma~\ref{lem:bipartite-ub-2mm} leaves the task of bounding the two exponentiated overlaps $\E \exp\lt(Y\lt(\XoneR, \XtwoR\rt)\rt)$ and $\E \exp\lt(\f12 Z\lt(\XoneR, \XtwoR\rt)\rt)$, where both expectations are over independent $\XoneR, \XtwoR \in S$.
In this section, we will bound these quantities by proving the following two lemmas.

\begin{lemma}
    \label{lem:bipartite-ub-exp-y-ub}
    There exists a constant $C$ such that for all sufficiently large $n$,
    \[
        \E_{\XoneR, \XtwoR \in S}
        \exp\lt(
            Y\lt(\XoneR, \XtwoR\rt)
        \rt)
        \le
        \P(S)^{-1}
        \lt(
            1 +
            C\alpha(n)d^{-2}
            \sum_{i,j\in V_R}
            \deg(i,j)^2
        \rt).
    \]
\end{lemma}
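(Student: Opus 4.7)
The plan is to drop the $\XtwoR\in S$ conditioning at the cost of a $\P(S)^{-1}$ factor, then for each fixed $\XoneR\in S$ bound $\E_{\XtwoR}\exp(Y(\XoneR,\XtwoR))$ by $1+O(\sigY{\XoneR})$ by exploiting that $Y$ is, as a function of $\XtwoR$, a centered degree-$2$ polynomial in Gaussian inputs whose variance is tightly controlled by the event $\Syv\supseteq S$.

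First, writing the conditioned joint density explicitly and using $\exp(Y)\ge 0$ together with $\ind{\XtwoR\in S}\le 1$ gives
\[
    \E_{\XoneR,\XtwoR\in S}\exp(Y)
    \le \P(S)^{-1}\,\E_{\XoneR\in S}\E_{\XtwoR}\exp\lt(Y(\XoneR,\XtwoR)\rt),
\]
where in the inner expectation $\XtwoR$ is unconditional, i.e.\ $\XtwoR\sim\cN(0,I_d)^{\otimes |V_R|}$. Fix any $\XoneR\in S$. Viewed as a function of $\XtwoR$, the overlap $Y=\sum_{v\in V_L}\Tr(\Delonen\Deltwon)$ is a polynomial of degree $2$ in i.i.d.\ standard Gaussians; it is centered since $\E_{\XtwoR}\Deltwon=0$, and its variance is by definition $\sigY{\XoneR}\le \alpha(n)d^{-2}\sum_{i,j\in V_R}\deg(i,j)^2$ on $\Syv$, which is $o(1)$ by~(\ref{eq:bipartite-ub-alpha-ub}).

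Using the elementary bound $|e^x-1-x|\le \tfrac12 x^2 e^{|x|}$, the centering $\E_{\XtwoR}Y=0$, and Cauchy--Schwarz,
\[
    \E_{\XtwoR}\exp(Y)
    \le 1+\tfrac12\sqrt{\E_{\XtwoR}Y^4}\,\sqrt{\E_{\XtwoR}\exp(2|Y|)}.
\]
Gaussian hypercontractivity for degree-$2$ polynomials (or integrating Lemma~\ref{lem:gaussian-hypercontractivity-tails} with $k=2$ directly) gives $\E_{\XtwoR}Y^4\le 81\sigY{\XoneR}^2$. The same lemma yields the subexponential tail $\P_{\XtwoR}[|Y|>s]\le C_2\exp(-c_2 s/\sqrt{\sigY{\XoneR}})$, and integration by tails then produces
\[
    \E_{\XtwoR}\exp(2|Y|)
    \le 1+\f{2C_2}{c_2/\sqrt{\sigY{\XoneR}}-2}
\]
whenever $\sqrt{\sigY{\XoneR}}<c_2/2$. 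Combining the two estimates gives $\E_{\XtwoR}\exp(Y)\le 1+C'\sigY{\XoneR}$ for an absolute constant $C'$ and all sufficiently large $n$; averaging over $\XoneR\in S\subseteq\Syv$ and applying the $\Syv$ bound on $\sigY{\XoneR}$ yields $\E_{\XoneR\in S}\E_{\XtwoR}\exp(Y)\le 1+C'\alpha(n)d^{-2}\sum_{i,j\in V_R}\deg(i,j)^2$, which after multiplication by $\P(S)^{-1}$ gives the lemma.

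The delicate step is controlling $\E_{\XtwoR}\exp(2|Y|)$: a degree-$2$ Gaussian polynomial has only subexponential (not sub-Gaussian) tails, so the integral $\int e^{2s}\P[|Y|>s]\,ds$ converges only once $\sqrt{\sigY{\XoneR}}$ is below an absolute threshold set by the hypercontractivity constant $c_2$. This is precisely why the slowly growing slack $\alpha(n)\gg 1$ was built into the definition of $\Syv$ in~(\ref{eq:bipartite-ub-def-syv}): it lets $\P(\Syvc)$ be vanishingly small so that $S$ remains a high-probability event, while still forcing $\sigY{\XoneR}=o(1)$ uniformly on $S$ and hence making the exponential moment bounded for all large $n$.
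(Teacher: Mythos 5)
Your proof is correct and follows the paper's argument: drop the conditioning on $\XtwoR$ at the cost of a $\P(S)^{-1}$ factor, then for each fixed $\XoneR\in S$ treat $Y$ as a centered degree-2 Gaussian polynomial in the entries of $\XtwoR$ whose variance is at most $\alpha(n)d^{-2}\sum_{i,j\in V_R}\deg(i,j)^2=o(1)$ by $\Syv$ and (\ref{eq:bipartite-ub-alpha-ub}), conclude $\E_{\XtwoR}\exp(Y)\le 1+O\lt(\sigY{\XoneR}\rt)$, and average over $\XoneR\in S$. The only deviation is that where the paper invokes Lemma~\ref{lem:dheeraj-method-deg2} (proved by symmetrizing with an independent copy and integrating the subexponential tail against $\sinh$), you re-derive the same inner bound via the Taylor remainder $|e^x-1-x|\le\f12 x^2e^{|x|}$, Cauchy--Schwarz, a fourth-moment hypercontractivity estimate, and a direct tail-integration bound on $\E_{\XtwoR}\exp(2|Y|)$ --- an equally valid route resting on the same hypercontractivity input and the same smallness threshold for $\sqrt{\sigY{\XoneR}}$.
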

\begin{lemma}
    \label{lem:bipartite-ub-exp-z-ub}
    There exists a constant $C$ such that for all sufficiently large $n$,
    \begin{align*}
        &\E_{\XoneR, \XtwoR \in S}
        \exp\lt(
            \f12 Z\lt(\XoneR, \XtwoR\rt)
        \rt)
        \le
        \P(S)^{-1}
        \exp\lt(
            C\beta(n)d^{-2}
            \sum_{i,j\in V_R}
            \deg(i,j)
        \rt) \\
        &\qquad \times
        \Bigg[
            1 + C\gamma(n)^{1/2}
            \Bigg(
                d^{-4} \sum_{i,j,k,\ell \in V_R} \deg(i,j,k,\ell)^2 +
                d^{-5} \sum_{i,j,k\in V_R} \deg(i,j) \deg(i,k) \\
                &\qquad \qquad +
                d^{-6} \sum_{i,j,k,\ell \in V_R} \deg(i,j,k) \deg(i,j,\ell)
            \Bigg)^{1/2}
        \Bigg].
    \end{align*}
\end{lemma}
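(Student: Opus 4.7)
The plan is to follow the integration-by-tails strategy outlined in Section~\ref{subsec:technical-overview-bipartite-ub-outline}. Since $\exp(Z/2) \ge 0$ everywhere, I first peel off the conditioning on $\XtwoR \in S$ via the identity
\[
\E_{\XoneR, \XtwoR \in S} \exp\lt(\f12 Z\rt)
= \P(S)^{-1} \E_{\XoneR \in S} \E_{\XtwoR} \lt[\exp\lt(\f12 Z\rt) \ind{\XtwoR \in S}\rt],
\]
which produces the $\P(S)^{-1}$ factor in the claim. For each fixed $\XoneR \in S$, I decompose $Z(\XoneR, \XtwoR) = \eZ{\XoneR} + V$ where $V$ is mean-zero and has variance $\sigZ{\XoneR}$ under $\XtwoR \sim \cN(0, I_d)^{\otimes |V_R|}$, and factor $\exp(Z/2) = \exp(\eZ{\XoneR}/2) \exp(V/2)$. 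The deterministic factor $\exp(\eZ{\XoneR}/2)$ is at most $\exp\lt(\f12 \beta(n) d^{-2} \sum_{i,j \in V_R} \deg(i,j)\rt)$ by the defining condition of $\Sze$, which produces the exponential factor in the claim (after adjusting $C$).

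The main task is to prove the fluctuation bound
\[
\E_{\XtwoR}\lt[\exp(V/2) \ind{\XtwoR \in S}\rt] \le \P(S) + C \sqrt{\sigZ{\XoneR}}.
\]
I plan to apply Cauchy-Schwarz in the form
\[
\lt|\E_{\XtwoR}\lt[(\exp(V/2) - 1) \ind{\XtwoR \in S}\rt]\rt|
\le \sqrt{\P(S)} \cdot \sqrt{\E_{\XtwoR}\lt[(\exp(V/2) - 1)^2 \ind{\XtwoR \in S}\rt]},
\]
and then show that the inner second-moment quantity is bounded by $C \sigZ{\XoneR}$. The motivation is that for small $V$, $(\exp(V/2) - 1)^2 \approx V^2/4$, so if the exponential grows mildly on the truncation set, the variance $\sigZ{\XoneR}$ controls this second moment. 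I will then invoke the defining condition of $\Szv$, namely $\sigZ{\XoneR} \le \gamma(n) \cdot (\cdots)$, to translate the fluctuation bound into the square-root expression stated in the lemma.

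The main obstacle is to show $\E_{\XtwoR}[(\exp(V/2) - 1)^2 \ind{\XtwoR \in S}] \lesssim \sigZ{\XoneR}$. The only pointwise control on $|V|$ comes from the spectral bound in Proposition~\ref{prop:bipartite-ub-spectral-error-term} (applied with $T=2$), which gives $|V| \le M$ for $M = O(d^{-2} \sum_v (\deg(v)^3 + \deg(v)\log^2 n))$; since $M$ need not be $O(1)$ under the theorem's hypotheses, the naive bound $(\exp(V/2) - 1)^2 \le V^2 \exp(M)/4$ is too weak. The remedy is to split the integration at an intermediate scale $M_0$: on $\{|V| \le M_0\}$ I plan to bound $(\exp(V/2) - 1)^2 \le V^2 \exp(M_0)/4$ and use the variance $\sigZ{\XoneR}$, while on $\{M_0 < |V| \le M\}$ I plan to use Gaussian hypercontractivity for the degree-$4$ polynomial $V$ (Lemma~\ref{lem:gaussian-hypercontractivity-tails}), which gives $\P[|V| > M_0] \le C_4 \exp(-c_4 (M_0/\sqrt{\sigZ{\XoneR}})^{1/2})$, combined with the pointwise bound $(\exp(V/2) - 1)^2 \le \exp(M)$. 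Balancing these two contributions requires choosing $M_0$ so that $\exp(M_0)$ is a constant while $\exp(M) \cdot \exp(-c_4 (M_0/\sqrt{\sigZ{\XoneR}})^{1/2})$ is $O(\sigZ{\XoneR})$; the polylogarithmic factors in hypotheses (\ref{eq:bipartite-ub-hypothesis-d8}) and (\ref{eq:bipartite-ub-hypothesis-d9}) are calibrated precisely to make this balance feasible.
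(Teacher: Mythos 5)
Your overall architecture matches the paper's: peel off the conditioning on $\XtwoR\in S$ to get the $\P(S)^{-1}$ factor, split $\f12 Z$ into its conditional mean (controlled by $\Sze$) and a centered degree-4 fluctuation, bound the mean factor deterministically, and control the fluctuation's exponential moment via hypercontractivity plus the deterministic truncation coming from $\Sop$ (Proposition~\ref{prop:bipartite-ub-spectral-error-term} with $T=2$). The gap is in the key exponential-moment estimate. Your two-region split bounds the tail region $\{M_0<|V|\le M\}$ by the worst-case value $\exp(M)$ times a \emph{single} tail probability at the fixed scale $M_0=O(1)$, namely $C_4\exp\lt(-c_4(M_0/\sqrt{\sigZ{\XoneR}})^{1/2}\rt)$. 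For this product to be $O(\sigZ{\XoneR})$ you need (up to lower-order terms) $M\lesssim \sigZ{\XoneR}^{-1/4}$, i.e.\ $M^4\,\sigZ{\XoneR}\lesssim 1$. But the only condition supplied by the definition (\ref{eq:bipartite-ub-def-szv}) of $\Szv$ together with (\ref{eq:bipartite-ub-gamma-ub2}) — which is exactly what hypotheses (\ref{eq:bipartite-ub-hypothesis-d8}) and (\ref{eq:bipartite-ub-hypothesis-d9}) are calibrated to produce via Lemma~\ref{lem:bipartite-ub-hypothesis-translation} — is $M^2\,\sigZ{\XoneR}\ll 1$. Your requirement is quadratically stronger in $M$, and $M$ is genuinely large in sparse regimes (e.g.\ $m=n$, $p\asymp n^{-1}$ near the threshold $d\asymp (nmp)^{1/2}$, where $M\asymp \polylog(n)$), so the assertion that the polylogarithmic factors in (\ref{eq:bipartite-ub-hypothesis-d8})–(\ref{eq:bipartite-ub-hypothesis-d9}) "are calibrated precisely to make this balance feasible" is unsupported and, as far as the stated hypotheses go, false: they buy you $M^2\sigZ{\XoneR}\ll 1$, not $M^4\sigZ{\XoneR}\lesssim 1$.

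The repair is not to compare a worst-case value against a single tail probability, but to integrate the hypercontractive tail against the exponential over the whole truncated range: writing $\sigma$ for the standard deviation, one has $\P[|f|>s+\mu]\le C_4\exp(-c_4\sqrt{s/\sigma})$ for \emph{every} $s$, and for $s\le U\le \f{1}{4}\,(\sigma/c_4^2)^{-1}$ the linear growth $\exp(s)$ is dominated pointwise by half the tail decay, so $\int_0^U \exp(-c_4\sqrt{s/\sigma}+s)\,\diff{s}\lesssim \sigma$. This only requires $U\sigma\lesssim 1$, i.e.\ exactly the available $M^2\sigZ{\XoneR}\ll 1$; it is the content of Proposition~\ref{prop:bipartite-ub-truncated-exp} and Lemma~\ref{lem:dheeraj-method-deg4}, which the paper applies directly to $f=\f12 Z$ (its mean being nonnegative by Proposition~\ref{prop:bipartite-ub-ez-nonnegative}), giving the factor $\exp(\f12\eZ{\XoneR})(1+\eta\,\sigZ{\XoneR}^{1/2})$ and hence the stated bound. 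Your Cauchy–Schwarz reformulation through $\E[(\exp(V/2)-1)^2\ind{S}]$ is compatible with this fix (it just replaces $\exp(s)$ by $\exp(2s)$ and tightens the constant in the truncation condition), but with the crude split as written the proof does not go through under the theorem's hypotheses.
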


Lemma~\ref{lem:bipartite-ub-exp-y-ub} is the simpler of these two results, and we will prove it first.
Our main tool is the following lemma, which translates the tail bounds of a degree 2 polynomial $f$ of i.i.d. Gaussians obtained from hypercontractivity to a bound on $\E \exp(f)$.

\begin{lemma}
    \label{lem:dheeraj-method-deg2}
    Let $f$ be a degree 2 polynomial of i.i.d. Gaussian inputs, such that $\E f = 0$ and $\E f^2 = \sigma^2$.
    There exists a small enough constant $\delta > 0$ and a large enough constant $\eta > 0$ such that if $\sigma \le \delta$, then $\E \exp(f) \le 1 + \eta \sigma^2$.
\end{lemma}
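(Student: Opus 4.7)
The plan is to prove the lemma by Taylor expanding $\exp(f)$ and bounding the tail of the moment series using the Gaussian hypercontractivity estimate already established in the paper as Lemma~\ref{lem:gaussian-hypercontractivity-tails}. Specifically, I would apply that lemma with $k = 2$ to $f$, which has $\E f^2 = \sigma^2$, yielding a sub-exponential tail $\P[|f| > t\sigma] \le C_2 \exp(-c_2 t)$ for some absolute constants $C_2, c_2 > 0$.

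The first step is to verify that $\E \exp(|f|) < \infty$ when $\sigma$ is small enough, which follows immediately from the sub-exponential tail and justifies interchanging expectation with the Taylor series. This gives
\[
    \E \exp(f) = 1 + \E f + \tfrac12 \E f^2 + \sum_{m \ge 3} \tfrac{1}{m!} \E f^m
    = 1 + \tfrac{\sigma^2}{2} + \sum_{m \ge 3} \tfrac{1}{m!} \E f^m,
\]
using the hypotheses $\E f = 0$ and $\E f^2 = \sigma^2$. The task thus reduces to controlling the tail sum by $O(\sigma^3)$.

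For the tail, I would use the layer-cake formula on $|f|^m$:
\[
    \E |f|^m = m \int_0^\infty t^{m-1} \P[|f| > t]\, dt
    \le m C_2 \sigma^m \int_0^\infty s^{m-1} e^{-c_2 s} \, ds
    = \f{C_2 \, m!}{c_2^m} \sigma^m.
\]
Summing over $m \ge 3$ and requiring $\sigma \le \delta := c_2/2$ so the geometric series converges gives
\[
    \sum_{m \ge 3} \f{|\E f^m|}{m!}
    \le \sum_{m \ge 3} \f{\E |f|^m}{m!}
    \le C_2 \sum_{m \ge 3} \lt(\f{\sigma}{c_2}\rt)^m
    \le C' \sigma^3
\]
for a constant $C'$ depending only on $C_2, c_2$.

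Combining these estimates yields $\E \exp(f) \le 1 + \sigma^2/2 + C' \sigma^3 \le 1 + (\tfrac12 + C'\delta) \sigma^2$, so choosing $\delta$ small enough and $\eta$ accordingly (e.g., $\eta = 1$) completes the argument. There is no serious obstacle here: the work is almost entirely black-boxed into Lemma~\ref{lem:gaussian-hypercontractivity-tails}, and the only care needed is bookkeeping of constants and verifying dominated convergence to justify swapping the Taylor series with the expectation. The essential reason the lemma holds is that a mean-zero degree-2 Gaussian polynomial has sub-exponential tails, so its exponential moment generating function is finite and, when $\sigma$ is small, dominated by its second-order Taylor term.
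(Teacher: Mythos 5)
Your proof is correct, but it takes a genuinely different route from the paper's. The paper symmetrizes: it introduces an independent copy $f'$, uses $\E \exp(-f') \ge 1$ (Jensen, since $\E f' = 0$) to get $\E \exp(f) \le \E \exp(f-f') = \E \cosh(|f-f'|)$, and then integrates the hypercontractive tail of the symmetrized variable $f-f'$ directly, which avoids handling signed moments altogether. You instead Taylor expand $\exp(f)$, use the exact values $\E f = 0$, $\E f^2 = \sigma^2$ for the low-order terms, and control all moments of order $\ge 3$ via the layer-cake formula applied to the same tail bound from Lemma~\ref{lem:gaussian-hypercontractivity-tails} (with $k=2$), obtaining $\E|f|^m \le C_2 m!\,(\sigma/c_2)^m$ and summing the geometric series under $\sigma \le c_2/2$. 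All steps check out: the interchange of expectation and series is justified by $\E\exp(|f|) < \infty$ (which your condition $\sigma \le c_2/2$ guarantees, since the tail decays like $e^{-c_2 s/\sigma}$ with $c_2/\sigma \ge 2$), the moment computation and geometric-series bound are correct, and the final choice of $\delta$ and $\eta$ works. What each approach buys: the paper's symmetrization is shorter once the trick is seen, needing only one tail integral of $\cosh$; your expansion is more mechanical but slightly sharper, since it identifies the leading constant, giving $\E\exp(f) \le 1 + \tfrac12\sigma^2 + O(\sigma^3)$, whereas the paper's route loses a constant factor through symmetrization. Either suffices for the lemma as stated.
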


Intuitively, this lemma states that if $f$ is centered with sufficiently small variance, then the amount by which $\E \exp(f)$ exceeds $1$ is dominated by its second moment, as one would expect from Taylor expanding $\exp(f)$.

\begin{proof}
    By Lemma~\ref{lem:gaussian-hypercontractivity-tails},
    there exist constants $c_2, C_2 > 0$ such that
    \[
        \P\lt[|f| > t\rt] \le C_2\exp(-c_2 t/\sigma).
    \]
    Let $f'$ be an independent copy of $f$.
    Then,
    \[
        \P\lt[|f-f'| > t\rt]
        \le \P\lt[|f| > t/2\rt] + \P\lt[|f'| > t/2\rt]
        \le 2C_2\exp(-c_2 t/2\sigma).
    \]
    Since $\E f' = 0$, by Jensen's inequality we have $\E \exp(-f') \ge 1$.
    We integrate by tails, exploiting the symmetry of $f-f'$, to deduce
    \begin{align*}
        \E \exp(f)
        &\le \E \exp(f-f')
        = \E \cosh(|f-f'|)
        = 1 + \int_1^\infty \P[\cosh(|f-f'|) > t] \diff{t} \\
        &= 1 + \int_0^\infty \P[|f-f'| > s] \sinh(s) \diff{s}
        \le 1 + 2C_2 \int_0^\infty \exp\lt(-\f{c_2s}{2\sigma}\rt) \sinh(s) \diff{s}.
    \end{align*}
    Set $\delta = \f{c_2}{4}$, so $\sigma \le \delta$ implies $\f{c_2}{2\sigma} \ge 2$.
    Then, this bound implies
    \begin{align*}
        \E \exp(f)
        &\le
        1 + 2C_2 \cdot \f12 \lt[\f{1}{(c_2/2\sigma)-1} - \f{1}{(c_2/2\sigma) + 1}\rt]
        = 1 + \f{2C_2}{(c_2/2\sigma)^2-1} \\
        &\le 1 + \f{2C_2}{\f34 (c_2/2\sigma)^2}
        = 1 + \f{32C_2}{3c_2^2} \sigma^2.
    \end{align*}
    So, the lemma holds with $\eta = \f{32c_2}{C_2^2}$.
\end{proof}

This lemma yields a short proof of Lemma~\ref{lem:bipartite-ub-exp-y-ub}.

\begin{proof}[Proof of Lemma~\ref{lem:bipartite-ub-exp-y-ub}]
    For now, fix a realization of $\XoneR \in S$.
    We will first bound the exponentiated overlap integrated only over $\XtwoR$.
    First, by the nonnegativity of the integrand,
    \[
        \E_{\XtwoR\in S}
        \exp\lt(Y\lt(\XoneR, \XtwoR\rt)\rt)
        \le
        \P(S)^{-1}
        \E_{\XtwoR}
        \exp\lt(Y\lt(\XoneR, \XtwoR\rt)\rt),
    \]
    where we recall that $\E_{\XtwoR}$ denotes expectation over $\XtwoR \sim \cN(0, I_d)^{\otimes |V_R|}$.
    Conditioned on $\XoneR$, $Y\lt(\XoneR, \XtwoR\rt)$ is a degree 2 polynomial in the i.i.d. standard Gaussian entries of $\XtwoR$.
    We can easily check that its conditional expectation is $0$, and its conditional variance is, by definition, $\sigY{\XoneR}$.
    As $\XoneR \in S \subseteq \Syv$, the bounds (\ref{eq:bipartite-ub-alpha-ub}) and (\ref{eq:bipartite-ub-def-syv}) imply that $\sigY{\XoneR}\ll 1$.
    Therefore, for sufficiently large $n$, we have $\lt(\sigY{\XoneR}\rt)^{1/2} \le \delta$ for the $\delta$ in Lemma~\ref{lem:dheeraj-method-deg2}.
    By Lemma~\ref{lem:dheeraj-method-deg2} and the definition (\ref{eq:bipartite-ub-def-syv}) of $\Syv$,
    \begin{align*}
        \E_{\XtwoR\in S}
        \exp\lt(Y\lt(\XoneR, \XtwoR\rt)\rt)
        &\le
        \P(S)^{-1}
        \lt(
            1 + \eta \sigY{\XoneR}
        \rt) \\
        &\le
        \P(S)^{-1}
        \lt(
            1 +
            \eta \alpha(n) d^{-2}
            \sum_{i,j\in V_R}
            \deg(i,j)^2
        \rt).
    \end{align*}
    Because this bound holds for all $\XoneR \in S$, it also holds in expectation over $\XoneR \in S$.
    This proves the lemma with $C = \eta$.
\end{proof}

Next, we will prove Lemma~\ref{lem:bipartite-ub-exp-z-ub} by generalizing the technique used to prove Lemma~\ref{lem:bipartite-ub-exp-y-ub}.
There are two important differences between these two lemmas that will need to be overcome.
The first difference is that, unlike $Y\lt(\XoneR, \XtwoR\rt)$, the polynomial $Z\lt(\XoneR, \XtwoR\rt)$ does not have expectation $0$ over $\XtwoR$ for every fixed $\XoneR$.
So, instead of applying hypercontractivity to $\f12 Z$, we will write
\[
    \E_{\XtwoR}
    \lt(\f12 Z \rt)
    =
    \lt(\f12 \E_{\XtwoR} Z \rt)
    \E_{\XtwoR}
    \lt(\f12 \lt(Z - \E_{\XtwoR} Z\rt)\rt)
\]
and apply hypercontractivity to $\f12 \lt(Z - \E_{\XtwoR} Z\rt)$.
The second and more challenging difference is that $Z$ is a degree 4 polynomial in $\XtwoR$, whereas $Y$ is a degree 2 polynomial, and degree 4 polynomials are generally not exponentially integrable.
If we try to integrate by tails as in Lemma~\ref{lem:dheeraj-method-deg2}, we will reach an integral of the form $\int_0^\infty \exp\lt(-O\lt(\sqrt{\f{s}{\sigma}}\rt) + s\rt) \diff{s}$, which diverges.

We overcome this difficulty with the following observation.
As $s$ increases starting from $0$, the value of $\exp\lt(-O\lt(\sqrt{\f{s}{\sigma}}\rt) + s\rt)$ exponentially decays until $s$ is of scale $1/\sigma$, and then diverges to infinity.
So, if we can truncate this integral at scale $1/\sigma$, we get a well-behaved integral.
Moreover, we do have a way to truncate this integral, because conditioning on the event $\Sop$ gives a finite, albeit weak, upper bound on $Z\lt(\XoneR, \XtwoR\rt)$.
Thus, our hypercontractivity and integration by tails technique allows us to bootstrap from this weak upper bound to the stronger bound of Lemma~\ref{lem:bipartite-ub-exp-z-ub}.

Note that, in this method, we use the high probability event $S$ in two distinct ways.
\begin{enumerate}[label=(\arabic*)]
    \item Similarly to the proof of Lemma~\ref{lem:bipartite-ub-exp-y-ub}, we argue that conditioned on $\XoneR\in S$, the mean and variance of $Z\lt(\XoneR, \XtwoR\rt)$ over the randomness of $\XtwoR$ is not unusually large.
    This uses the events $\Sze$ and $\Szv$.
    \item Using Proposition~\ref{prop:bipartite-ub-spectral-error-term}, we show a deterministic upper bound on $Z\lt(\XoneR, \XtwoR\rt)$ for all pairs $\XoneR, \XtwoR\in S$, which allows us to truncate the integral we obtain when we integrate by tails.
    This uses the event $\Sop$.
\end{enumerate}

We will now present this proof formally.
The following proposition bounds the integral of $\exp\lt(-\sqrt{\f{s}{\sigma}} + s\rt)$ when it is truncated below the scale at which it diverges.
\begin{proposition}
    \label{prop:bipartite-ub-truncated-exp}
    Let $\sigma, U > 0$ with $\sigma U \le \f{1}{4}$. Then,
    \[
        \int_0^U
        \exp\lt(
            -\sqrt{\f{s}{\sigma}} + s
        \rt)
        \diff{s}
        \le
        8\sigma.
    \]
\end{proposition}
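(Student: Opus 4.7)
The plan is to reduce this one-variable calculus inequality to a clean Gaussian-type integral by showing that, on the truncation window, the troublesome $+s$ term in the exponent is dominated by half of the $-\sqrt{s/\sigma}$ term.

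First I would observe the key algebraic fact: for $s \in [0, U]$, the hypothesis $\sigma U \le \tfrac{1}{4}$ gives $s \le \tfrac{1}{4\sigma}$, hence $4\sigma s \le 1$, which is equivalent to $2s \le \sqrt{s/\sigma}$, i.e. $s \le \tfrac{1}{2}\sqrt{s/\sigma}$. Consequently, for every $s$ in the integration range,
\[
    -\sqrt{\frac{s}{\sigma}} + s \le -\frac{1}{2}\sqrt{\frac{s}{\sigma}}.
\]
This is the only place the truncation hypothesis $\sigma U \le \tfrac{1}{4}$ enters; it exactly compensates for the fact that $\exp(-\sqrt{s/\sigma} + s)$ eventually blows up for large $s$ where the linear term $s$ overtakes the square root.

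Next I would extend the integration domain to $[0,\infty)$ (the integrand is now nonnegative and globally integrable) and perform the substitution $u = \sqrt{s/\sigma}$, so $s = \sigma u^2$ and $\diff{s} = 2\sigma u \,\diff{u}$. This gives
\[
    \int_0^U \exp\!\lt(-\sqrt{\tfrac{s}{\sigma}} + s\rt) \diff{s}
    \le \int_0^{\infty} \exp\!\lt(-\tfrac{1}{2}\sqrt{\tfrac{s}{\sigma}}\rt) \diff{s}
    = 2\sigma \int_0^{\infty} u\, e^{-u/2}\, \diff{u}.
\]
Finally, a direct evaluation (integration by parts, or recognizing a Gamma integral) yields $\int_0^\infty u e^{-u/2} \diff{u} = 4$, giving the stated bound $8\sigma$.

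There is no genuine obstacle here; the proposition is an elementary calculus estimate. The only conceptual point worth articulating clearly is the role of the hypothesis $\sigma U \le \tfrac14$: it is precisely the threshold at which the linear growth in the exponent can be absorbed into half of the square-root decay, thereby justifying the extension of the upper limit to infinity and allowing us to handle the truncated integral by a clean Gaussian-style computation.
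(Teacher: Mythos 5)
Your proof is correct and takes essentially the same approach as the paper: both use the hypothesis $\sigma U \le \frac14$ to deduce $s \le \frac12\sqrt{s/\sigma}$ on $[0,U]$, thereby absorbing the linear term into half of the square-root decay. The only immaterial difference is in the final routine step: the paper evaluates the truncated integral of $\exp\bigl(-\frac12\sqrt{s/\sigma}\bigr)$ exactly over $[0,U]$, whereas you extend the domain to $[0,\infty)$ and substitute $u=\sqrt{s/\sigma}$ to get a Gamma-type integral; both yield the bound $8\sigma$.
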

\begin{proof}
    For each $s\le U$, we have $s\le \f{1}{4\sigma}$, so $s\le \f12 \sqrt{\f{s}{\sigma}}$.
    By a routine calculation,
    \[
        \int_0^U
        \exp\lt(-\sqrt{\f{s}{\sigma}} + s\rt)
        \diff{s}
        \le
        \int_0^U
        \exp\lt(-\f12 \sqrt{\f{s}{\sigma}}\rt)
        \diff{s}
        =
        8\sigma -
        \lt(4\sqrt{\sigma U} + 8a\rt)
        \exp\lt(-\f12 \sqrt{\f{U}{\sigma}}\rt)
        \le 8\sigma.
    \]
\end{proof}

The following lemma is the desired analogue of Lemma~\ref{lem:dheeraj-method-deg2}.
As discussed above, we truncate on an event $X\in T$ that provides a deterministic bound on values of $f(X)$.
\begin{lemma}
    \label{lem:dheeraj-method-deg4}
    Let $f$ be a degree 4 polynomial of i.i.d. Gaussian inputs $X$, such that $\E f = \mu \ge 0$ and $\Var f = \sigma^2$.
    Let $U>0$, and let $T$ be an event such that $|f(X)|\le U$ for all $X\in T$.
    There exists a small enough constant $\delta > 0$ and a large enough constant $\eta > 0$ such that if $U\sigma \le \delta$, then
    \[
        \E \lt[\exp(f(X)) \ind{X\in T}\rt]
        \le
        \exp(\mu)(1 + \eta \sigma).
    \]
\end{lemma}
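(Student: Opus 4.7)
The plan is to adapt the integration-by-tails argument from Lemma~\ref{lem:dheeraj-method-deg2}, but with a critical truncation at scale $U$ that makes essential use of the event $T$. The key difference from the degree-$2$ case is that a degree-$4$ polynomial of Gaussians has only a stretched-exponential tail of the form $\exp(-(s/\sigma)^{1/2})$, which is eventually dominated by $\exp(s)$; consequently $\E \exp(g)$ is typically infinite for a centered degree-$4$ polynomial $g$, and some form of truncation is unavoidable.

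I would first center, writing $g = f - \mu$ so that $\E g = 0$, $\E g^2 = \sigma^2$, and $\E[\exp(f)\ind{X\in T}] = \exp(\mu)\, \E[\exp(g)\ind{X\in T}]$. The task then reduces to showing $\E[\exp(g)\ind{X\in T}] \le 1 + \eta \sigma$. I would decompose this expectation according to the sign of $g$: on $\{g < 0\}$ the bound $\exp(g) \le 1$ immediately gives $\E[\exp(g)\ind{g<0}\ind{X\in T}] \le \P[g<0, X\in T]$, while for the part on $\{g\ge 0\}$ I would integrate by tails. Here the event $T$ plays its key role: since $X\in T$ together with $\mu \ge 0$ forces $g = f - \mu \le U$, the tail integral terminates at $s = U$, and summing the two parts yields
\[
    \E[\exp(g)\ind{X\in T}] \le \P[X\in T] + \int_0^U \P[g > s]\, \exp(s)\, \diff s \le 1 + \int_0^U \P[g > s]\, \exp(s)\, \diff s.
\]

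I would then invoke Gaussian hypercontractivity (Lemma~\ref{lem:gaussian-hypercontractivity-tails}) with $k = 4$ to obtain constants $c_4, C_4 > 0$ such that $\P[g > s] \le C_4 \exp(-c_4 (s/\sigma)^{1/2})$. After rescaling to $\tilde\sigma := \sigma/c_4^2$, the tail integral becomes $C_4 \int_0^U \exp(-(s/\tilde\sigma)^{1/2} + s)\, \diff s$, which is exactly the quantity estimated by Proposition~\ref{prop:bipartite-ub-truncated-exp}: it is at most $8 C_4 \tilde\sigma = 8 C_4 \sigma / c_4^2$, provided $\tilde\sigma U \le 1/4$, i.e.\ $\sigma U \le c_4^2/4$. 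Choosing $\delta := c_4^2/4$ and $\eta := 8 C_4 / c_4^2$ then delivers the claim upon multiplying back by $\exp(\mu)$.

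The main obstacle will be precisely the divergence issue described at the outset: without truncation, shrinking $\sigma$ alone cannot control $\E \exp(g)$ because the sub-Gaussian tail fails against the linear growth $s$ once $s \gtrsim 1/\tilde\sigma$. The hypothesis $U\sigma \le \delta$ is exactly what ensures the deterministic cutoff $U$ lies comfortably below this crossover, so that Proposition~\ref{prop:bipartite-ub-truncated-exp} applies. Thus the event $T$ is not used to bound a small failure probability in the usual sense, but rather to provide a deterministic ceiling that prevents the tail integral from ever entering the regime where $e^s$ wins---and this is precisely the bootstrap mechanism that lets a crude $\ell^\infty$-type bound on $f$ be upgraded to a sharp moment generating function estimate.
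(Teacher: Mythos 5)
Your proof is correct and follows essentially the same route as the paper: both arguments integrate by tails with the deterministic truncation at $U$ supplied by the event $T$ (using $\mu \ge 0$), apply Lemma~\ref{lem:gaussian-hypercontractivity-tails} to the centered polynomial $f-\mu$, and invoke Proposition~\ref{prop:bipartite-ub-truncated-exp} with $\tilde\sigma = \sigma/c_4^2$, arriving at the same constants $\delta = c_4^2/4$ and $\eta = 8C_4/c_4^2$. Your centering of $f$ and splitting by the sign of $g$ is only a cosmetic variation on the paper's manipulation of $\exp(|f|)\ind{|f|\le U}$.
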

\begin{proof}
    By Lemma~\ref{lem:gaussian-hypercontractivity-tails} on $f-\mu$, there exist constants $c_4, C_4 > 0$ such that
    \[
        \P\lt[|f|>t+\mu\rt]
        \le \P\lt[|f-\mu|>t\rt]
        \le C_4 \exp\lt(-c_4\sqrt{\f{t}{\sigma}}\rt).
    \]
    So,
    \begin{align*}
        &\E
        \lt[
            \exp(f(X))
            \ind{X\in T}
        \rt]
        \le
        \E \exp(|f|)
        \ind{|f|\le U}
        \le
        \int_0^{\exp(U)}
        \P\lt[\exp(|f|) > t\rt]
        \diff{t} \\
        &\qquad \le
        \exp(\mu) +
        \int_{\exp(\mu)}^{\exp(U)}
        \P\lt[\exp(|f|) > t\rt]
        \diff{t}
        =
        \exp(\mu) +
        \int_{\mu}^U
        \P\lt[|f| > s\rt]
        \exp(s)
        \diff{s} \\
        &\qquad \le
        \exp(\mu) \lt[
            1 +
            \int_{0}^U
            \P\lt[|f| > s+\mu\rt]
            \exp(s)
            \diff{s}
        \rt]
        \le
        \exp(\mu) \lt[
            1 +
            C_4 \int_{0}^U \exp\lt(
                -c_4 \sqrt{\f{s}{\sigma}}
                + s
            \rt) \diff{s}
        \rt] \\
        &\qquad =
        \exp(\mu) \lt[
            1 +
            C_4 \int_{0}^U \exp\lt(
                -\sqrt{\f{s}{\sigma/c_4^2}}
                + s
            \rt) \diff{s}
        \rt].
    \end{align*}
    Pick $\delta = \f{c_4^2}{4}$, so that when $U\sigma \le \delta$, we have $U\cdot \f{\sigma}{c_4^2} \le \f14$.
    Then, Proposition~\ref{prop:bipartite-ub-truncated-exp} implies that
    \[
        \int_{0}^U
        \exp\lt(-\sqrt{\f{s}{\sigma/c_4^2}} + s\rt)
        \diff{s}
        \le
        \f{8\sigma}{c_4^2}.
    \]
    This proves the lemma with $\eta = \f{8C_4}{c_4^2}$.
\end{proof}

We can now prove Lemma~\ref{lem:bipartite-ub-exp-z-ub}.

\begin{proof}[Proof of Lemma~\ref{lem:bipartite-ub-exp-z-ub}]
    For now, fix a realization $\XoneR \in S$.
    We will first bound the exponentiated overlap integrated only over $\XtwoR$.
    First, we have that
    \[
        \E_{\XtwoR \in S}
        \exp\lt(\f12 Z\lt(\XoneR, \XtwoR\rt)\rt)
        =
        \P(S)^{-1}
        \E_{\XtwoR}
        \lt[
            \exp\lt(\f12 Z\lt(\XoneR, \XtwoR\rt)\rt)
            \ind{\XtwoR\in S}
        \rt].
    \]
    By spectrally expanding $Z$, we have that for $\XtwoR\in S$,
    \begin{align*}
        \f12
        Z\lt(\XoneR, \XtwoR\rt)
        &=
        \f12
        \sum_{v\in V_L}
        \sum_{\lambda \in \spec\lt(\Delonen \Deltwon\rt)}
        \lambda^2
        \le
        \f12
        \sum_{v\in V_L}
        \sum_{\lambda \in \spec\lt(\Delonen \Deltwon\rt)}
        |\lambda|^2 \\
        &\le
        2\cdot 10^8 d^{-2}
        \sum_{v\in V_L}
        \lt(
            \degv^3 + \degv \log^2 n
        \rt),
    \end{align*}
    where the last inequality is by Proposition~\ref{prop:bipartite-ub-spectral-error-term} with $T=2$.
    Let $U$ denote this upper bound.
    Conditioned on $\XoneR$, $\f12 Z\lt(\XoneR, \XtwoR\rt)$ is a degree 4 polynomial in the i.i.d. standard Gaussian entries of $\XtwoR$ with mean $\f12 \eZ{\XoneR}$ and variance $\f14 \sigZ{\XoneR}$.
    By Proposition~\ref{prop:bipartite-ub-ez-nonnegative}, we have $\f12 \eZ{\XoneR} \ge 0$.
    As $\XoneR \in S \subseteq \Szv$, the bounds (\ref{eq:bipartite-ub-gamma-ub2}) and (\ref{eq:bipartite-ub-def-szv}) imply that $U^2 \sigZ{\XoneR} \ll 1$.
    Therefore, for sufficiently large $n$, we have $U \lt(\f14 \sigZ{\XoneR}\rt)^{1/2} \le \delta$ for the $\delta$ in Lemma~\ref{lem:dheeraj-method-deg4}.
    By Lemma~\ref{lem:dheeraj-method-deg4} and the definitions (\ref{eq:bipartite-ub-def-sze}), (\ref{eq:bipartite-ub-def-szv}) of $\Sze$ and $\Szv$, we have that
    \begin{align*}
        &\E_{\XtwoR \in S}
        \exp\lt(
            \f12 Z\lt(\XoneR, \XtwoR\rt)
        \rt)
        \le
        \P(S)^{-1}
        \exp\lt(\f12 \eZ{\XoneR}\rt)
        \lt[1 + \f{\eta}{2} \lt(\sigZ{\XoneR}\rt)^{1/2}\rt] \\
        &\qquad \le
        \P(S)^{-1}
        \exp\lt(\f12 \beta(n) d^{-2} \sum_{i,j\in V_R} \deg(i,j)\rt)
        \Bigg[
            1 +
            \f{\eta}{2} \gamma(n)^{1/2}
            \Bigg(
                d^{-4} \sum_{i,j,k,\ell \in V_R}
                \deg(i,j,k,\ell)^2 \\
                &\qquad \qquad +
                d^{-5} \sum_{i,j,k\in V_R}
                \deg(i,j) \deg(i,k) +
                d^{-6} \sum_{i,j,k,\ell \in V_R}
                \deg(i,j,k) \deg(i,j,\ell)
            \Bigg)^{1/2}
        \Bigg].
    \end{align*}
    This proves the lemma for $C = \max\lt(\f12, \f{\eta}{2}\rt)$.
\end{proof}

We are now ready to prove Theorem~\ref{thm:bipartite-ub}.
The remaining task is to verify that $S$ occurs with high probability, as in Proposition~\ref{prop:bipartite-ub-s-hp}.
We carry out this task in Section~\ref{subsec:bipartite-ub-s-hp-proof}.

\begin{proof}[Proof of Theorem~\ref{thm:bipartite-ub}]
    By a union bound and Proposition~\ref{prop:bipartite-ub-s-hp}, we have that
    \[
        \P(S^c)
        \le
        \P(\Sopc) + \P(\Syvc) + \P(\Szec) + \P(\Szvc)
        \le
        C \lt[n^{-1} + \alpha(n)^{-1} + \beta(n)^{-1} + \gamma(n)^{-1}\rt]
    \]
    for some constant $C>0$.
    Because $\alpha(n), \beta(n), \gamma(n) \gg 1$, we have $\P(S^c) = o(1)$.

    By Lemma~\ref{lem:bipartite-ub-hypothesis-translation}, (\ref{eq:bipartite-ub-hypothesis-k14-translated}) holds.
    This implies that $d \gg \max_{v\in V_L} \degv + \log n$, so Lemma~\ref{lem:bipartite-ub-2mm} holds.
    Combining Lemmas~\ref{lem:bipartite-ub-2mm}, \ref{lem:bipartite-ub-exp-y-ub}, and \ref{lem:bipartite-ub-exp-z-ub} gives that for sufficiently large $n$, there exists a constant $C>0$ such that
    \begin{align*}
        1 + \chisq(\mu^S, \nu)
        &\le
        \P(S)^{-1}
        \exp\lt(
            Cd^{-3}
            \sum_{v\in V_L} \lt(
                \deg(v)^4 + \deg(v) \log^3 n
            \rt)
        \rt) \\
        &\qquad \times
        \lt(
            1 +
            C\alpha(n)d^{-2}
            \sum_{i,j\in V_R}
            \deg(i,j)^2
        \rt)^{1/2}
        \exp\lt(
            C\beta(n)d^{-2}
            \sum_{i,j\in V_R}
            \deg(i,j)
        \rt)^{1/2} \\
        &\qquad \times
        \Bigg[
            1 + C\gamma(n)^{1/2}
            \Bigg(
                d^{-4} \sum_{i,j,k,\ell \in V_R} \deg(i,j,k,\ell)^2 +
                d^{-5} \sum_{i,j,k\in V_R} \deg(i,j) \deg(i,k) \\
                &\qquad \qquad +
                d^{-6} \sum_{i,j,k,\ell \in V_R} \deg(i,j,k) \deg(i,j,\ell)
            \Bigg)^{1/2}
        \Bigg]^{1/2}.
    \end{align*}
    Since $\P(S^c) = o(1)$, we have $\P(S)^{-1} = 1 + o(1)$.
    The bounds (\ref{eq:bipartite-ub-alpha-ub}), (\ref{eq:bipartite-ub-beta-ub}), and (\ref{eq:bipartite-ub-gamma-ub1}) imply that the remaining terms in this upper bound are all $1+o(1)$.
    Therefore, $\chisq(\mu^S, \nu) = o(1)$.
    Substituting this into (\ref{eq:bipartite-ub-starting-point}) yields the result.
\end{proof}

\subsection{High Probability Bounds on $S$ and Nonnegativity of $\eZ{\XR}$}
\label{subsec:bipartite-ub-s-hp-proof}

In this section, we show that $S$, defined in (\ref{eq:bipartite-ub-def-s}), holds with high probability, proving Proposition~\ref{prop:bipartite-ub-s-hp}.
We will show that the constituent events $\Sop$, $\Syv$, $\Sze$, and $\Szv$ of $S$ all occur with high probability.
We begin with $\Sop$, defined in (\ref{eq:bipartite-ub-def-sop}).

\begin{proposition}
    \label{prop:bipartite-ub-sop-high-prob}
    Suppose that $d \gg \max_{v\in V_L} \degv + \log n$.
    Then, for all sufficiently large $n$, we have that $\P(\Sopc) \le n^{-1}$.
\end{proposition}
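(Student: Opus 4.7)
The plan is to mirror the proof of Proposition~\ref{prop:general-ub-svop-high-prob} but with a union bound over $v \in V_L$. The event $\Sop$ is the intersection over $v \in V_L$ of the events $\|\Delta_{N(v)}\|_{\op} \le 100\sqrt{(\deg(v) + \log n)/d}$, and $|V_L| \le n$, so it suffices to show each of these events fails with probability at most $n^{-2}$, and then union bound.

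For a fixed $v \in V_L$, note that $X_{N(v)} \in \bR^{d \times \deg(v)}$ has i.i.d.\ standard Gaussian entries. Applying Lemma~\ref{lem:gaussian-singular-values-concentration} with $t = 32\sqrt{\log n}$ yields that with probability at least $1 - 2\exp(-512 \log n) = 1 - 2n^{-512}$,
\[
    \sqrt{d} - \sqrt{\deg(v)} - 32\sqrt{\log n}
    \le s_{\min}(X_{N(v)})
    \le s_{\max}(X_{N(v)})
    \le \sqrt{d} + \sqrt{\deg(v)} + 32\sqrt{\log n}.
\]
Using $\sqrt{\deg(v)} + 32\sqrt{\log n} \le 33\sqrt{\deg(v) + \log n}$, this bound may be rewritten in the form $\spec\lt(X_{N(v)}^\top X_{N(v)}\rt) \subseteq [d(1-\eps)^2, d(1+\eps)^2]$, where $\eps = 33\sqrt{(\deg(v) + \log n)/d}$. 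Because $d \gg \max_{v\in V_L} \deg(v) + \log n$, for all sufficiently large $n$ we have $\eps < 1$ uniformly in $v$, and the computation in the proof of Proposition~\ref{prop:general-ub-svop-high-prob} carries over verbatim to give $\spec(\Delta_{N(v)}) \subseteq [-3\eps, 3\eps] \subseteq [-100\sqrt{(\deg(v) + \log n)/d}, 100\sqrt{(\deg(v) + \log n)/d}]$, so that $\|\Delta_{N(v)}\|_{\op} \le 100\sqrt{(\deg(v) + \log n)/d}$.

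Finally, a union bound over $v \in V_L$ gives
\[
    \P(\Sopc)
    \le |V_L| \cdot 2n^{-512}
    \le 2n^{-511}
    \le n^{-1}
\]
for all sufficiently large $n$, completing the proof. No step here presents a real obstacle; the argument is a direct adaptation of the single-vertex estimate of Section~\ref{subsec:general-ub-s-hp-proof}, with the Gaussian singular-value bound applied with a slightly larger $t$ so that the polynomial loss from the union bound is absorbed.
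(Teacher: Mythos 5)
Your proof is correct and is essentially identical to the paper's: both apply the singular-value estimate from Proposition~\ref{prop:general-ub-svop-high-prob} (via Lemma~\ref{lem:gaussian-singular-values-concentration} with $t = 32\sqrt{\log n}$) to each $\Deln$ for $v \in V_L$ and then union bound over $|V_L| \le n$ vertices. The only cosmetic difference is that the paper does not change $t$ at all (your closing remark about a "slightly larger $t$" is unnecessary, since the per-vertex failure probability $2n^{-512}$ already absorbs the factor of $n$ with room to spare), which does not affect the argument.
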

\begin{proof}
    By an argument identical to Proposition~\ref{prop:general-ub-svop-high-prob}, for sufficiently large $n$ we have
    \[
        \norm{\Deln}_{\op}
        \le
        100\sqrt{\f{\degv + \log n}{d}}
    \]
    with probability at least $1 - n^{-2}$ for each $v\in V_L$.
    The result follows from a union bound because $|V_L| \le n$.
\end{proof}

We will show the events $\Syv$, $\Sze$, and $\Szv$, defined in (\ref{eq:bipartite-ub-def-syv}), (\ref{eq:bipartite-ub-def-sze}), and (\ref{eq:bipartite-ub-def-szv}), are high probability by the following technique.
Each of their complements is the event that one of the random variables $\sigY{\XR}$, $\eZ{\XR}$, and $\sigZ{\XR}$ does not exceed $\alpha(n)$, $\beta(n)$, and $\gamma(n)$ times its typical scale, respectively.
These random variables are nonnegative: $\sigY{\XR}$ and $\sigZ{\XR}$ are variances, and we will prove $\eZ{\XR}$ is nonnegative as in Proposition~\ref{prop:bipartite-ub-ez-nonnegative}.
The desired probability bounds then follow from Markov's inequality.

In the following proofs, we let
\[
    \Delr[V_R]
    =
    d^{-1} \lt(\XrR\rt)^\top \XrR - I_{V_R}
\]
for $r=1,2$.
For $i,j \in V_R$, let $\Delr[i,j] = d^{-1} \la \Xri, \Xrj\ra - \delta_{i,j}$ be the $(i,j)$ entry of $\Delr[V_R]$.
We first prove Proposition~\ref{prop:bipartite-ub-ez-nonnegative}, that $\eZ{\XR}$ is nonnegative.

\begin{proof}[Proof of Proposition~\ref{prop:bipartite-ub-ez-nonnegative}]
    By expanding into coordinates, we have that
    \[
        Z\lt(\XoneR, \XtwoR\rt) =
        \sum_{v\in V_L}
        \sum_{i,j,k,\ell \in N(v)}
        \Delone[i,j]
        \Deltwo[j,k]
        \Delone[k,\ell]
        \Deltwo[\ell,i].
    \]
    By standard computations with Gaussian moments, we have
    \begin{equation}
        \label{eq:delta-second-moments}
        \E_{\XrR}
        \lt[
            \Delr[i,j]
            \Delr[k,\ell]
        \rt]
        =
        \begin{cases}
            2/d & i=j=k=\ell, \\
            1/d & \{i,j\} = \{k,\ell\} ~\text{and}~ i\neq j, \\
            0 & \text{otherwise}.
        \end{cases}
    \end{equation}
    So,
    \begin{align}
        \notag
        \eZ{\XoneR}
        &=
        \sum_{v\in V_L}
        \sum_{i,j,k,\ell \in N(v)}
        \Delone[i,j]
        \Delone[k,\ell]
        \E_{\XtwoR}
        \lt[
        \Deltwo[j,k]
        \Deltwo[\ell,i]
        \rt] \\
        \notag
        &=
        \f{1}{d}
        \sum_{v\in V_L}
        \lt[
            \sum_{i\in N(v)}
            2\lt(\Delone[i,i]\rt)^2
            +
            \sum_{i,j\in N(v), i\neq j}
            \lt(\Delone[i,j]\rt)^2
            +
            \sum_{i,j\in N(v), i\neq j}
            \Delone[i,i]\Delone[j,j]
        \rt] \\
        \label{eq:bipartite-ub-ez-expansion}
        &=
        \f{1}{d}
        \sum_{v\in V_L}
        \lt[
            \sum_{i,j\in N(v)}
            \lt(\Delone[i,j]\rt)^2
            +
            \lt(\sum_{i\in N(v)} \Delone[i,i]\rt)^2.
        \rt].
    \end{align}
    Therefore $\eZ{\XoneR} \ge 0$, as desired.
\end{proof}

We now proceed to bounding the probabilities of $\Syvc$, $\Szec$, and $\Szvc$.

\begin{proposition}
    \label{prop:bipartite-ub-syv-high-prob}
    There exists a constant $C$ such that $\P(\Syvc) \le C\alpha(n)^{-1}$.
\end{proposition}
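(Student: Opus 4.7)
The plan is to prove this proposition by Markov's inequality applied to $\sigY{\XR}$. Specifically, it suffices to show that $\E_{\XR} \sigY{\XR} \le 4 d^{-2} \sum_{i,j\in V_R} \deg(i,j)^2$; combined with the definition (\ref{eq:bipartite-ub-def-syv}) of $\Syv$, Markov's inequality then gives $\P(\Syvc) \le 4\alpha(n)^{-1}$.

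To bound $\E_{\XR} \sigY{\XR}$, I would first observe that for any fixed $\XoneR$, standard Gaussian moment computations give $\E_{\XtwoR} \Deltwo[i,j] = 0$ for all $i,j\in V_R$, so $\E_{\XtwoR} Y\lt(\XoneR, \XtwoR\rt) = 0$ and therefore $\sigY{\XoneR} = \E_{\XtwoR}\lt[Y\lt(\XoneR, \XtwoR\rt)^2\rt]$. Expanding $Y^2$ coordinate-wise and using that $\Delrn[i,j] = d^{-1}\la \Xri, \Xrj\ra - \delta_{i,j}$ is symmetric in $i,j$ yields
\[
    Y\lt(\XoneR, \XtwoR\rt)^2
    =
    \sum_{v,v'\in V_L}
    \sum_{\substack{i,j\in N(v) \\ k,\ell\in N(v')}}
    \Delone[i,j] \Delone[k,\ell]
    \Deltwo[i,j] \Deltwo[k,\ell].
\]
The inner expectation over $\XtwoR$ is given by (\ref{eq:delta-second-moments}): it is nonzero only when $\{i,j\} = \{k,\ell\}$. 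Carefully counting the contributions from $i = j = k = \ell$ (which contributes $2/d$) versus $i\neq j$ with $(k,\ell)\in \{(i,j),(j,i)\}$ (each contributing $1/d$), the cross terms collapse to
\[
    \sigY{\XoneR}
    =
    \f{2}{d}
    \sum_{v,v'\in V_L}
    \sum_{i,j\in N(v) \cap N(v')}
    \Delone[i,j]^2.
\]

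Next I would take $\E_{\XoneR}$, swap sums by Fubini, and apply the standard Gaussian moment bound $\E\lt[\Delone[i,j]^2\rt] \le 2/d$ (which holds for both $i=j$ and $i\neq j$, cf.\ (\ref{eq:delta-second-moments})) to obtain
\[
    \E_{\XR} \sigY{\XR}
    \le
    \f{4}{d^2}
    \sum_{v,v'\in V_L}
    |N(v)\cap N(v')|^2.
\]
Finally, a standard double-counting argument (switching the roles of $V_L$ and $V_R$) gives
\[
    \sum_{v,v'\in V_L} |N(v) \cap N(v')|^2
    =
    \sum_{i,j\in V_R}
    |\{v\in V_L : i,j\in N(v)\}|^2
    =
    \sum_{i,j\in V_R} \deg(i,j)^2,
\]
which completes the bound. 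Markov's inequality then yields the proposition with $C=4$.

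There is no main obstacle here; the argument is essentially a routine second-moment calculation. The only care needed is in bookkeeping the combinatorial factors arising from the four cases in (\ref{eq:delta-second-moments}) when expanding $Y^2$, and in identifying the resulting double-sum over $V_L\times V_L$ with the shared-degree sum over $V_R\times V_R$ that appears in the definition of $\Syv$.
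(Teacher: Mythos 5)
Your proposal is correct and follows essentially the same route as the paper: bound $\E_{\XR}\sigY{\XR}$ by $4d^{-2}\sum_{i,j\in V_R}\deg(i,j)^2$ via the Gaussian second-moment identities, then use nonnegativity of the variance and Markov's inequality. The only difference is cosmetic bookkeeping — you compute $\E_{\XtwoR}Y^2$ first in the $V_L$-indexed form and convert to the shared-degree sum by double counting, whereas the paper works directly with the $\deg(i,j)$-weighted expansion of $Y$ over $V_R\times V_R$ — and both yield the same bound.
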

\begin{proof}
    We can expand $Y$ into coordinates, by
    \[
        Y\lt(\XoneR, \XtwoR\rt)
        =
        \sum_{i,j\in V_R}
        \deg(i,j)
        \Delone[i,j]
        \Deltwo[i,j].
    \]
    As $\E_{\XtwoR} \Deltwo[i,j] = 0$ for all $i,j\in V_R$, we have $\E_{\XtwoR} Y\lt(\XoneR, \XtwoR\rt) = 0$.
    Therefore, we have that $\sigY{\XoneR} = \E_{\XtwoR} Y\lt(\XoneR, \XtwoR\rt)^2$.
    Because $\XoneR$ and $\XtwoR$ are independent, we have
    \begin{align*}
        \E_{\XoneR}
        \sigY{\XoneR}
        &=
        \E_{\XoneR, \XtwoR}
        Y\lt(\XoneR, \XtwoR\rt)^2 \\
        &=
        \sum_{i,j,i',j'\in V_R}
        \deg(i,j)\deg(i',j')
        \E_{\XoneR} \lt[
            \Delone[i,j]
            \Delone[i',j']
        \rt]
        \E_{\XtwoR} \lt[
            \Deltwo[i,j]
            \Deltwo[i',j']
        \rt].
    \end{align*}
    By (\ref{eq:delta-second-moments}), this implies that
    \[
        \E_{\XoneR}
        \sigY{\XoneR}
        =
        d^{-2} \lt(
            \sum_{i\in V_R}
            4
            \deg(i)^2 +
            \sum_{i,j\in V_R, i\neq j}
            2
            \deg(i,j)^2
        \rt)
        \le
        4d^{-2}
        \sum_{i,j\in V_R}
        \deg(i,j)^2.
    \]
    Because $\sigY{\XoneR}$ is a variance, we have $\sigY{\XoneR} \ge 0$ almost surely.
    The proposition now follows from Markov's inequality.
\end{proof}

\begin{proposition}
    \label{prop:bipartite-ub-sze-high-prob}
    There exists a constant $C$ such that $\P(\Szec) \le C\beta(n)^{-1}$.
\end{proposition}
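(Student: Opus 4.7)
The plan is to apply Markov's inequality to the nonnegative random variable $\eZ{\XR}$. Proposition~\ref{prop:bipartite-ub-ez-nonnegative} gives nonnegativity, so it suffices to show $\E_{\XR} \eZ{\XR} \lesssim d^{-2}\sum_{i,j\in V_R}\deg(i,j)$; the bound $\P(\Szec)\le C\beta(n)^{-1}$ then follows immediately from Markov and the definition (\ref{eq:bipartite-ub-def-sze}) of $\Sze$.

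To compute $\E_{\XR}\eZ{\XR}$, I would use the clean expansion (\ref{eq:bipartite-ub-ez-expansion}) derived in the proof of Proposition~\ref{prop:bipartite-ub-ez-nonnegative}, namely
\[
    \eZ{\XR}
    =
    \f{1}{d} \sum_{v\in V_L}
    \lt[
        \sum_{i,j\in N(v)} \lt(\Delta_{i,j}\rt)^2
        + \lt(\sum_{i\in N(v)} \Delta_{i,i}\rt)^2
    \rt],
\]
where $\Delta_{i,j} = d^{-1}\la X_i, X_j\ra - \delta_{i,j}$. Then I would invoke the same standard Gaussian moment computation (\ref{eq:delta-second-moments}) already used in the proof of Proposition~\ref{prop:bipartite-ub-syv-high-prob}: $\E\lt[\Delta_{i,j}^2\rt]$ is $2/d$ or $1/d$ depending on whether $i=j$, and $\E\lt[\Delta_{i,i}\Delta_{j,j}\rt]$ vanishes unless $i=j$. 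Applying these entrywise gives, after taking expectations term by term,
\[
    \E_{\XR}\eZ{\XR}
    =
    \f{1}{d^2} \sum_{v\in V_L}
    \Bigl(\degv^2 + 3\degv\Bigr).
\]

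The final step is the combinatorial translation to the form appearing in (\ref{eq:bipartite-ub-def-sze}). Here I would use the standard double-counting identity
\[
    \sum_{v\in V_L} \degv^2
    = \sum_{v\in V_L} \sum_{i,j\in N(v)} 1
    = \sum_{i,j\in V_R} \deg(i,j),
\]
together with $\sum_{v\in V_L}\degv = \sum_{i\in V_R}\deg(i) = \sum_{i\in V_R}\deg(i,i) \le \sum_{i,j\in V_R}\deg(i,j)$. Combining yields $\E_{\XR}\eZ{\XR} \le 4 d^{-2}\sum_{i,j\in V_R}\deg(i,j)$, so Markov's inequality gives the result with $C=4$.

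I do not anticipate a serious obstacle: the crucial analytic input (nonnegativity of $\eZ{\XR}$ and the closed-form expansion of it) has already been established, and the remainder is a direct Gaussian second-moment calculation combined with a routine double-counting step. The only thing to watch is to keep the diagonal ($i=j$) and off-diagonal contributions straight when evaluating $\E\Delta_{i,j}^2$, which is the same bookkeeping carried out in Proposition~\ref{prop:bipartite-ub-syv-high-prob}.
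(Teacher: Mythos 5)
Your proposal is correct and follows essentially the same route as the paper: Markov's inequality applied to the nonnegative $\eZ{\XR}$, with the mean computed via (\ref{eq:delta-second-moments}); your vertex-indexed evaluation $d^{-2}\sum_{v\in V_L}(\degv^2+3\degv)$ agrees with the paper's pair-indexed expression $d^{-2}(4\sum_{i}\deg(i)+\sum_{i\neq j}\deg(i,j))$ after the double-counting identity you state.
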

\begin{proof}
    By expanding (\ref{eq:bipartite-ub-ez-expansion}) into coordinates, we get that
    \[
        \eZ{\XoneR}
        =
        \f{1}{d}
        \sum_{i,j\in V_R}
        \deg(i,j)
        \lt[
            \lt(\Delone[i,j]\rt)^2 +
            \Delone[i,i]
            \Delone[j,j]
        \rt].
    \]
    By (\ref{eq:delta-second-moments}), this implies that
    \[
        \E_{\XoneR}
        \eZ{\XoneR}
        =
        d^{-2} \lt(
            \sum_{i\in V_R}
            4
            \deg(i) +
            \sum_{i,j\in V_R, i\neq j}
            \deg(i,j)
        \rt)
        \le
        4d^{-2}
        \sum_{i,j\in V_R}
        \deg(i,j).
    \]
    By Proposition~\ref{prop:bipartite-ub-ez-nonnegative}, $\eZ{\XoneR} \ge 0$ almost surely.
    The proposition now follows from Markov's inequality.
\end{proof}

\begin{proposition}
    \label{prop:bipartite-ub-szv-high-prob}
    There exists a constant $C$ such that $\P(\Szvc) \le C\gamma(n)^{-1}$.
\end{proposition}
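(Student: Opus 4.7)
As in Propositions~\ref{prop:bipartite-ub-syv-high-prob} and \ref{prop:bipartite-ub-sze-high-prob}, the proposition will follow from Markov's inequality once we establish the first-moment bound
\[
\E_{\XR}\sigZ{\XR} \le C \Bigg[d^{-4}\sum_{i,j,k,\ell \in V_R}\deg(i,j,k,\ell)^2 + d^{-5}\sum_{i,j,k \in V_R}\deg(i,j)\deg(i,k) + d^{-6}\sum_{i,j,k,\ell \in V_R}\deg(i,j,k)\deg(i,j,\ell)\Bigg],
\]
which is applicable because $\sigZ{\XR}$ is almost surely nonnegative as a variance. My plan is to establish this moment bound directly, by expanding $Z$ and $Z^2$ as polynomials in the entries of $\XoneR$ and $\XtwoR$ and computing the resulting fourth moments of $\Delta$-entries.

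Concretely, grouping the sum defining $Z$ by quadruples of vertices of $V_R$ gives the expansion
\[
Z(\XoneR,\XtwoR) = \sum_{(i,j,k,\ell) \in V_R^4} \deg(i,j,k,\ell)\, \Delone[i,j] \Deltwo[j,k] \Delone[k,\ell] \Deltwo[\ell,i].
\]
I would then write $\E_{\XR}\sigZ{\XR} = \E_{\XoneR,\XtwoR}[Z^2] - \E_{\XoneR}[(\E_{\XtwoR}[Z])^2]$, factor the $\XoneR$ and $\XtwoR$ expectations using their independence, and reduce the estimate to a sum over pairs of quadruples $(\tau,\tau') \in V_R^4 \times V_R^4$ of products of two fourth-moment factors of the form $\E[\Delta_{a_1,b_1}\Delta_{a_2,b_2}\Delta_{a_3,b_3}\Delta_{a_4,b_4}]$. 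Each such fourth moment decomposes, via Isserlis' theorem applied to the underlying i.i.d.\ Gaussian entries of $X$, into a sum of products of pair covariances, each covariance being $O(d^{-1})$ and enforcing a specific index coincidence as in~\eqref{eq:delta-second-moments}.

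The main step is then a case analysis over the resulting pairings of the eight indices $\tau_1,\tau_2,\tau_3,\tau_4,\tau'_1,\tau'_2,\tau'_3,\tau'_4$. The ``disconnected'' pairings, which split into two independent pairings of four indices each (one for $\tau$ and one for $\tau'$), exactly cancel in the subtraction of $\E_{\XoneR}[(\E_{\XtwoR}[Z])^2]$. The surviving ``connected'' pairings fall into three broad types according to how the quadruples $\tau$ and $\tau'$ overlap in their contributing vertices: pairings that fully collapse $\tau$ onto $\tau'$ generate the $d^{-4}\deg(i,j,k,\ell)^2$ term; pairings in which $\tau$ and $\tau'$ share exactly one vertex generate the $d^{-5}\deg(i,j)\deg(i,k)$ term; and pairings in which they share two vertices generate the $d^{-6}\deg(i,j,k)\deg(i,j,\ell)$ term. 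Bounding each case in isolation and summing yields the desired first-moment estimate, after which Markov's inequality delivers the claimed probability bound with some constant $C$ depending only on combinatorial constants from the moment computation.

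The main obstacle is the combinatorial bookkeeping in Step~4: the eight indices can coincide in many patterns, and for each pattern one must track both the $d^{-t}$ scaling of the two fourth-moment factors and the corresponding vertex-count structure in order to match one of the three canonical terms above. Once the disconnected pairings are identified as cancelling against $\E_{\XoneR}[(\E_{\XtwoR}[Z])^2]$, the remaining connected cases can be enumerated by the intersection pattern of $\{\tau_1,\tau_2,\tau_3,\tau_4\}$ and $\{\tau'_1,\tau'_2,\tau'_3,\tau'_4\}$ and the way the two pairs $\{\tau_2,\tau_3\},\{\tau_4,\tau_1\}$ and $\{\tau'_2,\tau'_3\},\{\tau'_4,\tau'_1\}$ are contracted across quadruples. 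Aside from this tedious but mechanical enumeration, the argument is routine.
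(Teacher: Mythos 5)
Your proposal is correct and takes essentially the same route as the paper: both reduce the claim, via Markov's inequality and the almost-sure nonnegativity of the conditional variance, to the first-moment bound on $\E_{\XoneR}\sigZ{\XoneR}$, which is then established by expanding $Z$ into coordinates and carrying out Gaussian moment computations organized by the coincidence patterns of the indices in $V_R$. The only difference is bookkeeping: you cancel the disconnected Wick pairings against $\E_{\XoneR}[(\E_{\XtwoR}Z)^2]$ and classify the connected ones, whereas the paper partitions the quadruples into fifteen coincidence classes, applies the inequality $(x_1+\cdots+x_{15})^2\le 15(x_1^2+\cdots+x_{15}^2)$, and computes each class's centered second moment exactly; the underlying case analysis is the same.
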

\begin{proof}
    This proposition follows from the same technique as the previous two, though the computation bounding $\sigZ{\XoneR}$ is more involved.
    We can expand $Z$ into coordinates, by
    \[
        Z\lt(\XoneR, \XtwoR\rt)
        =
        \sum_{i,j,k,\ell \in V_R}
        \deg(i,j,k,\ell)
        \Delone[i,j]
        \Delone[k,\ell]
        \Deltwo[j,k]
        \Deltwo[\ell,i].
    \]
    So, we have that
    \begin{equation}
        \label{eq:bipartite-ub-sigz-expansion}
        \E_{\XoneR}
        \sigZ{\XoneR}
        =
        \E_{\XoneR, \XtwoR}
        \lt[\lt(
            \sum_{i,j,k,\ell \in V_R}
            \deg(i,j,k,\ell)
            \Delone[i,j]
            \Delone[k,\ell]
            \lt(
                \Deltwo[j,k]
                \Deltwo[\ell,i]
                -
                \E_{\XtwoR}
                \lt[
                    \Deltwo[j,k]
                    \Deltwo[\ell,i]
                \rt]
            \rt)
        \rt)^2\rt].
        \end{equation}
    The inner expectation can be computed by (\ref{eq:delta-second-moments}).
    We separate this sum into fifteen sub-sums, by partitioning the quadruples of indices $(i,j,k,\ell)\in V_R^4$ into the fifteen sets described in Table~\ref{tab:bipartite-ub-sigz-expansion-index-partition}.

    \begin{table}[h!]
    \centering
    \begin{tabular}{|c|c||c|c||c|c|}
        \hline
        Set & Condition & Set & Condition & Set & Condition \\
        \hline
        $A_{1}$ & $i=j=k=\ell$ & $A_{3}$ & $i=j$, $k=\ell$ & $A_{6c}$ & $k=\ell$ \\
        \hline
        $A_{2a}$ & $i=j=k$ & $A_{4}$ & $i=k$, $j=\ell$ & $A_{6d}$ & $i=\ell$ \\
        \hline
        $A_{2b}$ & $i=j=\ell$ & $A_{5}$ & $i=\ell$, $j=k$ & $A_{7a}$ & $i=k$ \\
        \hline
        $A_{2c}$ & $i=k=\ell$ & $A_{6a}$ & $i=j$ & $A_{7b}$ & $j=\ell$ \\
        \hline
        $A_{2d}$ & $j=k=\ell$ & $A_{6b}$ & $j=k$ & $A_{8}$ & $i,j,k,\ell$ all distinct \\
        \hline
    \end{tabular}
    \caption{A partition of the quadruples $(i,j,K,L)$ summed over in (\ref{eq:bipartite-ub-sigz-expansion}). In each set, all indices not indicated as equal in the given condition are distinct. For example, $A_3$ is the set of indices $(i,j,k,\ell)$ for $i,j,k,\ell\in V_R$ where $i=j$, $k=\ell$, and $i\neq k$.}
    \label{tab:bipartite-ub-sigz-expansion-index-partition}
\end{table}

    Define the sub-sum
    \[
        f_1 =
        \sum_{(i,j,k,\ell)\in A_1}
        \deg(i,j,k,\ell)
        \Delone[i,j]
        \Delone[k,\ell]
        \lt(
            \Deltwo[j,k]
            \Deltwo[\ell,i]
            -
            \E_{\XtwoR}
            \lt[
                \Deltwo[j,k]
                \Deltwo[\ell,i]
            \rt]
        \rt),
    \]
    and likewise define $f_{2a},f_{2b},\ldots,f_8$.
    By the inequality $(x_1+\cdots+x_n)^2 \le n\lt(x_1^2+\cdots+x_n^2\rt)$, we have that
    \[
        \E_{\XoneR}
        \sigZ{\XoneR}
        =
        \E_{\XoneR, \XtwoR}
        \lt[\lt(
            f_1 + f_{2a} + \cdots + f_8
        \rt)^2\rt]
        \le
        15\E_{\XoneR, \XtwoR}
        \lt[
            f_1^2 + f_{2a}^2 + \cdots + f_{8}^2
        \rt].
    \]
    We define a symmetric sum notation: $\sum_{(i,j,k,\ell) \in \Sym(V_R, 4)}$ denotes the sum over all $|V_R|(|V_R|-1)(|V_R|-2)(|V_R|-3)$ tuples of distinct $i,j,k,\ell \in V_R$.
    Similarly, $\sum_{(i,j,k) \in \Sym(V_R, 3)}$ denotes the sum over all $|V_R|(|V_R|-1)(|V_R|-2)$ tuples of distinct $i,j,k\in V_R$, and so on.
    By standard computations with Gaussian moments, we can compute the following identities.
    \begin{align*}
        \E_{\XoneR, \XtwoR}
        f_1^2
        &=
        \lt(
            \f{96}{d^4} +
            \f{960}{d^5} +
            \f{2304}{d^6}
        \rt)
        \sum_{i\in \Sym(V_R, 1)}
        \deg(i)^2,
        \\
        \E_{\XoneR, \XtwoR}
        f_{2a}^2
        &=
        \lt(
            \f{4}{d^4} +
            \f{32}{d^5} +
            \f{80}{d^6}
        \rt)
        \sum_{(i,j) \in \Sym(V_R, 2)}
        \deg(i,j)^2,
        \\
        \E_{\XoneR, \XtwoR}
        f_{3}^2
        &=
        \lt(
            \f{16}{d^4} +
            \f{48}{d^5}
        \rt)
        \sum_{(i,j) \in \Sym(V_R, 2)}
        \deg(i,j)^2,
        \\
        \E_{\XoneR, \XtwoR}
        f_{4}^2
        &=
        \lt(
            \f{12}{d^4} +
            \f{60}{d^5} +
            \f{72}{d^6}
        \rt)
        \sum_{(i,j) \in \Sym(V_R, 2)}
        \deg(i,j)^2 \\
        &\qquad +
        \lt(
            \f{8}{d^5} +
            \f{16}{d^6}
        \rt)
        \sum_{(i,j,k) \in \Sym(V_R, 3)}
        \deg(i,j) \deg(i,k),
        \\
        \E_{\XoneR, \XtwoR}
        f_{5}^2
        &=
        \lt(
            \f{24}{d^4} +
            \f{72}{d^5}
        \rt)
        \sum_{(i,j) \in \Sym(V_R, 2)}
        \deg(i,j)^2,
        \\
        \E_{\XoneR, \XtwoR}
        f_{6a}^2
        &=
        \lt(
            \f{4}{d^4} +
            \f{8}{d^5}
        \rt)
        \sum_{(i,j,k) \in \Sym(V_R, 3)}
        \deg(i,j,k)^2,
        \\
        \E_{\XoneR, \XtwoR}
        f_{7a}^2
        &=
        \lt(
            \f{2}{d^4} +
            \f{4}{d^5}
        \rt)
        \sum_{(i,j,k) \in \Sym(V_R, 3)}
        \deg(i,j,k)^2 \\
        &\qquad +
        \f{2}{d^6}
        \sum_{(i,j,k,\ell) \in \Sym(V_R, 4)}
        \deg(i,j,k) \deg(i,j,\ell),
        \\
        \E_{\XoneR, \XtwoR}
        f_{8}^2
        &=
        \lt(
            \f{4}{d^4} +
            \f{8}{d^5} +
            \f{12}{d^6}
        \rt)
        \sum_{(i,j,k,\ell) \in \Sym(V_R, 4)}
        \deg(i,j,k,\ell)^2.
    \end{align*}
    By symmetry $\E_{\XoneR, \XtwoR} f_{7a}^2 = \E_{X,X^{(2)}} f_{7b}^2$, and likewise for $f_{2a}$ and $f_{6a}$.
    Combining these bounds, we get that for some constant $C > 0$,
    \begin{align*}
        \E_{\XoneR}
        \sigZ{\XoneR}
        &\le
        C\bigg[
            d^{-4}
            \sum_{i,j,k,\ell \in V_R}
            \deg(i,j,k,\ell)^2
            +
            d^{-5}
            \sum_{i,j,k\in V_R}
            \deg(i,j) \deg(i,k) \\
            &\qquad + d^{-6}
            \sum_{i,j,k,\ell \in V_R}
            \deg(i,j,k) \deg(i,j,\ell)
        \bigg].
    \end{align*}
    The proposition now follows from Markov's inequality.
\end{proof}

Finally, we can prove Proposition~\ref{prop:bipartite-ub-s-hp}.
\begin{proof}[Proof of Proposition~\ref{prop:bipartite-ub-s-hp}]
The proposition follows from Propositions~\ref{prop:bipartite-ub-sop-high-prob}, \ref{prop:bipartite-ub-syv-high-prob}, \ref{prop:bipartite-ub-sze-high-prob}, and \ref{prop:bipartite-ub-szv-high-prob}.
\end{proof}

\section{Proofs of TV Lower Bounds}
\label{sec:tv-lower-bounds-proofs}

In this section, we will prove Theorems~\ref{thm:deg3-lb}, \ref{thm:deg4-lb}, and \ref{thm:maxdeg-lb}, which provide conditions under which $\TV\lt(W(G,d), M(G)\rt) \to 1$.
These bounds are witnessed by the degree 3 statistic $\kappa_3$, the degree 4 statistic $\kappa_4$, and longest row statistic $\kappa_r$, respectively.
The proofs of these theorems are a natural generalization of the method of \cite{BDER16}: we consider the hypothesis testing problem with hypotheses $M\sim M(G)$ and $M\sim W(G,d)$ and devise tests using these statistics that distinguish these two hypotheses with $o(1)$ type I+II error.
We carry out this task for Theorem~\ref{thm:deg3-lb} by bounding the mean and variance of the statistic $\kappa_3(M)$ under these hypotheses.
Then, by Chebyshev's inequality, the test that thresholds this statistic halfway between the two means distinguishes the hypotheses.
The proof of Theorem~\ref{thm:deg4-lb} proceeds similarly, albeit with a more involved computation.
The proof of Theorem~\ref{thm:maxdeg-lb} characterizes the distribution of $\kappa_r(M)$ under these hypotheses as, respectively, a $\chisq$ random variable scaled to mean $1$ and the product of two independent $\chisq$ random variables scaled to mean $1$.
We will show the latter distribution has larger fluctuations.
Then, a test that appropriately thresholds $|\kappa_r(M)-1|$ distinguishes the two distributions.

\subsection{Analysis of the Degree 3 Statistic}

In this section, we will prove Theorem~\ref{thm:deg3-lb}.
As discussed above, we will show the distributions $\kappa_3(M(G))$ and $\kappa_3(W(G,d))$ separate by computing their means and variances and applying Chebyshev's inequality.

\begin{lemma}
    \label{lem:deg3-lb-goe-stats}
    If $M \sim M(G)$, then $\E \kappa_3(M)=0$ and $\Var \kappa_3(M) = \Num_G(C_3)$.
\end{lemma}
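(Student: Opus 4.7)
The plan is to compute $\E\kappa_3(M)$ and $\E\kappa_3(M)^2$ directly by exploiting the independence structure of $M(G)$: the entries $\{M_{i,j} : (i,j)\in E(G), i<j\}$ are i.i.d.\ standard Gaussians, and $M_{i,j}=M_{j,i}$ by symmetry.

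\medskip

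For the mean, I would note that each summand $M_{i,j}M_{j,k}M_{k,i}$ indexed by $(i,j,k)\in C_3(G)$ is a product of three of these independent Gaussians (the three unordered pairs $\{i,j\},\{j,k\},\{k,i\}$ are distinct since $i<j<k$). Hence each summand has mean $0$ and the claim $\E\kappa_3(M)=0$ follows by linearity.

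\medskip

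For the variance, I would write
\[
    \Var\kappa_3(M)
    =
    \sum_{(i,j,k),(i',j',k')\in C_3(G)}
    \E\bigl[M_{i,j}M_{j,k}M_{k,i}\,M_{i',j'}M_{j',k'}M_{k',i'}\bigr],
\]
and exploit that the expectation of a product of independent standard Gaussians is nonzero only when every factor appears with even multiplicity. The combined multiset of six unordered pairs $\{i,j\},\{j,k\},\{k,i\},\{i',j'\},\{j',k'\},\{k',i'\}$ contains exactly three distinct pairs from each triangle, so each pair in the first triangle must coincide with some pair in the second triangle. This forces the two triangles to have the same edge set, hence the same vertex set; combined with the ordering $i<j<k$ and $i'<j'<k'$, this yields $(i,j,k)=(i',j',k')$. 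The surviving diagonal terms are $\E\bigl[M_{i,j}^2 M_{j,k}^2 M_{k,i}^2\bigr]=1$ by independence, so
\[
    \Var\kappa_3(M) = |C_3(G)| = \Num_G(C_3).
\]

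\medskip

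There is no real obstacle here: the only thing to be careful about is the combinatorial step showing that two ordered triangles with the same (unordered) edge set must be equal as ordered triples, which is immediate from the $i<j<k$ convention in the definition of $C_3(G)$.
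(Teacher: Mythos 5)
Your proof is correct and follows essentially the same route as the paper: mean zero by linearity, and variance via the double sum over ordered triangle pairs, where only the diagonal terms $(i,j,k)=(i',j',k')$ survive and each contributes $1$. The extra care you take with the even-multiplicity argument and the $i<j<k$ convention simply spells out a step the paper states without elaboration.
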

\begin{proof}
    If $M\sim M(G)$, the entries $M_{i,j}$ for $(i,j)\in E(G)$ are mutually independent and centered.
    By linearity of expectation, $\E \kappa_3(M) = 0$.
    This implies that
    \[
        \Var \kappa_3(M) =
        \sum_{\substack{(i,j,k)\in C_3(G) \\ (i',j',k')\in C_3(G)}}
        \E\lt[
            M_{i,j} M_{j,k} M_{k,i}
            M_{i',j'} M_{j',k'} M_{k',i'}
        \rt].
    \]
    The last expectation is $1$ if $(i,j,k)=(i',j',k')$ and $0$ otherwise.
    Therefore, $\Var \kappa_3(M) = \Num_G(C_3)$.
\end{proof}

\begin{lemma}
    \label{lem:deg3-lb-wishart-stats}
    If $M\sim W(G,d)$, then $\E \kappa_3(M) = d^{-1/2} \Num_G(C_3)$ and there exists a constant $C$ such that
    \[
        \Var \kappa_3(M)
        \le
        C \lt(
            \Num_G(C_3) +
            d^{-1} \Num_G(C_3^{2,e}) +
            d^{-2} \Num_G(C_3^{2,v})
        \rt).
    \]
\end{lemma}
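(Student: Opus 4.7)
The plan is to follow the blueprint of \cite{BDER16}, adapted to the masked setting. For the mean, use the representation $M_{i,j} = d^{-1/2}\la X_i, X_j\ra$ with i.i.d.\ $X_1,\ldots,X_n \sim \cN(0, I_d)$. For any triangle $(i,j,k)\in C_3(G)$, expanding in coordinates gives
\[
    \E[M_{i,j}M_{j,k}M_{k,i}] = d^{-3/2}\sum_{a,b,c\in [d]}\E[X_{i,a}X_{j,a}X_{j,b}X_{k,b}X_{k,c}X_{i,c}],
\]
and since $i,j,k$ are distinct, Wick's theorem forces $a = b = c$ for a nonzero contribution, yielding $d^{-3/2}\cdot d = d^{-1/2}$. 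Summing over $C_3(G)$ gives the claimed mean $d^{-1/2}\Num_G(C_3)$.

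For the variance, write
\[
    \Var \kappa_3(M) = \sum_{T, T' \in C_3(G)}\lt(\E[P_T P_{T'}] - \E[P_T]\E[P_{T'}]\rt),
\]
with $P_T = M_{i,j}M_{j,k}M_{k,i}$ for $T = (i,j,k)$, and case-split on $|V(T) \cap V(T')| \in \{0,1,2,3\}$. The $0$-overlap case contributes nothing: $P_T$ and $P_{T'}$ depend on disjoint collections of $X_v$'s and are therefore independent, so $\E[P_T P_{T'}] = \E[P_T]\E[P_{T'}]$. The $T = T'$ case contributes $\Num_G(C_3)$ diagonal terms, each of size $\E[M_{i,j}^2 M_{j,k}^2 M_{k,i}^2] - d^{-1} = O(1)$ by conditioning on $(X_j, X_k)$ and applying Isserlis' theorem twice. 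The remaining overlaps $|V(T)\cap V(T')| \in \{1,2\}$ correspond exactly to the graph statistics $\Num_G(C_3^{2,v})$ and $\Num_G(C_3^{2,e})$.

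For these middle cases, the plan is to expand $P_T P_{T'} = d^{-3}\sum_s \prod X_{v,s}$ and count surviving Wick pairings by examining the coordinate incidences at each vertex. When $|V(T)\cap V(T')| = 2$ (shared edge $(a,b)$, unshared vertices $c, c'$), the degree-$2$ pairings at $c$ and $c'$ force two pairs of coordinates to collapse, after which four free coordinates must pair up at both $a$ and $b$ (each of coordinate-incidence degree $4$); applying Isserlis at $a$ and $b$ yields $\E[P_T P_{T'}] = 3d^{-1} + 6d^{-2}$, so the per-pair covariance is $2d^{-1} + 6d^{-2}$. When $|V(T)\cap V(T')| = 1$ (shared vertex $i$), the four unshared vertices each of degree $2$ collapse the three coordinates within each triangle to a single free label, leaving the shared vertex $i$ with a degree-$4$ incidence involving two free labels; computing $\E[X_{i,\alpha_1}^2 X_{i,\alpha_2}^2] \in \{1,3\}$ gives $\E[P_T P_{T'}] = d^{-1} + 2d^{-2}$ and per-pair covariance $2d^{-2}$. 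Summing all four cases yields the stated bound.

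The main obstacle is organizational rather than conceptual: cleanly setting up the Wick/Isserlis counts in the two middle cases, and verifying that the overlap patterns $|V(T) \cap V(T')| \in \{1,2\}$ are enumerated exactly by $\Num_G(C_3^{2,v})$ and $\Num_G(C_3^{2,e})$. The key phenomenon making the proof work is that in the single-vertex-overlap case the leading $d^{-1}$ term in $\E[P_T P_{T'}]$ cancels exactly against $\E[P_T]\E[P_{T'}] = d^{-1}$, producing the improved $O(d^{-2})$ per-pair covariance that is essential for matching the TV upper bound of Theorem~\ref{thm:general-ub}.
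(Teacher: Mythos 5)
Your proposal is correct and follows essentially the same route as the paper: the mean is computed termwise from Gaussian moments, and the variance is decomposed into pairwise covariances classified by the triangle overlap pattern (disjoint, shared vertex, shared edge, identical), with per-case expectations $d^{-1}+2d^{-2}$, $3d^{-1}+6d^{-2}$, and $1+O(d^{-1})$ matching the paper's tabulated values. The only cosmetic difference is that you phrase the moment computations via explicit Wick pairings in coordinates, whereas the paper conditions on shared vertices and reduces to moments of $\la X_a, X_b\ra$ and $\norm{X_i}_2^2$.
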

\begin{proof}
    Recall that for $i\neq j$, $M_{i,j} = d^{-1/2} \la X_i,X_j\ra$, where $X_1,\ldots,X_n$ are sampled i.i.d. from $\cN(0, I_d)$.
    For each $(i,j,k)\in C_3(G)$,
    \[
        \E \lt[M_{i,j} M_{j,k} M_{k,i}\rt]
        = d^{-3/2}
        \E \lt[
            \la X_i, X_j\ra
            \la X_j, X_k\ra
            \la X_k, X_i\ra
        \rt]
        = d^{-1/2}.
    \]
    By linearity of expectation, $\E \kappa_3(M) = d^{-1/2} \Num_G(C_3)$.
    We can compute the variance by expanding
    \begin{equation}
        \label{eq:deg3-lb-wishart-var-as-double-sum}
        \Var \kappa_3(M)
        =
        \sum_{\substack{(i,j,k)\in C_3(G) \\ (i',j',k')\in C_3(G)}}
        \lt[
            \E\lt[
                M_{i,j} M_{j,k} M_{k,i}
                M_{i',j'} M_{j',k'} M_{k',i'}
            \rt]
            - d^{-1}
        \rt].
    \end{equation}
    The expectation inside the sum depends on the shape of the graph formed by the six edges $(i,j)$, $(j,k)$, $(k,i)$, $(i',j')$, $(j',k')$, and $(k',i')$.
    If the sets $\{i,j,k\}$ and $\{i',j',k'\}$ do not intersect, then $M_{i,j} M_{j,k} M_{k,i}$ and $M_{i',j'} M_{j',k'} M_{k',i'}$ are independent, and so the summand in (\ref{eq:deg3-lb-wishart-var-as-double-sum}) is $0$.
    Otherwise, these six edges can form the graphs shown in Figure~\ref{fig:deg3-statistic-wishart-graphs}.
    For each graph these edges can form, we can compute the expectation $\E\lt[M_{i,j} M_{j,k} M_{k,i} M_{i',j'} M_{j',k'} M_{k',i'}\rt]$ by a standard Gaussian moment computation.
    We can also count the number of times each graph arises in the sum (\ref{eq:deg3-lb-wishart-var-as-double-sum}).
    Table~\ref{tab:deg3-lb-wishart-variance-analysis} summarizes this computation.
    Therefore, $\Var \kappa_3(M) \lesssim \Num_G(C_3) + d^{-1}\Num_G(C_3^{2,e}) + d^{-2} \Num_G(C_3^{2,v})$, as desired.

    \begin{figure}[h!]
    \centering
    \includegraphics{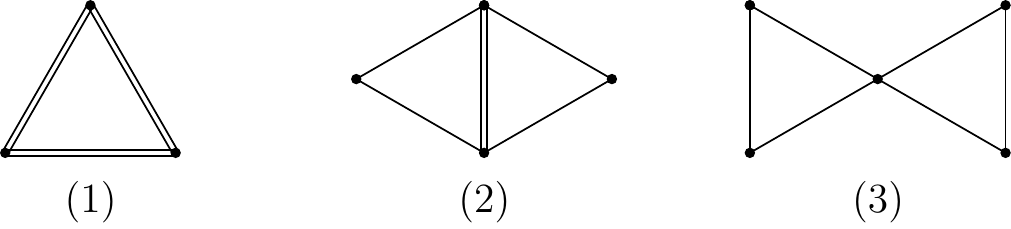}
    \caption{Graphs formed by the edges $(i,j)$, $(j,k)$, $(k,i)$, $(i',j')$, $(j',k')$, and $(k',i')$ yielding nonzero terms in (\ref{eq:deg3-lb-wishart-var-as-double-sum}).}
    \label{fig:deg3-statistic-wishart-graphs}
\end{figure}

    \begin{table}[h!]
    \centering
    \begin{tabular}{|c|c|c|}
    \hline
    Graph & $\E\lt[M_{i,j} M_{j,k} M_{k,i} M_{i',j'} M_{j',k'} M_{k',i'}\rt]$ & Number of occurrences in (\ref{eq:deg3-lb-wishart-var-as-double-sum}) \\
    \hline
    (1) & $1 + 10d^{-1} + 16d^{-2}$ & $O(\Num_G(C_3))$ \\ \hline
    (2) & $3d^{-1} + 6d^{-2}$ & $O(\Num_G(C_3^{2,e}))$ \\ \hline
    (3) & $d^{-1} + 2d^{-2}$ & $O(\Num_G(C_3^{2,v}))$ \\ \hline
    \end{tabular}
    \caption{Analysis of terms in variance of degree-3 statistic under Wishart distribution.}
    \label{tab:deg3-lb-wishart-variance-analysis}
\end{table}

\end{proof}

\begin{proof}[Proof of Theorem~\ref{thm:deg3-lb}]
    We will distinguish the hypotheses $M\sim M(G)$ and $M\sim W(G,d)$ with the following test.
    If $\kappa_3(M)\ge \f{1}{2} d^{-1/2}\Num_G(C_3)$, predict $W(G,d)$, and otherwise predict $M(G)$.

    By Lemma~\ref{lem:deg3-lb-goe-stats} and Chebyshev's inequality,
    the probability of predicting $W(G,d)$ when $M\sim M(G)$ is at most
    \[
        \P_{M\sim M(G)}
        \lt(\kappa_3(M) \ge \f{1}{2} d^{-1/2} \Num_G(C_3)\rt)
        \lesssim \f{\Num_G(C_3)}{\lt(\f{1}{2} d^{-1/2} \Num_G(C_3)\rt)^2}
        \lesssim \f{d}{\Num_G(C_3)}.
    \]
    By hypothesis (\ref{eq:deg3-lb-hypothesis-triangles}), this probability is $o(1)$.
    Similarly, by Lemma~\ref{lem:deg3-lb-wishart-stats} and Chebyshev's inequality, the probability of predicting $M(G)$ when $M\sim W(G,d)$ is bounded by
    \begin{align*}
        \P_{M\sim W(G,d)}
        \lt(\kappa_3(M) < \f{1}{2} d^{-1/2} \Num_G(C_3)\rt)
        &\lesssim
        \f{
            \Num_G(C_3) +
            d^{-1} \Num_G(C_3^{2,e}) +
            d^{-2} \Num_G(C_3^{2,v})
        }{
            \lt(
                \f{1}{2} d^{-1/2}
                \Num_G(C_3)
            \rt)^2
        } \\
        &\lesssim
        \f{d}{\Num_G(C_3)}
        + \f{
            \Num_G(C_3^{2,e}, C_3^{2,v})
        }{
            \Num_G(C_3)^2
        }.
    \end{align*}
    By the hypotheses (\ref{eq:deg3-lb-hypothesis-triangles}) and (\ref{eq:deg3-lb-hypothesis-regularity}), this probability is $o(1)$.
    Since this test separates $\kappa_3(M(G))$ and $\kappa_3(W(G,d))$ with $o(1)$ Type I+II error, we have $\TV\lt(\kappa_3(W(G,d)), \kappa_3(M(G))\rt) \to 1$.
    By the data processing inequality, $\TV\lt(W(G,d), M(G)\rt) \to 1$.
\end{proof}

\subsection{Analysis of the Degree 4 Statistic}

In this section, we will prove Theorem~\ref{thm:deg4-lb}.
Recall that the degree 4 statistic is defined by $\kappa_4(M) = \kappa_4^{C_4}(M) + \kappa_4^{P_2}(M) + \kappa_4^{E}(M)$, where the three constituent statistics are defined by
\begin{eqnarray*}
    \kappa_4^{C_4}(M)
    &=&
    \sum_{(i,j,k,\ell)\in C_4(G)} M_{i,j} M_{j,k} M_{k,\ell} M_{\ell,i}, \\
    \kappa_4^{P_2}(M)
    &=&
    \sum_{(i,j,k)\in P_2(G)} (M_{i,j}^2-1) (M_{j,k}^2-1), \\
    \kappa_4^{E}(M)
    &=&
    \sum_{(i,j)\in E(G)} (M_{i,j}^4-6M_{i,j}^2+3).
\end{eqnarray*}
Like for the degree 3 statistic, we will show the distributions $\kappa_4(M(G))$ and $\kappa_4(W(G,d))$ separate by computing their means and variances and applying Chebyshev's inequality.

\begin{lemma}
    \label{lem:deg4-lb-goe-stats}
    If $M \sim M(G)$, then $\E \kappa_4(M) = 0$. 
    Moreover, there exists a constant $C$ such that $\Var \kappa_4(M) \le C\Num_G(C_4, P_2, E)$.
\end{lemma}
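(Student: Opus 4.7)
The plan is to compute $\E \kappa_4(M)$ and $\Var \kappa_4(M)$ directly under $M(G)$, exploiting the independence of the entries $\{M_{i,j} : (i,j) \in E(G)\}$ and the (near-)orthogonality of the three summands $\kappa_4^{C_4}, \kappa_4^{P_2}, \kappa_4^E$ in $L^2$. For the mean, I would argue termwise via linearity: each summand of $\kappa_4^{C_4}$ is a product of four independent standard Gaussians indexed by the four distinct edges of a $4$-cycle, hence has mean $0$; each summand of $\kappa_4^{P_2}$ factors as $\E[M_{i,j}^2-1]\,\E[M_{j,k}^2-1]=0$; and each summand of $\kappa_4^E$ has mean $\E[Z^4]-6\E[Z^2]+3 = 3-6+3=0$ for $Z\sim \cN(0,1)$.

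For the variance, I would split
\[
    \Var \kappa_4(M) = \sum_{\star} \Var \kappa_4^\star + 2\sum_{\star \neq \dagger} \mathrm{Cov}(\kappa_4^\star, \kappa_4^\dagger),
\]
with $\star, \dagger \in \{C_4, P_2, E\}$, and handle each piece via a parity-of-edge-powers argument: since the $M_e$ are mutually independent $\cN(0,1)$, $\E[\prod_e M_e^{n_e}]$ vanishes whenever some $n_e$ is odd. The key structural observation is that in each summand of $\kappa_4^{C_4}$ every edge appears to odd power $1$, while in each (expanded) summand of $\kappa_4^{P_2}$ and $\kappa_4^{E}$ every edge appears to even power. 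Since the $P_2$ and $E$ terms touch at most two distinct edges, they cannot absorb all four distinct edges of a $C_4$ term, and at least two $C_4$ edges remain at odd power $1$. This kills $\mathrm{Cov}(\kappa_4^{C_4}, \kappa_4^{P_2})$ and $\mathrm{Cov}(\kappa_4^{C_4}, \kappa_4^{E})$.

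The mixed covariance $\mathrm{Cov}(\kappa_4^{P_2}, \kappa_4^{E})$ does not vanish purely by parity and requires a short case analysis on whether the $E$-edge $(a,b)$ lies in the $P_2$-edge-pair $\{(i,j),(j,k)\}$: if not, $\E[M_{a,b}^4-6M_{a,b}^2+3]=0$ kills the term by independence; if so, the uninvolved $P_2$ factor $M_e^2-1$ has mean zero and is independent of the rest, again killing the term. The three diagonal variances are then computed by the same parity argument: only pairs of identical $4$-cycles, identical $2$-paths, or identical edges yield nonzero contributions, using that a $4$-cycle is determined by its edge set (and our convention $i<\min(j,k,\ell)$, $j<\ell$ lists each exactly once) and similarly that a $2$-path is determined by its edge set (its middle vertex is the shared endpoint). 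The per-diagonal-term contributions are $\E[M_e^2]^4 = 1$, $\E[(M_e^2-1)^2]^2 = 4$, and $\E[(M_e^4-6M_e^2+3)^2]=24$ respectively (the last by a direct expansion using $\E[Z^{2k}]=(2k-1)!!$), giving
\[
    \Var \kappa_4(M) = \Num_G(C_4) + 4\,\Num_G(P_2) + 24\,\Num_G(E) \le 24\,\Num_G(C_4,P_2,E).
\]

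The main obstacle is not any single calculation but the careful bookkeeping of the parity arguments across the six variance/covariance terms, specifically the observation that $\mathrm{Cov}(\kappa_4^{P_2}, \kappa_4^{E})$ does not vanish by parity alone and needs the independent mean-zero factor trick. The remainder is a routine Gaussian moment computation mirroring the proof of Lemma~\ref{lem:deg3-lb-goe-stats}.
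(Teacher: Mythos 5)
Your proof is correct and follows essentially the same route as the paper: compute the mean termwise, then expand the second moment of $\kappa_4$ and observe that all cross terms vanish by independence and mean-zero factors, leaving only the diagonal contributions $\Num_G(C_4) + 4\Num_G(P_2) + 24\,\Num_G(E) \lesssim \Num_G(C_4,P_2,E)$, with your argument merely being more explicit than the paper's ``the cross terms all vanish.'' Your value $\E[(Z^4-6Z^2+3)^2]=24$ is in fact the correct Gaussian moment (the paper's displayed coefficient $6$ on $\Num_G(E)$ is a harmless arithmetic slip), which does not affect the lemma since it only asserts a bound up to an absolute constant.
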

\begin{proof}
    For i.i.d. standard Gaussians $g_1,g_2,g_3,g_4$, each of $g_1g_2g_3g_4$, $(g_1^2-1)(g_2^2-1)$, and $g_1^4-6g_1^2+3$ has mean $0$.
    So, by linearity of expectation, $\E \kappa_4(M) = 0$.
    Therefore, $\Var \kappa_4(M) = \E \kappa_4(M)^2$.
    Moreover, we compute that
    \begin{align*}
        \E \kappa_4(M)^2
        &=
        \sum_{(i,j,k,\ell)\in C_4(G)}
        \E\lt[
            M_{i,j}^2
            M_{j,k}^2
            M_{k,\ell}^2
            M_{\ell,i}^2
        \rt]
        +
        \sum_{(i,j,k)\in P_2(G)}
        \E\lt[
            \lt(M_{i,j}^2-1\rt)^2
            \lt(M_{j,k}^2-1\rt)^2
        \rt] \\
        &\qquad +
        \sum_{(i,j)\in E(G)}
        \E\lt[
            \lt(M_{i,j}^4-6M_{i,j}^2+3\rt)^2
        \rt] \\
        &= \Num_G(C_4) + 4\Num_G(P_2) + 6\Num_G(E)
        \le 6\Num_G(C_4, P_2, E)
    \end{align*}
    because the cross terms all vanish.
\end{proof}

\begin{lemma}
    \label{lem:deg4-lb-wishart-stats}
    If $M \sim W(G,d)$,
    then $\E \kappa_4(M) \ge d^{-1} \Num_G(C_4, P_2, E)$ and there exists a constant $C$ such that
    \begin{align*}
        \Var \kappa_4(M)
        &\le
        C \bigg(
            \Num_G(C_4, P_2, E) +
            d^{-1} \Num_G(C_4, P_2)^{3/2} \\
        &\qquad
            + d^{-2} \Num_G(K_{1,4}, K_{2,4}, C_4^{2,e}) +
            d^{-3} \Num_G(C_4^{2,v})
        \bigg).
    \end{align*}
\end{lemma}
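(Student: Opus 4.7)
The plan is to mirror the strategy of Lemma~\ref{lem:deg3-lb-wishart-stats}, namely to compute the required moments directly via conditioning on subsets of the latent Gaussians $X_1,\ldots,X_n$ and then to carry out a (lengthy) combinatorial case analysis of the double sum defining $\E\kappa_4(M)^2$. Throughout, I will use the representation $M_{i,j}=d^{-1/2}\la X_i,X_j\ra$ with $X_1,\ldots,X_n\sim \cN(0,I_d)$ i.i.d.

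For the mean, I will handle the three components of $\kappa_4$ separately. For a $4$-cycle $(i,j,k,\ell)$, expanding each inner product into coordinates and requiring each Gaussian coordinate to appear an even number of times forces all four summation indices to agree, which gives $\E\la X_i,X_j\ra\la X_j,X_k\ra\la X_k,X_\ell\ra\la X_\ell,X_i\ra = d$ and hence $\E\kappa_4^{C_4}(M)=d^{-1}\Num_G(C_4)$. For $\kappa_4^{P_2}$, conditioning on $X_j$ makes $M_{i,j}^2-1$ and $M_{j,k}^2-1$ conditionally independent with common conditional mean $d^{-1}\|X_j\|^2-1$, giving $\E[(M_{i,j}^2-1)(M_{j,k}^2-1)]=\Var(d^{-1}\|X_j\|^2)=2/d$ and hence $\E\kappa_4^{P_2}(M)=2d^{-1}\Num_G(P_2)$. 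Similarly, conditioning on $X_j$ in $\kappa_4^E$ yields $\E[M_{i,j}^4-6M_{i,j}^2+3\mid X_j]=3(d^{-1}\|X_j\|^2-1)^2$, so $\E\kappa_4^E(M)=6d^{-1}\Num_G(E)$. Summing gives the desired lower bound on $\E\kappa_4(M)$.

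For the variance I would write $\Var\kappa_4(M)=\E\kappa_4(M)^2-(\E\kappa_4(M))^2$ and expand $\kappa_4^2=\sum_{a,b\in\{C_4,P_2,E\}}\kappa_4^{a}\kappa_4^{b}$ into nine double sums, each ranging over pairs of index tuples. For each pair $(T_1,T_2)$ of tuples I will classify the corresponding term by the isomorphism class of the underlying graph $H(T_1,T_2)$ formed by the union of their edges (with multiplicity). The crucial observation is that when $T_1$ and $T_2$ are vertex-disjoint, the expectation factorizes into $(\E\kappa_4^{a}/\Num)\cdot(\E\kappa_4^{b}/\Num)$-type contributions, and these are exactly cancelled by subtracting $(\E\kappa_4(M))^2$. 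Thus only pairs $(T_1,T_2)$ that share at least one vertex contribute to the variance.

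The remaining cases are enumerated by intersection pattern. The self-pairings $T_1=T_2$ contribute $O(\Num_G(C_4,P_2,E))$ (mirroring the GOE case). Two $4$-cycles sharing a single vertex or a single edge produce the $C_4^{2,v}$ and $C_4^{2,e}$ contributions with scaling $d^{-3}$ and $d^{-2}$ respectively, while two $4$-cycles sharing two opposite vertices generate a $K_{2,4}$ subgraph. The mixed products $\kappa_4^{C_4}\kappa_4^{P_2}$ and $\kappa_4^{C_4}\kappa_4^{E}$ give $K_{1,4}$-type structures (a $4$-cycle meeting a $2$-path at a common vertex degree-$4$ vertex) at order $d^{-2}$, as well as further $C_4^{2,v}$-type terms; the cross order $d^{-1}\Num_G(C_4,P_2)^{3/2}$ arises from Cauchy--Schwarz applied to the off-diagonal $\kappa_4^{C_4}\kappa_4^{P_2}$ contributions where one of the graphs is a $K_{1,3}$ or $P_3$ configuration bounded via AM--GM by $\Num_G(P_2)^{1/2}\Num_G(C_4)^{1/2}$. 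The $\kappa_4^{P_2}\kappa_4^{P_2}$, $\kappa_4^{P_2}\kappa_4^{E}$, and $\kappa_4^{E}\kappa_4^{E}$ pairings contribute only at scales already absorbed into the $\Num_G(C_4,P_2,E)$ and $d^{-2}\Num_G(K_{1,4})$ terms. For each configuration, the relevant expectation $\E[M_{i_1,j_1}^{a_1}\cdots M_{i_r,j_r}^{a_r}]$ is computed by conditioning on an appropriate subset of the $X_v$'s (typically the ``hub'' vertices of the intersection graph) and then invoking standard Gaussian moment identities, and the count of graphs of each type is bounded by the corresponding $\Num_G$.

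The main obstacle will be in keeping the bookkeeping of the combinatorial case analysis tight: there are nine pairs of component statistics and, for each pair, several intersection patterns to enumerate, with subtle cancellations against $(\E\kappa_4(M))^2$ that must be carefully tracked so that the residual variance is bounded by exactly the quantity claimed. The key technical move that keeps the statement clean is, for each mixed intersection type, bounding the moment contribution either by a single subgraph count (when one graph dominates) or by Cauchy--Schwarz applied to the two independent ranges of summation, which is what produces the $d^{-1}\Num_G(C_4,P_2)^{3/2}$ term. Once the case analysis is complete, summing the bounds yields the stated variance estimate.
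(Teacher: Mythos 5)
Your computation of the mean is correct and matches the paper, and your general strategy for the variance (expand the double sum, note that vertex-disjoint pairs cancel against the squared mean, classify the surviving pairs by the isomorphism type of the union graph, and evaluate each type by Gaussian moment identities) is the right idea in spirit. One structural difference: the paper never touches the nine cross products at all — it bounds $\Var\kappa_4 \le 3\lt(\Var\kappa_4^{C_4}+\Var\kappa_4^{P_2}+\Var\kappa_4^{E}\rt)$ via $(x+y+z)^2\le 3(x^2+y^2+z^2)$ and then treats each component separately (Propositions~\ref{prop:deg4-lb-4cycs-wishart-stats}, \ref{prop:deg4-lb-2paths-wishart-stats}, \ref{prop:deg4-lb-edges-wishart-stats}). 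Your direct expansion would additionally require you to classify and bound unions of a $4$-cycle with a $2$-path or an edge, which is extra work your sketch does not carry out.

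The genuine gap is in your accounting of where the terms in the stated bound come from, and in particular the $d^{-1}\Num_G(C_4,P_2)^{3/2}$ term. In the actual computation this term does not arise from cross-component Cauchy--Schwarz: it comes from \emph{within-component} intersection patterns at order $d^{-1}$, namely two $2$-paths sharing their center ($K_{1,3}$) or closing into a triangle ($C_3$) in the $P_2$--$P_2$ pairing, and two $4$-cycles sharing three alternating vertices ($K_{2,3}$) in the $C_4$--$C_4$ pairing. These counts are then converted into the stated form via the subgraph-count inequalities of Lemma~\ref{lem:subgraph-statistics}, e.g. $\Num_G(K_{1,3},C_3)\lesssim\Num_G(P_2)^{3/2}$ and $\Num_G(K_{2,3})\lesssim\Num_G(C_4)^{3/2}$ (and similarly $\Num_G(K_{1,3}^{+},P_4)\lesssim\Num_G(P_2)^{3/2}+\Num_G(K_{1,4})$, $\Num_G(C_4^{2,ev})\lesssim\Num_G(C_4,K_{2,4},C_4^{2,e})$ for the higher-order configurations). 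Your sketch omits $K_{2,3}$ and the $K_{1,3}/C_3$ contributions entirely, asserts incorrectly that the $P_2$--$P_2$, $P_2$--$E$, $E$--$E$ pairings are absorbed into $\Num_G(C_4,P_2,E)$ and $d^{-2}\Num_G(K_{1,4})$, and instead attributes the $3/2$-power term to a Cauchy--Schwarz bound of the form $\Num_G(P_2)^{1/2}\Num_G(C_4)^{1/2}$ on the cross term $\kappa_4^{C_4}\kappa_4^{P_2}$ — which is not of order $\Num_G(C_4,P_2)^{3/2}$ and would not produce the claimed bound. To close the argument you need the full enumeration of intersection graphs for each (within-component) pairing with their $d$-scalings, together with the subgraph-comparison inequalities that reduce those counts to $\Num_G(C_4,P_2,E)$, $\Num_G(C_4,P_2)^{3/2}$, $\Num_G(K_{1,4},K_{2,4},C_4^{2,e})$, and $\Num_G(C_4^{2,v})$.
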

We will prove this lemma by analyzing the three constituent statistics of $\kappa_4$ in the propositions below.

\begin{proposition}
    \label{prop:deg4-lb-4cycs-wishart-stats}
    If $M\sim W(G,d)$, then $\E \kappa_4^{C_4}(M) = d^{-1}\Num_G(C_4)$ and there exists a constant $C$ such that
    \[
        \Var \kappa_4^{C_4}(M) \le C \lt(
            \Num_G(C_4) +
            d^{-1} \Num_G(K_{2,3}) +
            d^{-2} \Num_G(K_{2,4}, C_4^{2,e}, C_4^{2,ev}) +
            d^{-3} \Num_G(C_4^{2, v})
        \rt).
    \]
\end{proposition}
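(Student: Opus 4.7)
The plan is to mimic the proof of Lemma~\ref{lem:deg3-lb-wishart-stats} but with a finer case analysis, since two $4$-cycles in $G$ admit more overlap patterns than two $3$-cycles. Write $M_{i,j} = d^{-1/2}\la X_i, X_j\ra$ for $i\neq j$, where $X_1,\ldots,X_n \sim_{\mathrm{i.i.d.}} \cN(0, I_d)$. For the mean, it suffices to evaluate
\[
    \E\lt[\la X_i, X_j\ra \la X_j, X_k\ra \la X_k, X_\ell\ra \la X_\ell, X_i\ra\rt]
\]
for distinct $i,j,k,\ell$. Expanding each inner product coordinate-wise and using independence of $X_i, X_j, X_k, X_\ell$, the only surviving Wick pairing forces all four coordinate indices to coincide, giving the value $d$. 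Multiplying by $d^{-2}$ and summing over $(i,j,k,\ell)\in C_4(G)$ produces $\E \kappa_4^{C_4}(M) = d^{-1}\Num_G(C_4)$.

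For the variance, I would expand
\[
    \Var \kappa_4^{C_4}(M)
    = \sum_{\substack{c_1, c_2 \in C_4(G)}}
    \Bigl(\E[T(c_1) T(c_2)] - d^{-2}\Bigr),
\]
where $T(c) = M_{i,j}M_{j,k}M_{k,\ell}M_{\ell,i}$ for $c=(i,j,k,\ell)$. If $c_1$ and $c_2$ are vertex-disjoint the two products are independent and the summand vanishes, so only pairs of $4$-cycles sharing at least one vertex contribute. Classify the surviving pairs by the isomorphism type of the union graph $c_1 \cup c_2$:
\begin{itemize}
    \item[(i)] $c_1 = c_2$: the summand is $O(1)$ (a constant $8$th moment of Gaussians minus $d^{-2}$), and the multiplicity is $\Num_G(C_4)$.
    \item[(ii)] $c_1, c_2$ share a $2$-path (so $c_1 \cup c_2 \cong K_{2,3}$): Wick bookkeeping yields a summand of order $d^{-1}$, multiplicity $O(\Num_G(K_{2,3}))$.
    \item[(iii)] $c_1, c_2$ share only two opposite vertices (so $c_1 \cup c_2 \cong K_{2,4}$): summand of order $d^{-2}$, multiplicity $O(\Num_G(K_{2,4}))$.
    \item[(iv)] $c_1, c_2$ share exactly one edge (hence exactly two adjacent vertices), contributing type $C_4^{2,e}$: summand $O(d^{-2})$.
    \item[(v)] $c_1, c_2$ share two non-adjacent vertices but no edge (type $C_4^{2,ev}$): summand $O(d^{-2})$.
    \item[(vi)] $c_1, c_2$ share exactly one vertex (type $C_4^{2,v}$): summand $O(d^{-3})$.
\end{itemize}
For each type, I would compute the Gaussian integral $\E[T(c_1)T(c_2)]$ explicitly by expanding all eight inner products into coordinate sums and applying Wick's theorem, identifying the leading-order diagrams. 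The key observation driving the declared $d$-exponents is that each \emph{repeated} vertex $u$ in $c_1 \cup c_2$ of degree $2m$ (counted with multiplicity in $c_1, c_2$) contributes a factor $d^{m-1}$ relative to the fully independent case, since the corresponding Wick contractions reduce the number of independent coordinate labels accordingly.

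Summing all contributions yields the stated bound. The main obstacle is the case enumeration in (ii)--(vi): it requires careful tracking of which $4$-cycle overlap patterns can arise (especially distinguishing edge-sharing from vertex-sharing overlaps and avoiding double counting), and checking that the Gaussian integral really is of the claimed order in $d$ for each pattern. Once each overlap type is fixed, the Wick computation is routine Gaussian moment arithmetic, and the multiplicity bound is immediate from the definition of the subgraph counts $\Num_G(\cdot)$.
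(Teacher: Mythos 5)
This is essentially the paper's proof: the mean via a direct Gaussian moment computation, and the variance via the double sum over pairs of $4$-cycles, discarding vertex-disjoint pairs, classifying the remaining pairs by the shape of the union graph (the same six types $C_4$, $K_{2,3}$, $K_{2,4}$, $C_4^{2,e}$, $C_4^{2,ev}$, $C_4^{2,v}$), evaluating each type's Wick expectation, and bounding multiplicities by the corresponding subgraph counts, exactly as in the paper's figure and table. One small caveat: as literally written your cases (iii) and (v) describe the same overlap pattern ("two shared non-adjacent vertices"); the intended distinction, which the paper uses, is whether the two shared vertices are opposite in both cycles (union $K_{2,4}$) or opposite in one cycle and adjacent in the other (union $C_4^{2,ev}$), so the enumeration should be phrased in those terms.
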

\begin{proof}
    For $i\neq j$, we have $M_{i,j} = d^{-1/2} \la X_i,X_j\ra$, where $X_1,\ldots,X_n$ are sampled i.i.d. from $\cN(0, I_d)$.
    By a Gaussian moment computation, for each $(i,j,k,\ell)\in C_4(G)$ we have
    \[
        \E \lt[ M_{i,j} M_{j,k} M_{k,\ell} M_{\ell,i} \rt]
        = d^{-2} \E \lt[
            \la X_i, X_j \ra
            \la X_j, X_k \ra
            \la X_k, X_\ell \ra
            \la X_\ell, X_i \ra
        \rt]
        = d^{-1}.
    \]
    By linearity of expectation, we have $\E \kappa_4^{C_4}(M) = d^{-1} \Num_G(C_4)$.
    We can control the variance by expanding
    \begin{equation}
        \label{eq:deg4-lb-4cycles-wishart-var-as-double-sum}
        \Var \kappa_4^{C_4}(M)
        =
        \sum_{\substack{
            (i,j,k,\ell)\in C_4(G) \\
            (i',j',k',\ell')\in C_4(G)
        }}
        \lt(
            \E\lt[
                M_{i,j} M_{j,k} M_{k,\ell} M_{\ell,i}
                M_{i',j'} M_{j',k'} M_{k',\ell'} M_{\ell',i'}
            \rt] - d^{-2}
        \rt).
    \end{equation}
    The expectation inside this sum depends on the shape of the graph formed by the eight edges $(i,j)$, $(j,k)$, $(k,\ell)$, $(\ell, i)$, $(i',j')$, $(j',k')$, $(k',\ell')$, and $(\ell', i')$.
    If the sets $\{i,j,k,\ell\}$ and $\{i',j',k',\ell'\}$ do not intersect, then $M_{i,j} M_{j,k} M_{k,\ell} M_{\ell, i}$ and $M_{i',j'} M_{j',k'} M_{k',\ell'} M_{\ell',i'}$ are independent, and so the summand in (\ref{eq:deg4-lb-4cycles-wishart-var-as-double-sum}) is $0$. Otherwise, these eight edges can form the graphs shown in Figure~\ref{fig:deg4-4cycles-statistic-wishart-graphs}.
    We can compute the expectation $\E\lt[M_{i,j} M_{j,k} M_{k,\ell} M_{\ell,i} M_{i',j'} M_{j',k'} M_{k',\ell'} M_{\ell',i'}\rt]$ for each such graph by a Gaussian moment computation and count the number of times each graph occurs in the sum (\ref{eq:deg4-lb-4cycles-wishart-var-as-double-sum}).
    Table~\ref{tab:deg4-lb-4cycles-wishart-variance-analysis} summarizes this computation.
    This completes the proof of the proposition.

    \begin{figure}[h!]
    \centering
    \includegraphics{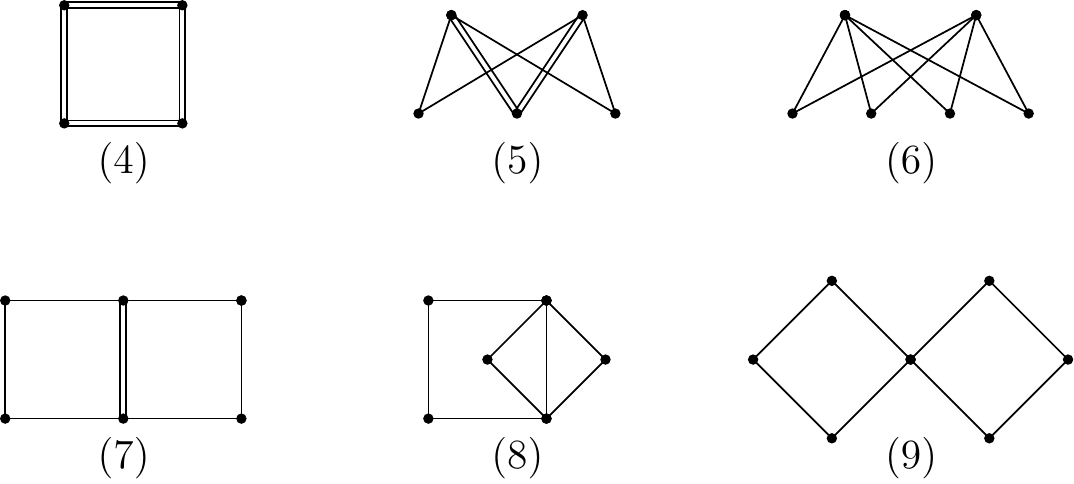}
    \caption{Graphs formed by the edges $(i,j)$, $(j,k)$, $(k,\ell)$, $(\ell, i)$, $(i',j')$, $(j',k')$, $(k',\ell')$, and $(\ell',i')$ yielding nonzero terms in (\ref{eq:deg4-lb-4cycles-wishart-var-as-double-sum}).}
    \label{fig:deg4-4cycles-statistic-wishart-graphs}
\end{figure}

    \begin{table}[h!]
    \centering
    \begin{tabular}{|c|c|c|}
        \hline
        Graph &
        $\E\lt[
             M_{i,j} M_{j,k} M_{k,\ell} M_{\ell,i}
             M_{i',j'} M_{j',k'} M_{k',\ell'} M_{\ell',i'}
        \rt]$ &
        Number of occurrences in (\ref{eq:deg4-lb-4cycles-wishart-var-as-double-sum})\\
        \hline
        (4) & $1 + 8d^{-1} + 32d^{-2} + 40d^{-3}$ & $O(\Num_G(C_4))$ \\ \hline
        (5) & $d^{-1} + 10d^{-2} + 16d^{-3}$ & $O(\Num_G(K_{2,3}))$ \\ \hline
        (6) & $3d^{-2} + 6d^{-3}$ & $O(\Num_G(K_{2,4}))$ \\ \hline
        (7) & $3d^{-2} + 6d^{-3}$ & $O(\Num_G(C_4^{2,e}))$ \\ \hline
        (8) & $3d^{-2} + 6d^{-3}$ & $O(\Num_G(C_4^{2,ev}))$ \\ \hline
        (9) &  $d^{-2} + 2d^{-3}$ & $O(\Num_G(C_4^{2,v}))$ \\ \hline
    \end{tabular}
    \caption{Analysis of terms in variance of 4-cycles statistic under Wishart distribution.}
    \label{tab:deg4-lb-4cycles-wishart-variance-analysis}
\end{table}

\end{proof}

\begin{proposition}
    \label{prop:deg4-lb-2paths-wishart-stats}
    If $M\sim W(G,d)$, then $\E \kappa_4^{P_2}(M) = 2d^{-1}\Num_G(P_2)$ and there exists a constant $C$ such that
    \begin{align*}
        \Var \kappa_4^{P_2}(M) 
        &\le C \bigg(
            \Num_G(P_2) +
            d^{-1} \Num_G(K_{1,3}, C_3) +
            d^{-2} \Num_G(K_{1,4}, C_4,C_3^{+}, P_3) \\
            &\qquad +
            d^{-3} \Num_G(K_{1,3}^{+}), P_4)
        \bigg).
    \end{align*}
\end{proposition}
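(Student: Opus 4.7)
The argument parallels Proposition~\ref{prop:deg4-lb-4cycs-wishart-stats}: establish the mean by a single-path calculation and linearity, then expand the variance as a double sum over ordered pairs of 2-paths, group summands by the isomorphism type of the graph jointly spanned by the two paths, and match each type to one of the subgraph counts in the statement. A key difference from the $\kappa_4^{C_4}$ analysis is that each factor is already quadratic in the entries of $M$, so moments will be of degree eight rather than eight in the simpler $\kappa_4^{C_4}$ setting, and the centering $-1$ in each factor is what produces the variance cancellations.

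For the mean, fix $(i,j,k)\in P_2(G)$ and condition on $X_j$. Then $\la X_i,X_j\ra$ and $\la X_k,X_j\ra$ are conditionally independent $\cN(0,\norm{X_j}_2^2)$ random variables, so
\[
    \E\lt[(M_{i,j}^2-1)(M_{j,k}^2-1)\,\Big|\,X_j\rt]
    =\lt(d^{-1}\norm{X_j}_2^2-1\rt)^2.
\]
Taking expectation with $\norm{X_j}_2^2\sim\chisq(d)$ gives $2/d$, and linearity yields $\E\kappa_4^{P_2}(M)=2d^{-1}\Num_G(P_2)$. For the variance, let $Y_{(a,b,c)}=(M_{a,b}^2-1)(M_{b,c}^2-1)$ so that
\[
    \Var\kappa_4^{P_2}(M)=\sum_{(i,j,k),(i',j',k')\in P_2(G)}\lt(\E\lt[Y_{(i,j,k)}Y_{(i',j',k')}\rt]-4d^{-2}\rt),
\]
and observe that when $\{i,j,k\}\cap\{i',j',k'\}=\emptyset$ the two factors are independent and the summand vanishes exactly.

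The remaining pairs I would classify by the isomorphism type of their union graph and match to the subgraphs appearing in the bound: identical paths give $\Num_G(P_2)$ at order $1$; sharing the middle vertex together with one endpoint, and the variant in which the second path closes a triangle, give $\Num_G(K_{1,3})$ and $\Num_G(C_3)$ at order $d^{-1}$; sharing only the middle vertex gives $\Num_G(K_{1,4})$, $\Num_G(C_4)$, and $\Num_G(C_3^{+})$, while sharing a single edge (with distinct free endpoints) gives $\Num_G(P_3)$, all at order $d^{-2}$; finally, sharing a single vertex where endpoint and middle meet gives $\Num_G(K_{1,3}^{+})$ and $\Num_G(P_4)$ at order $d^{-3}$. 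For each shape I would compute the degree-eight Gaussian moment either by conditioning on the shared latent vectors (as in the mean computation) or by direct Wick expansion, subtract $4d^{-2}$, and bound the number of ordered pairs of 2-paths realizing the shape by a universal constant times the corresponding $\Num_G$ term.

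The main obstacle is the case analysis itself. Compared with the $\kappa_4^{C_4}$ argument, the intersection patterns of two (ordered) 2-paths are substantially more varied, since each position of each path can be either a middle or an endpoint vertex and these play different roles in the moment computation. The centering $-1$ on each factor is essential: it kills all $O(1)$ contributions arising from subsets of the four factors that share no latent vector, so that only configurations in which every latent vector appearing in the moment expansion is visited at least twice contribute, and each ``unshared'' repetition costs exactly one factor of $d^{-1}$. I would organize the enumeration by first fixing the nine-cell intersection pattern of the ordered triples $(i,j,k)$ and $(i',j',k')$ (i.e., which positions coincide), then reading off the union shape, identifying the matching subgraph count, and summing to obtain the stated bound.
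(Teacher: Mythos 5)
Your plan is correct and follows essentially the same route as the paper: expand the variance as a double sum over ordered pairs of 2-paths, discard disjoint pairs, classify the remaining pairs by the isomorphism type of the union graph ($P_2$, $K_{1,3}$, $C_3$, $K_{1,4}$, $C_4$, $C_3^{+}$, $P_3$, $K_{1,3}^{+}$, $P_4$), compute the Gaussian moment for each type, subtract $4d^{-2}$, and count occurrences, and your assignment of $d$-orders (including the cancellation that pushes $K_{1,3}^{+}$ and $P_4$ down to $d^{-3}$) matches the paper's Table~\ref{tab:deg4-lb-2paths-wishart-variance-analysis} exactly. Only your bookkeeping of which intersection patterns produce which unions is slightly off (e.g.\ $C_4$ arises from two paths sharing both endpoints with distinct middles, and $P_4$ from sharing a single endpoint of each path), but this does not affect the classification or the final bound.
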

\begin{proof}[Proof of Proposition~\ref{prop:deg4-lb-2paths-wishart-stats}]
    For each $(i,j,k)\in P_2(G)$, we can compute that
    \[
        \E \lt[
            \lt(M_{i,j}^2-1\rt)
            \lt(M_{j,k}^2-1\rt)
        \rt]
        =
        \E \lt[
            \lt(d^{-1} \la X_i,X_j\ra^2-1\rt)
            \lt(d^{-1} \la X_j,X_k\ra^2-1\rt)
        \rt]
        = 2d^{-1}.
    \]
    By linearity of expectation, $\E \kappa_4^{P_2}(M) = 2d^{-1} \Num_G(P_2)$.
    We can bound the variance by expanding
    \begin{equation}
        \label{eq:deg4-lb-2paths-wishart-var-as-double-sum}
        \Var \kappa_4^{P_2}(M)
        =
        \sum_{\substack{(i,j,k)\in P_2(G) \\ (i',j',k')\in P_2(G)}}
        \lt(
            \E\lt[
                \lt(M_{i,j}^2-1\rt)
                \lt(M_{j,k}^2-1\rt)
                \lt(M_{i',j'}^2-1\rt)
                \lt(M_{j',k'}^2-1\rt)
            \rt]
            - 4d^{-2}
        \rt).
    \end{equation}
    The expectation inside this sum depends on the shape of the graph formed by the four edges $(i,j)$, $(j,k)$, $(i',j')$, $(j',k')$.
    If $\{i,j,k\}$ and $\{i',j',k'\}$ do not intersect, the summand in (\ref{eq:deg4-lb-2paths-wishart-var-as-double-sum}) is $0$.
    Otherwise, these edges can form the graphs shown in Figure~\ref{fig:deg4-2paths-statistic-wishart-graphs}.
    For each graph, we can compute the value of $\E\lt[ \lt(M_{i,j}^2-1\rt) \lt(M_{j,k}^2-1\rt) \lt(M_{i',j'}^2-1\rt) \lt(M_{j',k'}^2-1\rt)\rt]$ and count the number of times it appears in the sum (\ref{eq:deg4-lb-2paths-wishart-var-as-double-sum}).
    Table~\ref{tab:deg4-lb-2paths-wishart-variance-analysis} summarizes this computation.
    This completes the proof of the proposition.

    \begin{figure}[h!]
    \centering
    \includegraphics{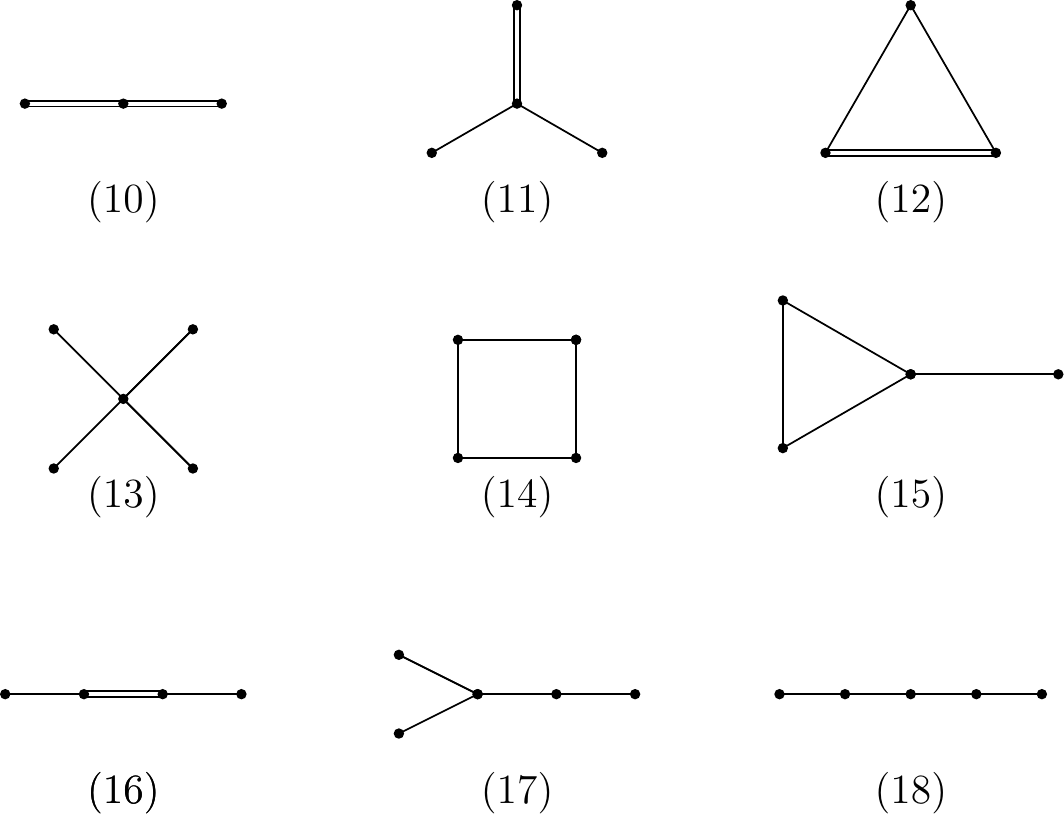}
    \caption{Graphs formed by the edges $(i,j)$, $(j,k)$, $(i',j')$, and $(j',k')$ yielding nonzero terms in (\ref{eq:deg4-lb-2paths-wishart-var-as-double-sum}).}
    \label{fig:deg4-2paths-statistic-wishart-graphs}
\end{figure}

    \begin{table}[h!]
    \centering
    \begin{tabular}{|c|c|c|}
        \hline
        Graph &
        $\E\lt[
            \lt(M_{i,j}^2-1\rt)
            \lt(M_{j,k}^2-1\rt)
            \lt(M_{i',j'}^2-1\rt)
            \lt(M_{j',k'}^2-1\rt)
        \rt]$ &
        Number of occurrences in (\ref{eq:deg4-lb-2paths-wishart-var-as-double-sum})\\
        \hline
        (10) & $4 + 56d^{-1} + 300d^{-2} + 432d^{-3}$ & $O(\Num_G(P_2))$ \\ \hline
        (11) & $4d^{-1} + 68d^{-2} + 144d^{-3}$ & $O(\Num_G(K_{1,3}))$ \\ \hline
        (12) & $20d^{-1} + 196d^{-2} + 336d^{-3}$ & $O(\Num_G(C_3))$ \\ \hline
        (13) & $12d^{-2} + 48d^{-3}$ & $O(\Num_G(K_{1,4}))$ \\ \hline
        (14) & $16d^{-2} + 40d^{-3}$ & $O(\Num_G(C_4))$ \\ \hline
        (15) & $24d^{-2} + 64d^{-3}$ & $O(\Num_G(C_3^+))$ \\ \hline
        (16) & $40d^{-2} + 96d^{-3}$ & $O(\Num_G(P_3))$ \\ \hline
        (17) & $4d^{-2} + 16d^{-3}$ & $O(\Num_G(K_{1,3}^+))$ \\ \hline
        (18) & $4d^{-2} + 8d^{-3}$ & $O(\Num_G(P_4))$ \\ \hline
    \end{tabular}
    \caption{Analysis of terms in variance of 2-paths statistic under Wishart distribution.}
    \label{tab:deg4-lb-2paths-wishart-variance-analysis}
\end{table}

\end{proof}

\begin{proposition}
    \label{prop:deg4-lb-edges-wishart-stats}
    If $M\sim W(G,d)$, then $\E \kappa_4^{E}(M) = 6d^{-1}\Num_G(E)$ and there exists a constant $C$ such that
    \[
        \Var \kappa_4^{E}(M) \le C \lt(
            \Num_G(E) + d^{-2} \Num_G(P_2)
        \rt).
    \]
\end{proposition}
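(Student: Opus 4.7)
The plan is to mimic the proofs of Propositions~9.3 and 9.4, but now exploit the special structure of the edges statistic: since $\kappa_4^E(M)$ is a sum over single edges, only pairs of edges sharing at least one vertex can give a nonzero covariance, which restricts the combinatorial enumeration to only edges and $2$-paths. The key analytic reduction is to condition on one endpoint of each edge and use $\chi^2$ moments.

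For the mean, I would condition on $X_i$. Given $X_i$, $M_{i,j}=d^{-1/2}\langle X_i, X_j\rangle \sim \cN(0, s_i)$ where $s_i = \|X_i\|^2/d$. Hence $\E[M_{i,j}^2 \mid X_i] = s_i$ and $\E[M_{i,j}^4\mid X_i] = 3s_i^2$, so
\[
    \E\!\bigl[M_{i,j}^4 - 6M_{i,j}^2 + 3\bigr]
    = \E\!\bigl[3(s_i-1)^2\bigr]
    = 3\,\Var(s_i)
    = 6/d,
\]
using that $\|X_i\|^2 \sim \chi^2(d)$ has variance $2d$. Summing over edges gives the claimed mean.

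For the variance, expand
\[
    \Var\kappa_4^E(M)
    = \sum_{(i,j), (i',j')\in E(G)}
    \Cov\!\bigl[f(M_{i,j}),\, f(M_{i',j'})\bigr],
    \qquad f(x) := x^4-6x^2+3.
\]
If $\{i,j\}\cap\{i',j'\}=\emptyset$, the two entries depend on disjoint latent vectors, so the covariance vanishes. If $\{i,j\}=\{i',j'\}$ (a single edge), then $\Var f(M_{i,j})$ is a constant, since all moments of $M_{i,j}$ up to order $8$ are uniformly bounded (again by conditioning on $X_i$ and using $\E[s_i^k]=O(1)$); these terms contribute $O(\Num_G(E))$.

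The only nontrivial case is when the two edges share exactly one vertex, i.e.\ they form an element of $P_2(G)$. Say the common vertex is $j=j'$ and the edges are $(i,j)$ and $(j,k)$ with $i\neq k$. Conditioned on $X_j$, the variables $M_{i,j}$ and $M_{j,k}$ are independent $\cN(0,s_j)$ with $s_j = \|X_j\|^2/d$. By the mean computation,
\[
    \E[f(M_{i,j})\mid X_j] = 3(s_j-1)^2,
\]
and the same holds for $M_{j,k}$. Therefore
\[
    \Cov[f(M_{i,j}), f(M_{j,k})]
    = \Var\!\bigl[3(s_j-1)^2\bigr]
    = 9\bigl(\E[(s_j-1)^4] - \E[(s_j-1)^2]^2\bigr).
\]
Using $\E[(s_j-1)^2]=2/d$ and the standard $\chi^2$ fourth central moment $\E[(s_j-1)^4] = 12/d^2 + 48/d^3$, this covariance equals $72/d^2 + O(1/d^3) = O(d^{-2})$. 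Summing over the $O(\Num_G(P_2))$ ordered $2$-paths yields a contribution of $O(d^{-2}\Num_G(P_2))$, and combining with the diagonal contribution completes the bound.

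The main obstacle is purely bookkeeping: verifying that the disjoint case really gives zero covariance (straightforward from independence of the relevant $X_v$'s), and carrying out the central $\chi^2$ moment identity $\E[(\chi^2(d)/d - 1)^4] = 12/d^2 + 48/d^3$ cleanly. No new combinatorial enumeration of "glueings" like in Propositions~9.3 and 9.4 is needed, because the summand involves only a single edge and hence only two vertices, leaving just the three intersection patterns above.
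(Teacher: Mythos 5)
Your proof is correct, and it follows the same decomposition as the paper's: expand $\Var \kappa_4^{E}(M)$ over pairs of edges, observe that vertex-disjoint pairs have zero covariance, that identical pairs contribute $O(1)$ each (hence $O(\Num_G(E))$ in total), and that pairs sharing exactly one vertex correspond to $2$-paths and contribute $O(d^{-2})$ each. The only real difference is how the individual terms are evaluated: the paper computes the mixed Gaussian moments $\E\lt[f(M_{i,j})f(M_{i',j'})\rt]$ directly and tabulates them ($24+432d^{-1}+\cdots$ for a repeated edge, $108d^{-2}+432d^{-3}$ for a $2$-path), whereas you condition on the shared latent vector $X_j$, use the exact identity $\E[f(M_{i,j})\mid X_j]=3(s_j-1)^2$ with $s_j=\norm{X_j}_2^2/d$, and reduce everything to central $\chi^2$ moments. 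Your numbers agree with the paper's: the per-$2$-path covariance $9\E[(s_j-1)^4]-9\E[(s_j-1)^2]^2=72d^{-2}+432d^{-3}$ is exactly the paper's $108d^{-2}+432d^{-3}-36d^{-2}$, and the mean $3\Var(s_i)=6d^{-1}$ per edge matches as well. The conditioning device also makes the conditional independence of $M_{i,j}$ and $M_{j,k}$ given $X_j$, and the vanishing of the disjoint-case covariance, transparent, so the argument is complete; it is a cleaner way of carrying out the same moment computations rather than a structurally different proof.
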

\begin{proof}[Proof of Proposition~\ref{prop:deg4-lb-edges-wishart-stats}]
    For each $(i,j)\in E(G)$, we can compute that
    \[
        \E \lt[M_{i,j}^4-6M_{i,j}^2+3\rt]
        =
        \E \lt[
            d^{-2} \la X_i,X_j\ra ^4 -
            6d^{-1} \la X_i,X_j\ra^2 +
            3
        \rt]
        = 6d^{-1}.
    \]
    By linearity of expectation, $\E \kappa_4^{E} = 6d^{-1} \Num_G(E)$.
    We can bound the variance by expanding
    \begin{equation}
        \label{eq:deg4-lb-edges-wishart-var-as-double-sum}
        \Var \kappa_4^{E}(M)
        =
        \sum_{\substack{(i,j)\in E(G) \\ (i',j')\in E(G)}}
        \lt(
            \E\lt[
                \lt(M_{i,j}^4-6M_{i,j}^2+3\rt)
                \lt(M_{i',j'}^4-6M_{i',j'}^2+3\rt)
            \rt]
            - 36d^{-2}
        \rt).
    \end{equation}
    The expectation inside this sum depends on the shape of the graph formed by the edges $(i,j)$ and $(i',j')$.
    If $\{i,j\}$ and $\{i',j'\}$ do not intersect, the summand in (\ref{eq:deg4-lb-edges-wishart-var-as-double-sum}) is $0$.
    Otherwise, these edges can form the graphs shown in Figure~\ref{fig:deg4-edges-statistic-wishart-graphs}.
    We can compute $\E\lt[ \lt(M_{i,j}^4 - 6M_{i,j}^2 + 3\rt) \lt(M_{i',j'}^4 - 6M_{i',j'}^2 + 3\rt)\rt]$ for each such graph and count the number of times it appears in the sum (\ref{eq:deg4-lb-edges-wishart-var-as-double-sum}).
    Table~\ref{tab:deg4-lb-edges-wishart-variance-analysis} summarizes this computation.
    This completes the proof of the proposition.

    \begin{figure}[h!]
    \centering
    % \begin{asy}
    %     unitsize(1.0cm);
    %     pair LABEL_SHIFT = (2, 0.5);
    %     pair CENTERING_SHIFT = (2, 1.5);
    %     void dot_shift(pair p, pair dif) {
    %         dot(p + dif + CENTERING_SHIFT);
    %     }
    %     void draw_shift(path p, pair dif) {
    %         draw(shift(dif + CENTERING_SHIFT) * (p));
    %     }
    %     void label_shift(string s, pair dif) {
    %         label(s, dif + LABEL_SHIFT);
    %     }
    %     void dot_path(string s, pair[] p, pair dif) {
    %         int n = p.length;
    %         label_shift(s, dif);
    %         for (int i=0; i < n-1; ++i) {
    %             dot_shift(p[i], dif);
    %             draw_shift(p[i]--p[i+1], dif);
    %         }
    %         dot_shift(p[n-1], dif);
    %     }
    %     real EPS = 0.03;
    %     pair vec090 = (0, 1);
    %     void double_draw_shift(path p, pair normal, pair dif) {
    %         draw_shift(p, dif + EPS * normal);
    %         draw_shift(p, dif - EPS * normal);
    %     }
    %     // E
    %     pair E_SHIFT = (0, 0);
    %     dot_shift((-0.5, 0), E_SHIFT);
    %     dot_shift((0.5, 0), E_SHIFT);
    %     double_draw_shift((-0.5, 0)--(0.5, 0), vec090, E_SHIFT);
    %     label_shift("(19)", E_SHIFT);
    %     // P2
    %     pair P2_SHIFT = (4, 0);
    %     pair[] P2_VERTICES = {(-1, 0), (0, 0), (1, 0)};
    %     dot_path("(20)", P2_VERTICES, P2_SHIFT);
    % \end{asy}
    \includegraphics{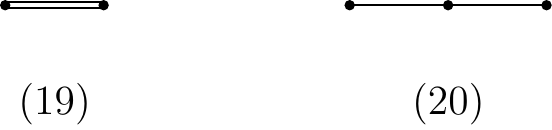}
    \caption{Graphs formed by the edges $(i,j)$ and $(i',j')$ yielding nonzero terms in (\ref{eq:deg4-lb-edges-wishart-var-as-double-sum}).}
    \label{fig:deg4-edges-statistic-wishart-graphs}
\end{figure}

    \begin{table}[h!]
    \centering
    \begin{tabular}{|c|c|c|}
        \hline
        Graph &
        $\E\lt[
            \lt(M_{i,j}^4-6M_{i,j}^2+3\rt)
            \lt(M_{i',j'}^4-6M_{i',j'}^2+3\rt)
        \rt]$ &
        Number of occurrences in (\ref{eq:deg4-lb-edges-wishart-var-as-double-sum})\\
        \hline
        (19) & $24 + 432d^{-1} + 3180d^{-2} + 5040d^{-3}$ & $O(\Num_G(E))$ \\ \hline
        (20) & $108d^{-2} + 432d^{-3}$ & $O(\Num_G(P_2))$ \\ \hline
    \end{tabular}
    \caption{Analysis of terms in variance of edges statistic under Wishart distribution.}
    \label{tab:deg4-lb-edges-wishart-variance-analysis}
\end{table}

\end{proof}

We can now prove Lemma~\ref{lem:deg4-lb-wishart-stats} and Theorem~\ref{thm:deg4-lb}, which follow readily from these propositions.
\begin{proof}[Proof of Lemma~\ref{lem:deg4-lb-wishart-stats}]
    By linearity of expectation and Propositions~\ref{prop:deg4-lb-4cycs-wishart-stats}, \ref{prop:deg4-lb-2paths-wishart-stats}, and \ref{prop:deg4-lb-edges-wishart-stats},
    \[
        \E \kappa_4(M) =
        \E \kappa_4^{C_4}(M) +
        \E \kappa_4^{P_2}(M) +
        \E \kappa_4^{E}(M)
        \ge
        d^{-1} \Num_G(C_4, P_2, E),
    \]
    as desired.
    By the inequality $(x+y+z)^2 \le 3(x^2+y^2+z^2)$ applied pointwise,
    \begin{align*}
        \Var \kappa_4(M)
        &=
        \E \lt[\lt(
            \lt(\kappa_4^{C_4}(M) - \E \kappa_4^{C_4}(M)\rt) +
            \lt(\kappa_4^{P_2}(M) - \E \kappa_4^{P_2}(M)\rt) +
            \lt(\kappa_4^{E}(M) - \E \kappa_4^{E}(M)\rt)
        \rt)^2\rt] \\
        &\le
        3 \E \lt[
            \lt(\kappa_4^{C_4}(M) - \E \kappa_4^{C_4}(M)\rt)^2 +
            \lt(\kappa_4^{P_2}(M) - \E \kappa_4^{P_2}(M)\rt)^2 +
            \lt(\kappa_4^{E}(M) - \E \kappa_4^{E}(M)\rt)^2
        \rt] \\
        &=
        3 \Var \kappa_4^{C_4}(M) +
        3 \Var \kappa_4^{P_2}(M) +
        3 \Var \kappa_4^{E}(M) \\
        &\lesssim
        \Num_G(C_4, P_2, E) +
        d^{-1} \Num_G(K_{1,3}, K_{2,3}, C_3) \\
        &\qquad +
        d^{-2} \Num_G(K_{1,4}, K_{2,4}, C_4^{2,e}, C_4^{2,ev}, C_3^+, P_3) +
        d^{-3} \Num_G(C_4^{2,v}, K_{1,3}^{+}, P_4).
    \end{align*}
    By Lemma~\ref{lem:subgraph-statistics}(\ref{itm:ss-c3},\ref{itm:ss-k13-p3-c3+},\ref{itm:ss-k13+-p4},\ref{itm:ss-k23},\ref{itm:ss-c42ev}), this simplifies to the desired bound on $\Var \kappa_4(M)$.
\end{proof}

\begin{proof}[Proof of Theorem~\ref{thm:deg4-lb}]
    We will distinguish the hypotheses $M\sim M(G)$ and $M\sim W(G,d)$ with the following test.
    If $\kappa_4(M) \ge \f{1}{2} d^{-1} \Num_G(C_4, P_2, E)$, predict $W(G,d)$, and otherwise predict $M(G)$.

    By Lemma~\ref{lem:deg4-lb-goe-stats} and Chebyshev's inequality, the probability of predicting $W(G,d)$ when $M\sim M(G)$ is at most
    \[
        \P_{M\sim M(G)}
        \lt(
            \kappa_4(M) \ge
            \f{1}{2} d^{-1} \Num_G(C_4, P_2, E)
        \rt)
        \lesssim
        \f{\Num_G(C_4, P_2, E)}{\lt(\f{1}{2} d^{-1} \Num_G(C_4, P_2, E)\rt)^2}
        \lesssim
        \f{d^2}{\Num_G(C_4, P_2, E)}.
    \]
    By hypothesis (\ref{eq:deg4-lb-hypothesis-4cycs}), this probability is $o(1)$.
    By Lemma~\ref{lem:deg4-lb-wishart-stats} and Chebyshev's inequality, the probability of predicting $M(G)$ when $M\sim W(G,d)$ is bounded by
    \begin{align*}
    &\P_{M\sim W(G,d)} \lt(\kappa_4(M) < \f{1}{2d}\Num_G(C_4, P_2, E)\rt) \\
    &\qquad \lesssim
    \f{
        \Num_G(C_4, P_2, E) +
        d^{-1} \Num_G(C_4, P_2)^{3/2} +
        d^{-2} \Num_G(K_{1,4},K_{2,4},C_4^{2,e}) +
        d^{-3} \Num_G(C_4^{2,v})
    }{\lt(\f{1}{2d} \Num_G(C_4, P_2, E)\rt)^2} \\
    &\qquad \lesssim
    \f{d^2}{\Num_G(C_4, P_2, E)} +
    \f{d}{\Num_G(C_4, P_2, E)^{1/2}} +
    \f{\Num_G(K_{1,4},K_{2,4},C_4^{2,e},C_4^{2,v})}{\Num_G(C_4, P_2, E)^2}.
    \end{align*}
    By hypotheses (\ref{eq:deg4-lb-hypothesis-4cycs}) and (\ref{eq:deg4-lb-hypothesis-regularity}), this probability is $o(1)$.
    Since this test separates $\kappa_4(M(G))$ and $\kappa_4(W(G,d))$ with $o(1)$ Type I+II error, we have $\TV\lt(\kappa_4(W(G,d)), \kappa_4(M(G))\rt) \to 1$.
    By the data processing inequality, $\TV\lt(W(G,d), M(G)\rt) \to 1$.
\end{proof}

\subsection{Analysis of the Longest Row Statistic}

In this section, we will prove Theorem~\ref{thm:maxdeg-lb}.
Throughout this subsection, let $\vst$ be the vertex of maximal degree in $G$, and let $D = \deg(\vst) = \maxdeg(G)$.
Thus $d \ll D$.
We will characterize the distributions of $\kappa_r(M(G))$ and $\kappa_r(W(G,d))$, which both concentrate around $1$ but have different fluctuations.
Throughout this section, let $\chisq(k)$ denote the $\chisq$ random variable with $k$ degrees of freedom.

\begin{lemma}
    \label{lem:longest-row-lb-goe-stats}
    If $M\sim M(G)$, then $\kappa_r(M) =_d \f{1}{D} \chisq(D)$.
\end{lemma}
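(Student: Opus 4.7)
The plan is to unpack the definitions and observe that under $M\sim M(G)$, the relevant entries are i.i.d.\ standard Gaussians, so the statistic is immediately recognized as a scaled chi-squared.

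First I would recall that $\kappa_r(M) = \f{1}{\maxdeg(G)} \sum_{i \in N(\vst)} M_{\vst, i}^2$, with $D = \maxdeg(G) = \deg(\vst)$, so that $|N(\vst)| = D$. Next, I would invoke the definition of $M(G) = A_G \odot M'$ where $M' \sim M(n)$. For each $i \in N(\vst)$, the edge $(\vst, i)$ belongs to $E(G)$, so the Schur-product mask preserves $M_{\vst, i} = M'_{\vst, i}$, which by Definition~\ref{defn:wishart-goe} is a standard Gaussian entry of the GOE, and distinct such entries are jointly independent (the off-diagonal GOE entries on or above the diagonal are mutually independent, and by symmetry this extends to $\{M'_{\vst, i} : i \in N(\vst)\}$, noting $\vst \notin N(\vst)$ so no diagonal entry appears).

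Therefore $\{M_{\vst, i}\}_{i \in N(\vst)}$ is a collection of $D$ i.i.d.\ $\cN(0,1)$ variables, and $\sum_{i \in N(\vst)} M_{\vst, i}^2$ is by definition distributed as $\chisq(D)$. Dividing by $D$ yields the claim $\kappa_r(M) =_d \f{1}{D}\chisq(D)$.

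There is no genuine obstacle here; the lemma is essentially a definitional unpacking. The only minor care point is confirming that the diagonal entry $M_{\vst,\vst}$ (which would have variance $2$) plays no role, which holds because $\vst \notin N(\vst)$ and also because diagonal entries are deleted by the mask when $G$ is simple. Thus the proof can be written as a short two- or three-line verification.
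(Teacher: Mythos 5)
Your proposal is correct and follows the same argument as the paper: the mask preserves the off-diagonal GOE entries $M_{\vst,i}$ for $i\in N(\vst)$, which are i.i.d.\ standard Gaussians, so the sum of their squares is $\chisq(D)$ and dividing by $D$ gives the claim. Your extra remark about the diagonal entry being irrelevant (since $\vst\notin N(\vst)$) is a fine, if unnecessary, clarification.
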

\begin{proof}
    If $M\sim M(G)$, then the entries $M_{\vst,i}$ are i.i.d. and standard Gaussian.
    Therefore,
    \[
        \kappa_r(M)
        = \f{1}{D} \sum_{i\in N(\vst)} M_{\vst,i}^2
        =_d \f{1}{D} \chisq(D).
    \]
\end{proof}

\begin{lemma}
    \label{lem:longest-row-lb-wishart-stats}
    If $M\sim W(G,d)$, then
    $\kappa_r(M) =_d \f{1}{D} \chisq(D) \cdot \f{1}{d}\chisq(d)$,
    where the two $\chisq$ variables are independent.
\end{lemma}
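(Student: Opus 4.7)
The plan is to exhibit the factorization directly from the latent Gaussian representation. Let $X_1, \ldots, X_n$ be the i.i.d.\ $\cN(0, I_d)$ columns generating $W(G,d)$, so that $M_{\vst, i} = d^{-1/2} \la X_{\vst}, X_i \ra$ for each $i \in N(\vst)$. Substituting into the definition of $\kappa_r$ gives
\[
    \kappa_r(M) = \f{1}{Dd} \sum_{i\in N(\vst)} \la X_{\vst}, X_i\ra^2.
\]
First I would decompose $X_{\vst} = \norm{X_{\vst}}_2 \cdot U$ where $U = X_{\vst}/\norm{X_{\vst}}_2$ is the direction and $\norm{X_{\vst}}_2$ is the radius. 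Then
\[
    \kappa_r(M) = \lt(\f{\norm{X_{\vst}}_2^2}{d}\rt) \cdot \lt(\f{1}{D}\sum_{i\in N(\vst)} \la U, X_i\ra^2\rt).
\]

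Next I would identify the marginal laws and independence structure of the two factors. The first factor is $\f{1}{d}\chisq(d)$ by the definition of the $\chisq$ distribution. For the second factor, condition on $X_{\vst}$ (equivalently, on both $U$ and $\norm{X_{\vst}}_2$). Since the $X_i$ for $i\in N(\vst)$ are i.i.d.\ $\cN(0, I_d)$ and independent of $X_{\vst}$, and since $U$ is a unit vector, the scalar projections $\{\la U, X_i\ra : i\in N(\vst)\}$ are, conditionally on $X_{\vst}$, i.i.d.\ $\cN(0,1)$. Hence conditionally on $X_{\vst}$, the sum $\sum_{i\in N(\vst)} \la U, X_i\ra^2$ is distributed as $\chisq(D)$.

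Because this conditional distribution depends on neither $U$ nor $\norm{X_{\vst}}_2$, the sum is independent of $X_{\vst}$ and in particular independent of $\norm{X_{\vst}}_2$. Combining, the two factors displayed above are independent, with marginal laws $\f{1}{d}\chisq(d)$ and $\f{1}{D}\chisq(D)$ respectively, yielding the claimed distributional identity. There is no real obstacle here; the only point worth stating carefully is the conditional argument showing that the direction $U$ ``disappears'' from the distribution of $\sum_i \la U, X_i\ra^2$, which is what delivers the independence of the two $\chisq$ factors.
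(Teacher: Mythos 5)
Your proposal is correct and follows essentially the same route as the paper: both factor $\kappa_r(M)$ as $\f{1}{d}\norm{X_{\vst}}_2^2$ times the normalized sum of squared unit-direction projections, and both use the conditional argument (rotational invariance / independence of the $X_i$ from $X_{\vst}$) to get that the projections are i.i.d. standard Gaussians given $X_{\vst}$, yielding the independent $\chisq(d)$ and $\chisq(D)$ factors. Your explicit remark that the conditional law does not depend on $X_{\vst}$, which delivers the independence, is just a slightly more careful statement of the same step the paper makes implicitly.
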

\begin{proof}
    Recall that for $i\neq j$, $M_{i,j} = d^{-1/2} \la X_i,X_j\ra$, where $X_1,\ldots,X_n$ are sampled i.i.d. from $\cN(0, I_d)$.
    Let $\pi(X_i; X_{\vst})$ be the signed length of the projection of $X_i$ onto $X_{\vst}$, so that $\la X_i, X_{\vst}\ra = \norm{X_{\vst}}_2 \pi(X_i; X_{\vst})$.
    Then, we have
    \[
        \kappa_r(M)
        = \f{1}{dD} \sum_{i\in N(\vst)} \la X_i, X_{\vst}\ra^2
        = \f{1}{d} \norm{X_{\vst}}_2^2 \cdot \f{1}{D} \sum_{i\in N(\vst)} \pi(X_i; X_{\vst})^2.
    \]
    Since the $X_i$ are i.i.d. and Gaussian distributions are rotationally invariant, conditioned on any $X_{\vst}$ the projections $\pi(X_i; X_{\vst})$ for $i\in N(\vst)$ are distributed as i.i.d. standard Gaussians.
    So, $\kappa_r(M)$ has the claimed distribution.
\end{proof}

\begin{proof}[Proof of Theorem~\ref{thm:maxdeg-lb}]
    We will distinguish the hypotheses $M\sim M(G)$ and $M\sim W(G,d)$ with the following test.
    Let $\eps = \lt(d D\rt)^{-1/4}$.
    If $|\kappa_r(M)-1| \le \eps$, predict $M(G)$, and otherwise predict $W(G,d)$.
    The motivation for this test is that under $M\sim M(G)$, $\kappa_r(M)$ has mean $1$ and fluctuations of scale $D^{-1/2}$, while under $M\sim W(G,d)$, $\kappa_r(M)$ has mean $1$ and fluctuations of scale $d^{-1/2}$, which is much larger because $d \ll D$.

    The distribution $\f{1}{D} \chisq(D)$ has mean $1$ and variance $2/D$.
    By Chebyshev's inequality,
    \begin{equation}
        \label{eq:longest-row-lb-chisq-tail-ub}
        \f{1}{D} \chisq(D)
        \in
        \lt[1 - \eps, 1 + \eps\rt]
    \end{equation}
    except with probability $\f{2/D}{\eps^2} = 2(d/D)^{1/2}$.
    Because $d \ll D$, this probability is $o(1)$.

    We will show that
    \begin{equation}
        \label{eq:longest-row-lb-chisq-anticoncentration}
        \f{1}{d}\chisq(d) \not\in [1-3\eps, 1+3\eps]
    \end{equation}
    except with probability $o(1)$.
    If $d = \omega(1)$, then $\f{\chisq(d) - d}{\sqrt{2d}} \rightarrow_d \cN(0,1)$ by the Central Limit Theorem.
    Since $d \ll D$, we have $\eps \ll d^{-1/2}$.
    This implies that
    \[
        \P\lt(
            \f{1}{d}\chisq(d)
            \in
            [1-3\eps, 1+3\eps]
        \rt)
        =
        \P\lt(
            \f{\chisq(d) - d}{\sqrt{2d}}
            \in
            \lt[
                -\f{3\eps d^{1/2}}{\sqrt{2}},
                \f{3\eps d^{1/2}}{\sqrt{2}}
            \rt]
        \rt)
        = o(1),
    \]
    because the interval $\lt[ -\f{3\eps d^{1/2}}{\sqrt{2}}, \f{3\eps d^{1/2}}{\sqrt{2}} \rt]$ shrinks to width $0$ as $n \to \infty$.
    Otherwise, we have $d=O(1)$ and the interval $[1-3\eps, 1+3\eps]$ shrinks to width $0$ as $n\to\infty$ (because $D \gg d$), while $\f{1}{d} \chisq(d)$ is one of a finite number of random variables.
    So, (\ref{eq:longest-row-lb-chisq-anticoncentration}) still holds except with probability $o(1)$.

    If $M\sim M(G)$, then $\kappa_r(M) =_d \f{1}{D} \chisq(D)$ by Lemma~\ref{lem:longest-row-lb-goe-stats}.
    Because (\ref{eq:longest-row-lb-chisq-tail-ub}) holds with probability $1-o(1)$, the probability of predicting $W(G,d)$ is $o(1)$.

    If $M\sim W(G,d)$, then $\kappa_r(M) =_d \f{1}{D} \chisq(D) \cdot \f{1}{d} \chisq(d)$ by Lemma~\ref{lem:longest-row-lb-wishart-stats}.
    By a union bound, (\ref{eq:longest-row-lb-chisq-tail-ub}) and (\ref{eq:longest-row-lb-chisq-anticoncentration}) simultaneously hold with probability $1-o(1)$.
    On this event,
    \[
        \kappa_r(M)
        \not\in \lt[
            \lt(1 - 3\eps\rt)\lt(1 +  \eps\rt),
            \lt(1 + 3\eps\rt)\lt(1 -  \eps\rt)
        \rt]
        \supset
        \lt[1 - \eps, 1 + \eps\rt]
    \]
    for all sufficiently large $n$.
    Therefore, the probability of predicting $M(G)$ is $o(1)$.
\end{proof}

\bibliographystyle{alpha}

\appendix

\section{Deferred Proofs from Section~\ref{sec:general-ub-main-argument}}
\label{appsec:kl-conditioning}

In this section, we will prove Lemma~\ref{lem:general-ub-computational} below, which gives several bounds on expectations with respect to the Wishart distribution that we use in the proof of Theorem~\ref{thm:general-weaker-ub}.
These bounds are used in Section~\ref{subsec:general-ub-s-hp-proof} to show that the set $S$ we choose has high probability. 
They are also used in the proof of Proposition~\ref{prop:general-ub-kl-successive-differences}, which bounds the successive differences $\KL_A-\KL_B$, $\KL_B-\KL_C$, $\KL_C-\KL_D$, and $\KL_D-\KL_E$ in the proof of Lemma~\ref{lem:general-ub-kl-conditioning}.
In this section, we also present the deferred proof of Proposition~\ref{prop:general-ub-kl-successive-differences}.

\begin{lemma}
    \label{lem:general-ub-computational}
    Let $M = X^\top X$, where $X\in \bR^{d\times k}$ is a matrix of i.i.d. standard Gaussians.
    Then the following bounds hold.
    \begin{enumerate}[label=(\alph*), ref=\alph*]
      \item \label{itm:general-ub-computational-trsq-centered} For all $d,k$, we have that $\E \Tr(d^{-1} M - I_k)^2 = \f{2k}{d}$.
      \item \label{itm:general-ub-computational-trsq} For all $d,k$, we have that $\E \Tr(d^{-1} M)^2 = k^2 + \f{2k}{d}$.
      \item \label{itm:general-ub-computational-trdelsq} For all $d,k$, we have that $\E \Tr\lt((d^{-1}M - I_k)^2\rt) = \f{k^2+k}{d}$.
      \item \label{itm:general-ub-computational-vartrdelsq} If $d\ge k$, then $\Var \Tr\lt((d^{-1}M - I_k)^2\rt) \le \f{56k^2}{d^2}$.
      \item \label{itm:general-ub-computational-det} If $d\ge 2k+2$, then $\E \det(d^{-1} M)^{-1} \le e^k$.
      \item \label{itm:general-ub-computational-log2det} If $d\ge 2k+2$, then $\E \log^2 \det(d^{-1} M) \le 3k$.
    \end{enumerate}
\end{lemma}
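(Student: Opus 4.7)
The plan is to reduce each part of the lemma to explicit distributional computations, using two main tools: direct Gaussian moment calculations for the trace-based identities in parts (a)--(d), and Bartlett's decomposition of the Wishart determinant for parts (e) and (f).

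For parts (a), (b), and (c), I would work with $W = d^{-1}M - I_k$ entry by entry. Note that $\Tr(W) = d^{-1}\sum_{\ell, i} X_{\ell, i}^2 - k$, and $\sum_{\ell,i} X_{\ell,i}^2 \sim \chi^2(dk)$, which has mean $dk$ and variance $2dk$. Part (a), understood as $(\Tr(d^{-1}M - I_k))^2$, then follows from $\Var(d^{-1}\chi^2(dk)) = 2k/d$, and (b) follows by adding the square of the mean $k$. For (c), I would write $\E \Tr(W^2) = \sum_{i,j} \E W_{ij}^2$, compute $\E W_{ij}^2 = d^{-1}$ for $i\ne j$ (products of independent standard Gaussians) and $\E W_{ii}^2 = 2/d$ (variance of $d^{-1}\chi^2(d)$), and sum to get $(k^2+k)/d$.

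For part (d), the quantity $\Tr(W^2) = \sum_{i,j} W_{ij}^2$ is a degree-4 polynomial in the i.i.d. Gaussian entries of $X$. I would expand $\Var \Tr(W^2) = \sum_{i,j,i',j'} \Cov(W_{ij}^2, W_{i'j'}^2)$ and use the symmetry of the Gaussian distribution to show that only covariances whose index sets form a small set of patterns can be nonzero. Each such pattern contributes a term bounded by a constant times $1/d^2$ or $1/d^3$, and there are $O(k^2)$ of the dominant type, giving the $56k^2/d^2$ bound for $d \ge k$.

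For parts (e) and (f), I would invoke Bartlett's decomposition (Theorem 3.2.14 of \cite{Mui09}, which is already cited in the paper): $\det(M)$ is distributed as $\prod_{i=1}^k \chi^2(d-i+1)$ where the factors are mutually independent. For (e), this gives $\E \det(d^{-1}M)^{-1} = \prod_{i=1}^k \E[d/\chi^2(d-i+1)] = \prod_{i=1}^k \frac{d}{d-i-1}$, valid when $d - i - 1 > 0$, i.e., $d \ge k+2$, which holds under $d \ge 2k+2$. I would then use $\frac{d}{d-i-1} = 1 + \frac{i+1}{d-i-1} \le \exp\!\big(\tfrac{i+1}{d-i-1}\big)$ and observe that $d \ge 2k+2$ forces $\frac{i+1}{d-i-1} \le 1$ for all $i \le k$, so the product is bounded by $e^k$. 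For (f), I would write $\log\det(d^{-1}M)$ as a sum of $k$ independent variables $\log(\chi^2(d-i+1)/d)$, so that $\E \log^2\det = (\E\log\det)^2 + \Var(\log\det)$. Using the standard digamma/trigamma identities $\E\log \chi^2(m) = \psi(m/2) + \log 2$ and $\Var \log \chi^2(m) = \psi'(m/2)$, together with the estimates $|\psi(m/2) - \log(m/2)| \lesssim 1/m$ and $\psi'(m/2) \le 2/(m-2)$ valid for $m \ge 4$, each mean term is $O((i-1)/d)$ and each variance term is $O(1/(d-i+1))$; summing and using $d \ge 2k+2$ easily yields the bound $3k$ (indeed with large slack).

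The only step requiring real care is part (d): unlike the other parts, there is no single scalar distributional representation to appeal to, so I must carry out the covariance expansion patiently. The main obstacle is bookkeeping the nonzero covariance patterns (diagonal-diagonal, diagonal-off-diagonal, and off-diagonal pairs sharing one or two indices) and extracting explicit constants. I expect this to fill the majority of the proof, while parts (a)--(c), (e), (f) are short once the right representation is in place.
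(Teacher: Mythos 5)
Your plan for parts (a)--(d) is essentially the paper's proof: (a)--(c) are the same direct Gaussian/$\chi^2$ moment computations (the paper also reads (a) and (b) as the square of the trace), and for (d) the paper likewise expands $\Var \Tr\lt((d^{-1}M-I_k)^2\rt)$ over pairs of entries classified by shared indices, with the $O(k^2)$ pairs sharing both indices contributing $O(1/d^2)$ and the $O(k^3)$ pairs sharing one index contributing $O(1/d^3)$, the latter absorbed via $d\ge k$; your outline would reproduce this, with only the constant-chasing left to carry out. For (e) you take a genuinely different route: the paper integrates against the Wishart density with $d-2$ degrees of freedom to obtain an exact Gamma-ratio expression, while you use Bartlett's decomposition and $\E[1/\chi^2(m)]=1/(m-2)$; both yield the same product $\prod_{i=1}^k \frac{d}{d-i-1}$, and your bound by $e^k$ under $d\ge 2k+2$ is correct and arguably cleaner.

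Part (f), however, has a genuine gap. The decomposition $\log\det(d^{-1}M)=\sum_{i=1}^k \log\lt(\chi^2(d-i+1)/d\rt)$ is fine and the variance is indeed $O(1)$ under $d\ge 2k+2$, but your accounting of the mean does not deliver $3k$. Since $\psi(x)<\log x$, the $i$th mean term is negative with magnitude at least $-\log\lt(1-\frac{i-1}{d}\rt)\ge \frac{i-1}{d}$, and all terms have the same sign, so $|\E\log\det(d^{-1}M)|\ge \frac{k(k-1)}{2d}$ and hence $\lt(\E\log\det(d^{-1}M)\rt)^2\ge \frac{k^2(k-1)^2}{4d^2}$. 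Under only $d\ge 2k+2$ this is of order $k^2$, not $O(k)$: the sum of mean terms is $\Theta(k^2/d)=\Theta(k)$ at $d\asymp k$, and it then gets squared. So the claim that summing "easily yields the bound $3k$ with large slack" is false, and in fact, since $\E\log^2\det\ge(\E\log\det)^2$, your own exact computation shows the quantity is genuinely of order $k^2$ when $d=2k+2$ and $k$ is large, so no argument can recover $3k$ in that regime; the bound one should aim for is $3k^2$. (This also exposes a slip in the paper's own proof of (f): the deterministic inequality established there is $\log^2\det Z\le k\Tr(Z)+k\Tr(Z^{-1})$, so taking expectations gives $k^2+\frac{k^2d}{d-k-1}\le 3k^2$; the displayed chain drops the factor $k$. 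The weaker bound $3k^2$ is harmless downstream: in the estimate of $\Gamma_2$ in the proof of Proposition~\ref{prop:general-ub-kl-successive-differences}, replacing $3\ddegv$ by $3\ddegv^2$ still gives the bound $\P(\Tdetc)^{1/2}n$ since $\ddegv\le n$.) As written, though, the final step of your (f) does not go through.
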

\begin{proof}
    We will prove each part in turn. 
    
    \paragraph{Proof of (\ref{itm:general-ub-computational-trsq-centered}).}
    By standard facts about moments of Gaussians, we have that
    \[
        \E \Tr\lt(d^{-1} M - I_k\rt)^2
        =
        \E \lt[\f{1}{d^2} \lt(
            \sum_{i=1}^d
            \sum_{j=1}^k
            (X_{i,j}^2-1)
        \rt)^2\rt]
        =
        \f{2kd}{d^2}
        =
        \f{2k}{d}.
    \]

    \paragraph{Proof of (\ref{itm:general-ub-computational-trsq}).}
    Since $\E \Tr\lt(d^{-1} M - I_k\rt) = 0$, we have, using part (\ref{itm:general-ub-computational-trsq-centered}),
    \[
        \E \Tr\lt(d^{-1} M\rt)^2
        =
        \E \lt(\Tr\lt(d^{-1} M - I_k\rt) + k\rt)^2
        =
        k^2 +
        \E \Tr\lt(d^{-1} M - I_k\rt)^2
        =
        k^2 + \f{2k}{d}.
    \]
    
    \paragraph{Proof of (\ref{itm:general-ub-computational-trdelsq}, \ref{itm:general-ub-computational-vartrdelsq}).}
    Let $X_1,\ldots,X_k \in \bR^d$ be the columns of $X$.
    First, observe that
    \begin{equation}
        \label{eq:app-general-ub-computational-trdelsq-combinatorial-expansion}
        \Tr\lt((d^{-1}M - I_k)^2\rt)
        = 
        \norm{d^{-1}M - I_k}_{\HS}^2
        = 
        \f{1}{d^2}
        \sum_{i,j=1}^k
        \lt(\la X_i, X_j\ra - d\delta_{i,j}\rt)^2.
    \end{equation}
    Direct computations with moments of Gaussians yield that
    \begin{align*}
        \E[(\la X_i, X_i\ra - d)^2]
        &= 2d,
        & \text{} \\
        \E[\la X_i, X_j\ra^2]
        &= d,
        & \text{if $i\neq j$} \\
        \E[(\la X_i, X_j\ra - d)^4]
        &= 12d(d+4),
        &\text{} \\
        \E [
            (\la X_i, X_i\ra-d)^2
            (\la X_j, X_j\ra-d)^2
        ]
        &= 4d^2,
        &\text{if $i\neq j$} \\
        \E[\la X_i,X_j\ra^4]
        &= 3d(d+2),
        &\text{if $i\neq j$} \\
        \E [
            \la X_i, X_j\ra^2
            (\la X_i, X_i\ra-d)^2
        ]
        &= 2d(d+4),
        &\text{if $i\neq j$} \\
        \E[
            \la X_i,X_j\ra^2
            (\la X_k, X_k\ra - d)^2
        ]
        &= 2d^2,
        & \text{if $i,j,k$ are distinct} \\
        \E [
            \la X_i, X_j\ra^2
            \la X_i, X_k\ra^2
        ]
        &= d(d+2),
        &\text{if $i,j,k$ are distinct} \\
        \E[
            \la X_i, X_j\ra^2
            \la X_k, X_\ell \ra^2
        ]
        &= d^2.
        &\text{if $i,j,k,\ell$ are distinct}
    \end{align*}
    Therefore, by the expansion (\ref{eq:app-general-ub-computational-trdelsq-combinatorial-expansion}),
    \[
        \E \Tr\lt((d^{-1}M - I_k)^2\rt)
        = \f{k \cdot 2d + k(k-1) \cdot d}{d^2} = \f{k^2 + k}{d},
    \]
    verifying part (\ref{itm:general-ub-computational-trdelsq}).
    Now suppose $d\ge k$.
    By squaring (\ref{eq:app-general-ub-computational-trdelsq-combinatorial-expansion}) and combinatorially expanding, we have
    \begin{align*}
        \E \Tr\lt((d^{-1}M - I_k)^2\rt)^2
        &=
        \f{1}{d^4}
        \Bigg[
            k \cdot 12d(d+4) +
            2 \binom{k}{2} \cdot 4d^2 +
            4 \binom{k}{2} \cdot 3d(d+2) +
            8 \binom{k}{2} \cdot 2d(d+4) \\
        &\qquad +
            12 \binom{k}{3} \cdot 2d^2 +
            24 \binom{k}{3} \cdot d(d+2)
            24 \binom{k}{4} \cdot d^2
        \Bigg] \\
        &=
        \lt(\f{k^2 + k}{d}\rt)^2 +
        \f{1}{d^4}
        \lt[
            4k^2 d^2 +
            4k d^2 +
            8k^3 d +
            20k^2 d +
            20k d
        \rt] \\
        &\le
        \lt(\f{k^2 + k}{d}\rt)^2 +
        \f{48k^2}{d^2} +
        \f{8k^3}{d^3}
        \le 
        \lt(\f{k^2 + k}{d}\rt)^2 +
        \f{56k^2}{d^2}.
    \end{align*}
    This and part (\ref{itm:general-ub-computational-trdelsq}) imply part (\ref{itm:general-ub-computational-vartrdelsq}).

    \paragraph{Proof of (\ref{itm:general-ub-computational-det}).}
    Since $M$ is the $k\times k$ Wishart matrix with $d$ degrees of freedom, if $d\ge k$ then $M$ has probability density
    \[
        f_{d,k}(M) =
        \f{
            \det(M)^{(d-k-1)/2} \etr\lt(-M/2\rt)
        }{
            2^{kd/2} \pi^{k(k-1)/4} \prod_{i=1}^k \Gamma(\f{d+1-i}{2})
        }
    \]
    with respect to the Lebesgue measure on the positive semidefinite cone $\cS^{k\times k} \subseteq \bR^{k\times k}$.
    If $d\ge k+2$, then a Wishart matrix with $d-2$ degrees of freedom has density $f_{d-2, k}$.
    The fact that this density integrates to $1$ implies that (using $d\ge 2k+2$)
    \begin{align*}
        \E \det(d^{-1}M)^{-1}
        &=
        \int_{\cS^{k\times k}}
        \det(d^{-1} M)^{-1}
        \cdot
        \f{
            \det(M)^{(d-k-1)/2} \etr\lt(-M/2\rt)
        }{
            2^{kd/2} \pi^{k(k-1)/4} \prod_{i=1}^k \Gamma(\f{d+1-i}{2})
        }
        \diff{M} \\
        &=
        \lt(\f{d}{2}\rt)^k
        \f{
            \Gamma(\f{d-k}{2})\Gamma(\f{d-k-1}{2})
        }{
            \Gamma(\f{d}{2})\Gamma(\f{d-1}{2})
        }
        \int_{\cS^{k\times k}}
        \f{
            \det(M)^{(d-k-3)/2} \etr\lt(-M/2\rt)
        }{
            2^{k(d-2)/2} \pi^{k(k-1)/4} \prod_{i=1}^k \Gamma(\f{d-1-i}{2})
        }
        \diff{M} \\
        &=
        \lt(\f{d}{2}\rt)^k
        \f{
            \Gamma(\f{d-k}{2})\Gamma(\f{d-k-1}{2})
        }{
            \Gamma(\f{d}{2})\Gamma(\f{d-1}{2})
        }
        \le
        \lt(\f{d}{2}\rt)^k
        \lt(\f{d-k-1}{2}\rt)^{-k} \\
        &=
        \lt(
            1 + \f{k+1}{d-k-1}
        \rt)^k
        \le
        \exp\lt(\f{k(k+1)}{d-k-1}\rt)
        \le
        e^k.
    \end{align*}
    
    \paragraph{Proof of (\ref{itm:general-ub-computational-log2det}).}
    Given a positive definite matrix $Z\in \bR^{k\times k}$, let $\lambda_1, \ldots, \lambda_k > 0$ denote its eigenvalues.
    By Cauchy-Schwarz, we have that
    \[
        \log^2 \det(Z)
        =
        \lt(
            \sum_{i=1}^k \log \lambda_i
        \rt)^2
        \le
        k \sum_{i=1}^k
        \log^2 \lambda_i
        \le
        k \sum_{i=1}^k
        (\lambda_i + \lambda_i^{-1})
        =
        k\Tr(Z) + k\Tr(Z^{-1}),
    \]
    where the second inequality follows from the fact that $\log^2 x \le x + x^{-1}$ for all $x>0$.
    Since $d\ge k$, $M$ is positive definite almost surely.
    Combining this inequality with the standard facts that $\E M = dI_k$ and $\E M^{-1} = \f{1}{d-k-1} I_k$ yields that (using $d\ge 2k+2$)
    \[
        \E \log^2 \det(d^{-1}M)
        \le
        \f{1}{d} \E \Tr(M) + d \E \Tr(M^{-1})
        =
        k + \f{kd}{d-k-1}
        \le 3k.
    \]
\end{proof}

We now proceed to the proof of Proposition~\ref{prop:general-ub-kl-successive-differences}.

\begin{proof}[Proof of Proposition~\ref{prop:general-ub-kl-successive-differences}]
    We will prove each part in turn. 
    
    \paragraph{Proof of (\ref{itm:kl-conditioning-kla-klb}).}
    We can write
    \begin{eqnarray*}
        \KL_A
        &=&
        \E_{W\sim \muvl}
        \KL\lt(
            \E_{\Xvl \sim \gamma(W)} \muv(\Xvl)
            \parallel
            \nuv
        \rt), \\
        \KL_B
        &=&
        \E_{W\sim \muvltd}
        \KL\lt(
            \E_{\Xvl \sim \gamma(W)} \muv(\Xvl)
            \parallel
            \nuv
        \rt).
    \end{eqnarray*}
    Thus $\KL_A$ and $\KL_B$ are both nonnegative.
    By convexity of KL divergence,
    \begin{align*}
        \KL_A - \KL_B
        &=
        -\P(\Tdetc) \KL_B +
        \E_{\Xpvl}
        \ind{\Xpvl\in \Tdetc}
        \KL\lt(
            \E_{\Xvl \sim \gamma(W(\Xpvl))} \muv(\Xvl)
            \parallel
            \nuv
        \rt) \\
        &\le
        \E_{\Xpvl}
        \E_{\Xvl \sim \gamma(W(\Xpvl))}
        \ind{\Xpvl\in \Tdetc}
        \KL\lt(\muv(\Xvl) \parallel \nuv\rt) \\
        &\le
        \E_{\Xvl}
        \lt(
            \P_{\Xpvl \sim \gamma(W(\Xvl))}
            (\Xpvl \in \Tdetc)
        \rt)
        \KL\lt(\muv(\Xvl) \parallel \nuv\rt).
    \end{align*}
    Recall that $\muv(\Xvl) = \cN\lt(0, d^{-1} \Xbnt \Xbn\rt)$ and $\nuv = \cN\lt(0, I_{\downNv}\rt)$.
    By Lemma~\ref{lem:kl-gaussians},
    \[
        \KL\lt(\muv(\Xvl) \parallel \nuv\rt)
        =
        \f12 \lt[
            \Tr\lt(d^{-1} \Xbnt \Xbn - \Ibn\rt)
            - \log \det \lt(d^{-1} \Xbnt \Xbn\rt)
        \rt].
    \]
    It now follows that
    \begin{align*}
        \KL_A - \KL_B
        &\le
        \E_{\Xvl}
        \f12
        \lt(
            \P_{\Xpvl \sim \gamma(W(\Xvl))}
            (\Xpvl \in \Tdetc)
        \rt)
        \Tr \lt(d^{-1} \Xbnt \Xbn - \Ibn\rt) \\
        &\qquad +
        \E_{\Xvl}
        \f12
        \lt(
            \P_{\Xpvl \sim \gamma(W(\Xvl))}
            (\Xpvl \in \Tdetc)
        \rt)
        \lt(-\log\det \lt(d^{-1} \Xbnt \Xbn\rt)\rt).
    \end{align*}
    Call the last two expectations $\Gamma_1$ and $\Gamma_2$.
    By Cauchy-Schwarz,
    \begin{align*}
        \Gamma_1
        &\le
        \f12
        \lt[
            \E_{\Xvl}
            \lt(
                \P_{\Xpvl \sim \gamma(W(\Xvl))}
                (\Xpvl \in \Tdetc)
            \rt)^2
        \rt]^{1/2}
        \lt[
            \E_{\Xvl}
            \Tr \lt(d^{-1} \Xbnt \Xbn - \Ibn\rt)^2
        \rt]^{1/2} \\
        &\le
        \f12
        \lt[
            \E_{\Xvl}
            \lt(
                \P_{\Xpvl \sim \gamma(W(\Xvl))}
                (\Xpvl \in \Tdetc)
            \rt)
        \rt]^{1/2}
        \lt[
            \E_{\Xvl}
            \Tr \lt(d^{-1} \Xbnt \Xbn - \Ibn\rt)^2
        \rt]^{1/2} \\
        &=
        \f12
        \P(\Tdetc)^{1/2}
        (2d^{-1}\ddegv)^{1/2}
        \le
        \P(\Tdetc)^{1/2} n
    \end{align*}
    by Lemma~\ref{lem:general-ub-computational}(\ref{itm:general-ub-computational-trsq-centered}).
    Similarly, we have that
    \begin{align*}
        \Gamma_2
        &\le
        \f12
        \lt[
            \E_{\Xvl}
            \lt(
                \P_{\Xpvl \sim \gamma(W(\Xvl))}
                (\Xpvl \in \Tdetc)
            \rt)^2
        \rt]^{1/2}
        \lt[
            \E_{\Xvl} \log^2 \det\lt(d^{-1} \Xbnt \Xbn\rt)
        \rt]^{1/2} \\
        &\le
        \f12
        \lt[
            \E_{\Xvl}
            \lt(
                \P_{\Xpvl \sim \gamma(W(\Xvl))}
                (\Xpvl \in \Tdetc)
            \rt)
        \rt]^{1/2}
        \lt[
            \E_{\Xvl} \log^2 \det\lt(d^{-1} \Xbnt \Xbn\rt)
        \rt]^{1/2} \\
        &\le
        \f12
        \P(\Tdetc)^{1/2}
        (3\ddegv)^{1/2}
        \le
        \P(\Tdetc)^{1/2} n
    \end{align*}
    where the third inequality is by Lemma~\ref{lem:general-ub-computational}(\ref{itm:general-ub-computational-log2det}), whose condition $d \ge 2\ddegv + 2$ holds for sufficiently large $n$ because $d \gg \max_{v\in [n]} \ddegv$.
    Finally $\KL_A - \KL_B \le \Gamma_1 + \Gamma_2 \le 2\P(\Tdetc)^{1/2} n$.
    
    \paragraph{Proof of (\ref{itm:kl-conditioning-klb-klc}).}
    Let $\cL^{\downNv}$ denote Lebesgue measure on $\bR^{\downNv}$ scaled by $(2\pi)^{-\ddegv/2}$, i.e. for $\phi\in \bR^{\downNv}$, $\rn{\nuv}{\cL^{\downNv}} = \exp\lt(-\f12 \norm{\phi}_2^2\rt)$.
    Then, we have that
    \begin{align*}
        \KL_B - \KL_C
        &=
        \E_{W\sim \muvltd}
        \E_{\phi\sim \nuv}
        \lt(
            \E_{\Xvl\sim \gamma(W)}
            \ind{\Xvl\not\in \Tdet}
            \rn{\muv(\Xvl)}{\nuv}(\phi)
        \rt) \\
        &\qquad\qquad \times \log\lt(
            \E_{\Xvl\sim \gamma(W)}
            \rn{\muv(\Xvl)}{\nuv}(\phi)
        \rt) \\
        &=
        \E_{W\sim \muvltd}
        \E_{\phi\sim \nuv}
        \lt(
            \E_{\Xvl\sim \gamma(W)}
            \ind{\Xvl\not\in \Tdet}
            \rn{\muv(\Xvl)}{\nuv}(\phi)
        \rt) \\
        &\qquad\qquad \times \lt[
            \log
            \lt(
                \E_{\Xvl\sim \gamma(W)}
                \rn{\muv(\Xvl)}{\cL^{\downNv}}(\phi)
            \rt)
            - \log \rn{\nuv}{\cL^{\downNv}}(\phi)
        \rt] \\
        &=
        \E_{W\sim \muvltd}
        \E_{\phi\sim \nuv}
        \lt(
            \E_{\Xvl\sim \gamma(W)}
            \ind{\Xvl\not\in \Tdet}
            \rn{\muv(\Xvl)}{\nuv}(\phi)
        \rt) \\
        &\qquad\qquad \times \log
        \lt(
            \E_{\Xvl\sim \gamma(W)}
            \rn{\muv(\Xvl)}{\cL^{\downNv}}(\phi)
        \rt) \\
        &\qquad +
        \E_{W\sim \muvltd}
        \E_{\phi\sim \nuv}
        \lt(
            \E_{\Xvl\sim \gamma(W)}
            \ind{\Xvl\not\in \Tdet}
            \rn{\muv(\Xvl)}{\nuv}(\phi)
        \rt)
        \f12 \norm{\phi}_2^2.
    \end{align*}
    Let $\Gamma_3$ and $\Gamma_4$ denote these last two expectations.
    To bound $\Gamma_3$ we will use the following fact.
    At any $\phi\in \bR^{\downNv}$, the (scaled) density of $\muv(\Xvl)$ with respect to Lebesgue measure is upper bounded by
    \begin{align*}
        \rn{\muv(\Xvl)}{\cL^{\downNv}}(\phi)
        &=
        \det \lt(d^{-1} \Xbnt \Xbn\rt)^{-1/2}
        \exp\lt(-\f{1}{2} \phi^\top \lt(d^{-1}\Xbnt \Xbn\rt)^{-1} \phi\rt) \\
        &\le
        \det \lt(d^{-1} \Xbnt \Xbn\rt)^{-1/2}.
    \end{align*}
    Now, note that if $W = W(\Xpvl)$ for some $\Xpvl \in \Tdet \subseteq \Svdet$, which is true almost surely for $W\sim \muvltd$, then we have that
    \[
        \E_{\Xvl \sim \gamma(W)}
        \rn{\muv(\Xvl)}{\cL^{\downNv}}(\phi)
        \le
        \E_{\Xvl\sim \gamma(W)}
        \det\lt(d^{-1} \Xbnt \Xbn\rt)^{-1/2} \le e^n
    \]
    by the definition (\ref{eq:general-ub-def-svdet}) of $\Svdet$.
    Therefore,
    \begin{align*}
        \Gamma_3
        &\le
        \E_{W\sim \muvltd}
        \E_{\phi\sim \nuv}
        \lt(
            \E_{\Xvl \sim \gamma(W)}
            \ind{\Xvl\not\in \Tdet}
            \rn{\muv(\Xvl)}{\nuv}(\phi)
        \rt) n \\
        &\le
        \f{n}{\P(\Tdet)}
        \E_{W\sim \muvl}
        \E_{\phi\sim \nuv}
        \E_{\Xvl\sim \gamma(W)}
        \ind{\Xvl\not\in \Tdet}
        \rn{\muv(\Xvl)}{\nuv}(\phi) \\
        &=
        \f{\P(\Tdetc)n}{\P(\Tdet)}
        \le
        2\P(\Tdetc)n
        \le
        2\P(\Tdetc)^{1/2}n.
    \end{align*}
    Here we have used the fact that, by Proposition~\ref{prop:general-ub-s-hp}, $\P(\Tdet)\ge 1 - \P(T^c) - \P((\Svdet)^c) \ge 1 - 2n^{-20} + e^{-n/2} \ge \f12$ for sufficiently large $n$.
    Now, note that $\Gamma_4$ can be bounded by
    \begin{align*}
        \Gamma_4
        &\le
        \f{1}{\P(\Tdet)}
        \E_{W\sim \muvl}
        \E_{\phi\sim \nuv}
        \lt(
            \E_{\Xvl\sim \gamma(W)}
            \ind{\Xvl\not\in \Tdet}
            \rn{\muv(\Xvl)}{\nuv} (\phi)
        \rt)
        \f12 \norm{\phi}_2^2 \\
        &=
        \f{1}{2\P(\Tdet)}
        \E_{\Xvl}
        \ind{\Xvl\not\in \Tdet}
        \E_{\phi\sim \muv(\Xvl)}
        \norm{\phi}_2^2 \\
        &=
        \f{1}{2\P(\Tdet)}
        \E_{\Xvl}
        \ind{\Xvl\not\in \Tdet}
        \Tr\lt(d^{-1} \Xbnt \Xbn\rt),
    \end{align*}
    where we recall that $\Xbn$ is the submatrix of $\Xvl$ with columns indexed by $\downNv$ and $\muv(\Xvl) = \cN\lt(0, d^{-1} \Xbnt \Xbn\rt)$.
    By Cauchy-Schwarz and Lemma~\ref{lem:general-ub-computational}(\ref{itm:general-ub-computational-trsq}), we have that
    \begin{align*}
        \Gamma_4
        &\le
        \f{1}{2\P(\Tdetc)}
        \lt(
            \E_{\Xvl}
            \ind{\Xvl\not\in \Tdet}
        \rt)^{1/2}
        \lt(
            \E_{\Xvl}
            \Tr\lt(d^{-1} \Xbnt \Xbn\rt)^2
        \rt)^{1/2} \\
        &=
        \f{\P(\Tdetc)^{1/2}}{2\P(\Tdetc)}
        \lt(\ddegv^2 + 2d^{-1} \ddegv \rt)^{1/2}
        \le
        \P(\Tdetc)^{1/2} \ddegv
        \le \P(\Tdetc)^{1/2} n.
    \end{align*}
    Therefore, $\KL_B - \KL_C \le \Gamma_3 + \Gamma_4 \le 3\P(\Tdetc)^{1/2} n$.
    
    \paragraph{Proof of (\ref{itm:kl-conditioning-klc-kld}).}
    Using the inequality $\log(1+x)\le x$, we have
    \begin{align*}
        \KL_C - \KL_D
        &=
        \E_{W\sim \muvltd}
        \E_{\phi \sim \nuv}
        \lt(
            \E_{\Xvl\sim \gamma(W)}
            \ind{\Xvl \in \Tdet}
            \rn{\muv(\Xvl)}{\nuv}(\phi)
        \rt) \\
        &\qquad \times
        \log \lt[
            \P_{\Xvl\sim \gamma(W)}
            (\Xvl \in \Tdet)
            \cdot
            \lt(1 + \f{
                \E_{\Xvl\sim \gamma(W)} \ind{\Xvl\not\in \Tdet}
                \rn{\muv(\Xvl)}{\nuv} (\phi)
            }{
                \E_{\Xvl\sim \gamma(W)} \ind{\Xvl\in \Tdet}
                \rn{\muv(\Xvl)}{\nuv} (\phi)
            }\rt)
        \rt] \\
        &\le
        \E_{W\sim \muvltd}
        \E_{\phi \sim \nuv}
        \lt(
            \E_{\Xvl\sim \gamma(W)}
            \ind{\Xvl \in \Tdet}
            \rn{\muv(\Xvl)}{\nuv}(\phi)
        \rt) \\
        &\qquad \times
        \log \lt[
            1 + \f{
                \E_{\Xvl\sim \gamma(W)} \ind{\Xvl\not\in \Tdet}
                \rn{\muv(\Xvl)}{\nuv} (\phi)
            }{
                \E_{\Xvl\sim \gamma(W)} \ind{\Xvl\in \Tdet}
                \rn{\muv(\Xvl)}{\nuv} (\phi)
            }
        \rt] \\
        &\le
        \E_{W\sim \muvltd}
        \E_{\phi\sim \nuv}
        \lt(
            \E_{\Xvl\sim \gamma(W)}
            \ind{\Xvl\not\in \Tdet}
            \rn{\muv(\Xvl)}{\nuv}(\phi)
        \rt) 
        = \P(\Tdetc)
    \end{align*}
    which proves the proposition.

    \paragraph{Proof of (\ref{itm:kl-conditioning-kld-kle}).}
    We can write $\KL_D$ and $\KL_E$ in the form
    \begin{eqnarray*}
        \KL_D
        &=&
        \E_{W\sim \muvltd}
        \lt(
            \E_{\substack{
                \Xvl \sim \gamma(W) \\
                \Xvl \in \Tdet
            }}
            \muv(\Xvl)
            \parallel
            \nuv
        \rt)
        \lt(
            \P_{\Xvl\sim \gamma(W)}
            (\Xvl \in \Tdet)
        \rt) \\
        \KL_E
        &=&
        \E_{W\sim \muvltd}
        \KL\lt(
            \E_{\substack{
                \Xvl \sim \gamma(W) \\
                \Xvl \in \Tdet
            }}
            \muv(\Xvl)
            \parallel
            \nuv
        \rt).
    \end{eqnarray*}
    Since the KL is nonnegative and the probability above is at most $1$, this implies $\KL_D\le \KL_E$.
\end{proof}

\section{Subgraph Statistics}
\label{appsec:subgraph-statistics}

In this section, we compile some inequalities relating counts of various subgraphs, which we will use to control the subgraph counts arising in the proofs of the TV divergence results in Section~\ref{sec:tv-lower-bounds-proofs}.
We also present the deferred proof of Lemma~\ref{lem:bipartite-ub-hypothesis-translation}, which translates the hypotheses in Theorem~\ref{thm:bipartite-ub} into a more amenable form.

\begin{lemma}
\label{lem:subgraph-statistics}
    For all graphs $G$, the following inequalities hold.
    The $\lesssim$ hides only a constant factor independent of $G$.
    \begin{enumerate}[label=(\alph*), ref=\alph*]
        \item \label{itm:ss-c3} $\Num_G(C_3) \lesssim \Num_G(P_2)$.
        \item \label{itm:ss-c4} $\Num_G(C_4) \lesssim \Num_G(P_3)$.
        \item \label{itm:ss-k13-p3-c3+} $\Num_G(K_{1,3}, P_3, C_3^{+}) \lesssim \Num_G(P_2)^{3/2}$.
        \item \label{itm:ss-k13+-p4} $\Num_G(K_{1,3}^{+}, P_4) \lesssim \Num_G(P_2)^{3/2} + \Num_G(K_{1,4})$.
        \item \label{itm:ss-k23} $\Num_G(K_{2,3}) \lesssim \Num_G(C_4)^{3/2}$.
        \item \label{itm:ss-c42ev} $\Num_G(C_4^{2,ev}) \lesssim \Num_G(C_4, K_{2,4}, C_4^{2,e})$.
        \item \label{itm:ss-k24-c42e-c42v} $\Num_G(K_{2,4}, C_4^{2,e}, C_4^{2,v}) \le \Num_G(C_4)^2$.
\end{enumerate}

\end{lemma}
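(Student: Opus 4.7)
The plan is to establish each inequality (a)--(g) by expressing both sides in terms of two elementary statistics: the degree moments $S_k(G) = \sum_{v\in G} \deg(v)^k$ and the common-neighborhood counts $c_{u,w} = |N(u)\cap N(w)|$. After this parameterization each bound reduces to a convexity inequality---AM-GM, Cauchy-Schwarz, or the one-line power-mean estimate
\[
    \sum_i x_i^{3} \;\le\; \bigl(\max_i x_i\bigr)\sum_i x_i^{2} \;\le\; \Bigl(\sum_i x_i^{2}\Bigr)^{3/2},
\]
which is the key tool throughout. Parts (a) and (b) are immediate: every triangle completes three distinct $P_2$'s and every 4-cycle completes four distinct $P_3$'s, giving $3\,\Num_G(C_3)\le \Num_G(P_2)$ and $4\,\Num_G(C_4)\le \Num_G(P_3)$.

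For (c) I would split into the three summands. The claw count is $\Num_G(K_{1,3})=\sum_v \binom{\deg(v)}{3}\lesssim S_3$; the $P_3$ count, decomposed by its middle edge, is at most $\sum_{(u,v)\in E(G)}\deg(u)\deg(v)\le\tfrac12 S_3$ by AM-GM; and the $C_3^+$ count, decomposed by the pendant edge, is at most $3\,\Num_G(C_3)\cdot\max_v\deg(v)$, which combined with (a) and $\max_v\deg(v)^2\le S_2$ gives $S_2^{3/2}$ scaling. The identification $\Num_G(P_2)\asymp S_2$ (modulo an $\Num_G(E)$ correction that is absorbable since isolated or degree-1 vertices contribute nothing to the left side either) yields the claim. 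Part (d) proceeds similarly: I decompose $P_4$ by its middle edge, so the two extending legs contribute $\deg(u)\deg(v)$, and decompose $K_{1,3}^+$ by its central degree-3 vertex; the non-degenerate counts reduce to the same $S_2^{3/2}$ bound, while the degenerate configurations where an extension leg collides with an existing vertex shrink the vertex support and end up dominated by $\Num_G(K_{1,4})$, which accounts for the second summand on the right-hand side. For (e), I parameterize $\Num_G(K_{2,3})\asymp\sum_{a<b}c_{a,b}^{3}$ and $\Num_G(C_4)\asymp\sum_{a<b}c_{a,b}^{2}$, and the same power-mean step finishes.

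Parts (f) and (g) are structural. For (g) each of $K_{2,4}$, $C_4^{2,e}$, $C_4^{2,v}$ is by construction an ordered pair of $C_4$'s with a prescribed intersection, so their total count is at most the number of ordered pairs of $C_4$'s, which is $\Num_G(C_4)^{2}$. For (f) the plan is to enumerate the possible shapes of a pair of 4-cycles sharing both a vertex and an edge in the $C_4^{2,ev}$ pattern, arguing that each such pair either degenerates to a single $C_4$ (extra overlap), completes to a $K_{2,4}$, or arises from a $C_4^{2,e}$ configuration; a uniform upper bound follows by summing over the finitely many cases.

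The main obstacle I anticipate is the bookkeeping for (d) and (f). In (d), the various ways a $P_4$ or $K_{1,3}^+$ can partially collapse (repeated vertices, coincident legs) produce many auxiliary subgraph terms, and the $\Num_G(K_{1,4})$ summand on the right appears precisely to absorb one such degeneracy---getting this decomposition right without missing a case is the delicate part. In (f), the symmetries of $C_4$ create several symmetric copies of each intersection pattern, so enumerating the $C_4^{2,ev}$ shapes and matching them to the three target configurations requires careful case analysis, though since the statement is only up to a universal constant no delicate estimates are needed.
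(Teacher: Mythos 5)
Parts (a), (b), and (g) of your plan coincide with the paper's proof, and parts (c) and (e) are essentially the paper's convexity argument. One remark on (c) and (e): your detour through the raw moments $S_k=\sum_v \deg(v)^k$ and $\sum_{a<b}c_{a,b}^k$ is lossier than working directly with $\sum_v\binom{\deg(v)}{k}$ and $\sum_{a<b}\binom{\deg(a,b)}{k}$ as the paper does (for a perfect matching $S_3\asymp n$ while $\Num_G(P_2)^{3/2}=0$, and for a tree $\sum_{a<b}c_{a,b}^2$ can be large while $\Num_G(C_4)=0$), but the repair you gesture at — restricting to vertices/pairs of multiplicity at least $2$ or $3$, on which the left-hand sides live — does work, and your $\Num_G(C_3^{+})\le 3\Num_G(C_3)\max_v\deg(v)\lesssim \Num_G(P_2)^{3/2}$ shortcut is a fine alternative to the paper's $C_3^{+}\subseteq$-two-$P_3$'s step. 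The genuine gaps are in (d) and (f).

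For (d), the claimed structure — non-degenerate counts bounded by $\Num_G(P_2)^{3/2}$, with $\Num_G(K_{1,4})$ absorbing only leg-collision degeneracies — cannot be carried out. First, $\Num_G$ by definition counts only non-degenerate copies, so there is nothing degenerate to absorb. Second, and fatally, the non-degenerate counts are not $\lesssim \Num_G(P_2)^{3/2}$: for $G=K_n$ one has $\Num_G(P_4)\asymp \Num_G(K_{1,3}^{+})\asymp n^5$ while $\Num_G(P_2)^{3/2}\asymp n^{9/2}$, so the $K_{1,4}$ term (i.e.\ the fourth degree moment, $\Num_{K_n}(K_{1,4})\asymp n^5$) is needed for the main term, not for collisions. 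The working route (the paper's) is: sum $(\deg(u)-1)(\deg(w)-1)$ over 2-paths $(u,v,w)$, apply AM-GM to get $\Num_G(P_4)\lesssim \sum_{(u,v,w)\in P_2(G)}(\deg(u)-1)^2\lesssim \Num_G(K_{1,3}^{+},P_3)$, and then $\Num_G(K_{1,3}^{+})\le\sum_{(u,v)\in E(G)}\binom{\deg(u)-1}{2}(\deg(v)-1)\lesssim\sum_v\deg(v)(\deg(v)-1)^3\lesssim\Num_G(K_{1,4},P_2)$. (Also note your "middle edge, legs contribute $\deg(u)\deg(v)$" description is the $P_3$ computation; $P_4$ has a middle $2$-path, not a middle edge.)

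For (f), a shape enumeration with per-copy charging cannot prove the inequality. In the parameterization the bound actually requires (an edge, a $3$-path, and two $2$-paths between a common pair $u,v$), the minimal copy of $C_4^{2,ev}$ contains neither a $K_{2,4}$ (no pair has four common neighbors) nor a $C_4^{2,e}$ (its only two $4$-cycles are edge-disjoint), so "completes to" or "arises from" those configurations must mean some charging map; and any such map has unbounded fibers — a single $C_4$ or $C_4^{2,e}$ can be charged by arbitrarily many copies of $C_4^{2,ev}$ — so "summing over finitely many cases" does not bound the count. The statement is really a local convexity inequality: with $p_k(u,v)$ the number of $k$-paths from $u$ to $v$, one has $\Num_G(C_4^{2,ev})\lesssim\sum_{u<v}p_1p_3\binom{p_2}{2}\lesssim\sum_{u<v}\lt[p_1^2p_3^2+\binom{p_2}{2}^2\rt]\lesssim\sum_{u<v}\lt[p_1\binom{p_3}{2}+p_1p_3+\binom{p_2}{4}+\binom{p_2}{2}\rt]\lesssim\Num_G(C_4,K_{2,4},C_4^{2,e})$, and it is this AM-GM bookkeeping over pairs $(u,v)$, not a finite case analysis of shapes, that produces the three terms on the right.
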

\begin{proof}
    We will separately prove each part.

    \paragraph{Proof of (\ref{itm:ss-c3}).}
    Each copy of $C_3$ in $G$ contains three copies of $P_2$, obtained by deleting an edge.

    \paragraph{Proof of (\ref{itm:ss-c4}).}
    Each copy of $C_4$ in $G$ contains four copies of $P_3$, obtained by deleting an edge.

    \paragraph{Proof of (\ref{itm:ss-k13-p3-c3+}).}
    Note that $\binom{a}{2}^{3/2}\ge \binom{a}{3}$ for all nonnegative integers $a$.
    By Cauchy-Schwarz,
    \begin{align*}
        \Num_G(P_2)^3
        &=
        \lt[\sum_{v\in G} \binom{\degv}{2}\rt]^3
        \ge
        \lt[\sum_{v\in G} \binom{\degv}{2}\rt]
        \lt[\sum_{v\in G} \binom{\degv}{2}^2\rt] \\
        &\ge
        \lt[\sum_{v\in G} \binom{\degv}{2}^{3/2}\rt]^2
        \ge
        \lt[\sum_{v\in G} \binom{\degv}{3}\rt]^2
        =
        \Num_G(K_{1,3})^2.
    \end{align*}
    Therefore, $\Num_G(K_{1,3}) \le \Num_G(P_2)^{3/2}$.
    By AM-GM,
    \begin{align*}
        \Num_G(P_3)
        &\le
        \sum_{(u,v)\in E(G)}
        (\degv-1) (\degu-1)
        \lesssim
        \sum_{(u,v)\in E(G)}
        \lt[(\degv-1)^2 + (\deg(u)-1)^2\rt] \\
        &\lesssim
        \sum_{v\in G}
        \degv (\degv-1)^2
        \lesssim
        \sum_{v\in G}
        \degv \lt[
            \binom{\degv-1}{2} +
            (\degv-1)
        \rt] \\
        &\lesssim
        \sum_{v\in G}
        \lt[
            \binom{\degv}{3} +
            \binom{\degv}{2}
        \rt]
        = \Num_G(K_{1,3}, P_2)
        \lesssim
        \Num_G(P_2)^{3/2}.
    \end{align*}
    Finally, each copy of $C_3^+$ in $G$ contains two copies of $P_3$, so $\Num_G(C_3^+) \lesssim \Num_G(P_3) \lesssim \Num_G(P_2)^{3/2}$.

    \paragraph{Proof of (\ref{itm:ss-k13+-p4}).}
    By AM-GM,
    \begin{align*}
        \Num_G(K_{1,3}^+)
        &\le
        \sum_{(u,v)\in E(G)} \binom{\degu-1}{2}(\degv-1) \\
        &\lesssim
        \sum_{(u,v)\in E(G)}
        \lt[\binom{\degu-1}{2}^{3/2} + (\degv-1)^{3}\rt] 
        \lesssim
        \sum_{v\in G} \degv (\degv-1)^3 \\
        &\lesssim
        \sum_{v\in G}
        \degv
        \lt[
            \binom{\degv-1}{3} + (\degv-1)
        \rt]
        \lesssim
        \sum_{v\in G}
        \lt[
            \binom{\degv}{4} +
            \binom{\degv}{2}
        \rt] \\
        &=
        \Num_G(K_{1,4}, P_2).
    \end{align*}
    Now, recall that $P_2(G)$ is the set of paths $(u,v,w)$ of length 2 in $G$. By AM-GM,
    \begin{align*}
        \Num_G(P_4)
        &\lesssim
        \sum_{(u,v,w)\in P_2(G)} (\degu-1) (\deg(w)-1)
        \lesssim
        \sum_{(u,,v,w)\in P_2(G)} \lt[(\degu-1)^2 + (\deg(w)-1)^2\rt] \\
        &\lesssim
        \sum_{(u,v,w)\in P_2(G)} (\degu-1)^2
        \lesssim
        \sum_{(u,v,w)\in P_2(G)} \lt[\binom{\degu-1}{2} + (\degu-1)\rt] \\
        &\lesssim \Num_G(K_{1,3}^+, P_3)
        \lesssim
        \Num_G(P_2)^{3/2} + \Num_G(K_{1,4}),
    \end{align*}
    where the last inequality follows from $\Num_G(K_{1,3}^+) \lesssim \Num_G(K_{1,4}, P_2)$ and part (\ref{itm:ss-k13-p3-c3+}).

    \paragraph{Proof of (\ref{itm:ss-k23}).}
    This proof is by the same idea as the upper bound for $\Num_G(K_{1,3})$ in part (\ref{itm:ss-k13-p3-c3+}).
    Note that $\binom{a}{2}^{3/2}\ge \binom{a}{3}$ for all nonnegative integers $a$.
    By Cauchy-Schwarz,
    \begin{align*}
        \Num_G(C_4)^3
        &\ge
        \lt[
            \sum_{u,v\in G, u<v}
            \binom{\deg(u,v)}{2}
        \rt]^3
        \ge
        \lt[
            \sum_{u,v\in G, u<v}
            \binom{\deg(u,v)}{2}
        \rt]
        \lt[
            \sum_{u,v\in G, u<v}
            \binom{\deg(u,v)}{2}^2
        \rt] \\
        &\ge
        \lt[
            \sum_{u,v\in G, u<v}
            \binom{\deg(u,v)}{2}^{3/2}
        \rt]^2
        \ge
        \lt[
            \sum_{u,v\in G, u<v}
            \binom{\deg(u,v)}{3}
        \rt]^2
        =
        \Num_G(K_{2,3})^2.
    \end{align*}

    \paragraph{Proof of (\ref{itm:ss-c42ev}).}
    For $k \in \{1,2,3\}$ and vertices $u,v\in G$, let $p_k(u,v)$ denote the number of paths of length $k$ in $G$, without repeated vertices, with endpoints $u,v$.
    (Note that $p_1(u,v)$ is the indicator for the edge $(u,v)$.)
    Note that $\binom{a}{2}^2\le 36\binom{a}{4} + 3\binom{a}{2}$ for all nonnegative integers $a$.
    By AM-GM,
    \begin{align*}
        \Num_G(C_4^{2,ev})
        &\lesssim
        \sum_{u,v\in G, u<v}
        p_1(u,v)p_3(u,v) \binom{p_2(u,v)}{2}
        \lesssim
        \sum_{u,v\in G, u<v}
        \lt[
            p_1(u,v)^2p_3(u,v)^2 +
            \binom{p_2(u,v)}{2}^2
        \rt] \\
        &\lesssim
        \sum_{u,v\in G, u<v}
        \lt[
            p_1(u,v)\binom{p_3(u,v)}{2} +
            p_1(u,v)p_3(u,v) +
            \binom{p_2(u,v)}{4} +
            \binom{p_2(u,v)}{2}
        \rt] \\
        &\lesssim
        \Num_G(C_4, K_{2,4}, C_4^{2,e}).
    \end{align*}

    \paragraph{Proof of (\ref{itm:ss-k24-c42e-c42v}).}
    Note that $\Num_G(K_{2,4}, C_4^{2,e}, C_4^{2,v})$ counts pairs of copies of $C_4$ in $G$ whose unions are the specified graphs, while $\Num_G(C_4)^2$ counts pairs of copies of $C_4$ in $G$ without restriction.
\end{proof}

\begin{proof}[Proof of Lemma~\ref{lem:bipartite-ub-hypothesis-translation}]
    For all nonnegative integers $a$, we have that $a^4 \ge 4^4 \binom{a}{4} + 3^3 a$.
    So,
    \begin{align*}
        \sum_{v\in V_L} \lt(
            \degv^4 + \degv \log^3 n
        \rt)
        &\lesssim
        \sum_{v\in V_L} \lt(
            \binom{\degv}{4} + \degv \log^3 n
        \rt) \\
        &\lesssim
        \oNum_G(\oK_{1,4}) + \Num_G(E) \log^3 n.
    \end{align*}
    The first conclusion now follows from hypothesis (\ref{eq:bipartite-ub-hypothesis-k14}).
    We can expand $\sum_{i,j \in V_R} \deg(i,j)^2$ by
    \begin{align*}
        \sum_{i,j \in V_R} \deg(i,j)^2
        &=
        \sum_{i,j\in V_R, i<j} 2\deg(i,j)^2
        +
        \sum_{i\in V_R} \deg(i)^2 \\
        &=
        \sum_{i,j\in V_R, i<j} \lt(
            4\binom{\deg(i,j)}{2} +
            2\deg(i,j)
        \rt) +
        \sum_{i\in V_R} \lt(
            2\binom{\deg(i)}{2} + \deg(i)
        \rt) \\
        &= 4\Num_G(C_4) + 2\Num_G(P_2) + \Num_G(E)
        \le
        4\Num_G(C_4, P_2, E).
    \end{align*}
    The second conclusion follows from hypothesis (\ref{eq:bipartite-ub-hypothesis-4cycles}).
    Define $\Gamma_1, \Gamma_2, \Gamma_3$ to be the three sums in the third conclusion, as in the equations below.
    Note that
    \begin{align}
        \notag
        \Gamma_1
        &= \sum_{i,j,k,\ell \in V_R}
        \deg(i,j,k,\ell)^2
        \le
        \sum_{i,j,k,\ell \in V_R}
        \deg(i,j) \deg(k,\ell)
        =
        \lt(\sum_{i,j \in V_R} \deg(i,j)\rt)^2
        \ll d^4, \\
        \label{eq:bipartite-ub-gamma2-ub}
        \Gamma_2
        &=
        \sum_{i,j,k\in V_R}
        \deg(i,j) \deg(i,k)
        \le
        \sum_{i,i',j,k\in V_R}
        \deg(i,j) \deg(i',k)
        \le
        \lt(\sum_{i,j \in V_R} \deg(i,j)\rt)^2
        \ll d^4, \\
        \label{eq:bipartite-ub-gamma3-ub}
        \Gamma_3
        &=
        \sum_{i,j,k,\ell \in V_R}
        \deg(i,j,k) \deg(i,j,\ell)
        \le
        \sum_{i,j,k,\ell \in V_R}
        \deg(i,k) \deg(j,\ell)
        \le
        \lt(\sum_{i,j \in V_R} \deg(i,j)\rt)^2
        \ll d^4.
    \end{align}
    This implies the third conclusion.
    To prove the fourth conclusion, first note that
    \begin{align*}
        \Gamma_1
        &=
        \sum_{i,j,k,\ell \in V_R}
        \deg(i,j,k,\ell)^2
        \lesssim
        \sum_{\substack{
            u,v\in V_L \\
            i,j,k,\ell \in V_R
        }}
        \ind{\text{$e\in E(G)$ for all $e\in \{u,v\}\times \{i,j,k,\ell\}$ }} \\
        &\lesssim
        \sum_{u,v \in V_L}
        \deg(u,v)^4 
        \lesssim
        \sum_{u\in V_L}
        \deg(u)^4 +
        \sum_{u,v\in V_L, u<v}
        \deg(u,v)^4 \\
        &\lesssim
        \sum_{u\in V_L} \lt[
            \binom{\deg(u)}{4} +
            \deg(u)
        \rt] +
        \sum_{u,v\in V_L, u<v} \lt[
            \binom{\deg(u,v)}{4} +
            \deg(u,v)
        \rt] \\
        &\lesssim
        \Num_G(E, P_2) + \oNum_G(\oK_{1,4}, \oK_{2,4}).
    \end{align*}
    Furthermore, we have
    \[
        \lt[
            \sum_{v\in V_L} \lt(
                \degv^3 + \degv \log^2 n
            \rt)
        \rt]^2
        \lesssim
        \lt(\sum_{v\in V_L} \degv^3\rt)^2
        +
        \Num_G(E)^2 \log^4 n.
    \]
    So,
    \begin{align}
        \notag
        &d^{-4}
        \lt[
            \sum_{v\in V_L} \lt(
                \degv^3 + \degv \log^2 n
            \rt)
        \rt]^2
        \lt[
            d^{-4} \Gamma_1 +
            d^{-5} \Gamma_2 +
            d^{-6} \Gamma_3
        \rt] \\
        \notag
        &\qquad \le
        d^{-4}
        \lt[
            \lt(\sum_{v\in V_L} \degv^3\rt)^2
            +
            \Num_G(E)^2 \log^4 n
        \rt] \\
        \notag
        &\qquad \qquad \times 
        \lt[
            d^{-4} \Num_G(E, P_2) + d^{-4} \oNum_G(\oK_{1,4}, \oK_{2,4}) +
            d^{-5} \Gamma_2 +
            d^{-6} \Gamma_3
        \rt] \\
        \notag
        &\qquad \le
        d^{-8}
        \lt[
            \lt(\sum_{v\in V_L} \degv^3\rt)^2
            +
            \Num_G(E)^2 \log^4 n
        \rt]
        \oNum_G(\oK_{2,4}) \\
        \notag
        &\qquad \qquad +
        d^{-4}\lt(\sum_{v\in V_L} \degv^3\rt)^2
        \lt[
            d^{-4} \Num_G(E, P_2) +
            d^{-4} \oNum_G(\oK_{1,4}) +
            d^{-5} \Gamma_2 +
            d^{-6} \Gamma_3
        \rt] \\
        \label{eq:bipartite-ub-hypothesis-translation-intermediate}
        &\qquad \qquad +
        d^{-4} \Num_G(E)^2 \log^4 n
        \lt[
            d^{-4} \Num_G(E, P_2) +
            d^{-4} \oNum_G(\oK_{1,4}) +
            d^{-5} \Gamma_2 +
            d^{-6} \Gamma_3
        \rt].
    \end{align}
    We will bound each the three summands in the bound (\ref{eq:bipartite-ub-hypothesis-translation-intermediate}).
    Becuase $a^3 \ge 3^3 \binom{a}{3} + 2^2 a$ for all nonnegative integers $a$, we have
    \begin{align*}
        \lt(\sum_{v\in V_L} \degv^3\rt)^2
        &\lesssim
        \lt(\sum_{v\in V_L} \binom{\degv}{3} + \degv\rt)^2
        \lesssim
        \lt(\oNum_G(\oK_{1,3}) + \Num_G(E)\rt)^2 \\
        &\lesssim
        \oNum_G(\oK_{1,3})^2 + \Num_G(E)^2.
    \end{align*}
    By hypothesis (\ref{eq:bipartite-ub-hypothesis-d8}), we have that
    \[
        d^{-8}
        \lt[
            \lt(\sum_{v\in V_L} \degv^3\rt)^2
            +
            \Num_G(E)^2 \log^4 n
        \rt]
        \oNum_G(\oK_{2,4})
        \ll 1.
    \]
    This bounds the first summand in (\ref{eq:bipartite-ub-hypothesis-translation-intermediate}).
    To bound the second summand, note that by Cauchy-Schwarz,
    \begin{align}
        \notag
        \lt(\sum_{v\in V_L} \degv^3\rt)^2
        &\le
        \lt(\sum_{v\in V_L} \degv^2\rt)
        \lt(\sum_{v\in V_L} \degv^4\rt) \\
        \notag
        &\lesssim
        \lt[\sum_{v\in V_L} \lt(\binom{\degv}{2} + \degv\rt)\rt]
        \lt[\sum_{v\in V_L} \lt(\binom{\degv}{4} + \degv\rt)\rt] \\
        \label{eq:bipartite-ub-hypothesis-translation-sum-delv3}
        &\lesssim
        \Num_G(P_2, E) \lt(\oNum_G(\oK_{1,4}) + \Num_G(E)\rt)
        \ll d^5,
    \end{align}
    where the last inequality follows from hypotheses (\ref{eq:bipartite-ub-hypothesis-4cycles}) and (\ref{eq:bipartite-ub-hypothesis-k14}).
    Moreover, by (\ref{eq:bipartite-ub-hypothesis-4cycles}), (\ref{eq:bipartite-ub-hypothesis-k14}), (\ref{eq:bipartite-ub-gamma2-ub}), and (\ref{eq:bipartite-ub-gamma3-ub}),
    \[
        d^{-4} \Num_G(E, P_2) +
        d^{-4} \oNum_G(\oK_{1,4}) +
        d^{-5} \Gamma_2 +
        d^{-6} \Gamma_3
        \ll
        d^{-1}.
    \]
    This implies that
    \[
        d^{-4}\lt(\sum_{v\in V_L} \degv^3\rt)^2
        \lt[
            d^{-4} \Num_G(E, P_2) +
            d^{-4} \oNum_G(\oK_{1,4}) +
            d^{-5} \Gamma_2 +
            d^{-6} \Gamma_3
        \rt]
        \ll
        1.
    \]
    This bounds the second summand of (\ref{eq:bipartite-ub-hypothesis-translation-intermediate}).
    By a counting argument, we have that
    \begin{align*}
        \Gamma_2
        &=
        \sum_{i,j,k\in V_R}
        \deg(i,j) \deg(i,k)
        =
        \sum_{\substack{
            u,v\in V_L \\
            i,j,k\in V_R
        }}
        \ind {(u,i), (u,j), (v,i), (v,k) \in E(G)} \\
        &\lesssim
        \sum_{\substack{
            v\in V_L \\
            i,j,k\in V_R
        }}
        \ind{(v,i), (v,j), (v,k) \in E(G)}
        +
        \sum_{\substack{
            u,v\in V_L, u\neq v \\
            i,j,k\in V_R
        }}
        \ind{(u,i), (u,j), (v,i), (v,k) \in E(G)} \\
        &\lesssim
        \sum_{v\in V_L}
        \deg(v)^3
        +
        \Num_G(C_4, P_3, P_2)
        +
        \oNum_G(\oP_4) \\
        &\lesssim
        d^{5/2} +
        \Num_G(P_2)^{3/2} +
        \oNum_G(\oP_4)
    \end{align*}
    The last inequality follows from (\ref{eq:bipartite-ub-hypothesis-translation-sum-delv3}) and Lemma~\ref{lem:subgraph-statistics}(\ref{itm:ss-c4},\ref{itm:ss-k13-p3-c3+}).
    Therefore, the third summand of (\ref{eq:bipartite-ub-hypothesis-translation-intermediate}) is bounded by
    \begin{align*}
        &d^{-4} \Num_G(E)^2 \log^4 n
        \lt[
            d^{-4} \Num_G(E, P_2) +
            d^{-4} \oNum_G(\oK_{1,4}) +
            d^{-5} \Gamma_2 +
            d^{-6} \Gamma_3
        \rt] \\
        &\qquad \lesssim
        d^{-4} \Num_G(E)^2 \log^4 n
        \bigg[
            d^{-4} \Num_G(E, P_2) +
            d^{-4} \oNum_G(\oK_{1,4}) +
            d^{-5/2} +
            d^{-5} \Num_G(P_2)^{3/2} \\
            &\qquad \qquad +
            d^{-5} \oNum_G(\oP_4) +
            d^{-6} \Gamma_3
        \bigg] \\
        &\qquad \lesssim
        d^{-4} \Num_G(E)^2 \log^4 n
        \lt[
            d^{-4} \Num_G(E, P_2) +
            d^{-5} \Num_G(P_2)^{3/2} +
            d^{-6} \Gamma_3 +
            d^{-5/2}
        \rt] \\
        &\qquad \qquad +
        d^{-8} \Num_G(E)^2 \oNum_G(\oK_{1,4}) \log^4 n
        +
        d^{-9} \Num_G(E)^2 \oNum_G(\oP_4) \log^4 n
    \end{align*}
    By hypotheses (\ref{eq:bipartite-ub-hypothesis-d8}) and (\ref{eq:bipartite-ub-hypothesis-d9}), the last two terms are $o(1)$.
    By hypothesis (\ref{eq:bipartite-ub-hypothesis-k14}), $\Num_G(E)^2 \log^4 n \le \Num_G(E)^2 \log^6 n \ll d^6$.
    By (\ref{eq:bipartite-ub-hypothesis-4cycles}) and (\ref{eq:bipartite-ub-gamma3-ub}),
    \[
        d^{-4} \Num_G(E, P_2) +
        d^{-5} \Num_G(P_2)^{3/2} +
        d^{-6} \Gamma_3 +
        d^{-5/2}
        \ll
        d^{-2}.
    \]
    Thus,
    \[
        d^{-4} \Num_G(E)^2 \log^4 n
        \lt[
            d^{-4} \Num_G(E, P_2) +
            d^{-5} \Num_G(P_2)^{3/2} +
            d^{-6} \Gamma_3 +
            d^{-5/2}
        \rt]
        \ll 1.
    \]
    This bounds the third summand of (\ref{eq:bipartite-ub-hypothesis-translation-intermediate}) and proves the fourth conclusion.
\end{proof}

\section{TV Convergence and Divergence for Random Masks}
\label{appsec:er-masks-proofs}

In this section, we will prove Theorems~\ref{thm:er-mask} and \ref{thm:bip-er-mask}, which show sharp phase transitions between the TV convergence and divergence regimes for Erd\H{o}s-R\'enyi $G$ and bipartite Erd\H{o}s-R\'enyi $G$.
We will prove these results by treating the various subgraph counts appearing in the TV upper and lower bounds in Sections~\ref{sec:tv-upper-bounds} and \ref{sec:tv-lower-bounds} as low-degree polynomials in the edges' indicator variables and showing that for the random graphs we consider, these polynomials concentrate near their expectations.
This implies that, for each $G = G_n$ in the sample path $G_1,G_2,\ldots$, the hypotheses of Theorems~\ref{thm:er-mask} and \ref{thm:bip-er-mask} imply the the appropriate theorem in Section~\ref{sec:tv-upper-bounds} or \ref{sec:tv-lower-bounds} with high probability over the randomness of $G$.
We then obtain the desired almost sure convergence by the Borel-Cantelli lemma.

Our main tools are the following two inequalities, by Kim-Vu and Janson, which bound the tails of polynomials of i.i.d. Bernoulli random variables.
Theorem~\ref{thm:kim-vu-ineq}, by Kim and Vu, controls both upper and lower tails, while Theorem~\ref{thm:janson-ineq} by Janson controls lower tails.
In the Erd\H{o}s-R\'enyi mask setting, we will use Theorem~\ref{thm:kim-vu-ineq} to control the upper tails of subgraph statistics and Theorem~\ref{thm:janson-ineq} to control the lower tails; in the bipartite Erd\H{o}s-R\'enyi mask setting, we will use Theorem~\ref{thm:kim-vu-ineq} to control both tails.

\begin{theorem}\cite{Vu02}
    \label{thm:kim-vu-ineq}
    Let $\xi_i$, $i\in \Gamma$ be a collection of i.i.d. Bernoulli variables, and let $Y$ be a polynomial in the $\xi_i$ of degree $k$ with coefficients in $[0,1]$.
    Define
    \[
        \bE_j Y
        =
        \max_{|A|\ge j} \E[\partial_A Y],
        \qquad
        M_j(Y)
        =
        \max_{|A|\ge j, \xi\in \{0,1\}^\Gamma}
        \partial_A Y(\xi),
    \]
    where, for a set $A\subseteq \Gamma$, $\partial_A$ denotes the partial derivative with respect to $\{\xi_i : i\in A\}$.
    (Note that $\bE_0 Y  = \E Y$.)
    Suppose $\ell \in \{1,\ldots,k\}$ satisfies $M_\ell(Y) \le 1$.
    For $\tau \ge 8k \log n$, define
    \begin{eqnarray*}
        \sE_0
        &=&
        \max\lt(\bE_0 Y, \tau \bE_1 Y, \tau^2 \bE_2 Y, \ldots, \tau^{\ell-1}\bE_{\ell-1} Y, \tau^{\ell}\rt), \\
        \sE_1
        &=&
        \max\lt(\bE_1 Y, \tau \bE_2 Y, \ldots, \tau^{\ell-2}\bE_{\ell-1} Y, \tau^{\ell-1}\rt).
    \end{eqnarray*}
    There exist constants $c_k,d_k > 0$, dependent only on $k$, such that $\P(|Y - \E Y| \ge c_k \sqrt{\tau \sE_0 \sE_1}) \le d_k \exp(-\tau/8)$.
\end{theorem}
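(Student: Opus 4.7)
The plan is to prove this polynomial concentration inequality by induction on the degree $k$ of $Y$, following the general philosophy that the fluctuations of $Y$ should be controlled by its ``typical'' first-order partial derivatives $\partial_{\{i\}} Y$ rather than their worst-case magnitudes. The base case $k=1$ is just a Chernoff bound on a weighted sum of independent Bernoullis, where $\sE_0 \ge \bE_0 Y = \E Y$ and $\sE_1 = 1$, so $\sqrt{\tau \sE_0 \sE_1}$ is exactly the standard subgaussian scale.

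For the inductive step, I would expose the variables one at a time and form the Doob martingale $Y_i = \E[Y \mid \xi_1,\ldots,\xi_i]$, so that $Y_n - Y_0 = Y - \E Y$. Each increment $|Y_i - Y_{i-1}|$ is pointwise bounded by a conditional expectation of $\partial_{\{i\}} Y$, and crucially $\partial_{\{i\}} Y$ is itself a polynomial of degree $k-1$ in the remaining variables with coefficients in $[0,1]$. A naive Azuma-Hoeffding bound would give $|Y - \E Y| \lesssim M_1(Y) \sqrt{\tau n}$, which is much too weak because $M_1(Y)$ can far exceed the typical value $\bE_1 Y$ of a first derivative.

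The key step, and the one that requires genuine care, is a truncation argument. I would define the ``good'' event $\cG$ on which every partial derivative $\partial_A Y$ with $|A| < \ell$ stays within a constant factor of $\bE_{|A|} Y$ (or of the floor $\tau^{\ell - |A|}$), and then apply the inductive hypothesis to each such derivative, using that the relevant $\sE_0', \sE_1'$ for $\partial_A Y$ are exactly the shifted tails appearing in the definition of $\sE_0$ and $\sE_1$ for $Y$. Since there are only polynomially many such $A$, a union bound costs at most a factor absorbed into $d_k$ and $\exp(-\tau/8)$. Conditional on $\cG$, the martingale increments are deterministically bounded by roughly $\sqrt{\sE_0/\sE_1}$ and the sum of conditional second moments is at most $\sE_0 \sE_1$; a conditional Azuma-Hoeffding (or Freedman) inequality then yields the claimed $\sqrt{\tau \sE_0 \sE_1}$ deviation with probability at least $1 - d_k \exp(-\tau/8)$.

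The main obstacle is the bookkeeping in the inductive step: the quantities $\bE_j Y$ for $Y$ translate into the $\bE_{j-1}$ of the derivatives $\partial_{\{i\}} Y$, and one must verify that the geometric sequence $\bE_0 Y, \tau \bE_1 Y, \ldots, \tau^{\ell-1} \bE_{\ell-1} Y, \tau^\ell$ telescopes correctly through the induction, with the floor $\tau^\ell$ serving as the base case for the highest-order derivatives (where $M_\ell(Y) \le 1$ is used directly). Once this scaling is set up so that the inductive hypothesis on derivatives feeds cleanly into the truncation event $\cG$, the remaining Azuma step is routine, and the constants $c_k, d_k$ arise from the finite recursion and depend only on $k$.
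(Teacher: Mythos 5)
You should first note that the paper does not prove Theorem~\ref{thm:kim-vu-ineq} at all: it is imported verbatim from Vu \cite{Vu02}, and the only argument the paper supplies is the one-line remark that the stated form follows from \cite[Theorem 4.2]{Vu02} by the substitution $\tau = 2\lambda$ with $\lambda \ge 4k\log n$. So what you have written is not an alternative to anything in the paper; it is an attempt to reprove the Kim--Vu polynomial concentration inequality itself, which is a substantial standalone result whose published proof is long and delicate.

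As a proof of that result, your sketch has the right silhouette (induction on degree, Doob martingale, truncation at typical derivative values) but the decisive steps are asserted rather than proved, and at least one of them fails as stated. First, ``conditional Azuma--Hoeffding on $\cG$'' is not a legitimate move: conditioning on a non-product event such as $\cG$ destroys the independence of the $\xi_i$ and hence the martingale property of the Doob sequence, and repairing this (by modifying $Y$ off $\cG$, or by bounding the bad-event contribution separately with the worst-case derivatives $M_j(Y)$) is exactly the technical heart of Kim--Vu/Vu-type and ``typical bounded differences'' arguments; it is also where the floors $\tau^{\ell}$ and $\tau^{\ell-1}$ in $\sE_0, \sE_1$ and the hypothesis $M_\ell(Y)\le 1$ actually get used, none of which appears in your outline. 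Second, the Doob increment at step $i$ equals $(\xi_i - p)\,\E[\partial_{\{i\}} Y \mid \xi_1,\ldots,\xi_{i-1}]$, a partial average over the unexposed variables, whereas your event $\cG$ controls the derivatives $\partial_A Y(\xi)$ evaluated at the full realization; the former is not controlled by the latter, so you would need $\cG$ to control all prefix-conditional expectations of all low-order derivatives, and showing those are simultaneously typical requires its own induction. Third, the quantitative claims ``increments are bounded by roughly $\sqrt{\sE_0/\sE_1}$'' and ``the sum of conditional second moments is at most $\sE_0\sE_1$'' are not derived from $\cG$ and are not obviously true: on $\cG$ a first derivative is of size about $\sE_1$ (or its floor), and turning that into the product form $\sqrt{\tau\,\sE_0\,\sE_1}$ is precisely the bookkeeping you defer. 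In short, the proposal is a plausible roadmap but not a proof, and for the purposes of this paper no proof is needed beyond the citation.
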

This inequality follows from \cite[Theorem 4.2]{Vu02} by setting $\tau = 2\lambda$ for $\lambda \ge 4k \log n$.
We treat $k$ as a constant because the subgraph counts we are interested in are of constant size.

\begin{theorem}{\cite[Theorem 0]{JR02}}
    \label{thm:janson-ineq}
    Let $\xi_i$, $i\in \Gamma$ be a collection of i.i.d. Bernoulli variables.
    For a set $A\subseteq \Gamma$, let $\xi_A = \prod_{i\in A} \xi_i$.
    Let $Y = \sum_{A\in \cS} \xi_A$ for a set family $\cS \subseteq 2^{\Gamma}$, and define $\bar \Delta = \sum_{A,B\in \cS, A\cap B\neq \emptyset} \E\lt[I_A I_B\rt]$.
    Then, $\P(Y - \E Y \le -t) \le \exp(-t^2/2\bar\Delta)$.
\end{theorem}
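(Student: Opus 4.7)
The plan is to prove Janson's inequality via the Cram\'er--Chernoff exponential moment method combined with positive association of the indicators $\{\xi_A : A \in \cS\}$, following the classical argument of \cite{JR02}. Since $Y$ is a sum of monotone increasing Bernoulli functionals, the lower tail admits a Gaussian-type bound driven by the pairwise interaction statistic $\bar\Delta$.

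First, I would reduce the tail bound to a moment generating function estimate. For any $\lambda \ge 0$, Markov's inequality applied to $e^{-\lambda Y}$ gives
\[
    \P(Y - \E Y \le -t)
    \le
    \exp\lt(\lambda(\E Y - t)\rt)
    \,\E\lt[\exp(-\lambda Y)\rt].
\]
It therefore suffices to establish the MGF bound
\[
    \E\lt[\exp(-\lambda Y)\rt]
    \le
    \exp\lt(-\lambda \E Y + \tfrac{\lambda^2}{2}\bar\Delta\rt),
    \qquad \lambda \ge 0,
\]
after which choosing $\lambda = t / \bar\Delta$ produces the claimed $\exp(-t^2/(2\bar\Delta))$ bound.

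Next, I would prove the MGF estimate. Enumerate $\cS = \{A_1,\ldots,A_m\}$ and write $\E[e^{-\lambda Y}]$ as a telescoping product of conditional expectations. Each $\xi_{A_k}$ is an increasing function of the underlying Bernoullis $\{\xi_i\}_{i\in\Gamma}$, so by the Harris/FKG inequality, conditioning on any decreasing event of the form $\{\xi_{A_{j_1}} = \cdots = \xi_{A_{j_r}} = 0\}$ can only decrease the conditional expectation of $\xi_{A_k}$. Using the elementary inequalities $\log(1 - x) \le -x - x^2/2$ for $x \in [0,1]$ and $1 - e^{-\lambda \xi} \ge \lambda \xi - \lambda^2 \xi^2/2$ for Bernoulli $\xi$, the linear terms aggregate to $-\lambda \E Y$, and the quadratic correction assembles into $\tfrac{\lambda^2}{2}\sum_{A,B \in \cS,\, A\cap B\ne \emptyset}\E[\xi_A \xi_B] = \tfrac{\lambda^2}{2}\bar\Delta$ once one notices that pairs $A,B$ with $A\cap B = \emptyset$ contribute independent factors absorbed into the linear part.

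The main obstacle will be the careful bookkeeping in the MGF bound: the naive product bound $\E[e^{-\lambda Y}] \le \prod_A \E[e^{-\lambda \xi_A}]$ actually goes in the wrong direction because the $\xi_A$'s are positively, not negatively, correlated, so a direct tensorization argument fails. The FKG-telescoping approach above is the standard way to obtain the correct sign, and the cross-term accounting must be restricted to overlapping pairs so as to recover exactly $\bar\Delta$ (rather than $\sum_{A \ne B}\E[\xi_A\xi_B]$, which could be much larger). In the paper, this obstacle is sidestepped entirely by invoking \cite[Theorem 0]{JR02} as a black box, which is sufficient for our downstream applications in the proofs of Theorems~\ref{thm:er-mask} and \ref{thm:bip-er-mask}.
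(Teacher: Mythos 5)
The paper does not actually prove this statement: it is quoted verbatim as Theorem 0 of \cite{JR02} and used as a black box, so there is no in-paper argument to compare against. Your outline is the standard proof of Janson's lower-tail inequality (Janson 1990; Boppana--Spencer; Theorem 2.14 of Janson--{\L}uczak--Ruci\'nski), and its two outer steps are exactly right: the Chernoff reduction to the Laplace-transform bound $\E\lt[e^{-\lambda Y}\rt] \le \exp\lt(-\lambda \E Y + \tfrac{\lambda^2}{2}\bar\Delta\rt)$ followed by the choice $\lambda = t/\bar\Delta$ does give $\exp(-t^2/2\bar\Delta)$, and your remark that naive tensorization fails because the $\xi_A$ are positively correlated is the correct diagnosis. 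The loose point is the middle step: telescoping conditional expectations given events of the form $\{\xi_{A_{j_1}}=\cdots=\xi_{A_{j_r}}=0\}$ together with $\log(1-x)\le -x-x^2/2$ is really the mechanics of the $\P(Y=0)$ version of Janson's inequality, and as described it does not produce the exponential-moment bound. The standard route is to differentiate: show $-\tfrac{d}{d\lambda}\log \E\lt[e^{-\lambda Y}\rt] \ge \E Y - \lambda\bar\Delta$ by writing, for each $A\in\cS$, $Y = Y_A + \bar Y_A$ with $Y_A = \sum_{B\in\cS:\, B\cap A\neq \emptyset}\xi_B$, conditioning on $\xi_A=1$, applying Harris/FKG to the two decreasing functionals $e^{-\lambda Y_A}$ and $e^{-\lambda \bar Y_A}$, using that $\bar Y_A$ is independent of $\xi_A$, and bounding $e^{-x}\ge 1-x$; integrating over $[0,\lambda]$ yields the MGF bound. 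In particular, the restriction of the quadratic term to overlapping pairs comes precisely from that independence, not from ``absorbing'' disjoint pairs into the linear part. With the middle step replaced by this derivative argument your proposal becomes a complete proof; as written it is the right plan with an imprecise core step, which is immaterial for the paper since the result is invoked by citation in any case.
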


\subsection{Erd\H{o}s-R\'enyi Masks}

In this section, we will show Theorem~\ref{thm:er-mask}, which gives conditions for TV convergence and divergence for Erd\H{o}s-R\'enyi masks.
In this setting, $G\sim \cG(n,p)$ is a sample from an Erd\H{o}s-R\'enyi graph.
We begin with the following two lemmas, which show high probability upper and lower bounds on the relevant subgraph statistics.
We will use these bounds in the proof of Theorem~\ref{thm:er-mask} to show that the appropriate theorems in Sections~\ref{sec:tv-upper-bounds} and \ref{sec:tv-lower-bounds} hold for $G$ with high probability.

\begin{lemma}
    \label{lem:er-subgraph-ubs}
    Let $G \sim \cG(n,p)$.
    For all sufficiently large $n$, the following inequalities each hold with probability at least $1 - n^{-10}$.
    The $\lesssim$ hides only a constant factor independent of $n,p$.
    \begin{enumerate}[label=(\alph*), ref=\alph*]
        \item \label{itm:er-ub-e} $\Num_G(E) \lesssim n^2p + \log n$.
        \item \label{itm:er-ub-p2} $\Num_G(P_2) \lesssim n^3p^2 + \log^2 n$.
        \item \label{itm:er-ub-k14} $\Num_G(K_{1,4}) \lesssim n^5p^4 + \log^4 n$.
        \item \label{itm:er-ub-k18} $\Num_G(K_{1,8}) \lesssim n^9p^8 + \log^8 n$.
        \item \label{itm:er-ub-c3} $\Num_G(C_3) \lesssim n^3p^3 + \log^2 n$.\footnote{ This is the famous upper tail problem for triangles. By the celebrated works of Chatterjee \cite{Cha12} and DeMarco and Kahn \cite{DK12}, we can show a stronger upper bound of $n^3p^3 + \log n$. Since the stronger upper bound does not lead to a stronger result in our application, we are content to use a cruder bound, which we can prove by general-purpose techniques.}
        \item \label{itm:er-ub-c4} $\Num_G(C_4) \lesssim n^4p^4 + \log^3 n$.
        \item \label{itm:er-ub-c32e} $\Num_G(C_3^{2,e}) \lesssim n^4p^5 + \log^4 n$.
        \item \label{itm:er-ub-c32v} $\Num_G(C_3^{2,v}) \lesssim n^5p^6 + \log^5 n$.
        \item \label{itm:er-ub-k24} $\Num_G(K_{2,4}) \lesssim n^6p^8 + \log^6 n + \min(n^8p^8, \log^8 n)$.
        \item \label{itm:er-ub-c42e} $\Num_G(C_4^{2,e}) \lesssim n^6p^7 + \log^6 n + \min(n^8p^8, \log^7 n)$.
        \item \label{itm:er-ub-c42v} $\Num_G(C_4^{2,v}) \lesssim n^7p^8 + \log^6 n + \min(n^8p^8, \log^8 n)$.
    \end{enumerate}
\end{lemma}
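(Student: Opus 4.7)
The plan is to prove Lemma~\ref{lem:er-subgraph-ubs} by applying the Kim-Vu polynomial concentration inequality (Theorem~\ref{thm:kim-vu-ineq}) to each subgraph count individually, then taking a union bound over the constant number of statements. For each fixed graph $H$ with $v_H$ vertices and $e_H$ edges, the count $\Num_G(H) = Y$ is a multilinear polynomial of degree $e_H$ in the $\binom{n}{2}$ edge indicators $\xi_e \sim \Ber(p)$, with coefficients in $\{0,1\}$ after grouping copies. Its expectation satisfies $\E Y \asymp n^{v_H} p^{e_H}$. For the partial-expectation quantities in Kim-Vu, observe that $\partial_A Y$ counts copies of $H$ containing a fixed set $A$ of $|A|$ specified edges, so $\bE_j Y = \max_{|A|=j} \E[\partial_A Y]$ is of order $\max_F n^{v_H - v(F)} p^{e_H - j}$, where the max runs over subgraphs $F \subseteq H$ with $j$ edges and $v(F)$ vertices.

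The workflow for each part is the same. Take $\tau = C\log n$ for a large constant $C$ (so that $\tau \ge 8e_H \log n$ and $d_{e_H}\exp(-\tau/8) \le n^{-11}$), set $\ell = e_H$ (so $M_\ell(Y) \le 1$ trivially, since the $e_H$-th partial derivative of a multilinear polynomial of degree $e_H$ is a $\{0,1\}$-valued indicator of the fixed copy), and compute $\sE_0, \sE_1$ by enumerating the constantly many pairs $(F, v(F))$. Kim-Vu then gives $Y \le \E Y + c_{e_H}\sqrt{\tau \sE_0 \sE_1}$ with probability $\ge 1-n^{-11}$; simplifying $\sqrt{\tau \sE_0 \sE_1}$ by AM-GM yields bounds of the form $\E Y + \sum_{j} \tau^{j/2} (\bE_j Y)^{1/2} (\bE_0 Y)^{1/2} + \tau^{e_H}$, which after using $\E Y = \bE_0 Y \asymp n^{v_H} p^{e_H}$ collapse to $O(n^{v_H} p^{e_H} + \log^{e_H} n)$ for the ``nice'' items (\ref{itm:er-ub-e})--(\ref{itm:er-ub-c32v}).

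The main obstacle will be the three items (\ref{itm:er-ub-k24})--(\ref{itm:er-ub-c42v}), where the stated bound contains the extra $\min(n^8 p^8, \log^k n)$ correction. These subgraphs have $e_H = 8$ (they are unions of two 4-cycles sharing an edge, a vertex, or four vertices), and the intermediate $\bE_j$ terms for small $j$ can dominate when $p$ is in an intermediate regime where the graph $H$ has few copies in expectation but a single edge has many $H$-extensions. I would carry out the Kim-Vu calculation enumerating subgraphs $F \subseteq H$ and, for each $j$, write $\tau^j \bE_j Y$ explicitly; the resulting bound always includes a term of scale $\tau^{e_H} = \log^{e_H} n$ coming from $\sE_0 \ge \tau^{\ell}$, and a term of scale $n^{v_H} p^{e_H}$ from $\bE_0 Y$, while the intermediate terms can be upper bounded by $n^{v_H/2 + \text{const}} p^{e_H}$ type expressions. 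The $\min(n^8 p^8, \log^k n)$ form then arises by taking, in each regime of $p$, whichever of the two bounds on the cross-term $\sqrt{\tau^{j} \bE_j Y \cdot \bE_0 Y}$ is smaller.

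Finally, I would handle the star counts (\ref{itm:er-ub-k14}) and (\ref{itm:er-ub-k18}) either directly via Kim-Vu or more simply by writing $\Num_G(K_{1,k}) = \sum_v \binom{\deg(v)}{k}$ and applying concentration of $\deg(v) \sim \Ber(p)^{n-1}$ plus a union bound over vertices; this gives $\deg(v) \lesssim np + \log n$ uniformly with probability $1 - n^{-11}$, and plugging into the sum yields the claim. A final union bound over the eleven events — each of probability at least $1 - n^{-11}$ — gives probability at least $1 - n^{-10}$ that all stated inequalities hold simultaneously for sufficiently large $n$.
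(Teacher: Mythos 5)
Your plan for parts (\ref{itm:er-ub-e})--(\ref{itm:er-ub-c32v}) matches the paper's proof: apply Theorem~\ref{thm:kim-vu-ineq} with $\tau \asymp \log n$, compute the constantly many $\bE_j Y$, and simplify $\sqrt{\tau\sE_0\sE_1}$; that part is fine. The genuine gap is in parts (\ref{itm:er-ub-k24})--(\ref{itm:er-ub-c42v}): the $\min(n^8p^8,\log^k n)$ correction cannot come out of the Kim--Vu calculation alone, no matter how you split into regimes of $p$. For these graphs $M_6(Y)$ is of order $n$ (e.g.\ six edges of a $K_{2,4}$ forming a $K_{2,3}$ extend to $\Theta(n)$ copies in the complete graph), so you are forced to take $\ell\ge 7$, and then $\sE_0\ge\tau^{\ell}$ and $\sE_1\ge\tau^{\ell-1}$ floor the deviation term at $\sqrt{\tau\,\sE_0\,\sE_1}\ge\tau^{\ell}\gtrsim\log^{7}n$. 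But in the very sparse regime $p\ll \log n/n$ the stated bounds collapse to roughly $\log^{6}n$ (e.g.\ for $p=n^{-2}$ part (\ref{itm:er-ub-k24}) asserts $\Num_G(K_{2,4})\lesssim\log^6 n$), which is strictly below what any application of Theorem~\ref{thm:kim-vu-ineq} can deliver. The missing idea, which is how the paper proceeds, is to combine the Kim--Vu bound (which gives $n^6p^8+\log^8 n$, $n^6p^7+\log^7 n$, $n^7p^8+\log^8 n$ respectively) with the deterministic domination $\Num_G(K_{2,4},C_4^{2,e},C_4^{2,v})\le\Num_G(C_4)^2$ from Lemma~\ref{lem:subgraph-statistics}(\ref{itm:ss-k24-c42e-c42v}) together with the already-proved part (\ref{itm:er-ub-c4}), which yields $\lesssim n^8p^8+\log^6 n$; taking the minimum of the two high-probability bounds produces exactly the stated $\min$ form.

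A secondary issue: your "simpler" alternative for the star counts, namely $\Num_G(K_{1,k})=\sum_v\binom{\deg(v)}{k}$ with the uniform bound $\deg(v)\lesssim np+\log n$, only gives $n^{k+1}p^k+n\log^k n$, with an extra factor of $n$ on the logarithmic term. That does not prove the stated $\log^k n$ tail (and would break the downstream use in Lemma~\ref{lem:er-ubs-aux} for small $p$), so for (\ref{itm:er-ub-k14}) and (\ref{itm:er-ub-k18}) you should stick with the direct Kim--Vu route you also propose.
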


\begin{proof}
    These bounds are routine consequences of Theorem~\ref{thm:kim-vu-ineq}.
    We will prove parts (\ref{itm:er-ub-e}, \ref{itm:er-ub-p2}, \ref{itm:er-ub-k14}, \ref{itm:er-ub-k18}) in full detail and sketch the remaining parts.
    Throughout this proof, for each unordered pair $i,j\in [n]$ with $i\neq j$, let $\xi_{i,j}$ be the indicator of the edge $(i,j)$ in $G$.
    The $\xi_{i,j}$ are i.i.d. samples from $\Ber(p)$.
    Throughout this proof, the quantities $\ell, c_k, d_k, \sE_0, \sE_1$ are defined as in Theorem~\ref{thm:kim-vu-ineq}.

    \paragraph{Proof of (\ref{itm:er-ub-e}, \ref{itm:er-ub-p2}, \ref{itm:er-ub-k14}, \ref{itm:er-ub-k18}).}
    We can identify $E$ and $P_2$ with $K_{1,1}$ and $K_{1,2}$.
    We will show more generally that, for any constant $k$ and star graph $K_{1,k}$, $\Num_G(K_{1,k}) \lesssim n^{k+1}p^k + \log^k n$ with probability $1-n^{-10}$, where the constant hidden by the $\lesssim$ may depend on $k$.
    Let
    \[
        Y = \sum_{\substack{
            1\le j_1 < \cdots < j_k \le n \\
            i \in [n] \setminus \{j_1,\ldots,j_k\}
        }}
        \prod_{t=1}^k \xi_{i, j_t}
    \]
    be the sum of the indicators for each copy of $K_{1,k}$.
    This is a degree-$k$ polynomial in the variables $\xi_{i,j}$.
    Then, $M_k(Y) \le 1$ and $\bE Y = \bE_0 Y \asymp n^{k+1}p^k$, while for $1\le j\le k-1$, $\bE_j Y \asymp n^{k-j}p^{k-j}$.
    We set $\tau = \max(8k\log n, 80\log n + 8\log d_k)$.
    For each $1\le j\le k-1$, we have
    \[
        \tau^j \bE_j Y
        \le
        \max\lt(\tau^k, \lt(\bE_j Y\rt)^{k/(k-j)}\rt)
        \lesssim
        n^kp^k + \log^k n.
    \]
    Therefore, $\sE_0, \tau \sE_1 \lesssim n^{k+1}p^k + \log^k n$.
    By Theorem~\ref{thm:kim-vu-ineq}, we have
    \[
        Y
        \le
        \E Y + c_3 \sqrt{\tau \sE_0 \sE_1}
        \lesssim
        n^3p^3 + \log^2 n
    \]
    with probability $1-d_k \exp(-\tau/8) \ge 1 - n^{-10}$.

    In the following proofs, we always set $Y$ to be the sum of the indicators of the appropriate subgraph.
    This is a degree-$k$ polynomial in the variables $\xi_{i,j}$, where $k$ is the number of edges in the subgraph.
    We set $\tau = \max(8k\log n, 80\log n + 8\log d_k)$.

    \paragraph {Proof of (\ref{itm:er-ub-c3}).}
    Analogous to the above, with $M_2(Y) \le 1$, $\bE_0 Y \asymp n^3p^3$, and $\bE_1 Y \asymp np^2$.

    \paragraph {Proof of (\ref{itm:er-ub-c4}).}
    Analogous to the above, with $M_3(Y)\le 1$, $\bE_0 Y \asymp n^4p^4$, $\bE_1 Y \asymp n^2p^3$, and $\bE_2 Y \asymp np^2$.

    \paragraph{Proof of (\ref{itm:er-ub-c32e}).}
    Analogous to the above, with $M_4(Y)\le 2$, $\bE_0 Y \asymp n^4p^5$, $\bE_1 Y \asymp n^2p^4$, $\bE_2 Y \asymp np^3$, and $\bE_3 Y \asymp np^2$.
    Because we have $M_4(Y)\le 2$, we apply Theorem~\ref{thm:kim-vu-ineq} to $\f12 Y$ instead of $Y$, though of course this makes no difference in the resulting asymptotic.

    \paragraph{Proof of (\ref{itm:er-ub-c32v}).}
    Analogous to part (\ref{itm:er-ub-c3}), with $M_5(Y)\le 1$, $\bE_0 Y \asymp n^5p^6$, $\bE_1 Y \asymp n^3p^5$, $\bE_2 Y \asymp n^2p^4$, $\bE_3 Y \asymp n^2p^3$, and $\bE_4 Y \asymp np^2$.

    \paragraph{Proof of (\ref{itm:er-ub-k24}).}
    We will first show that $\Num_G(K_{2,4}) \lesssim n^6p^8 + \log^8 n$.
    The proof is analogous to the above, with $M_8(Y)\le 1$ , $\bE_0 Y \asymp n^6p^8$, $\bE_1 Y \asymp n^4p^7$, $\bE_2 Y \asymp n^3p^6$, $\bE_3 Y \asymp n^2p^5$, $\bE_4 Y \asymp n^2p^4$, $\bE_5 Y \asymp np^3$, $\bE_6 Y \asymp np^2$, and $\bE_7 Y \asymp p$.
    By Lemma~\ref{lem:subgraph-statistics}(\ref{itm:ss-k24-c42e-c42v}) and part (\ref{itm:er-ub-c4}), we also have
    \[
        \Num_G(K_{2,4})
        \lesssim
        \Num_G(C_4)^2
        \lesssim
        n^8p^8 + \log^6 n.
    \]
    So, we have that
    \[
        \Num_G(K_{2,4})
        \lesssim
        \min(n^6p^8 + \log^8 n, n^8p^8 + \log^6 n)
        \lesssim n^6p^8 + \log^6 n + \min(n^8p^8, \log^8 n).
    \]

    \paragraph{Proof of (\ref{itm:er-ub-c42e}).}
    We will first show that $\Num_G(C_4^{2,e}) \lesssim n^6p^7 + \log^7 n$.
    The proof is analogous to the above, with $M_7(Y)\le 1$, $\bE_0 Y \asymp n^6p^7$, $\bE_1 Y \asymp n^4p^6$, $\bE_2 Y \asymp n^3p^5$, $\bE_3 Y \asymp n^2p^4$, $\bE_4 Y \asymp n^2p^3$, $\bE_5 Y \asymp np^2$, and $\bE_6 Y \asymp p$.
    We also have $\Num_G(C_4^{2,e}) \lesssim \Num_G(C_4)^2$ by Lemma~\ref{lem:subgraph-statistics}(\ref{itm:ss-k24-c42e-c42v}).
    The result follows in the same way as in part (\ref{itm:er-ub-k24}).

    \paragraph{Proof of (\ref{itm:er-ub-c42v}).}
    We will first show that $\Num_G(C_4^{2,v}) \lesssim n^7p^8 + \log^8 n$.
    The proof is analogous to the above, with $M_8(Y)\le 1$, $\bE_0 Y \asymp n^7p^8$, $\bE_1 Y \asymp n^5p^7$, $\bE_2 Y \asymp n^4p^6$, $\bE_3 Y \asymp n^3p^5$,  $\bE_4 Y \asymp n^3p^4$, $\bE_5 Y \asymp n^2p^3$, $\bE_6 Y \asymp np^2$, and $\bE_7 Y \asymp p$.
    We also have $\Num_G(C_4^{2,v}) \lesssim \Num_G(C_4)^2$ by Lemma~\ref{lem:subgraph-statistics}(\ref{itm:ss-k24-c42e-c42v}).
    The result follows in the same way as in part (\ref{itm:er-ub-k24}).
\end{proof}

\begin{lemma}
    \label{lem:er-subgraph-lbs}
    Let $G \sim \cG(n,p)$.
    The following inequalities hold with high probability.
    \begin{enumerate}[label=(\alph*), ref=\alph*]
        \item \label{itm:er-lb-c3} $\Num_G(C_3) \ge \f12 \E \Num_G(C_3)$ with probability at least $1 - \exp(-\Omega(\min(n^3p^3, n^2p, n)))$.
        \item \label{itm:er-lb-c4p2e} $\Num_G(C_4, P_2, E) \ge \f12 \E \Num_G(C_4, P_2, E)$ with probability at least $1 - \exp(-\Omega(n^2p, n))$.
    \end{enumerate}
    \end{lemma}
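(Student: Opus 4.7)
The plan is to obtain both lower tail bounds from a direct application of Janson's inequality (Theorem~\ref{thm:janson-ineq}) with the $\xi_i$ taken to be the edge indicators of $\cG(n,p)$, indexed by $\Gamma = \binom{[n]}{2}$. For each part, we take $Y = \sum_{A\in\cS} \xi_A$ where $\cS$ is the set family of edge-sets of the relevant subgraphs, and apply the inequality with deviation $t = \f12 \E Y$, yielding a failure probability of $\exp(-\Omega((\E Y)^2/\bar\Delta))$. The main task in each part is to compute $\bar\Delta$ by a casework over how pairs of subgraphs in $\cS$ can share edges.

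For part (a), take $\cS$ to be the edge-sets of all triangles in $K_n$, so that $\E Y = \binom{n}{3}p^3 \asymp n^3 p^3$. The key observation is that two distinct triangles cannot share two edges (sharing two edges forces the third edge to coincide), so the only contributions to $\bar\Delta$ are from coincident triangles and from pairs of triangles sharing exactly one edge. The former contributes $\asymp n^3 p^3$ and the latter contributes $\asymp n^4 p^5$ (choose the shared edge in $O(n^2)$ ways and the two remaining apex vertices in $O(n^2)$ ways, then each pair contains 5 edges). Thus $\bar\Delta \lesssim n^3 p^3 + n^4 p^5$, and $t^2/2\bar\Delta \asymp n^6 p^6 / (n^3 p^3 + n^4 p^5) \asymp \min(n^3 p^3, n^2 p)$, which implies the claimed bound $\exp(-\Omega(\min(n^3 p^3, n^2 p, n)))$ after weakening with $n$.

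For part (b), take $\cS$ to be the union of the singleton families for edges, 2-paths, and 4-cycles, so that $Y = \Num_G(E) + \Num_G(P_2) + \Num_G(C_4)$ and $\E Y \asymp n^2 p + n^3 p^2 + n^4 p^4$. Then $\bar\Delta$ decomposes into a $3\times 3$ table of cross-contributions indexed by the type of $A$ and $B$, which we enumerate: the $(E,E)$ term contributes $\asymp n^2 p$; $(E, P_2)$ and $(P_2, P_2)$ (coincident) contribute $\asymp n^3 p^2$; $(P_2, P_2)$ sharing one edge contributes $\asymp n^4 p^3$; $(E, C_4)$, $(P_2, C_4)$ sharing two edges, and coincident $(C_4, C_4)$ all contribute $\asymp n^4 p^4$; $(P_2, C_4)$ sharing one edge and $(C_4, C_4)$ sharing two adjacent edges contribute at most $\asymp n^5 p^5 + n^5 p^6$; and $(C_4, C_4)$ sharing one edge contributes $\asymp n^6 p^7$. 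Summing, $\bar\Delta \lesssim n^2 p + n^3 p^2 + n^4 p^3 + n^4 p^4 + n^5 p^6 + n^6 p^7$. One then verifies, by checking the three regimes $p \le 1/n$, $1/n \le p \le n^{-1/2}$, and $p \ge n^{-1/2}$, that $(\E Y)^2/\bar\Delta \gtrsim \min(n^2 p, n)$ throughout, yielding the lemma's conclusion.

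The main obstacle is the bookkeeping in part (b), where nine pairs of families $(H_1, H_2) \in \{E,P_2,C_4\}^2$ must be analyzed, and within each pair one must further split by the number of shared edges (and for $(C_4,C_4)$ sharing two edges, by whether they are adjacent or opposite). The actual computations are elementary graph counting, but care is needed to ensure no case dominates the ones I've listed. A useful consistency check is to verify the ratio at the boundary $p \asymp n^{-2}\log^3 n$, where $\E Y \asymp \log^3 n$ is dominated by single edges and both $\E Y$ and $\bar\Delta$ are dominated by the $(E,E)$ contribution, so the ratio reduces to $\asymp n^2 p$.
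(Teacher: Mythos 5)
Your proposal is correct and follows essentially the same route as the paper: apply Janson's inequality with $t = \tfrac12 \E Y$ to the sum of subgraph indicators, bound $\bar\Delta$ by a casework count of overlapping pairs, and compare $(\E Y)^2$ with $\bar\Delta$ across the regimes of $p$. The only difference is bookkeeping: you restrict $\bar\Delta$ to pairs sharing an edge (as the definition requires), which gives slightly sharper exponents than the paper's computation, which also charges vertex-sharing pairs such as $C_3^{2,v}$; either way the stated probability bounds follow.
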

\begin{proof}
    For each $(i,j)$ with $1\le i<j\le n$, let $\xi_{i,j}$ be the indicator of the edge $(i,j)$ in $G$.
    We will show these lower bounds with Theorem~\ref{thm:janson-ineq}.
    Note that applying Theorem~\ref{thm:janson-ineq} with $t = \f12 \E Y$ gives
    \begin{equation}
        \label{eq:janson-with-t-half-y}
        \P \lt(Y \le \f12 \E Y\rt)
        \le
        \exp\lt(-\f{(\E Y)^2}{8\bar \Delta}\rt).
    \end{equation}
    We will apply this probability bound with $Y$ equal to the sum of the indicators of the appropriate subgraphs.

    \paragraph{Proof of (\ref{itm:er-lb-c3}).}
    Let $Y = \sum_{i<j<k} \xi_{i,j}\xi_{j,k}\xi_{k,i}$ be the sum of the indicators for each triangle.
    Then $\E Y \asymp n^3p^3$, and
    \[
        \bar \Delta
        \asymp
        \E \Num_G(C_3, C_3^{2,e}, C_3^{2,v})
        \asymp
        n^3p^3 + n^4p^5 + n^5p^6.
    \]
    So,
    \[
        \f{(\E Y)^2}{8\bar \Delta}
        \asymp
        \f{n^6p^6}{n^3p^3 + n^4p^5 + n^5p^6}
        \asymp
        \min(n^3p^3, n^2p, n).
    \]
    The result follows from (\ref{eq:janson-with-t-half-y}).

    \paragraph{Proof of (\ref{itm:er-lb-c4p2e}).}
    Let $Y$ be the sum of the indicators 4-cycles, 2-paths, and edges in $G$.
    Then $\E Y \asymp n^4p^4 + n^3p^2 + n^2p$, and by an analogous computation
    \[
        \bar \Delta
        \asymp
        n^2p + n^3p^2 + n^4p^3 + n^5p^4 + n^6p^7 + n^7p^8.
    \]
    Note that $n^3p^2 \le \max(n^2p, n^5p^4)$, $n^4p^3 \le \max(n^2p, n^5p^4)$, and $n^6p^7 \le n^6p^6 \le \max(n^5p^4, n^7p^8)$.
    So, in fact $\bar \Delta \asymp n^2p + n^5p^4 + n^7p^8$.
    Hence,
    \[
    \f{(\E Y)^2}{8\bar \Delta}
    \asymp
    \f{n^4p^2 + n^6p^4 + n^8p^8}{n^2p + n^5p^4 + n^7p^8}
    \asymp
    \min(n^2p, n),
    \]
    where the second asymptotic equality can be verified by separately considering the cases $p\gtrsim n^{-1/2}$, $n^{-1}\lesssim p \lesssim n^{-1/2}$, and $p \lesssim n^{-1}$.
    The result now follows from (\ref{eq:janson-with-t-half-y}).
\end{proof}

The next two lemmas give conditions under which, for $G\sim \cG(n,p)$, the hypotheses of the appropriate theorems in Sections~\ref{sec:tv-upper-bounds} and \ref{sec:tv-lower-bounds} hold almost surely.

\begin{lemma}
    \label{lem:er-ubs-aux}
    Let $G = G_n \sim \cG(n,p)$, and let
    \[
        \Phi = n^3p^3 + n^{3/2} p + np^{1/2} + n^{1/2}p^{1/4} \log^2 n + \log^3 n.
    \]
    Over the randomness of the sample path $G_1,G_2,\ldots$, the following inequalities hold for all sufficiently large $n$ almost surely.
    The $\gtrsim$ hides an absolute constant factor.
    \begin{eqnarray*}
        \Phi &\gtrsim& \Num_G(C_3), \\
        \Phi^2 &\gtrsim& \Num_G(C_4, P_2, E), \\
        \Phi^4 &\gtrsim& \Num_G(K_{1,8}) + \log^8 n \Num_G(K_{1,4}, E).
    \end{eqnarray*}
\end{lemma}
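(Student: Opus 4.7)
The plan is to apply Lemma~\ref{lem:er-subgraph-ubs} to the six subgraph counts appearing on the right-hand sides, use Borel-Cantelli to pass to an almost-sure event, and then verify the three inequalities as deterministic polynomial comparisons in $n$ and $p$.

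First I would invoke Lemma~\ref{lem:er-subgraph-ubs} on each of $\Num_G(C_3), \Num_G(C_4), \Num_G(P_2), \Num_G(E), \Num_G(K_{1,4})$, and $\Num_G(K_{1,8})$. Each bound holds with probability at least $1 - n^{-10}$, so a union bound gives that all six hold simultaneously with probability at least $1 - 6n^{-10}$. Since $\sum_n 6n^{-10} < \infty$, the Borel-Cantelli lemma yields that almost surely over the sample path $(G_n)_{n \ge 1}$ all six bounds hold for every sufficiently large $n$; it then suffices to establish the three inequalities of the lemma deterministically under these bounds.

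The first is immediate from $\Num_G(C_3) \lesssim n^3 p^3 + \log^2 n \le \Phi$. For the second, $\Num_G(C_4, P_2, E) \lesssim n^4 p^4 + n^3 p^2 + n^2 p + \log^3 n$: the terms $n^3 p^2 = (n^{3/2} p)^2$, $n^2 p = (np^{1/2})^2$, and $\log^3 n \le (\log^3 n)^2$ are directly squares of summands of $\Phi$, and $n^4 p^4 = (n^3 p^3)(np) \le \tfrac{1}{2}((n^3 p^3)^2 + (np)^2)$ by AM-GM, with $(np)^2 = n^2 p^2 \le n^2 p \le \Phi^2$ since $p \le 1$. For the third we must bound $n^9 p^8 + n^5 p^4 \log^8 n + n^2 p \log^8 n + \log^{12} n$ against $\Phi^4$: the terms $n^2 p \log^8 n = (n^{1/2} p^{1/4} \log^2 n)^4$ and $\log^{12} n = (\log^3 n)^4$ are pure fourth powers of summands of $\Phi$; for $n$ large enough that $\log^8 n \le n$ we have $n^5 p^4 \log^8 n \le n^6 p^4 = (n^{3/2} p)^4 \le \Phi^4$; and $n^9 p^8 = (n^3 p^3)^2 (n^{3/2} p)^2$ appears as a multinomial cross term in $\Phi^4$ with coefficient $\binom{4}{2,2} = 6$.

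The only delicate step is this last identification: $n^9 p^8$ is not a pure fourth power of any summand of $\Phi$ but rather the cross term $(n^3 p^3)^2 (n^{3/2} p)^2$ in the multinomial expansion of $\Phi^4$, so one must write out enough of that expansion to extract it with a positive constant coefficient. Everything else is routine AM-GM together with the observation $p \le 1$.
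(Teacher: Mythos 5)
Your proposal is correct and follows essentially the same route as the paper: invoke Lemma~\ref{lem:er-subgraph-ubs} on the six relevant counts, pass to an almost-sure statement via Borel--Cantelli, and verify the three inequalities by elementary polynomial comparisons. The only cosmetic difference is how the mixed terms are handled — you use AM--GM and a multinomial cross term (e.g. $n^9p^8 = (n^3p^3)^2(n^{3/2}p)^2$), while the paper bounds each geometric mean by the maximum of the corresponding pure powers (e.g. $n^9p^8 \le \max(n^{12}p^{12}, n^6p^4)$) — which is the same estimate in different clothing.
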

\begin{proof}
    Because $\sum_{n\in \bN} n^{-10} < \infty$, by the Borel-Cantelli lemma the inequalities in Lemma~\ref{lem:er-subgraph-ubs} each holds for all sufficiently large $n$ almost surely.
    Suppose this is the case.
    By Lemma~\ref{lem:er-subgraph-ubs}(\ref{itm:er-ub-c3}),
    \[
        \Num_G(C_3) \lesssim n^3p^3 + \log^2 n \le \Phi.
    \]
    By Lemma~\ref{lem:er-subgraph-ubs}(\ref{itm:er-ub-e},\ref{itm:er-ub-p2},\ref{itm:er-ub-c4}),
    \[
        \Num_G(C_4, P_2, E)
        \lesssim
        n^4p^4 + n^3p^2 + n^2p + \log^3 n
        \le
        \Phi^2.
    \]
    In the last inequality, we use that $n^4p^4 \le n^{9/2}p^4 \le \max(n^6p^6, n^3p^2)$.
    \[
        \Num_G(K_{1,8}) + \log^8 n \Num_G(K_{1,4}, E)
        \lesssim
        n^9p^8 + n^5p^4\log^8 n + n^2p \log^8 n
        \le
        \Phi^4.
    \]
    In the last inequality, we use that $n^5p^4 \log^8 n \lesssim n^6p^4$ and $n^9p^8 \le \max(n^{12}p^{12}, n^6p^4)$.
\end{proof}

\begin{lemma}
\label{lem:er-lbs-aux}
    Let $G = G_n \sim \cG(n,p)$
    Over the randomness of the sample path $G_1,G_2,\ldots$, the following inequalities hold almost surely for sufficiently large $n$.
    The $\lesssim$ hides an absolute constant factor.
    \begin{enumerate}[label=(\alph*), ref=\alph*]
        \item \label{itm:er-lbs-aux-dense}
        If $p\gtrsim n^{-3/4}$, then
        \begin{eqnarray*}
            n^3p^3
            &\lesssim&
            \Num_G(C_3), \\
            \Num_G(C_3^{2,e}, C_3^{2,v})
            &\ll&
            \Num_G(C_3)^2.
        \end{eqnarray*}
        \item \label{itm:er-lbs-aux-sparse}
        If $p\gg n^{-2}\log^3 n$, then
        \begin{eqnarray*}
            n^4p^4 + n^3p^2 + n^2 p
            &\lesssim&
            \Num_G(C_4, P_2, E), \\
            \Num_G(K_{1,4}, K_{2,4}, C_4^{2,e}, C_4^{2,v})
            &\ll&
            \Num_G(C_4, P_2, E)^2.
        \end{eqnarray*}
    \end{enumerate}
\end{lemma}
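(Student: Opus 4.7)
The plan is to combine the high probability lower bounds of Lemma~\ref{lem:er-subgraph-lbs} with the high probability upper bounds of Lemma~\ref{lem:er-subgraph-ubs}, and then upgrade ``with high probability'' to ``almost surely for all sufficiently large $n$'' via the Borel-Cantelli lemma. First I would check summability of the failure probabilities along the sample path. All bounds in Lemma~\ref{lem:er-subgraph-ubs} fail with probability $\le n^{-10}$, so Borel-Cantelli applies directly. For Lemma~\ref{lem:er-subgraph-lbs}(a), the hypothesis $p\gtrsim n^{-3/4}$ forces $\min(n^3p^3,n^2p,n)\gtrsim n^{3/4}$, so the failure probability is $\exp(-\Omega(n^{3/4}))$, summable. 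For Lemma~\ref{lem:er-subgraph-lbs}(b), the hypothesis $p\gg n^{-2}\log^3 n$ forces $\min(n^2p,n)\gg \log^3 n$, and $\sum_n \exp(-c\log^3 n)=\sum_n n^{-c\log^2 n}$ converges. So almost surely all of these bounds hold simultaneously for all large $n$; I condition on this event for the rest.

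For part~(\ref{itm:er-lbs-aux-dense}), the lower bound $n^3p^3\lesssim \Num_G(C_3)$ is immediate from Lemma~\ref{lem:er-subgraph-lbs}(a) combined with $\E\Num_G(C_3)\asymp n^3p^3$. For the regularity condition, I would apply Lemma~\ref{lem:er-subgraph-ubs}(\ref{itm:er-ub-c32e},\ref{itm:er-ub-c32v}) to get
\[
\Num_G(C_3^{2,e},C_3^{2,v})\lesssim n^4p^5+n^5p^6+\log^5 n,
\]
and divide by $\Num_G(C_3)^2\gtrsim n^6p^6$. The three resulting ratios are $(n^2p)^{-1}$, $n^{-1}$, and $\log^5 n/(n^6p^6)$, all of which vanish since $p\gtrsim n^{-3/4}$ gives $n^2p\gtrsim n^{5/4}$ and $n^6p^6\gtrsim n^{3/2}\gg \log^5 n$.

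For part~(\ref{itm:er-lbs-aux-sparse}), the lower bound on $\Num_G(C_4,P_2,E)$ follows from Lemma~\ref{lem:er-subgraph-lbs}(b) together with $\E\Num_G(C_4,P_2,E)\asymp n^4p^4+n^3p^2+n^2p$. For the regularity condition, I would use Lemma~\ref{lem:er-subgraph-ubs}(\ref{itm:er-ub-k14},\ref{itm:er-ub-k24},\ref{itm:er-ub-c42e},\ref{itm:er-ub-c42v}) to upper bound the numerator and then split by which of the three terms in $n^4p^4+n^3p^2+n^2p$ dominates: (i) $p\gtrsim n^{-1/2}$, denominator $\asymp n^8p^8$; (ii) $n^{-1}\lesssim p\lesssim n^{-1/2}$, denominator $\asymp n^6p^4$; (iii) $n^{-2}\log^3 n\ll p\lesssim n^{-1}$, denominator $\asymp n^4p^2=(n^2p)^2\gg \log^6 n$. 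In each regime I would check directly that each of $n^5p^4$, $n^6p^8$, $n^6p^7$, $n^7p^8$ and the pure $\log$ terms $\log^4 n,\log^6 n$ are $o$ of the denominator.

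The main obstacle, and the one delicate point, is the $\min(n^8p^8,\log^k n)$ contributions in the bounds for $\Num_G(K_{2,4})$, $\Num_G(C_4^{2,e})$, $\Num_G(C_4^{2,v})$. The clean way to handle them is to note that the two sides of the minimum cross at $np\asymp \log n$: in regime~(i) $np\gg \log n$ so the minimum is $\log^k n$, which is trivially $\ll n^8p^8$; and in the sparser regimes $p\le n^{-1}\log n$ so $np\lesssim \log n$ and the minimum equals $n^8p^8=(n^2p)^4/n^4$, for which the ratio against $(n^2p)^2$ (or $n^6p^4$) reduces to a power of $n$ or $p$ that vanishes under the given hypotheses. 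Assembling these case estimates yields $\Num_G(K_{1,4},K_{2,4},C_4^{2,e},C_4^{2,v})\ll \Num_G(C_4,P_2,E)^2$ in all regimes, completing part~(\ref{itm:er-lbs-aux-sparse}).
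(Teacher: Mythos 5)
Your proposal is correct and follows essentially the same route as the paper: Borel--Cantelli applied to the failure probabilities of Lemmas~\ref{lem:er-subgraph-ubs} and \ref{lem:er-subgraph-lbs} (with exactly the summability checks you give), then the lower bounds from Lemma~\ref{lem:er-subgraph-lbs} paired with the upper bounds from Lemma~\ref{lem:er-subgraph-ubs}; part~(\ref{itm:er-lbs-aux-dense}) is identical to the paper's argument. The only divergence is in how the regularity condition of part~(\ref{itm:er-lbs-aux-sparse}) is verified: you split into three sparsity regimes and compare each numerator term against the dominant term of $n^8p^8+n^6p^4+n^4p^2$, whereas the paper avoids casework with a single chain, using $n^6p^7\le\max(n^7p^8,n^5p^4)$ and the geometric-mean bound $\min(n^8p^8,\log^8 n)\le n^4p^4\log^4 n\lesssim n^5p^4$, and then $n^7p^8\ll n^8p^8$, $n^5p^4\ll n^6p^4$, $\log^6 n\ll n^4p^2$. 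Both work; one small bookkeeping slip in yours is that the stated dichotomy for the $\min$ terms (regime~(i) versus ``$p\le n^{-1}\log n$'') does not literally cover $n^{-1}\log n\lesssim p\lesssim n^{-1/2}$, but there the minimum is at most $\log^9 n\ll n^2\log^4 n\lesssim n^6p^4$, so the fix is one line.
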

\begin{proof}
    Because $\sum_{n\in \bN} n^{-10} < \infty$, by the Borel-Cantelli lemma the inequalities in Lemma~\ref{lem:er-subgraph-ubs} each holds for all sufficiently large $n$ almost surely.
    Throughout this proof, assume this is the case.

    \paragraph{Proof of (\ref{itm:er-lbs-aux-dense}).}
    Because $p\gtrsim n^{-3/4}$, we have that $\min(n^3p^3, n^2p, n) \gtrsim n^{3/4}$.
    For any $c>0$, we have $\sum_{n\in \bN} \exp(-cn^{3/4}) < \infty$.
    So, by the Borel-Cantelli lemma, the inequality in Lemma~\ref{lem:er-subgraph-lbs}(\ref{itm:er-lb-c3}) holds for all sufficiently large $n$ almost surely.
    Because $\E \Num_G(C_3) \asymp n^3p^3$, this implies that $\Num_G(C_3) \gtrsim n^3p^3$.
    Thus, Lemma~\ref{lem:er-subgraph-ubs}(\ref{itm:er-ub-c32e}) implies
    \[
        \Num_G(C_3^{2,e}, C_3^{2,v})
        \lesssim
        n^4p^5 + n^5p^6 + \log^5 n
        \ll
        n^6p^6
        \lesssim
        \Num_G(C_3)^{2}.
    \]

    \paragraph{Proof of (\ref{itm:er-lbs-aux-sparse}).}
    Because $p\gg n^{-2} \log^3 n$, we have that $\min(n^2p, n) \gg \log^3 n$.
    For any $c>0$, $\sum_{n\in \bN}\exp(-c\log^3 n) < \infty$.
    So, by the Borel-Cantelli lemma, the inequality in Lemma~\ref{lem:er-subgraph-lbs}(\ref{itm:er-lb-c4p2e}) holds for all sufficiently large $n$ almost surely.
    Because $\E \Num_G(C_4, P_2, E) \asymp n^4p^4 + n^3p^2 + n^2p$, this implies that $\Num_G(C_4, P_2, E) \gtrsim n^4p^4 + n^3p^2 + n^2p$.
    Lemma~\ref{lem:er-subgraph-ubs}(\ref{itm:er-ub-k14},\ref{itm:er-ub-k24},\ref{itm:er-ub-c42e},\ref{itm:er-ub-c42v}) implies that
    \begin{align*}
        \Num_G(K_{1,4}, K_{2,4}, C_4^{2,e}, C_4^{2,v})
        &\lesssim
        n^7p^8 + n^6p^7 + n^5p^4 + \log^6 n + \min(n^8p^8, \log^8 n) \\
        &\lesssim
        n^7p^8 + n^5p^4 + \log^6 n,
    \end{align*}
    where the last inequality follows from the bounds $n^6p^7 \le n^6p^6 \le \max(n^7p^8, n^5p^4)$ and
    \[
        \min(n^8p^8, \log^8 n)
        \le
        \sqrt{n^8p^8\cdot \log^8 n}
        =
        n^4p^4 \log^4 n
        \lesssim
        n^5p^4.
    \]
    Because $n^7p^8 \ll n^8p^8$, $n^5p^4 \ll n^6p^4$, and $\log^6 n \ll n^4p^2$ (recall that $p\gg n^{-2}\log^3 n$), we have that
    \[
        \Num_G(K_{1,4}, K_{2,4}, C_4^{2,e}, C_4^{2,v})
        \ll
        n^8p^8 + n^6p^4 + n^4p^4
        \lesssim
        \Num_G(C_4, P_2, E)^2.
    \]
\end{proof}

We now have the tools to prove Theorem~\ref{thm:er-mask}.

\begin{proof}[Proof of Theorem~\ref{thm:er-mask}]
    We will prove this theorem using Theorems~\ref{thm:general-ub}, \ref{thm:deg3-lb}, and \ref{thm:deg4-lb}, using the appropriate parts of Lemmas~\ref{lem:er-ubs-aux} and \ref{lem:er-lbs-aux} to show the hypotheses of these theorems hold almost surely.

    \paragraph{Proof of (\ref{itm:er-mask-ub}).}
    Define $\Phi$ as in Lemma~\ref{lem:er-ubs-aux}.
    By hypothesis (\ref{eq:er-mask-ub-hypothesis}), $d \gg \Phi$.
    By Lemma~\ref{lem:er-ubs-aux}, Theorem~\ref{thm:general-ub} holds, and so $\TV(W(G,d), M(G)) \to 0$.

    \paragraph{Proof of (\ref{itm:er-mask-lb}).}
    We will separately consider the cases $p\gtrsim n^{-3/4}$, where we will use Theorem~\ref{thm:deg3-lb}, and $n^{-2}\log^3 n \ll p\lesssim n^{-3/4}$, where we will use Theorem~\ref{thm:deg4-lb}.

    First, suppose $p\gtrsim n^{-3/4}$. Then, $n^3p^3 + n^{3/2} p + np^{1/2} \asymp n^3p^3$, so hypothesis (\ref{eq:er-mask-lb-hypothesis}) gives $d \ll n^3p^3$.
    By Lemma~\ref{lem:er-lbs-aux}(\ref{itm:er-lbs-aux-dense}), Theorem~\ref{thm:deg3-lb} holds, and so $\TV(W(G,d), M(G)) \to 1$.

    Otherwise, suppose $n^{-2}\log^3 n \ll p\lesssim n^{-3/4}$.
    Because $p\lesssim n^{-3/4}$, we can verify that
    \[
        n^3p^3 + n^{3/2}p + np^{1/2}
        \asymp
        n^{3/2}p + np^{1/2}
        \asymp
        n^2p^2 + n^{3/2}p + np^{1/2},
    \]
    so hypothesis (\ref{eq:er-mask-lb-hypothesis}) gives $d \ll n^2p^2 + n^{3/2} p + np^{1/2}$, or equivalently $d^2 \ll n^4p^4 + n^3p^2 + n^2p$.
    By Lemma~\ref{lem:er-lbs-aux}(\ref{itm:er-lbs-aux-sparse}), Theorem~\ref{thm:deg4-lb} holds, and so $\TV(W(G,d), M(G)) \to 1$.
\end{proof}

\subsection{Bipartite Erd\H{o}s-R\'enyi Masks}

In this subsection, we will show Theorems~\ref{thm:bip-er-mask}, which gives conditions for TV convergence and divergence for bipartite Erd\H{o}s-R\'enyi masks.
In this setting, $G\sim \cG(n,m,p)$ is a sample from a bipartite Erd\H{o}s-R\'enyi graph, where $n\ge m$.
Recall that $\cG(n,m,p)$ is the graph on $[n+m]$ where every edge between $\{1,\ldots,n\}$ and $\{n+1,\ldots,n+m\}$ occurs with independent probability $p$.
Throughout this subsection, we adopt the orientation $V_L = \{1,\ldots,n\}$ and $V_R = \{n+1,\ldots,n+m\}$ and define all oriented subgraph counts $\oNum_G(H)$ with respect to this labeling.
We choose this labeling because Theorem~\ref{thm:bipartite-ub} is stronger with this labeling than with the reverse.

We begin with the following two lemmas, which show high probability upper and lower bounds on various subgraph statistics and oriented subgraph statistics.
We will use these bounds to show that the appropriate theorems in Sections~\ref{sec:tv-upper-bounds} and \ref{sec:tv-lower-bounds} hold.

\begin{lemma}
    \label{lem:bip-er-subgraph-ubs}
    Let $G\sim \cG(n,m,p)$.
    For all sufficiently large $n$, the following inequalities each hold with probability at least $1-n^{-10}$.
    The $\lesssim$ hides only a constant factor independent of $n,m,p$.
    \begin{enumerate}[label=(\alph*), ref=\alph*]
        \item \label{itm:bip-er-ub-e} $\Num_G(E) \lesssim nmp + \log n$.
        \item \label{itm:bip-er-ub-p2} $\Num_G(P_2) \lesssim n^2mp^2 + \log^2 n$.
        \item \label{itm:bip-er-ub-c4} $\Num_G(C_4) \lesssim n^2m^2p^4 + \log^3 n + \min(np^2\log^2 n, \log^4 n)$.
        \item \label{itm:bip-er-ub-k14} $\Num_G(K_{1,4}) \lesssim n^4mp^4 + \log^4 n$.
        \item \label{itm:bip-er-ub-k24} $\Num_G(K_{2,4}) \lesssim n^4m^2p^8 + \log^6 n + \min(n^2m^2p^4\log^4 n, \log^8 n)$.
        \item \label{itm:bip-er-ub-c42e} $\Num_G(C_4^{2,e}) \lesssim n^3m^3p^7 + \log^6 n + \min(n^2m^2p^4\log^{9/2} n, \log^9 n)$
        \item \label{itm:bip-er-ub-c42v} $\Num_G(C_4^{2,v}) \lesssim n^4m^3p^8 + \log^6 n + \min(n^2m^2p^4\log^4 n, \log^8 n)$.
        \item \label{itm:bip-er-ub-ok13} $\oNum_G(\oK_{1,3}) \lesssim nm^3p^3 + \log^3 n$.
        \item \label{itm:bip-er-ub-ok14} $\oNum_G(\oK_{1,4}) \lesssim nm^4p^4 + \log^4 n$.
        \item \label{itm:bip-er-ub-ok24} $\oNum_G(\oK_{2,4}) \lesssim n^2m^4p^8 + \log^6 n + \min(n^2m^2p^4\log^4 n, \log^8 n)$.
        \item \label{itm:bip-er-ub-op4} $\oNum_G(\oP_4) \lesssim n^2m^3p^4 + \log^4 n$.
    \end{enumerate}
\end{lemma}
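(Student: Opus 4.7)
The plan is to prove each inequality by a routine application of the Kim-Vu polynomial concentration inequality (Theorem~\ref{thm:kim-vu-ineq}), following the template of the proof of Lemma~\ref{lem:er-subgraph-ubs}. For each subgraph $H$ under consideration I would let $\xi_{i,j}$ for $i \in V_L, j \in V_R$ denote the edge indicators, which are i.i.d.\ $\Ber(p)$, and let $Y$ be the sum of the indicators over all labeled copies (respectively, oriented labeled copies) of $H$ in the complete bipartite graph on $V_L \times V_R$, so that $Y$ is a degree-$k$ polynomial in the $\xi_{i,j}$ with $k = |E(H)|$ and coincides with $\Num_G(H)$ (respectively, $\oNum_G(H)$) up to an absolute constant. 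For each $0 \le j \le k-1$, the quantity $\bE_j Y$ is of order $p^{k-j}$ times the maximum, over $j$-edge partial embeddings, of the number of ways to extend that partial embedding to a full copy of $H$ in $K_{n,m}$; each left-vertex not already pinned contributes a factor of at most $n$, and each right-vertex not already pinned contributes a factor of at most $m$. Setting $\tau = \Theta(\log n)$ large enough that the failure probability in Theorem~\ref{thm:kim-vu-ineq} is at most $n^{-10}$, and using $\tau^j \bE_j Y \le \max(\tau^k, (\bE_j Y)^{k/(k-j)})$ by AM-GM as in the proof of Lemma~\ref{lem:er-subgraph-ubs}, one gets a Kim-Vu bound of the form $Y \lesssim \E Y + \log^k n$.

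For a bipartite subgraph $H$ with $\ell$ left-vertices and $r$ right-vertices under a fixed orientation, this yields $\oNum_G(H) \lesssim n^\ell m^r p^{|E(H)|} + \log^{|E(H)|} n$. For the unoriented count, $\Num_G(H)$ counts copies under both allowed orientations, so the leading term is $\max(n^\ell m^r, n^r m^\ell) p^{|E(H)|}$; since $n \ge m$, the dominant orientation places more vertices on the left. This immediately yields the bounds in (a), (b), (d), (h), (i), (k), which have no $\min$ term.

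The bounds (c), (e), (f), (g), (j) have a $\min$ term because Kim-Vu alone is not sharp when $p$ is very small. To upgrade the Kim-Vu bound I would combine it with a combinatorial inequality: for (e), (f), (g), (j), Lemma~\ref{lem:subgraph-statistics}(\ref{itm:ss-k24-c42e-c42v}) gives $\Num_G(K_{2,4}, C_4^{2,e}, C_4^{2,v}) \le \Num_G(C_4)^2$ (and the same holds for $\oNum_G(\oK_{2,4})$), so squaring the bound from (c) provides an alternative estimate and taking the minimum produces the stated result. For (c) itself, the additional $\min(np^2 \log^2 n, \log^4 n)$ term comes from the deterministic bound $\Num_G(C_4) \le \Num_G(P_2) \cdot \maxdeg(G)$ combined with the $\Num_G(P_2)$ bound from (b) and a separate Kim-Vu application to $\maxdeg(G)$ on the $V_L$ side (controlling a single row sum of i.i.d.\ $\Ber(p)$ variables of length $m$).

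The main obstacle will be bookkeeping: there are eleven inequalities, and each requires tabulating $\bE_j Y$ for $0 \le j \le k-1$, selecting $\tau$, and simplifying the resulting $\sqrt{\tau \sE_0 \sE_1}$ expression. However, each individual calculation parallels one of the cases already treated in Lemma~\ref{lem:er-subgraph-ubs}, with the counts adjusted to reflect the bipartite structure, and no conceptually new ingredients are needed.
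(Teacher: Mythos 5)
Most of your outline coincides with the paper's proof: parts (a), (b), (d), (h), (i), (k) follow from a single application of Theorem~\ref{thm:kim-vu-ineq} with the tabulated $\bE_j Y$, and parts (e), (f), (g), (j) are handled, exactly as in the paper, by taking the minimum of a direct Kim--Vu bound and $\Num_G(C_4)^2$ via Lemma~\ref{lem:subgraph-statistics}(\ref{itm:ss-k24-c42e-c42v}). The genuine gap is your part (c), and it propagates. Your route to the refined term $\log^3 n + \min(np^2\log^2 n,\log^4 n)$ --- the deterministic inequality $\Num_G(C_4)\le \Num_G(P_2)\cdot\maxdeg(G)$ combined with part (b) and a Chernoff bound on the maximum left-degree --- yields a bound of order $(n^2mp^2+\log^2 n)(mp+\log n)=n^2m^2p^3+n^2mp^2\log n+mp\log^2 n+\log^3 n$: it loses a factor of $p$ in the main term and acquires the extraneous term $n^2mp^2\log n$. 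In the regime where the refinement actually matters, i.e.\ when $nmp^2\ll\log^2 n$ so that the plain Kim--Vu bound degenerates to $\log^4 n$ (take, e.g., $m=2$ and $p=n^{-3/5}$, where the claimed bound is $\asymp\log^3 n$), your product bound is $\asymp n^{4/5}\log n$, so the minimum of your two bounds is $\asymp\log^4 n$, a full $\log n$ factor larger than the statement allows; the same failure occurs for the other orientation of the decomposition (2-paths centered in $V_L$ times the maximum right-degree).

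The paper closes this differently: it applies Theorem~\ref{thm:kim-vu-ineq} to the 4-cycle count a \emph{second} time with truncation parameter $\ell=3$ rather than $\ell=4$ (legitimate since $M_3(Y)\le 1$: three edges of a 4-cycle determine the fourth), so that $\sE_0$ and $\tau\sE_1$ are bounded by $\max(\E Y,\tau^2\bE_2 Y,\tau^3)\lesssim n^2m^2p^4+np^2\log^2 n+\log^3 n$, and then takes the minimum of the two Kim--Vu bounds; this is precisely the source of $\min(np^2\log^2 n,\log^4 n)$. Without this device your part (c) only yields $n^2m^2p^4+\log^4 n$, and because (e), (f), (g), (j) are obtained by squaring (c), you would then get error terms of order $\log^8 n$ (resp.\ $\log^9 n$) in place of the stated $\log^6 n+\min(\cdots)$, so those parts also fall short of the lemma as written. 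The remainder of your plan needs no changes.
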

\begin{proof}
    These bounds are routine consequences of Theorem~\ref{thm:kim-vu-ineq}, proved in the same manner as Lemma~\ref{lem:er-subgraph-ubs}(\ref{itm:er-ub-e},\ref{itm:er-ub-p2},\ref{itm:er-ub-k14},\ref{itm:er-ub-k18}).
    We will only sketch the proofs.
    In the bipartite setting, the underlying random variables are the indicators $\xi_{i,j}$ for each edge $(i,j)$ with $i\in V_L$ and $j\in V_R$, which are i.i.d. samples from $\Ber(p)$.
    Throughout this proof, $\ell, c_k, d_k, \sE_0, \sE_1$ are defined as in Theorem~\ref{thm:kim-vu-ineq}.

    In the following proofs, we always set $Y$ to the sum of the indicators of the apporpriate subgraph.
    This is a degree-$k$ polynomial in the $\xi_{i,j}$, where $k$ is the number of edges in the subgraph.
    Except where indicated, we set $\tau = \max(8k\log n, 80\log n + 8\log d_k)$.
    When bounding the terms $\tau^j \bE_j Y$, we recall that $m\le n$.

    \paragraph{Proof of (\ref{itm:bip-er-ub-e}).}
    Analogous to the above, with $M_1(Y)\le 1$ and $\bE_0 Y \asymp nmp$.

    \paragraph{Proof of (\ref{itm:bip-er-ub-p2}).}
    Analogous to the above, with $M_2(Y)\le 1$, $\bE_0 Y \asymp n^2mp^2$, and $\bE_1 Y \asymp np$.

    \paragraph{Proof of (\ref{itm:bip-er-ub-c4}).}
    Analogously to the above, we can verify that $M_3(Y)\le 1$, $M_4(Y)\le 1$,
    We proceed analogously to part (\ref{itm:bip-er-ub-k14}), obtaining that $M_3(Y)\le 1$, $M_4(Y)\le 1$, ${\E}_0 Y \asymp n^2m^2p^4$, ${\E}_1 Y \asymp nmp^3$, ${\E}_2 Y \asymp np^2$, and ${\E}_3 Y \asymp p$.
    Applying Theorem~\ref{thm:kim-vu-ineq} with $\ell = 4$ yields that $\Num_G(C_4) \lesssim n^2m^2p^4 + \log^4 n$.
    If we apply Theorem~\ref{thm:kim-vu-ineq} with $\ell = 4$, we get that
    \[
        \sE_0, \tau \sE_1
        \lesssim
        \max(\E Y, \tau^2 \bE_2 Y, \tau^3)
        \lesssim
        n^2m^2p^4 + n^2p\log^2 n + \log^3 n.
    \]
    The extra term $\tau^2 {\E}_2 Y$ is necessary, because it does not necessarily hold that $\tau^2 \bE_2 Y \lesssim \max(\E Y, \tau^3)$.
    So, $\Num_G(C_4) \lesssim n^2m^2p^4 + np^2\log^2 n + \log^3 n$.
    Thus,
    \begin{align*}
        \Num_G(C_4)
        &\lesssim
        \min\lt(n^2m^2p^4 + \log^4 n, n^2m^2p^4 + np^2\log^2 n + \log^3 n\rt) \\
        &\lesssim
        n^2m^2p^4 + \log^3 n + \min(np^2\log^2 n, \log^4 n).
    \end{align*}

    \paragraph{Proof of (\ref{itm:bip-er-ub-k14}).}
    Analogous to the above, with $M_4(Y) \le 1$, $\bE_0 Y \asymp n^4mp^4$, $\bE_1 Y \asymp n^3p^3$, $\bE_2 Y \asymp n^2p^2$, and $\bE_3 Y \asymp np$.

    \paragraph{Proof of (\ref{itm:bip-er-ub-k24}).}
    We will first show that $\Num_G(K_{2,4}) \lesssim n^4m^2p^8 + \log^8 n$.
    The proof is analogous to the above, with $M_8(Y)\le 1$, ${\E}_0 Y \asymp n^4m^2p^8$, ${\E}_1 Y \asymp n^3mp^7$, ${\E}_2 Y \asymp n^3p^6$, ${\E}_3 Y \asymp n^2p^5$, ${\E}_4 Y \asymp n^2p^4$, ${\E}_5 Y \asymp np^3$, ${\E}_6 Y \asymp np^2$, and ${\E}_7 Y \asymp p$.
    By Lemma~\ref{lem:subgraph-statistics}(\ref{itm:ss-k24-c42e-c42v}) and part (\ref{itm:bip-er-ub-c4}), we also have
    \[
        \Num_G(K_{2,4})
        \le
        \Num_G(C_4)^2
        \lesssim
        n^4m^4p^8 + \log^6 n + \min(n^2p^4\log^4n, \log^8 n).
    \]
    So,
    \begin{align*}
        \Num_G(K_{2,4})
        &\lesssim
        \min\lt(
            n^4m^2p^8 + \log^8 n,
            n^4m^4p^8 + \log^6 n + \min(n^2p^4\log^4n, \log^8 n)
        \rt) \\
        &\lesssim
        n^4m^2p^8 + \log^6 n +
        \min(n^4m^4p^8, \log^8 n) +
        \min(n^2p^4\log^4n, \log^8 n).
    \end{align*}
    Finally, note that $\min(n^2m^2p^4\log^4 n, \log^8 n)$ is an upper bound for each of the last two terms, because $n^2m^2p^4\log^4 n$ is the geometric mean of $n^4m^4p^8$ and $\log^8 n$.

    \paragraph{Proof of (\ref{itm:bip-er-ub-c42e}).}
    We will first show that $\Num_G(C_4^{2,e}) \lesssim n^3m^3p^7 + \log^9 n$.
    The proof is analogous to the above, with the following modification.
    We treat $Y$, a degree-7 polynomial in the $\xi_{i,j}$, as a degree-9 polynomial, and set $\tau = \max (72\log n, 80\log n + 8\log d_9)$.
    The remaining argument is the same, with $M_9(Y)\le 1$, $\bE_0 Y \asymp n^3m^3p^7$, $\bE_1 Y \asymp n^2m^2p^6$, $\bE_2 Y \asymp n^2mp^5$, $\bE_3 Y \asymp n^2p^4$, $\bE_4 Y \asymp nmp^3$, $\bE_5 Y \asymp np^2$, $\bE_6 Y \asymp p$, $\bE_7 Y \asymp 1$, and $\bE_8 Y = 0$.
    We also have $\Num_G(C_4^{2,e}) \lesssim \Num_G(C_4)^2$ by Lemma~\ref{lem:subgraph-statistics}(\ref{itm:ss-k24-c42e-c42v}).
    The result follows in the same way as in part (\ref{itm:bip-er-ub-k24}).

    \paragraph{Proof of (\ref{itm:bip-er-ub-c42v}).}
    We will first show that $\Num_G(C_4^{2,v}) \lesssim n^4m^3p^8 + \log^8 n$.
    The proof is analogous to the above, with $M_8(Y)\le 1$ and $\bE_0 Y \asymp n^4m^3p^8$, $\bE_1 Y \asymp n^3m^2p^7$, $\bE_2 Y \asymp n^3mp^6$, $\bE_3 Y \asymp n^2mp^5$, $\bE_4 Y \asymp n^2mp^4$, $\bE_5 Y \asymp nmp^3$, $\bE_6 Y \asymp np^2$, and $\bE_7 Y \asymp p$.
    We also have $\Num_G(C_4^{2,e}) \lesssim \Num_G(C_4)^2$ by Lemma~\ref{lem:subgraph-statistics}(\ref{itm:ss-k24-c42e-c42v}).
    The result follows in the same way as in part (\ref{itm:bip-er-ub-k24}).

    \paragraph{Proof of (\ref{itm:bip-er-ub-ok13}).}
    Analogous to the above, with $M_3(Y)\le 1$, $\E Y = \bE_0 Y \asymp nm^3p^3$, $\bE_1 Y \asymp m^2p^2$, and $\bE_2 Y \asymp mp$.

    \paragraph{Proof of (\ref{itm:bip-er-ub-ok14}).}
    Analogous to the above, with $M_4(Y)\le 1$, $\bE_0 Y \asymp nm^4p^4$, $\bE_1 Y \asymp m^3p^3$, $\bE_2 Y \asymp m^2p^2$, and $\bE_3 Y \asymp mp$.

    \paragraph{Proof of (\ref{itm:bip-er-ub-ok24}).}
    We will first show that $\oNum_G(\oK_{2,4}) \lesssim n^2m^4p^8 + \log^8 n$.
    The proof is analogous to the above, with $M_8(Y) \le 1$, $\bE_0Y \asymp n^2m^4p^8$, $\bE_1Y \asymp nm^3p^7$, $\bE_2Y \asymp nm^2p^6$, $\bE_3Y \asymp nmp^5$, $\bE_4Y \asymp \max(n,m^2) p^4$, $\bE_5Y \asymp mp^3$, $\bE_6Y \asymp mp^2$, and $\bE_7Y \asymp p$.
    We also have $\Num_G(C_4^{2,e}) \lesssim \Num_G(C_4)^2$ by Lemma~\ref{lem:subgraph-statistics}(\ref{itm:ss-k24-c42e-c42v}).
    The result follows in the same way as in part (\ref{itm:bip-er-ub-k24}).

    \paragraph{Proof of (\ref{itm:bip-er-ub-op4}).}
    Analogous to the above, with $M_4(Y)\le 1$, $\bE_0 Y \asymp n^2m^3p^4$, $\bE_1 Y \asymp nm^2p^3$, $\bE_2 Y \asymp nmp^2$, and $\bE_3 Y \asymp mp$.
\end{proof}

\begin{lemma}
    \label{lem:bip-er-subgraph-lbs}
    Let $G\sim \cG(n,m,p)$, with $p \gg (nm)^{-1} \log^3 n$.
    For all sufficiently large $n$, the following inequalities hold with probability at least $1-n^{-10}$.
    \begin{enumerate}[label=(\alph*), ref=\alph*]
        \item \label{itm:bip-er-lb-e} $\Num_G(E) \ge \f12 \E \Num_G(E)$.
        \item \label{itm:bip-er-lb-c4p2e} $\Num_G(C_4, P_2, E) \ge \f12 \E \Num_G(C_4, P_2, E)$.
    \end{enumerate}
\end{lemma}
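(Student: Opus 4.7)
The plan is to apply Janson's inequality (Theorem~\ref{thm:janson-ineq}) to each of the two sums, following the template of Lemma~\ref{lem:er-subgraph-lbs}. Setting $t = \f12 \E Y$ in Theorem~\ref{thm:janson-ineq} yields
\[
    \P \lt( Y \le \f12 \E Y \rt) \le \exp \lt( -\f{(\E Y)^2}{8 \bar\Delta} \rt),
\]
so in each case the goal reduces to showing $(\E Y)^2 / \bar\Delta \gg \log n$, which makes the right-hand side at most $n^{-10}$ for all sufficiently large $n$.

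For part (\ref{itm:bip-er-lb-e}), I would take $Y = \Num_G(E)$, a sum of $nm$ i.i.d.\ Bernoulli$(p)$ indicators, one per pair in $V_L \times V_R$. Each ``copy'' $A$ in the formulation of Theorem~\ref{thm:janson-ineq} is a singleton, so $A \cap B \neq \emptyset$ only when $A = B$; hence $\bar\Delta = \sum_A \E[I_A] = \E Y = nmp$. The ratio $(\E Y)^2/\bar\Delta = nmp$ is $\gg \log^3 n$ under the hypothesis $p \gg (nm)^{-1} \log^3 n$, so the Janson bound is $\exp(-\Omega(\log^3 n)) \ll n^{-10}$.

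For part (\ref{itm:bip-er-lb-c4p2e}), I would take $Y = \Num_G(C_4) + \Num_G(P_2) + \Num_G(E)$. Exploiting the bipartite structure (each $C_4$ has two vertices in each part, each $P_2$ has its middle vertex in one part and its endpoints in the other) and $n \ge m$, the expectation computes to $\E Y \asymp n^2 m^2 p^4 + n^2 m p^2 + n m p$. The main work is the bound on $\bar\Delta$: enumerate pairs of copies sharing at least one edge, classify each by the isomorphism type of the union, and sum the contributions. Same-copy pairs contribute $\E Y$; $E$--$P_2$ pairs give a $P_2$ union; $E$--$C_4$ and ``$P_2 \subset C_4$'' pairs give $C_4$ unions; $P_2$--$P_2$ pairs sharing one edge give $K_{1,3}$ and $P_3$ unions of expected bipartite count $\asymp n^3 m p^3$ and $\asymp n^2 m^2 p^3$ respectively; $P_2$--$C_4$ pairs sharing exactly one edge give ``$C_4$ with a pendant edge'' unions of scale $\asymp (n^3 m^2 + n^2 m^3) p^5$; and $C_4$--$C_4$ pairs sharing edges produce $C_4^{2,e}$, $K_{2,4}$, $C_4^{2,v}$ and similar unions already bounded in Lemma~\ref{lem:bip-er-subgraph-ubs}(\ref{itm:bip-er-ub-k24},\ref{itm:bip-er-ub-c42e},\ref{itm:bip-er-ub-c42v}).

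After collecting all the terms, I expect $\bar\Delta$ to simplify, by eliminating monomials dominated via AM-GM (as in the Erd\H{o}s-R\'enyi argument where many terms collapsed to the extremals $n^2 p + n^5 p^4 + n^7 p^8$), to an expression of the form $\bar\Delta \asymp nmp + (\text{intermediate terms}) + n^4 m^4 p^8$. The final step is to verify $(\E Y)^2 / \bar\Delta \gg \log^3 n$, which should reduce to $\min(nmp, \ldots) \gg \log^3 n$ for an appropriate ``$\ldots$'', with the hypothesis $p \gg (nm)^{-1}\log^3 n$ handling the $nmp$ factor. The main obstacle is this bookkeeping step: the asymmetry $n \ge m$ splits monomials more finely than in the symmetric Erd\H{o}s-R\'enyi case, and I expect to need to check case-by-case over regimes of $m$ (from $m = O(1)$ to $m \asymp n$) to confirm that no dominant term is missed and that the final ratio survives in every regime.
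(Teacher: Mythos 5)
Your route is genuinely different from the paper's. For part (\ref{itm:bip-er-lb-e}) the paper uses a plain Chernoff bound, which is numerically the same as your singleton application of Janson, so that part is fine. For part (\ref{itm:bip-er-lb-c4p2e}), however, the paper does not use Janson at all: it applies the Kim--Vu inequality (Theorem~\ref{thm:kim-vu-ineq}) separately to $\Num_G(C_4)$, $\Num_G(P_2)$ and $\Num_G(E)$, takes a union bound, and checks that the three deviation terms are $o(n^2m^2p^4+n^2mp^2+nmp)$ under $p\gg (nm)^{-1}\log^3 n$ --- this is the ``both tails via Kim--Vu'' strategy announced at the start of Appendix~\ref{appsec:er-masks-proofs}. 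A Janson-based proof of (\ref{itm:bip-er-lb-c4p2e}) can be made to work, but not by transplanting the Erd\H{o}s--R\'enyi computation the way you sketch it, and the problem sits exactly in the step you leave open.

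Concretely, $\bar\Delta$ in Theorem~\ref{thm:janson-ineq} only sums over pairs of copies sharing at least one \emph{edge}. Your list of $C_4$--$C_4$ unions includes $K_{2,4}$ and $C_4^{2,v}$, and your guessed extremal term $n^4m^4p^8$; all of these come from edge-disjoint pairs of 4-cycles and do not belong in $\bar\Delta$. In the proof of Lemma~\ref{lem:er-subgraph-lbs}(\ref{itm:er-lb-c4p2e}) the paper could afford to throw vertex-sharing pairs into its upper bound on $\bar\Delta$, but here this is fatal: the $C_4^{2,v}$ contribution is $\asymp n^4m^3p^8$, and for $m=O(1)$, $p=\Theta(1)$ (allowed by the hypothesis) it gives $(\E Y)^2/\bar\Delta = O(\max(m,p^{-4}))=O(1)$, so the Janson exponent is useless; a fortiori, including $n^4m^4p^8$ kills the bound whenever 4-cycles dominate the mean. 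If instead you keep only the genuine edge-sharing unions --- same copy; $P_2$ inside or pendant to a $C_4$ ($\lesssim n^2m^2p^4$ and $n^3m^2p^5$); two $P_2$'s sharing an edge ($\lesssim n^3mp^3$, from $K_{1,3}$ and $P_3$); two $C_4$'s sharing two adjacent edges ($\lesssim n^3m^2p^6$; note two distinct 4-cycles in a bipartite graph cannot share two opposite edges); and $C_4^{2,e}$ ($\lesssim n^3m^3p^7$) --- then every term $T$ of $\bar\Delta$ satisfies $(\E Y)^2/T \gtrsim \min(nmp,n)$: for instance $n^4m^4p^8/(n^3m^3p^7)=nmp$, and for $T=n^2mp^2$ one has $(\E Y)^2/T \ge \max(m, n^2mp^2)\ge nmp$. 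Hence the ratio is $\gg\log^3 n$ uniformly in $m\le n$, and no case analysis over regimes of $m$ is needed. So the missing piece is not finer bookkeeping over $m$, as you anticipate, but restricting $\bar\Delta$ strictly to edge-sharing pairs and correcting your extremal term; with that correction your Janson route gives a valid alternative to the paper's Kim--Vu argument.
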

\begin{proof}
    For each $(i,j)$ with $i\in V_L, j\in V_R$, let $\xi_{i,j}$ be the indicator for the edge $(i,j)$ in $G$.
    The $\xi_{i,j}$ are i.i.d. samples from $\Ber(p)$.

    \paragraph{Proof of (\ref{itm:bip-er-lb-e}).}
    This is by a Chernoff bound.
    Since $\Num_G(E) = \sum_{i\in V_L, j\in V_R} \xi_{i,j}$, we have
    \[
        \P\lt(\Num_G(E) < \f12 \Num_G(E)\rt)
        \le
        \exp\lt(-\f18 \E \Num_G(E)\rt)
        =
        \exp\lt(-\f18 nmp\rt)
        \ll
        \exp\lt(-\log^3 n\rt)
        \ll
        n^{-10}.
    \]

    \paragraph{Proof of (\ref{itm:bip-er-lb-c4p2e}).}
    Let $Y$ be the sum of indicators of the copies of $C_4$ in $G$.
    This is a degree-4 polynomial in the $\xi_{i,j}$.
    By the same computations as in the proof of Lemma~\ref{lem:bip-er-subgraph-ubs}(\ref{itm:bip-er-ub-c4}), we have $M_3(Y)\le 1$, $\bE_0 Y \asymp n^2m^2p^4$, $\bE_1 Y \asymp nmp^3$, and $\bE_2 Y \asymp np^2$.
    Set $\tau = \max(32\log n, 80\log n + 8 \log (3d_4))$.
    Since
    \[
        \tau {\E}_1 Y
        \le
        \max(({\E}_1 Y)^{3/2}, \tau^3)
        \lesssim
        \max(\E Y, \tau^3),
    \]
    we have
    \begin{align*}
        \sE_0
        &\lesssim
        \max(\E Y, \tau^2 {\E}_2 Y, \tau^3)
        \lesssim
        \max(n^2m^2p^4, np^2\log^2 n, \log^3 n), \\
        \sE_1
        &\lesssim
        \max(\tau {\E}_1 Y, \tau^2 {\E}_2 Y, \tau^3)
        \lesssim
        \max(nmp^3, np^2\log^2 n, \log^3 n).
    \end{align*}
    So, by Theorem~\ref{thm:kim-vu-ineq}, there exists a constant $C$ such that with probability $1-\f13 n^{-10}$,
    \[
    \Num_G(C_4)
    \le
    \E \Num_G(C_4) -
    C \max(n^2m^2p^4, np^2\log^2 n, \log^3 n)^{1/2}
    \max(nmp^3, np^2\log^2 n, \log^3 n)^{1/2}.
    \]
    By similar analysis for $P_2$ and $E$, there exists a constant $C$ such that with probability $1-\f13 n^{-10}$,
    \begin{align*}
        \Num_G(P_2)
        &\le
        \E \Num_G(P_2)
        -
        C \max(n^2mp^2, \log^2 n)^{1/2}
        \max(np\log n, \log^2 n)^{1/2}, \\
        \Num_G(E)
        &\le
        \E \Num_G(E)
        -
        C \max(nmp, \log n)^{1/2}
        \log^{1/2} n.
    \end{align*}
    By a union bound, there is a constant $C$ such that with probability $1-n^{-10}$,
    \begin{align*}
        &\Num_G(C_4, P_2, E)
        \ge
        \E \Num_G(C_4, P_2, E) \\
        &\qquad -
        C\bigg(
            \max(n^2m^2p^4, np^2\log^2 n, \log^3 n)^{1/2}
            \max(nmp^3, np^2\log^2 n, \log^3 n)^{1/2} \\
            &\qquad \qquad +
            \max(n^2mp^2, \log^2 n)^{1/2}
            \max(np\log n, \log^2 n)^{1/2}
            + \max(nmp, \log n)^{1/2}
            \log^{1/2} n
        \bigg).
    \end{align*}
    We will show that the error term is $o(n^2m^2p^4 + n^2mp^2 + nmp)$.
    By the hypothesis $p\gg (nm)^{-1}\log^3 n$, we have that $nmp^3 \ll n^2m^2p^4$, $np^2\log^2 n \ll n^2mp^2$, and $\log^3 n \ll nmp$.
    So,
    \begin{align*}
        \max(n^2m^2p^4, np^2 \log^2 n, \log^3 n)
        &\lesssim
        n^2m^2p^4 + n^2mp^2 + nmp, \\
        \max(nmp^3, np^2 \log^2 n, \log^3 n)
        &\ll
        n^2m^2p^4 + n^2mp^2 + nmp.
    \end{align*}
    It follows that
    \[
        \max(n^2m^2p^4, np^2 \log^2 n, \log^3 n)^{1/2}
        \max(nmp^3, np^2 \log^2 n, \log^3 n)^{1/2}
        \ll
        n^2m^2p^4 + n^2mp^2 + nmp.
    \]
    Similarly, we have that $np\log n \ll n^2mp^2$ and $\log^2 n \ll nmp$, so
    \begin{align*}
        \max(n^2mp^2, \log^2 n)
        &\lesssim
        n^2m^2p^4 + n^2mp^2 + nmp, \\
        \max(np\log n, \log^2 n)
        &\ll
        n^2m^2p^4 + n^2mp^2 + nmp.
    \end{align*}
    Thus,
    \[
        \max(n^2mp^2, \log^2 n)^{1/2}
        \max(np\log n, \log^2 n)^{1/2}
        \ll
        n^2m^2p^4 + n^2mp^2 + nmp.
    \]
    Finally, as $\log n \ll nmp$, we have
    $\max(nmp, \log n) \lesssim n^2m^2p^4 + n^2mp^2 + nmp$, and so
    \[
        \max(nmp, \log n)^{1/2} \log^{1/2} n
        \ll
        n^2m^2p^4 + n^2mp^2 + nmp.
    \]
    Because $\E \Num_G(C_4, P_2, E) \asymp n^2m^2p^4 + n^2mp^2 + nmp$, the lemma follows.
\end{proof}

The next two lemmas give conditions under which, for $G\sim \cG(n,m,p)$, the hypotheses of the appropriate theorems in Sections~\ref{sec:tv-upper-bounds} and \ref{sec:tv-lower-bounds} hold almost surely.

\begin{lemma}
    \label{lem:bip-er-mask-ubs-aux}
    Let $G = G_n \sim \cG(n,m,p)$, and let
    \[
        \Phi =
        nmp^2 + nm^{1/2}p + (nmp)^{1/2} +
        (nmp)^{1/3}\log n +
        (nmp)^{1/4} \log^{5/4} n +
        \log^{3/2} n.
    \]
    Over the randomness of the sample path $G_1, G_2, \ldots$, the following inequalities hold almost surely for sufficiently large $n$.
    The $\lesssim$ hides an absolute constant factor.
    \begin{eqnarray*}
        \Phi^2
        &\gtrsim&
        \Num_G(C_4, P_2, E), \\
        \Phi^3
        &\gtrsim&
        \oNum_G(\oK_{1,4}) +
        \Num_G(E)\log^3 n, \\
        \Phi^8
        &\gtrsim&
        \oNum_G(\oK_{1,3})^2 \oNum_G(\oK_{2,4}) +
        \Num_G(E)^2 \oNum_G(\oK_{1,4},\oK_{2,4}) \log^4 n, \\
        \Phi^9 &\gtrsim& \Num_G(E)^2 \oNum_G(\oP_4)\log^4 n.
    \end{eqnarray*}
\end{lemma}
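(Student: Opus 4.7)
The plan is to reduce this almost-sure statement to a deterministic verification and then dispatch the resulting inequalities by a case analysis on the sparsity $p$. First I would invoke Lemma~\ref{lem:bip-er-subgraph-ubs}: each of the eleven subgraph-count upper bounds there holds with probability at least $1 - n^{-10}$, and since $\sum_n n^{-10} < \infty$, the Borel--Cantelli lemma implies that all of these bounds hold simultaneously for every sufficiently large $n$ almost surely along the sample path. Substituting into the right-hand sides of the four claimed inequalities turns each right-hand side into a finite sum of monomials of the form $n^a m^b p^c \log^e n$, reducing the problem to showing $\Phi^k \gtrsim T$ for each such monomial $T$, where $k \in \{2, 3, 8, 9\}$ is the corresponding exponent.

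For the pure power-law terms I would split on $p$ using the structure of $\Phi$. The three geometric summands $A := nmp^2$, $B := nm^{1/2}p$, $C := (nmp)^{1/2}$ of $\Phi$ balance at $p \asymp m^{-1/2}$ and $p \asymp n^{-1}$, so $\Phi$ is at least $A$ when $p \gtrsim m^{-1/2}$, at least $B$ when $m^{-1/2} \gtrsim p \gtrsim n^{-1}$, and at least $C$ when $p \lesssim n^{-1}$. For each target monomial $T = n^a m^b p^c$ and each of the three regimes, I would exhibit a factorization $A^i B^j C^\ell$ with $i + j + \ell = k$ that dominates $T$, making frequent use of $m \le n$ to absorb excess powers. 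For instance, $\oNum_G(\oK_{1,3})^2\, \oNum_G(\oK_{2,4}) \lesssim n^4 m^{10} p^{14}$ is dominated respectively by $A^8 = n^8 m^8 p^{16}$ (using $p^2 \ge m^{-1}$ and $m \le n$), by $B^8 = n^8 m^4 p^8$ (using $p^6 \le m^{-3}$ and $m \le n$), and by $C^8 = n^4 m^4 p^4$ (using $p^{10} \le n^{-10}$ and $m \le n$) in the three regimes, each via an elementary inequality.

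For monomials carrying an explicit $\log^e n$ factor, and for the $\min(P(n,p), \log^f n)$ error terms appearing in the $C_4$, $K_{2,4}$, $C_4^{2,e}$, $C_4^{2,v}$, and $\oK_{2,4}$ bounds, I would exploit the three logarithmic summands of $\Phi$: $(nmp)^{1/3} \log n$, $(nmp)^{1/4} \log^{5/4} n$, and $\log^{3/2} n$. The simplest device is $\Phi \ge \log^{3/2} n$, so $\Phi^{\lceil 2e/3 \rceil}$ absorbs any $\log^e n$ factor and the remaining $k - \lceil 2e/3 \rceil$ factors handle the power-law part as above. For the min-type error terms I would use the elementary inequality $\min(a, b) \le a^t b^{1-t}$ for a well-chosen $t \in [0,1]$, selecting $t$ so that the interpolated exponents of $n$ and $p$ match a suitable product of geometric and logarithmic summands of $\Phi$; this is for example how one shows $\Phi^2 \gtrsim \min(np^2 \log^2 n, \log^4 n)$ despite neither $\Phi^2 \ge \log^3 n$ alone nor a geometric factor alone sufficing.

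The main obstacle is bookkeeping rather than any conceptual difficulty: one must verify that \emph{every} monomial produced by expanding the four right-hand sides is dominated in \emph{every} one of the three $p$-regimes by some admissible product of $k$ factors drawn from $\Phi$. The $k = 8$ and $k = 9$ inequalities are especially laborious, since the fourth right-hand side alone contains monomials of total $(n, m, p)$-degree up to $16$, and several candidate factorizations $A^i B^j C^\ell$ (possibly augmented by logarithmic summands) must typically be tested before one that dominates the target is found.
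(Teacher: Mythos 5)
Your proposal is correct and follows essentially the same route as the paper: Borel--Cantelli applied to Lemma~\ref{lem:bip-er-subgraph-ubs}, followed by term-by-term domination of each resulting monomial by a product of $k$ summands of $\Phi$, using $m\le n$ and elementary interpolation. The only difference is organizational: where you split into three regimes of $p$ according to which of $nmp^2$, $nm^{1/2}p$, $(nmp)^{1/2}$ dominates, the paper dispenses with the case analysis by bounding each monomial directly by a maximum of such products (a weighted geometric-mean-versus-maximum comparison), and it handles the $\min(\cdot,\cdot)$ error terms by simply taking one branch of the minimum and absorbing the logarithm via $\log^2 n \lesssim nm$ rather than by interpolation.
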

\begin{proof}
    Because $\sum_{n\in \bN} n^{-10} < \infty$, by the Borel-Cantelli lemma the inequalities in Lemma~\ref{lem:bip-er-subgraph-ubs} each hold for all sufficiently large $n$ almost surely.
    Throughout this proof, we suppose this is the case.
    By Lemma~\ref{lem:bip-er-subgraph-ubs}(\ref{itm:bip-er-ub-e},\ref{itm:bip-er-ub-p2},\ref{itm:bip-er-ub-c4}),
    \begin{align*}
        \Num_G(C_4, P_2, E)
        &\lesssim
        n^2m^2p^4 + n^2mp^2 + nmp + \log^3 n + \min(np^2\log^2 n, \log^4 n) \\
        &\lesssim
        n^2m^2p^4 + n^2mp^2 + nmp + \log^3 n
        \le \Phi^2
    \end{align*}
    where the second-last inequality follows from $\min(np^2\log^2 n, \log^4 n) \le np^2\log^2 n \lesssim n^2mp^2$.
    This proves the first conclusion.
    By Lemma~\ref{lem:bip-er-subgraph-ubs}(\ref{itm:bip-er-ub-e},\ref{itm:bip-er-ub-ok14}),
    \begin{align*}
        \oNum_G(\oK_{1,4}) + \Num_G(E) \log^3 n
        &\lesssim
        nm^4p^4 + nmp \log^3 n + \log^{4} n \\
        &\lesssim
        n^3m^3p^6 + n^3m^{3/2}p^3 + nmp \log^3 n + \log^{9/2} n
        \le
        \Phi^3,
    \end{align*}
    where the second-last inequality follows from $nm^4p^4 \le n^3m^2p^4 \le \max(n^3m^3p^6, n^3m^{3/2}p^3)$ (recall $m\le n$).
    This proves the second conclusion.
    Lemma~\ref{lem:bip-er-subgraph-ubs}(\ref{itm:bip-er-ub-e},\ref{itm:bip-er-ub-ok13},\ref{itm:bip-er-ub-ok24}) yields
    \begin{align*}
        &\lt(\oNum_G(\oK_{1,3})^2 + \Num_G(E)^2 \log^4 n\rt) \oNum_G(\oK_{2,4}) \\
        &\qquad \lesssim
        \lt(n^2m^6p^6 + n^2m^2p^2\log^4 n + \log^6 n\rt)
        \lt(n^2m^4p^8 + n^2m^2p^4 \log^4 n + \log^6 n\rt) \\
        &\qquad =
        n^4m^{10}p^{14} +
        n^4m^6p^{10} \log^4 n +
        n^2m^4p^8 \log^6 n +
        n^4m^8p^{10}\log^4 n +
        n^4m^4p^6\log^8 n \\
        &\qquad\qquad +
        n^2m^2p^4\log^{10} n +
        n^2m^6p^6 \log^6 n +
        n^2m^2p^2 \log^{10} n +
        \log^{12} n.
    \end{align*}
    These terms are bounded by the following inequalities.
    \begin{align*}
        n^4m^{10}p^{14}
        &\le n^8m^7p^{14}
        \le \max(n^8m^8p^{16}, n^8m^4p^8)
        \le \Phi^8, \\
        n^4m^6p^{10} \log^4 n
        &\lesssim n^8m^5p^{10}
        \le \max(n^8m^8p^{16}, n^8m^4p^8)
        \le \Phi^8, \\
        n^2m^4p^8 \log^6 n
        &\lesssim n^8m^4p^8
        \le \Phi^8, \\
        n^4m^8p^{10} \log^4 n
        &\lesssim n^8m^5p^{10}
        \le \max(n^8m^8p^{16}, n^8m^4p^8)
        \le \Phi^8, \\
        n^4m^4p^6\log^8 n
        &\lesssim n^6m^4p^6
        \le \max(n^8m^4p^8, n^4m^4p^4)
        \le \Phi^8, \\
        n^2m^2p^4\log^{10} n
        &\lesssim n^4m^4p^4
        \le \Phi^8, \\
        n^2m^6p^6 \log^6 n
        &\lesssim n^6m^4p^6
        \le \max(n^8m^4p^8, n^4m^4p^4)
        \le \Phi^8, \\
        n^2m^2p^2 \log^{10} n
        &\le \Phi^8, \\
        \log^{12} n
        &\le \Phi^8.
    \end{align*}
    Thus $\lt(\oNum_G(\oK_{1,3})^2 + \Num_G(E)^2 \log^4 n\rt) \oNum_G(\oK_{2,4}) \lesssim \Phi^8$.
    Moreover, Lemma~\ref{lem:bip-er-subgraph-ubs}(\ref{itm:bip-er-ub-e},\ref{itm:bip-er-ub-ok14}) implies
    \begin{align*}
        \Num_G(E)^2 \oNum_G(\oK_{1,4}) \log^4 n
        &\lesssim
        \lt(n^2m^2p^2 + \log^2 n\rt)
        \lt(nm^4p^4  + \log^4 n\rt)
        \log^4 n \\
        & =
        n^3m^6p^6 \log^4 n +
        n^2m^2p^2\log^8 n +
        nm^4p^4\log^6 n +
        \log^{10} n.
    \end{align*}
    These terms are bounded by the following inequalities.
    \begin{align*}
        n^3m^6p^6\log^4 n
        &\lesssim n^6m^4p^6
        \le \max(n^8m^4p^8, n^4m^4p^4)
        \le \Phi^8, \\
        n^2m^2p^2\log^8 n
        &\le
        n^2m^2p^2\log^{10} n
        \le
        \Phi^8, \\
        nm^4p^4\log^6 n
        &\lesssim n^4m^4p^4
        \le \Phi^8, \\
        \log^{10} n &\le \log^{12} n \le \Phi^8.
    \end{align*}
    Therefore, $\Num_G(E)^2 \oNum_G(\oK_{1,4}) \log^4 n \lesssim \Phi^8$.
    This proves the third conclusion.
    Finally by Lemma~\ref{lem:bip-er-subgraph-ubs}(\ref{itm:bip-er-ub-e},\ref{itm:bip-er-ub-op4}),
    \begin{align*}
        \Num_G(E)^2\oNum_G(\oP_4)\log^4 n
        &\lesssim
        (n^2m^2p^2 + \log^2 n)
        (n^2m^3p^4 + \log^4 n)
        \log^4 n \\
        &=
        n^4m^5p^6\log^4 n +
        n^2m^3p^4 \log^6 n +
        n^2m^2p^2 \log^8 n +
        \log^{10} n.
    \end{align*}
    These terms are bounded by the following inequalities.
    \begin{align*}
        n^4m^5p^6\log^4 n
        &\lesssim n^6m^{9/2}p^6
        \le \max(n^9m^{9/2}p^9, n^{9/2}m^{9/2}p^{9/2})
        \le \Phi^9, \\
        n^2m^3p^4 \log^6 n
        &\lesssim n^4m^4p^4\log^3 n
        \le \max(n^{9/2}m^{9/2}p^{9/2}, m^3n^3p^3 \log^9 n)
        \le \Phi^9, \\
        n^2m^2p^2 \log^8 n
        &\le n^2m^2p^2 \log^{23/2} n
        \le \max(n^{9/4}m^{9/4}p^{9/4} \log^{45/4} n, \log^{27/2} n)
        \le \Phi^9, \\
        \log^{10} n
        &\le \log^{27/2} n
        \le \Phi^9.
    \end{align*}
    Therefore $\Num_G(E)^2\oNum_G(\oP_4)\log^4 n \lesssim \Phi^9$.
    This proves the fourth conclusion.
\end{proof}

\begin{lemma}
    \label{lem:bip-er-mask-lbs-aux}
    Let $G = G_n \sim \cG(n,m,p)$.
    Over the randomness of the sample path $G_1, G_2, \ldots$, the following inequalities hold almost surely for sufficiently large $n$.
    The $\lesssim$ hides an absolute constant factor.
    \begin{enumerate}[label=(\alph*), ref=\alph*]
        \item \label{itm:bip-er-mask-lbs-aux-deg4}
        If $p\gg (nm)^{-1}\log^3 n$ and $m \gg 1$, then
        \begin{eqnarray*}
            n^2m^2p^4 + n^2mp^2 + nmp &\lesssim& \Num_G(C_4, P_2, E), \\
            \Num_G(K_{1,4}, K_{2,4}, C_4^{2,e}, C_4^{2,v})
            &\ll&
            \Num_G(C_4, P_2, E)^2.
        \end{eqnarray*}
        \item \label{itm:bip-er-mask-lbs-aux-maxdeg}
        If $m=1$, then $\Num_{G}(E) \gtrsim np$.
    \end{enumerate}
\end{lemma}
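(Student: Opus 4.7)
My plan is to prove the two parts separately, with each reducing to an almost-sure transfer, via Borel--Cantelli, of the high-probability subgraph count bounds established in Lemmas~\ref{lem:bip-er-subgraph-ubs} and \ref{lem:bip-er-subgraph-lbs}. The core is then a bookkeeping exercise comparing polynomial and logarithmic terms under the hypotheses.

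For part (\ref{itm:bip-er-mask-lbs-aux-deg4}), the first inequality is immediate from Lemma~\ref{lem:bip-er-subgraph-lbs}(\ref{itm:bip-er-lb-c4p2e}), which gives $\Num_G(C_4,P_2,E) \ge \tfrac12 \E \Num_G(C_4,P_2,E)$ with probability $\ge 1-n^{-10}$ under $p\gg(nm)^{-1}\log^3 n$. Since $\sum_n n^{-10}<\infty$, Borel--Cantelli makes this hold almost surely for all sufficiently large $n$, and a direct calculation gives $\E\Num_G(C_4,P_2,E)\asymp n^2m^2p^4 + n^2mp^2 + nmp$. For the second inequality, another application of Borel--Cantelli together with Lemma~\ref{lem:bip-er-subgraph-ubs} lets us assume almost surely, for all large $n$, the upper bounds on $\Num_G(K_{1,4})$, $\Num_G(K_{2,4})$, $\Num_G(C_4^{2,e})$, and $\Num_G(C_4^{2,v})$ from parts (\ref{itm:bip-er-ub-k14}), (\ref{itm:bip-er-ub-k24}), (\ref{itm:bip-er-ub-c42e}), (\ref{itm:bip-er-ub-c42v}). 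The remaining task is to verify term by term that each of these is $\ll (n^2m^2p^4 + n^2mp^2 + nmp)^2$.

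The polynomial leading terms fall out cleanly using $m\gg 1$: for instance $n^4mp^4 \ll n^4m^2p^4 \le (n^2mp^2)^2$, while $n^4m^2p^8\ll n^4m^4p^8 = (n^2m^2p^4)^2$, and $n^4m^3p^8, n^3m^3p^7$ are dominated by $(n^2m^2p^4)^2$ using $m\gg 1$ and $nmp\gg\log^3 n \gg 1$ respectively. The plain logarithmic terms $\log^c n$ for $c\le 6$ are controlled by $(nmp)^2 \gg \log^6 n$, again from $nmp\gg\log^3 n$. The main obstacle, and the step requiring the most care, is the $\min$ terms such as $\min(n^2m^2p^4\log^4 n,\log^8 n)$: bounding by an individual branch fails in one of the two regimes. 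The clean workaround is to use the geometric-mean bound $\min(x,y)\le\sqrt{xy}$, giving e.g.\ $\min(n^2m^2p^4\log^4n,\log^8 n)\le nmp^2\log^6 n \ll (nmp)(n^2mp^2)$, where the last step again uses $nmp\gg\log^3 n$; the other $\min$ terms are handled identically.

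For part (\ref{itm:bip-er-mask-lbs-aux-maxdeg}), when $m=1$ the random variable $\Num_G(E)$ is simply $\mathrm{Binomial}(n,p)$, so $\E\Num_G(E)=np$. In the regime where this part is applied (the $m=1$ case of Theorem~\ref{thm:bip-er-mask}(\ref{itm:bip-er-mask-lb}), which carries the standing hypothesis $p\gg n^{-1}\log^3 n$), we have $np\gg\log^3 n$, so a standard Chernoff bound gives $\Num_G(E)\ge\tfrac12 np$ with probability at least $1-\exp(-\Omega(np))\le 1-n^{-10}$. A final application of Borel--Cantelli promotes this to an almost-sure statement for all large $n$, completing the proof.
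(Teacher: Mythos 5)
Your proposal is correct and follows essentially the same route as the paper's proof: Borel--Cantelli applied to the high-probability bounds of Lemmas~\ref{lem:bip-er-subgraph-ubs} and \ref{lem:bip-er-subgraph-lbs}, the identity $\E \Num_G(C_4,P_2,E) \asymp n^2m^2p^4 + n^2mp^2 + nmp$, and a term-by-term comparison using $m\gg 1$ and $nmp \gg \log^3 n$, with the $m=1$ case being exactly the Chernoff bound underlying Lemma~\ref{lem:bip-er-subgraph-lbs}(\ref{itm:bip-er-lb-e}). The only divergence is cosmetic: for the $\min$ terms the paper simply takes the branch $\min(n^2m^2p^4\log^{9/2}n,\log^9 n)\le n^2m^2p^4\log^{9/2}n \lesssim n^4mp^4$ (valid deterministically since $m\le n$ and $\log^{9/2}n\le n$) and then uses $m\gg 1$, so your remark that bounding by an individual branch fails is inaccurate, though harmless since your geometric-mean bound is also valid.
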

\begin{proof}
    Because all parts of the lemma assume $p\gg (nm)^{-1}\log^3 n$, Lemma~\ref{lem:bip-er-subgraph-lbs} applies.
    Because $\sum_{n\in \bN} n^{-10} < \infty$, by the Borel-Cantelli lemma the inequalities in Lemma~\ref{lem:bip-er-subgraph-ubs} and Lemma~\ref{lem:bip-er-subgraph-lbs} each hold for all sufficiently large $n$ almost surely.
    Throughout this proof, assume that this is the case.

    \paragraph{Proof of  (\ref{itm:bip-er-mask-lbs-aux-deg4}).}
    The first conclusion follows directly from Lemma~\ref{lem:bip-er-subgraph-lbs}, because $\E \Num_G(C_4, P_2, E) \asymp n^2m^2p^4 + n^2mp^2+ nmp$.
    By Lemma~\ref{lem:bip-er-subgraph-ubs}(\ref{itm:bip-er-ub-k14},\ref{itm:bip-er-ub-k24},\ref{itm:bip-er-ub-c42e},\ref{itm:bip-er-ub-c42v}), we have that
    \begin{align*}
        \Num_G(K_{1,4}, K_{2,4}, C_4^{2,e}, C_4^{2,v})
        &\lesssim
        n^4m^3p^8 + n^3m^3p^7 + n^4mp^4 + \log^6 n + \min(n^2m^2p^4\log^{9/2} n, \log^9 n) \\
        &\lesssim
        n^4m^3p^8 + n^4mp^4 + \log^6 n.
    \end{align*}
    The last inequality follows from the inequalities $n^3m^3p^7 \le n^4m^{5/2}p^7 \le \max(n^4mp^4, n^4m^3p^8)$ and
    \[
        \min(n^2m^2p^4\log^{9/2} n, \log^9 n)
        \le
        n^2m^2p^4\log^{9/2} n
        \lesssim
        n^4mp^4.
    \]
    Because $m\gg 1$ and $p\gg (nm)^{-1}\log^3 n$, we have that $n^4m^3p^8 \ll n^4m^4p^8$, $n^4mp^4 \ll n^4m^2p^4$, and $\log^6 n \ll n^2m^2p^2$.
    So,
    \[
        \Num_G(K_{1,4}, K_{2,4}, C_4^{2,e}, C_4^{2,v})
        \ll
        n^4m^4p^8 + n^4m^2p^4 + n^2m^2p^2
        \lesssim
        \Num_G(C_4, P_2, E)^2.
    \]

    \paragraph{Proof of (\ref{itm:bip-er-mask-lbs-aux-maxdeg}).}
    The claim follows immediately from Lemma~\ref{lem:bip-er-subgraph-lbs}(\ref{itm:bip-er-lb-e}), because $\E \Num_G(E) = np$.
\end{proof}

We now have the tools to prove Theorem~\ref{thm:bip-er-mask}.

\begin{proof}[Proof of Theorem~\ref{thm:bip-er-mask}]
    We will prove this theorem using Theorems~\ref{thm:bipartite-ub}, \ref{thm:deg4-lb}, and \ref{thm:maxdeg-lb}, using the appropriate parts of Lemmas~\ref{lem:bip-er-mask-ubs-aux} and \ref{lem:bip-er-mask-lbs-aux} to show these theorems hold almost surely.

    \paragraph{Proof of (\ref{itm:bip-er-mask-ub}).}
    Define $\Phi$ as in Lemma~\ref{lem:bip-er-mask-ubs-aux}.
    By hypothesis (\ref{eq:bip-er-mask-ub-hypothesis}), $d \gg \Phi$.
    By Lemma~\ref{lem:bip-er-mask-ubs-aux}, Theorem~\ref{thm:bipartite-ub} holds, and so $\TV(W(G,d), M(G)) \to 0$.

    \paragraph{Proof of (\ref{itm:bip-er-mask-lb}).}
    We will separately consider the cases $m\gg 1$, where we will use Theorem~\ref{thm:deg4-lb}, and $m=O(1)$, where we will use Theorem~\ref{thm:maxdeg-lb}.

    First, suppose $m \gg 1$.
    Then, hypothesis (\ref{eq:bip-er-mask-lb-hypothesis}) gives that $d^2 \ll n^2m^2p^4 + n^2mp^2 + nmp$.
    By Lemma~\ref{lem:bip-er-mask-lbs-aux}(\ref{itm:bip-er-mask-lbs-aux-deg4}), Theorem~\ref{thm:deg4-lb} holds, and so $\TV(W(G,d), M(G)) \to 1$.

    Otherwise, suppose $m = O(1)$.
    Then, the hypothesis $p \gg (nm)^{-1} \log^3 n$ implies $p \gg n^{-1} \log^3 n$, and hypothesis (\ref{eq:bip-er-mask-lb-hypothesis}) simplifies to $d \ll np$
    Let $G'$ denote the induced subgraph of $G$ on $\{1,\ldots,n+1\}$, containing all of $V_L = \{1,\ldots,n\}$ and the vertex $n+1$ from $V_R$.
    Lemma~\ref{lem:bip-er-mask-lbs-aux}(\ref{itm:bip-er-mask-lbs-aux-maxdeg}) applies to $G'$, and yields that $np \lesssim \Num_{G'}(E) \le \maxdeg(G)$.
    Thus $d \ll \maxdeg(G)$.
    By Theorem~\ref{thm:maxdeg-lb}, $\TV(W(G,d), M(G)) \to 1$.
\end{proof}

\end{document}